\newcommand{\chaptertoc}[1][Chapter contents]{%
  \etocsettocstyle{\addsec*{#1}}{}%
  \setcounter{tocdepth}{2}
  \localtableofcontents%
}
\def\thenomenclature{%
  \section*{\nomname}
  \if@intoc\addcontentsline{toc}{section}{\nomname}\fi%
  \nompreamble
  \list{}{%
    \labelwidth\nom@tempdim
    \leftmargin\labelwidth
    \advance\leftmargin\labelsep
    \itemsep\nomitemsep
    \let\makelabel\nomlabel}}
\newcommand{\m}[1]{\mathcal{#1}}
\newcommand{\bb}[1]{\mathbb{#1}}
\newcommand{\bm}[1]{\boldsymbol{#1}}	
\newcommand{\mrm}[1]{\mathrm{#1}}
\newcommand{\mscr}[1]{\mathscr{#1}}
\newcommand{\f}[1]{\mathfrak{#1}}
\newcommand{\sslash}{/\!\!/}			
\newcommand{\diff}{\partial}
\newcommand{\bdiff}{\bar{\partial}} 
\newcommand{\SpecRad}{r} 
\newcommand{\I}{\mathrm{i}\mkern1mu}	
\newcommand{\transpose}{\intercal}		
\DeclarePairedDelimiter\abs{\lvert}{\rvert}	
\DeclarePairedDelimiter\norm{\lVert}{\rVert}	
\DeclarePairedDelimiter{\set}{\{}{\}}	
\newcommand{\tc}{\mathrel{}\mathclose{}\middle|\mathopen{}\mathrel{}}	
\newcommand{\ext}[1]{\bigwedge\nolimits^{#1}}	
\newcommand{\cotJ}{T^*\!\!\!\mscr{J}}	
\newtheorem{thm}{Theorem}[section]
\newtheorem{thm_intro}{Theorem}
\newtheorem{prop}[thm]{Proposition}
\newtheorem{lemma}[thm]{Lemma}
\newtheorem{cor}[thm]{Corollary}
\theoremstyle{definition}
\newtheorem{definition}[thm]{Definition}
\theoremstyle{remark}
\newtheorem{rmk}[thm]{Remark}
\title{The Hitchin-cscK system}
\date{
}
\author{Carlo Scarpa}
\begin{document}

\frontmatter


\maketitle

\chapter*{}

\begin{center}
{\Large \textsc{abstract}}
\end{center}

\noindent We present an infinite-dimensional hyperk\"ahler reduction that extends the classical moment map picture of Fujiki and Donaldson for the scalar curvature of K\"ahler metrics. We base our approach on an explicit construction of hyperk\"ahler metrics due to Biquard and Gauduchon. The construction is motivated by how one can derive Hitchin’s equations for harmonic bundles from the Hermitian Yang-Mills equation, and yields a system of moment map equations which modifies the constant scalar curvature K\"ahler (cscK) condition by adding a ``Higgs field'' to the cscK equation. In the special case of complex curves, we recover previous results of Donaldson, while for higher-dimensional manifolds the system of equations has not yet been studied. We study the existence of solutions to the system in some special cases. On a Riemann surface, we extend an existence result for Donaldson's equation to our system. We then study the existence of solutions to the moment map equations on a class of ruled surfaces which do not admit cscK metrics, showing that adding a suitable Higgs term to the cscK equation can stabilize the manifold. Lastly, we study the system of equations on abelian and toric surfaces, taking advantage of a description of the system in symplectic coordinates analogous to Abreu's formula for the scalar curvature.

\setcounter{tocdepth}{1}
\tableofcontents

\listoftodos

\chapter{Introduction}

This thesis studies the problem of finding metrics with special curvature properties on a compact complex manifold. It builds on a parallel between two important problems in complex differential geometry, the existence of constant scalar curvature K\"ahler metrics and the theory of Hermitian Yang-Mills connections, to study a new set of equations whose solutions should exhibit interesting geometric properties.

We briefly outline some features of the two problems. These will serve as a motivation for the main question we address in this thesis: \emph{is there a natural way to add a ``Higgs field'' to the constant scalar curvature equation and to obtain a system of equations analogous to Hitchin's equations for Higgs bundles?} 

The existence of K\"ahler metrics with constant scalar curvature is one of the most studied problems in complex differential geometry and many interesting results have been obtained in the last twenty years. The general setting is as follows: consider a compact complex manifold~$M$ together with a K\"ahler class~$\Omega\in H^2(M,\bb{R})$ and look for K\"ahler forms~$\omega\in\Omega$ such that
\begin{equation*}
S(\omega)=\hat{S}
\end{equation*}
where~$\hat{S}$ is a constant determined by the class~$\Omega$ and the first Chern class of~$M$. By parametrizing the K\"ahler forms in~$\Omega$ as~$\omega_0+\I\diff\bdiff\varphi$ for a fixed reference form~$\omega_0$ and potentials~$\varphi\in\m{C}^\infty_0(M,\bb{R})$ we can see the constant scalar curvature equation for K\"ahler metrics (the cscK equation, from now on) as a fourth order fully nonlinear elliptic PDE.

It is now well-known that the cscK equation can be interpreted as the zero-locus equation for the moment map of an infinite-dimensional Hamiltonian action on the space~$\mscr{J}$ of complex structures on~$M$ that are compatible with a reference symplectic form. This was first noted by Quillen, Donaldson~\cite{Donaldson_scalar} and Fujiki~\cite{Fujiki_moduli} in the~$90$'s; framing the cscK problem in the context of Hamiltonian actions has led to many important results (e.g.~\cite{Donaldson_scalarcurvature_embeddingsI}). Together with previous work of Tian on the Futaki invariant~\cite{Tian_positive_scalar_curvature}, this feature of the cscK problem prompted the formulation of the ``Yau-Tian-Donaldson'' conjecture for the existence of cscK metrics. A major success of this theory is a verification of the YTD conjecture in the case when~$\Omega$ is proportional to~$\mrm{c}_1(M)$, establishing the equivalence between existence of K\"ahler-Einstein metrics and~$K$-stability. We refer to~\cite{Szekelyhidi_libro} for an overview of these results.

Broadly speaking, the YTD conjecture states that a polarized manifold~$(M,L)$ should admit a metric of constant scalar curvature in the class~$\mrm{c}_1(L)$ if and only if~$(M,L)$ satisfies some kind of algebraic stability condition. A motivation for this conjecture comes from the relation between symplectic reductions and GIT quotients in a finite-dimensional setting: the Kempf-Ness Theorem implies that the existence of zeroes of a moment map can be characterized in terms of a stability notion for the orbits for the action, and this stability condition in turn can be tested using a numerical condition, Mumford's criterion.

When studying the cscK equation we have to consider a K\"ahler reduction in \emph{infinite} dimensions, so the Kempf-Ness Theorem can not be directly applied to obtain an algebraic characterization of the existence of cscK metrics. However, there is at least one case in which this finite-dimensional picture has been successfully applied to a similar infinite-dimensional problem, namely the Hermitian Yang-Mills (HYM) equation. Let~$E$ be a holomorphic vector bundle over a complex curve with a fixed K\"ahler form~$\omega$: the HYM equation for a Hermitian metric~$h$ on~$E$ is
\begin{equation*}
F(h) = \lambda\,\mathbbm{1}\otimes\omega
\end{equation*}
where~$\lambda$ is a topological constant, essentially the slope of~$E$. Atiyah and Bott~\cite{AtiyahBott_YangMills} showed that this can be interpreted as the zero moment map equation for the Hamiltonian action of unitary gauge transformations on the space~$\mscr{A}$ of compatible~$\bar{\diff}$-operators of~$E$. This space is an affine infinite-dimensional K\"ahler manifold, and the action of the gauge group preserves this structure. The HYM equation arises when looking for zeroes of the moment map along the orbits of the complexified action, and for a fixed holomorphic structure on~$E$ there is a Hermitian metric solving the HYM equation if and only if~$E$ is \emph{slope-stable}. This is known as the Kobayashi-Hitchin correspondence; we refer to~\cite{Lubke_KobayashiHitchin} for an in-depth discussion of these results. Slope stability is a numerical condition for the bundle~$E$, hence we see an example that a ``Kempf-Ness criterion'' can hold also in an infinite-dimensional setting.

The Hamiltonian interpretation of both problems suggests that, to some extent, we might hope to find new features of the cscK equations drawing inspiration from the well-developed theory of the HYM equation. One of the best examples of the fruitfulness of this parallel between the two problems are the lower bounds for the Calabi functional found by Donaldson~\cite{Donaldson_lower_bounds}, that were directly inspired from the bounds on the Yang-Mills functional in~\cite{AtiyahBott_YangMills}.

In this thesis we will focus on a feature of the moment map description of the Hermitian Yang Mills problem noticed by Hitchin~\cite{Hitchin_self_duality}. Let~$\mscr{A}$ be the space of~$\bdiff$-operators of a complex vector bundle~$E$; the holomorphic cotangent space of~$\mscr{A}$ carries a hyperk\"ahler structure, and the Hamiltonian action of the gauge group on~$\mscr{A}$ lifts naturally to an action on~$T^*\!\mscr{A}$ that is Hamiltonian with respect to all the symplectic forms in the hyperk\"ahler family. Along the orbits of a complexification of the action this gives \emph{Hitchin's Higgs bundle equations}
\begin{equation*}
\begin{cases}
\bar{\diff}\phi = 0\\
\nonumber F(h) + [\phi, \phi^{*_h}] = \lambda\,\mathbbm{1}\otimes\omega
\end{cases}
\end{equation*}
the system of moment map equations coming from the infinite-di\-men\-sio\-nal hyperk\"ahler reduction. The term~$\phi\in\m{A}^{1,0}(\mrm{End}(E))$
is called a \emph{Higgs field}, and is an element of the cotangent space of~$\mscr{A}$. The Higgs bundle equations have been extensively studied and lead to an extraordinarily rich theory. For example, if we fix a Higgs field~$\phi$ satisfying the equation~$\bdiff\phi=0$ (i.e.~$\phi$ is holomorphic) then the existence of solutions to the second equation is characterized by an algebraic stability condition on the pair~$(E,\phi)$, generalizing slope-stability. The hyperk\"ahler geometry underlying the system of equations descends to the moduli space of solutions, that has a very rich geometric structure.

It is natural to ask if a similar construction can be carried out for the cscK equation, i.e. if we can upgrade the moment map picture for the cscK equation to a hyperk\"ahler reduction. This problem in the case of a complex curve has already been studied by Donaldson~\cite{Donaldson_hyperkahler}, who found a hyperk\"ahler metric in a neighbourhood of the zero section of the holomorphic cotangent bundle~$\cotJ$ and studied a Hamiltonian action on this space. In fact we can expect a hyperk\"ahler structure to exist on~$\cotJ$ for a base manifold~$M$ of arbitrary dimension, since~$\mscr{J}$ is a K\"ahler manifold. From general results of Feix~\cite{Feix_hyperkahler} and Kaledin~\cite{Kaledin_hyperkahler} the cotangent bundle of a K\"ahler manifold always carries a hyperk\"ahler metric, at least in a neighbourhood of the zero section. One of the issues that arise when trying to generalize Donaldson's approach to higher dimensional manifolds is that one would like to obtain a fairly explicit expression for the hyperk\"ahler structure, but it is not clear how to do this in general from the theory developed by Feix and Kaledin.

In this work instead we consider an explicit description due to Biquard and Gauduchon~\cite{Biquard_Gauduchon} of the hyperk\"ahler metric on the cotangent bundle of a K\"ahler symmetric space. This allows us to find a hyperk\"ahler metric on~$\cotJ$ for a base manifold~$M$ of any dimension, as the infinite-dimensional manifold~$\mscr{J}$ is formally a symmetric space. More precisely, the hyperk\"ahler metric is defined on an open neighbourhood of the zero section~$\mscr{J}\hookrightarrow\cotJ$, and the restriction of this metric to~$\mscr{J}$ coincides with the usual K\"ahler metric of~$\mscr{J}$. We then show how the Hamiltonian action on~$\mscr{J}$ lifts to an action on~$\cotJ$ that is Hamiltonian with respect to all the symplectic forms in the hyperk\"ahler family, giving a set of moment maps on~$\cotJ$ that restricted to the zero section coincide with the scalar curvature.

We call the resulting system of moment map equations the \emph{Hitchin-constant scalar curvature} system (HcscK, in short). It is a system of equations for a complex structure~$J$ and a first-order deformation~$\alpha$ of~$J$; there is a clear similarity with Hitchin's system, although the equations have a less clear geometric interpretation. The HcscK system is
\begin{equation}\label{eq:HcscK_system}
\begin{cases}
\mrm{div}\left(\diff^*\alpha\right)=0\\
2\,S(\omega,J)-2\,\hat{S}+\mrm{div}\,X(J,\alpha)=0
\end{cases}
\end{equation}
where all the metric quantities are computed with respect to the K\"ahler metric defined from the reference symplectic form and the complex structure~$J$, and~$X(J,\alpha)$ is a vector field defined from~$\alpha$ and~$J$. The general expression of~$X$ is slightly complicated, and can be computed from the potential of the Biquard-Gauduchon hyperk\"ahler metric. To write~$X(J,\alpha)$, notice first that~$\alpha\bar{\alpha}$ is similar to a Hermitian matrix (see the proof of Proposition~\ref{prop:BiquardGauduchonAC+}), so that it has~$n$ real non-negative eigenvalues. Introduce the spectral function
\begin{equation}\label{eq:funzioni_accessorie_HcscK}
\psi(x)=\frac{1}{2}\left(1+\sqrt{1-x}\right)^{-1}
\end{equation}
and apply it to~$\alpha\bar{\alpha}$, obtaining a matrix~$\hat{\alpha}:=\psi(\alpha\bar{\alpha})$. Then, we can finally write~$X(J,\alpha)$ as 
\begin{equation*}
X(J,\alpha)=2\,\mrm{Re}\left(g(\nabla^a\alpha,\bar{\alpha}\hat{\alpha})\diff_{z^a}-g(\nabla^{\bar{b}}\alpha,\bar{\alpha}\hat{\alpha})\diff_{\bar{z}^b}-2\nabla^*(\alpha\bar{\alpha}\hat{\alpha})\right).
\end{equation*}
As in the case of the HYM equation and the cscK equation, also this system of equations should be \emph{complexified}, by fixing a complex structure~$J$ on~$M$ and considering the HcscK system as a system of equations for a K\"ahler form~$\omega$ in a fixed class and a ``Higgs term''~$\alpha$. It is slightly more difficult to describe this complexification for the HcscK system than for the classical cscK equation, and this is one of the reasons why we have not yet been able to formulate a stability condition analogous to~$K$-stability for the HcscK system.

\section{Summary of results}

Most of the results in this thesis have already appeared in articles of the same author,~\cite{ScarpaStoppa_hyperk_reduction},~\cite{ScarpaStoppa_HcscK_curve} and~\cite{ScarpaStoppa_HcscK_abelian}. In particular the general theory of the HcscK system developed in Chapter~\ref{chap:HyperkahlerReduction} is essentially taken from~\cite{ScarpaStoppa_hyperk_reduction}. The main improvements over~\cite{ScarpaStoppa_hyperk_reduction} are a formulation of the moment map equations in arbitrary dimensions and a discussion of an infinitesimal complexification of the equations.

Chapter~\ref{chap:classicalScalCurv} provides some background material for the rest of the thesis. We will mainly focus on describing~$\mscr{J}$ as the space of smooth sections of a bundle over~$M$ with fibres isomorphic to Siegel's upper half space~$\mrm{Sp}(2n)/U(n)$, after~\cite{Donaldson_scalar}. This allows us to describe a natural K\"ahler structure on~$\mscr{J}$, and we will briefly study the Riemannian geometry of~$\mscr{J}$ in Section~\ref{sec:KahlerStrJ}. In particular we will prove that~$\mscr{J}$ is a formally symmetric space, i.e. its curvature tensor is covariantly constant. This is analogous to a result of Donaldson on the space of K\"ahler potentials, and in fact the two spaces are closely related, as we describe in Section~\ref{sec:complexification_cscK}. This result has also been found with a different approach by Berndtsson (private communication), and gives another motivation for expecting that the Biquard-Gauduchon construction of an explicit hyperk\"ahler metric on~$\cotJ$ will also work in our situation. A large part of Chapter~\ref{chap:classicalScalCurv} is devoted to explaining Donaldson's interpretation of the constant scalar curvature as a moment map in~\cite{Donaldson_scalar}, and in particular we show how to formally complexify the resulting moment map equation, following the discussion in~\cite{Donaldson_SymmKahlerHam}.

In Chapter~\ref{chap:HyperkahlerReduction} we present the construction of a hyperk\"ahler structure on~$\cotJ$ using the Hermitian symmetric space approach of~\cite{Biquard_Gauduchon}.
\begin{thm_intro}\label{thm:HKThmIntro}
A neighbourhood of the zero section in the holomorphic cotangent bundle~$\cotJ$ is endowed with a natural hyperk\"ahler structure, whose restriction to the zero section coincides with the K\"ahler metric of~$\mscr{J}$. For each~$J\!\in\!\!\mscr{J}$, the first order deformations that are in this neighbourhood are those~$\alpha\in\m{A}^{1,0}(T^{0,1}M)$ such that the eigenvalues of the endomorphism~$\alpha\bar{\alpha}$ are strictly less than~$1$.
\end{thm_intro}
\begin{rmk}
The hyperk\"ahler metric we will describe in Chapter~\ref{chap:HyperkahlerReduction} is invariant under the~$U(1)$-action on the fibres of~$\cotJ\to\mscr{J}$, as also happens for the hyperk\"ahler metric on~$T^*\!\mscr{A}$ described by Hitchin~\cite{Hitchin_self_duality}. From the finite-dimensional picture of~\cite{Feix_hyperkahler, Kaledin_hyperkahler} we expect that in fact any~$U(1)$-invariant hyperk\"ahler metric on (an open subset of)~$\cotJ$ that restricts to the usual K\"ahler metric on~$\mscr{J}$ essentially coincides with the metric of Theorem~\ref{thm:HKThmIntro}.
\end{rmk}
A review of the results of Biquard and Gauduchon can be found in Section~\ref{sec:BiquardGauduchon}. The construction of the hyperk\"ahler metric and the proof of Theorem~\ref{thm:HKThmIntro} are given in Section~\ref{sec:cotJ_hyperk}. After this we focus on the extension of the Hamiltonian action on~$\mscr{J}$, obtaining the second main result of the Chapter.
\begin{thm_intro}\label{thm:HamiltonianHyperkIntro}
The Hamiltonian action on~$\mscr{J}$ lifts to an action on~$\cotJ$ that is Hamiltonian with respect to the hyperk\"ahler family of symplectic forms. The moment map equations for this action form the HcscK system~\eqref{eq:HcscK_system}.
\end{thm_intro}
In order to write the moment map equations explicitly we need to first compute the Biquard-Gauduchon hyperk\"ahler metric on~$T^*\left(\mrm{Sp}(2n)/U(n)\right)$ in Section~\ref{sec:BiquardGauduchon}, see in particular Lemma~\ref{lemma:differenziale_BiquardGauduchon}. We remark again the importance of having an explicit expression of the hyperk\"ahler metric, such as the one provided in~\cite{Biquard_Gauduchon}.

In Section~\ref{sec:complexification} we study the complexification of the HcscK system in the case of an \emph{integrable} deformation of the complex structure. This means that we restrict to considering Higgs terms~$\alpha_0\in\m{A}^{1,0}(T^{0,1}M)$ satisfying~$\diff\alpha_0=0$; in this case~$\alpha_0$ determines a cohomology class~$[\bar{\alpha}_0]\in H^1(T^{1,0}M)$. The main result of Section~\ref{sec:complexification} is that we can complexify the HcscK system by considering~\eqref{eq:HcscK_system} for a fixed complex structure~$J$, K\"ahler class~$[\omega_0]$ and deformation class~$[\alpha_0]\in H^1(T^{0,1}M)$; the variables are instead~$\omega\in[\omega_0]$ and~$\alpha\in[\alpha_0]$ with the additional condition that~$\omega$ and~$\alpha$ should be compatible.

Chapter~\ref{chap:examples} is focused on obtaining some existence results for the HcscK system on curves and surfaces. The equations in dimension~$1$ and~$2$ seem more treatable than in the general case mainly because the eigenvalues of~$\alpha\bar{\alpha}$, and so the morphism~$\psi(\alpha\bar{\alpha})$, can be written in a particularly simple form. For example in complex dimension~$1$ there is only one eigenvalue~$\delta$ that can be expressed equivalently as
\begin{equation*}
\delta=\abs{\mrm{det}(\alpha)}^2=\norm{\alpha}_{g_J}^2
\end{equation*}
while in complex dimension~$2$ we can express the two eigenvalues as the roots of the characteristic polynomial of~$\alpha\bar{\alpha}$
\begin{equation*}
\delta^\pm=\frac{1}{2}\left(\mrm{Tr}\left(\alpha\bar{\alpha}\right)\pm\sqrt{\mrm{Tr}\left(\alpha\bar{\alpha}\right)^2-4\abs{\mrm{det}(\alpha)}^2}\right).
\end{equation*}

In Section~\ref{sec:curve} we consider the HcscK system on a curve. The system essentially becomes trivial for genus~$0$ or~$1$, while for a high genus curve we partially generalize a result of Donaldson~\cite{Donaldson_hyperkahler} and~\cite{Hodge_phd_thesis}, finding a general existence result.
\begin{thm_intro}\label{thm:curve_existence_intro}
Let~$\omega_0$ be the hyperbolic metric on a high-genus compact Riemann surface~$\Sigma$. The HcscK system on~$\Sigma$ admits a set of solutions parametrized by pairs~$(\tau,\beta)$, consisting of a holomorphic quadratic differential~$\tau$ and a holomorphic~$1$-form~$\beta$, with~$\m{C}^{0,\frac{1}{2}}(\omega_0)$-norm bounded by some Sobolev constants of~$\Sigma$.
\end{thm_intro}

The situation for complex surfaces is more complicated. One interesting result is that a surface on which there are no cscK metrics can nonetheless admit solutions to the HcscK system. In other words, an appropriate choice of a Higgs field~$\alpha$ can \emph{stabilize} a previously unstable manifold. We show an example of this phenomenon in Section~\ref{sec:ruled_surface}, by studying the equations on a ruled surface using the explicit description of K\"ahler metrics via momentum profiles of Hwang and Singer~\cite{Hwang_Singer}.
\begin{thm_intro}\label{thm:solution_mm_reale}
Fix a high-genus compact Riemann surface~$\Sigma$, endowed with the hyperbolic metric~$\omega_{\Sigma}$. Let~$M$ be the ruled surface~$M = \bb{P}(\m{O}\oplus T\Sigma)$, with projection~$\pi\!: M \to \Sigma$ and relative hyperplane bundle~$\m{O}(1)$, endowed with the K\"ahler class 
\begin{equation*}
[\omega_m] = [\pi^*\omega_{\Sigma}] + m\, c_1(\m{O}(1)),\, m > 0.
\end{equation*}
Then for all sufficiently small~$m$ the HcscK system~\eqref{eq:HcscK_system} can be solved on~$(M, [\omega_m])$. 
\end{thm_intro}
On the other hand it is well-known that, for all positive~$m$,~$(M,[\omega_m])$ does not admit a cscK metric (see~\cite[\S~$3.3$ and \S~$5.2$]{Szekelyhidi_phd}).

Finally, in Chapter~\ref{chap:symplectic_coords} we will show an interesting feature of the HcscK system in symplectic coordinates, that can be applied to study the equations on abelian or toric manifolds: if we restrict to considering~$\bb{T}^n$-invariant tensors in~\eqref{eq:HcscK_system} then, under a simple change of variables, the HcscK equations can be decoupled. The resulting system is similar to Abreu's equation (c.f.~\cite{Abreu_toric}) for the scalar curvature of K\"ahler metrics on toric manifolds:
\begin{equation}\label{eq:symplectic_HcscK_intro}
\begin{cases}
\left(\xi^{ab}\right)_{,ab}=0\\
\left(\left(1-\xi\,D^2u\,\bar{\xi}\,D^2u\right)^{\frac{1}{2}}D^2u^{-1}\right)^{ab}_{,ab}=-C, 
\end{cases}
\end{equation}
where~$u$ is a local potential of the complex structure (or equivalently of the K\"ahler metric) with positive-definite Hessian~$u_{,ab}=D^2u$,~$\xi$ is a symmetric matrix-valued function such that~$\alpha=\xi\,D^2u$ and~$C$ is a topological constant (essentially the average of the scalar curvature). The matrix-valued functions appearing in~\eqref{eq:symplectic_HcscK_intro} must also satisfy some boundary conditions: in the case of a complex torus, one just has to impose periodicity of~$\xi$ and~$D^2u$, while for a toric manifold the correct boundary conditions are more complicated, analogous to \emph{Guillemin's boundary conditions}. We refer to Section~\ref{sec:symp_boundary_abelian} and Section~\ref{sec:symp_boundary_toric} for the details.

On abelian varieties in particular we will use this expression to describe a variational characterization of the HcscK system, proving a uniqueness result for solutions to the system in dimension~$2$. We also describe a more explicit set of solutions to the HcscK system, under some simplifying assumptions.

On a toric surface instead we find that~\eqref{eq:symplectic_HcscK_intro} has some interesting properties: if the matrix~$\xi$ comes from an integrable first-order deformation~$\alpha$ of the complex structure, then it solves the complex moment map equation~$(\xi^{ij})_{,ij}=0$ if and only if it is identically~$0$, see~Corollary~\ref{cor:toric_integrability}. Also, we show in Corollary~\ref{cor:toric_Futaki} that the real moment map equation can be solved only if the toric surface is~$K$-stable, or equivalently (c.f. Corollary~$1$ in \cite{Donaldson_cscKmetrics_toricsurfaces}), only if the surface admits a cscK metric. This is a first obstruction result to the HcscK system, and might be a first step towards the development of a stability theory for the HcscK system. We will get back to this point in the next section.

\section{Future research and open questions}

This work is just a first study of the Hitchin-cscK system, and many aspects have not been treated completely. The main question that remains open is that of the formal complexification of the equations, that we only gave in an implicit form in Section~\ref{sec:complexification}. Once we have an explicit complexification of the equations there is a natural way (c.f.~\cite{Wang_Futaki}) to define a Futaki-like invariant for the HcscK system, obtain results analogous to Matsushima's criterion and define a generalization of the~$K$-energy for the HcscK system. In~\cite{GarciaFernandez_PHD} it is shown how to obtain these classic results for the cscK problem directly from the formal complexification of the moment map equation. These results would allow us to generalize some of the aspects of the theory of the cscK problem to our system: these would allow us to take a first step in defining a stability condition for triples~$(M,[\omega_0],[\alpha_0])$, generalizing~$K$-stability, to characterize the existence of solutions to the HcscK system.

The results in Chapter~\ref{chap:symplectic_coords} for abelian varieties and toric manifolds seem to go in this direction, even without a precise description of the complexified equations. Indeed we find a functional, generalizing Mabuchi's~$K$-energy, whose Euler-Lagrange equation is the real moment map equation of the HcscK system. The convexity properties of the~\emph{$\textit{HK}$-energy} are not clear, but nonetheless these results give a hint of the interesting features of the HcscK system. For a toric surface we proved that~$K$-stability is a necessary condition to having solutions to the HcscK system; this has been obtained by a simple integration by parts, following the approach of~\cite{Donaldson_stability_toric}. However it is not clear if~$K$-stability is also a sufficient condition for the solvability of the toric HcscK system when we consider a non-zero Higgs term, while it is well-known from \cite{Donaldson_cscKmetrics_toricsurfaces} that~$K$-stability implies the existence of a cscK metric in this context. Evidently, to find the appropriate stability condition for the HcscK system on a toric surface we should consider a different point of view. It is possible that a careful study of the toric~$\textit{HK}$-energy of Lemma~\ref{lemma:toricHKenergy} might point to a possible modification of~$K$-stability that takes into account the presence of a non-zero Higgs term.

Another aspect of the formal complexification of the HcscK system we should consider is that our (implicit) description of the complexification in Section~\ref{sec:complexification} bears on the assumption that the Higgs term~$\alpha$ is an integrable deformation of the complex structure. However, the results in Chapter~\ref{chap:symplectic_coords} seem to indicate that solutions to the complex moment map equation are, in general, \emph{not} integrable. For example, the integrability of the Higgs term is actually an obstruction to having zeroes of the complex moment map on a toric surface, see Corollary~\ref{cor:toric_integrability}. It is therefore important to generalize the infinitesimal complexification of the Hamiltonian action~$\mscr{G}\curvearrowright\cotJ$ to Higgs terms that are not necessarily integrable.

There are some other natural directions along which to study the HcscK system, following the parallel with Hitchin's Higgs bundle equations. In this work we mostly studied the HcscK system from the point of view of the cscK equation, but it would be really interesting to find an analogue of Hitchin's integrable system in our setting.

Recall that in some situations there is a direct connection between solutions to the HYM equation and the cscK equation. Consider a slope-stable vector bundle over a curve~$E\to\m{C}$ of genus~$g\geq 1$, so that by the Hitchin-Kobayashi correspondence there is a solution to the HYM equation. In~\cite{Burns_stability} it is shown that the manifold~$\bb{P}(E)$ then carries a cscK metric; the precise statement can be found in~\cite[Theorem~$1.6$]{Fine_phd}, see also~\cite[Theorem~$A$]{Hong_fibered} for a generalization to higher-dimensional base manifolds. Ross and Thomas~\cite{RossThomas_obstruction} gave a converse to this result. The natural question is to start instead from a slope-stable Higgs bundle~$(E,\phi)$ on a curve and see if one can find solutions to the HcscK system on~$\bb{P}(E)$; a first step would be to see if~$\phi$ induces in a natural way a ``Higgs term''~$\alpha$. We have a first result in this direction in Section~\ref{sec:def_complex_bundle}, but the deformations of the complex structure considered in that Section are not compatible with any K\"ahler form.

It would also be interesting to apply the ideas that we used to derive the HcscK system to other problems in differential geometry. This can probably be done in a variety of situations, since the techniques used to obtain the HcscK system can likely be adapted to many other geometric differential equations coming from K\"ahler reductions. The first problem to examine from this point of view is the existence of K\"ahler-Einstein metrics; we can take as a starting point the Hamiltonian description of the K\"ahler-Einstein equations in~\cite{Donaldson_Ding} and start the hyperk\"ahler construction anew. This is of interest also for studying the HcscK equations, since we expect to obtain a set of equations similar to the HcscK system but with PDEs of lower order. The cscK problem reduces to a Monge-Ampére equation when the K\"ahler class is a multiple of the canonical class, so there might be some algebraic conditions on the Higgs term that allow to obtain a similar result for the HcscK system.

\section*{Acknowledgements}

The research leading to these results has received funding from the European
Research Council under the European Union's Seventh Framework Programme (FP7/2007-2013) /
ERC Grant agreement no. 307119.

First and foremost I would like to thank my advisor, Jacopo Stoppa, for his support and encouragement throughout my journey as a PhD student in SISSA and for sharing his insight on the cscK problem and mathematics in general.

I wish to thank Joel Fine and Julius Ross for reading and reviewing this thesis. They provided valuable suggestions for improving this work and suggested interesting new lines of research for studying the HcscK system. Much of the work in Chapter~\ref{chap:symplectic_coords} was developed while visiting Julius Ross at the University of Illinois at Chicago. I sincerely thank Julius for the many important discussions and the Department of Mathematics at UIC for welcoming and supporting me via the grant NSF~RTG~1246844.

I wish to thank all the people who encouraged me to pursue this project and especially Ruadha\'i Dervan for useful advice and suggestions.

Finally, I would like to thank all the members of the complex geometry group at IGAP, Claudio Arezzo, Zakarias Sj\"ostr\"om Dyrefelt, Samreena and especially Enrico Schlitzer, Veronica Fantini and Annamaria Ortu for sharing the joys and woes of K\"ahler geometry.

\newpage

\renewcommand{\nompreamble}{All the manifolds appearing in this work are compact, orientable and without boundary, unless otherwise specified. The imaginary unit is ``$\I$''; if~$(M,J)$ is a complex manifold of complex dimension~$n$ we use~$i,j,k\dots$ as indices for tensors defined on the underlying real manifold, so~$i,j\dots\in\set{1,2,\ldots,2n}$. Instead we use~$a,b,c,\dots$ as indices ranging from~$1$ to~$n$. We always use the Einstein convention on repeated indices.

Our conventions for the Laplacian are the following:~$\Delta=\mrm{d}\,\mrm{d}^*+\mrm{d}^*\mrm{d}$, and in particular for a function~$\varphi$ we get~$\Delta(\varphi)=-\mrm{div}\,\mrm{grad}(\varphi)$. In complex coordinates we find, for a K\"ahler metric,~$\Delta(\varphi)=-2g^{a\bar{b}}\diff_a\diff_{\bar{b}}\varphi$. The ``complex Laplacian'' is~$\Delta_{\bar{\diff}}=\Delta_{\diff}=\frac{1}{2}\Delta$.

Here is a brief glossary of some of the notation that is used throughout the thesis.}
\thispagestyle{plain}
\renewcommand{\nomname}{Notation and conventions}
\printnomenclature[2cm]


\mainmatter

\chapter{Scalar curvature as a moment map}\label{chap:classicalScalCurv}

\chaptertoc{}

\bigskip

Our general setting is this: fix a compact complex manifold~$M$ of dimension~$n$, a K\"ahler form~$\omega$ on~$M$, and let~$\mscr{J}$ %
\nomenclature[J]{$\mscr{J}$}{space of all almost complex structures compatible with a reference symplectic form}%
be the set of all almost complex structures that are \emph{compatible} with~$\omega$, i.e.
\begin{equation*}
\mscr{J}=\set*{J\tc J^2=-\mathbbm{1},\ \omega(J-,J-)=\omega(-,-)\mbox{ and } \omega(-,J-)>0}.
\end{equation*}
For~$J\in\mscr{J}$ the bilinear form~$\omega(-,J-)$ is a Riemannian metric on~$M$, that will be denoted by~$g_J$. %
\nomenclature[gJ]{$g_J$}{metric defined by the reference symplectic form~$\omega$ and a compatible almost complex structure~$J$,~$g_J(v,w)=\omega(v,Jw)$}%
Notice that if~$J$ is an integrable complex structure,~$(g_J,J,\omega)$ is a K\"ahler triple.

Around any point of~$M$ we can find a coordinate system~$\bm{u}$ such that~$\omega(\bm{u})$ is the canonical~$2$-form~$\sum_i\mrm{d}u^i\wedge\mrm{d}u^{n+i}$ on~$\bb{R}^{2n}$ (in other words,~$\bm{u}$ is a local system of Darboux coordinates for~$\omega$); the matrix-valued function that represents a compatible almost complex structure~$J\in\mscr{J}$ in this coordinate system satisfies
\begin{equation*}
J(\bm{u})^\transpose\begin{pmatrix}0 &\mathbbm{1}\\-\mathbbm{1} &0\end{pmatrix}J(\bm{u})=\begin{pmatrix}0 &\mathbbm{1}\\-\mathbbm{1} &0\end{pmatrix}\ \mbox{ and }\ 
\begin{pmatrix}0 &\mathbbm{1}\\-\mathbbm{1} &0\end{pmatrix}J(\bm{u})>0.
\end{equation*}%
Our first object of study in Section~\ref{sec:matrici} is the set of all~$2n\times 2n$ real matrices that have this property, denoted by~$\m{AC}^+(2n)$. %
\nomenclature[almost complex]{$\m{AC}^+$}{space of all matrices that are compatible with the standard symplectic structure of~$\bb{R}^{2n}$; isomorphic to Siegel's upper half space as~$\mrm{Sp}(2n)$-spaces}%
This gives a ``point-wise description'' of~$\mscr{J}$, and in fact it is easy to see that~$\mscr{J}$ is the space of sections of a bundle over~$M$ with fibres~$\m{AC}^+(2n)$.

The space~$\m{AC}^+(2n)$ carries a K\"ahler structure, so we can equip~$\mscr{J}$ with the structure of an infinite-dimensional K\"ahler manifold, and in particular we will get a symplectic form on~$\mscr{J}$. We then study the action of an infinite-dimensional Lie group on~$\mscr{J}$ that preserves this symplectic form, with the aim of explaining the following classical result, originally proven by Fujiki~\cite{Fujiki_moduli}, and by Donaldson~\cite{Donaldson_scalar} in this degree of generality:
\begin{thm}\label{thm:Donaldson_scalcurv_mm}
For a compact K\"ahler manifold~$(M,\omega)$, the map that sends a compatible almost complex structure~$J$ to the Hermitian scalar curvature~$S(J)$ %
of the Riemannian metric~$g_J$ is, up to a scalar multiple, a moment map for an infinite-dimensional Hamiltonian action.
\end{thm}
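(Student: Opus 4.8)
The plan is to exhibit the group, its Hamiltonian action on~$\mscr{J}$, and the candidate moment map, and then to verify the defining identity of a moment map by a direct curvature computation. The group is~$\mscr{G}=\mrm{Ham}(M,\omega)$, the group of Hamiltonian symplectomorphisms of~$(M,\omega)$, whose Lie algebra I identify with~$\m{C}^\infty_0(M,\bb{R})$ (smooth functions of zero average) under the Poisson bracket. It acts on~$\mscr{J}$ by pushforward,~$\phi\cdot J=\phi_*\circ J\circ\phi_*^{-1}$; since~$\phi$ preserves~$\omega$ this lands back in~$\mscr{J}$, and as~$\mscr{G}$ acts fibrewise by biholomorphic isometries of the pointwise Kähler structure on~$\m{AC}^+(2n)$ it preserves the Kähler form~$\Omega$ of~$\mscr{J}$ from Section~\ref{sec:matrici}. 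Writing a tangent vector at~$J$ as an endomorphism~$A$ of~$TM$ that is~$g_J$-symmetric and anti-commutes with~$J$, this form reads~$\Omega_J(A,B)=c\int_M\mrm{tr}(J\,A\,B)\,\frac{\omega^n}{n!}$ for a fixed constant~$c$. The candidate moment map sends~$J$ to its Hermitian scalar curvature~$S(J)$ (the trace against~$\omega$ of the Ricci form of the canonical Hermitian connection of~$(\omega,J)$, which reduces to the usual scalar curvature when~$J$ is integrable), viewed inside~$\m{C}^\infty_0(M,\bb{R})^*$ through the pairing~$\langle S(J),H\rangle=\int_M S(J)\,H\,\frac{\omega^n}{n!}$.

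With these identifications the identity to prove is the moment map equation
\[
\int_M \big(\mrm{D}S_J(A)\big)\,H\,\frac{\omega^n}{n!}=\Omega_J\!\big(P_J(H),A\big),
\]
for every~$H\in\m{C}^\infty_0(M,\bb{R})$ and every~$A\in T_J\mscr{J}$, where~$P_J(H)=\m{L}_{X_H}J$ is the infinitesimal action of~$H$ (with~$X_H$ the Hamiltonian vector field of~$H$) and~$\mrm{D}S_J(A)$ is the linearisation of~$S$ in the direction~$A$. First I would compute~$P_J(H)$: differentiating the compatibility relations defining~$\mscr{J}$ confirms that~$\m{L}_{X_H}J$ is~$g_J$-symmetric and anti-commutes with~$J$, hence a genuine element of~$T_J\mscr{J}$, and it can be expressed through the~$g_J$-Hessian of~$H$. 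Substituting into the right-hand side turns it into an explicit integral pairing of~$A$ with (derivatives of)~$H$.

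The heart of the argument, and the step I expect to be the main obstacle, is the left-hand side: one needs a workable formula for the variation~$\mrm{D}S_J(A)$ of the Hermitian scalar curvature under a deformation of the almost complex structure. The plan is to differentiate the canonical Hermitian connection of~$(\omega,J)$ along~$A$, trace the resulting variation of the curvature to get the variation of the Ricci form, and contract with~$\omega$; this produces a second-order differential operator~$A\mapsto\mrm{D}S_J(A)$. The decisive move is then an integration by parts: after pairing~$\mrm{D}S_J(A)$ with~$H$ and transferring derivatives onto~$H$, the expression must reorganise into exactly~$\Omega_J(\m{L}_{X_H}J,A)$. What makes this succeed is that the differential operators appearing in the linearised curvature are the formal adjoints of those defining the Hamiltonian action, so the two sides agree after integrating over the closed manifold~$M$, where no boundary terms appear. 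I expect the bookkeeping of these integrations by parts and the careful tracking of the normalising factor relating~$S$ to the trace of the Ricci form to be the most laborious part.

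Finally, equivariance follows from the naturality of scalar curvature under diffeomorphisms: for~$\phi\in\mscr{G}$ one has~$S(\phi\cdot J)=\phi_*\,S(J)$, which is precisely the assertion that~$J\mapsto S(J)$ intertwines the~$\mscr{G}$-action on~$\mscr{J}$ with the coadjoint action on~$\m{C}^\infty_0(M,\bb{R})^*$, namely pullback of functions. The scalar multiple in the statement simply absorbs the normalising constant~$c$ in~$\Omega$ together with the factor relating~$S$ to the trace of the Ricci form; fixing these conventions once pins it down.
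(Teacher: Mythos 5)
Your overall strategy coincides with the paper's (and with Donaldson's original argument, reproduced in Section~\ref{sec:thm:moment_map_proof}): reduce the moment map identity to the statement that the linearisation of the Hermitian scalar curvature and the infinitesimal action are formal adjoints of one another, and obtain equivariance from the naturality~$S(\varphi.J)=(\varphi^{-1})^*S(J)$. The structure is right, but two points in your plan underestimate the non-integrable case, which is the whole point of the theorem as stated for \emph{almost} complex structures. First, for a general~$J\in\mscr{J}$ the infinitesimal action is \emph{not} captured by the~$g_J$-Hessian of~$H$: Lemma~\ref{lemma_P2} gives~$\m{L}_XJ=4\,\mrm{Im}\bigl(\nabla^{0,1}X^{1,0}-\tfrac{1}{4}N_J(X^{0,1},-)\bigr)$, with an explicit Nijenhuis correction. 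Second, before one can ``differentiate the canonical Hermitian connection along~$A$'' one must identify~$\ext{1,0}_JM$ with~$\ext{1,0}_{J'}M$ for~$J'=J+\varepsilon A$; the paper does this with the first-order isometry~$\mu_A(X)=X+\tfrac{\varepsilon}{2}JAX$ of Lemma~\ref{lemma:first_ord_identification}, and the resulting variation of the~$(0,1)$-part of the connection again contains a torsion term~$-\tfrac{1}{4}N\circ\bar{\nu}$, see equation~\eqref{eq:variazione_connessione}. The adjointness you invoke is true, but only because these two Nijenhuis contributions match up, and verifying this requires the almost-K\"ahler identities~$\diff^*=\I[\Lambda,\bdiff]$,~$\bdiff^*=-\I[\Lambda,\diff]$; none of this is visible in the integrable case, where your plan would go through essentially as written. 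So the proposal is a correct skeleton of the paper's proof, with the decisive computation --- precisely the part that distinguishes the general statement from Fujiki's integrable version --- left unexecuted.
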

We will give later a more detailed account of this statement, along with some material that is needed for the theory developed in Chapter~\ref{chap:HyperkahlerReduction}. For now we just remark that if~$J$ is an integrable complex structure then the Hermitian scalar curvature of~$g_J$ is just the usual scalar curvature. This is the context in which Theorem~\ref{thm:Donaldson_scalcurv_mm} was first proven in~\cite{Fujiki_moduli}, and the computation for this case can be found, for example, in~\cite[\S$4$]{Tian_libro} and~\cite{Szekelyhidi_phd}. It is however useful for us to retrace the original proof of the general case in~\cite{Donaldson_scalar} to clearly identify the constant mentioned in~\ref{thm:Donaldson_scalcurv_mm}, since we will later consider a different moment map whose precise expression depends on this computation.

Before studying the space~$\mscr{J}$ in Section~\ref{sec:J} we recall some definitions about actions of Lie groups on symplectic manifolds, and we give a brief account of the connection between Hamiltonian actions and stability conditions. We do not aim to give an introduction to GIT, since there are many excellent references in the literature. We refer to~\cite{Mumford_GIT},~\cite[chapter~$4$]{Moduli_VectorBundles} and~\cite{Schmitt_GIT} for a more in-depth discussion of GIT, and to~\cite{RThomas_GIT} and~\cite{Szekelyhidi_libro} for the relation of GIT with the cscK problem.

\section{Moment maps and stability}\label{sec:stability}

Here we give a very brief description of two different ways to take quotients under the action of a Lie group on a manifold: Marsden-Weinstein reductions, or symplectic reductions, and \textsc{GIT} quotients. Then we will explain the connection between the two points of view, giving some motivation for studying these moment map descriptions of well-known problems in complex differential geometry.

\subsection{Hamiltonian actions}\label{sec:hamiltonian}

\begin{definition}
Let~$( M ,\omega)$ be a symplectic manifold, and let~$h$ be a real smooth function on~$M$. The \emph{Hamiltonian vector field} defined by~$h$ and~$\omega$ is the unique vector field~$X_h$ %
 such that
\begin{equation*}
\mrm{d}{h}=-X_h\lrcorner\omega.
\end{equation*}
\end{definition}

Sometimes we will also say that~$h$ is a \emph{Hamiltonian potential} or a \emph{Hamiltonian function} for~$X_h$ with respect to~$\omega$. Notice that any Hamiltonian potential for~$X_h$ differs from~$h$ by a constant. From Cartan's formula we can readily see that Hamiltonian vector fields preserve the symplectic form.
  
If~$(M,J,\omega)$ is a K\"ahler manifold and~$g=g_J$ is the Riemannian metric defined by~$\omega$ and~$g$, then~$X_h=J\mrm{grad}(h)$; indeed, for every vector field~$Y$
\begin{equation*}
\mrm{d}h(Y)=-\omega(X_h,Y)=-g(JX_h,Y).
\end{equation*}

\paragraph{The group of Hamiltonian symplectomorphisms.} The flow~$\Phi_{X_h}^t$ %
\nomenclature[Flow]{$\Phi^t_X$}{flow of a vector field~$X$}%
of the Hamiltonian vector field of~$h$ is called called the \emph{Hamiltonian flow} of~$h$ on~$ M$.
\begin{lemma}
Fix~$h\in\m{C}^\infty_0$, and let~$\psi$ be a symplectomorphism of~$( M ,\omega)$. Then:
\begin{enumerate}
\item for all~$t$,~$\Phi^t_{X_h}$ is a symplectomorphism;
\item~$\psi^*X_h=X_{h\circ\psi}$.
\end{enumerate}
\end{lemma}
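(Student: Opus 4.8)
The plan is to treat the two parts separately, both by direct computation from the defining relation~$\mrm{d}h=-X_h\lrcorner\omega$, using Cartan's formula for the first part and the naturality of the interior product under diffeomorphisms for the second.

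For the first part I would show that~$X_h$ preserves~$\omega$ infinitesimally and then integrate; this is precisely the remark made just above the lemma. Using Cartan's formula together with the fact that~$\omega$ is closed,
\begin{equation*}
\m{L}_{X_h}\omega=\mrm{d}\left(X_h\lrcorner\omega\right)+X_h\lrcorner\mrm{d}\omega=\mrm{d}\left(-\mrm{d}h\right)+0=0.
\end{equation*}
Since the flow satisfies~$\frac{\mrm{d}}{\mrm{d}t}\left(\Phi^t_{X_h}\right)^*\omega=\left(\Phi^t_{X_h}\right)^*\m{L}_{X_h}\omega=0$ and~$\left(\Phi^0_{X_h}\right)^*\omega=\omega$, I conclude~$\left(\Phi^t_{X_h}\right)^*\omega=\omega$ for all~$t$, i.e.\ each~$\Phi^t_{X_h}$ is a symplectomorphism.

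For the second part I would interpret~$\psi^*X_h$ as the pullback vector field~$\left(\mrm{d}\psi\right)^{-1}\circ X_h\circ\psi$ and exploit the uniqueness built into the definition of a Hamiltonian vector field: it suffices to verify that~$\psi^*X_h$ satisfies the defining equation for the potential~$h\circ\psi$. The key ingredient is the naturality of contraction under a diffeomorphism,~$\psi^*\left(X\lrcorner\beta\right)=\left(\psi^*X\right)\lrcorner\left(\psi^*\beta\right)$. Applying this, using~$\psi^*\omega=\omega$, and recalling~$\mrm{d}(h\circ\psi)=\psi^*\mrm{d}h$, I compute
\begin{equation*}
-\mrm{d}(h\circ\psi)=\psi^*\left(-\mrm{d}h\right)=\psi^*\left(X_h\lrcorner\omega\right)=\left(\psi^*X_h\right)\lrcorner\left(\psi^*\omega\right)=\left(\psi^*X_h\right)\lrcorner\omega.
\end{equation*}
Since~$\omega$ is nondegenerate, this identifies~$\psi^*X_h$ with the unique Hamiltonian vector field of~$h\circ\psi$, giving~$\psi^*X_h=X_{h\circ\psi}$.

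Neither part presents a genuine obstacle; the content is essentially bookkeeping. The only points requiring care are fixing the signs consistently with the convention~$\mrm{d}h=-X_h\lrcorner\omega$, making precise that~$\psi^*X_h$ denotes the pullback of a \emph{vector field} (so that~$\psi^*$ is contravariant and the naturality identity reads as above), and invoking the naturality of the interior product under a diffeomorphism, which is exactly where the symplectomorphism hypothesis~$\psi^*\omega=\omega$ enters. The nondegeneracy of~$\omega$ is what legitimises the uniqueness step in the second part.
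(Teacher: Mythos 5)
Your proof is correct and is exactly the standard argument the paper has in mind: the paper states this lemma without proof, remarking only that ``from Cartan's formula we can readily see that Hamiltonian vector fields preserve the symplectic form'', which is precisely your part~$1$, and your part~$2$ is the usual uniqueness argument via naturality of contraction and nondegeneracy of~$\omega$. Nothing to add.
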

Any smooth path~$f:\bb{R}\to\mrm{Diff}( M )$ such that~$f_0=\mathbbm{1}_M$ defines a \emph{time-dependent} vector field~$X_t$ as follows:
\begin{equation*}
(X_t)(p)=\frac{\mrm{d}}{\mrm{d}s}\Big|_{s=t}f_s(f^{-1}_t(p)).
\end{equation*}
Notice that if~$f$ is a group homomorphism, i.e.~$f_s\circ f_t=f_{s+t}$, then~$X_t=X_0$ and~$f_t$ is the flow of~$X_0$.
\begin{definition}
Fix~$\varphi\in\mrm{Diff}_0( M )$, let~$f:\bb{R}\to\mrm{Diff}( M )$ be a smooth path between~$\mathbbm{1}_M$ and~$\varphi$ and let~$X_t$ be the time-dependent vector field defined by~$f$. We say that~$f$ is a \emph{Hamiltonian isotopy} if there is a smooth family~$\set*{h_t\in\m{C}^\infty_0\tc t\in\bb{R}}$ such that for all~$t$,~$X_t$ is the Hamiltonian vector field of~$h_t$. The \emph{group of Hamiltonian symplectomorphisms}~$\mrm{Ham}( M ,\omega)$ %
\nomenclature[Hamiltonian]{$\mrm{Ham}(M,\omega)$}{group of Hamiltonian symplectomorphisms of~$(M,\omega)$, also called the group of exact symplectomorphisms}%
of~$( M ,\omega)$ is the group of all~$\varphi\in\mrm{Diff}_0( M )$ that can be connected to~$\mathbbm{1}_M$ by a Hamiltonian isotopy.
\end{definition}
We list the properties of this group that we will need in the next sections; for proofs of these results we refer to~\cite[chapter~$10$]{McDuff_Salamon}. 
\begin{prop}\label{prop:Ham_properties}
Let~$( M ,J,\omega)$ be a K\"ahler manifold. Then
\begin{enumerate}
\item~$\mrm{Ham}( M ,\omega)$ is a Lie subgroup of~$\mrm{Symp}( M ,\omega)$;
\item the Lie algebra of~$\mrm{Ham}( M ,\omega)$ is the Lie algebra of Hamiltonian vector fields on~$ M$, which is isomorphic to~$\m{C}^\infty_0( M ,\bb{R})$ equipped with the Poisson bracket;
\item if~$H^{1}( M ,\bb{R})=0$ then~$\mrm{Ham}( M ,\omega)$ is just the connected component of the identity of~$\mrm{Symp}( M ,\omega)$.
\end{enumerate}
\end{prop}
We recall that the \emph{Poisson bracket} mentioned in Proposition~\ref{prop:Ham_properties} is defined, for two functions~$f,h\in\m{C}^\infty(M)$, as
\begin{equation*}
\{f,h\}=\omega(X_f,X_h).
\end{equation*}
\begin{rmk}
The results in Proposition~\ref{prop:Ham_properties} are not trivial; for example the fact that~$\mrm{Ham}( M ,\omega)$ is a Lie subgroup of~$\mrm{Symp}( M ,\omega)$ bears on our assumption that~$( M ,\omega)$ is K\"ahler; in general this is an open conjecture (if we give~$\mrm{Symp}$ the~$\m{C}^1$-topology), see for example~\cite[chapter~$14$, \S~$2$]{Polterovich_sympdiff} and~\cite[Conjecture~$10.21$]{McDuff_Salamon}.
\end{rmk}

\paragraph{Hamiltonian actions.} We are interested in some left actions~$\sigma:G\times M\to M$ of Lie groups~$G$ with additional geometric properties, such as \emph{symplectic actions}, i.e. actions that preserve a symplectic form. We will usually denote left actions by~$(g,x)\mapsto g.x$, when there is no risk of confusion. Given such an action we define for~$a\in\f{g}$ the \emph{fundamental vector field}~$\hat{a}$ %
\nomenclature[infinitesimal action]{$\hat{a}$}{vector field describing the infinitesimal action of~$a\in\f{g}$}%
on~$M$ as
\begin{equation*}
\hat{a}_x =\frac{\mrm{d}}{\mrm{d}t}\Bigr|_{t=0}\left(\mrm{exp}(-ta).x\right)\in T_xM.
\end{equation*}
The vector field~$\hat{a}$ is also called the \emph{infinitesimal action} of~$a$ on~$M$. The minus sign in the definition is due to the fact that, with this convention, the map
\begin{equation*}
\begin{split}
\f{g}&\to\Gamma(TM)\\
a&\mapsto\hat{a}
\end{split}
\end{equation*}
is a Lie algebra homomorphism (see~\cite[Proposition~$3.8$, Appendix~$5$]{LibermannMarle}).
\begin{definition}
Let~$G\curvearrowright(M,\omega)$ be a symplectic action. We say that the action is \emph{Hamiltonian} if there is a \emph{moment map}
\begin{equation*}
\mu:M\to\f{g}^*
\end{equation*}
that is equivariant with respect to~$G\curvearrowright M$ and the co-adjoint action of~$G$ on~$\f{g}^*$, and such that~$\langle\mu,a\rangle$ is a Hamiltonian function of the vector field~$\hat{a}$ on~$M$. In a more concise way:
\begin{align*}
& \forall g\in G,\,\forall x\in M,\,\forall a\in\f{g}\quad\quad \langle\mu_{g.x},a\rangle=\langle\mu_x,\mrm{Ad}_{g^{-1}}(a)\rangle;&\\
& \forall g\in G,\,\forall a\in\f{g}\quad\quad\quad\quad\quad\quad \mrm{d}\left(x\mapsto\langle\mu_x,a\rangle\right)=-\hat{a}\lrcorner\omega.&
\end{align*}
\end{definition}

An important feature of Hamiltonian actions is that in many cases they allow us to build a \emph{symplectic quotient} of the action. Assume for simplicity that~$K$ is a compact Lie group, and consider a Hamiltonian action~$K\curvearrowright M$ with moment map~$\mu$. The equivariance condition implies that the~$K$-action preserves the level sets of~$\mu$. From the definition of a moment map it is clear that for all~$x\in M$, the kernel of~$\mrm{d}\mu_x:T_x M \to\f{k}^*$ is the set
\begin{equation*}
 K (x)^\omega:=\set*{v\in T_x M \tc \omega(v,w)=0\mbox{ for all }w\in T_x( K (x))}.
\end{equation*}
Moreover~$\xi\in\f{k}^*$ is a regular value of~$\mu$ if and only if for all~$x\in\mu^{-1}(\xi)$ and all~$a\in\f{k}$,~$\hat{a}_x\not=0$; in other words,~$\xi$ is a regular value if and only if the stabilizer~$ K _x$ of~$x$ is discrete (hence finite). If the stabilizer~$ K _\xi$ of~$\xi\in\f{k}^*$ under the coadjoint action acts freely and properly on~$\mu^{-1}(\xi)$ then the level set~$\mu^{-1}(\xi)$ is a submanifold of~$ M$ and the quotient~$\mu^{-1}(\xi)/ K _{\xi}$ is a symplectic manifold, whose symplectic form is induced by~$\omega$.

This construction is known as the \emph{Marsden-Weinstein reduction}, or the \emph{symplectic quotient} of the action. For the details we refer to~\cite{Marsden_Weinstein} and to~\cite[\S~$5.4$]{McDuff_Salamon}; here we just note that in particular~$ K _0= K$, and the general idea to define the symplectic structure on~$\mu^{-1}(0)/ K$ is to identify~$T_x\left(\mu^{-1}(0)/ K \right)$ with a complement of~$T_x( K (x))$ in~$ K (x)^\omega$ in such a way that~$\omega$ restricted to this space is still a symplectic form.

We mention also that if~$(M,J,\omega)$ is a K\"ahler manifold and the symplectic action of~$ K$ is by biholomorphisms (i.e. the action also preserves~$J$) then the action preserves the metric~$g$ induced by~$J$ and~$\omega$, and the symplectic quotient also inherits the structure of a K\"ahler manifold.

\subsection{Elements of \textsc{GIT}}

Let~$ M \subseteq\bb{CP}^N$ be a projective variety, and let~$ G \subseteq\mrm{GL}(N+1,\bb{C})$ be a linear group whose action on~$\bb{CP}^N$ preserves~$ M$. Geometric Invariant Theory (\textsc{GIT}, from now on) describes a way to construct another algebraic variety that is, in some sense, the quotient of~$ M$ by the action of~$ G$. In this section we briefly explain some of the ideas coming from \textsc{GIT} that motivate the interest in formulating differential-geometric equations as zeroes of a moment map.

Hilbert's Theorem suggests that we should restrict attention to actions of \emph{reductive} groups, Lie groups that are the complexification of a real compact Lie group.
\begin{definition}
A \emph{good quotient} for the action~$G\curvearrowright M$ is a pair~$(Y,\pi)$ where~$Y$ is an algebraic variety and~$\pi$ is a~$G$-invariant affine morphism~$\pi:M \to Y$ such that
\begin{enumerate}
\item for every open set~$U\subseteq Y$, the induced morphism of rings~$\m{O}_Y(U)\to\m{O}_M(\pi^{-1}(U))$ is an isomorphism between~$\m{O}_Y(U)$ and the~$G$-invariant subring of~$\m{O}_M(\pi^{-1}(U))$;
\item if~$Z\subseteq M$ is~$G$-invariant and closed, also~$\pi(Z)$ is closed;
\item if~$Z_1,Z_2\subseteq V$ are~$G$-invariant, closed and disjoint, then also~$\pi(Z_1)$ and~$\pi(Z_2)$ are closed and disjoint.
\end{enumerate}
A good quotient is \emph{geometric} if each fibre~$M_y$ is a single~$G$-orbit in~$ M$.
\end{definition}
Given an action of a reductive group~$G$ on a projective variety~$M$, it is not clear how to find a geometric quotient, if any exist at all. It turns out that to construct such quotients we should first remove from~$M$ some points that are not ``well-behaved'' under the action of~$G$. For some insight into the motivation of the following definitions it may be useful to consult~\cite[Chapter~$5$]{Szekelyhidi_libro} and the Introduction in~\cite{Schmitt_GIT}.

\begin{definition}
Let~$M\subseteq\bb{CP}^N$ be a projective variety, and let~$G$ be a reductive subgroup of~$\mrm{GL}(N+1)$ whose natural action on~$\bb{CP}^N$ preserves~$M$. For~$p\in M$, denote by~$\hat{p}$ a preimage of~$p$ for the quotient map~$\bb{C}^{N+1}\setminus\set{0}\to\bb{CP}^N$. Then we say that~$p$ under the action~$ G \curvearrowright M$ is
\begin{enumerate}
\item \emph{semistable} if~$0$ does not belong to the closure of~$ G .\hat{p}$ in~$\bb{C}^{N+1}$;
\item \emph{polystable} if the orbit~$ G .\hat{p}$ is closed in~$\bb{C}^{N+1}$;
\item \emph{stable} if it is polystable and moreover the stabilizer of~$p$ in~$ G$ is zero-dimensional.
\end{enumerate}
We denote the sets of semistable, polystable and stable points of~$M$ respectively by~$M ^{ss}$, $M ^{ps}$ and~$M ^s$.
\end{definition}
\begin{rmk}
This definition differs slightly from the one in~\cite{Mumford_GIT}; what we call ``stable'' here would be, in the original terminology, ``properly stable'' (c.f. Definition~$1.7$ and Definition~$1.8$ in~\cite{Mumford_GIT}). However, it is by now quite standard to use this terminology.
\end{rmk}

Notice that the definition does not depend on the choice of the lift~$\hat{p}$ of~$p$, since all the orbits of these lifts differ just by a scaling factor. It is not hard to show that~$p$ is semistable if and only if there is a~$f\in\bb{C}[ M ]^{ G }$, homogeneous and non-constant, such that~$f(p)\not=0$; in other words, the semistable points are those that can actually be ``seen'' by~$ G$-invariant functions on~$ M$. This is, in fact, the more common definition of semistable points. See~\cite[Proposition~$2.2$]{Mumford_GIT},~\cite[Proposition~$2.1$]{Moduli_VectorBundles} or~\cite[Proposition~$5.14$]{Szekelyhidi_libro}.

The following is the main existence result for algebraic quotients, which also (a posteriori) sheds some light on why it is necessary to consider the semistable and stable loci of the action. For a proof see~\cite[Theorem~$1.4.3.8$]{Schmitt_GIT} and~\cite[Theorem~$1.7$]{Moduli_VectorBundles}.
\begin{thm}
Let~$ M$,~$ G$ be as above. Then there is a good quotient~$\left( M \sslash G ,\pi\right)$ of~$ M ^{ss}$ for the induced~$ G$-action, and this quotient is a projective variety. Moreover, there is an open subset~$U$ of~$ M \sslash G$ such that~$(U,\pi_{\restriction M ^s})$ is a geometric quotient of~$ M ^s$.
\end{thm}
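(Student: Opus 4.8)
The plan is to build the quotient algebraically from the ring of invariants, reducing the global statement to a local affine model. Let $R=\bigoplus_{d\geq 0}R_d$ be the homogeneous coordinate ring of $M\subseteq\bb{CP}^N$, on which the linear $G$-action induces a graded action, and let $R^G=\bigoplus_{d\geq 0}R^G_d$ be the invariant subring. The crucial input is that, since $G$ is reductive, $R^G$ is a finitely generated graded $\bb{C}$-algebra; I would therefore set
\begin{equation*}
M\sslash G:=\mrm{Proj}\left(R^G\right),
\end{equation*}
which is automatically a projective variety. The graded inclusion $R^G\hookrightarrow R$ induces a rational map $M\dashrightarrow M\sslash G$, and the first point to check is that its domain of definition is exactly $M^{ss}$: a point lies in the domain precisely when some homogeneous non-constant invariant is non-zero there, which by the remark following the definition of semistability characterizes the semistable points. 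This produces the morphism $\pi\colon M^{ss}\to M\sslash G$.

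Next I would establish the local affine structure. For a homogeneous $f\in R^G_d$ with $d>0$, the basic open set $(M\sslash G)_f=\mrm{Spec}\bigl((R^G)_{(f)}\bigr)$ pulls back under $\pi$ to $M^{ss}_f=\mrm{Spec}\bigl(R_{(f)}\bigr)$, where the subscript $(f)$ denotes the degree-zero part of the localization. These affine opens cover $M\sslash G$, and on each of them $\pi$ is the morphism $\mrm{Spec}(A)\to\mrm{Spec}(A^G)$ with $A=R_{(f)}$; one also needs that invariants commute with this localization, i.e.\ $(R_{(f)})^G=(R^G)_{(f)}$, which follows because the Reynolds operator is $R^G$-linear. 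Thus the whole statement reduces to verifying that, for an affine variety $\mrm{Spec}(A)$ with reductive $G$ acting, the natural map to $\mrm{Spec}(A^G)$ is a good quotient, and that these good-quotient properties glue across charts.

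The heart of the argument, and the step I expect to be the main obstacle, is the affine case, which rests entirely on the \emph{Reynolds operator} $E\colon A\to A^G$, the $G$-equivariant projection onto invariants that exists because representations of a reductive group are completely reducible. Finite generation of $A^G$ is the first consequence (Hilbert--Nagata); the separation properties are the second. To show that disjoint closed $G$-invariant subsets $Z_1,Z_2$ have disjoint closed images, I would take their ideals $I_1,I_2$, use $I_1+I_2=A$ to write $1=f_1+f_2$ with $f_i\in I_i$, and apply $E$: since each $I_i$ is $G$-stable, $E(f_i)\in I_i\cap A^G$, and these two invariant functions sum to $1$, so $E(f_1)$ vanishes on $\pi(Z_1)$ and equals $1$ on $\pi(Z_2)$, separating the images. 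Surjectivity of $E$ onto $A^G$ gives property~(1), that regular functions on the quotient are exactly the invariant functions upstairs, and the same circle of ideas yields closedness of images of invariant closed sets (properties~(2) and~(3)). The delicate aspect is that every one of these steps genuinely uses reductivity, and that the resulting construction is independent of the chosen $f$ so that the charts glue to a good quotient in the sense of the definition.

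Finally, for the stable locus I would restrict $\pi$ to $M^s$ and show its image $U\subseteq M\sslash G$ is open with $M^s=\pi^{-1}(U)$, so that $(U,\pi_{\restriction M^s})$ is a geometric quotient. Here I would use that polystable points have closed orbits in $\bb{C}^{N+1}$, so that over $U$ distinct orbits are closed, invariant and disjoint, hence separated by invariants by the argument just given; this forces each fibre of $\pi$ over $U$ to be a single $G$-orbit, which is precisely the geometric condition. Openness of $U$ and saturation of $M^s$ follow from the fact that the stable locus is cut out by non-vanishing of invariants together with the finiteness of the stabilizers, completing the construction of the geometric quotient on $M^s$.
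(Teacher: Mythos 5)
The paper does not actually prove this theorem: it states it as background and refers to~\cite[Theorem~$1.4.3.8$]{Schmitt_GIT} and~\cite[Theorem~$1.7$]{Moduli_VectorBundles} for the proof. Your proposal reconstructs precisely the standard argument given in those references --- set~$M\sslash G=\mrm{Proj}(R^G)$ with~$R^G$ finitely generated by Hilbert--Nagata, identify~$M^{ss}$ as the domain of the induced rational map, reduce to the affine charts~$\mrm{Spec}(R_{(f)})\to\mrm{Spec}((R^G)_{(f)})$, and run the Reynolds-operator separation argument there --- so it is correct and consistent with what the paper intends. Two places deserve a little more care than your sketch gives them, though neither is a genuine gap: surjectivity of~$\pi$ on each affine chart (needed to deduce property~(2), closedness of images of invariant closed sets, from the separation property) again uses the Reynolds operator, via~$\mathfrak{m}A\neq A$ for every maximal ideal~$\mathfrak{m}\subseteq A^G$; and for the geometric quotient on~$M^s$ you need that stable orbits are closed \emph{in the semistable locus of the relevant affine chart}, which you should extract from the closedness of the lifted orbit in~$\bb{C}^{N+1}$ rather than just assert.
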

The problem of finding a good quotient for an action~$ G \curvearrowright M$ is then reduced to finding a way to identify (semi-)stable points of the action. To check (semi-)stability one can use the \emph{Hilbert-Mumford criterion} (Theorem~$2.1$ in~\cite{Mumford_GIT}): this characterizes the stability of~$p\in M$ in terms of numerical properties of the orbits of~$p$ under~$1$-dimensional subgroups of~$G$.

\subsection{The Kempf-Ness Theorem}

Symplectic quotients given by the Marsden-Weinstein reduction and algebraic quotients obtained via Geometric Invariant Theory are closely related, and this relation is the main reason why we are interested in formulating the cscK problem in the framework of Hamiltonian actions. In this section we give an account of a result by Kempf and Ness that explains the relation between the quotients~$\mu^{-1}(0)/ K$ and~$ M \sslash G$, for a compact Lie group~$K$ and the corresponding reductive group~$G=K^c$.

Consider a complex reductive Lie group~$ G \leq\mrm{GL}(N+1,\bb{C})$ acting in the usual way on~$\bb{CP}^N$. If~$ M$ is a smooth projective subvariety of~$\bb{CP}^N$ that is~$G$-invariant, we can form the GIT quotient~$ M ^s\sslash G$ that was described above. On the other hand since~$ G$ is reductive we know that~$ G$ is the complexification of a maximal compact real Lie group~$ K \leq G$; if~$( M ,\omega)$ is symplectic and~$ K \curvearrowright M$ is Hamiltonian with moment map~$\mu$, we can form the symplectic quotient~$\mu^{-1}(0)/ K$. The Kempf-Ness theorem tells us that the two orbit spaces are essentially the same.
\begin{thm}[\cite{Kempf_Ness}]
With the previous notation, any~$p\in M$ is in the semistable locus of the action~$ G \curvearrowright M$ if and only if the closure of~$ G .p$ intersects~$\mu^{-1}(0)$. In particular~$\mu^{-1}(0)\subseteq M ^{ss}$, and this inclusion induces a homeomorphism between~$\mu^{-1}(0)/ K$ and~$ M ^s\sslash G$.
\end{thm}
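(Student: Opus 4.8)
The plan is to control the Hermitian norm along the orbits of~$G$ by means of the \emph{Kempf-Ness function}. Fix~$p\in M$ with lift~$\hat{p}\in\bb{C}^{N+1}\setminus\set{0}$ and define~$\Psi_{\hat{p}}:G\to\bb{R}$ by~$\Psi_{\hat{p}}(g)=\log\norm{g\cdot\hat{p}}^2$, where~$\norm{-}$ is a~$K$-invariant Hermitian norm on~$\bb{C}^{N+1}$. Since~$K$ preserves this norm,~$\Psi_{\hat{p}}$ is right~$K$-invariant and descends to a function on the symmetric space~$G/K$. First I would compute its first variation: differentiating~$t\mapsto\Psi_{\hat{p}}(\exp(t\,\I a)\,g)$ at~$t=0$ for~$a\in\f{k}$, and using that the Fubini-Study moment map for~$K\curvearrowright\bb{CP}^N$ is~$\langle\mu_p,a\rangle=\norm{\hat{p}}^{-2}\langle\I a\cdot\hat{p},\hat{p}\rangle$ up to a universal constant, one finds that the differential of~$\Psi_{\hat{p}}$ at~$g$ vanishes exactly when~$\mu_{g.p}=0$. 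Thus zeroes of the moment map along the orbit~$G.p$ correspond precisely to critical points of~$\Psi_{\hat{p}}$ on~$G/K$.

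The geometric heart of the argument is the \emph{convexity} of~$\Psi_{\hat{p}}$ along the geodesics~$t\mapsto\exp(t\,\I a)\cdot gK$ of~$G/K$. To see this I would diagonalise the Hermitian operator~$\I a$, writing~$g\hat{p}=\sum_j c_j e_j$ in an orthonormal eigenbasis with eigenvalues~$\lambda_j$, so that~$\norm{\exp(t\,\I a)g\hat{p}}^2=\sum_j\abs{c_j}^2 e^{2\lambda_j t}$; since the logarithm of a positive combination of exponentials of affine functions is convex, the function~$t\mapsto\Psi_{\hat{p}}(\exp(t\,\I a)g)$ is convex, and strictly so unless all the~$\lambda_j$ with~$c_j\neq0$ coincide, that is, unless~$\I a$ fixes~$[g\hat{p}]$ projectively and so lies in the Lie algebra of the stabiliser. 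Convexity immediately gives that every critical point of~$\Psi_{\hat{p}}$ is a global minimum, and strict convexity transverse to the stabiliser shows the minimum is unique modulo the stabiliser; translated back to~$M$, this means that~$\mu^{-1}(0)$ meets each orbit~$G.p$ in at most one~$K$-orbit.

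With these two ingredients I would prove the stated equivalence. By definition~$p$ is semistable iff~$0\notin\overline{G.\hat{p}}$, i.e. iff~$\inf_{g\in G}\norm{g\hat{p}}>0$, equivalently iff~$\Psi_{\hat{p}}$ is bounded below on~$G/K$. If~$\overline{G.p}$ meets~$\mu^{-1}(0)$ at some~$q$, then the lift~$\hat{q}$ is a critical point, hence a global minimum, of the corresponding Kempf-Ness function, so the norm stays bounded away from~$0$ along the orbit and~$p$ is semistable. For the converse, assuming semistability I would show that~$\Psi_{\hat{p}}$ is \emph{proper} on~$G/K$: using the Cartan~$KAK$ decomposition to reduce to one-parameter subgroups and invoking the Hilbert-Mumford criterion, any sequence escaping to infinity along which~$\Psi_{\hat{p}}$ stays bounded would produce a one-parameter subgroup driving~$\hat{p}$ towards~$0$, contradicting semistability. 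A proper convex function attains its minimum, and this minimiser is a zero of~$\mu$ lying in~$\overline{G.p}$. Finally, the inclusion~$\mu^{-1}(0)\hookrightarrow M^{ss}$ is~$K$-equivariant, and the uniqueness of the~$K$-orbit of minima shows that it induces a continuous bijection~$\mu^{-1}(0)/K\to M^{s}\sslash G$; a compactness argument on the projective variety then upgrades this to a homeomorphism.

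The step I expect to be the main obstacle is the properness of the Kempf-Ness function under the semistability hypothesis, since this is where the global structure of the~$G$-action genuinely enters: one must rule out flat directions of the convex function at infinity, and it is precisely here that the Hilbert-Mumford criterion and the reduction to one-parameter subgroups do the essential work of converting the analytic statement~$\inf\norm{g\hat{p}}>0$ into the \emph{attainment} of a minimum. A secondary but delicate point is the bookkeeping needed to distinguish the stable locus~$M^s$ (closed orbits, finite stabilisers) from the merely semistable locus, which is what yields the sharp homeomorphism statement rather than a bijection on points alone.
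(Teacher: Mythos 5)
The paper states this result purely as a citation of Kempf--Ness and supplies no proof of its own, so there is no in-paper argument to compare against; I will assess your sketch on its own terms. The overall strategy is the standard one and its first two steps are correct: the Kempf--Ness functional $\Psi_{\hat{p}}(g)=\log\norm{g\hat{p}}^2$ descends to $G/K$, its critical points are exactly the points of the orbit where $\mu$ vanishes, and it is convex along the geodesics $t\mapsto\exp(t\,\I a)gK$, with strict convexity except in stabiliser directions.

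The genuine gap is in the ``only if'' direction, where you claim that semistability of $p$ forces $\Psi_{\hat{p}}$ to be \emph{proper} on $G/K$. This is false for points that are semistable but not polystable. Take $G=\bb{C}^*$ acting on $\bb{C}^3$ with weights $(1,-1,0)$ and $\hat{p}=(1,0,1)$: the invariant coordinate $z$ is nonzero at $\hat{p}$, so $p$ is semistable, yet $\norm{t\cdot\hat{p}}^2=\abs{t}^2+1$ tends to its infimum without attaining it, so $\Psi_{\hat{p}}$ is bounded below but not proper, and $\mu^{-1}(0)$ meets only the orbit \emph{closure} (at $[0:0:1]$), never the orbit itself. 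Properness of $\Psi_{\hat{p}}$ is equivalent to closedness of $G\cdot\hat{p}$ (polystability), not to semistability, and your appeal to Hilbert--Mumford cannot repair this: a one-parameter subgroup whose Hilbert--Mumford weight is exactly zero is perfectly compatible with semistability and produces exactly the flat direction at infinity you are trying to exclude. The statement only requires that the \emph{closure} of $G.p$ meet $\mu^{-1}(0)$, and the correct argument exploits this: minimise the norm over the closed set $\overline{G\cdot\hat{p}}\subseteq\bb{C}^{N+1}$, where a minimiser $\hat{q}$ exists because the norm is proper on $\bb{C}^{N+1}$ and is nonzero by semistability; $\hat{q}$ is then a critical point of the norm restricted to its own (necessarily closed) orbit, whence $\mu(q)=0$ and $q\in\overline{G.p}$. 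A secondary slip occurs in the ``if'' direction: from a zero $q\in\overline{G.p}\cap\mu^{-1}(0)$ you conclude that ``the norm stays bounded away from $0$ along the orbit'', but the orbit you control is that of $q$, not of $p$; to transfer semistability from $q$ back to $p$ one still needs the observation that the unstable locus is closed and $G$-invariant (it is the common zero set of the nonconstant homogeneous invariants), so the orbit closure of an unstable point cannot contain a semistable one.
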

This result inspires a suggestive principle, that proved to be very fruitful both in complex differential geometry and algebraic geometry: \emph{let~$G\curvearrowright X$ be a Hamiltonian action of the Lie group~$G$ on the symplectic complex manifold~$(X,\omega)$. Then there should be some notion of stability for the complexified action of~$G$ on~$X$, such that, for any point~$p\in X$, being a zero of the moment map is equivalent to belonging to the~$G^c$-orbit of a stable point.}

This general principle proved to be quite effective for studying famous problems in differential geometry, as we already mentioned in the Introduction. The example we are most interested in is the Hermitian Yang-Mills equation: the Hitchin-Kobayashi correspondence (see~\cite{Lubke_KobayashiHitchin} for a beautiful exposition of the problem) and the results of Atiyah and Bott~\cite{AtiyahBott_YangMills} show that an infinite-dimensional version of this approach can be successful. 
The formulation by Donaldson and Fujiki of the cscK problem as a Hamiltonian reduction together with the ``Kempf-Ness principle'', suggest that there should be a way to characterize the existence of cscK metrics in terms of some algebraic stability condition.

\section{Some matrix spaces}\label{sec:matrici}

We go back to the pointwise description of the space~$\mscr{J}$ of compatible almost complex structures. Consider the symplectic vector space~$(\bb{R}^{2n},\Omega_0)$, where~$\Omega_0$ %
\nomenclature[Omega]{$\Omega_0$}{standard symplectic form on~$\bb{R}^{2n}$}%
is the canonical symplectic form, i.e. the matrix 
\begin{equation*}
\Omega_0=\begin{pmatrix}0 & \mathbbm{1}_n \\ -\mathbbm{1}_n & 0\end{pmatrix}.
\end{equation*}
The \emph{symplectic group}~$\mrm{Sp}(2n)$ is defined as
\begin{equation*}
\mrm{Sp}(2n)=\set*{A\in\mrm{GL}(2n,\bb{R})\tc A^{\intercal}\Omega_0 A=\Omega_0}.
\end{equation*}
This is a connected real Lie group, and we are particularly interested in some actions of~$\mrm{Sp}(2n)$. Notice that~$\mrm{Sp}(2n)$ is a subgroup of~$\mrm{Sl}(2n,\bb{R})$, since any symplectic matrix preserves the standard volume form on~$\bb{R}^{2n}$.

By the usual identification of~$\bb{C}^n$ with~$\bb{R}^{2n}$ as real vector spaces, we consider~$\mrm{GL}(n,\bb{C})$ as the subgroup of~$\mrm{GL}(2n,\bb{R})$ consisting of all the real invertible~$2n\times 2n$ matrices that commute with the standard complex structure on~$\bb{R}^{2n}$ (given by~$-\Omega_0$). The groups~$\mrm{Sp}(2n)$,~$\mrm{SO}(2n)$ and~$\mrm{U}(n)$ are tied together by the well-known \emph{two out of three property}:
\begin{equation*}
\mrm{Sp}(2n)\cap \mrm{SO}(2n)=\mrm{Sp}(2n)\cap \mrm{U}(n)=\mrm{SO}(2n)\cap \mrm{U}(n)=\mrm{U}(n).
\end{equation*}
The coset space~$\mrm{Sp}(2n)/\mrm{U}(n)$ will play a fundamental role in what follows. It carries a natural K\"ahler metric, coming from its identification with \emph{Siegel's upper half space}~$\f{H}$, and at the same time it can be identified naturally with the space~$\m{AC}^+$ of linear complex structures compatible with a linear symplectic form.

\subsection{Siegel's upper half space}\label{sec:Siegel}

Siegel's upper half space is a generalization of the well-known hyperbolic plane, and these two spaces share many interesting geometric properties. For example they are both geodesically complete, and their groups of biholomorphisms can be easily described. Some references for these properties are the original article~\cite{Siegel_halfspace}, the memoir~\cite{DallaVolta_matrici} and the paper~\cite{Maass}. Here we just collect some of the properties we need for the next sections.
\begin{definition}
Siegel's upper half space~$\f{H}(n)$ %
\nomenclature[Siegel]{$\f{H}(n)$}{Siegel's upper half space, consisting of~$n\times n$ complex symmetric matrices with positive-definite imaginary part}%
is the set of all symmetric~$n\times n$ complex matrices with positive-definite imaginary part.
\end{definition}
We will often denote Siegel's upper half space by~$\f{H}$, omitting the dimension when it will not cause confusion. Notice that~$\f{H}$ inherits a complex structure from the inclusion in~$M_{n\times n}(\bb{C})$, given simply by multiplication by~$\I$. It will be more notationally convenient, however, to consider on~$\f{H}$ the \emph{conjugate} complex structure, i.e. we will define the complex structure on~$\f{H}$ to be the multiplication by~$-\I$. The reason for this choice will become clear when we will use it to define a complex structure on~$\m{AC}^+$, see Proposition~\ref{prop:metrica_AC+_Siegel}.

On~$\f{H}$ there is also a holomorphic action of~$\mrm{Sp}(2n)$ by an analogue of the well-known \emph{M\"obius transformations}. For~$P=\begin{pmatrix}A & B\\ C & D\end{pmatrix}\in\mrm{Sp}(2n)$ and~$Z\in\f{H}$, let
\begin{equation*}
P.Z:=(AZ+B)(CZ+D)^{-1}.
\end{equation*}
This action generalizes the usual~$\mrm{Sl}(2,\bb{R})$-action on Poincaré's upper half plane. To check that this is a well-defined left action on~$\f{H}$ it is useful to write~$P.Z$ as~$WU^{-1}$, for~$\begin{pmatrix} W\\ U\end{pmatrix}=\begin{pmatrix}A & B\\ C&D\end{pmatrix}\begin{pmatrix} Z\\\mathbbm{1}\end{pmatrix}$.
\begin{prop}[Theorem~$1$ in~\cite{Siegel_halfspace}]
The action of~$\mrm{Sp}(2n)$ on~$\f{H}$ is transitive. Moreover, every holomorphic bijection~$\f{H}\to\f{H}$ is a M\"obius transformation.
\end{prop}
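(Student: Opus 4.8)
The plan is to prove both assertions separately. For transitivity, I would use the homogeneous-space strategy: it suffices to show that every $Z \in \mathfrak{H}(n)$ lies in the $\mathrm{Sp}(2n)$-orbit of the distinguished point $Z_0 = \I\,\mathbbm{1}_n$. First I would write a general symmetric matrix with positive-definite imaginary part as $Z = X + \I Y$, with $X,Y$ real symmetric and $Y > 0$. Using the hint from the excerpt, I would track the action through the vector $\begin{pmatrix} W \\ U \end{pmatrix} = P\begin{pmatrix} Z \\ \mathbbm{1} \end{pmatrix}$, so that $P.Z = WU^{-1}$. The key computation is to exhibit two explicit families of symplectic matrices: translations $\begin{pmatrix} \mathbbm{1} & X \\ 0 & \mathbbm{1} \end{pmatrix}$, which send $\I\,\mathbbm{1}$ to $X + \I\,\mathbbm{1}$ and account for the real part, and ``dilations'' $\begin{pmatrix} R & 0 \\ 0 & (R^\transpose)^{-1} \end{pmatrix}$ for $R \in \mathrm{GL}(n,\bb{R})$, which send $\I\,\mathbbm{1}$ to $\I\, R R^\transpose$. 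Since any positive-definite symmetric $Y$ can be written as $RR^\transpose$ (Cholesky or the square root), composing a dilation with a translation carries $Z_0$ to an arbitrary $Z$. The only genuine verification is that these matrices do lie in $\mathrm{Sp}(2n)$ and that the M\"obius formula indeed produces the claimed images; both are direct from the block condition $A^\transpose\Omega_0 A = \Omega_0$ and are routine.

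For the second assertion — that every holomorphic bijection of $\mathfrak{H}$ is a M\"obius transformation — the approach I would take is a rigidity argument reducing to the stabilizer. Given a holomorphic bijection $f\colon \mathfrak{H}\to\mathfrak{H}$, transitivity (just proved) lets me pre-compose with a M\"obius transformation so that the resulting map $\tilde f$ fixes the base point $Z_0 = \I\,\mathbbm{1}$. It then suffices to show that any such base-point-fixing biholomorphism is itself induced by an element of the stabilizer $\mathrm{U}(n)$ of $Z_0$ in $\mathrm{Sp}(2n)$. The natural route is to pass to a bounded model: the Cayley transform $Z \mapsto (Z - \I\,\mathbbm{1})(Z + \I\,\mathbbm{1})^{-1}$ identifies $\mathfrak{H}$ biholomorphically with the bounded symmetric domain of symmetric matrices $W$ with $\mathbbm{1} - W^*W > 0$, sending $Z_0$ to the origin $0$. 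On a bounded circled domain, a biholomorphism fixing the origin is determined by its differential there — this is the content of H.\,Cartan's uniqueness theorem — so $\tilde f$ is linear in the bounded coordinates, and one checks that the linear automorphisms of this domain are exactly the transformations $W \mapsto \bar U W U^\transpose$ for $U \in \mathrm{U}(n)$, which correspond under Cayley to the $\mathrm{U}(n)$-part of the M\"obius action.

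The main obstacle is the second half, and specifically the step from ``holomorphic self-bijection fixing a point'' to ``linear.'' Transitivity is elementary linear algebra, but the surjectivity-onto-M\"obius statement requires a genuine rigidity input. The cleanest way to supply it is Cartan's theorem on bounded domains, which forces the base-point-fixing map to be linear once we move to the bounded realization; the remaining identification of the linear automorphism group with $\mathrm{U}(n)$ is then a finite computation on the quadratic form $\mathbbm{1} - W^*W$. I would therefore invest most of the effort in setting up the Cayley transform carefully and invoking Cartan's uniqueness theorem, since the excerpt cites Siegel's original paper and the memoir \cite{DallaVolta_matrici} precisely for these classical facts. It is worth noting that, because all of this is standard material collected from \cite{Siegel_halfspace}, a paper at this level would most likely just cite the result rather than reprove it; the sketch above is the route one would follow if a self-contained argument were wanted.
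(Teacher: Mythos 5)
The paper does not actually prove this Proposition: it is stated as Theorem~$1$ of Siegel's paper and simply cited, so there is no internal argument to compare against. Your sketch is nonetheless correct and is the standard route. For transitivity, the two families you exhibit, the translations $\begin{pmatrix}\mathbbm{1} & X\\ 0 & \mathbbm{1}\end{pmatrix}$ with $X$ symmetric and the dilations $\begin{pmatrix}R & 0\\ 0 & (R^\transpose)^{-1}\end{pmatrix}$, are indeed symplectic and send $\I\,\mathbbm{1}$ to $X+\I\,\mathbbm{1}$ and $\I\,RR^\transpose$ respectively, and writing $Y=RR^\transpose$ finishes the argument; this is exactly Siegel's generating set. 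For the second assertion, the one place where your wording is slightly off is the appeal to ``Cartan's uniqueness theorem'': that theorem only says a self-map fixing a point with identity differential is the identity. What you actually need is Cartan's \emph{linearity} theorem for bounded circled domains, which is deduced from the uniqueness theorem by conjugating $\tilde{f}$ with the circle action $W\mapsto e^{\I\theta}W$ (under which the Siegel disc is invariant) and comparing with $\mathrm{d}\tilde{f}_0$. Once that is in place, the identification of the linear automorphisms of $\set{W=W^\transpose \tc \mathbbm{1}-\bar{W}W>0}$ with the $\mathrm{U}(n)$-action is classical but is a genuine (if finite) verification, not merely a remark; one way is to observe that such a linear map must preserve the Shilov boundary, consisting of the symmetric unitary matrices. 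With that adjustment your outline is a complete and faithful reconstruction of the cited classical result.
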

On~$\f{H}$ there is also a~$\mrm{Sp}(2n)$-invariant K\"ahler structure; the metric tensor looks like
\begin{equation*}
\mrm{d}s^2_{X+\I Y}=\mrm{Tr}\left(Y^{-1}\mrm{d}Z\,Y^{-1}\mrm{d}\overline{Z}\right)
\end{equation*}
where~$\mrm{d}Z$ and~$\mrm{d}\overline{Z}$ are the matrix of differentials~$(\mrm{d}z_{ab})_{1\leq a,b\leq n}$ and its conjugate. We refer to~\cite{Siegel_halfspace} for the details. It is not difficult to see that this metric has a local potential, of the form
\begin{equation*}
\frac{\diff}{\diff z\indices{^a_b}}\frac{\diff}{\diff\bar{z}\indices{^c_d}}\log\mrm{det}(Y)
\end{equation*}
see for example~\cite[\S~$5$]{DallaVolta_matrici}. Then this is a K\"ahler metric, so~$\f{H}$ is a homogeneous K\"ahler manifold.

The matrix~$\begin{pmatrix}A & B\\ C & D\end{pmatrix}\in\mrm{Sp}(2n)$ stabilizes~$\I\mathbbm{1}$ if and only if~$\I A+B=\I D-C$, i.e.~$B+C=0$ and~$A=D$, so the stabilizer of~$\I\mathbbm{1}$ is
\begin{equation*}
\mrm{Sp}(2n)\cap\mrm{GL}(n,\bb{C})=\mrm{U}(n)
\end{equation*}
and we find that~$\mrm{Sp}(2n)/\mrm{U}(n)\cong\f{H}$. 

\subsubsection{Linear almost complex structures}
Let~$J\in\mrm{Sp}(2n)$ be a linear almost complex structure preserving~$\Omega_0$. Then the product~$\Omega_0J$ is a non-degenerate symmetric matrix, defining a bilinear form~$\beta_J$. We are interested in the set of all almost complex structures~$J\in\mrm{Sp}(2n)$ such that~$\beta_J$ is positive-definite,
\begin{equation*}
\m{AC}^+(2n):=\set*{J\in\mrm{Sp}(2n)\tc J^2=-\mathbbm{1},\ \beta_J>0}.
\end{equation*}
Notice that the matrix~$-\Omega_0$ is an element of~$\m{AC}^+(2n)$, and~$\beta_{-\Omega_0}$ is just the usual Euclidean product.
\begin{lemma}\label{lemma:stabilizzatore}
The action by conjugation of~$\mrm{Sp}(2n)$ on~$\m{AC}(2n)$ is transitive. Moreover, the stabilizer of any~$J\in\m{AC}(2n)$ is~$\mrm{Sp}(2n)\cap\mrm{SO}(\beta_J)$.
\end{lemma}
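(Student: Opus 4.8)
The plan is to reduce both assertions to a single normal-form statement: every $J\in\m{AC}^+(2n)$ is $\mrm{Sp}(2n)$-conjugate to the standard structure $J_0=-\Omega_0$. Transitivity then follows by composition, and the stabilizer is identified by a short bilinear-algebra computation. Throughout I use that, by hypothesis, $\beta_J(v,w)=\Omega_0(v,Jw)$ is a positive-definite inner product, that $J$ is $\beta_J$-orthogonal (indeed $\beta_J(Jv,Jw)=\Omega_0(Jv,J^2w)=\beta_J(w,v)=\beta_J(v,w)$, using $J\in\mrm{Sp}(2n)$ and $J^2=-\mathbbm{1}$), and the pointwise identity $\Omega_0(v,w)=-\beta_J(v,Jw)$ coming from $J^{-1}=-J$.

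For transitivity I would build a basis adapted to the triple $(\Omega_0,J,\beta_J)$ by an inductive Gram--Schmidt procedure. Choose a $\beta_J$-unit vector $e_1$; then $\beta_J(e_1,Je_1)=\Omega_0(e_1,J^2e_1)=-\Omega_0(e_1,e_1)=0$ and $\beta_J(Je_1,Je_1)=\beta_J(e_1,e_1)=1$, so $(e_1,Je_1)$ is $\beta_J$-orthonormal. The $\beta_J$-orthogonal complement of $\mrm{span}(e_1,Je_1)$ is $J$-invariant and $\Omega_0$-nondegenerate, so iterating produces a $\beta_J$-orthonormal basis $(e_1,\dots,e_n,Je_1,\dots,Je_n)$. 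Using $\Omega_0(v,w)=-\beta_J(v,Jw)$ one checks directly that in this basis $\Omega_0$ is the standard form and $J$ acts as $J_0$; hence the change-of-basis matrix $P$ lies in $\mrm{Sp}(2n)$ and conjugates $J_0$ to $J$. Applying this to two given structures and composing yields the desired symplectic conjugacy $J\mapsto J'$.

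For the stabilizer, fix $J$ and note $P\in\mrm{Sp}(2n)$ fixes $J$ iff $PJP^{-1}=J$, i.e. $PJ=JP$. If $P$ commutes with $J$ then $\beta_J(Pv,Pw)=\Omega_0(Pv,JPw)=\Omega_0(Pv,PJw)=\Omega_0(v,Jw)=\beta_J(v,w)$, so $P\in O(\beta_J)$; conversely, if $P\in\mrm{Sp}(2n)$ preserves $\beta_J$, then $\Omega_0(v,Jw)=\beta_J(Pv,Pw)=\Omega_0(Pv,JPw)=\Omega_0(v,P^{-1}JPw)$ for all $v$, and nondegeneracy of $\Omega_0$ forces $P^{-1}JP=J$. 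Thus the stabilizer equals $\mrm{Sp}(2n)\cap O(\beta_J)$; since every symplectic matrix has determinant $1$ (as $\mrm{Sp}(2n)\subseteq\mrm{Sl}(2n,\bb{R})$, recalled above), this intersection automatically sits inside $\mrm{SO}(\beta_J)$, giving the stated answer.

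The only genuine work is the inductive orthonormalization, and in particular checking at each stage that the $\beta_J$-orthogonal complement of the span built so far remains both $J$-invariant and $\Omega_0$-nondegenerate; I expect this to be the main, albeit routine, obstacle. I would also flag that conjugation preserves the signature of $\beta_J$, since $\beta_{PJP^{-1}}$ is congruent to $\beta_J$ via $P^{-1}$ and Sylvester's law applies; this is exactly why transitivity can hold only on the positive-definite locus $\m{AC}^+(2n)$ and not on all complex structures compatible with $\Omega_0$.
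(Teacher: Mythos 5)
Your proof is correct. Both halves check out: the adapted-basis construction does produce a $\beta_J$-orthonormal basis $(e_1,\dots,e_n,Je_1,\dots,Je_n)$ in which $\Omega_0$ is standard and $J$ becomes $-\Omega_0$ (the inductive step is fine, since $J$-invariance of the $\beta_J$-complement follows from $\beta_J(J-,J-)=\beta_J$, and $\Omega_0$-nondegeneracy from $\Omega_0(v,-Jv)=-\beta_J(v,v)\neq 0$), and the stabilizer computation $PJ=JP\iff P\in O(\beta_J)$ together with $\det P=1$ is exactly right. You are also right to flag that the statement only holds on the positive-definite locus $\m{AC}^+(2n)$ — conjugation preserves the signature of $\beta_J$, so the ``$\m{AC}(2n)$'' in the statement should be read as $\m{AC}^+(2n)$.

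Where you diverge from the paper is in the transitivity argument. The paper proves transitivity by exhibiting an explicit conjugating element, namely $P_J=(\Omega_0 J)^{1/2}$, the positive square root of the positive-definite symmetric matrix $\Omega_0 J$; one checks directly that this matrix is symplectic and conjugates $J$ to $-\Omega_0$. Your Gram--Schmidt construction is more elementary and makes the geometric content (a compatible triple admits a unitary basis) transparent, but it only produces \emph{some} symplectic conjugator, depending on choices. The paper's square-root formula buys a \emph{canonical} choice of $P_J$ depending smoothly on $J$, and this is not cosmetic: the map $\sigma(J)=(\Omega_0 J)^{-1/2}\,\mrm{U}(n)$ built from it is what defines the identification $\m{AC}^+\cong\mrm{Sp}(2n)/\mrm{U}(n)\cong\f{H}$ used throughout Section~2.2, and its differential at $-\Omega_0$ is computed explicitly there. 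So your argument suffices for the lemma as stated, but if you wanted to feed into the rest of the paper you would still need the explicit $P_J$. The stabilizer half of your argument is essentially the paper's.
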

In particular the stabilizer of~$-\Omega_0$ is~$\mrm{Sp}(2n)\cap\mrm{O}(2n)=\mrm{U}(n)$. Moreover, for any~$J\in\m{AC}^+(2n)$ there is some~$P\in\mrm{Sp}(2n)$ which conjugates~$J$ to~$-\Omega_0$; a possible choice of~$P$ is given by
\begin{equation*}
P_J =(\Omega_0 J)^{1/2}.
\end{equation*}
Lemma~\ref{lemma:stabilizzatore} tells us that we can identify~$\m{AC}^+(2n)$ with the quotient
\begin{equation*}
\mrm{Sp}(2n)/\left(\mrm{SO}(2n)\cap\mrm{Sp}(2n)\right)\cong\mrm{Sp}(2n)/\mrm{U}(n).
\end{equation*}
Then in fact~$\m{AC}^+$ and~$\f{H}$ are isomorphic as~$\mrm{Sp}(2n)$-spaces. In this way we can induce on~$\m{AC}^+$ geometric structures coming from~$\f{H}$, in particular a complex structure and a Riemannian metric.

\subsection{K\"ahler geometry of~$\m{AC}^+$}\label{sec:mAC+}

Let~$\phi:\m{AC}^+\to\f{H}$ be the diffeomorphism given by composing the two identifications of~$\m{AC}^+$ and~$\f{H}$ with~$\mrm{Sp}(2n)/\mrm{U}(n)$. These identifications are defined by fixing the reference points~$-\Omega_0\in\m{AC}^+$ and~$\I\mathbbm{1}\in\f{H}$, so that~$\phi$ is the composition
\begin{center}
\begin{tabular}{c c c c c}
$\m{AC}^+$ &~$\xrightarrow{\sigma}$ &~$\mrm{Sp}(2n)/\mrm{U}(n)$ &~$\xrightarrow{\chi}$ &~$\f{H}$\\
$J$ &~$\mapsto$ &~$(\Omega_0J)^{-1/2}\mrm{U}(n)$ &~$\mapsto$ &~$(\Omega_0J)^{-1/2}.(\I\mathbbm{1})$
\end{tabular}
\end{center}
and~$\phi$ is a smooth isomorphism of~$\mrm{Sp}(2n)$-spaces, i.e. it is a diffeomorphism that commutes with the~$\mrm{Sp}(2n)$-actions. Using this identification of the two spaces we obtain a K\"ahler structure on~$\m{AC}^+$.

The differential of~$\phi$ at the point~$-\Omega_0\in\m{AC}^+$ is quite easy to compute directly, by composing the differentials of~$\chi$ and~$\sigma$:
\begin{equation*}
\mrm{d}\sigma_{-\Omega_0}(A)=-\frac{1}{2}\Omega_0A
\end{equation*}
\begin{equation*}
\mrm{d}\chi_{\sigma(-\Omega_0)}\left(\mrm{d}\sigma_{-\Omega_0}(A)\right)=
-\frac{1}{2}\left(\I (\Omega_0A)_{11}+(\Omega_0A)_{12}\right)+\frac{\I}{2}\left(\I (\Omega_0A)_{21}+(\Omega_0A)_{22}\right)
\end{equation*}
so that we find
\begin{equation*}
\mrm{d}\phi_{-\Omega_0}(A)=\frac{1}{2}\left(A_{11}-A_{22}-\I\left(A_{21}+A_{12}\right)\right).
\end{equation*}
Notice that
\begin{equation*}
\mrm{d}\phi_{-\Omega_0}(-\Omega_0A)=-\I\,\mrm{d}\phi_{-\Omega_0}(A)
\end{equation*}
so the pull-back of the complex structure of~$\f{H}$ to~$T_{-\Omega_0}\m{AC}^+$ is multiplication by~$-\Omega_0$. To see what the complex structure is on the rest of~$\m{AC}^+$ we can use the~$\mrm{Sp}(2n)$-action on~$\m{AC}^+$, since~$\phi$ is a~$\mrm{Sp}(2n)$-isomorphism.

Fix~$J\in\m{AC}^+$, and let~$f:I\mapsto(\Omega_0J)^{\frac{1}{2}}I(\Omega_0J)^{-\frac{1}{2}}$ be the conjugation by an element of~$\mrm{Sp}(2n)$ that sends~$J$ to~$-\Omega_0$. Then
\begin{equation*}
\mrm{d}f_J(A)=(\Omega_0J)^{\frac{1}{2}}A(\Omega_0J)^{-\frac{1}{2}}
\end{equation*}
and so we have, for the complex structure
\begin{equation*}
\begin{split}
-\Omega_0\mrm{d}f_J(A)=&-\Omega_0(\Omega_0J)^{\frac{1}{2}}A(\Omega_0J)^{-\frac{1}{2}}=\Omega_0(\Omega_0J)^{\frac{1}{2}}JJA(\Omega_0J)^{-\frac{1}{2}}=\\
=&-\Omega_0\Omega_0(\Omega_0J)^{\frac{1}{2}}JA(\Omega_0J)^{-\frac{1}{2}}=\mrm{d}f_J(JA)
\end{split}
\end{equation*}

Similarly, we can pull back the metric tensor at~$-\Omega_0\in\m{AC}^+$:
\begin{equation*}
\mrm{d}s^2_{\I\mathbbm{1}}(\mrm{d}\phi_{-\Omega_0}(A),\mrm{d}\phi_{-\Omega_0}(B))=\mrm{Tr}\left(A_{11}B_{11}+A_{12}B_{12}\right)=\frac{1}{2}\mrm{Tr}(AB).
\end{equation*}
Where we have used that any~$A\in T_{-\Omega_0}\m{AC}^+$ is written, in matrix-block notation, as~$A=\begin{pmatrix}A_{11} & A_{12}\\ A_{12} &-A_{22}\end{pmatrix}$. Now we can pull back this metric to the tangent space at any~$J\in\m{AC}^+$, obtaining
\begin{equation*}
\left(\phi^*\mrm{d}s^2\right)_{J}(A,B)=\frac{1}{2}\mrm{Tr}\left((\Omega_0J)^{\frac{1}{2}}AB(\Omega_0J)^{-\frac{1}{2}}\right)=\frac{1}{2}\mrm{Tr}(AB).
\end{equation*}
\begin{prop}\label{prop:metrica_AC+_Siegel}
Equip~$\m{AC}^+$ with the K\"ahler structure pulled back from Siegel's upper half space. Then the complex structure on~$T_J\m{AC}^+$ is given by
\begin{equation*}
A\mapsto JA
\end{equation*}
while the metric is given by
\begin{equation*}
\left\langle A,B\right\rangle=\frac{1}{2}\mrm{Tr}(AB).
\end{equation*}  
\end{prop}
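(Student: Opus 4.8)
The strategy is to exploit the $\mrm{Sp}(2n)$-equivariance of $\phi$ together with the fact that both the metric and the complex structure on $\f{H}$ are $\mrm{Sp}(2n)$-invariant. Since $\phi$ intertwines the two actions, the tensors it pulls back to $\m{AC}^+$ are invariant under the conjugation action of $\mrm{Sp}(2n)$. By Lemma~\ref{lemma:stabilizzatore} this action is transitive, so the whole statement reduces to verifying the two formulas at the single reference point $J=-\Omega_0$ and then transporting them to an arbitrary $J\in\m{AC}^+$. The calculations needed at the reference point are precisely the ones carried out above, so the proof amounts to organizing them around this equivariance principle.

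At $J=-\Omega_0$ the result is immediate from the explicit expression of $\mrm{d}\phi_{-\Omega_0}$. Recalling that the complex structure on $\f{H}$ was chosen to be multiplication by $-\I$, the identity $\mrm{d}\phi_{-\Omega_0}(-\Omega_0 A)=-\I\,\mrm{d}\phi_{-\Omega_0}(A)$ says exactly that the pulled-back complex structure sends $A\mapsto-\Omega_0 A$, i.e. $A\mapsto JA$ for $J=-\Omega_0$. Similarly, evaluating $\mrm{d}s^2_{\I\mathbbm{1}}$ on $\mrm{d}\phi_{-\Omega_0}(A)$ and using the block form of a tangent vector $A\in T_{-\Omega_0}\m{AC}^+$ gives $\langle A,B\rangle=\tfrac12\mrm{Tr}(AB)$.

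To propagate to a general $J$ I would use the conjugation $f_J\colon I\mapsto(\Omega_0 J)^{1/2}I(\Omega_0 J)^{-1/2}$ by $P_J$, which lies in $\mrm{Sp}(2n)$ and sends $J$ to $-\Omega_0$; its differential is again a conjugation, $\mrm{d}(f_J)_J(A)=(\Omega_0 J)^{1/2}A(\Omega_0 J)^{-1/2}$. Invariance of the complex structure $\bb{J}$ reads $\bb{J}_{-\Omega_0}\circ\mrm{d}(f_J)_J=\mrm{d}(f_J)_J\circ\bb{J}_J$; combining this with the identity $-\Omega_0\,\mrm{d}(f_J)_J(A)=\mrm{d}(f_J)_J(JA)$ computed above and the invertibility of $\mrm{d}(f_J)_J$ yields $\bb{J}_J(A)=JA$. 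For the metric, invariance together with the cyclicity of the trace gives $\langle A,B\rangle_J=\tfrac12\mrm{Tr}\big((\Omega_0 J)^{1/2}AB(\Omega_0 J)^{-1/2}\big)=\tfrac12\mrm{Tr}(AB)$, exactly as displayed.

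The parts requiring care are structural rather than computational. First, one should check that $A\mapsto JA$ really is an almost complex structure on $T_J\m{AC}^+$: since tangent vectors anticommute with $J$, one has $J(JA)=J^2A=-A$, so the map squares to $-\mathbbm{1}$, and a short verification shows $JA$ again anticommutes with $J$ and hence stays in $T_J\m{AC}^+$. Second, the transport argument relies on $(\Omega_0 J)^{1/2}$ being well-defined and symplectic, which holds because $\beta_J=\Omega_0 J$ is symmetric positive-definite for $J\in\m{AC}^+$; this is the genuine content behind the reduction to the reference point, and it has essentially been secured already in the discussion of $P_J$ preceding the proposition.
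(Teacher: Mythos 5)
Your proposal is correct and follows essentially the same route as the paper: compute $\mrm{d}\phi$ at the reference point $-\Omega_0$ to identify the complex structure and metric there, then transport to an arbitrary $J$ via the symplectic conjugation $I\mapsto(\Omega_0 J)^{1/2}I(\Omega_0 J)^{-1/2}$, using invariance and cyclicity of the trace. The additional checks you flag (that $A\mapsto JA$ squares to $-\mathbbm{1}$ and preserves $T_J\m{AC}^+$, and that $(\Omega_0 J)^{1/2}$ is well-defined and symplectic) are exactly the points the paper relies on implicitly.
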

In Chapter~\ref{chap:HyperkahlerReduction} we will also need an explicit expression for the curvature of its K\"ahler metric. Notice that it is enough to compute the curvature at the point~$-\Omega_0$, since the~$\mrm{Sp}(2n)$-action on~$\m{AC}^+$ is by isometries.
\begin{prop}\label{prop:curvatura_AC}
Consider~$\m{AC}^+(2n)$ with the K\"ahler metric induced by its identification with~$\f{H}(n)$. The curvature of this metric at the point~$-\Omega_0$ is given by
\begin{equation*}
R_{-\Omega_0}(A,B)(C)=-\frac{1}{4}\Big[[A,B],C\Big].
\end{equation*}
\end{prop}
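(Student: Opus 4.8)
The plan is to exploit the fact, established in Lemma~\ref{lemma:stabilizzatore}, that $\m{AC}^+(2n)$ is the Riemannian symmetric space $\mrm{Sp}(2n)/\mrm{U}(n)$, and to read off the curvature from the Lie-algebraic data at the base point $-\Omega_0$ rather than computing Christoffel symbols by hand. The geodesic symmetry at $-\Omega_0$ is conjugation by $-\Omega_0$, and since $(-\Omega_0)^{-1}=\Omega_0$ its differential is the Cartan involution $\theta(\xi)=-\Omega_0\,\xi\,\Omega_0$ on $\f{sp}(2n)$. First I would record the resulting decomposition $\f{sp}(2n)=\f{u}(n)\oplus\f{p}$, where $\f{k}=\f{u}(n)$ is the $+1$-eigenspace (matrices commuting with $\Omega_0$) and $\f{p}$ is the $-1$-eigenspace. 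A short computation using the relation $\xi^\transpose\Omega_0=-\Omega_0\xi$ defining $\f{sp}(2n)$ shows that $\f{p}$ consists exactly of the \emph{symmetric} matrices anticommuting with $\Omega_0$, so that as a subspace of matrices $\f{p}$ coincides with $T_{-\Omega_0}\m{AC}^+$ computed in Section~\ref{sec:Siegel}. One also checks the symmetric-pair relation $[\f{p},\f{p}]\subseteq\f{k}$: a bracket of two elements of $\f{p}$ commutes with $\Omega_0$, since each factor contributes one anticommutation.

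Next I would invoke the standard formula for the curvature of a symmetric space: under the identification of $T_{-\Omega_0}\m{AC}^+$ with $\f{p}$ by fundamental vector fields, the Levi--Civita connection of any invariant metric agrees with the canonical connection, and the curvature is $R(X,Y)Z=-[[X,Y],Z]$ for $X,Y,Z\in\f{p}$ (see, e.g., Helgason or Kobayashi--Nomizu). A useful point is that this formula is insensitive to the normalization of the invariant metric --- in particular to our choice $\langle A,B\rangle=\tfrac12\mrm{Tr}(AB)$ --- because the curvature of the canonical connection is determined by the bracket alone; the metric enters only through the identification of algebra elements with tangent vectors.

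The one genuine subtlety, and the source of the factor $\tfrac14$, is that this identification is \emph{not} the identity. Since $\mrm{Sp}(2n)$ acts by conjugation, the fundamental vector field of $\xi\in\f{p}$ at $-\Omega_0$ is the commutator $[\xi,\Omega_0]$, which by $\Omega_0\xi=-\xi\Omega_0$ equals $2\Omega_0\xi$ up to the sign fixed by the convention for $\hat{a}$; call this identification $\iota(W)=2\Omega_0 W$. Thus a tangent vector $A$ corresponds to $X=-\tfrac12\Omega_0 A\in\f{p}$, the overall sign being immaterial since only the square of this constant survives. The final step is to transport $R(X,Y)Z=-[[X,Y],Z]$ through $\iota$: writing $X=-\tfrac12\Omega_0 A$, $Y=-\tfrac12\Omega_0 B$, $Z=-\tfrac12\Omega_0 C$ and using $\Omega_0 A=-A\Omega_0$ with $\Omega_0^2=-\mathbbm{1}$, the products collapse so that $[\Omega_0 A,\Omega_0 B]=[A,B]$ and hence $[X,Y]=\tfrac14[A,B]$; since $[A,B]$ commutes with $\Omega_0$ the outer bracket reassembles, and after applying $\iota$ once more the powers of $\Omega_0$ combine through $\Omega_0^2=-\mathbbm{1}$ to give $R(A,B)C=-\tfrac14[[A,B],C]$. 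The main obstacle is precisely this bookkeeping of the identification constant --- tracking where the factors of $2$, of $\Omega_0$, and the sign conventions for the infinitesimal action go; no individual step is hard, but the constant comes out correctly only if all are handled consistently. As a fallback one could compute $R$ directly from the invariant metric via the Koszul formula adapted to the homogeneous structure, but this is considerably more laborious, and the symmetric-space shortcut is much cleaner.
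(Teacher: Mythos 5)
Your proof is correct and follows essentially the same route as the paper: both read the curvature off the symmetric-space structure of $\mrm{Sp}(2n)/\mrm{U}(n)$ via the Lie-algebraic formula $R(X,Y)Z=-[[X,Y],Z]$ on $\f{p}\cong\f{r}$ and then transport it through the identification $A\mapsto-\tfrac{1}{2}\Omega_0A$, with the same bookkeeping producing the factor $\tfrac{1}{4}$. The only cosmetic difference is that the paper obtains the Lie-algebraic formula from the naturally reductive curvature theorem of Kobayashi--Nomizu together with the observation $[\f{r},\f{r}]\subseteq\f{u}(n)$, and explicitly checks that the invariant pairing $2\,\mrm{Tr}(UV^\transpose)$ reproduces the Siegel metric, whereas you invoke the symmetric-pair formula directly and sidestep the metric check by noting that the curvature of the canonical connection is independent of the choice of invariant metric.
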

Proposition~\ref{prop:curvatura_AC} is best explained by the theory of reductive spaces, using the symmetric space structure of~$\m{AC}^+\cong\mrm{Sp}(2n)/U(n)$. This is the subject of the next section.

\subsubsection{Curvature of~$\m{AC}^+$}

\begin{definition}
Let~$G$ be a Lie group,~$H\leq G$ a compact subgroup, and consider the space~$X=G/H$. We say that~$X$ is \emph{reductive} if there is a subspace~$\f{r}\subseteq\f{g}$ such that~$\f{g}=\f{h}\oplus\f{r}$, and~$\mrm{Ad}(H)(\f{r})\subseteq\f{r}$.
\end{definition}
For~$G$,~$H$ and~$X$ as in this definition we let~$o\in X$ be the coset~$eH$, where~$e\in G$ denotes the identity element. We can naturally identify~$T_oX$ with the vector space quotient~$\f{g}/\f{h}$, using the differential of the projection~$\pi:G\to G/H$; if moreover~$X$ is reductive, we obtain a natural Lie algebra structure on~$T_oX\cong\f{r}$ from that of~$\f{g}$.

\begin{definition}
With the previous notation, assume that~$X$ is reductive and that~$\f{g}=\f{h}\oplus\f{r}$ is the reductive decomposition of~$\f{g}$. We say that~$X$ is \emph{naturally reductive} if~$\f{g}$ has an~$\mrm{Ad}(H)$-invariant inner product~$\langle-,-\rangle$ such that
\begin{equation*}
\forall\, U,V,W\in\f{r}\quad\langle[U,V]_{\f{r}},W\rangle+\langle V,[U,W]_{\f{r}}\rangle=0.
\end{equation*}
Here~$[U,V]_{\f{r}}$ is the projection onto~$\f{r}$ induced by the decomposition~$\f{g}=\f{h}\oplus\f{r}$.
\end{definition}

If~$\f{g}$ has an~$\mrm{Ad}(H)$-invariant inner product, this induces an inner product on~$T_oX$; in turn then this gives us a Riemannian metric on~$X=G/H$, by left-translating with elements of~$G$. When~$\left(X,\langle-,-\rangle\right)$ is naturally reductive there is a simple expression for the curvature of the Riemannian metric induced by~$\langle-,-\rangle$ on~$X$.

\begin{thm}\label{thm:curvatura_nat_red}
Let~$\left(X=G/H,\langle-,-\rangle\right)$ be a naturally reductive space, let~$\f{g}=\f{h}\oplus\f{r}$ be the reductive decomposition, identify~$T_oX$ with~$\f{r}$ and consider the Riemannian metric induced by~$\langle-,-\rangle$ on~$X$. Then the curvature tensor~$R_o(U,V)W$ is, for~$U$,~$V$ and~$W$ in~$\f{r}$
\begin{equation*}
-\Big[[U,V]_{\f{h}},W\Big]-\frac{1}{2}\Big[{[U,V]}_{\f{r}},W\Big]_{\f{r}}-\frac{1}{4}{\Big[{[V,W]}_{\f{r}},U\Big]}_{\f{r}}-\frac{1}{4}{\Big[{[W,U]}_{\f{r}},V\Big]}_{\f{r}}.
\end{equation*}
\end{thm}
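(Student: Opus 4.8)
The plan is to reduce the statement to an algebraic computation in~$\f r$ by first determining the Levi-Civita connection at the base point~$o$ and then substituting it into the standard formula for the curvature of an invariant connection. Throughout I would extend~$U,V,W\in\f r\cong T_oX$ to the fundamental vector fields~$\hat U,\hat V,\hat W$ of the~$G$-action and use three structural facts: since the metric is~$G$-invariant, each~$\hat U$ is a Killing field; the map~$U\mapsto\hat U$ is a Lie algebra homomorphism, so~$[\hat U,\hat V]=\widehat{[U,V]}$; and~$\hat U_o$ corresponds to~$U$ under the identification~$T_oX\cong\f r$ (up to the sign coming from the~$\exp(-tU)$ convention). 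The one consequence of natural reductivity I will exploit is that the trilinear form~$c(U,V,W):=\langle[U,V]_\f r,W\rangle$ is \emph{totally} skew-symmetric: it is skew in~$(U,V)$ by antisymmetry of the bracket, and the defining identity of a naturally reductive space makes it skew in~$(V,W)$ as well.

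First I would determine the connection at~$o$ through the Koszul formula applied to~$\hat U,\hat V,\hat W$. The crucial simplification is that the three ``derivative of the metric'' terms all vanish at~$o$: since~$\hat U$ is Killing, $\hat U\langle\hat V,\hat W\rangle=\langle[\hat U,\hat V],\hat W\rangle+\langle\hat V,[\hat U,\hat W]\rangle$, which at~$o$ equals~$c(U,V,W)+c(U,W,V)=0$ by skew-symmetry in the last two slots. What survives are the bracket terms, and using total skew-symmetry they collapse to
\[
(\nabla_{\hat U}\hat V)_o=\tfrac12\,[U,V]_\f r .
\]
Equivalently, natural reductivity is exactly the condition that kills the symmetric tensor distinguishing the Levi-Civita connection from the canonical reductive connection; one checks directly from the total skew-symmetry of~$c$ that~$\Lambda(U,V):=\tfrac12[U,V]_\f r$ is both torsion-free and metric, so by uniqueness it is the Levi-Civita connection.

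Next I would compute the curvature from~$R(X,Y)Z=\nabla_X\nabla_Y Z-\nabla_Y\nabla_X Z-\nabla_{[X,Y]}Z$ evaluated on the fundamental fields at~$o$. The term~$\nabla_{[\hat U,\hat V]}\hat W=\nabla_{\widehat{[U,V]}}\hat W$ is tensorial in its first argument, so it only sees~$\widehat{[U,V]}_o=[U,V]_\f r$ and is read off from the connection formula, producing the term~$-[[U,V]_\f h,W]$ (recall~$[\f h,\f r]\subseteq\f r$ by~$\mrm{Ad}(H)$-invariance) together with part of the~$[U,V]_\f r$ contribution. The two remaining second-derivative terms are the heart of the matter and cannot be obtained by naively differentiating the pointwise formula for~$\nabla\hat W$; I would dispatch them with Nomizu's algebraic expression for the curvature of an invariant connection on a reductive space,
\[
R_o(U,V)W=\Lambda(U,\Lambda(V,W))-\Lambda(V,\Lambda(U,W))-\Lambda([U,V]_\f r,W)-[[U,V]_\f h,W],
\]
into which I substitute~$\Lambda(U,V)=\tfrac12[U,V]_\f r$. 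Expanding the nested brackets and rewriting~$[U,[V,W]_\f r]_\f r=-[[V,W]_\f r,U]_\f r$ produces precisely the four stated terms with coefficients~$-1,-\tfrac12,-\tfrac14,-\tfrac14$. Staying self-contained, one may instead feed the second covariant derivatives into the second-order Killing identity, which expresses~$\nabla_X\nabla_Y\hat W$ through~$\nabla_{\nabla_XY}\hat W$ and the curvature~$R(\hat W,X)Y$, and solve the resulting linear relation for~$R$ with the help of the first Bianchi identity.

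The principal obstacle is exactly this passage from the pointwise connection to the curvature: one must control a second covariant derivative of vector fields whose covariant derivative is pinned down only at~$o$, which is what forces either Nomizu's invariant-curvature formula or the second-order Killing identity into the argument. The secondary, but genuinely error-prone, difficulty is the sign and projection bookkeeping --- keeping the~$\f h$- and~$\f r$-components, the~$\exp(-tU)$ sign hidden in~$\hat U_o$, and the normalization of~$\Lambda$ mutually consistent --- so that the final coefficients emerge as~$-1,-\tfrac12,-\tfrac14,-\tfrac14$ rather than with spurious signs or factors.
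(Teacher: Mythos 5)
Your proposal is correct, and it is essentially the proof the paper points to: the thesis gives no argument of its own for Theorem~\ref{thm:curvatura_nat_red}, citing instead Kobayashi--Nomizu, whose proof is exactly your two-step scheme (natural reductivity forces the Levi-Civita connection to equal the natural torsion-free connection $\Lambda(U,V)=\tfrac12[U,V]_{\f{r}}$, and then Nomizu's curvature formula for invariant connections yields the four terms with coefficients $-1,-\tfrac12,-\tfrac14,-\tfrac14$). The only inaccuracy is your attribution of the term $-[[U,V]_{\f{h}},W]$ to $\nabla_{[\hat U,\hat V]}\hat W$: since the fundamental field of an element of $\f{h}$ vanishes at $o$, tensoriality in the first slot kills that contribution there, and the isotropy term in fact emerges from the two second-derivative terms; this does not affect your argument, because you ultimately invoke Nomizu's formula, where the term appears as the curvature of the canonical connection.
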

For a proof of Theorem~\ref{thm:curvatura_nat_red} we refer to~\cite[chapter~$10$, \S~$3$]{KobayashiNomizu2}; more precisely, the statement of the Theorem can be found in the proof of Proposition~$3.4$, ibid.

Our goal is to show that~$\mrm{Sp}(2n)/\mrm{U}(n)$ is a naturally reductive space, and that the naturally reductive metric on~$\mrm{Sp}(2n)/\mrm{U}(n)$ is the same K\"ahler metric induced by the identification
\begin{equation*}
\mrm{Sp}(2n)/\mrm{U}(n)\xrightarrow{\sim}\f{H}.
\end{equation*}
This will allow us to get an easy expression for the curvature of the Hermitian symmetric space~$\mrm{Sp}(2n)/\mrm{U}(n)$ (and for the curvature of~$\m{AC}^+$) from Theorem~\ref{thm:curvatura_nat_red}.

Recall that~$\mrm{U}(n)$ is a closed subgroup of~$\mrm{Sp}(2n)$ in the following way:
\begin{equation*}
\begin{split}
\mrm{U}(n)&\rightarrow\mrm{Sp}(2n)\\
X+\I Y&\mapsto\begin{pmatrix}X & Y\\-Y& X\end{pmatrix}
\end{split}
\end{equation*}
so we can identify~$\f{u}(n)$ with
\begin{equation*}
\begin{split}
\f{u}(n)=&\set*{\begin{pmatrix}X & Y\\ -Y& X\end{pmatrix}\tc X^\transpose=-X\mbox{ and }Y^\transpose=Y}\subseteq\f{sp}(2n).
\end{split}
\end{equation*}

\begin{lemma}
Consider the set
\begin{equation*}
\f{r}=\set*{\begin{pmatrix}A & B\\ B&-A\end{pmatrix}\tc A^\transpose=A\mbox{ and }B^\transpose=B}.
\end{equation*}
Then~$\f{sp}(2n)=\f{u}(n)\oplus\f{r}$, and this decomposition shows that~$\mrm{Sp}(2n)/\mrm{U}(n)$ is a reductive space.
\end{lemma}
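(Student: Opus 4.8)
The plan is to verify the two claims directly: first that $\f{sp}(2n) = \f{u}(n) \oplus \f{r}$ as vector spaces, and second that this decomposition is $\mrm{Ad}(\mrm{U}(n))$-invariant, which is precisely the reductivity condition. Before either step I would pin down the Lie algebra $\f{sp}(2n)$ explicitly. Differentiating the defining relation $A^\transpose \Omega_0 A = \Omega_0$ at the identity gives $\xi^\transpose \Omega_0 + \Omega_0 \xi = 0$; writing $\xi$ in $n \times n$ blocks $\begin{pmatrix} P & Q \\ R & S \end{pmatrix}$ and using the explicit form of $\Omega_0$, this translates into the block conditions $S = -P^\transpose$, $Q^\transpose = Q$, and $R^\transpose = R$. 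This is the concrete description of $\f{sp}(2n)$ I would work with throughout.

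For the direct-sum claim I would first check $\f{u}(n) \cap \f{r} = 0$: an element in the intersection would have to satisfy simultaneously $X^\transpose = -X$ from the $\f{u}(n)$ side and $A^\transpose = A$ from the $\f{r}$ side on the same diagonal block (and similarly $Y^\transpose = Y$ against $B^\transpose = B$ on the off-diagonal), forcing all blocks to vanish. Then I would check that the two subspaces span $\f{sp}(2n)$ by a dimension count, or more transparently by exhibiting, for a general symplectic element $\begin{pmatrix} P & Q \\ R & -P^\transpose \end{pmatrix}$, its decomposition: split the diagonal block $P$ into its symmetric and antisymmetric parts $P = \tfrac{1}{2}(P + P^\transpose) + \tfrac{1}{2}(P - P^\transpose)$, and split the pair of symmetric off-diagonal blocks $Q, R$ into the $\f{u}(n)$-type combination (where the upper-right equals the lower-left) and the $\f{r}$-type combination (where the upper-right equals the lower-left with the sign matching the pattern $\begin{pmatrix} A & B \\ B & -A \end{pmatrix}$). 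This is a routine block-matrix bookkeeping exercise and I would not belabor it.

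The only substantive step is the $\mrm{Ad}(\mrm{U}(n))$-invariance $\mrm{Ad}(\mrm{U}(n))(\f{r}) \subseteq \f{r}$. For $g \in \mrm{U}(n) \subseteq \mrm{Sp}(2n)$ and $\eta \in \f{r}$ I must show $g \eta g^{-1} \in \f{r}$. The cleanest route is to observe that $\f{r}$ admits an intrinsic characterization independent of the block form: recalling from Section~\ref{sec:matrici} that $\mrm{U}(n)$ is exactly $\mrm{Sp}(2n) \cap \mrm{GL}(n,\bb{C})$, i.e.\ the symplectic matrices commuting with the reference complex structure $-\Omega_0$, I would identify $\f{u}(n)$ as the elements of $\f{sp}(2n)$ commuting with $\Omega_0$ (equivalently, the $+1$-eigenspace of $\mrm{Ad}(-\Omega_0)$ acting by $\eta \mapsto \Omega_0 \eta \Omega_0^{-1}$) and $\f{r}$ as the $-1$-eigenspace, i.e.\ $\{\eta : \Omega_0 \eta \Omega_0^{-1} = -\eta\}$; a quick block computation confirms these match the given descriptions. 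Since every $g \in \mrm{U}(n)$ commutes with $\Omega_0$, conjugation by $g$ commutes with $\mrm{Ad}(\Omega_0)$, so it preserves each eigenspace and in particular $\f{r}$. \textbf{The main obstacle} is choosing this eigenspace framing rather than attempting a brute-force conjugation of the block matrix $\begin{pmatrix} A & B \\ B & -A \end{pmatrix}$ by a general unitary $\begin{pmatrix} X & Y \\ -Y & X \end{pmatrix}$, which is correct but computationally opaque; the eigenspace description makes the invariance essentially automatic and is what I would present.
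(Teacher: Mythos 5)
Your proof is correct. The direct-sum part is essentially identical to the paper's: the paper also checks $\f{u}(n)\cap\f{r}=\set{0}$ and exhibits the same explicit splitting of a general element $\begin{pmatrix}P & Q\\ R &-P^\transpose\end{pmatrix}$ into the antisymmetric/symmetric part of $P$ and the combinations $\frac{Q-R}{2}$, $\frac{Q+R}{2}$ of the off-diagonal blocks. Where you add value is the $\mrm{Ad}(\mrm{U}(n))$-invariance, which the paper dismisses with ``to check that $\f{r}$ is $\mrm{Ad}(\mrm{U}(n))$-invariant is a quick computation'' and does not carry out. Your eigenspace framing is clean and correct: since $\Omega_0^2=-\mathbbm{1}$, the map $\eta\mapsto\Omega_0\eta\Omega_0^{-1}$ is an involution of $\f{sp}(2n)$ whose $+1$- and $-1$-eigenspaces one checks by a one-line block computation to be exactly $\f{u}(n)$ and $\f{r}$, and any $g\in\mrm{U}(n)=\mrm{Sp}(2n)\cap\mrm{GL}(n,\bb{C})$ commutes with $\Omega_0$, so $\mrm{Ad}(g)$ preserves both eigenspaces. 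This is the Cartan-involution point of view and makes the reductivity structural rather than computational; it is arguably preferable to the brute-force conjugation the paper implicitly has in mind.
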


\begin{proof}
It's clear that~$\f{u}(n)\cap\f{r}=\set{0}$. Consider~$\begin{pmatrix}A & B\\ C &-A^\transpose\end{pmatrix}\in\f{sp}(2n)$; then we can write it as a sum of elements in~$\f{u}(n)$ and~$\f{r}$ as follows:
\begin{equation*}
\begin{pmatrix}A & B\\ C &-A^\transpose\end{pmatrix}=\begin{pmatrix}\frac{A-A^\transpose}{2} & \frac{B-C}{2}\\
-\frac{B-C}{2} &\frac{A-A^\transpose}{2}\end{pmatrix}+\begin{pmatrix}\frac{A+A^\transpose}{2} & \frac{B+C}{2}\\
\frac{B+C}{2} &-\frac{A+A^\transpose}{2}\end{pmatrix}
\end{equation*}
so~$\f{sp}(2n)=\f{u}(n)\oplus\f{r}$. To check that~$\f{r}$ is~$\mrm{Ad}(\mrm{U}(n))$-invariant is a quick computation.
\end{proof}

\begin{rmk}\label{nota:comm_Sp}
For any two elements~$P,Q\in\f{r}$, the commutator~$[P,Q]$ is an element of~$\f{u}(n)$.
As a consequence,~$[P,Q]_{\f{r}}=0$ for all~$P,Q\in\f{r}$. This implies that \emph{any} product on~$\f{sp}(2n)$ that is~$\mrm{Ad}(\mrm{U}(n))$-invariant makes~$\mrm{Sp}(2n)/\mrm{U}(n)$ into a naturally reductive homogeneous space.
\end{rmk}

The product~$\langle U,V\rangle =2\,\mrm{Tr}(U\,V^\transpose)$ is a positive-definite pairing on~$\mrm{Sp}(2n)$ that is invariant under the adjoint action of~$\mrm{U}(n)$. Moreover, it defines on~$\mrm{Sp}(2n)/\mrm{U}(n)$ the same K\"ahler metric defined by its identification with~$\f{H}$. Indeed, by the definition of the action~$\chi$ on~$\f{H}$, for~$P=\begin{pmatrix}P_1 & P_2\\ P_2 &-P_1\end{pmatrix}\in\f{r}\cong T_o\left(\mrm{Sp}(2n)/\mrm{U}(n)\right)$
\begin{equation*}
\mrm{d}\chi_{o}(P)=2\,\I\,P_1+2\,P_2
\end{equation*}
so the metric induced on~$\mrm{Sp}(2n)/\mrm{U}(n)$ by~$\chi$ is
\begin{equation*}
\langle\mrm{d}\chi_o(P),\mrm{d}\chi_o(Q)\rangle=4\mrm{Tr}\left(P_1Q_1+P_2Q_2\right)=2\,\mrm{Tr}(P\,Q^\transpose).
\end{equation*}

\begin{rmk}
In fact the set~$\f{r}\subseteq\f{sp}(2n)$ is the orthogonal complement to~$\f{u}(n)$ under the pairing~$(U,V)\mapsto\mrm{Tr}(U\,V^\transpose)$. This product is~$\mrm{Ad}(\mrm{U}(n))$-invariant, so this gives an alternative way to show that~$\f{sp}(2n)=\f{u}(n)\oplus\f{r}$ is a reductive decomposition of~$\f{sp}(2n)$.
\end{rmk}

Putting together Theorem~\ref{thm:curvatura_nat_red} and Remark~\ref{nota:comm_Sp} we can compute the curvature of~$\mrm{Sp}(2n)/U(n)$.

\begin{prop}\label{prop:curvatura_Sp}
Consider the K\"ahler metric on~$\mrm{Sp}(2n)/\mrm{U}(n)$ induced by the isomorphism~$\mrm{Sp}(2n)/\mrm{U}(n)\xrightarrow{\sim}\f{H}(n)$. With the previous notation, its curvature tensor is
\begin{equation*}
\forall\,U,V,W\in\f{r},\quad R_o(U,V)(W)=-\Big[[U,V],W\Big].
\end{equation*}
\end{prop}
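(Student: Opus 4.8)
The plan is to obtain the formula as a direct specialization of the general curvature expression in Theorem~\ref{thm:curvatura_nat_red}, exploiting the fact that $\f{r}$ brackets into $\f{u}(n)$. The groundwork is already in place: the discussion preceding the statement shows that $\f{sp}(2n)=\f{u}(n)\oplus\f{r}$ is a reductive decomposition and that the $\mrm{Ad}(\mrm{U}(n))$-invariant product $\langle U,V\rangle=2\,\mrm{Tr}(U\,V^\transpose)$ makes $\mrm{Sp}(2n)/\mrm{U}(n)$ naturally reductive, inducing precisely the K\"ahler metric pulled back from $\f{H}(n)$. Hence Theorem~\ref{thm:curvatura_nat_red} applies verbatim at the base point $o$, with $\f{h}=\f{u}(n)$.

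The essential input is Remark~\ref{nota:comm_Sp}: every bracket of two elements of $\f{r}$ lands in $\f{u}(n)=\f{h}$, so that for all $P,Q\in\f{r}$ one has $[P,Q]_{\f{r}}=0$ and $[P,Q]_{\f{h}}=[P,Q]$. First I would apply this to the leading term of Theorem~\ref{thm:curvatura_nat_red}, rewriting $-\big[[U,V]_{\f{h}},W\big]$ as $-\big[[U,V],W\big]$. Then I would observe that each of the remaining three terms contains one of the $\f{r}$-projected brackets $[U,V]_{\f{r}}$, $[V,W]_{\f{r}}$, $[W,U]_{\f{r}}$, so all three vanish identically. Collecting what survives yields exactly $R_o(U,V)(W)=-\big[[U,V],W\big]$, which is the claim.

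There is no real analytic difficulty here; the whole proof is the substitution above, and the work is entirely packaged in the two cited results. It is worth emphasizing why the answer comes out so clean: the inclusion $[\f{r},\f{r}]\subseteq\f{u}(n)$ is exactly the defining bracket relation of a \emph{symmetric} pair, and for such a pair the general naturally reductive curvature formula collapses to the standard symmetric-space expression $-\big[[U,V],W\big]$. In particular the curvature, as a $(3,1)$-tensor, is insensitive to the scaling of the invariant inner product, so the same formula would follow from any $\mrm{Ad}(\mrm{U}(n))$-invariant product, consistently with the closing observation of Remark~\ref{nota:comm_Sp}. As a final sanity check I would confirm that, transported through the identification $\m{AC}^+\cong\mrm{Sp}(2n)/\mrm{U}(n)$, this is compatible with the value $-\tfrac{1}{4}\big[[A,B],C\big]$ recorded in Proposition~\ref{prop:curvatura_AC}.
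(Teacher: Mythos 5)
Your proof is correct and is exactly the paper's argument: the paper derives Proposition~\ref{prop:curvatura_Sp} by "putting together Theorem~\ref{thm:curvatura_nat_red} and Remark~\ref{nota:comm_Sp}", i.e.\ substituting $[U,V]_{\f{r}}=0$ and $[U,V]_{\f{h}}=[U,V]$ into the naturally reductive curvature formula, just as you do. Your closing remarks on the symmetric-pair structure and the consistency check against Proposition~\ref{prop:curvatura_AC} are accurate but not needed.
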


At this point computing the curvature of~$\m{AC}^+$ is straightforward:
\begin{proof}[Proof of Proposition~\ref{prop:curvatura_AC}]
The identification of~$\m{AC}^+(2n)$ with~$\mrm{Sp}(2n)/\mrm{U}(n)$ is given by the map~$\sigma(J)=\left(\Omega_0J\right)^{-\frac{1}{2}}\mrm{U}(n)$, and its differential at the point~$-\Omega_0$ is
\begin{equation*}
\mrm{d}\sigma_{-\Omega_0}(A)=-\frac{1}{2}\Omega_0A\in\f{r}.
\end{equation*}
Then, by Proposition~\ref{prop:curvatura_Sp} we have
\begin{equation*}
\begin{split}
R_{-\Omega_0}(A,B)(C)&=\sigma^*\!\left(-\Big[[\sigma_*A,\sigma_*B],\sigma_*C\Big]\right)=2\Omega_0\left(\frac{1}{8}\Big[[\Omega_0A,\Omega_0B],\Omega_0C\Big]\right)=\\
&=-\frac{1}{4}\Big[[A,B],C\Big]
\end{split}
\end{equation*}
using that~$A$,~$B$ and~$C$ anti-commute with~$\Omega_0$.
\end{proof}

\section{Compatible almost complex structures}\label{sec:J}

Consider again the set~$\mscr{J}$ of all almost complex structures compatible with a symplectic form~$\omega$. In a system of Darboux coordinates~$\bm{u}$ for~$\omega$ the matrix associated to~$J$ is an element of~$\m{AC}^+$, by the definition of~$\mscr{J}$. Notice that, for a different system of Darboux coordinates~$\bm{v}$, the matrix~$\frac{\diff\bm{v}}{\diff\bm{u}}$ that describes the change of coordinates is a~$\mrm{Sp}(2n)$-valued function. Considering the matrices associated to~$J$ in the two Darboux coordinate systems we have
\begin{equation*}
J(\bm{v})=\frac{\diff\bm{v}}{\diff\bm{u}}\,J(\bm{u})\,\left(\frac{\diff\bm{v}}{\diff\bm{u}}\right)^{-1}
\end{equation*}
so~$J(\bm{u})$ and~$J(\bm{v})$ are two elements of~$\m{AC}^+$ that differ by the action of an element of~$\mrm{Sp}(2n)$.

We can define a fibre bundle~$\m{E}$ on~$M$ with fibre~$\m{AC}^+$, that is trivial over Darboux coordinate systems and with~$\mrm{Sp}(2n)$-valued cocycles. Elements of~$\mscr{J}$ are then global sections of~$\m{E}$, i.e.~$\mscr{J}=\mrm{\Gamma}(M,\m{E})$.

This description of~$\mscr{J}$ as a space of sections is useful to define extra structures on~$\mscr{J}$: for example, it is an infinite-dimensional Fréchet manifold. Also, for any~$J\in\mscr{J}$ the tangent space at~$J$ is
\begin{equation*}
T_J\mscr{J}=T_J\Gamma(M,\m{E})=\Gamma(M,J^*(\mrm{Vert}\,\m{E}))
\end{equation*}
where~$\mrm{Vert}\,\m{E}$ is the vertical distribution of~$\m{E}$, the kernel of the projection on the base~$\pi:\m{E}\to M$. For any~$x\in M$,
\begin{equation*}
J^*(\mrm{Vert}\m{E})_x=\mrm{Vert}_{J(x)}\m{E}\cong T_{J(x)}\m{AC}^+
\end{equation*}
here the identification is done by fixing a Darboux coordinate system around~$x$, i.e. by locally trivializing~$\m{E}$. In other words, any~$A\in T_J\mscr{J}$ is itself a section of a fibre bundle on~$M$ that is trivial over any system of Darboux coordinates, and in any such trivialization~$A(x)\in T_{J(x)}\m{AC}^+$. This description of~$T_J\mscr{J}$ can be made more intrinsic by noticing that any first-order deformation~$A$ of an almost complex structure~$J$ is itself an endomorphism of~$TM$, and
\begin{equation*}
T_J\mscr{J}=\set*{A\tc AJ+JA=0\mbox{ and }\omega(A-,J-)+\omega(J-,A-)=0}.
\end{equation*}
We can rephrase the second condition,~$\omega(A-,J-)+\omega(J-,A-)=0$, in terms of the Riemannian metric~$g_J$ defined by~$\omega$ and~$J$:
\begin{equation*}
\omega(A-,J-)+\omega(J-,A-)=0\iff g_J(A-,-)-g_J(-,A-)=0
\end{equation*}
so if~$A\in T_J\mscr{J}$ the bilinear form defined by~$(v,w)\mapsto g_J(Av,w)$ is symmetric. Notice also that for~$A\in T_J\mscr{J}$, the condition~$AJ+JA=0$ implies that the~$\bb{C}$-linear extension of~$A$ to~$T_{\bb{C}} M$ maps~$T_J^{0,1}M$ to~$T_J^{1,0}M$ and vice versa. Moreover~$A$ is uniquely determined by its restriction to~$T^{0,1}_JM$, since it is a real endomorphism. In fact we could identify~$T_J\mscr{J}$ with a subset of~$\m{A}^{0,1}_J(T^{1,0}_J M )$, via the map
\begin{equation*}
A\mapsto A^{1,0}=A_{\restriction T^{0,1}_JM}:T^{0,1}_JM\to T^{1,0}_JM.
\end{equation*}
In a system of local coordinates for~$M$, the conditions for an endomorphism~$A$ to be in~$T_J\mscr{J}$ are equivalent to the identities:
\begin{equation}\label{eq:indentita_tangenti}
\begin{split}
J\indices{^i_j}A\indices{^j_k}&=-A\indices{^i_j}J\indices{^j_k}\\
g(J)_{ij}A\indices{^j_k}&=g(J)_{kj}A\indices{^j_i}.
\end{split}
\end{equation}

Since elements of~$\mscr{J}$ are locally maps for~$M$ to~$\m{AC}^+$, we can induce various geometric structures on~$\mscr{J}$ using the ones of~$\m{AC}^+$. We are interested in particular in the K\"ahler structure of~$\m{AC}^+$.

First of all, we define a complex structure~$\bb{J}:T\mscr{J}\to T\mscr{J}$ as follows: fix~$J\in\mscr{J}$ and~$A\in T_J\mscr{J}$; for any~$x\in M$ consider a trivialization of~$\m{E}$ around~$x$ (i.e. a system of Darboux coordinates for~$\omega$ around~$x$), giving~$A(x)\in T_{J(x)}\m{AC}^+$; on this vector space we have the complex structure described in Proposition~\ref{prop:metrica_AC+_Siegel}, given by~$A(x)\mapsto J(x)A(x)=(JA)(x)$. Notice that this does not depend on the choice of the trivialization, since the action of~$\mrm{Sp}(2n)$ on~$\m{AC}^+$ preserves the complex structure. Then
\begin{align*}
\bb{J}:T_J\mscr{J}&\to T_J\mscr{J}\\
A&\mapsto JA
\end{align*}
is an almost complex structure on~$\mscr{J}$. The same approach works to define a metric; for~$A,B\in T_J\mscr{J}$ and~$x\in M$ the number~$\frac{1}{2}\mrm{Tr}(A(x)B(x))$ depends just on~$x$, and not on the particular trivialization chosen to see~$A$ and~$B$ as elements of~$T_J\m{AC}^+$. We can then define a metric
\begin{align*}
\bb{G}:T_J\mscr{J}\times& T_J\mscr{J}\to\bb{R}\\
(A,&B)\mapsto\frac{1}{2}\int_{M}\mrm{Tr}(AB)\,\frac{\omega^n}{n!}
\end{align*}
and all the algebraic relations of~$\bb{J}$,~$\bb{G}$ carry over from those of the metric and the complex structure on~$\m{AC}^+$; in particular~$\bb{G}(\bb{J}-,\bb{J}-)=\bb{G}(-,-)$, and so we obtain a~$2$-form on~$\mscr{J}$,
\begin{equation}\label{eq:symp_form_J}
\Omega_J(A,B) =\bb{G}_J(\bb{J}A,B)=\frac{1}{2}\int_{M}\mrm{Tr}(JAB)\,\frac{\omega^n}{n!}.
\end{equation}
\begin{rmk}
If we denote by~$g_J$ the Hermitian metric on~$M$ defined by~$\omega$ and~$J\in\mscr{J}$, then~$g_J(A,B)=\mrm{Tr}(AB)$ for any~$A,B\in T_J\mscr{J}$. Indeed by working in a system of coordinates we have
\begin{equation*}
g_J(A,B)=g\indices{^j^k}g\indices{_i_l}A\indices{^i_j}B\indices{^l_k}=g\indices{^j^k}g\indices{_i_j}A\indices{^i_l}B\indices{^l_k}=A\indices{^i_l}B\indices{^l_i}
\end{equation*}
where we have used~\eqref{eq:indentita_tangenti} in the second equality. So we can rewrite the expression of~$\bb{G}$ in a way that makes the role of the point~$J$ more explicit, i.e.
\begin{equation*}
\begin{split}
\bb{G}_J(A,B)&=\frac{1}{2}\int_{M}g_J(A,B)\,\frac{\omega^n}{n!}=\frac{1}{2}\int_{M}\omega(A,JB)\,\frac{\omega^n}{n!}.
\end{split}
\end{equation*}
\end{rmk}

\begin{thm}\label{thm:kahler_J}
With the almost complex structure~$\bb{J}$ and the metric~$\bb{G}$,~$\mscr{J}$ is an infinite-dimensional (formally) K\"ahler manifold.
\end{thm}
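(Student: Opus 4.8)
The almost Hermitian part of the statement is already in hand: by Proposition~\ref{prop:metrica_AC+_Siegel}, applied fibrewise, one has $\bb{J}^2=-\mathbbm{1}$ (since $\bb{J}^2A=J^2A=-A$) and $\bb{G}(\bb{J}-,\bb{J}-)=\bb{G}(-,-)$, so that $\Omega$ in~\eqref{eq:symp_form_J} is a nondegenerate (formal) $2$-form compatible with $\bb{J}$ and $\bb{G}$. Hence, to prove that $(\mscr{J},\bb{J},\bb{G})$ is formally K\"ahler it remains to show that $\bb{J}$ is integrable and that $\Omega$ is closed. The guiding principle of the whole argument is that all three tensors $\bb{J}$, $\bb{G}$ and $\Omega$ are \emph{ultralocal}: they are built pointwise from the corresponding $\mrm{Sp}(2n)$-invariant K\"ahler data on the fibre $\m{AC}^+\cong\f{H}$, with no differentiation along $M$ entering their definition. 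Since the fibre is a genuine finite-dimensional K\"ahler manifold, both conditions should reduce to their fibrewise counterparts.

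For integrability I would argue as follows. The Nijenhuis tensor $N_{\bb{J}}$ is tensorial, so its value at $J\in\mscr{J}$ on $A,B\in T_J\mscr{J}$ depends only on $A,B$ and may be computed with any extensions. Because $\bb{J}$ acts by pointwise matrix multiplication $A\mapsto JA$, and $T_J\mscr{J}$ is the space of sections of a bundle with fibre $T_{J(x)}\m{AC}^+$ (see~\eqref{eq:indentita_tangenti}), every operation in the defining formula for $N_{\bb{J}}$ --- including the differentiation of $\bb{J}$ in the $\mscr{J}$-directions, which is the fibre direction --- is carried out fibrewise over $x\in M$. Consequently $N_{\bb{J}}(A,B)(x)$ equals the Nijenhuis tensor of the complex structure of $\m{AC}^+$ at $J(x)$ evaluated on $A(x),B(x)$, which vanishes because $\m{AC}^+\cong\f{H}$ is a complex manifold. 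The $\mrm{Sp}(2n)$-invariance of the fibrewise complex structure guarantees that this pointwise identification is independent of the Darboux trivialisation, so $N_{\bb{J}}\equiv 0$ and $\bb{J}$ is integrable.

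For the closedness of $\Omega$ I would use a transgression argument. Writing $\eta$ for the $\mrm{Sp}(2n)$-invariant K\"ahler form of $\m{AC}^+$, so that $\Omega_J(A,B)=\int_M \eta_{J(x)}(A(x),B(x))\,\tfrac{\omega^n}{n!}$, one works in a Darboux trivialisation, where $\mscr{J}$ becomes a space of maps into $\m{AC}^+$, and realises $\Omega$ as the fibre integral over $M$ of the pullback of $\eta$ under the evaluation map $(J,x)\mapsto J(x)$. Since $M$ is closed and $\tfrac{\omega^n}{n!}$ is a top (hence closed) form, integration over $M$ commutes with $\mrm{d}$ up to sign and with no boundary contribution; and as the definition of $\Omega$ involves no derivatives along $M$, the integrand of $\mrm{d}\Omega$ at each $x$ is the fibrewise exterior derivative $(\mrm{d}\eta)_{J(x)}(A(x),B(x),C(x))$, which vanishes because $\eta$ is closed on $\m{AC}^+\cong\f{H}$. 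The $\mrm{Sp}(2n)$-invariance again guarantees that these local computations patch to the global statement $\mrm{d}\Omega=0$. Together with the integrability of $\bb{J}$ and the compatibility of $\bb{G}$ with $\bb{J}$, this shows that $\mscr{J}$ is formally K\"ahler.

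The main obstacle, and the reason for the qualifier \emph{formally}, is analytic rather than algebraic: $\mscr{J}$ is an infinite-dimensional Fr\'echet manifold, so one must justify that the integrals defining $\bb{G}$ and $\Omega$ are differentiable, that the transgression and Stokes manipulation is legitimate, and that the tensorial identities survive integration. All of this rests on the structural fact that $\bb{J}$ and $\Omega$ are ultralocal, which is precisely what lets every infinite-dimensional condition collapse to a pointwise statement on the finite-dimensional K\"ahler fibre $\m{AC}^+\cong\f{H}$; keeping track of the fibre-integration sign in the closedness computation, and of the fibre-direction differentiation of $\bb{J}$ in the Nijenhuis computation, are the two points that require the most care.
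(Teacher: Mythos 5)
Your proposal is correct and follows essentially the same route as the paper: the ``ultralocality'' principle you invoke is exactly what the paper packages as Theorem~\ref{thm:differenziazione} (exterior differentiation commutes with the fibrewise-integration construction, proved by extending tangent vectors to constant vector fields in local charts for~$\mscr{J}$), from which the closedness of~$\Omega$ is immediate. The only cosmetic difference is that the paper obtains the formal integrability of~$\bb{J}$ by first showing that the Levi-Civita connection of~$\bb{G}$ is the pointwise pull-back of that of~$\m{AC}^+$ (Lemma~\ref{lemma:connection_scrJ}) and deducing~$\nabla\bb{J}=0$, rather than by your direct fibrewise Nijenhuis computation, but both arguments collapse to the same finite-dimensional fact on the K\"ahler fibre~$\m{AC}^+\cong\f{H}$.
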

This is actually a particular case of a more general result. Indeed, it holds \emph{for any} fibre bundle over a manifold with a fixed volume form whose fibres are K\"ahler manifolds, see~\cite[Theorem~$2.4$]{Koiso_complex_structure}. We will however sketch a proof of Theorem~\ref{thm:kahler_J} in Section~\ref{sec:KahlerStrJ}, based on the argument in~\cite{Koiso_complex_structure}, as this will also be useful in Chapter~\ref{chap:HyperkahlerReduction}.

We now come to explaining the statement of Theorem~\ref{thm:Donaldson_scalcurv_mm}. The first ingredient is the definition, for~$J\in\mscr{J}$, of the \emph{Hermitian scalar curvature} of~$g_J$.

Any almost complex structure~$J$ on~$ M$ induces a splitting of the complexified tangent bundle,~$T_{\bb{C}}{ M }=T^{1,0}_J M \oplus T^{0,1}_J M$. Moreover, this splitting induces a decomposition of the cotangent bundles,~$\ext{k}M =\bigoplus_{p+q=k}\ext{p,q}_J M$. This also gives a decomposition of the exterior differential
\begin{equation*}
\begin{split}
\mrm{d}:\m{A}^0 M &\to\m{A}^{1,0}_J M \oplus\m{A}^{0,1}_J M \\
\alpha &\mapsto \diff_J(\alpha)+\bar{\diff}_J(\alpha).
\end{split}
\end{equation*}
There is a similar decomposition for~$\mrm{d}:\m{A}^1 M \to\m{A}^{2,0}_J M \oplus\m{A}^{1,1}_J M \oplus\m{A}^{0,2}_J M$, involving the Nijenhuis tensor of~$J$:
\begin{equation*}
N_J(X,Y):=[X,Y]+J[JX,Y]+J[X,JY]-[JX,JY].
\end{equation*}
Notice that for~$Z,W\in\Gamma(T^{1,0}_J M )$ one has
\begin{equation*}
\begin{split}
N_J(Z,W)&=2[Z,W]+2\I J[Z,W]\in\Gamma(T^{0,1}_J M )\\
N_J(Z,\bar{W})&=0\\
N_J(\bar{Z},\bar{W})&=2[\bar{Z},\bar{W}]-2\I J[\bar{Z},\bar{W}]\in\Gamma(T^{1,0}_J M ).
\end{split}
\end{equation*}
\begin{lemma}\label{lemma:decomposizione_d}
For any almost complex structure~$J$, the~$\bb{C}$-linear extension of the exterior differential~$\mrm{d}:\m{A}^1(M,\bb{C})\to\m{A}^2(M,\bb{C})$ is decomposed as
\begin{equation*}
\mrm{d}(\varphi)=\diff_J(\varphi)+\bar{\diff}_J(\varphi)-\frac{1}{4}\varphi\circ N_J.
\end{equation*}
\end{lemma}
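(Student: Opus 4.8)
The plan is to decompose~$\varphi = \varphi^{1,0} + \varphi^{0,1}$ into its types and to read off~$\diff_J$ and~$\bdiff_J$ on~$1$-forms as the bidegree-raising components of~$\mrm{d}$. Writing~$P^{p,q}$ for the projection onto~$\m{A}^{p,q}_J M$, I would set~$\diff_J\varphi$ to be the sum of the~$(2,0)$-part of~$\mrm{d}\varphi^{1,0}$ and the~$(1,1)$-part of~$\mrm{d}\varphi^{0,1}$, and~$\bdiff_J\varphi$ to be the sum of the~$(1,1)$-part of~$\mrm{d}\varphi^{1,0}$ and the~$(0,2)$-part of~$\mrm{d}\varphi^{0,1}$. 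Since~$\mrm{d}\varphi = \mrm{d}\varphi^{1,0} + \mrm{d}\varphi^{0,1}$ and each summand splits into its three bidegree components, subtracting~$\diff_J\varphi + \bdiff_J\varphi$ from~$\mrm{d}\varphi$ leaves precisely the two \emph{anomalous} components
\[
\mrm{d}\varphi - \diff_J\varphi - \bdiff_J\varphi = P^{0,2}\!\left(\mrm{d}\varphi^{1,0}\right) + P^{2,0}\!\left(\mrm{d}\varphi^{0,1}\right),
\]
so the lemma reduces to matching these two tensorial terms with~$-\tfrac14\,\varphi\circ N_J$.

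First I would note, from the identity~$N_J(Z,\bar{W}) = 0$ recalled above, that~$\varphi\circ N_J$ has vanishing~$(1,1)$-part, so it suffices to compare bidegrees~$(0,2)$ and~$(2,0)$ separately. For the~$(0,2)$-part I evaluate on~$\bar{Z},\bar{W}$ with~$Z, W \in \Gamma(T^{1,0}_J M)$ and expand via the Cartan formula
\[
\mrm{d}\varphi^{1,0}(\bar{Z},\bar{W}) = \bar{Z}\!\left(\varphi^{1,0}(\bar{W})\right) - \bar{W}\!\left(\varphi^{1,0}(\bar{Z})\right) - \varphi^{1,0}\!\left([\bar{Z},\bar{W}]\right).
\]
The two derivative terms vanish because a~$(1,0)$-form annihilates~$T^{0,1}_J M$; this is the crucial cancellation that removes all \emph{differential} contributions and leaves only the algebraic bracket term.

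The remaining task is to recognise the bracket term as the Nijenhuis tensor. Projecting~$[\bar{Z},\bar{W}]$ onto~$T^{1,0}_J M$ with~$P^{1,0} = \tfrac12(\mrm{id} - \I J)$ and comparing with~$N_J(\bar{Z},\bar{W}) = 2[\bar{Z},\bar{W}] - 2\I J[\bar{Z},\bar{W}]$ gives~$P^{1,0}[\bar{Z},\bar{W}] = \tfrac14 N_J(\bar{Z},\bar{W})$, so that~$\mrm{d}\varphi^{1,0}(\bar{Z},\bar{W}) = -\tfrac14\,\varphi\!\left(N_J(\bar{Z},\bar{W})\right)$. The~$(2,0)$-case is symmetric, using~$N_J(Z,W) = 2[Z,W] + 2\I J[Z,W]$ together with~$P^{0,1} = \tfrac12(\mrm{id} + \I J)$. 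Summing the two contributions and invoking the absence of a~$(1,1)$-part reassembles~$-\tfrac14\,\varphi\circ N_J$.

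I do not anticipate a real obstacle: the argument is bookkeeping of bidegrees followed by a single substitution of the Nijenhuis identity. The point requiring most care is keeping conventions consistent — the normalisation of~$N_J$, the factors in the projectors~$P^{1,0}, P^{0,1}$, and the very definition of~$\diff_J, \bdiff_J$ on~$1$-forms — since these are exactly what fix the coefficient~$-\tfrac14$. A minor secondary check is that an identity verified on vectors of pure type extends tensorially and~$\bb{C}$-bilinearly to arbitrary vector fields, which is immediate since both sides are tensorial in their arguments.
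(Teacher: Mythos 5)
Your proof is correct and follows essentially the same route as the paper: both isolate the bidegree-anomalous components of~$\mrm{d}$ on a form of pure type, kill the derivative terms in Cartan's formula because the form annihilates vectors of the opposite type, and identify the remaining bracket term with~$\tfrac14 N_J$ via the projection~$\tfrac12(\mrm{id}\mp\I J)$. The only cosmetic difference is that the paper evaluates on the~$(0,1)$-parts of real vector fields rather than directly on~$\bar{Z},\bar{W}$, which is equivalent.
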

\begin{proof}
Let~$X,Y$ be real vector fields on~$ M$, and let~$\varphi\in\m{A}^{1,0}_J M$. Then by definition
\begin{equation*}
(\mrm{d}\varphi-\diff\varphi-\bar{\diff}\varphi)(X,Y)=\mrm{d}\varphi(X^{0,1},Y^{0,1})
\end{equation*}
and since~$\varphi$ is of type~$(1,0)$
\begin{equation*}
\mrm{d}\varphi(X^{0,1},Y^{0,1})=-\varphi([X^{0,1},Y^{0,1}]).
\end{equation*}
Decomposing this commutator in its~$(1,0)$ and~$(0,1)$ parts we have
\begin{equation*}
\begin{split}
[X^{0,1},Y^{0,1}]=&\frac{1}{2}\left([X^{0,1},Y^{0,1}]-\I J[X^{0,1},Y^{0,1}]\right)+\frac{1}{2}\left([X^{0,1},Y^{0,1}]+\I J[X^{0,1},Y^{0,1}]\right)=\\
=&\frac{1}{4}N_J(X^{0,1},Y^{0,1})+\frac{1}{2}\left([X^{0,1},Y^{0,1}]+\I J[X^{0,1},Y^{0,1}]\right)
\end{split}
\end{equation*}
so in the end
\begin{equation*}
\mrm{d}\varphi(X^{0,1},Y^{0,1})=-\frac{1}{4}\varphi(N_J(X^{0,1},Y^{0,1}))=-\frac{1}{4}\varphi(N_J(X,Y)).
\end{equation*}
A similar computation gives the same result also for a~$(0,1)$-form.
\end{proof}

Consider now the complex vector bundle~$\ext{1,0}_J M$ and the Hermitian metric~$h$ defined by~$g_J$ on the fibres of~$\ext{1,0}_JM$. A~$\bb{C}$-linear connection~$\nabla:\Gamma(\ext{1,0}_J M )\to\m{A}^1(\ext{1,0}_J M )$ defines two operators
\begin{equation*}
\begin{split}
\nabla^{(1,0)}&:\Gamma(\ext{1,0}_J M )\to\m{A}^{1,0}(\ext{1,0}_J M )\\
\nabla^{(0,1)}&:\Gamma(\ext{1,0}_J M )\to\m{A}^{0,1}(\ext{1,0}_J M ).
\end{split}
\end{equation*}
On the other hand the type decomposition of~$\mrm{d}:\Gamma(\ext{1,0}_J M )\to\m{A}^1(\ext{1,0}_J M )$ already gives an operator~$\bdiff_J:\Gamma(\ext{1,0}_J M )\to\m{A}^{0,1}(\ext{1,0}_J M )$. As in the integrable case,~$\bdiff_J$ and~$h$ together determine a connection on~$\ext{1,0}_J M$. When the complex structure is integrable this is usually called the \emph{Chern connection} of~$M$; however we can drop the integrability assumption and still obtain essentially  the same result.
\begin{lemma}\label{lemma:Chern_conn}
Let~$h$ be a Hermitian metric on the complex vector bundle~$E\xrightarrow{\pi}M$. Then any metric connection on~$E$ is uniquely determined by its~$(0,1)$-part.
\end{lemma}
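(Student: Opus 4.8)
The plan is to read the statement as a uniqueness assertion and prove it by a type-decomposition argument: I would show that two metric connections sharing the same $(0,1)$-part must coincide. First I would recall that the space of connections on $E$ is an affine space modelled on $\m{A}^1(\mrm{End}(E))$, so that if $\nabla$ and $\nabla'$ are two metric connections, their difference $a := \nabla-\nabla'$ is a genuine tensor, $a\in\m{A}^1(\mrm{End}(E))$. The hypothesis that $\nabla$ and $\nabla'$ have the same $(0,1)$-part, i.e.~$\nabla^{(0,1)}=\nabla'^{(0,1)}$, says precisely that $a$ has no $(0,1)$-component, so in fact $a\in\m{A}^{1,0}(\mrm{End}(E))$.

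Next I would impose metric compatibility. Both connections satisfy $\mrm{d}\,h(s,t)=h(\nabla s,t)+h(s,\nabla t)$ for all $s,t\in\Gamma(E)$. Subtracting the two identities cancels the exterior derivative of $h(s,t)$ and leaves the single algebraic relation
\[
h(a\,s,t)+h(s,a\,t)=0 \qquad \text{for all } s,t\in\Gamma(E),
\]
which is what the rest of the argument exploits.

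The key observation — and essentially the only point requiring care — is a type count. Since $h$ is~$\bb{C}$-linear in its first slot and~$\bb{C}$-antilinear in its second, and since $a$ takes values in $(1,0)$-forms, the term $h(a\,s,t)$ is a $(1,0)$-form, whereas $h(s,a\,t)$ is a $(0,1)$-form, because the antilinearity in the second argument conjugates the $(1,0)$-valued coefficient into a $(0,1)$-form. As $\ext{1,0}_J M$ and $\ext{0,1}_J M$ are complementary subbundles of $\ext{1} M\otimes\bb{C}$, a sum of a $(1,0)$-form and a $(0,1)$-form can vanish only if each summand vanishes separately. Hence $h(a\,s,t)=0$ for all $s,t$, and non-degeneracy of $h$ forces $a\,s=0$ for every $s$; that is, $a=0$ and $\nabla=\nabla'$.

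The main obstacle here is conceptual rather than computational: one must fix the conventions for $h$ and its extension to $E$-valued forms so that the type separation in the third step is unambiguous, after which the conclusion is immediate. I would also remark, although it goes beyond the literal statement, that the companion existence claim — that every Dolbeault-type operator $\bdiff_J$ arises as the $(0,1)$-part of a metric connection, the \emph{Chern connection} — follows by solving the same identity in reverse within a local unitary frame; this is exactly what justifies using $\bdiff_J$ and $h$ to produce the connection on $\ext{1,0}_J M$ in the sequel.
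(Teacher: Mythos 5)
Your argument is correct, and it proves the stated uniqueness cleanly; but it is organised differently from the paper's proof. You work invariantly: the difference $a=\nabla-\nabla'$ of two metric connections with the same $(0,1)$-part is a tensor in $\m{A}^{1,0}(\mrm{End}(E))$, metric compatibility gives $h(a\,s,t)+h(s,a\,t)=0$, and the type count ($h(a\,s,t)$ of type $(1,0)$, $h(s,a\,t)$ of type $(0,1)$ by antilinearity in the second slot) forces each summand to vanish, whence $a=0$ by non-degeneracy. The paper instead computes in a local frame: writing $\tau$ and $\vartheta$ for the $(0,1)$- and $(1,0)$-connection matrices, it compares the $(1,0)$-parts of $\mrm{d}(h(\sigma_i,\sigma_j))$ and solves explicitly, obtaining $\vartheta\indices{^k_i}=h^{jk}\diff h_{ij}-h^{jk}\overline{\tau\indices{^l_j}}h_{il}$. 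Both proofs hinge on the same type separation of the compatibility identity, so the underlying idea is identical; what the paper's computation buys is the explicit local formula for the $(1,0)$-part, which is reused later (for the curvature of $\nabla$ on $K_J$ and for Lemma~\ref{lemma:variazione_nabla}), and which also settles existence by reversing the computation. Your approach is shorter and frame-free but yields only uniqueness; your closing remark on existence is correct in spirit but, as you note, is not proved, and in the paper it is exactly the explicit formula that does that job.
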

\begin{proof}
Let~$D:\Gamma(E)\to\m{A}^{0,1}(E)$ be a~$(0,1)$-operator and let~$\tau\indices{^i_j}$ be the matrix of~$(0,1)$-forms associated to~$D$ in a frame~$\mathbf{s}=(\sigma_1,\dots,\sigma_n)$ for~$E$. Assume that~$\nabla$ is a metric connection that has~$D$ as~$(0,1)$-part, let~$D'$ be its~$(1,0)$-part and let~$\vartheta\indices{^i_j}$ be the matrix of~$D'$ in the frame~$\mathbf{s}$, so that~$\nabla\sigma_i=(\tau\indices{^j_i}+\vartheta\indices{^j_i})\sigma_j$. Since we are assuming~$\nabla$ to be compatible with the Hermitian fibre metric~$h$, we have
\begin{equation*}
\mrm{d}(h(\sigma_i,\sigma_j))=h(\nabla\sigma_i,\sigma_j)+h(\sigma_i,\nabla\sigma_j)=(\tau\indices{^k_i}+\vartheta\indices{^k_i})h_{kj}+(\overline{\tau\indices{^l_j}}+\overline{\vartheta\indices{^l_j}})h_{il}
\end{equation*}
but on the other hand
\begin{equation*}
\mrm{d}(h(\sigma_i,\sigma_j))=\diff h_{ij}+\bar{\diff}h_{ij}.
\end{equation*}
Comparing the~$(1,0)$-parts of these expressions we find
\begin{equation*}
\diff h_{ij}=\vartheta\indices{^k_i}h_{kj}+\overline{\tau\indices{^l_j}}h_{il}
\end{equation*}
and in the end we obtain
\begin{equation}\label{eq:Chern_quasi_complessa}
\vartheta\indices{^k_i}=h^{jk}\diff h_{ij}-h^{jk}\overline{\tau\indices{^l_j}}h_{il}.
\end{equation}
This shows the uniqueness claim. Tracing back the proof it is easy to check that we get a metric connection if we use as~$(1,0)$-part the operator defined by equation~\eqref{eq:Chern_quasi_complessa} and as~$(0,1)$-part~$D$.
\end{proof}

In particular there is a unique affine connection~$\nabla_J$ on~$\ext{1,0}_J M$ such that~$\nabla^{(0,1)}_J=\bar{\diff}_J$ and~$\nabla_J h=0$. This connection~$\nabla_J$ induces of course also a connection on~$\ext{0,1} M$ (by conjugation),~$T^{1,0} M$ (by duality) and~$T^{0,1} M$. Moreover, if we consider~$T M$ as a~$\bb{C}$-vector bundle on~$ M$ by~$\I X:=JX$, we also have a connection induced by~$\nabla_J$ on~$T M$ via the isomorphism
\begin{equation*}
\begin{split}
T M &\to T^{1,0}_J M \\
X&\mapsto X^{1,0}=\frac{1}{2}(X-\I JX).
\end{split}
\end{equation*}
This connection has been studied in the context of almost Hermitian manifolds. For a proof of the following result see~\cite[Theorem~$3.2$]{KobayashiNomizu2} and~\cite[Proposition~$2$ and the discussion at pages~$272-273$]{Gauduchon_HermitianConnections}.
\begin{lemma}\label{lemma:torsion}
With the previous notation, the torsion of~$\nabla_J$ on~$TM$ is
\begin{equation*}
T^{\nabla}(X,Y)=-\frac{1}{4}N_J(X,Y).
\end{equation*}
\end{lemma}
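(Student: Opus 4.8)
The plan is to reduce the statement to the universal identity relating the exterior derivative on $1$-forms, the torsion of a connection, and the connection itself. Viewing $\nabla_J$ as a connection on the real tangent bundle $TM$ through the isomorphisms of the preceding discussion, and denoting again by $\nabla_J$ its dual connection on $T^*M$, one has for every $1$-form $\varphi$ and all vector fields $X,Y$
\begin{equation*}
\mrm{d}\varphi(X,Y)=(\nabla_X\varphi)(Y)-(\nabla_Y\varphi)(X)+\varphi\!\left(T^{\nabla}(X,Y)\right),
\end{equation*}
which is immediate from the definitions of $\mrm{d}$, of the dual connection and of the torsion. Since $T^{\nabla}$ is a tensor and a real tangent vector is determined by its pairings with $(1,0)$-forms (those with $(0,1)$-forms being their conjugates, as $T^{\nabla}$ and $N_J$ are real), it suffices to evaluate $\varphi(T^{\nabla}(X,Y))$ for $\varphi\in\m{A}^{1,0}_JM$. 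Substituting Lemma~\ref{lemma:decomposizione_d} into the identity above, the claim becomes equivalent to showing that the skew-symmetrised covariant derivative $\mrm{Alt}(\nabla\varphi):=(\nabla_X\varphi)(Y)-(\nabla_Y\varphi)(X)$ equals $\diff_J\varphi+\bdiff_J\varphi$, so that the only surviving contribution to $\varphi(T^{\nabla}(\cdot,\cdot))$ is the $(0,2)$-term $-\tfrac{1}{4}\varphi\circ N_J$.

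To analyse $\mrm{Alt}(\nabla\varphi)$ I would split it by type. Because $\nabla_J$ preserves the decomposition $T_{\bb{C}}M=T^{1,0}_JM\oplus T^{0,1}_JM$, the derivative $\nabla\varphi$ of a $(1,0)$-form is $\ext{1,0}_JM$-valued; hence $\mrm{Alt}(\nabla\varphi)$ has no $(0,2)$-component, and its $(1,1)$- and $(2,0)$-components come from $\nabla^{(0,1)}\varphi$ and $\nabla^{(1,0)}\varphi$ respectively. The $(1,1)$-component is controlled for free: by construction $\nabla^{(0,1)}_J=\bdiff_J$, and $\bdiff_J$ is precisely the $(1,1)$-part of $\mrm{d}$ appearing in Lemma~\ref{lemma:decomposizione_d}, so the two $(1,1)$-parts cancel and the $(1,1)$-component of $\varphi(T^{\nabla})$ vanishes; by conjugation the whole $(1,1)$-part of $T^{\nabla}$ is zero, consistently with $N_J(Z,\bar{W})=0$. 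The $(0,2)$-component is likewise automatic: since $\mrm{Alt}(\nabla\varphi)$ has no $(0,2)$-part, the entire $(0,2)$-piece of $\varphi(T^{\nabla})$ is the Nijenhuis term $-\tfrac{1}{4}\varphi\circ N_J$ supplied by Lemma~\ref{lemma:decomposizione_d}.

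The real work is the $(2,0)$-component, i.e.\ showing $\mrm{Alt}(\nabla^{(1,0)}\varphi)=\diff_J\varphi$, equivalently that the $T^{1,0}$-valued, $(2,0)$-form part of $T^{\nabla}$ vanishes. This is where the ambient structure enters: for any $J\in\mscr{J}$ the fundamental form of $g_J$ is the fixed symplectic form $\omega$, so $(M,g_J,J,\omega)$ is almost-K\"ahler and $\mrm{d}\omega=0$; moreover $\nabla_J$ is metric and $J$-linear, whence $\nabla_J\omega=0$. Feeding $\nabla_J\omega=0$ and $\mrm{d}\omega=0$ into the $2$-form analogue of the identity above yields the Bianchi-type relation
\begin{equation*}
\omega\!\left(T^{\nabla}(X,Y),Z\right)-\omega\!\left(T^{\nabla}(X,Z),Y\right)+\omega\!\left(T^{\nabla}(Y,Z),X\right)=0.
\end{equation*}
Taking $X,Y$ of type $(1,0)$ and $Z$ of type $(0,1)$, the two terms containing a $(1,1)$-torsion vanish by the previous paragraph, leaving $\omega(T^{\nabla}(X,Y),Z)=0$ for all $(0,1)$-vectors $Z$. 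As $\omega$ is of type $(1,1)$ and pairs $T^{1,0}_JM$ nondegenerately with $T^{0,1}_JM$, this forces the $T^{1,0}$-part of $T^{\nabla}(X,Y)$ to vanish, which is exactly the missing identity $\mrm{Alt}(\nabla^{(1,0)}\varphi)=\diff_J\varphi$. I expect this step to be the main obstacle: it is the one genuinely using the closedness of $\omega$ (without it a non-trivial $(2,0)$-torsion survives, as happens for the Chern connection of a non-K\"ahler Hermitian metric), and it requires the careful type bookkeeping above. Alternatively one could verify it by a direct computation in a $g_J$-unitary coframe using the explicit $(1,0)$-connection form~\eqref{eq:Chern_quasi_complessa}. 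Combining the three components gives $\varphi(T^{\nabla}(X,Y))=-\tfrac{1}{4}\varphi(N_J(X,Y))$ for every $(1,0)$-form $\varphi$, and therefore $T^{\nabla}=-\tfrac{1}{4}N_J$.
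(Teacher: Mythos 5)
The paper does not actually prove this lemma: it defers to \cite[Theorem~$3.2$]{KobayashiNomizu2} and \cite{Gauduchon_HermitianConnections}, so there is no internal argument to compare against. Your proof is correct and self-contained, and it follows the standard route for the second canonical (Chern) connection of an almost-K\"ahler manifold. The type bookkeeping is right: since $\nabla_J$ preserves types, $\mrm{Alt}(\nabla\varphi)$ has no $(0,2)$-part, the $(1,1)$-parts cancel because $\nabla^{(0,1)}_J=\bdiff_J$ by construction, and the only nontrivial point is the vanishing of the $(2,0)$-torsion. Your treatment of that point is the correct one: $\nabla_J$ commutes with $J$ and preserves $g_J$, hence $\nabla_J\omega=0$, and combining this with $\mrm{d}\omega=0$ in the first-Bianchi-type identity for the $2$-form $\omega$, together with the already-established vanishing of the mixed-type torsion and the nondegeneracy of the $(1,1)$-form $\omega$ on $T^{1,0}\times T^{0,1}$, forces $T^{\nabla}(X,Y)^{1,0}=0$ for $X,Y\in T^{1,0}_JM$. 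This is precisely the step that fails for a general almost-Hermitian (non-almost-K\"ahler) structure, where the residual $(2,0)$-torsion is $-(\mrm{d}\omega)^{(2,1)}$ contracted with the metric, so you have correctly isolated where the hypothesis $\mrm{d}\omega=0$ enters. No gaps.
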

Notice that~$h$ induces a Hermitian metric on the line bundle~$K_J:=\ext{n,0}_J M$, and~$\nabla$ defines a metric connection on this bundle.
\begin{lemma}
The curvature of~$\nabla$ on~$K_J$ is a purely imaginary~$2$-form.
\end{lemma}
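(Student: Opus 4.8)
The plan is to reduce the statement to the elementary fact that the curvature of \emph{any} metric connection on a Hermitian \emph{line} bundle is purely imaginary; the integrability of~$J$ will play no role, only the compatibility of~$\nabla$ with~$h$ on~$K_J$, which was noted just before the statement. First I would fix a point of~$M$ and choose a local non-vanishing smooth section~$s$ of~$K_J$ on a neighbourhood. Since~$K_J$ has rank one, the connection is described by a single~$\bb{C}$-valued~$1$-form~$\theta$ through~$\nabla s=\theta\,s$, and the curvature~$F$ of~$\nabla$ is represented in this frame by the~$2$-form~$\mrm{d}\theta$ (the quadratic term~$\theta\wedge\theta$ vanishes because a scalar~$1$-form wedges to zero with itself).

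Next I would exploit metric compatibility. Writing~$H:=h(s,s)>0$, the condition~$\nabla h=0$ gives
\begin{equation*}
\mrm{d}H=h(\nabla s,s)+h(s,\nabla s)=(\theta+\bar{\theta})\,H,
\end{equation*}
so that~$\theta+\bar{\theta}=\mrm{d}\log H$ is an exact, hence closed, real~$1$-form on the chart. Differentiating,
\begin{equation*}
F+\bar{F}=\mrm{d}\theta+\mrm{d}\bar{\theta}=\mrm{d}(\theta+\bar{\theta})=\mrm{d}\,\mrm{d}\log H=0,
\end{equation*}
hence~$\bar{F}=-F$ at every point of the chart. As~$F$ is a globally defined~$2$-form on~$M$ (under a change of frame~$s'=g\,s$ one has~$\theta'=\theta+\mrm{d}\log g$, so~$\mrm{d}\theta'=\mrm{d}\theta$), being purely imaginary is a pointwise condition that we have just verified on an arbitrary neighbourhood, and it therefore holds on all of~$M$.

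The computation is routine, so there is no serious obstacle; the only points worth keeping in mind are that~$K_J$ is a line bundle, which is precisely what kills the~$\theta\wedge\theta$ term and makes~$\theta+\bar{\theta}$ the logarithmic derivative of~$H$ rather than a matrix-valued expression, and that one never needs a holomorphic frame — a smooth non-vanishing local section suffices, so the argument is insensitive to whether~$J$ is integrable. This last point also explains why the conclusion is only that~$F$ is purely imaginary, i.e.~$\mrm{Re}\,F=0$, and not the stronger statement that~$F$ is of type~$(1,1)$: in the non-integrable case the Nijenhuis tensor can produce~$(2,0)$ and~$(0,2)$ components of~$F$, but these are unaffected by the reality argument above.
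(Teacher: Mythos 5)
Your proof is correct, and it takes a cleaner route than the paper's. The paper writes the full connection form of~$K_J$ as~$\diff\log h(s,s)+\vartheta-\bar{\vartheta}$ (with~$\vartheta$ the local representative of~$\nabla^{0,1}$, via Lemma~\ref{lemma:Chern_conn}), observes that~$\mrm{d}(\vartheta-\bar{\vartheta})$ is purely imaginary by inspection, and then handles the remaining term~$\mrm{d}\!\left(\diff\log h(s,s)\right)$ by decomposing~$\mrm{d}$ on~$1$-forms through Lemma~\ref{lemma:decomposizione_d}, so the Nijenhuis tensor appears explicitly and the cancellation of the real part is extracted from~$\mrm{d}^2=0$ written in type components. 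You bypass the type decomposition entirely: metric compatibility forces~$\theta+\bar{\theta}=\mrm{d}\log H$ to be exact, so~$F+\bar{F}=\mrm{d}(\theta+\bar{\theta})=0$ follows from~$\mrm{d}^2=0$ applied to the \emph{total} exterior derivative, with no reference to~$N_J$. The two arguments rest on the same identity~$\mrm{d}^2=0$ — the paper's displayed relation among~$\diff^2$,~$\bdiff\diff$ and~$N_J^\transpose\circ\diff$ is just that identity split into bidegrees — but yours is the standard, frame-independent statement that any metric connection on a Hermitian line bundle has skew-Hermitian (hence purely imaginary) curvature, and it makes transparent why integrability of~$J$ is irrelevant. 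What the paper's version buys is the explicit bookkeeping of where the~$(2,0)$ and~$(0,2)$ components of~$F$ come from, which is useful context for the surrounding discussion of the Hermitian scalar curvature in the non-integrable case, but it is not needed for the lemma itself. Your closing remark correctly identifies that the conclusion is only~$\mrm{Re}\,F=0$ and not~$F\in\m{A}^{1,1}$.
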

\begin{proof}
Let~$\Theta$ be the~$2$-form representing locally the curvature~$F(\nabla)$. From the proof of Lemma~\ref{lemma:Chern_conn} we can get a local expression for~$\Theta$ in terms of~$h$ and the local representative~$\vartheta$ of~$\nabla^{0,1}$:
\begin{equation*}
\Theta=\mrm{d}\left(\diff\,\mrm{log}\,h(s,s)+\vartheta-\bar{\vartheta}\right).
\end{equation*}
It is enough to prove that~$\mrm{d}\left(\diff\,\mrm{log}\,h(s,s)\right)$ is purely imaginary. To check this use Lemma~\ref{lemma:decomposizione_d} to get:
\begin{equation*}
\mrm{d}\left(\diff\,\mrm{log}\,h(s,s)\right)=\diff^2\mrm{log}\,h(s,s)+\bdiff\diff\,\mrm{log}\,h(s,s)-\frac{1}{4}\left(\diff\,\mrm{log}\,h(s,s)\right)\circ N_J.
\end{equation*}
Now we can use Lemma~\ref{lemma:decomposizione_d} again to see that this form is purely imaginary, since~$\mrm{d}^2=0$ implies
\begin{equation*}
\diff^2-\frac{1}{4}N_J^\transpose\circ\diff+\bdiff\diff=-\left(\bdiff^2-\frac{1}{4}N_J^\transpose\circ\bdiff+\diff\bdiff\right).\qedhere
\end{equation*}
\end{proof}

Then we can write~$F(\nabla)=\I\rho$ for a real~$2$-form~$\rho$. When~$J$ is integrable, and so~$(M,J,\omega)$ is a K\"ahler manifold,~$\rho$ is just the Ricci form of the manifold, and its contraction with the metric tensor gives the usual scalar curvature. In our case we can still take the contraction of~$\rho$ with~$\omega$ to find a function, the \emph{Hermitian scalar curvature} of~$( M ,J,\omega)$. More precisely we define~$S(J)$ as the unique (real) function on~$ M$ such that
\begin{equation*}
S(J)\omega^n=n\rho\wedge\omega^{n-1}.
\end{equation*}

Notice that the average~$\hat{S}$ of~$S(J)$ is a topological quantity, since
\begin{equation*}
\int_{ M }S(J)\frac{\omega^n}{n!}=2\pi c_1( M ,\omega,J)\cup[\omega]^{n-1}.
\end{equation*}
Indeed, the first Chern class of a symplectic manifold does not depend on the choice of the compatible almost complex structure that we choose on it: the space of compatible almost complex structures is connected, and the first Chern class is left invariant under an infinitesimal deformation of the complex structure. So we have a continuous function~$J\to c_1( M ,\omega,J)\in H^2( M ,\bb{Z})$. But this target space is discrete, hence~$c_1( M ,\omega)$ does not depend on the choice of~$J\in\mscr{J}$.

The group~$\mscr{G}:=\mrm{Ham}( M ,\omega)$ %
\nomenclature[G]{$\mscr{G}$}{shorthand for~$\mrm{Ham}(M,\omega)$}%
of Hamiltonian diffeomorphisms of~$ M$ acts naturally on~$\mscr{J}$ by pull-backs, i.e. by letting
\begin{equation*}
\varphi.J:=\left(\varphi^{-1}\right)^*J=\mrm{d}\varphi\circ J\circ \mrm{d}\varphi^{-1}
\end{equation*}
for any~$J\in\mscr{J}$ and~$\varphi\in\mscr{G}$. Recall that the Lie algebra of~$\mscr{G}$ is the algebra of Hamiltonian vector fields, and we identify this with~$\m{C}^\infty_0( M ,\bb{R})$, as in Section~\ref{sec:hamiltonian}; the dual of the Lie algebra~$\m{C}^\infty_0( M )$ is identified with~$\m{C}^\infty_0( M )$ via the usual~$L^2$ pairing of functions on~$ M$.

We can now state Theorem~\ref{thm:Donaldson_scalcurv_mm} more precisely.

\begin{thm}\label{thm:moment_map}
The map~$J\mapsto 2\big(S(J)-\hat{S}\big)$ is a moment map for the action of~$\mscr{G}$ on~$\mscr{J}$.
\end{thm}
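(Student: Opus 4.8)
The plan is to verify the two defining properties of a moment map from the end of Section~\ref{sec:hamiltonian}, namely equivariance and the differential (Hamiltonian) condition, for the assignment $\mu_J:=2\big(S(J)-\hat{S}\big)$, paired with $a\in\f{g}=\m{C}^\infty_0(M,\bb{R})$ by the $L^2$ product $\langle\mu_J,a\rangle=\int_M 2\big(S(J)-\hat{S}\big)\,a\,\frac{\omega^n}{n!}$. A preliminary observation is that $\mu_J$ genuinely lands in $\f{g}^*\cong\m{C}^\infty_0$: since $\hat{S}$ is the ($J$-independent) average of $S(J)$ computed just above the statement, $\mu_J$ has zero average and so pairs correctly against zero-average Hamiltonians. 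First I would record the infinitesimal action: differentiating $\varphi.J=(\varphi^{-1})^*J$ along the Hamiltonian flow of $a$ gives $\hat{a}=\m{L}_{X_a}J$, where $X_a$ is the Hamiltonian vector field of $a$. Because $\Phi^t_{X_a}$ consists of symplectomorphisms carrying compatible almost complex structures to compatible ones, $\hat{a}$ lies in $T_J\mscr{J}$; concretely it anticommutes with $J$ and $g_J(\hat{a}-,-)$ is symmetric, so it satisfies~\eqref{eq:indentita_tangenti}.

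Equivariance is the easier condition. Since $\varphi$ preserves $\omega$, one checks $g_{\varphi.J}=(\varphi^{-1})^*g_J$, so the whole Riemannian structure is pulled back and hence $S(\varphi.J)=S(J)\circ\varphi^{-1}$. Combining this with the constancy of $\hat{S}$ and the change of variables $\int_M f\,\frac{\omega^n}{n!}=\int_M (f\circ\varphi)\,\frac{\omega^n}{n!}$ (valid because $\varphi^*\omega^n=\omega^n$) yields $\langle\mu_{\varphi.J},a\rangle=\langle\mu_J,a\circ\varphi\rangle=\langle\mu_J,\mrm{Ad}_{\varphi^{-1}}a\rangle$, where the coadjoint action on $\f{g}^*\cong\m{C}^\infty_0$ is pullback of functions.

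The heart of the proof is the differential condition $\mrm{d}\langle\mu,a\rangle(A)=-\Omega_J(\hat{a},A)$ for all $A\in T_J\mscr{J}$. Since $\hat{S}$ and $\frac{\omega^n}{n!}$ do not depend on $J$, the left-hand side is $2\int_M DS_J(A)\,a\,\frac{\omega^n}{n!}$, where $DS_J$ is the linearization of the Hermitian scalar curvature. Using~\eqref{eq:symp_form_J} and antisymmetry, the right-hand side is $\Omega_J(A,\hat{a})=\frac12\int_M\mrm{Tr}(JA\,\hat{a})\,\frac{\omega^n}{n!}=\bb{G}_J(JA,\hat{a})$. Thus the statement reduces to the operator identity
\begin{equation*}
2\int_M DS_J(A)\,a\,\frac{\omega^n}{n!}=\bb{G}_J\big(JA,\,\m{L}_{X_a}J\big),\qquad\text{equivalently}\qquad 2\,DS_J=P^*\circ\bb{J},
\end{equation*}
where $P\colon a\mapsto\m{L}_{X_a}J$ is the infinitesimal action and $P^*$ its formal adjoint with respect to $\bb{G}$ on $T_J\mscr{J}$ and the $L^2$ pairing of functions (legitimate since $M$ is compact without boundary).

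To establish this identity I would compute $DS_J(A)$ from the description of $S$ through the canonical-bundle connection: recall $F(\nabla_J)=\I\rho$ and $S(J)\,\omega^n=n\,\rho\wedge\omega^{n-1}$, so I need the variation of the curvature $\rho$ of $\nabla_J$ on $K_J=\ext{n,0}_J M$ along a path $J_t$ with $\dot{J}_0=A$. Through the identification $A\leftrightarrow A^{0,1}\in\m{A}^{0,1}(T^{1,0}M)$ the deformation controls the variation of $\bdiff_J$, hence of the connection of Lemma~\ref{lemma:Chern_conn} and of its curvature; differentiating $\rho$ and contracting with $\omega$ expresses $DS_J(A)$ as a second-order operator in $A$. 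Pairing against $a$ and integrating by parts then moves all derivatives onto $a$ and, after a parallel integration by parts on the right (rewriting $\m{L}_{X_a}J$ through $\nabla X_a$), matches the two sides. I expect this linearization to be the main obstacle: in the present almost-complex setting one must carefully track the Nijenhuis contributions from Lemma~\ref{lemma:decomposizione_d} and Lemma~\ref{lemma:torsion}, since $J$ need not be integrable, and the bookkeeping of the type decomposition together with the precise constants and signs (which fix the factor $2$ in the statement) is the delicate part; the integrations by parts themselves are routine once the variational formula for $\rho$ is in hand.
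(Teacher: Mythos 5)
Your framework is exactly the paper's: you verify equivariance via $S(\varphi.J)=(\varphi^{-1})^*S(J)$ and the invariance of $\omega^n$, and you reduce the Hamiltonian condition to the adjointness identity $2\,DS_J=P^*\circ\bb{J}$, which is precisely the paper's reformulation $4\,Q_2\circ Q_1=P_1^*\circ P_2^*\circ J$ of equation~\eqref{eq:mm_pairing} (your signs and constants are consistent with~\eqref{eq:integral_form_mm}, using $\mrm{Tr}(JA\hat{a})=-\mrm{Tr}(J\hat{a}A)$ for anticommuting deformations). The equivariance argument and the identification of the infinitesimal action as $\m{L}_{X_a}J$ are correct and complete as written.

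The gap is that the entire substantive content of the theorem --- the computation of $DS_J(A)$ --- is deferred rather than carried out. The paper's proof spends essentially all of its length on this: it factors $P=P_2\circ P_1$ and $Q=Q_2\circ Q_1$, computes $P_1^*$ by the divergence identity (Lemma~\ref{lemma:Q2_aggiunto}), computes $P_2(X)=4\,\mrm{Im}\big(\nabla^{0,1}X^{1,0}-\tfrac14N(X^{0,1},-)\big)$ (Lemma~\ref{lemma_P2}), and then derives $Q_1$ by tracking how the Chern connection on $\ext{n,0}_JM$ varies under $J\mapsto J+\varepsilon A$ --- this requires the first-order isometry $\mu_A$ of Lemma~\ref{lemma:first_ord_identification}, the two variation lemmas~\ref{lemma:variazione_J} and~\ref{lemma:variazione_nabla}, the torsion identity of Lemma~\ref{lemma:torsion}, and the almost-K\"ahler identities $\diff^*=\I[\Lambda,\bdiff]$. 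You correctly name the ingredients (variation of $\rho$ on $K_J$, Nijenhuis contributions from Lemmas~\ref{lemma:decomposizione_d} and~\ref{lemma:torsion}) but do not execute the computation, and it is exactly here that the factor of $2$ in the statement and the non-integrable correction terms are determined; without it the identity $2\,DS_J=P^*\circ\bb{J}$ is asserted rather than proved. As a plan the proposal is sound and follows the paper's route; as a proof it is missing its core.
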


The infinitesimal action on~$\mscr{J}$ of a function~$f\in\m{C}^\infty_0(M,\bb{R})$ is, from the definition
\begin{equation*}
\hat{f}_J=\frac{\mrm{d}}{\mrm{d}t}\Bigr|_{t=0}\mrm{exp}(-tf).J=\frac{\mrm{d}}{\mrm{d}t}\Bigr|_{t=0}\left(\Phi_{X_f}^t\right)^*J=\m{L}_{X_f}J
\end{equation*}
where~$\Phi_{X_f}^t$ is the flow of the Hamiltonian vector field~$X_f$. Then we can formulate Theorem~\ref{thm:moment_map} as follows: for every~$A\in T_J\mscr{J}$ and every~$f\in\m{C}^\infty_0(M)$
\begin{equation}\label{eq:integral_form_mm}
2\int DS_J(A) f\frac{\omega^n}{n!}=-\frac{1}{2}\int\mrm{Tr}\left(J(\m{L}_{X_f}J)\,A\right)\frac{\omega^n}{n!}.
\end{equation}
It is not straightforward to check that this equation holds for an almost complex structure~$J$ that is not necessarily integrable. We will give an account of Donaldson's proof in Section~\ref{sec:thm:moment_map_proof}; for the integrable case instead the computation is quite easier and can be found for example in~\cite{Szekelyhidi_phd} or~\cite{Tian_libro}.

Theorem~\ref{thm:moment_map} also implicitly states that the map~$J\to S(J)$ is \emph{equivariant} with respect to the action~$\mscr{G}\curvearrowright\mscr{J}$ and the co-adjoint action of~$\mscr{G}$ on~$T_e^*\mscr{G}$. To spell out this condition more explicitly, notice that for any symplectomorphism~$\varphi$ and any function~$f$, for the Hamiltonian vector field~$X^\omega_f$ we have
\begin{equation*}
\varphi_*\left(X^\omega_f\right)=X^\omega_{f\circ\varphi^{-1}}.
\end{equation*}
Then under the identification (Proposition~\ref{prop:Ham_properties}) of the Lie algebra of~$\mscr{G}$ with~$\m{C}^\infty_0$ we have~$\mrm{Ad}_{\varphi^{-1}}(f)=\varphi^*f$. This means that the equivariance of the moment map can be restated in our case as
\begin{equation}\label{eq:equivariance_scalar}
\int_MS(\varphi.J)f\frac{\omega^n}{n!}=\int_MS(J)\,\varphi^*f\frac{\omega^n}{n!}
\end{equation}
for every~$\varphi\in\mscr{G}$ and~$f\in\m{C}^\infty_0$. To prove that~\eqref{eq:equivariance_scalar} holds, first notice that if we write~$g(\omega,J)$ for the metric defined by~$\omega$ and~$J\in\mscr{J}$ then
\begin{equation*}
g(\omega,(\varphi^{-1})^*J)=(\varphi^{-1})^*g(\omega,J)
\end{equation*}
since~$\varphi$ is a symplectomorphism. Moreover
\begin{equation*}
\ext{1,0}_{(\varphi^{-1})^*J}M=(\varphi^{-1})^*\left(\ext{1,0}_JM\right)
\end{equation*}
so the metric defined by~$\omega$ on~$\ext{1,0}_{\varphi.J}M$ is the pull-back of the metric defined by~$\omega$ on~$\ext{1,0}_JM$. From our definition of~$S(J)$ then we have~$S(\varphi.J)=(\varphi^{-1})^*S(J)$ and equation~\eqref{eq:equivariance_scalar} easily follows, as~$\varphi$ preserves the volume form~$\omega^n$.

\begin{rmk}\label{rmk:integrable_submanifoldJ}
The action of~$\mscr{G}$ on~$\mscr{J}$ preserves the locus of \emph{integrable} almost complex structures, that we denote by~$\mscr{J}_{\mrm{int}}$. This is essentially a consequence of the naturality of the Lie bracket: for any almost complex structure~$J$ and a diffeomeorphism~$\phi$, it is easy to check that
\begin{equation*}
N_{\phi^*J}(X,Y)=\phi^{-1}_*\left(N_J(\phi_*X,\phi_*Y)\right)
\end{equation*}
so that if~$J$ is integrable, also~$\phi^*J$ is. Since the action~$\mscr{G}\curvearrowright\mscr{J}$ is by pull-backs, the action preserves~$\mscr{J}_{\mrm{int}}$. Moreover,~$\mscr{J}_{\mrm{int}}$ is a K\"ahler submanifold of~$\mscr{J}$; this is a consequence of the next Lemma.
\end{rmk}
\begin{definition}\label{def:integrable}
Let~$A\in\Gamma(\mrm{End}(TM))$ be a first-order deformation of the complex structure~$J$, i.e.~$AJ+JA=0$. Then we say that~$A$ is an \emph{integrable} first-order deformation of~$J$ if
\begin{equation*}
N(J+\varepsilon\,A)=O(\varepsilon^2).
\end{equation*}
\end{definition}
This condition is equivalent to the more well-known framing of integrability in terms of the Maurer-Cartan equation. Indeed if we have a family of complex structures~$J+t\,A+O(t^2)$ then the first-order Maurer-Cartan equation, i.e.
\begin{equation*}
\bdiff_J A^{1,0}=0
\end{equation*}
is equivalent to~$A$ being an integrable first-order deformation of~$J$ in the sense of Definition~\ref{def:integrable}. An immediate consequence is that~$\mscr{J}_{\mrm{int}}$ is a complex submanifold of~$\mscr{J}$.
\begin{lemma}\label{lemma:integrable_submanifoldJ}
Let~$J$ be a complex structure, and let~$A$ be an integrable first-order deformation of~$J$. Then~$JA$ is also an integrable first-order deformation of~$J$.
\end{lemma}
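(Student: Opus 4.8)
The plan is to reduce everything to the fact that the complex structure $\bb{J}$ on $\mscr{J}$, which acts by $A\mapsto JA$, becomes multiplication by~$\I$ on the values once we pass to the identification $A\mapsto A^{1,0}=A_{\restriction T^{0,1}_JM}$ of $T_J\mscr{J}$ with a subspace of $\m{A}^{0,1}_J(T^{1,0}_JM)$. Granting this, integrability is cut out by a $\bb{C}$-linear equation and is therefore automatically preserved by $\bb{J}$. First I would check that $JA$ is again a first-order deformation of $J$ in the sense of Definition~\ref{def:integrable}, i.e. that it anticommutes with~$J$: using $AJ=-JA$ one computes
\[
(JA)J+J(JA)=JAJ+J^2A=J(AJ+JA)=0,
\]
so $JA$ is a legitimate first-order deformation to begin with.

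Next I would identify its $(1,0)$-part. For $\bar W\in T^{0,1}_JM$ the anticommutation $AJ+JA=0$ gives $A\bar W\in T^{1,0}_JM$, and $J$ acts as multiplication by~$\I$ on $T^{1,0}_JM$; hence
\[
(JA)^{1,0}(\bar W)=J\bigl(A\bar W\bigr)=\I\,A\bar W=\I\,A^{1,0}(\bar W),
\]
that is $(JA)^{1,0}=\I\,A^{1,0}$. This is exactly the globalised form, following Proposition~\ref{prop:metrica_AC+_Siegel}, of the statement that $\bb{J}$ corresponds to multiplication by~$\I$. Recalling from the discussion after Definition~\ref{def:integrable} that $A$ is integrable if and only if it satisfies the first-order Maurer--Cartan equation $\bdiff_J A^{1,0}=0$, and using that $\bdiff_J$ is $\bb{C}$-linear while $\I$ is constant, I would then conclude
\[
\bdiff_J\bigl((JA)^{1,0}\bigr)=\bdiff_J\bigl(\I\,A^{1,0}\bigr)=\I\,\bdiff_J A^{1,0}=0,
\]
so that $JA$ is integrable as well.

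The argument is short, and the only genuinely delicate point is bookkeeping rather than analysis: one must be careful that in $(JA)^{1,0}=\I\,A^{1,0}$ the factor~$\I$ multiplies the values (the $T^{1,0}_JM$-component) and not the form part, and that $A^{1,0}$ and $(JA)^{1,0}$ are sections of one and the same space $\m{A}^{0,1}_J(T^{1,0}_JM)$ attached to the \emph{fixed} complex structure~$J$, so that a single operator $\bdiff_J$ applies to both. Once this is pinned down the $\bb{C}$-linearity of $\bdiff_J$ does all the work. I would also note that the same computation settles the assertion in Remark~\ref{rmk:integrable_submanifoldJ}: since $\mscr{J}_{\mrm{int}}$ is locally the zero locus of the $\bb{C}$-linear map $A^{1,0}\mapsto\bdiff_J A^{1,0}$, its tangent spaces are $\bb{J}$-invariant and $\mscr{J}_{\mrm{int}}$ is a complex submanifold of $\mscr{J}$.
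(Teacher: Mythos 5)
Your proof is correct and follows exactly the route the paper intends: the lemma is stated there without an explicit proof, as an immediate consequence of the preceding observation that integrability of $A$ is equivalent to the first-order Maurer--Cartan equation $\bdiff_J A^{1,0}=0$, which is $\bb{C}$-linear and hence preserved under $A\mapsto JA$ since $(JA)^{1,0}=\I\,A^{1,0}$. Your write-up simply supplies the details (the anticommutation check and the identification of $(JA)^{1,0}$) that the paper leaves implicit, and your closing remark about $\mscr{J}_{\mrm{int}}$ being a complex submanifold matches the paper's Remark~\ref{rmk:integrable_submanifoldJ} as well.
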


\subsection{K\"ahler structure of~$\mscr{J}$}\label{sec:KahlerStrJ}

The main goal of this section is to prove Theorem~\ref{thm:kahler_J}. We will also digress to study the connection and curvature of~$\mscr{J}$, showing that the curvature of~$\mscr{J}$ is essentially the same as that of~$\m{AC}^+$ and proving that~$\mscr{J}$ is a formally symmetric space. Compare this result with the curvature of the space of K\"ahler potentials in~\cite{Donaldson_SymmKahlerHam}. The expression we will find for the curvature of~$\mscr{J}$ is well-know, see~\cite[Example~$4.3.6$]{McDuff_Salamon} and also~\cite{Smolentsev_curvature}.

Notice that Theorem~\ref{thm:kahler_J} consists of two statements:~$\bb{J}$ is formally integrable and the symplectic form~$\Omega$ of~\eqref{eq:symp_form_J} is closed. We will actually prove a more general result, that will also be needed in a slightly different form in Chapter~\ref{chap:HyperkahlerReduction}. This is essentially based on the discussion of a more general problem in~\cite{Koiso_complex_structure}.
\begin{thm}\label{thm:differenziazione}
Let~$k$ be a~$r$-form on~$\m{AC}^+$ invariant under the~$\mrm{Sp}(2n)$-action, and let~$K$ be a~$r$-form on~$\mscr{J}$ such that for all~$J\in\mscr{J}$ and~$v_1,\dots,v_r\in T_{J}\mscr{J}$
\begin{equation*}
K_{J}(v_1,\dots,v_r)=\int_{x\in M} k_{J(x)}(v_1(x),\dots,v_r(x))\frac{\omega^n}{n!}
\end{equation*}
where the second expression is computed by taking a local trivialization of~$\m{E}$ around each~$x\in M$. Then
\begin{equation*}
\mrm{d}K_{J}(v_0,v_1,\dots,v_r)=\int_{x\in M}\mrm{d}k_{J(x)}(v_0,v_1,\dots,v_r)\frac{\omega^n}{n!}.
\end{equation*}
\end{thm}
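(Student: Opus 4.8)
The plan is to compute $\mrm{d}K$ by means of the intrinsic (Koszul) formula for the exterior derivative and to show that, term by term, it reduces to the corresponding pointwise expression on $\m{AC}^+$ integrated against $\omega^n/n!$. Recall that for vector fields $V_0,\dots,V_r$ on any manifold one has
\begin{equation*}
\mrm{d}K(V_0,\dots,V_r)=\sum_{i}(-1)^i V_i\big(K(V_0,\dots,\widehat{V_i},\dots,V_r)\big)+\sum_{i<j}(-1)^{i+j}K\big([V_i,V_j],V_0,\dots,\widehat{V_i},\dots,\widehat{V_j},\dots,V_r\big),
\end{equation*}
and the analogous formula holds for $\mrm{d}k$ on $\m{AC}^+$. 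Since $k$ is $\mrm{Sp}(2n)$-invariant and $\mrm{d}$ commutes with pullback, $\mrm{d}k$ is again $\mrm{Sp}(2n)$-invariant; hence the right-hand side $\int_M \mrm{d}k\,\omega^n/n!$ is well-defined, independently of the chosen local trivializations of~$\m{E}$, exactly as $K$ itself is.

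The key is to extend each $v_i\in T_J\mscr{J}$ to a vector field $V_i$ on $\mscr{J}$ that is \emph{ultralocal}: there should be a family of vector fields $w_{i,x}$ on $\m{AC}^+$, depending smoothly on $x\in M$, with $V_i(J')(x)=w_{i,x}(J'(x))$ in any local trivialization and with $w_{i,x}(J(x))=v_i(x)$. Such extensions exist because $\mrm{Sp}(2n)$ acts transitively on $\m{AC}^+$: choosing $\xi_i(x)\in\f{sp}(2n)$ smoothly in $x$ with $\widehat{\xi_i(x)}(J(x))=v_i(x)$ and setting $w_{i,x}=\widehat{\xi_i(x)}$ works. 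The decisive property of ultralocal fields is that their flow is pointwise, $\Phi^t_{V_i}(J')(x)=\Phi^t_{w_{i,x}}(J'(x))$, and consequently their Lie bracket is again ultralocal and pointwise, $[V_i,V_j](J')(x)=[w_{i,x},w_{j,x}](J'(x))$.

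Granting this, each term of the Koszul formula on $\mscr{J}$ matches the integral over $M$ of the corresponding term on $\m{AC}^+$. For the derivative terms, ultralocality identifies $K(V_0,\dots,\widehat{V_i},\dots,V_r)$ at $J'$ with $\int_M\big(k(w_0,\dots,\widehat{w_i},\dots,w_r)\big)_{J'(x)}\,\omega^n/n!$, where $k(w_0,\dots,\widehat{w_i},\dots,w_r)$ is the function on $\m{AC}^+$ obtained by contracting $k$ with the $w_{\cdot,x}$; differentiating along the pointwise flow of $V_i$ and passing the derivative under the integral yields $\int_M\big(w_{i,x}(k(w_0,\dots,\widehat{w_i},\dots,w_r))\big)_{J(x)}\,\omega^n/n!$, i.e. the $i$-th derivative term of $\mrm{d}k$ at $J(x)$. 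For the bracket terms, the pointwise bracket formula gives directly $\int_M k_{J(x)}\big([w_{i,x},w_{j,x}](J(x)),\dots\big)\,\omega^n/n!$. Summing over the Koszul formula and using that $\mrm{d}k$ is a tensor, so that its value depends only on $w_{l,x}(J(x))=v_l(x)$ and not on the extensions, gives $\mrm{d}K_J(v_0,\dots,v_r)=\int_M\mrm{d}k_{J(x)}(v_0(x),\dots,v_r(x))\,\omega^n/n!$, as claimed.

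The main obstacle is analytic rather than algebraic: one must (i) produce the extensions $w_{i,x}$ smoothly in $x$ and check that they define honest vector fields on the Fréchet manifold $\mscr{J}$, (ii) justify differentiation under the integral sign for the derivative terms, and (iii) verify that the infinite-dimensional Lie bracket of two ultralocal fields really is the pointwise bracket. In the formal (Fréchet) setting these points are routine once one observes that all the vector fields involved are differential operators of order zero in the base variable $x$, so that no derivatives in $x$ are created and every operation commutes with the integration over $M$.
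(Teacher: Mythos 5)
Your proposal is correct and its skeleton is the same as the paper's: write $\mrm{d}K$ via the intrinsic (Koszul) formula, extend the given tangent vectors to vector fields on $\mscr{J}$ that act ``pointwise over $M$'' in a local trivialization, and then reduce every term to differentiation under the integral sign (the $r=0$ case). The only real divergence is the choice of extension. The paper extends each $v_i$ to a field that is \emph{constant} in a system of local coordinates for $\mscr{J}$ (following Koiso), and simultaneously constant in coordinates on $\m{AC}^+$ after pushing forward by the trivialization $\Phi^x_{\bm{u}}$; the payoff is that all the bracket terms $[V_i,V_j]$ vanish identically and the Koszul formula collapses to the derivative terms alone. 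You instead use the transitivity of the $\mrm{Sp}(2n)$-action to extend by fundamental vector fields $\widehat{\xi_i(x)}$; your brackets do not vanish, but they are again ultralocal and match the pointwise brackets on $\m{AC}^+$, so each Koszul term on $\mscr{J}$ matches the integral of the corresponding term on $\m{AC}^+$ and tensoriality of $\mrm{d}k$ finishes the argument. Both devices work; the paper's is slightly cleaner bookkeeping, yours has the advantage of being phrased entirely in terms of the group action and not requiring an explicit chart on $\mscr{J}$.

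One point you should make explicit: when you change Darboux coordinates the fibre coordinate transforms by conjugation by an $\mrm{Sp}(2n)$-valued function $h(x)$, and a fundamental vector field transforms as $\widehat{\xi}\mapsto\widehat{\mrm{Ad}_{h(x)}\xi}$. So for your ultralocal field $V_i$ to be well defined globally on $M$ the data $\xi_i(x)$ must be a section of the adjoint bundle associated to $\m{E}$, not a plain $\f{sp}(2n)$-valued function; transitivity fibrewise (say via the reductive splitting $\f{sp}(2n)=\f{u}(n)\oplus\f{r}$ and the identification of $\f{r}$ with the tangent space) lets you choose such a section smoothly. This is a technicality of the same order as the one the paper defers to Koiso, and your closing tensoriality remark absorbs any residual dependence on the choice, but it is the one place where ``depending smoothly on $x$'' hides a bundle-theoretic statement rather than a pointwise one.
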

Using Theorem~\ref{thm:differenziazione} we can prove that~$\Omega$ is closed just by noting that~$\Omega$ is defined by integration from the K\"ahler form of~$\m{AC}^+$.
\begin{rmk}
In fact we will need Theorem~\ref{thm:differenziazione} just for~$r=0,1,2$. For~$r=0$ the result is quite easy to prove: for~$J\in\mscr{J}$ and~$v\in T_J\mscr{J}$, let~$J_t$ be a path in~$\mscr{J}$ such that~$v=\frac{\mrm{d}}{\mrm{d}t}\Bigr|_{t=0}J_t$. Then
\begin{equation*}
\begin{split}
\mrm{d}K_J(v)&=\frac{\mrm{d}}{\mrm{d}t}\Bigr|_{t=0}\int_{x\in M}k(J_t(x))\frac{\omega^n}{n!}=\int_{x\in M}\frac{\mrm{d}}{\mrm{d}t}\Bigr|_{t=0}k(J_t(x))\frac{\omega^n}{n!}=\\
&=\int_{x\in M}\mrm{d}k_{J(x)}(v(x))\frac{\omega^n}{n!}.
\end{split}
\end{equation*}
Here the last equality holds since the matrix~$v(x)$ associated to~$v$ in a Darboux coordinate system around~$x\in M$ is given by~$\frac{\mrm{d}}{\mrm{d}t}\Bigr|_{t=0}J_t(x)$.
\end{rmk}

\begin{proof}[Proof of Theorem~\ref{thm:differenziazione}.]
We sketch the proof for ~$r=1$; the other cases are very similar. It will be convenient to introduce some additional notation: for~$x\in M$ and a system of Darboux coordinates~$\bm{u}$ around~$x$, let~$\Phi^x_{\bm{u}}$ be the map
\begin{equation*}
\begin{split}
\Phi^x_{\bm{u}}:\mscr{J}&\to \m{AC}^+\\
J&\mapsto J(x)
\end{split}
\end{equation*}
given by locally trivializing the fibre bundle over the coordinate system~$\bm{u}$.

For a tangent vector~$v\in T_J\mscr{J}$, we can extend it to a vector field~$V$ on an open neighbourhood of~$J\in\mscr{J}$ in such a way that~$V$ is \emph{constant} in a system of local coordinates for~$\mscr{J}$. For the details about how to find local coordinates for~$\mscr{J}$, see~\cite[proof of Theorem~$1.2$]{Koiso_complex_structure}. Moreover, this extension~$V$ is such that~$\mrm{d}\Phi^x_{\bm{u}}(V)$ is a vector field on~$\m{AC}^+(2n)$, itself constant in a system of coordinates for~$\m{AC}^+(2n)$. This essentially reduces the proof to the case~$r=0$ that we already discussed.

Now fix~$J\in\mscr{J}$ and~$v,w\in T_J\mscr{J}$. If we extend~$v, w$ to constant vectors~$V,W$ as in the previous paragraph, we can compute
\begin{equation*}
\mrm{d}K_J(v,w)=v_J(K(W))-w_J(K(V))-K([V,W])
\end{equation*}
however,~$[V,W]=0$ since the vector fields are constant; for the other two terms we have, if~$v=\diff_t\Bigr|_{t=0}J_t$:
\begin{equation*}
\begin{split}
v_J(K(W))&=\frac{\mrm{d}}{\mrm{d}t}\Bigr|_{t=0}\int_{x\in M} k_{\Phi^x_{\bm{u}}(J_t)}\left(\left(\mrm{d}\Phi^x_{\bm{u}}\right)_{J_t}(W)\right)\,\frac{\omega^n}{n!}=\\
&=\int_{x\in M} \left(\mrm{d}\Phi^x_{\bm{u}}\right)_J(v)\left(k(\mrm{d}\Phi^x_{\bm{u}}(W))\right)\,\frac{\omega^n}{n!}
\end{split}
\end{equation*}
so we find
\begin{equation*}
\begin{split}
\mrm{d}K_J(v,w)=&\int_{x\in M} \Big[\left(\mrm{d}\Phi^x_{\bm{u}}\right)_J(v)\left(k(\mrm{d}\Phi^x_{\bm{u}}(W))\right)-\left(\mrm{d}\Phi^x_{\bm{u}}\right)_J(w)\left(k(\mrm{d}\Phi^x_{\bm{u}}(V))\right)-\\
&\quad\quad\quad-k_{\Phi^x_{\bm{u}}(J)}\left(\left[\mrm{d}\Phi^x_{\bm{u}}(V),\mrm{d}\Phi^x_{\bm{u}}(W)\right]\right)\Big]\frac{\omega^n}{n!}=\\
=&\int_{x\in M} \mrm{d}k_{\Phi^x_{\bm{u}}(J)}\left(\mrm{d}\Phi^x_{\bm{u}}(v),\mrm{d}\Phi^x_{\bm{u}}(w)\right)\,\frac{\omega^n}{n!}.\qedhere
\end{split}
\end{equation*}
\end{proof}
Using Theorem~\ref{thm:differenziazione} we can also find an expression for the Levi-Civita connection of the metric~$\bb{G}$. Notice that the usual Koszul formula for the Levi-Civita connection, i.e.
\begin{equation}\label{eq:six_terms_formula}
\begin{split}
2\langle Z,\nabla_XY\rangle=&X\langle Z,Y\rangle+Y\langle Z,X\rangle-Z\langle X,Y\rangle+\\
&+\langle Y,[Z,X]\rangle+\langle X,[Z,Y]\rangle+\langle Z,[X,Y]\rangle
\end{split}
\end{equation}
gives the uniqueness of a torsion-free metric connection also in an infinite-dimensional setting, but does not guarantee its existence. However, a different characterization of the covariant derivative shows that it does exist in our case, see~\cite[Example~$4.3.6$]{McDuff_Salamon}. We can show that the connection is essentially the same of~$\m{AC}^+$.
\begin{lemma}\label{lemma:connection_scrJ}
Let~$\nabla$ be the Levi-Civita connection of~$\mscr{J}$, and let~$X$ and~$Y$ be vector fields on~$\mscr{J}$. Then for every point~$x$ of~$M$ and any system of Darboux coordinates~$\bm{u}$ around~$x$, the vector~$\nabla_XY\in\Gamma(\mrm{End}(TM))$ satisfies
\begin{equation}\label{eq:connection_scrJ}
{\Phi^x_{\bm{u}}}_*\left(\nabla_XY\right)=\nabla_{{\Phi^x_{\bm{u}}}_*X}{\Phi^x_{\bm{u}}}_*Y
\end{equation}
where~$\Phi^x_{\bm{u}}$ is a local trivialization of~$\m{E}\to M$, as in the Proof of Theorem~\ref{thm:differenziazione}.
\end{lemma}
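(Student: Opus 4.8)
The strategy is to define a connection $\tilde\nabla$ on $\mscr{J}$ by the pointwise prescription \eqref{eq:connection_scrJ} and to verify that, for every test field $Z$, the quantity $2\,\bb{G}(Z,\tilde\nabla_X Y)$ reproduces the right-hand side of the Koszul formula \eqref{eq:six_terms_formula}. Since the Levi-Civita connection $\nabla$ exists in this setting and also satisfies \eqref{eq:six_terms_formula}, comparing the two and using the non-degeneracy of $\bb{G}$ will force $\nabla=\tilde\nabla$, which is the assertion of the lemma. To see that \eqref{eq:connection_scrJ} really defines a connection, I would first note that its right-hand side does not depend on the chosen Darboux chart: two local trivializations of $\m{E}$ differ by an $\mrm{Sp}(2n)$-valued cocycle, and the Levi-Civita connection $\nabla^{\m{AC}^+}$ of $\m{AC}^+$ is invariant under the $\mrm{Sp}(2n)$-action, which is by isometries (Proposition~\ref{prop:metrica_AC+_Siegel}). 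Tensoriality in $X$ and the Leibniz rule in $Y$ are then inherited fibrewise from $\nabla^{\m{AC}^+}$.

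Since $\nabla$ and $\tilde\nabla$ are both affine connections, the map $(X,Y)\mapsto\nabla_X Y-\tilde\nabla_X Y$ is tensorial, so it is enough to prove $\nabla_X Y=\tilde\nabla_X Y$ when $X,Y$ range over a frame of \emph{constant} vector fields, i.e. fields that are constant in a system of local coordinates on $\mscr{J}$ and whose pushforwards ${\Phi^x_{\bm{u}}}_*X$ are constant vector fields on $\m{AC}^+$, exactly as constructed in the proof of Theorem~\ref{thm:differenziazione}. Fix such constant fields $X,Y$ together with an arbitrary constant test field $Z$, and evaluate \eqref{eq:six_terms_formula} at a point $J\in\mscr{J}$. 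Constant fields commute, so the three bracket terms vanish and only the derivatives of the three metric functions survive. Each of these, such as $J\mapsto\bb{G}_J(Z,Y)=\tfrac12\int_M\mrm{Tr}\big(({\Phi^x_{\bm{u}}}_*Z)({\Phi^x_{\bm{u}}}_*Y)\big)\tfrac{\omega^n}{n!}$, is exactly of the type covered by the $r=0$ case of Theorem~\ref{thm:differenziazione}, so differentiating along $X$ commutes with the integral: $X\,\bb{G}(Z,Y)=\int_M\mrm{d}\langle{\Phi^x_{\bm{u}}}_*Z,{\Phi^x_{\bm{u}}}_*Y\rangle\big({\Phi^x_{\bm{u}}}_*X\big)\tfrac{\omega^n}{n!}$, and likewise for the other two terms.

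Collecting the three integrands, I would recognise them as the right-hand side of the Koszul formula on $\m{AC}^+$ applied to the pushed-forward fields ${\Phi^x_{\bm{u}}}_*X$, ${\Phi^x_{\bm{u}}}_*Y$, ${\Phi^x_{\bm{u}}}_*Z$ --- whose mutual brackets again vanish --- so at each $x$ they sum to $2\big\langle{\Phi^x_{\bm{u}}}_*Z,\nabla^{\m{AC}^+}_{{\Phi^x_{\bm{u}}}_*X}{\Phi^x_{\bm{u}}}_*Y\big\rangle$. Integrating over $M$ and using the definition \eqref{eq:connection_scrJ} of $\tilde\nabla$, the whole of \eqref{eq:six_terms_formula} becomes $2\,\bb{G}_J(Z,\tilde\nabla_X Y)$. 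Hence $\bb{G}_J(Z,\nabla_X Y)=\bb{G}_J(Z,\tilde\nabla_X Y)$ for every constant $Z$, and the non-degeneracy of $\bb{G}$ yields $\nabla_X Y=\tilde\nabla_X Y$, which is precisely \eqref{eq:connection_scrJ}.

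The point requiring care is that $\Phi^x_{\bm{u}}$ is an evaluation submersion rather than a diffeomorphism, so neither Lie brackets nor connections can be transported through it directly. The device that circumvents this is the systematic use of constant vector fields: for these all the bracket terms vanish simultaneously on $\mscr{J}$ and on $\m{AC}^+$, and the identity collapses onto the $r=0$ case of Theorem~\ref{thm:differenziazione} for the three metric-coefficient functions, so no genuine interchange of a Lie bracket with the integral is ever needed.
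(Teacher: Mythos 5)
Your proposal is correct and follows essentially the same route as the paper: define the connection by the pointwise formula, check well-definedness via the $\mrm{Sp}(2n)$-invariance of the fibre data, and verify the Koszul formula~\eqref{eq:six_terms_formula} by reducing the derivative terms to the $r=0$ case of Theorem~\ref{thm:differenziazione}, concluding by uniqueness of the Levi-Civita connection. The only (harmless) variation is that you dispose of the bracket terms by restricting to constant vector fields and invoking tensoriality of the difference of two connections, whereas the paper keeps general fields and matches the bracket terms using the naturality of the Lie derivative; both devices are already used in the proof of Theorem~\ref{thm:differenziazione} itself.
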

In other words, the Levi-Civita connection of~$\mscr{J}$ is, pointwise, the pull-back connection obtained from~$\m{AC}^+$ by a local trivialization.
\begin{proof}
First notice that~\eqref{eq:connection_scrJ} does define a connection on~$\mscr{J}$, since to define a vector on~$\mscr{J}$ it is sufficient to give its expression over any system of Darboux coordinates for~$\omega_0$, as long as this expression is compatible with the transition functions between two different Darboux coordinate systems.

We will show that any connection satisfying equation~\eqref{eq:connection_scrJ} coincides with the Levi-Civita connection of~$\bb{G}$, by checking that it satisfies the Koszul formula~\eqref{eq:six_terms_formula}. For notational convenience we let~$\Phi=\Phi^x_{\bm{u}}$ for the course of this proof. Let~$Z$ be a third vector field, and denote by~$\langle-,-\rangle$ the metric of~$\m{AC}^+$; then, if~$\nabla$ is a connection that satisfies~\eqref{eq:connection_scrJ}
\begin{equation*}
2\bb{G}_J(Z,\nabla_XY)=\int_M 2\,\left\langle\Phi_*Z,\nabla_{\Phi_*X}\Phi_*Y\right\rangle\frac{\omega^n}{n!}
\end{equation*}
then we can use the Koszul formula for the connection of~$\m{AC}^+$ and the naturality of the Lie derivative to find
\begin{equation*}
\begin{split}
&2\bb{G}_J(Z,\nabla_XY)=\\
&=\int\!\!\Big[\Phi_*X\left(\langle\Phi_*Z,\Phi_*Y\rangle\right)+\Phi_*Y\left(\langle\Phi_*Z,\Phi_*X\rangle\right)-\Phi_*Z\left(\langle\Phi_*X,\Phi_*Y\rangle\right)\!\Big]\frac{\omega^n}{n!}+\\
&\quad\quad\quad+\bb{G}(Y,[Z,X])+\bb{G}(X,[Z,Y])+\bb{G}(Z,[X,Y])
\end{split}
\end{equation*}
Then Theorem~\ref{thm:differenziazione} shows, for~$r=0$, that~$\nabla$ satisfies the Koszul formula.
\end{proof}
The same ideas can be used to see that the curvature tensor of~$\mscr{J}$ is also pointwise equal to that of~$\m{AC}^+$, c.f. Proposition~\ref{prop:curvatura_AC}.
\begin{cor}\label{cor:curvature_mscrJ}
For~$J\in\mscr{J}$ and~$A,B,C\in T_J\mscr{J}$, the curvature at~$J$ is
\begin{equation*}
R_J(A,B)C=-\frac{1}{4}\Big[[A,B],C\Big].
\end{equation*}
\end{cor}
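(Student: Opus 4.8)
The plan is to reduce the computation of the curvature of $\mscr{J}$ to the already-known curvature of $\m{AC}^+$ from Proposition~\ref{prop:curvatura_AC}, using the pointwise description of the Levi-Civita connection established in Lemma~\ref{lemma:connection_scrJ}. Recall that the Riemann curvature is $R(X,Y)Z=\nabla_X\nabla_YZ-\nabla_Y\nabla_XZ-\nabla_{[X,Y]}Z$, and that it is tensorial, so to evaluate $R_J(A,B)C$ I am free to choose any vector fields $X,Y,Z$ on $\mscr{J}$ extending $A,B,C$ at $J$.

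First I would extend $A,B,C$ to vector fields $X,Y,Z$ that are \emph{constant} in a system of local coordinates for $\mscr{J}$, exactly as in the proof of Theorem~\ref{thm:differenziazione}. For such extensions the Lie brackets $[X,Y]$ etc.\ vanish, so the last term in the curvature formula drops out and $R_J(A,B)C=\left(\nabla_X\nabla_YZ-\nabla_Y\nabla_XZ\right)_J$. Moreover, for a Darboux trivialization $\Phi=\Phi^x_{\bm{u}}$ of $\m{E}$ around a point $x\in M$, the pushed-forward fields $\Phi_*X,\Phi_*Y,\Phi_*Z$ are again constant in a coordinate system for $\m{AC}^+(2n)$, so their Lie brackets on $\m{AC}^+$ also vanish.

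Next I would apply Lemma~\ref{lemma:connection_scrJ} twice. The identity $\Phi_*(\nabla_XY)=\nabla_{\Phi_*X}\Phi_*Y$ gives
\begin{equation*}
\Phi_*(\nabla_X\nabla_YZ)=\nabla_{\Phi_*X}\nabla_{\Phi_*Y}\Phi_*Z,
\end{equation*}
and similarly with $X,Y$ exchanged. Subtracting, and using that the $\m{AC}^+$-brackets vanish, the right-hand side is exactly $R_{\Phi(J)}(\Phi_*A,\Phi_*B)(\Phi_*C)$. Hence $\Phi_*\big(R_J(A,B)C\big)=R_{\Phi(J)}(\Phi_*A,\Phi_*B)(\Phi_*C)$: the curvature of $\mscr{J}$ is pointwise the pull-back of the curvature of $\m{AC}^+$.

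Finally I would invoke Proposition~\ref{prop:curvatura_AC}, which holds at every point of $\m{AC}^+$ because $\mrm{Sp}(2n)$ acts by isometries, to write $R_{\Phi(J)}(\Phi_*A,\Phi_*B)(\Phi_*C)=-\tfrac14[[\Phi_*A,\Phi_*B],\Phi_*C]$. Since the trivialization $\Phi_*$ acts on the matrix representatives by conjugation with a fixed $\mrm{Sp}(2n)$-element, it is a Lie algebra automorphism and so commutes with the iterated commutator; pulling back therefore yields $R_J(A,B)C=-\tfrac14[[A,B],C]$, as claimed. The one point requiring care is the bookkeeping of the constant extensions — I must confirm that a field constant in $\mscr{J}$-coordinates really does push forward to a field constant in $\m{AC}^+$-coordinates, so that both Lie-bracket terms vanish simultaneously — but this is precisely the compatibility already noted in the proof of Theorem~\ref{thm:differenziazione}, so no new work is needed.
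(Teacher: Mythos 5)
Your proof is correct and follows essentially the same route the paper intends: the paper only sketches this corollary by remarking that "the same ideas" as in Lemma~\ref{lemma:connection_scrJ} and Theorem~\ref{thm:differenziazione} reduce the curvature pointwise to that of~$\m{AC}^+$, and your write-up simply fills in those details (constant extensions killing the bracket terms, two applications of the pointwise connection identity, and equivariance of the curvature formula of Proposition~\ref{prop:curvatura_AC} under the~$\mrm{Sp}(2n)$-conjugation). No gaps.
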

The expression for the covariant derivative in~\ref{lemma:connection_scrJ} also allows us to reduce metric properties of~$\mscr{J}$ to those of~$\m{AC}^+$.
\begin{cor}
The complex structure and the curvature tensor of~$\mscr{J}$ are covariantly constant. In particular,~$\mscr{J}$ is a formally symmetric infinite-dimensional K\"ahler manifold.
\end{cor}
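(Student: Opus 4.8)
The plan is to deduce both vanishing statements from the corresponding facts on the model fibre $\m{AC}^+$, exactly as Lemma~\ref{lemma:connection_scrJ} and Corollary~\ref{cor:curvature_mscrJ} already reduced the connection and curvature of $\mscr{J}$ to those of $\m{AC}^+$. First I would observe that $\bb{J}$ and $R$ are \emph{fibrewise} operators on $\mscr{J}$: by the definition of $\bb{J}$ together with Proposition~\ref{prop:metrica_AC+_Siegel}, in any Darboux trivialization $\Phi^x_{\bm{u}}$ one has $(\bb{J}A)(x)=J(x)A(x)$, and by Corollary~\ref{cor:curvature_mscrJ} the value of $R_J(A,B)C$ at $x$ is the curvature of $\m{AC}^+$ at $J(x)$ applied to $A(x),B(x),C(x)$. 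Since Lemma~\ref{lemma:connection_scrJ} says that the Levi-Civita connection of $\mscr{J}$ is pointwise the pull-back of the connection of $\m{AC}^+$, the same propagates to the covariant derivatives of these tensors, because $\nabla_X\bb{J}=\nabla_X\circ\bb{J}-\bb{J}\circ\nabla_X$ (and the analogue for $R$) only combines the fibrewise operators with the connection. Concretely,
\begin{equation*}
{\Phi^x_{\bm{u}}}_*\big((\nabla_X\bb{J})Y\big)=\big(\nabla_{{\Phi^x_{\bm{u}}}_*X}J_{\m{AC}^+}\big){\Phi^x_{\bm{u}}}_*Y,
\end{equation*}
and similarly for $\nabla_X R$. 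Hence it suffices to prove that the complex structure and the curvature of $\m{AC}^+$ are covariantly constant.

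On $\m{AC}^+\cong\mrm{Sp}(2n)/\mrm{U}(n)$ the complex structure is parallel simply because the metric is K\"ahler. For the curvature I would invoke the symmetric-space structure already uncovered in Remark~\ref{nota:comm_Sp}: there we checked that $[\f{r},\f{r}]\subseteq\f{u}(n)$, i.e.\ $[\f{r},\f{r}]_{\f{r}}=0$. This is precisely the symmetric-pair condition, so $\mrm{Sp}(2n)/\mrm{U}(n)$ is a Riemannian (in fact Hermitian) symmetric space. Equivalently, the canonical connection of the reductive decomposition $\f{sp}(2n)=\f{u}(n)\oplus\f{r}$, for which \emph{every} $\mrm{Sp}(2n)$-invariant tensor field is parallel, has torsion $-[\,\cdot\,,\,\cdot\,]_{\f{r}}$, which now vanishes; being torsion-free and metric it coincides with the Levi-Civita connection. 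Since both $J_{\m{AC}^+}$ and $R_{\m{AC}^+}$ are $\mrm{Sp}(2n)$-invariant, they are parallel. Combining this with the reduction above gives $\nabla\bb{J}=0$ and $\nabla R=0$ on $\mscr{J}$; as $\mscr{J}$ is K\"ahler by Theorem~\ref{thm:kahler_J}, having covariantly constant curvature makes it formally symmetric.

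I expect the only real subtlety to be the justification of the first display, i.e.\ that the pointwise pull-back description of the Levi-Civita connection genuinely propagates to covariant derivatives of the tensors $\bb{J}$ and $R$. This requires the same book-keeping as in the proof of Theorem~\ref{thm:differenziazione}: extending tangent vectors to locally constant vector fields and verifying that differentiating a fibrewise tensor in a $\mscr{J}$-direction commutes with evaluation in a fixed Darboux trivialization. Everything else is a formal transcription of the finite-dimensional symmetric-space computation already carried out in Proposition~\ref{prop:curvatura_AC} and Remark~\ref{nota:comm_Sp}, so no new estimates or convergence issues arise.
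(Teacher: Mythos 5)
Your proof is correct and follows essentially the same route as the paper: reduce the covariant constancy of~$\bb{J}$ and~$R$ to the model fibre~$\m{AC}^+$ via Lemma~\ref{lemma:connection_scrJ} and Theorem~\ref{thm:differenziazione}, and then use that~$\m{AC}^+\cong\mrm{Sp}(2n)/\mrm{U}(n)$ is a K\"ahler symmetric space. The only cosmetic difference is that the paper phrases the reduction weakly, proving~$\bb{G}_J\left(C,(\nabla_A\bb{J})B\right)=0$ for all~$C$ (and the analogous five-tensor pairing for~$\nabla R$) instead of asserting the pointwise tensor identity directly, which sidesteps exactly the book-keeping you flag at the end.
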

\begin{proof}
Fix a point~$J\in\mscr{J}$, and consider~$A\in T_J\mscr{J}$. To prove that~$\left(\nabla_A\bb{J}\right)_J=0$ it is enough to show that for all~$B,C\in T_J\mscr{J}$
\begin{equation*}
\bb{G}_J\left(C,(\nabla_A\bb{J})B\right)=0.
\end{equation*}
Using Lemma~\ref{lemma:connection_scrJ} and the definitions of~$\bb{G}$ and~$\bb{J}$, we can reduce this to a condition on~$\m{AC}^+$: extend~$A$,~$B$ and~$C$ to vector fields near~$J$. Then
\begin{equation*}
\begin{split}
\bb{G}_J\left(C,(\nabla_A\bb{J})B\right)=&\bb{G}_J\left(C,\nabla_A(\bb{J}B)-\bb{J}\nabla_AB\right)=\\
=&A\left(\bb{G}(C,\bb{J}B)\right)-\bb{G}_J\left(\nabla_AC,\bb{J}B)\right)-\bb{G}\left(C,\bb{J}\nabla_AB\right).
\end{split}
\end{equation*}
Using Theorem~\ref{thm:differenziazione} and Lemma~\ref{lemma:connection_scrJ} we find, if we let~$\langle-,-\rangle$ and~$I$ be respectively the metric and the complex structure on~$\m{AC}^+$
\begin{equation*}
\begin{split}
\bb{G}_J\left(C,(\nabla_A\bb{J})B\right)&=\int_M\Big[
{\Phi_{\bm{u}}^x}_*A\left(\langle{\Phi_{\bm{u}}^x}_*C,I{\Phi_{\bm{u}}^x}_*B\rangle\right)
-\\
-\langle\nabla_{{\Phi_{\bm{u}}^x}_*A}&{\Phi_{\bm{u}}^x}_*C,I{\Phi_{\bm{u}}^x}_*B\rangle-\langle{\Phi_{\bm{u}}^x}_*C,I\nabla_{{\Phi_{\bm{u}}^x}_*A}{\Phi_{\bm{u}}^x}_*B\rangle\Big]\frac{\omega^n}{n!}=\\
&=\int_M \langle{\Phi_{\bm{u}}^x}_*C,(\nabla_{{\Phi_{\bm{u}}^x}_*A}I){\Phi_{\bm{u}}^x}_*B\rangle\frac{\omega^n}{n!}=0.
\end{split}
\end{equation*}
Of course this computation relies on the integrability of the complex structure of~$\m{AC}^+$. Similarly, since~$\m{AC}^+$ is a symmetric space we can compute that for any five tangent vectors~$A$,~$B$,~$C$,~$D$,~$E$ at~$J$ we have
\begin{equation*}
\bb{G}_J\left(A,(\nabla_BR)(C,D)E\right)=0.
\end{equation*}
The computation is completely analogous to the one for~$\bb{J}$, but slightly more cumbersome since it involves more terms. Of course one should use the expression for the curvature of Corollary~\ref{cor:curvature_mscrJ}, and more precisely the formula
\begin{equation*}
{\Phi_{\bm{u}}^x}_*\left(R(A,B)C\right)=R_{\m{AC}^+}\left({\Phi_{\bm{u}}^x}_*A,{\Phi_{\bm{u}}^x}_*B\right){\Phi_{\bm{u}}^x}_*C.\qedhere
\end{equation*}
\end{proof}
Given the expression for the curvature of Corollary~\ref{cor:curvature_mscrJ}, it is quite natural to conjecture that~$\mscr{J}$ is in fact a K\"ahler-Einstein manifold, since~$\m{AC}^+$ is and the curvatures of the two spaces have very similar expressions. The main difficulty to prove this is to actually \emph{define} in a sensible way the Ricci curvature of an infinite-dimensional manifold.

\section{Proof of Theorem~\ref{thm:Donaldson_scalcurv_mm}}\label{sec:thm:moment_map_proof}

In this Section we give a proof of Theorem~\ref{thm:Donaldson_scalcurv_mm}, after~\cite{Donaldson_scalar}. We expand the details of the original proof, both for the reader's convenience and to fix the constant mentioned in Theorem~\ref{thm:Donaldson_scalcurv_mm}.

We have to show that~\eqref{eq:integral_form_mm} holds; in terms of the~$L^2$ product of functions,~\eqref{eq:integral_form_mm} can be restated as
\begin{equation}\label{eq:mm_pairing}
4\left\langle Q(A),f\right\rangle=\left\langle JA,P(f)\right\rangle
\end{equation}
where~$Q(A)$ is the variation of~$S(J)$ along a path~$J+\varepsilon A+O(\varepsilon^2)$ in~$\mscr{J}$, and~$P(f)=\m{L}_{X_f}J$.

For a given~$A\in T_J\mscr{J}$, we will often denote by~$J'=J+\varepsilon A$ the infinitesimal deformation of the complex structure~$J$ by~$A$; of course this should be thought of as a point of a path~$\gamma:[-\delta,\delta]\to\mscr{J}$ such that~$\gamma(0)=J$ and~$\dot{\gamma}(0)=A$, but it is notationally more convenient to use this notation and work ``to first order in~$\varepsilon$''.

\begin{lemma}\label{lemma:first_ord_identification}
Fix~$J\in\mscr{J}$,~$A\in T_J\mscr{J}$ and let~$J'=J+\varepsilon A$ be an infinitesimal deformation of the almost complex structure~$J$. Then the map
\begin{equation*}
\mu_A:X\mapsto X+\frac{\varepsilon}{2}JAX
\end{equation*}
defines, to first order in~$\varepsilon$, an isometry between~$\left(T^{1,0}_JM,g_J\right)$ and~$\left(T^{1,0}_{J'}M,g_{J'}\right)$.
\end{lemma}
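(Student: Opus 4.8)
The plan is to unpack the two assertions hidden in the statement and verify them separately to first order in~$\varepsilon$: first that~$\mu_A$ actually carries~$T^{1,0}_JM$ into~$T^{1,0}_{J'}M$ modulo~$O(\varepsilon^2)$, and then that it intertwines the Hermitian inner products~$g_J(X,\bar Y)$ on~$T^{1,0}_JM$ and~$g_{J'}(X',\bar Y')$ on~$T^{1,0}_{J'}M$. Throughout I would extend everything~$\bb{C}$-bilinearly to~$T_{\bb{C}}M$ and use the two structural facts available for~$A\in T_J\mscr{J}$: the anticommutation~$AJ+JA=0$, and the~$g_J$-symmetry of~$(v,w)\mapsto g_J(Av,w)$ recorded in~\eqref{eq:indentita_tangenti} (i.e.~$A$ is~$g_J$-self-adjoint), together with the standard fact that~$J$ is~$g_J$-skew-adjoint.

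For the first point I would take~$X\in T^{1,0}_JM$, so~$JX=\I X$, and note that then~$AX\in T^{0,1}_JM$ because~$JAX=-AJX=-\I AX$. A direct expansion using~$J'=J+\varepsilon A$ and~$AJ+JA=0$ gives~$J'\mu_A(X)=\I X+\tfrac{\varepsilon}{2}AX+O(\varepsilon^2)$, while rewriting~$\I JAX=AX$ (again using~$AX\in T^{0,1}_JM$) yields~$\I\mu_A(X)=\I X+\tfrac{\varepsilon}{2}AX$. Hence~$J'\mu_A(X)=\I\mu_A(X)+O(\varepsilon^2)$, so~$\mu_A(X)$ lies in~$T^{1,0}_{J'}M$ to first order; this also tacitly uses~$(J')^2=-\mathbbm{1}+O(\varepsilon^2)$, which follows from the anticommutation.

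For the isometry claim I would first record that, since~$\omega$ is independent of~$J$, one has~$g_{J'}(v,w)=\omega(v,J'w)=g_J(v,w)+\varepsilon\,\omega(v,Aw)+O(\varepsilon^2)$. Expanding~$g_{J'}(\mu_A X,\overline{\mu_A Y})$ and using that~$J$ and~$A$ are real operators (so~$\overline{JAY}=JA\bar Y$), the zeroth-order term is~$g_J(X,\bar Y)$ and the first-order coefficient is
\[
\tfrac12\,g_J(JAX,\bar Y)+\tfrac12\,g_J(X,JA\bar Y)+\omega(X,A\bar Y).
\]
I would then rewrite~$\omega(X,A\bar Y)=-g_J(X,JA\bar Y)$, and show using that~$J$ is skew-adjoint,~$A$ is self-adjoint, and~$AJ=-JA$ that~$g_J(JAX,\bar Y)=g_J(X,JA\bar Y)$; the three-term expression then collapses to~$\tfrac12\,g_J(X,JA\bar Y)-\tfrac12\,g_J(X,JA\bar Y)=0$.

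The expansions are routine bookkeeping; the single step carrying all the content is this final cancellation, so that is where I would be most careful. It is precisely the point at which the defining conditions of~$T_J\mscr{J}$ enter, and the main thing that could go wrong is a sign mismatch among the skew-adjointness of~$J$, the self-adjointness of~$A$, and the anticommutation~$AJ=-JA$, so I would track those three identities explicitly.
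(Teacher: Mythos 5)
Your proposal is correct and follows essentially the same route as the paper: both verify that $J'\mu_A(X)=\I\,\mu_A(X)+O(\varepsilon^2)$ for $X\in T^{1,0}_JM$, and both expand $g_{J'}(\mu_A-,\mu_A-)$ via $g_{J'}(v,w)=\omega(v,J'w)$ and kill the first-order term using the compatibility of $\omega$ with $J$ together with the defining identities of $T_J\mscr{J}$. The only cosmetic difference is that you phrase the isometry for the Hermitian pairing $g_J(X,\bar Y)$ on $(1,0)$-vectors while the paper computes with real vector fields and concludes by $\bb{C}$-bilinear extension; the cancellation is the same in both cases.
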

\begin{proof}
Let~$X$ and~$Y$ be real vector fields. Then
\begin{equation*}
\begin{split}
g_{J'}&\!\left(X+\frac{\varepsilon}{2}JAX,Y+\frac{\varepsilon}{2}JAY\right)\!=\!\omega\!\left(X+\frac{\varepsilon}{2}JAX,(J+\varepsilon A)\!\left(Y+\frac{\varepsilon}{2}JAY\right)\!\right)=\\
&=\omega\left(X+\frac{\varepsilon}{2}JAX,JY+\frac{\varepsilon}{2}JJAY+\varepsilon AY\right)+O(\varepsilon^2)=\\
&=\omega\left(X+\frac{\varepsilon}{2}JAX,JY+\frac{\varepsilon}{2}AY\right)+O(\varepsilon^2)=\\
&=g_J\left(X,Y\right)+\frac{\varepsilon}{2}\left(\omega\left(JAX,JY\right)+\omega\left(X,AY\right)\right)+O(\varepsilon^2).
\end{split}
\end{equation*}
So to prove the isometry claim we just have to notice that
\begin{equation*}
\omega\left(JAX,JY\right)+\omega\left(X,AY\right)=\omega\left(AX,Y\right)+\omega\left(X,AY\right)=0.
\end{equation*}
Now, let~$X$ be a~$(1,0)$-vector with respect to~$J$. Then~$X+\frac{\varepsilon}{2}JAX$ is~$X-\frac{\I\varepsilon}{2}AX$, and applying~$J'$ we find
\begin{equation*}
\begin{split}
J'\left(X-\frac{\I\varepsilon}{2}AX\right)&=\left(J+\varepsilon A\right)\left(X-\frac{\I\varepsilon}{2}AX\right)=\I X-\frac{\I\varepsilon}{2}JAX+\varepsilon AX+O(\varepsilon^2)=\\
&=\I X+\frac{\varepsilon}{2}AX+O(\varepsilon^2)=\I\left(X-\frac{\I\varepsilon}{2}AX\right)+O(\varepsilon^2).
\end{split}
\end{equation*}
A similar computation shows also that if~$X\in T^{0,1}_JM$ then~$\mu_A(X)\in T^{0,1}_{J'}M$.
\end{proof}
The map~$\mu_A$ is also invertible, to first order in~$\varepsilon$:~$\mu_A\circ\mu_{-A}=\mathbbm{1}+O(\varepsilon^2)$. Then we have an identification between~$\ext{1,1}_J M$ and~$\ext{1,1}_{J'} M$, using~$\mu_A$. Indeed, by Lemma~\ref{lemma:first_ord_identification}
\begin{equation*}
\alpha\mapsto\mu_A^*\alpha:=\alpha\circ\mu^{-1}_A=\alpha\left(\mu_A^{-1}-,\mu_A^{-1}-\right)
\end{equation*}
maps~$(1,1)$-forms with respect to~$J$ to~$(1,1)$-forms with respect to~$J'$. Moreover,~$\alpha\mapsto\alpha\circ\mu^{-1}$ is also a first-order isometry. This can easily be checked in a system of local coordinates, once we get from Lemma~\ref{lemma:first_ord_identification} the identities
\begin{equation*}
\begin{split}
g(J')_{ij}(\mu_A)\indices{^i_k}(\mu_A)\indices{^j_l}=g(J)_{kl}\\
(\mu_A)\indices{^i_k}g(J)^{kl}=(\mu_A^{-1})\indices{^l_p}g(J')^{pi}.
\end{split}
\end{equation*}

These identifications will be necessary to compute the functional~$Q$ in~\eqref{eq:mm_pairing}. We decompose~$P$ as~$P_2\circ P_1$, where~$P_1$ sends a zero-average function~$H$ to its Hamiltonian vector field~$X_H$, and~$P_2$ sends a vector field~$X$ to the infinitesimal change in complex structure~$\m{L}_XJ$. We may also decompose~$Q$ as follows: given an infinitesimal deformation~$J'=J+\varepsilon A$ of~$J$, we may consider~$\ext{1,0}_J M$ and~$\ext{1,0}_{J'} M$ as the same bundle using the isometry of Lemma~\ref{lemma:first_ord_identification}. Then the two connections~$\nabla_J$ and~$\nabla_{J'}$ may be considered as metric connections on the same vector bundle~$E$, and as such they will differ by some~$\Xi\in\m{A}^1(\mrm{End}(E))$. The trace of~$\Xi$ is a purely imaginary~$1$-form on~$ M$, and it is the difference of the connections induced by~$\nabla_J$ and~$\nabla_{J'}$ on~$\ext{\mrm{top}}E$. Then if we let~$Q_1(A):=-\I\mrm{Trace}(\Xi)$ and define for any~$1$-form~$\psi$
\begin{equation*}
Q_2(\psi)\omega^n=n\mrm{d}\psi\wedge\omega^{n-1}
\end{equation*}
we have~$Q=Q_2\circ Q_1$. So we reformulate equation~\eqref{eq:mm_pairing} as
\begin{equation*}
4\,Q_2\circ Q_1=P_1^*\circ P_2^*\circ J
\end{equation*}
where~$*$ denotes the formal adjoint of an operator.
\begin{lemma}\label{lemma:Q2_aggiunto}
Let~$\flat:T M \to\m{A}^1M$ be the map sending a vector~$X$ to~$g_J(X,-)$. Then
\begin{equation*}
\flat\circ P_1=-Q_2^*.
\end{equation*}
\end{lemma}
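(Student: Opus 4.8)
The plan is to compute the formal adjoint $Q_2^*$ directly from the definition of $Q_2$ by an integration-by-parts argument, and then to recognise the result as $-\flat\circ P_1$. Both $\flat\circ P_1$ and $Q_2^*$ are maps $\m{C}^\infty_0(M)\to\m{A}^1(M)$, so it suffices to test the defining relation $\langle Q_2\psi, f\rangle=\langle\psi, Q_2^* f\rangle$ against an arbitrary $1$-form $\psi$ and function $f$, where $\langle-,-\rangle$ denotes the $L^2$ pairings induced by $g_J$ and~$\frac{\omega^n}{n!}$. Unwinding the definition $Q_2(\psi)\,\omega^n=n\,\mrm{d}\psi\wedge\omega^{n-1}$ gives $Q_2(\psi)\frac{\omega^n}{n!}=\frac{\mrm{d}\psi\wedge\omega^{n-1}}{(n-1)!}$, hence $\langle Q_2\psi, f\rangle=\int_M f\,\frac{\mrm{d}\psi\wedge\omega^{n-1}}{(n-1)!}$.

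Next I would integrate by parts. Expanding $\mrm{d}\!\left(f\,\psi\wedge\omega^{n-1}\right)=\mrm{d}f\wedge\psi\wedge\omega^{n-1}+f\,\mrm{d}\psi\wedge\omega^{n-1}$ (the $\omega^{n-1}$ term drops because $\mrm{d}\omega=0$) and applying Stokes' theorem on the closed manifold $M$, I obtain $\langle Q_2\psi, f\rangle=-\int_M \frac{\mrm{d}f\wedge\psi\wedge\omega^{n-1}}{(n-1)!}$. To turn the right-hand side back into an $L^2$ pairing of $1$-forms I need the pointwise identity, for $1$-forms $\alpha,\beta$,
\[
\alpha\wedge\beta\wedge\frac{\omega^{n-1}}{(n-1)!}=g_J(J\alpha,\beta)\,\frac{\omega^n}{n!},
\]
where $J$ acts on $1$-forms by $(J\alpha)(Z)=-\alpha(JZ)$. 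This follows by writing $\alpha\wedge\beta\wedge\frac{\omega^{n-1}}{(n-1)!}=\Lambda(\alpha\wedge\beta)\frac{\omega^n}{n!}$, with $\Lambda$ the contraction by $\omega$, and checking $\Lambda(\alpha\wedge\beta)=g_J(J\alpha,\beta)$ in a local orthonormal coframe adapted to $J$: both sides equal $\sum_k(\alpha_{2k-1}\beta_{2k}-\alpha_{2k}\beta_{2k-1})$. Applying this with $\alpha=\mrm{d}f$ and $\beta=\psi$ yields $\langle Q_2\psi, f\rangle=-\langle J\,\mrm{d}f,\psi\rangle$, so that $Q_2^* f=-J\,\mrm{d}f$.

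Finally I would identify $\flat\circ P_1$ with $J\circ\mrm{d}$. By definition $P_1(f)=X_f$ is the Hamiltonian vector field with $\omega(X_f,-)=-\mrm{d}f$, so for any vector field $Z$ we have $\flat(X_f)(Z)=g_J(X_f,Z)=\omega(X_f,JZ)=-\mrm{d}f(JZ)=(J\,\mrm{d}f)(Z)$, i.e. $\flat\circ P_1=J\circ\mrm{d}$. Comparing with $Q_2^*=-J\circ\mrm{d}$ gives the claim $\flat\circ P_1=-Q_2^*$. I expect the main obstacle to be the bookkeeping of signs and conventions — in particular the sign in $\omega(X_f,-)=-\mrm{d}f$, the induced action of $J$ on $1$-forms, and the orientation conventions hidden in $\Lambda$ and in the identity above — rather than any serious analytic difficulty; the one genuinely computational ingredient is verifying the pointwise wedge identity, which is most transparent in an adapted orthonormal frame.
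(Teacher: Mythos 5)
Your proof is correct and follows essentially the same route as the paper: a single integration by parts identifying $Q_2^*f=-J\,\mrm{d}f$ and the observation that $\flat\circ P_1=J\circ\mrm{d}$ under the conventions $\omega(X_f,-)=-\mrm{d}f$ and $g_J=\omega(-,J-)$. The only cosmetic difference is that the paper packages the key pointwise identity as $\mrm{div}(J\vartheta^\sharp)\,\omega^n=n\,\mrm{d}\big((J\vartheta^\sharp)\lrcorner\omega\big)\wedge\omega^{n-1}=-n\,\mrm{d}\vartheta\wedge\omega^{n-1}$ via the Lie derivative of the volume form, whereas you use the equivalent contraction identity $\Lambda(\alpha\wedge\beta)=g_J(J\alpha,\beta)$ checked in an adapted coframe.
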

\begin{proof}
Fix~$f\in\m{C}^\infty_0$ and~$\vartheta\in\m{A}^1M$. Then~$P_1(f)=J\,\mrm{grad}(f)$, where the gradient is computed with respect to the metric~$g_J$. If~$\sharp$ is the inverse of~$\flat$
\begin{equation*}
\left\langle\vartheta,P_1(f)^\flat\right\rangle=\left\langle\mrm{div}\left(J\,\vartheta^\sharp\right),f\right\rangle.
\end{equation*}
Moreover, for any vector field~$X$ we have
\begin{equation*}
\mrm{div}(X)\omega^n=\m{L}_X\omega^n=\mrm{d}\left(X\lrcorner\omega^n\right)=n\,\mrm{d}\left(X\lrcorner\omega\right)\wedge\omega^{n-1}
\end{equation*}
so for~$J\,\vartheta^\sharp$:
\begin{equation*}
\begin{split}
\mrm{div}\left(J\,\vartheta^\sharp\right)\omega^n&=n\,\mrm{d}\left((J\,\vartheta^\sharp)\lrcorner\omega\right)\wedge\omega^{n-1}=-n\,\mrm{d}\left(\omega(\vartheta^\sharp,J-)\right)\wedge\omega^{n-1}=\\
&=-n\,\mrm{d}\vartheta\wedge\omega^{n-1}=-Q_2(\vartheta)\omega^n.
\end{split}
\end{equation*}
This readily implies~$\left\langle\vartheta,P_1(f)^\flat\right\rangle=-\left\langle Q_2(\vartheta),f\right\rangle$.
\end{proof}
If we substitute the expression for~$P_1^*$ from Lemma~\ref{lemma:Q2_aggiunto} in~\eqref{eq:mm_pairing} we find that to prove~\ref{thm:Donaldson_scalcurv_mm} it is enough to show
\begin{equation*}
-4\,\sharp\circ Q_1=P_2^*\circ J.
\end{equation*}
We now turn to the other two operators,~$P_2$ and~$Q_1$.
\begin{lemma}\label{lemma_P2}
For any vector field~$X$
\begin{equation*}
P_2(X)=4\,\mrm{Im}\left(\nabla^{0,1}X^{1,0}-\frac{1}{4}N\left(X^{0,1},-\right)\right).
\end{equation*}
\end{lemma}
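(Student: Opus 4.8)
The plan is to compute $P_2(X)=\m{L}_X J$ directly from its definition and then rewrite it in terms of the canonical Hermitian connection $\nabla=\nabla_J$ and the Nijenhuis tensor, exploiting the fact (recorded after \eqref{eq:indentita_tangenti}) that a first-order deformation $A\in T_J\mscr{J}$ is completely determined by its $(1,0)$-part $A\colon T^{0,1}_JM\to T^{1,0}_JM$. So the strategy is to evaluate $\m{L}_X J$ on a $(0,1)$-vector $\bar Z$ and project onto $T^{1,0}_JM$. Starting from the pointwise formula $(\m{L}_X J)(Y)=[X,JY]-J[X,Y]$ and using $J\bar Z=-\I\bar Z$ together with the action of $J$ by $\pm\I$ on the two summands, a short type computation collapses the terms into
\begin{equation*}
(\m{L}_X J)(\bar Z)=-2\I\,\big([X,\bar Z]\big)^{1,0},
\end{equation*}
so the whole problem reduces to extracting the $(1,0)$-part of the bracket $[X,\bar Z]$.

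Next I would split $X=X^{1,0}+X^{0,1}$ and handle the two brackets separately via the torsion identity of Lemma~\ref{lemma:torsion}, namely $[U,V]=\nabla_U V-\nabla_V U+\frac{1}{4}N_J(U,V)$. For the mixed bracket one has $N_J(X^{1,0},\bar Z)=0$ (from the vanishing $N_J(Z,\bar W)=0$ recalled before Lemma~\ref{lemma:decomposizione_d}), and since $\nabla$ is $J$-compatible it preserves the type decomposition; hence $\big([X^{1,0},\bar Z]\big)^{1,0}=-\nabla_{\bar Z}X^{1,0}=-(\nabla^{0,1}X^{1,0})(\bar Z)$. For the antiholomorphic bracket I would use the identity $N_J(\bar U,\bar W)=4\,\big([\bar U,\bar W]\big)^{1,0}$, which is immediate from the explicit expression for $N_J$ on $(0,1)$-vectors, giving $\big([X^{0,1},\bar Z]\big)^{1,0}=\frac{1}{4}N_J(X^{0,1},\bar Z)$. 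Combining the two contributions shows that the $(1,0)$-part of $\m{L}_X J$, evaluated on $\bar Z$, equals $2\I\big(\nabla^{0,1}X^{1,0}-\frac{1}{4}N(X^{0,1},-)\big)(\bar Z)$.

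It then remains to translate this complex $(1,0)$-part back into the real endomorphism. Under the identification of $T_J\mscr{J}$ with $(T^{1,0}_JM)$-valued $(0,1)$-forms, a real operator $A$ anticommuting with $J$ is recovered from $A|_{T^{0,1}}$ by $A=A|_{T^{0,1}}+\overline{A|_{T^{0,1}}}$; feeding in the expression just obtained reassembles $\m{L}_X J$ as $4\,\mrm{Im}\big(\nabla^{0,1}X^{1,0}-\frac{1}{4}N(X^{0,1},-)\big)$, which is exactly the claimed formula.

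The differential-geometric content—trading Lie brackets for covariant derivatives plus torsion and using $N_J\big(T^{1,0},T^{0,1}\big)=0$—is routine once Lemma~\ref{lemma:torsion} is available. The genuinely delicate point, and the step I expect to be the main obstacle, is the real-to-complex bookkeeping: getting the coefficient $4$ and the $\mrm{Im}$ (rather than $\mrm{Re}$, and with the correct sign) right when passing from the complex expression $2\I(\cdots)$ for the $(1,0)$-part to the real endomorphism. This forces one to fix the conventions for complex conjugation and for $\mrm{Re}/\mrm{Im}$ on $\mrm{End}(TM)\otimes\bb{C}$ consistently with those used throughout the section, since an unnoticed sign convention there is precisely what distinguishes $4\,\mrm{Im}(\cdots)$ from its negative.
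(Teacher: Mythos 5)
Your argument is correct in substance and has the same two-step skeleton as the paper's proof, but it runs on the tangent side where the paper runs on the cotangent side. You evaluate $\m{L}_XJ$ on a $(0,1)$-vector, reduce to $\bigl([X,\bar Z]\bigr)^{1,0}$, and then trade the bracket for $\nabla$ plus torsion via Lemma~\ref{lemma:torsion}, together with $N_J(X^{1,0},\bar Z)=0$ and $N_J(\bar U,\bar W)=4\bigl([\bar U,\bar W]\bigr)^{1,0}$. The paper instead computes the transpose $P_2(X)^{1,0}\colon\ext{1,0}M\to\ext{0,1}M$ on a $(1,0)$-form $\alpha$, reduces to $(\m{L}_X\alpha)^{0,1}$, and expands this with Cartan's formula and the decomposition $\mrm{d}=\diff+\bdiff-\frac{1}{4}N_J^\transpose$ of Lemma~\ref{lemma:decomposizione_d}, identifying the $\bdiff$-terms with the Chern connection. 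The two routes are dual to one another (the torsion identity you use is exactly the relation $\diff\alpha(X,Y)=\nabla_X\alpha(Y)-\nabla_Y\alpha(X)$ that the paper re-derives from Lemma~\ref{lemma:torsion} a page later), and yours is arguably the more economical, since it bypasses Cartan's formula entirely.

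The one point that does not close is the final sign, and you are right to single it out. Your intermediate result $(\m{L}_XJ)_{\restriction T^{0,1}}=2\I\bigl(\nabla^{0,1}X^{1,0}-\frac{1}{4}N(X^{0,1},-)\bigr)$ is correct (it can be checked directly on $\bb{C}$ with the flat metric), but feeding $A_{\restriction T^{0,1}}=2\I B$ into $A=A_{\restriction T^{0,1}}+\overline{A_{\restriction T^{0,1}}}$ gives $2\I(B-\bar B)=-4\,\mrm{Im}(B)$ with the usual convention $\mrm{Im}(B)=\frac{1}{2\I}(B-\bar B)$, i.e.\ the \emph{opposite} of the stated formula, not $+4\,\mrm{Im}(B)$ as you assert at the end. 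The paper's own proof lands on $+4\,\mrm{Im}$ only because at the step $\alpha(\m{L}_XY)=\m{L}_X\alpha(Y)$ it uses $\alpha(Y)=0$ without the resulting minus sign: one actually has $\alpha(\m{L}_XY)=-(\m{L}_X\alpha)(Y)$, so its $(1,0)$-part comes out as $-2\I B$ rather than your (correct) $+2\I B$. So the discrepancy you anticipated is real; it amounts to an overall sign fixed by the convention for $\mrm{Im}$ on $\mrm{End}(TM)\otimes\bb{C}$ (equivalently, an overall sign of the moment map) and does not affect the geometric content, but as written your last step does not follow from your own intermediate formula.
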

\begin{proof}
Since~$\m{L}_XJ$ is an element of~$T_J\mscr{J}$,~$P_2(X)$ is determined by its~$(1,0)$ part. We compute it by focusing on its transpose,~$P_2(X)^{1,0}:\ext{1,0}M\to\ext{0,1}M$. Fix a~$(1,0)$-form~$\alpha$. Then for any~$Y\in T^{0,1}M$ we have
\begin{equation*}
\begin{split}
\left(P_2(X)(\alpha)\right)(Y)&=\alpha\left(\m{L}_X(JY)-J\m{L}_XY\right)=-\I\alpha\left(\m{L}_XY-\I J\m{L}_XY\right)=\\
&=-2\I\alpha\left((\m{L}_XY)^{1,0}\right)=-2\I\alpha\left(\m{L}_XY\right)=-2\I\m{L}_X\alpha(Y)
\end{split}
\end{equation*}
since~$\alpha$ is of type~$(1,0)$ while~$Y$ is of type~$(0,1)$. This means that~$P_2(X)(\alpha)=-2\I\left(\m{L}_X\alpha\right)^{0,1}$. Using Lemma~\ref{lemma:decomposizione_d} and Cartan's formula we have
\begin{equation*}
\m{L}_X\alpha=X\lrcorner\left(\diff\alpha+\bdiff\alpha-\frac{1}{4}\alpha\circ N_J\right)+\diff\left(\alpha(X)\right)+\bdiff\left(\alpha(X)\right)
\end{equation*}
so its~$(0,1)$ part is
\begin{equation*}
\begin{split}
\left(\m{L}_X\alpha\right)^{0,1}&=-\bdiff\alpha\left(-,X^{1,0}\right)+\bdiff\left(\alpha(X^{1,0})\right)-\frac{1}{4}\alpha\left(N_J(X,-)\right)=\\
&=\alpha\left(\nabla^{0,1}X^{1,0}-\frac{1}{4}N_J(X^{0,1},-)\right).
\end{split}
\end{equation*}
For this last equality it is fundamental to use the Chern connection~$\nabla$ of~$\ext{1,0}_J\!\!M$ with respect to~$\bdiff_J$. In the end we have found that the~$(1,0)$ part of~$\frac{i}{2}P_2(X)$ is~$\nabla^{0,1}X^{1,0}-\frac{1}{4}N_J(X^{0,1},-)$; summing this with its conjugate we obtain the thesis.
\end{proof}
It is slightly more complicated to find an expression for~$Q_1$; we will use some preliminary lemmas. Consider a complex vector bundle~$E\to M$ with a Hermitian fibre metric~$h$ and a metric connection~$\nabla$. Then for each almost complex structure~$J$ on~$M$ we have the usual decomposition~$\nabla=\nabla^{1,0}_J+\nabla^{0,1}_J$. We want to study how an infinitesimal change~$J\mapsto J+\varepsilon A$ of the complex structure affects the other pieces in this picture. First, using the identification of~$T^{1,0}_JM$ and~$T^{1,0}_{J'}M$, we can compare the~$(0,1)$ parts of a connection.
\begin{lemma}\label{lemma:variazione_J}
Fix a metric connection~$\nabla$ on~$(E,h)$, and consider an infinitesimal change~$J\mapsto J'=J+\varepsilon A$ of the almost complex structure of~$ M$. Then the corresponding change in the~$(0,1)$ part of the connection is, to first order in~$\varepsilon$, 
\begin{equation*}
\nabla^{0,1}_{J'}=\nabla^{0,1}_J+\nabla^{1,0}_J\circ\frac{\varepsilon}{2}JA.
\end{equation*}
\end{lemma}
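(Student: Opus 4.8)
The key observation is that the connection $\nabla$ itself is held fixed; what varies with the almost complex structure is only the type decomposition $\nabla = \nabla^{1,0}_J + \nabla^{0,1}_J$ induced by the splitting of $1$-forms into $(1,0)$ and $(0,1)$ parts. Thus $\nabla^{0,1}_J s$ is nothing but the restriction of the $E$-valued $1$-form $\nabla s$ to vectors of type $(0,1)$ with respect to $J$. The plan is therefore to compare this restriction for $J$ and for $J'$, using the isometric identification $\mu_A$ of Lemma~\ref{lemma:first_ord_identification} to make sense of comparing forms living on $T^{0,1}_J M$ and $T^{0,1}_{J'} M$.

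Concretely, I would fix $s \in \Gamma(E)$ and a vector $Y \in T^{0,1}_J M$, and evaluate $\nabla^{0,1}_{J'} s$ on the corresponding $J'$-antiholomorphic vector $\mu_A(Y) = Y + \frac{\varepsilon}{2}JAY$. Since $\mu_A(Y)$ is of type $(0,1)$ for $J'$ by Lemma~\ref{lemma:first_ord_identification}, one has $\nabla^{0,1}_{J'} s(\mu_A Y) = \nabla_{\mu_A Y} s$, and expanding $\mu_A$ gives, to first order,
\begin{equation*}
\nabla^{0,1}_{J'} s(\mu_A Y) = \nabla_Y s + \frac{\varepsilon}{2}\nabla_{JAY}s.
\end{equation*}

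The decisive point is to identify the types of the two vectors appearing here. Because $A$ is a first-order deformation of $J$, the relation $AJ + JA = 0$ forces $A$ to interchange the two eigenbundles of $J$: for $Y \in T^{0,1}_J M$ one computes $J(AY) = -A(JY) = \I AY$, so $AY \in T^{1,0}_J M$ and hence $JAY = \I AY$ is again of type $(1,0)$. Therefore $\nabla_Y s = \nabla^{0,1}_J s(Y)$ while $\nabla_{JAY} s = \nabla^{1,0}_J s(JAY)$, and the previous display becomes exactly $\left(\nabla^{0,1}_J + \nabla^{1,0}_J \circ \frac{\varepsilon}{2}JA\right)s$ evaluated on $Y$. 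This is the claimed identity, once one reads the equality $\nabla^{0,1}_{J'} = \nabla^{0,1}_J + \nabla^{1,0}_J\circ\frac{\varepsilon}{2}JA$ under the $\mu_A$-identification of antiholomorphic tangent spaces.

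I expect the only real subtlety to be bookkeeping rather than analysis: one must keep the two distinct identifications straight --- the fixed bundle $E$ on which $\nabla$ lives, versus the $J$-dependent identification $\mu_A$ of the base tangent spaces --- and consistently discard $O(\varepsilon^2)$ terms, recalling that $A$ is itself of first order so that $\frac{\varepsilon}{2}JA$ is already the full first-order contribution. A useful cross-check is the coordinate-free formula $\nabla^{0,1}_J s = \frac{1}{2}\left(\nabla s + \I(\nabla s)\circ J\right)$, whose linearization in the direction $A$ reproduces the same result after accounting for the $\mu_A$ twist.
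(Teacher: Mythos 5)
Your proof is correct and is exactly the argument the paper intends (the lemma is stated without proof, but the identification $\mu_A$ of Lemma~\ref{lemma:first_ord_identification} is set up immediately beforehand precisely for this purpose): evaluate $\nabla s$ on $\mu_A(Y)=Y+\frac{\varepsilon}{2}JAY$ for $Y\in T^{0,1}_JM$, and use $AJ+JA=0$ to see that $JAY$ is of type $(1,0)$ for $J$, so the correction term is picked up by $\nabla^{1,0}_J$. The bookkeeping point you flag — that the stated equality is to be read through the $\mu_A$-identification of antiholomorphic tangent spaces, modulo $O(\varepsilon^2)$ — is the right one, and it is consistent with how the lemma is later used in equation~\eqref{eq:nuova_struttura_alt2}.
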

Of course, we can also change the~$(0,1)$ part of the connection while keeping the complex structure fixed. This implies a change of the metric connection itself.
\begin{lemma}\label{lemma:variazione_nabla}
Fix the almost complex structure~$J$. Under a change of the~$(0,1)$ part of the connection~$\nabla^{0,1}\mapsto\nabla^{0,1}+\sigma$ for some~$\sigma\in\m{A}^{0,1}(\mrm{End}(E))$, the connection~$\nabla$ changes by
\begin{equation*}
\nabla\mapsto\nabla+(\sigma-\sigma^*).
\end{equation*}
Here~$\sigma^*$ denotes the formal adjoint of~$\sigma$ with respect to the Hermitian metric.
\end{lemma}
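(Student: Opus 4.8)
The plan is to reduce the statement to the uniqueness result of Lemma~\ref{lemma:Chern_conn}: since a metric connection on~$(E,h)$ is completely determined by its~$(0,1)$-part, it suffices to exhibit \emph{some} metric connection whose~$(0,1)$-part is~$\nabla^{0,1}+\sigma$ and to check that it coincides with~$\nabla+(\sigma-\sigma^*)$. The natural candidate is of course~$\nabla+(\sigma-\sigma^*)$ itself, so the whole argument splits into two checks: that this connection has the correct~$(0,1)$-part, and that it is again compatible with~$h$.

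First I would observe that~$\sigma\in\m{A}^{0,1}(\mrm{End}(E))$ is of type~$(0,1)$, while by definition its adjoint~$\sigma^*$ is of type~$(1,0)$. Hence the~$(0,1)$-part of~$\nabla+(\sigma-\sigma^*)$ is exactly~$\nabla^{0,1}+\sigma$, as required, and the contribution of~$-\sigma^*$ sits entirely in the~$(1,0)$-part. The only thing left to verify is then metric compatibility. Writing~$a:=\sigma-\sigma^*\in\m{A}^1(\mrm{End}(E))$, compatibility of~$\nabla+a$ with~$h$ is equivalent to the skew-adjointness condition~$h(au,v)+h(u,av)=0$ for all sections~$u,v$ of~$E$, since~$\nabla$ is already metric.

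The key step is to split this identity into its~$(1,0)$- and~$(0,1)$-parts, keeping careful track of the fact that~$h$ is~$\bb{C}$-linear in the first entry and conjugate-linear in the second, so that the form-type of any term involving the second entry gets conjugated. With~$a^{0,1}=\sigma$ and~$a^{1,0}=-\sigma^*$, the~$(1,0)$-component of~$h(au,v)+h(u,av)$ collapses to~$h(\sigma^*u,v)=h(u,\sigma v)$ and the~$(0,1)$-component to~$h(\sigma u,v)=h(u,\sigma^*v)$; both are precisely the defining relation of the adjoint together with~$(\sigma^*)^*=\sigma$. This makes~$a$ skew-adjoint, so~$\nabla+a$ is a metric connection with~$(0,1)$-part~$\nabla^{0,1}+\sigma$, and uniqueness in Lemma~\ref{lemma:Chern_conn} forces it to be the connection determined by the new~$(0,1)$-operator.

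I expect the only delicate point to be bookkeeping the interaction between the type decomposition and the conjugate-linearity of~$h$ when extracting the adjoint condition, as this is exactly where a stray conjugation or sign could slip in. As a cross-check I would also redo the computation in a local frame, feeding the substitution~$\tau\mapsto\tau+s$ (with~$s$ the matrix of~$(0,1)$-forms representing~$\sigma$) into the explicit formula~\eqref{eq:Chern_quasi_complessa} for the~$(1,0)$-part~$\vartheta$; the resulting variation~$\delta\vartheta\indices{^k_i}=-h^{jk}\,\overline{s\indices{^l_j}}\,h_{il}$ should reproduce exactly~$-(\sigma^*)\indices{^k_i}$, confirming the coordinate-free argument and pinning down any index conventions.
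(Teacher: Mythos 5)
Your proposal is correct and rests on the same foundation as the paper's proof, namely Lemma~\ref{lemma:Chern_conn}: the paper simply substitutes $\tau\mapsto\tau+s$ into the explicit formula~\eqref{eq:Chern_quasi_complessa} and identifies the resulting variation of the $(1,0)$-part with $-(\sigma^*)\indices{^k_i}=-h_{jk}\overline{\sigma\indices{^k_l}}h^{li}$, which is precisely the local-frame cross-check you describe at the end. Your primary, coordinate-free route (checking that $\sigma-\sigma^*$ is $h$-skew-adjoint by splitting into types and then invoking uniqueness) is a clean repackaging of the same argument and is carried out correctly, including the subtle point that conjugate-linearity of $h$ in the second slot swaps the form types.
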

\begin{proof}
This follows from how a connection is defined in terms of its~$(0,1)$-part and the metric, as in Lemma~\ref{lemma:Chern_conn}. It is enough to check that~${\sigma^*}\indices{^i_j}=h_{jk}\overline{\sigma\indices{^k_l}}h^{li}$.
\end{proof}

We use these two lemmas to help us study a more difficult problem: consider the infinitesimal deformation~$J'=J+\varepsilon A$ for some~$A\in T_J\mscr{J}$; then we have the two bundles~$\ext{1,0}_J M$ and~$\ext{1,0}_{J'} M$, each with a Hermitian metric on their fibres and a~$(0,1)$-operator~$\bar{\diff}_J$ and~$\bar{\diff}_{J'}$. On each of these bundle we have a connection~$\nabla_J$,~$\nabla_{J'}$ and we want to compute the difference in the scalar curvatures of these two connections. Notice that we can use the metric isomorphisms between~$\ext{1,0}_JM$ and~$\ext{1,0}_{J'}M$, and between~$\ext{1,1}_JM$ and~$\ext{1,1}_{J'}M$ (c.f. Lemma~\ref{lemma:first_ord_identification}), to think about~$\nabla_J$ and~$\nabla_{J'}$ as metric connections on the same bundle. In doing so our problem becomes to compute the variation of a metric connection under a change of both the complex structure and the~$(0,1)$-part of the connection.

Consider now the following diagram:
\begin{center}
\begin{tikzcd}
\m{A}^{1,0}_J M  \ar[r,dashed]{}{\Theta} \ar{d}{\mu_A^*} & \m{A}^{1,1}_J M \\
\m{A}^{1,0}_{J'} M  \ar{r}{\bar{\diff}_{J'}} & \m{A}^{1,1}_{J'} M  \ar[u,swap]{}{\mu_A^\transpose}
\end{tikzcd}
\end{center}
the two vertical arrows are first-order isomorphisms, and the dashed arrow (the composition of the other three) is, under our identification, the~$(0,1)$-part of the metric connection induced by the new complex structure~$J'$ on the bundle~$\ext{1,0}_J M$. A straightforward computation allows us to write the map~$\bdiff_{J'}$ in terms of~$J$ and the first-order deformation~$A$.
\begin{lemma}
Let~$\chi\in\m{A}^2_{\bb{C}} M$, and consider its decomposition into forms of type~$(p,q)$ according to~$J$:~$\chi=\chi^{2,0}+\chi^{1,1}+\chi^{0,2}$. Then its~$(1,1)$ component with respect to the~$J'$-decomposition is
\begin{equation*}
\chi^{1,1}_{J'}=\mu_A^*\chi^{1,1}+\left(\chi^{2,0}+\chi^{0,2}\right)\circ\mu_A-\left(\chi^{2,0}+\chi^{0,2}\right)+O(\varepsilon^2).
\end{equation*}
\end{lemma}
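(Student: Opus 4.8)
The plan is to characterize the $(1,1)$-part of a $2$-form intrinsically and then expand to first order in $\varepsilon$. Recall that a $2$-form $\beta$ is of type $(1,1)$ with respect to an almost complex structure $K$ precisely when it is $K$-invariant, $\beta(K-,K-)=\beta(-,-)$, whereas the $(2,0)+(0,2)$-part is $K$-anti-invariant; consequently the projection onto the $(1,1)$-component for $J'$ is
\[
\chi^{1,1}_{J'}(X,Y)=\tfrac{1}{2}\bigl(\chi(X,Y)+\chi(J'X,J'Y)\bigr).
\]
Substituting $J'=J+\varepsilon A$ and discarding $O(\varepsilon^2)$ terms gives
\[
\chi^{1,1}_{J'}(X,Y)=\chi^{1,1}(X,Y)+\tfrac{\varepsilon}{2}\bigl(\chi(AX,JY)+\chi(JX,AY)\bigr)+O(\varepsilon^2),
\]
since the $\varepsilon$-independent part is exactly $\chi^{1,1}$. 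This is the quantity to be matched against the right-hand side.

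Next I would expand the right-hand side to first order. Writing $\mu_A=\mathbbm{1}+\tfrac{\varepsilon}{2}JA$ and $\mu_A^{-1}=\mathbbm{1}-\tfrac{\varepsilon}{2}JA+O(\varepsilon^2)$, and using $\mu_A^*\chi^{1,1}=\chi^{1,1}(\mu_A^{-1}-,\mu_A^{-1}-)$ together with $(\chi^{2,0}+\chi^{0,2})\circ\mu_A=(\chi^{2,0}+\chi^{0,2})(\mu_A-,\mu_A-)$, one finds that the $\varepsilon$-independent terms collapse to $\chi^{1,1}$ — the $\chi^{2,0}+\chi^{0,2}$ contribution cancels against the subtracted copy — so the two sides already agree to leading order. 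What remains is to check equality of the $\varepsilon$-coefficients.

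Set $\eta=\chi^{2,0}+\chi^{0,2}$. The first-order term on the right-hand side works out to
\[
\tfrac{\varepsilon}{2}\bigl(-\chi^{1,1}(JAX,Y)-\chi^{1,1}(X,JAY)+\eta(JAX,Y)+\eta(X,JAY)\bigr),
\]
and I would show this equals the first-order term computed above. The key tool is the $J$-invariance of $\chi^{1,1}$ and the $J$-anti-invariance of $\eta$, combined with $J^2=-\mathbbm{1}$: applying $J$ to both arguments yields the identities $\chi^{1,1}(AX,JY)=-\chi^{1,1}(JAX,Y)$, $\chi^{1,1}(JX,AY)=-\chi^{1,1}(X,JAY)$, $\eta(AX,JY)=\eta(JAX,Y)$ and $\eta(JX,AY)=\eta(X,JAY)$. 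Expanding $\chi=\chi^{1,1}+\eta$ in $\chi(AX,JY)+\chi(JX,AY)$ and substituting these four identities transforms it termwise into the displayed right-hand side coefficient, completing the proof.

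The computation is entirely elementary; the only place demanding care is this final step, where the four invariance identities must be applied with the correct signs. I expect this sign-bookkeeping — keeping straight which argument carries the $J$ and tracking the sign produced by $J^2=-\mathbbm{1}$ on the invariant versus the anti-invariant piece — to be the main (and essentially the only) obstacle. Note in particular that no use of the anticommutation $AJ+JA=0$ is needed; only $J^2=-\mathbbm{1}$ and the type-decomposition enter.
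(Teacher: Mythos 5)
Your proof is correct. The paper states this lemma without proof (it is used immediately afterwards to compute $\bdiff_{J'}\mu_A^*\alpha$), so there is no argument to compare against; your verification via the characterisation of the $(1,1)$-part as the $+1$-eigenspace of $\chi\mapsto\chi(K-,K-)$ and the $(2,0)+(0,2)$-part as the $-1$-eigenspace is the natural route, and the first-order bookkeeping with the four invariance identities checks out against the expansion of $\mu_A^*\chi^{1,1}+\eta\circ\mu_A-\eta$. One small correction to your closing remark: the anticommutation $AJ+JA=0$ \emph{is} used, albeit implicitly, since the formula $\chi^{1,1}_{J'}(X,Y)=\tfrac{1}{2}\bigl(\chi(X,Y)+\chi(J'X,J'Y)\bigr)$ is the projector onto the $(1,1)$-part for $J'$ only because $(J')^2=-\mathbbm{1}+\varepsilon(AJ+JA)+O(\varepsilon^2)=-\mathbbm{1}+O(\varepsilon^2)$; without that, $J'$ would not define a type decomposition to first order and the averaging formula would fail at order $\varepsilon$. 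This does not affect the validity of your argument, since $A\in T_J\mscr{J}$ guarantees the anticommutation by definition.
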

Let~$\alpha$ be a~$(1,0)$ form with respect to~$J$; using the previous Lemma we can compute~$\bdiff_{J'}\mu_A^*\alpha$:
\begin{equation*}
\begin{split}
\bdiff_{J'}\mu_A^*\alpha=&\mu_A^*(\mrm{d}\,\mu_A^*\alpha)^{1,1}+\left((\mrm{d}\,\mu_A^*\alpha)^{2,0}+(\mrm{d}\,\mu_A^*\alpha)^{0,2}\right)\circ\mu_A-\\
&-\left((\mrm{d}\,\mu_A^*\alpha)^{2,0}+(\mrm{d}\,\mu_A^*\alpha)^{0,2}\right)+O(\varepsilon^2).
\end{split}
\end{equation*}
Now, the decomposition of~$\mrm{d}\mu_A^*\alpha$ with respect to~$J$ is
\begin{equation*}
\begin{split}
\mrm{d}\mu_A^*\alpha&=\left(\diff+\bdiff-\frac{1}{4}N_J^\transpose\right)\left(\alpha\circ\mu_A^{-1}\right)=\\
&=\underbrace{\diff\alpha+\frac{1}{4}\alpha\circ\frac{\varepsilon}{2}JAN_J}_{(2,0)\text{-part}}+\underbrace{\bdiff\alpha-\diff\left(\alpha\circ\frac{\varepsilon}{2}JA\right)}_{(1,1)\text{-part}} \underbrace{-\bdiff\left(\alpha\circ\frac{\varepsilon}{2}JA\right)-\frac{1}{4}\alpha\circ N_J}_{(0,2)\text{-part}}
\end{split}
\end{equation*}
Then~$\Theta\alpha$ can be computed, again to first order in~$\varepsilon$:
\begin{equation}\label{eq:nuova_struttura}
\begin{split}
\Theta\alpha=&\bdiff_J\alpha-\diff\left(\alpha\circ\frac{\varepsilon}{2}JA\right)+\diff\alpha\left(\frac{\varepsilon}{2}JA-,-\right)+\diff\alpha\left(-,\frac{\varepsilon}{2}JA-\right)-\\
&-\frac{1}{4}\alpha\circ\left(N\left(\frac{\varepsilon}{2}JA-,-\right)+N\left(-,\frac{\varepsilon}{2}JA-\right)\right)
\end{split}
\end{equation}
It will be more convenient to let~$\nu:=\left(\frac{\varepsilon}{2}JA\right)^{1,0}$, and define for a~$2$-form~$\beta$
\begin{equation*}
\beta\circ\nu:=\beta(\nu-,-)+\beta(-,\nu-).
\end{equation*}
Then we can rewrite equation~\eqref{eq:nuova_struttura} as
\begin{equation}\label{eq:nuova_struttura_alt}
\Theta\alpha=\bdiff_J\alpha-\diff_J\left(\alpha\circ\nu\right)+\diff_J\alpha\circ\nu-\frac{1}{4}\alpha\circ N\circ\bar{\nu}
\end{equation}
We proceed to rewrite some terms of equation~\eqref{eq:nuova_struttura_alt} in a slightly different way. For any complex vector bundle~$E$ over~$ M$, a connection~$D$ on~$E$ induces a connection also on the complex vector bundle with the conjugate complex structure,~$\bar{E}$; the induced connection~$\bar{D}$ satisfies
\begin{equation*}
\bar{D}_X\sigma=\overline{D_{\bar{X}}\bar{\sigma}}
\end{equation*}
for any~$X\in T_{\bb{C}} M$ and~$\sigma\in\Gamma(\bar{E})$. Here~$\bar{\ }$ denotes both the usual conjugation and the natural map of vector bundles~$\bar{\ }:E\to\bar{E}$. It is then easy to check that~$\bar{D}^{1,0}=\overline{D^{0,1}}$. In particular the term~$\diff_J(\alpha\circ\nu)$ in equation~\eqref{eq:nuova_struttura_alt} is equal to~$\bar{\nabla}^{1,0}(\alpha\circ\nu)$, since~$\alpha\circ\nu$ is a~$(0,1)$-form. Using the Leibniz rule we extend~$\nabla$ also to a connection on~$\mrm{End}(TM)$, and so
\begin{equation*}
\diff(\alpha\circ\nu)=\bar{\nabla}^{1,0}(\alpha\circ\nu)=\alpha\circ\nabla^{1,0}\nu+\nabla^{1,0}\alpha\circ\nu
\end{equation*}
where the composition in~$\nabla^{1,0}\alpha\circ\nu$ indicates that~$\nu$ is acting on the second component of~$\nabla^{1,0}\alpha$. In other words, for any two vector fields~$X^{0,1}$ and~$Y^{1,0}$ we have
\begin{equation}\label{eq:diff_mu_alpha}
\begin{split}
\diff(\alpha\circ\nu)(X^{0,1},Y^{1,0})&=-\diff(\alpha\circ\nu)(Y^{1,0},X^{0,1})=\\
&=-\alpha((\nabla_{Y^{1,0}}\nu)X^{0,1})-\nabla_{Y^{1,0}}\alpha(\nu X^{0,1}).
\end{split}
\end{equation}
Lemma~\ref{lemma:torsion} allows us to rewrite this expression. For any two vector fields~$X,Y\in T^{1,0}_J M$ and any~$\alpha\in\m{A}^{1,0}_J M$ we have
\begin{equation*}
\alpha(\mrm{Tor}^{\nabla}(X,Y))=-\frac{1}{4}\alpha(N(X,Y))=0
\end{equation*}
since~$N$ sends~$X,Y$ to a~$(0,1)$-vector field. On the other hand, however
\begin{equation*}
\begin{split}
\alpha(\mrm{Tor}^{\nabla}(X,Y))&=\alpha(\nabla_XY-\nabla_XY-[X,Y])=\\
=&X(\alpha(Y))-\nabla_X\alpha(Y)-Y(\alpha(X))+\nabla_Y\alpha(X)-\alpha([X,Y])=\\
=&\diff\alpha(X,Y)-(\nabla_X\alpha(Y)-\nabla_Y\alpha(X))
\end{split}
\end{equation*}
and so for every~$X,Y\in T^{1,0}M$ we have
\begin{equation*}
\diff\alpha(X,Y)=\nabla_X\alpha(Y)-\nabla_Y\alpha(X).
\end{equation*}
In particular for~$\nu X^{0,1}$ and~$Y^{1,0}$ we have
\begin{equation}\label{eq:azione_antisimmetrizazione}
(\diff\alpha\circ\nu)(X^{0,1},Y^{1,0})=\diff\alpha(\nu X^{0,1},Y^{1,0})=\nabla_{\nu X^{0,1}}\alpha(Y^{1,0})-\nabla_{Y^{1,0}}\alpha(\nu X^{0,1}).
\end{equation}
Putting together equations~\eqref{eq:diff_mu_alpha} and~\eqref{eq:azione_antisimmetrizazione} we get
\begin{equation}\label{eq:manipolazione}
\left(\diff(\alpha\circ\nu)-(\diff\alpha)\circ\nu\right)(X^{0,1},Y^{1,0})=-\alpha((\nabla_{Y^{1,0}}\nu)X^{0,1})-\nabla_{\nu X^{0,1}}\alpha(Y^{1,0}).
\end{equation}
At this point we can rewrite equation~\eqref{eq:nuova_struttura_alt}:
\begin{equation}\label{eq:nuova_struttura_alt2}
\Theta\alpha=\bdiff_J\alpha+\alpha\circ\nabla^{1,0}\nu+\nabla^{1,0}_\nu\alpha-\frac{1}{4}\alpha\circ N\circ\bar{\nu}
\end{equation}
Notice that the term~$\bar{\diff}\alpha+\nabla^{1,0}_\nu\alpha$ in equation~\eqref{eq:nuova_struttura_alt2} is precisely how the~$(0,1)$-part of the connection would change if we fixed the connection~$\nabla$, c.f. Lemma~\ref{lemma:variazione_J}. Hence by Lemma~\ref{lemma:variazione_nabla} the variation in the connection is
\begin{equation}\label{eq:variazione_connessione}
(\nabla^{1,0}\nu)-\frac{1}{4}N\circ\bar{\nu}-\left[(\nabla^{1,0}\nu)-\frac{1}{4}N\circ\bar{\nu}\right]^*.
\end{equation}
Here, as before,~${}^*$ denotes adjointness with respect to the hermitian metric on the fibres. Since~$Q_1$ is the imaginary part of the variation of the induced metric on~$\ext{n,0}_J M$, to compute~$Q_1$ is enough to take the trace of equation~\eqref{eq:variazione_connessione}. Notice that, for any~$\sigma\in\m{A}^{0,1}(\mrm{End}(E))$, if we write its components as~$\sigma\indices{^j_k}$ then the adjoint is
\begin{equation*}
(\sigma^*)\indices{^i_j}=h_{jk}\overline{\sigma\indices{^k_l}}h^{li}
\end{equation*}
and so, by taking an~$h$-orthonormal frame for~$E$
\begin{equation*}
\mrm{Trace}(\sigma-\sigma^*)=\sigma\indices{^j_j}-\overline{\sigma\indices{^k_k}}=2\I\mrm{Im}(\mrm{Trace}(\sigma)).
\end{equation*}
Summing up, we finally obtain an expression for~$Q_1$.
\begin{lemma}
For~$A\in T_J\mscr{J}$, the deformation of the Chern connection of~$J$ on~$\ext{n,0}_JM$ is 
\begin{equation*}
Q_1(A)(X^{0,1})=2\mrm{Im}\left[\mrm{Trace}\left(\left(\nabla^{1,0}\nu\right)(X^{0,1})-\frac{1}{4}N\left(X^{0,1},\bar{\nu}-\right)\right)\right]
\end{equation*}
\end{lemma}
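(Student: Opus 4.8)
The plan is to read the formula off the two structural lemmas and the definition of $Q_1$, so the proof is an assembly rather than a fresh computation. Recall that $Q_1(A) = -\I\,\mrm{Trace}(\Xi)$, where $\Xi = \nabla_{J'} - \nabla_J$ is the difference of the two Chern connections read on the single bundle $E = \ext{1,0}_J M$ through the first-order isometry of Lemma~\ref{lemma:first_ord_identification}. The starting point is the decomposition \eqref{eq:nuova_struttura_alt2} of the transported operator $\Theta = \bdiff_{J'}$.

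First I would split \eqref{eq:nuova_struttura_alt2} into its genuinely differential part and its algebraic part. The part $\bdiff_J\alpha + \nabla^{1,0}_\nu\alpha$ differentiates $\alpha$, and by Lemma~\ref{lemma:variazione_J} it is exactly the $(0,1)$-component of the \emph{unchanged} connection $\nabla_J$ once $\nabla_J$ is re-expressed with respect to $J'$; it therefore records no real change of connection and drops out of $\Xi$. The remaining terms form the $\mrm{End}(E)$-valued $(0,1)$-form
\[
\sigma := \nabla^{1,0}\nu - \tfrac14\, N\circ\bar\nu \in \m{A}^{0,1}(\mrm{End}(E)),
\]
acting on $\alpha$ purely algebraically. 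Since $J$ is now fixed and only the $(0,1)$-part of the connection has been altered by $\sigma$, Lemma~\ref{lemma:variazione_nabla} applies verbatim and yields $\Xi = \sigma - \sigma^*$, which is precisely \eqref{eq:variazione_connessione}.

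Finally I would take the trace. Working in an $h$-orthonormal frame and using ${\sigma^*}\indices{^i_j} = h_{jk}\,\overline{\sigma\indices{^k_l}}\,h^{li}$ one gets $\mrm{Trace}(\sigma - \sigma^*) = 2\I\,\mrm{Im}\big(\mrm{Trace}(\sigma)\big)$, whence
\[
Q_1(A) = -\I\,\mrm{Trace}(\Xi) = -\I\cdot 2\I\,\mrm{Im}\big(\mrm{Trace}(\sigma)\big) = 2\,\mrm{Im}\big(\mrm{Trace}(\sigma)\big),
\]
and evaluating $\sigma$ on a $(0,1)$-vector $X^{0,1}$ gives the claimed formula. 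The only delicate point — and the step I expect to require the most care — is the bookkeeping in the second paragraph: one must check that $\bdiff_J + \nabla^{1,0}_\nu$ is literally $\nabla_J$ seen through the identification (so that Lemma~\ref{lemma:variazione_nabla} is applied to $\sigma$ alone), and that $\sigma$ is truly $\mrm{End}(E)$-valued, i.e. that $N\circ\bar\nu$ and $\nabla^{1,0}\nu$ act on $\alpha$ by composition rather than by differentiation; everything after that is the elementary trace identity.
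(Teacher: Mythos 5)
Your proposal is correct and follows essentially the same route as the paper: it isolates the genuinely algebraic part $\sigma=\nabla^{1,0}\nu-\tfrac14 N\circ\bar\nu$ of the transported $\bdiff_{J'}$ in equation~\eqref{eq:nuova_struttura_alt2}, discards the $\bdiff_J\alpha+\nabla^{1,0}_\nu\alpha$ piece via Lemma~\ref{lemma:variazione_J}, applies Lemma~\ref{lemma:variazione_nabla} to get $\Xi=\sigma-\sigma^*$, and concludes with the trace identity $\mrm{Trace}(\sigma-\sigma^*)=2\I\,\mrm{Im}(\mrm{Trace}(\sigma))$. The ``delicate point'' you flag is exactly the one the paper handles by the remark following~\eqref{eq:nuova_struttura_alt2}, so nothing further is needed.
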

The next step is to rewrite the trace of~$\nabla^{1,0}\nu$. To do this, it is convenient to use the tensor~$q\in\ext{0,1} M \otimes\ext{0,1} M$ obtained from~$\nu$ by lowering its~$(1,0)$ index: for a local frame~$e_1,\dots,e_n$ of~$T^{1,0}M$ and dual frame~$\epsilon^1,\dots,\epsilon^n$ of~$\ext{1,0}M$,~$q$ is written as
\begin{equation*}
q_{\bar{a}\bar{b}}=g_{c\bar{a}}\nu\indices{^c_{\bar{b}}}.
\end{equation*}
Since~$\nabla$ is a metric connection and~$\nabla^{1,0}=\diff$ on~$\ext{0,1}M$
\begin{equation*}
\mrm{Trace}(\nabla^{1,0}\nu)=\nabla_c\nu\indices{^c_{\bar{b}}}=g^{c\bar{a}}\nabla_c q_{\bar{a}\bar{b}}=g^{c\bar{a}}\diff_c q_{\bar{a}\bar{b}}.
\end{equation*}
We can rewrite this using the operator
\begin{equation*}
\begin{split}
L:\m{A}^{0,1} M &\to\Gamma\left(\ext{1,0}\otimes\ext{0,1}\otimes\ext{0,1}\right)\\
\vartheta&\mapsto\omega\otimes\vartheta
\end{split}
\end{equation*}
that has, like in the integrable case, an adjoint~$\Lambda$ given by contraction with~$\omega$. So we have the nice expression
\begin{equation*}
\mrm{Trace}(\nabla^{1,0}\nu)=\I\Lambda\diff q.
\end{equation*}
In~\cite[Proposition~$16$]{Donaldson_YangMills4man}, Donaldson noted that some K\"ahler identities hold also in the non-integrable case:
\begin{prop}
On any almost K\"ahler manifold, the adjoints of~$\diff$,~$\bdiff$ respectively are
\begin{equation*}
\diff^*=\I[\Lambda,\bdiff],\quad \bdiff^*=-\I[\Lambda,\diff].
\end{equation*}
\end{prop}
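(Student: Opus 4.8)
Since $\Lambda$ is a real operator and $\overline{\bdiff}=\diff$, the two identities are complex conjugates of one another: conjugating $\diff^{*}=\I[\Lambda,\bdiff]$ and using $\overline{\diff^{*}}=\bdiff^{*}$ yields $\bdiff^{*}=-\I[\Lambda,\diff]$. Hence the plan is to prove only $\diff^{*}=\I[\Lambda,\bdiff]$, equivalently $[\Lambda,\bdiff]=-\I\,\diff^{*}$. Both sides are pointwise-defined operators lowering the bidegree by $(1,0)$, so it suffices to work at a fixed $x\in M$ on the Hermitian exterior algebra of $(T_xM,g_J,J,\omega)$.

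First I would collect the purely algebraic input, all of which is valid on a Hermitian vector space irrespective of integrability: the Lefschetz operators $L=\omega\wedge-$ and $\Lambda=L^{*}$, together with the grading operator $H$, satisfy the $\mathfrak{sl}_{2}$-relations $[\Lambda,L]=H$; the Riemannian Hodge star $\star$ sends $(p,q)$-forms to $(n-q,n-p)$-forms; and Weil's identity expresses $\star$ on each primitive component as an explicit constant multiple of a power of $L$, times $\I^{\,p-q}$. Next I would bring in the two differential facts. The exterior derivative decomposes by bidegree as $\mrm{d}=\mu+\diff+\bdiff+\bar\mu$, generalising Lemma~\ref{lemma:decomposizione_d}, where $\mu$ and $\bar\mu$ are the zeroth-order operators built from $N_J$, of bidegree $(2,-1)$ and $(-1,2)$. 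From $\mrm{d}\omega=0$ we get $[L,\mrm{d}]=0$, whose component of bidegree-shift $(1,2)$ reads $[L,\bdiff]=0$. Combining this with the Riemannian identity $\diff^{*}=\pm\,\star\,\bdiff\,\star$ — which holds because a bidegree count, using that $\star$ shifts $(p,q)$ to $(n-q,n-p)$, shows that the only middle term of $\star\,\mrm{d}\,\star$ contributing to the $(-1,0)$ part is the one carrying $\bdiff$ — exhibits $\diff^{*}$ as an expression involving only $\bdiff$ and the algebraic operator $\star$, with no explicit Nijenhuis term.

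The core of the proof is then the algebraic reduction of $\star\,\bdiff\,\star$ to $\pm\I[\Lambda,\bdiff]$: one uses Weil's formula to write $\star$ through powers of $L$, commutes these past $\bdiff$ for free via $[L,\bdiff]=0$, and re-expresses the outcome through $\Lambda$ by the $\mathfrak{sl}_{2}$-relations. This is precisely the classical Weil derivation of the K\"ahler identities, and the point I want to stress is that it never invokes integrability: the single differential ingredient is $[L,\bdiff]=0$, which here follows from $\mrm{d}\omega=0$ rather than from $J$ being integrable.

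The main obstacle is to be sure that this is genuinely the case — that no correction term survives. On a general almost-Hermitian manifold the K\"ahler identities acquire torsion corrections proportional to $\mrm{d}\omega$ and to $N_J$; the bidegree bookkeeping above shows that the $N_J$-corrections carry bidegree-shift $(2,-1)$ or $(-1,2)$ and therefore cannot enter an identity between operators of bidegree-shift $(-1,0)$, while the $\mrm{d}\omega$-corrections vanish by the almost-K\"ahler hypothesis. The remaining work is the careful tracking of signs and constants through the Lefschetz primitive decomposition, after which the identity at $x$ follows, and with it the Proposition.
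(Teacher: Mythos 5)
Your argument is sound, but note that the paper does not actually prove this Proposition: it is quoted from Donaldson (\cite[Proposition~$16$]{Donaldson_YangMills4man}) with no argument supplied, so there is no internal proof to compare against. Your route is the standard Weil--Lefschetz derivation, correctly adapted to the almost-K\"ahler setting, and the three load-bearing observations are all right: Weil's formula for~$\star$ on primitive forms and the~$\mathfrak{sl}_2$-relations are pointwise Hermitian linear algebra, hence insensitive to~$N_J$; the identity~$[L,\bdiff]=0$ follows from~$\mrm{d}\omega=0$ by extracting the bidegree-$(1,2)$ component of~$[L,\mrm{d}]=0$; and the bidegree count on~$\mrm{d}^*=-\star\,\mrm{d}\,\star$ correctly isolates~$\diff^*=-\star\,\bdiff\,\star$, so that the Nijenhuis pieces~$\mu,\bar\mu$ of~$\mrm{d}$ (of bidegree shift~$(2,-1)$ and~$(-1,2)$) can only feed the components~$\bar\mu^*,\mu^*$ and never the~$(-1,0)$ component you are after. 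The conjugation reduction to a single identity is also fine since~$\Lambda$ is a real operator. One step you leave implicit and should spell out when writing this up: for~$\alpha$ primitive of degree~$k$ one needs~$\bdiff\alpha=\beta_0+L\beta_1$ with no higher Lefschetz terms, which is exactly where~$[L,\bdiff]=0$ enters (apply~$L^{n-k+1}$ and use injectivity of~$L^m$ on primitives of low degree); without this the Weil computation of~$\star\,\bdiff\,\star$ on~$L^j\alpha$ does not close up. With that made explicit, the proof is complete and, usefully, self-contained where the thesis relies on an external citation.
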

Then we can rewrite~$Q_1$ as
\begin{equation*}
Q_1(A)=-2\mrm{Im}\left({\nabla^{0,1}}^*q\right)-\frac{1}{2}\mrm{Im}\left[\mrm{Tr}\left(N(-,\bar{\nu}-)\right)\right]
\end{equation*}
To finish the proof of Theorem~\ref{thm:Donaldson_scalcurv_mm} it is now enough to put together the various expressions we have already computed.
\begin{prop}
For any~$X\in\Gamma(TM)$ and~$A\in\m{A}^1(TM)$ we have
\begin{equation*}
\left\langle P_2(X),A\right\rangle=4\left\langle X,Q_1(JA)^\sharp\right\rangle
\end{equation*}
\end{prop}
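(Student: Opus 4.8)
The plan is to prove the identity by inserting the explicit formulas obtained above for $P_2$ and $Q_1$ and matching the two $L^2$ pairings term by term. The crucial preliminary observation concerns the effect of replacing the deformation by $JA$ in the definition of the auxiliary tensor $\nu$. In the formula for $Q_1$ we had set $\nu=\left(\frac{\varepsilon}{2}JA\right)^{1,0}$; feeding in $JA$ in place of $A$ and using $J^2=-\mathbbm{1}$ as an endomorphism gives
\[
\nu = \left(\frac{\varepsilon}{2}J(JA)\right)^{1,0}=\left(-\frac{\varepsilon}{2}A\right)^{1,0}=-\frac{\varepsilon}{2}A^{1,0}.
\]
This is exactly why the statement is phrased with $Q_1(JA)$ rather than $Q_1(A)$: it makes the tensor entering $Q_1(JA)$ proportional to $A^{1,0}$ itself, so that $Q_1(JA)$ becomes directly comparable with the term $\nabla^{0,1}X^{1,0}$ appearing in $P_2(X)$ (Lemma~\ref{lemma_P2}).

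First I would rewrite both sides in a local unitary frame for $\ext{1,0}_JM$. On the left, since $P_2(X)$ and $A$ are real endomorphisms in $T_J\mscr{J}$, the pairing $\langle P_2(X),A\rangle$ reduces, via $g_J(A,B)=\mrm{Tr}(AB)$ and the identification $A\leftrightarrow A^{1,0}$, to a real part of a Hermitian pairing of the $(1,0)$-components; using Lemma~\ref{lemma_P2} this splits into a ``derivative'' contribution built from $\nabla^{0,1}X^{1,0}$ and a pointwise Nijenhuis contribution built from $N(X^{0,1},-)$. On the right I would use the form $Q_1(JA)=-2\mrm{Im}\left({\nabla^{0,1}}^*q\right)-\frac{1}{2}\mrm{Im}\left[\mrm{Tr}\left(N(-,\bar{\nu}-)\right)\right]$, where $q$ is obtained from $\nu=-\frac{\varepsilon}{2}A^{1,0}$ by lowering its $(1,0)$ index with the metric; note that $q$ is then a constant multiple of the symmetric $2$-tensor obtained by lowering the index of $A$, the symmetry being \eqref{eq:indentita_tangenti}. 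Pairing with $X$ and passing the musical isomorphism through the integral again produces a derivative term and a Nijenhuis term.

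The heart of the argument is the matching of these two pieces. For the derivative terms, the appearance of the formal adjoint ${\nabla^{0,1}}^*$ inside $Q_1$ is precisely what an integration by parts of $\langle\nabla^{0,1}X^{1,0},A^{1,0}\rangle$ produces, so that the derivative falls off $X$ on the left and lands on $A$ on the right, the two sides agreeing up to the asserted factor $4$. For the Nijenhuis terms no integration is needed: I would check that $N(X^{0,1},-)$ paired against $A$ equals, pointwise and with the correct constant, the trace expression $\mrm{Tr}\left(N(-,\bar{\nu}-)\right)$ paired against $X$. This uses the type properties of $N_J$ collected before Lemma~\ref{lemma:decomposizione_d}, in particular that $N_J(Z,\bar{W})=0$ on $(1,0)$ fields and that $N_J$ sends a pair of $(1,0)$ vectors to a $(0,1)$ vector, together with the symmetry of the tensor underlying $A$.

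The hard part will be the purely computational bookkeeping: keeping straight the $(1,0)/(0,1)$ decompositions, the conjugations hidden in $\bar{\nu}$ and in the various $\mrm{Re}/\mrm{Im}$ operations, and the interplay between the endomorphism trace, the Hermitian fibre pairing, and the $L^2$ integral, so that the numerical constants (the $4$ of Lemma~\ref{lemma_P2}, the $2$ in the definition of $Q_1$, the $\frac{1}{2}$ in $\nu$, and the factor $2$ from taking real parts of endomorphism traces) assemble into the single factor $4$. A secondary point is that the integration by parts carries no boundary contribution, which is immediate since $M$ is compact without boundary. Once this Proposition is established, combining it with the relation $J^2=-\mathbbm{1}$ for the operator $A\mapsto JA$ on $T_J\mscr{J}$ yields $P_2^*\circ J=-4\,\sharp\circ Q_1$, which is exactly the remaining identity needed to complete the proof of Theorem~\ref{thm:Donaldson_scalcurv_mm}.
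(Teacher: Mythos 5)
Your proposal is correct and follows essentially the same route as the paper: substitute $JA$ into $Q_1$ so that $\nu$ becomes proportional to $A^{1,0}$, split both sides into a derivative piece and a Nijenhuis piece, match the former through the adjointness of $\nabla^{0,1}$ and ${\nabla^{0,1}}^*$, and match the latter pointwise using the type properties of $N_J$ and the symmetry of $g_J(A-,-)$. The paper compresses all of this into the remark that the two operators $\mrm{Im}(\nabla^{0,1}X^{1,0})+\frac{1}{4}\mrm{Im}(N(X^{1,0},-))$ and $\mrm{Im}({\nabla^{0,1}}^*A^{1,0})+\frac{1}{4}\mrm{Im}[\mrm{Tr}(N(-,A^{0,1}-))^\sharp]$ are formal adjoints of each other, which is exactly the bookkeeping you describe.
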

\begin{proof}
Recall from Lemma~\ref{lemma_P2} that
\begin{equation*}
\begin{split}
\frac{1}{4}P_2(X)&=\mrm{Im}\left(\nabla^{0,1}X^{1,0}-\frac{1}{4}N\left(X^{0,1},-\right)\right)=\\
&=\mrm{Im}\left(\nabla^{0,1}X^{1,0}\right)+\frac{1}{4}\mrm{Im}\left(N\left(X^{1,0},-\right)\right)
\end{split}
\end{equation*}
while the previous computations give
\begin{equation*}
Q_1(JA)^\sharp=\mrm{Im}\left({\nabla^{0,1}}^*A^{1,0}\right)+\frac{1}{4}\mrm{Im}\left[\mrm{Tr}\left(N(-,A^{0,1}-)\right)^\sharp\right]
\end{equation*}
and now a direct computation shows the two operators are formal adjoints of each other.
\end{proof}

\subsection{The complexified action}\label{sec:complexification_cscK}

Having found a moment map for the action of~$\mscr{G}=\mrm{Ham}(M,\omega)$ on~$\mscr{J}$, the next step to mimic the finite-dimensional picture described in Section~\ref{sec:stability} would be to extend the action of~$\mscr{G}$ to an action of the complexified group~$\mscr{G}^c$; then, the stable~$\mscr{G}^c$-orbits of~$\mscr{J}$ should be in one-to-one correspondence with the set~$\mu^{-1}(0)/\mscr{G}$, i.e. in each stable orbit of the complexified action we should be able to find a compatible almost complex structure of constant scalar curvature.

There is a major issue with this approach: a complexification of~$\mscr{G}$ does not exist, see for example the discussion in~\cite[Remark~$35$]{Wang_Futaki} and~\cite[\S$1.3.3$]{GarciaFernandez_PHD}. However we can certainly complexify the \emph{infinitesimal} action of~$\mrm{Lie}(\mscr{G})\cong\m{C}^\infty_0(M,\bb{R})$ at a point~$J\in\mscr{J}$. Indeed, if~$h\in\m{C}^\infty_0$, the infinitesimal action of~$h$ at~$J$ is
\begin{equation*}
\hat{h}_J=\m{L}_{X_h}J\in T_J\mscr{J}.
\end{equation*}
Since~$\mscr{J}$ is a complex manifold, for any~$h\in\m{C}^\infty_0(M,\bb{R})$ we can define the infinitesimal action of~$\I h$ as
\begin{equation*}
\widehat{\I h}_J:=\bb{J}_J\hat{h}_J=J\m{L}_{X_h}J
\end{equation*}
and so the complexified Lie algebra~$\mrm{Lie}(\mscr{G})^{c}\cong\m{C}^\infty_0(M,\bb{C})$ defines a distribution~$\mscr{D}$ on~$\mscr{J}$ that plays the role of the \emph{infinitesimal complexified action} of~$\mscr{G}$ on~$\mscr{J}$:
\begin{equation*}
\begin{split}
\mscr{D}_J=&\set*{\hat{h}_J\tc h\in\m{C}^\infty(M,\bb{R})}\cup\set*{\hat{\I h}_J\tc h\in\m{C}^\infty(M,\bb{R})}=\\
=&\set*{\m{L}_{X_h}J\tc h\in\m{C}^\infty(M,\bb{R})}\cup\set*{J\m{L}_{X_h}J\tc h\in\m{C}^\infty(M,\bb{R})}.
\end{split}
\end{equation*}
If we are able to prove that~$\mscr{D}$ is an integrable distribution, then the leaves of the distribution can be considered as \emph{complexified orbits} of~$\mscr{G}\curvearrowright\mscr{J}$, even though~$\mscr{G}$ does not admit a complexification. This was shown by Donaldson, for integrable~$J$, in~\cite{Donaldson_scalar} and later in~\cite{Donaldson_SymmKahlerHam} (assuming~$H^1(M,\bb{R})=0$, for simplicity, but the argument can be generalized).

For integrable~$J$, consider the K\"ahler class~$\m{K}_J(\omega)$ of~$\omega$ with respect to the complex structure~$J$. Then, let~$\mscr{Y}_J$ be the set
\begin{equation*}
\mscr{Y}_J=\set*{(f,\omega')\tc f\in\mrm{Diff}_0(M),\ \omega'\in\m{K}_J(\omega)\mbox{ and }f^*\omega'=\omega}.
\end{equation*}
Then we have a~$\mrm{Symp}_0(M,\omega)$-bundle~$\mscr{Y}\to\m{K}_J(\omega)$ given by projection on the second component (the hypothesis~$H^1=0$ is required here to have~$\mrm{Symp}_0=\mrm{Ham}$, see Proposition~\ref{prop:Ham_properties}). Consider now the map
\begin{equation}\label{eq:foglia_integrale_J}
\begin{split}
\Phi_J:\mscr{Y}_J&\to\mscr{J}\\
(f,\omega')&\mapsto f^*\!J.
\end{split}
\end{equation}
\begin{prop}[\cite{Donaldson_SymmKahlerHam}]\label{prop:integral_leaf}
For every integrable compatible complex structure~$J\in\mscr{J}$, the image of~$\Phi_J$ is a leaf of the distribution~$\mscr{D}_J$.
\end{prop}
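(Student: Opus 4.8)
The plan is to show that the image $\mscr{L}_J:=\Phi_J(\mscr{Y}_J)$ is a connected (immersed) submanifold of $\mscr{J}$ containing $J$ whose tangent space at every point coincides with the distribution $\mscr{D}$; exhibiting such integral submanifolds through the points of $\mscr{J}$ simultaneously proves that $\mscr{D}$ is involutive and identifies $\mscr{L}_J$ with the leaf through $J$. A useful preliminary reduction is that $\mscr{L}_J$ does not really depend on the base point: if $K=f^*J$ is any point of $\mscr{L}_J$, running the construction with $K$ in place of $J$ gives $\Phi_K(\mscr{Y}_K)=\set*{(f\circ g)^*J\tc g\in\mrm{Diff}_0(M)}=\mscr{L}_J$, since $g\mapsto f\circ g$ is a bijection of $\mrm{Diff}_0(M)$. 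Hence it suffices to carry out the tangent-space computation at the base point, and the same argument applied to $\Phi_K$ then gives $T_K\mscr{L}_J=\mscr{D}_K$ at an arbitrary $K\in\mscr{L}_J$.

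First I would check that $\Phi_J$ is well defined and that $\mscr{L}_J$ is connected. Since $f^*\omega'=\omega$ and $J$ is $\omega'$-compatible, $f^*J$ is compatible with $\omega$, so indeed $\Phi_J(f,\omega')\in\mscr{J}$. By Moser's theorem every $\omega'\in\m{K}_J(\omega)$ admits some $f\in\mrm{Diff}_0(M)$ with $f^*\omega'=\omega$, so the projection $\mscr{Y}_J\to\m{K}_J(\omega)$ is onto with fibre $\mrm{Symp}_0(M,\omega)=\mscr{G}$ (using $H^1(M,\bb{R})=0$ and Proposition~\ref{prop:Ham_properties}). As $\m{K}_J(\omega)$ is convex and $\mscr{G}$ is connected, $\mscr{Y}_J$ is connected, hence so is $\mscr{L}_J$; and $(\mathbbm{1}_M,\omega)\mapsto J$ shows $J\in\mscr{L}_J$.

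The core of the argument is the identification $T_J\mscr{L}_J=\mscr{D}_J$. Differentiating a path $(f_t,\omega'_t)$ in $\mscr{Y}_J$ through $(\mathbbm{1}_M,\omega)$, with generator $X=\dot{f}_0$ and $\dot{\omega}=\tfrac{\mrm{d}}{\mrm{d}t}\big|_0\omega'_t$, the constraint $f_t^*\omega'_t=\omega$ gives $\mrm{d}(X\lrcorner\omega)=-\dot{\omega}$, while $\mrm{d}\Phi_J$ sends this vector to $\m{L}_XJ$. Here $\dot{\omega}$ ranges exactly over the real exact $(1,1)$-forms: exactness is automatic, positivity is an open condition, and the $\diff\bdiff$-lemma on the compact K\"ahler manifold $(M,J,\omega)$ identifies these with $\I\diff\bdiff\phi$, $\phi\in\m{C}^\infty(M,\bb{R})$. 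For fixed $\phi$ the general solution of $\mrm{d}(X\lrcorner\omega)=-\I\diff\bdiff\phi$ is $X=\mrm{grad}_{g_J}\phi+X_h$ with $h$ arbitrary, since two solutions differ by a symplectic — hence, by $H^1=0$, Hamiltonian — vector field. Using Lemma~\ref{lemma_P2} with $N_J=0$ gives $\m{L}_XJ=4\,\mrm{Im}(\bdiff X^{1,0})$, and from $(JX_h)^{1,0}=\I\,X_h^{1,0}$ one reads off the identity $J\,\m{L}_{X_h}J=\m{L}_{JX_h}J=\widehat{\I h}_J$; the same mechanism shows $\m{L}_{\mrm{grad}_{g_J}\phi}J=\pm\,\widehat{\I\phi}_J$. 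Absorbing signs into $\phi$ and $h$, the image tangent vector is $\m{L}_XJ=\hat{h}_J+\widehat{\I\phi}_J$, which as $\phi,h$ vary sweeps out precisely $\mscr{D}_J$; conversely every generator of $\mscr{D}_J$ arises this way, giving the claimed equality.

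The formal heart — the tangent identification — is clean once one has the $\diff\bdiff$-lemma and the identity $J\,\m{L}_{X_h}J=\m{L}_{JX_h}J$. I expect the genuine obstacles to be analytic and to live in the infinite-dimensional setting: that $\mscr{Y}_J$ and $\mscr{L}_J$ are honest (Fr\'echet) manifolds, that $\Phi_J$ is a smooth immersion onto a submanifold, and above all that $\mscr{L}_J$ is a \emph{maximal} connected integral manifold, so that it is a full leaf rather than an open piece of one. I anticipate that the maximality and regularity statements, rather than the tangent computation, will require the most care, and I would handle them using the charts for $\mscr{J}$ of~\cite{Koiso_complex_structure} together with the elliptic theory underlying Moser's theorem.
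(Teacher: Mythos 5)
Your argument is correct and follows essentially the same route as the paper: both linearize the constraint $f_t^*\omega_t'=\omega$, parametrize the transverse directions by K\"ahler potentials via the $\diff\bdiff$-lemma, and rely on the integrability identity $J\m{L}_XJ=\m{L}_{JX}J$ to recognize $\m{L}_{\mrm{grad}\,\varphi}J$ as the complex direction $\widehat{\I\varphi}_J$. The paper's proof is deliberately partial --- it only exhibits one explicit path realizing each complex direction, leaving the Hamiltonian directions and the converse inclusion implicit --- whereas you compute the full tangent space of the image and flag the remaining regularity and maximality issues; this is a more complete write-up of the same idea, not a different method.
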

It is crucial here to assume that~$J$ is integrable, since~$N_J=0$ implies that~$J\m{L}_XJ=\m{L}_{JX}J$ for every vector field~$X$.
\begin{proof}
We'll just show that~$\Phi_J$ is surjective onto the complex part of~$\mscr{D}_J$, i.e.
\begin{equation*}
\mrm{Im}\,\mscr{D}_J=\set*{J\m{L}_{X_f}J\tc f\in\m{C}^\infty_0(M,\bb{R})}.
\end{equation*}
 Assume that~$\omega_\varphi=\omega+\I\diff\bdiff\varphi$ is a K\"ahler form, and consider the path~$\omega_t:=\omega_{t\varphi}$ in~$\m{K}_{J}(\omega)$. Let~$X_t$ be the time-dependent vector field~$X_t=-\frac{1}{2}\mrm{grad}_{g_t}(\varphi)$, and let~$f_t$ be the isotopy defined by~$X_t$, i.e.~$X_t=\diff_tf_t$ and~$f_0=\mathbbm{1}_M$. Notice that
\begin{equation*}
X_t=\frac{1}{2}JX_\varphi(\omega_t),
\end{equation*}
where~$X_\varphi(\omega_t)$ is the Hamiltonian vector field associated to~$\varphi_t$ under the form~$\omega_t$. Then
\begin{equation*}
X_t\lrcorner\omega_t=-\frac{1}{2}\omega_t(X_\varphi(\omega_t),J-)=\frac{1}{2}J\mrm{d}\varphi
\end{equation*}
and by~\cite[Proposition~$6.4$]{DaSilva_lectures_symp} we have
\begin{equation*}
\diff_t\Bigr|_{t=t_0}(f_t^*\omega_t)=f_{t_0}^*(\m{L}_{X_{t_0}}\omega_{t_0}+\I\diff\bdiff\varphi)=f_{t_0}^*(\frac{1}{2}\mrm{d}J\mrm{d}\varphi+\I\diff\bdiff\varphi)=0
\end{equation*}
so~$f_t^*\omega_t=\omega$. Hence we have a path~$(f_t,\omega_t)\in\mscr{Y}_J$, and if we compute the differential of~$\Phi_J$ along this path, again by~\cite[Proposition~$6.4$]{DaSilva_lectures_symp} we get
\begin{equation*}
\diff_t\Bigr|_{t=0}\Phi_J(f_t,\omega_t)=\diff_t\Bigr|_{t=0}(f_t^*J)=\m{L}_{X_0}J=\frac{1}{2}\m{L}_{JX_\varphi}J.\qedhere
\end{equation*}
\end{proof}

\subsubsection{The complexified equation}

So, this map from the~$\mscr{G}$-bundle~$\mscr{Y}$ to~$\mscr{J}_{\mrm{int}}$ defined by~$(f,\omega)\mapsto f^*J$ plays the role of the complex action~$\mscr{G}^{c}\curvearrowright\mscr{J}_{\mrm{int}}$. This can also be used to rephrase the cscK problem in a more familiar setting; the point of view we have adopted here in describing the cscK equation as a moment map equation for the action~$\mscr{G}\curvearrowright\mscr{J}$ is quite different from the constant scalar curvature problem that is usually considered in K\"ahler geometry, namely, finding K\"ahler metrics of constant scalar curvature inside a fixed K\"ahler class. Indeed, so far in the moment map equation
\begin{equation*}
S(\omega,J)-\hat{S}=0
\end{equation*}
the symplectic form is \emph{fixed}, while we let instead~$J$ vary in~$\mscr{J}_{\mrm{int}}$. However, the two points of view are closely related by the map~$\Phi:\mscr{Y}\to\mscr{J}_{\mrm{int}}$. 

\begin{rmk}
For any K\"ahler form~$\omega$ on~$ M$ and any almost complex structure~$J$ compatible with~$\omega$, write~$g(J,\omega)$ for the Riemannian metric defined by~$J$ and~$\omega$. Assume that~$J'=\varphi.J$ for some diffeomorphism~$\varphi$. Then 
\begin{equation*}
g(J',\omega)=\varphi^*(g(J,(\varphi^{-1})^*\omega))
\end{equation*}
and for the curvature we have
\begin{equation*}
S\left(g(J',\omega)\right)=\varphi^*\left(S\left(g(J,(\varphi^{-1})^*\omega)\right)\right).
\end{equation*}
\end{rmk}

We can apply these consideration to compute the moment map along the complexified orbit of a complex structure~$J\in\mscr{J}$. Let~$(f,\omega')\in\mscr{Y}$, and consider
\begin{equation*}
S(\Phi(f,\omega'))=S\left(g(f^*J,\omega)\right)=f^*\left(S\left(g(J,\omega'\right)\right)
\end{equation*}
so~$f^*J$ is a zero of the moment map if and only if~$g(J,\omega')$ is a metric of constant scalar curvature. In other words, looking for a solution to the moment map equation along a complexified orbit~$\mscr{G}^c.J$ is equivalent to finding a constant scalar curvature K\"ahler form in the K\"ahler class of~$\omega$ with respect to~$J$. By analogy with finite-dimensional GIT, Theorem~\ref{thm:moment_map} suggests that there should be some notion of stability for a K\"ahler class~$[\omega]$ such that in each stable orbit for the action of~$\mscr{G}^c$ there is exactly one element in the zero set of the moment map, i.e. a K\"ahler form in~$[\omega]$ of constant scalar curvature.

Of course these considerations are not sufficient on their own to prove any theorem, but they might serve as guiding principles for obtaining new results on the cscK problem, and they give a new perspective on many classical features of the cscK equation. For example Matsushima's criterion~\cite{Matsushima_obstruction}, the Futaki invariant~\cite{Futaki_obstruction} and Mabuchi's~$K$-energy can be almost directly defined starting from this interpretation of the cscK equation: see for example~\cite[Chapter~$1$]{GarciaFernandez_PHD} and~\cite{Wang_Futaki}, where it is shown how these objects arise from the Hamiltonian action.

\chapter{Hyperk\"ahler extension of the Donaldson-Fujiki moment map}\label{chap:HyperkahlerReduction}

\chaptertoc{}

\bigskip

This chapter contains the proofs of our main results, Theorem~\ref{thm:HKThmIntro} and Theorem~\ref{thm:HamiltonianHyperkIntro}. The general idea for the proof of Theorem~\ref{thm:HKThmIntro} is that~$\mscr{J}$ is described locally in terms of maps from~$M$ to the K\"ahler symmetric space~$\f{H}\cong\mrm{Sp}(2n)/U(n)$. As we have seen in Section~\ref{sec:J} the K\"ahler structure on~$\mscr{J}$ is ``pointwise'' induced from that of~$\f{H}$, using the fact that the~$\mrm{Sp}(2n)$-action on~$\f{H}$ is by holomorphic isometries and that the transition functions between two local descriptions of~$\mscr{J}$ act as symplectomorphisms. Roughly stated, the result of~\cite{Biquard_Gauduchon} is that the cotangent bundle of a K\"ahler symmetric space~$G/H$ admits a hyperk\"ahler structure that, crucially, is invariant under the action of~$G$ on~$G/H$. In our situation this suggests that we can induce on~$\cotJ$ a hyperk\"ahler structure locally by the identification of~$\mscr{J}$ with~$\m{C}^\infty(M,\f{H})$, and then checking that these local structures glue together to give a hyperk\"ahler metric on the whole~$\cotJ$.

Section~\ref{sec:BiquardGauduchon} gives a more detailed account of the results of Biquard and Gauduchon~\cite{Biquard_Gauduchon} for symmetric spaces of non-compact type, and in particular we find an explicit expression for the Biquard-Gauduchon hyperk\"ahler potential of the space~$\m{AC}^+$. The Section begins with a couple of technical results characterizing hyperk\"ahler manifolds that will be needed in the proof of Theorem~\ref{thm:HKThmIntro}.

In Section~\ref{sec:cotJ_hyperk} we prove Theorem~\ref{thm:HKThmIntro} and Theorem~\ref{thm:HamiltonianHyperkIntro} (in Section~\ref{sec:moment_map}), and then we find an explicit expression for the moment map equations that arise from the Hamiltonian action. This is quite complicated for the real moment map equation, and leads to the Hitchin-cscK system~\eqref{eq:HcscK_system}. The discussion of the Hamiltonian action then continues in Section~\ref{sec:complexification}, where we show how to formally complexify the action and we consider the moment map equations under this complexification. The final result is not as satisfactory as one would have hoped from the classical case of the cscK equation, and leads to a complexification of the system just in a more formal sense, see the discussion around equation~\eqref{eq:HCSCK_complex_relaxed}.

We close the chapter considering the moment map equations in complex dimension~$1$ and~$2$, in Section~\ref{sec:curves_surfaces}. For curves we show how our approach is essentially equivalent to that in~\cite{Donaldson_hyperkahler}, that instead found a hyperk\"ahler metric on the cotangent space of Poincaré's upper half plane inspired by the existence result of hyperk\"ahler metrics due to Feix~\cite{Feix_hyperkahler}. The first objects for which our approach gives a novel result are complex surfaces, and in Section~\ref{sec:complex_surface} we find an expression of the HcscK system equivalent to~\eqref{eq:HCSCK_original} but slightly simpler to study analytically. Examples of solutions to the HcscK system on surfaces will be given in Chapter~\ref{chap:examples}.

\section{A result of Biquard and Gauduchon}\label{sec:BiquardGauduchon}

\subsection{Characterisations of hyperk\"ahler manifolds}

\begin{definition}\label{def:hyperkahler}
Let~$(M,g)$ be a Riemannian manifold, and let~$I$,~$J$ be two almost complex structures on~$M$ such that
\begin{enumerate}
\item~$IJ=-JI$;
\item~$g(I-,I-)=g(J-,J-)=g(-,-)$;
\item~$\forall x\in M,\,\forall v\in T_xM\quad g(Iv,Jv)=0$.
\end{enumerate}
Then~$(M,g,I,J)$ is a \emph{hyperk\"ahler manifold} if~$(M,g,I)$ and~$(M,g,J)$ are both K\"ahler.
\end{definition}
In this case, if we let~$K:=IJ$ then~$I$,~$J$ and~$K$ satisfy the usual quaternionic relations, giving a quaternionic structure on~$TM$. Moreover, for any~$\bm{u}\in\bb{S}^2$ also~$(M,g,u_1I+u_2J+u_3K)$ is K\"ahler. This is what prompted Calabi~\cite{Calabi_hyperkahler} to call these manifolds \emph{hyper}-K\"ahler. We refer the reader to~\cite{Hitchin_hyperkahler} for a general introduction to hyperk\"ahler manifolds.

The standard notation is to call~$\omega_1$,~$\omega_2$ and~$\omega_3$ (or~$\omega_I$,~$\omega_J$ and~$\omega_K$) the three~$2$-forms defined respectively by~$g\circ I$,~$g\circ J$ and~$g\circ K$. Consider also the complex-valued~$2$-form~$\omega_c:=\omega_2+\I\omega_3$; an important remark is that~$\omega_c$ is a symplectic form on~$M$ of type~$(2,0)$ relatively to the complex structure~$I$. Symplectic forms with these properties are called \emph{holomorphic-symplectic forms}.

The following Lemma gives us a useful criterion to prove that some structures are hyperk\"ahler.
\begin{lemma}[Lemma~$6.8$ in~\cite{Hitchin_self_duality}]\label{lemma:closed_hyperkahler}
Let~$(M,g)$ be a Riemannian manifold, and assume that~$I,J$ are almost complex structures on~$M$ satisfying conditions~$1,2,3$ of Definition~\ref{def:hyperkahler}. Then~$(M,g,I,J)$ is hyperk\"ahler if and only if
\begin{equation*}
\mrm{d}\omega_1=\mrm{d}\omega_2=\mrm{d}\omega_3=0.
\end{equation*}
\end{lemma}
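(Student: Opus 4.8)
The plan is to treat the two implications separately; the forward direction is routine and the reverse direction carries the real content.

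For the ``only if'' direction I would use the characterisation of the K\"ahler condition by parallel transport: since $(M,g,I)$ and $(M,g,J)$ are both K\"ahler, the Levi-Civita connection $\nabla$ of $g$ satisfies $\nabla I=0$ and $\nabla J=0$. Writing $K=IJ$, the Leibniz rule gives $\nabla K=(\nabla I)J+I(\nabla J)=0$, so all three forms $\omega_1=g\circ I$, $\omega_2=g\circ J$, $\omega_3=g\circ K$ are parallel, hence closed. This is immediate once $\nabla I=\nabla J=0$.

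For the ``if'' direction the key object is the complex two-form $\omega_c=\omega_2+\I\omega_3$. First I would check, using only conditions~$1$--$3$ of Definition~\ref{def:hyperkahler} and the quaternionic relations $JI=-K$, $KI=J$ that follow from $K=IJ$ and $IJ=-JI$, that $\omega_c$ is of type $(2,0)$ with respect to $I$: a direct computation gives $\omega_c(IX,Y)=g(JIX,Y)+\I\,g(KIX,Y)=-\omega_3(X,Y)+\I\,\omega_2(X,Y)=\I\,\omega_c(X,Y)$, which is exactly the $(2,0)$ condition. The hypotheses $\mrm{d}\omega_2=\mrm{d}\omega_3=0$ give $\mrm{d}\omega_c=0$, and positive-definiteness of $g$ shows that $\omega_c$ is nondegenerate on $T^{1,0}_IM$; thus $\omega_c$ is a closed holomorphic-symplectic form for $I$. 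The crucial step is then to deduce that $I$ is integrable from the existence of such a form. I would argue through the type decomposition of $\mrm{d}$ on an almost complex manifold, the two-form analogue of Lemma~\ref{lemma:decomposizione_d}: for $\omega_c$ of type $(2,0)$, $\mrm{d}\omega_c$ splits into a $(3,0)$, a $(2,1)$ and an anomalous $(1,2)$ piece, and this last piece is precisely the algebraic contraction of $\omega_c$ with the Nijenhuis tensor, of the form $(\bar X,\bar Y,Z)\mapsto\omega_c(N_I(\bar X,\bar Y),Z)$ for $\bar X,\bar Y\in T^{0,1}_IM$ and $Z\in T^{1,0}_IM$. Since $\mrm{d}\omega_c=0$, this $(1,2)$-component vanishes; as $N_I(\bar X,\bar Y)\in T^{1,0}_IM$ and $\omega_c$ is nondegenerate on $T^{1,0}_IM$, this forces $N_I(\bar X,\bar Y)=0$ for all $\bar X,\bar Y$, i.e. $N_I\equiv0$ and $I$ integrable. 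Together with $\mrm{d}\omega_1=0$, the almost K\"ahler structure $(M,g,I)$ is then K\"ahler. By the symmetric argument with $\omega_c'=\omega_3+\I\omega_1$, which the same quaternionic computation shows to be of type $(2,0)$ with respect to $J$ and which is closed by hypothesis, $J$ is integrable and $(M,g,J)$ is K\"ahler; hence $(M,g,I,J)$ is hyperk\"ahler.

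The main obstacle is the identification of the anomalous $(1,2)$-component of $\mrm{d}\omega_c$ with the Nijenhuis contraction, and the verification that its vanishing against a nondegenerate $(2,0)$-form forces $N_I=0$; this is the one genuinely non-formal point. I would establish it either by adapting the $1$-form computation of Lemma~\ref{lemma:decomposizione_d} to $2$-forms or by checking the contraction identity directly in a local frame adapted to $I$. Everything else reduces to the quaternionic linear algebra encoded in conditions~$1$--$3$ and the standard fact that an integrable almost K\"ahler structure is K\"ahler.
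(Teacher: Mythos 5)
Your proof is correct. Note first that the paper does not actually supply a proof of Lemma~\ref{lemma:closed_hyperkahler}: it is quoted from Hitchin (Lemma~$6.8$ of the self-duality paper), with only the remark that integrability ``follows from an algebraic manipulation of the Newlander--Nirenberg criterion''. Your argument is precisely that manipulation, and it is essentially Hitchin's original one. The forward direction via $\nabla I=\nabla J=0$ and the Leibniz rule is fine. In the reverse direction, the quaternionic computation showing that $\omega_c=\omega_2+\I\omega_3$ is of type $(2,0)$ for $I$ (and $\omega_3+\I\omega_1$ of type $(2,0)$ for $J$) is correct, and the ``one genuinely non-formal point'' you isolate does go through: for a $(2,0)$-form the only surviving term of $\mrm{d}\omega_c(\bar X,\bar Y,Z)$ in the intrinsic formula for $\mrm{d}$ is $-\omega_c([\bar X,\bar Y]^{1,0},Z)$, and $[\bar X,\bar Y]^{1,0}$ is a nonzero multiple of $N_I(\bar X,\bar Y)$ by exactly the identities for $N_J(\bar Z,\bar W)$ displayed in the paper just before Lemma~\ref{lemma:decomposizione_d}; nondegeneracy of $\omega_c$ on $T^{1,0}_IM$ then kills $N_I$, and since $N_I$ vanishes on mixed pairs and is real, it vanishes identically. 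The only presentational difference from Hitchin is that he phrases the same computation via the Cartan formula, characterising $T^{0,1}_IM$ as the kernel of $\iota_{(-)}\omega_c$ and showing it is closed under Lie bracket, whereas you extract the anomalous $(1,2)$-component of $\mrm{d}\omega_c$ explicitly; the two are interchangeable, and your version has the minor advantage of making visible why the argument is purely algebraic and hence survives in the formal infinite-dimensional setting, which is the point the paper's remark is after.
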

In other words, the three forms being closed is enough to ensure the integrability of~$I$,~$J$ and~$K$. We remark that this conditions follows from an algebraic manipulation of the Newlander-Nirenberg criterion, so it holds also in the infinite-dimensional setting -- guaranteeing at least the \emph{formal} integrability of the complex structures.

To construct the hyperk\"ahler structure on~$\cotJ$ we will need an additional criterion, taken from the discussion in~\cite{Biquard_Gauduchon}.
\begin{lemma}\label{lemma:hyperekahler_criterio}
Let~$(M,g)$ be a Riemannian manifold, and let~$I$ be an almost complex structure on~$M$, compatible with~$g$. Assume also that there is a~$(2,0)$ symplectic form on~$M$,~$\omega_c$. Then we can always define a tensor~$J$ on~$M$ by the condition~$g(J-,-)=\mrm{Re}\,{\omega_c}(-,-)$. If
\begin{enumerate}
\item~$\mrm{d}\omega_1=0$, for~$\omega_1=g(I-,-)$
\item~$J^2=-\mathbbm{1}$
\end{enumerate}
then~$(M,g,I,J)$ is a hyperk\"ahler manifold, and the three~$2$-forms defined by~$g$ and~$I$,~$J$ and~$K=IJ$ are, respectively,~$\omega_1$,~$\mrm{Re}\,\omega_c$ and~$\mrm{Im}\,\omega_c$.
\end{lemma}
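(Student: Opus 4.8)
The plan is to verify the hypotheses of Lemma~\ref{lemma:closed_hyperkahler}, so that the hyperk\"ahler property drops out at once. First I would check that~$J$ is a genuine almost complex structure compatible with~$g$. Since~$g$ is non-degenerate, the relation~$g(J-,-)=\mrm{Re}\,\omega_c$ determines~$J$ uniquely, and the antisymmetry of the real~$2$-form~$\mrm{Re}\,\omega_c$ forces~$J$ to be~$g$-skew-adjoint, i.e.~$g(Jv,w)=-g(v,Jw)$. Combined with the hypothesis~$J^2=-\mathbbm{1}$ this immediately gives~$g(Jv,Jw)=-g(v,J^2w)=g(v,w)$, so condition~$2$ of Definition~\ref{def:hyperkahler} holds (the part for~$I$ being the assumed compatibility of~$I$ with~$g$). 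It then remains to establish conditions~$1$ and~$3$ and the closedness of the three fundamental forms.

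The crux is to exploit that~$\omega_c$ is of type~$(2,0)$ with respect to~$I$. Unwinding the type condition on the~$\bb{C}$-linear extension gives the identity~$\omega_c(Iv,w)=\I\,\omega_c(v,w)=\omega_c(v,Iw)$ for all real vectors~$v,w$. Splitting into real and imaginary parts, with~$\omega_2:=\mrm{Re}\,\omega_c$ and~$\omega_3:=\mrm{Im}\,\omega_c$, I would read off~$\omega_2(Iv,w)=-\omega_3(v,w)$ and~$\omega_3(Iv,w)=\omega_2(v,w)$, together with the analogous identities in the second slot. Writing these in terms of~$g$ and using that~$I$ is also~$g$-skew-adjoint, the relation~$\omega_2(Iv,w)=\omega_2(v,Iw)$ translates into~$g(JIv,w)=-g(IJv,w)$, i.e.~$IJ=-JI$ (condition~$1$); setting~$w=v$ in~$\omega_2(v,Iw)=-\omega_3(v,w)$ kills the antisymmetric~$\omega_3(v,v)$ and yields~$g(Jv,Iv)=0$, which is condition~$3$. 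The same computation identifies the fundamental form of~$K=IJ$: from~$g(IJv,w)=-g(Jv,Iw)=-\omega_2(v,Iw)=\omega_3(v,w)$ one obtains~$g(K-,-)=\mrm{Im}\,\omega_c$, while~$g(I-,-)=\omega_1$ and~$g(J-,-)=\mrm{Re}\,\omega_c$ hold by definition, giving the claimed identification of the three forms as~$\omega_1$,~$\mrm{Re}\,\omega_c$,~$\mrm{Im}\,\omega_c$.

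With conditions~$1$,~$2$,~$3$ in hand, the last step is closedness. Here~$\mrm{d}\omega_1=0$ is hypothesis~$1$, while~$\omega_2$ and~$\omega_3$ are the real and imaginary parts of the symplectic form~$\omega_c$, hence closed because~$\mrm{d}\omega_c=0$ and~$\mrm{d}$ commutes with taking real and imaginary parts. Lemma~\ref{lemma:closed_hyperkahler} then concludes that~$(M,g,I,J)$ is hyperk\"ahler. I expect the only delicate point to be the sign bookkeeping in the~$(2,0)$-type identity~$\omega_c(Iv,w)=\I\,\omega_c(v,w)$ and its consistent propagation through the real/imaginary splitting; everything downstream is routine linear algebra. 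A minor point worth flagging is that no integrability of~$I$ or~$J$ is assumed anywhere, so the argument (and Lemma~\ref{lemma:closed_hyperkahler}) remains valid in the formal, infinite-dimensional setting where it will be applied.
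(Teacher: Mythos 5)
Your proposal is correct and follows essentially the same route as the paper: verify the algebraic identities of Definition~\ref{def:hyperkahler} using the $(2,0)$-type identity $\omega_c(Iv,w)=\I\,\omega_c(v,w)=\omega_c(v,Iw)$, identify the three fundamental forms as $\omega_1$, $\mrm{Re}\,\omega_c$, $\mrm{Im}\,\omega_c$, and conclude via closedness and Lemma~\ref{lemma:closed_hyperkahler}. The only cosmetic difference is that you organize the computations through the systematic real/imaginary splitting of the type identity, whereas the paper carries out each verification directly; your closing remark that the argument is purely algebraic and hence valid in the formal infinite-dimensional setting is exactly the point the paper emphasizes after its proof.
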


\begin{proof}
First of all notice that~$\omega_2$ is closed, since~$\omega_c$ is closed and by definition~$\omega_2=\mrm{Re}\,\omega_c$. We proceed to check that~$g$,~$I$ and~$J$ satisfy the algebraic identities of Definition~\ref{def:hyperkahler}.

To check the compatibility of~$J$ with~$g$, fix two tangent vectors~$v$,~$w$. Then we have, using the (anti-)symmetries of~$g$ and~$\omega_c$ 
\begin{equation*}
g(Jv,Jw)=\mrm{Re}(\omega_c(v,Jw))=-\mrm{Re}(\omega_c(Jw,v))=-g(JJw,v)=g(v,w).
\end{equation*}
The anti-commutativity of~$I$ and~$J$ is equivalent to~$g(IJv+JIv,w)=0$ for every pair of tangent vectors~$v$,~$w$. From the definition of~$J$ we have
\begin{equation*}
\begin{split}
g(IJv+JIv,w)=&-g(Jv,Iw)+g(JIv,w)=\\
=&-\mrm{Re}(\omega_c(v,Iw))+\mrm{Re}(\omega_c(Iv,w))
\end{split}
\end{equation*}
and since~$\omega_c$ is of type~$(2,0)$ with respect to~$I$
\begin{equation*}
-\mrm{Re}(\omega_c(v,Iw))+\mrm{Re}(\omega_c(Iv,w))=-\mrm{Re}(\I\omega_c(v,w))+\mrm{Re}(\I\omega_c(v,w))=0.
\end{equation*}
From the definition of~$J$ it is also easy to check that~$g(Iv,Jv)=0$ for any tangent vector~$v$:
\begin{equation*}
g(Iv,Jv)=\mrm{Re}\omega_c(v,Iv)=\mrm{Re}\left(\I\,\omega_c(v,v)\right)=0.
\end{equation*}
By Lemma~\ref{lemma:closed_hyperkahler}, to conclude the proof we should check that, if~$K:=IJ$ and~$\omega_3:=g(K-,-)$, we have~$\mrm{d}\omega_3=0$. Since~$\omega_c$ is of type~$(2,0)$ with respect to~$I$ we have, from the definition of~$J$
\begin{equation*}
\begin{split}
g(Kv,w)&=g(IJv,w)=-g(JIv,w)=-\mrm{Re}(\omega_c(Iv,w))=\mrm{Im}(\omega_c(v,w))
\end{split}
\end{equation*}
so the closedness of~$\omega_3$ follows from that of~$\omega_c$.
\end{proof}
It is important to highlight the fact that the proof of Lemma~\ref{lemma:hyperekahler_criterio} is purely algebraic, provided that~$\omega_c$ and~$\omega_1$ are closed; we do not need to resort to computations in local coordinates. Hence, this criterion for checking the hyperk\"ahler condition also holds in the infinite-dimensional setting, where we intend to apply it in Section~\ref{sec:cotJ_hyperk}.

\subsection{Hyperk\"ahler structures on symmetric spaces}

We recall the construction of Biquard and Gauduchon in~\cite{Biquard_Gauduchon} of a hyperk\"ahler metric on the cotangent bundle of any Hermitian symmetric space~$\Sigma=G/H$. Assume that~$\Sigma$ has a complex structure~$I$ and a Hermitian metric~$h$. For~$x\in\Sigma$ we have an identification of~${T^{1,0}_x}^*\Sigma$ and~$T_x\Sigma$ given by taking the metric dual of the real part of~$\xi\in{T^{1,0}_x}^*\Sigma$. Under this identification, for every~$\xi\in{T^{1,0}}^*_x\Sigma$, we have an endomorphism~$IR(I\xi,\xi)$ of~$T_x\Sigma$ associated to the Riemann curvature tensor~$R$. Since this is self-adjoint we can consider its spectral functions; we are interested in particular in the function~$f:\bb{R}_{>0}\to\bb{R}$ defined by
\begin{equation}\label{eq:funzione_f}
f(x):=\frac{1}{x}\left(\sqrt{1+x}-1-\log\frac{1+\sqrt{1+x}}{2}\right).
\end{equation}
\begin{thm}[\cite{Biquard_Gauduchon}]\label{thm:BiquardGauduchon}
Let~$\left(\Sigma=G/H,I,h\right)$ be a Hermitian symmetric space of compact type, and let~$\omega_c$ be the canonical symplectic form on~${T^{1,0}}^*\Sigma$. Then there is a unique~$G$-invariant hyperk\"ahler metric on~$({T^{1,0}}^*\Sigma,I,\omega_c)$ whose restriction to the zero-section of~${T^{1,0}}^*\Sigma$ coincides with the Hermitian metric of~$\Sigma$.

Moreover, we have an explicit expression for this metric: if we identify~$T^*\Sigma$ and~$T\Sigma$ using the metric on the base, the K\"ahler form is given by~$\omega_I=\pi^*\omega_{\Sigma}+\mrm{d}\mrm{d}^c\rho$, where~$\rho$ is the function on~$T\Sigma$ defined by
\begin{equation}\label{eq:rho_def}
\rho(x,\xi)=h_x\left(f(-IR(I\xi,\xi))\xi,\xi\right).
\end{equation}%
\nomenclature[rho]{$\rho$}{the Biquard-Gauduchon function of a Hermitian symmetric space}%
Here~$f$ is the function defined by~\eqref{eq:funzione_f}, evaluated on the self-adjoint endomorphism~$-IR(I\xi,\xi)$.

If instead~$\Sigma$ is of non-compact type, the same statement holds in an open neighbourhood~$N\subseteq {T^{1,0}}^*\Sigma$ of the zero section. This neighbourhood is the set of all~$\xi$ such that the modulus of the eigenvalues of~$-IR(I\xi,\xi)$ is less than~$1$.
\end{thm}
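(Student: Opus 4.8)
The plan is to reduce the whole statement to the algebraic hyperkähler criterion of Lemma~\ref{lemma:hyperekahler_criterio}. On the total space~${T^{1,0}}^*\Sigma$ there are two pieces of data that require no construction: the complex structure~$I$ coming from the holomorphic structure of the cotangent bundle (which restricts to the complex structure of~$\Sigma$ on the zero section), and the canonical holomorphic symplectic form~$\omega_c$ (the differential of the Liouville~$1$-form), which is closed and of type~$(2,0)$ with respect to~$I$. The only object I must build is the metric~$g$; so I would \emph{define} it through its would-be Kähler form~$\omega_1 := \pi^*\omega_{\Sigma} + \mrm{d}\mrm{d}^c\rho$, with~$\rho$ as in~\eqref{eq:rho_def}, set~$g := \omega_1(-,I-)$ (so that~$I$ is compatible with~$g$ by construction and~$g$ is symmetric because~$\omega_1$ is of type~$(1,1)$), and then let~$J$ be determined by~$g(J-,-)=\mrm{Re}\,\omega_c$. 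With this setup condition~$1$ of Lemma~\ref{lemma:hyperekahler_criterio}, namely~$\mrm{d}\omega_1=0$, is automatic, and the theorem is reduced to two points: that~$g$ is positive-definite (so it is genuinely a metric) and that~$J^2=-\mathbbm{1}$.

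First I would use homogeneity to localise the computation. Since~$G$ acts on~$\Sigma$ by holomorphic isometries and lifts to an action on~${T^{1,0}}^*\Sigma$ preserving~$I$,~$\omega_c$ and the~$G$-invariant form~$\omega_1$, it is enough to verify positivity of~$g$ and the identity~$J^2=-\mathbbm{1}$ along a single cotangent fibre over the base point~$o=eH$. Here the symmetric-space hypothesis is what makes the calculation tractable: the curvature tensor~$R$ is parallel,~$\nabla R = 0$ (for~$\mrm{Sp}(2n)/\mrm{U}(n)$ it is the explicit parallel tensor of Proposition~\ref{prop:curvatura_Sp}), so the self-adjoint operator~$-IR(I\xi,\xi)$ and its spectral function~$f(-IR(I\xi,\xi))$ behave under covariant differentiation as though~$R$ were constant. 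Splitting the tangent space of the total space into its horizontal and vertical parts, I would then compute~$\mrm{d}\mrm{d}^c\rho$, and hence~$g$, in closed form as an expression in the eigenvalues of~$-IR(I\xi,\xi)$ and the derivatives~$f'$,~$f''$.

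The main obstacle is the verification that~$J^2=-\mathbbm{1}$. This does not hold for a generic radial potential: expanding~$g$ and then~$J$ in a horizontal/vertical frame produces cross terms between the two blocks that cancel only if the spectral function obeys a specific first-order ordinary differential equation, and the function~$f$ of~\eqref{eq:funzione_f} is precisely the solution of that equation, normalised so that~$g$ restricts to the Hermitian metric~$h$ on the zero section (where~$\xi=0$ and~$-IR(I\xi,\xi)=0$). The delicate part of the argument is the bookkeeping of these horizontal and vertical components and checking that, once~$f$ satisfies its defining identity, all the mixed contributions collapse to give~$J^2=-\mathbbm{1}$. Granting this, Lemma~\ref{lemma:hyperekahler_criterio} (which rests in turn on Lemma~\ref{lemma:closed_hyperkahler}) immediately produces a hyperkähler structure~$({T^{1,0}}^*\Sigma, g, I, J)$ whose three Kähler forms are~$\omega_1$,~$\mrm{Re}\,\omega_c$ and~$\mrm{Im}\,\omega_c$, as claimed.

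It then remains to pin down the domain and uniqueness. In the compact-type case the eigenvalues of~$-IR(I\xi,\xi)$ are nonnegative, so~$f$ is applied on its natural domain~$\bb{R}_{>0}$ and one checks that~$g$ is positive-definite for every~$\xi$, giving the structure on the whole of~${T^{1,0}}^*\Sigma$. In the non-compact-type case the curvature reverses sign, the eigenvalues become nonpositive, and~$f$ must be taken as the real-analytic extension of~\eqref{eq:funzione_f} to~$(-1,\infty)$; positivity of~$g$ then persists only while every eigenvalue of~$-IR(I\xi,\xi)$ has modulus strictly less than~$1$, and degenerates as a modulus tends to~$1$ (where~$f'$ blows up). This is exactly the neighbourhood~$N$ in the statement. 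Finally, for uniqueness I would observe that a~$G$-invariant hyperkähler metric with the prescribed~$I$ and~$\omega_c$ is determined by its Kähler form~$\omega_1$, hence by a~$G$-invariant potential; homogeneity reduces this potential to a single spectral function, and the hyperkähler condition together with the normalisation on the zero section forces it to be the~$\rho$ above, so the metric is unique.
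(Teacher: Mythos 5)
The paper does not actually prove this statement: it is quoted verbatim from Biquard--Gauduchon with a citation, and the only in-paper material surrounding it is the criterion of Lemma~\ref{lemma:hyperekahler_criterio} (explicitly ``taken from the discussion in \cite{Biquard_Gauduchon}'') and the sign-convention remark that follows the theorem. So there is no in-paper proof to compare yours against; I can only assess your sketch on its own terms. Your architecture is the right one, and it is exactly the one the paper sets up for its own later use: take $I$ and $\omega_c$ as given, construct only the metric from the potential $\omega_1=\pi^*\omega_\Sigma+\mrm{d}\mrm{d}^c\rho$, define $J$ by $g(J-,-)=\mrm{Re}\,\omega_c$, reduce everything to a single fibre by $G$-invariance and $\nabla R=0$, and feed the result into Lemma~\ref{lemma:hyperekahler_criterio}.

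However, as a proof rather than a plan there is a genuine gap, and it sits exactly where the content of the theorem is. The statement $J^2=-\mathbbm{1}$ is equivalent, after the horizontal/vertical decomposition you describe, to a specific first-order ODE for the spectral function, and the entire theorem amounts to the assertion that the $f$ of~\eqref{eq:funzione_f} solves it with the right normalisation at $0$. You name this step (``the cross terms cancel only if the spectral function obeys a specific first-order ordinary differential equation, and the function $f$ of~\eqref{eq:funzione_f} is precisely the solution'') but neither derive the ODE nor verify that $f$ satisfies it; ``granting this'' grants the theorem. To close the gap you would need to actually compute $\mrm{d}\mrm{d}^c\rho$ at a point of the fibre over $o$ in a frame diagonalising $-IR(I\xi,\xi)$, write out the blocks of $g$ and hence of $J$, and exhibit the cancellation. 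The uniqueness paragraph has a similar issue: the claim that every $G$-invariant potential on ${T^{1,0}}^*\Sigma$ restricts on a fibre to a spectral function of $-IR(I\xi,\xi)$ is an invariant-theory fact about the isotropy representation (true for irreducible Hermitian symmetric spaces, but requiring justification, and requiring a reduction to the irreducible case in general), and even granting it you must still argue that the ODE together with the zero-section normalisation determines that spectral function uniquely. Finally, a small point of care in the domain discussion: with the paper's sign conventions it is the \emph{non-compact} case in which the operator $-IR(I\xi,\xi)$ has eigenvalues in $(-1,0]$ and $f$ must be continued analytically across $0$ down to $-1$ (this is what Proposition~\ref{prop:BiquardGauduchonAC+} uses, evaluating $f$ at $-\delta(i)$ with $\delta(i)\geq 0$), so the bound defining $N$ should be checked against that convention rather than asserted symmetrically for the two cases.
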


The quotient~$\mrm{Sp}(2n)/\mrm{U}(n)$ considered in Section~\ref{sec:Siegel} is a K\"ahler symmetric space of non-compact type, to which we can apply Theorem~\ref{thm:BiquardGauduchon}: the space~${T^{1,0}}^*(\mrm{Sp}(2n)/\mrm{U}(n))$ has a hyperk\"ahler metric, at least in a neighbourhood of the zero section. From Section~\ref{sec:Siegel} we know that~$\m{AC}^+$ is diffeomorphic to~$\mrm{Sp}(2n)/U(n)$, and the K\"ahler structure on~$\m{AC}^+$ is induced from the one of~$\mrm{Sp}(2n)/\mrm{U}(n)$ using this isomorphism. Then we can also carry the hyperk\"ahler structure of~${T^{1,0}}^*(\mrm{Sp}(2n)/\mrm{U}(n))$ to~${T^{1,0}}^*\!\m{AC}^+$.

Let's denote by~$(g,I,\omega)$ the K\"ahler structure of~$\m{AC}^+$; we also denote by~$I$ the complex structure on~${T^{1,0}}^*\!\m{AC}^+$, and we let~$\theta$ %
be the canonical symplectic form, the form denoted by~$\omega_c$ in Theorem~\ref{thm:BiquardGauduchon}. Theorem~\ref{thm:BiquardGauduchon} guarantees that~$\hat{g}:=\pi^*\omega+\mrm{d}\mrm{d}^c\rho$ is a hyperk\"ahler metric on~$T^*\!\m{AC}^+$.

\begin{rmk}
Biquard and Gauduchon consider the full cotangent bundle; for notational reasons, for us it will be more convenient to just consider the \emph{holomorphic} cotangent bundle of~$\m{AC}^+$ and~$\mscr{J}$, but this won't cause issues, thanks to the usual canonical identifications of the two. Moreover, Biquard and Gauduchon in~\cite{Biquard_Gauduchon} use for the curvature tensor the convention~$R(X,Y)=\nabla_{[X,Y]}-[\nabla_X,\nabla_Y]$, rather than the more usual~$R(X,Y)=[\nabla_X,\nabla_Y]-\nabla_{[X,Y]}$, that is the one we are going to use. This is why there is a minus sign in equation~\eqref{eq:rho_def}.
\end{rmk}

\subsubsection{The Biquard-Gauduchon function for~$\m{AC}^+$}\label{sec:BiquardGauduchon_mAC+}

We have to compute the Biquard-Gauduchon function~$\rho$ of~$\m{AC}^+$. For a fixed vector field~$A$ on~$\m{AC}^+$ we consider the endomorphism~$\Xi(A)$ of~$T\m{AC}^+$ defined by
\begin{equation*}
\Xi(A):B\mapsto -\bb{J}\left(R(\bb{J}A,A)(B)\right).
\end{equation*}
By Proposition~\ref{prop:curvatura_AC}, at a point~$J\in\m{AC}^+$,~$\Xi(A)$ can be written as:
\begin{equation}\label{eq:da_diagonalizzare}
\Xi_{J}(A)(B)=-J\left(-\frac{1}{4}\Big[[J\,A,A],B\Big]\right)=-\frac{1}{2}\left(A^2B+B\,A^2\right).
\end{equation}
\begin{rmk}
The~$\bb{C}$-linear extension of~$J\in\m{AC}^+\subset\mrm{End}(\bb{R}^{2n})$ to the vector space~$\bb{C}^{2n}$ can always be diagonalized. We decompose~$\bb{C}^{2n}$ as~$V'\oplus V''$, where~$V'$ and~$V''$ are the eigenspaces for~$J$ relative to the eigenvalues~$\I$ and~$-\I$ respectively. The positive-definite matrix~$\Omega_0 J$ induces on~$V'$ a Hermitian product, that we denote by~$\beta$. Then any~$A\in T_J\m{AC}^+$ is written as~$A=\begin{pmatrix}0 & A' \\ A'' & 0\end{pmatrix}$, with respect to this decomposition of~$\bb{C}^{2n}$. The two complex matrices~$A'$,~$A''$ satisfy~$A''=\overline{A'}$. Moreover,~$A'=\beta^{-1}\sigma(A)$ for a symmetric complex matrix~$\sigma(A)$, and these two properties characterize the tangent space of~$\m{AC}^+$ at a point~$J$.
\end{rmk}
\begin{prop}\label{prop:BiquardGauduchonAC+}
The Biquard-Gauduchon function of~$\m{AC}^+$ is
\begin{equation*}
\rho(J,A)=\sum_{i=1}^n\delta(i)f(-\delta(i))=\sum_{i=1}^n 1-\sqrt{1-\delta(i)}+\log\frac{1+\sqrt{1-\delta(i)}}{2}
\end{equation*}
where~$\delta(i)$ are the eigenvalues of~$A' A''$, that are all real and non-negative.
\end{prop}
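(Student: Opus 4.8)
The plan is to exploit the $\mrm{Sp}(2n)$-invariance of the Biquard-Gauduchon function together with the explicit curvature endomorphism \eqref{eq:da_diagonalizzare}, reducing the evaluation of $\rho$ to a finite diagonal computation. Recall from \eqref{eq:rho_def} that, under the metric identification of cotangent and tangent vectors, $\rho(J,A)=\langle f(\Xi(A))A,A\rangle$, where $\Xi(A)=-\bb{J}\,R(\bb{J}A,A)$ is a self-adjoint endomorphism of $T_J\m{AC}^+$ and $\langle\cdot,\cdot\rangle=\tfrac12\mrm{Tr}(\cdot\,\cdot)$ by Proposition~\ref{prop:metrica_AC+_Siegel}. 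By Proposition~\ref{prop:curvatura_AC} this endomorphism is exactly $\Xi_J(A)(B)=-\tfrac12(A^2B+BA^2)$, so the whole problem is to diagonalise the operator $B\mapsto A^2B+BA^2$ and to locate $A$ itself among its eigenvectors.

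Since $\rho$ is $G$-invariant (Theorem~\ref{thm:BiquardGauduchon}) and $\Xi(A)$ transforms by conjugation under the $\mrm{Sp}(2n)$-action, so that its spectrum and hence the $\delta(i)$ are preserved, it suffices to evaluate both sides at the base point $-\Omega_0$. There $A=\left(\begin{smallmatrix}0 & A'\\ \bar{A}' & 0\end{smallmatrix}\right)$ with $A'=\sigma$ complex symmetric, as recorded in the Remark preceding the statement. To prove that the $\delta(i)$ are real and non-negative — the claim quoted in the Introduction — I would observe that at $-\Omega_0$ one has $A'A''=\sigma\bar\sigma=\sigma\sigma^{*}$, manifestly Hermitian and positive semidefinite; at a general point the Hermitian factor $\beta$ enters, and conjugating by $\beta^{1/2}$ turns $A'A''$ into $\tau\tau^{*}$ with $\tau=\beta^{-1/2}\sigma\,\overline{\beta^{-1/2}}$ again symmetric, showing $A'A''$ is similar to a Hermitian positive semidefinite matrix. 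Next, the stabiliser $\mrm{U}(n)$ of $-\Omega_0$ acts on tangent vectors by the congruence $A'\mapsto UA'U^{\transpose}$, so the Autonne-Takagi factorisation lets me further assume $A'=D=\mrm{diag}(d_1,\dots,d_n)$ with $d_i\ge0$ and $\delta(i)=d_i^2$.

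With $A=\left(\begin{smallmatrix}0 & D\\ D & 0\end{smallmatrix}\right)$ one has $A^2=\mrm{diag}(D^2,D^2)$, so on a tangent vector represented by its upper block $B'$ the operator $\Xi(A)$ acts by $B'\mapsto-\tfrac12(D^2B'+B'D^2)$, i.e.\ entrywise by multiplication by $-\tfrac12(\delta(i)+\delta(j))$. Thus the eigenvalues of $\Xi(A)$ are the $-\tfrac12(\delta(i)+\delta(j))$, and the crucial observation is that $A$, having diagonal upper block $D$, lies entirely in the eigenspaces indexed by $(i,i)$, on which the eigenvalue is $-\delta(i)$. Applying the spectral function $f$ of \eqref{eq:funzione_f} therefore multiplies the $(i,i)$-component of $A$ by $f(-\delta(i))$, and the pairing collapses to $\rho(J,A)=\tfrac12\mrm{Tr}\bigl(f(\Xi(A))A\cdot A\bigr)=\sum_i\delta(i)\,f(-\delta(i))$. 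Substituting the explicit $f$ and simplifying $\delta f(-\delta)=1-\sqrt{1-\delta}+\log\frac{1+\sqrt{1-\delta}}{2}$ yields the stated formula; note that the eigenvalues of $\Xi(A)$ lie in $(-1,0]$ precisely when the $\delta(i)$ are $<1$, matching the neighbourhood of Theorem~\ref{thm:BiquardGauduchon} on which $f$ is evaluated (the argument being negative here, on the non-compact side of the singularity of $f$).

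The main obstacle I anticipate is bookkeeping rather than conceptual: carefully justifying the two reductions (the $G$-invariance of both sides and the invariance of the $\delta(i)$) and verifying that the spectral calculus for the self-adjoint operator $\Xi(A)$ interacts correctly with the metric pairing. The decisive point is that $A$ is supported on the diagonal eigenspaces, which is what makes $f$ act through the single scalars $f(-\delta(i))$ rather than $f(-\tfrac12(\delta(i)+\delta(j)))$; keeping the $V'\oplus V''$ block calculus consistent is the one place where sign and conjugation errors could creep in. The reality and non-negativity of the $\delta(i)$, via the similarity to $\tau\tau^{*}$, is the one genuinely algebraic step, and it relies essentially on the symmetry of $\sigma$ together with the Hermitian positivity of $\beta$.
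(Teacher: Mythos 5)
Your proof is correct and follows essentially the same route as the paper's: both reduce, via the Takagi--Autonne factorisation, to the case where $A'$ is a non-negative real diagonal matrix $D$, observe that $A$ then lies in the $(i,i)$-eigenspaces of $B\mapsto-\tfrac{1}{2}(A^2B+BA^2)$ with eigenvalues $-\delta(i)$, and evaluate the trace pairing using the orthogonality of the basis $E_{ij}$. The only difference is organisational: you invoke the $\mrm{Sp}(2n)$-invariance of $\rho$ and the $\mrm{U}(n)$-stabiliser action to reduce to the base point $-\Omega_0$ first, whereas the paper carries out the equivalent conjugation ($A'A''=Q\Delta Q^{-1}$ with $Q=\bar{S}^\transpose\Lambda^{\frac{1}{2}}U$) explicitly at a general point $J$.
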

\begin{proof}
Since~$\beta$ is a Hermitian matrix, there is a unitary matrix~$S$ such that~$\beta^{-1}=\bar{S}^\transpose\Lambda S$, for a diagonal matrix~$\Lambda$ with real, positive eigenvalues. Then we can decompose~$A'$ as
\begin{equation*}
A'=\bar{S}^\transpose\Lambda S\sigma(A)=\bar{S}^\transpose\Lambda^{\frac{1}{2}}\Lambda^{\frac{1}{2}} S\sigma(A) S^\transpose\Lambda^{\frac{1}{2}}\Lambda^{-\frac{1}{2}}\bar{S}.
\end{equation*}
The matrix~$R:=\Lambda^{\frac{1}{2}} S\sigma(A) S^\transpose\Lambda^{\frac{1}{2}}$ is symmetric, and the Takagi-Autonne factorization (see~\cite{Takagi_Factor} and~\cite{Autonne_Factor}) tells us that there is a unitary matrix~$U$ such that~$R=U D U^\transpose$ for a real diagonal matrix~$D$. Let~$\Delta:=D^2$. Then we have
\begin{equation*}
A'=\bar{S}^\transpose\Lambda^{\frac{1}{2}}U D U^\transpose \Lambda^{-\frac{1}{2}}\bar{S}
\end{equation*}
and for the product matrix~$A'A''$
\begin{equation*}
A'A''=\bar{S}^\transpose\Lambda^{\frac{1}{2}}U \Delta \bar{U}^\transpose \Lambda^{-\frac{1}{2}}S.
\end{equation*}
This shows that the eigenvalues of~$A'A''$ are the diagonal entries of~$\Delta$, and in particular they are all real and non-negative. From now on let~$\Delta=\mrm{diag}\left(\delta(1),\dots,\delta(n)\right)$, and let~$Q:=\bar{S}^\transpose\Lambda^{\frac{1}{2}}U$, so that~$A'A''=Q\Delta Q^{-1}$. For the map~$\Xi$ we find
\begin{equation*}
\begin{split}
A^2B+BA^2=&\begin{pmatrix}0 & Q\\ \bar{Q} & 0\end{pmatrix}
\begin{pmatrix}\Delta & 0\\ 0 &\Delta\end{pmatrix}
\begin{pmatrix}0 & \bar{Q}^{-1}\\ Q^{-1} & 0\end{pmatrix}
\begin{pmatrix}0& B'\\ B''&0\end{pmatrix}+\\
&+\begin{pmatrix}0& B'\\ B''&0\end{pmatrix}
\begin{pmatrix}0 & Q\\ \bar{Q} & 0\end{pmatrix}
\begin{pmatrix}\Delta & 0\\ 0 &\Delta\end{pmatrix}
\begin{pmatrix}0 & \bar{Q}^{-1}\\ Q^{-1} & 0\end{pmatrix}.
\end{split}
\end{equation*}
Notice that~$P(B):=\bar{Q}^{-1}B''Q$ is a symmetric matrix:
\begin{equation*}
P(B)=U^\transpose\Lambda^{-\frac{1}{2}}\bar{S}
S^\transpose\Lambda\bar{S}\sigma(B)
\bar{S}^\transpose\Lambda^{\frac{1}{2}}U=
U^\transpose\Lambda^{\frac{1}{2}}\bar{S}\,\sigma(B)\,\bar{S}^\transpose\Lambda^{\frac{1}{2}}U.
\end{equation*}
The linear map~$\Xi(B)$ has a much simpler expression in these new coordinates for~$T_J\m{AC}^+$:
\begin{equation*}
\Xi(B)=-\frac{1}{2}\!
\begin{pmatrix}0 &Q\\ \bar{Q} &0\end{pmatrix}\!\!
\begin{pmatrix}0 &\Delta P(B)+P(B)\Delta\\ \Delta\overline{P(B)}+\overline{P(B)}\Delta &0\end{pmatrix}\!\!
\begin{pmatrix}0 &\bar{Q}^{-1}\\ Q^{-1} &0\end{pmatrix}
\end{equation*}
and in fact the map~$P\mapsto \Delta P+P\Delta$ on the space of complex symmetric matrices is quite easy to diagonalize: consider the matrices~$E_{ij}$ for~$i\leq j$ defined by
\begin{equation*}
\left(E_{ij}\right)_{pq}:=\frac{1}{2}\left(\delta_{ip}\delta_{jq}+\delta_{iq}\delta_{jp}\right)
\end{equation*}
then~$\Delta E_{ij}+E_{ij}\Delta=\left(\delta(i)+\delta(j)\right)E_{ij}$. We are almost ready to compute the Biquard-Gauduchon function; notice first that~$P(A)=D$ is a real diagonal matrix, so that in the basis~$E_{ij}$ of the space of complex symmetric matrices~$P(A)=\sum_{i=1}^nD_{ii}E_{ii}$, and recall that~$D_{ii}^2=\delta(i)$.
\begin{equation}\label{eq:rhoAC_acc}
\begin{gathered}
\frac{1}{2}\mrm{Tr}\left(f(\Xi(A))(A)\!\cdot\!A\right)=\\
=\frac{1}{2}\mrm{Tr}\left(\begin{pmatrix}
0 & \sum_{i=1}^nf(-\delta(i))D_{ii}E_{ii}
\\ \sum_{i=1}^nf(-\delta(i))D_{ii}E_{ii} & 0
\end{pmatrix}\!\!
\begin{pmatrix}
0 & P(A) \\ \overline{P(A)} & 0
\end{pmatrix}
\right)=\\
=\mrm{Tr}\left(\left(\sum_{i=1}^nf(-\delta(i))D_{ii}E_{ii}\right)\left(\sum_{i=1}^nD_{ii}E_{ii}\right)\right).
\end{gathered}
\end{equation}
To finish the computation use that~$E_{ij}$ is an orthogonal basis with respect to the trace, and in particular
\begin{equation*}
\mrm{Tr}(E_{ii}E_{jj})=\sum_{k,l}\delta_{ik}\delta_{il}\delta_{jk}\delta_{jl}=\sum_l\delta_{il}\delta_{ij}\delta_{jl}=\delta_{ij}.
\end{equation*}
We can conclude the proof from the computation in~\eqref{eq:rhoAC_acc}
\begin{equation*}
\rho(J,A)=\mrm{Tr}\left(\left(\sum_{i=1}^nf(-\delta(i))D_{ii}E_{ii}\right)\left(\sum_{i=1}^nD_{ii}E_{ii}\right)\right)=\sum_{i=1}^nf(-\delta(i))\delta(i).\qedhere
\end{equation*}
\end{proof}

The hyperk\"ahler metric of Theorem~\ref{thm:BiquardGauduchon} does not depend on~$\rho$, but rather on its differential~$\mrm{d}\rho$. To compute explicitly the hyperk\"ahler moment map in Section~\ref{sec:real_momentmap} it will be useful to have an expression for the derivative of~$\rho$ along a path~$(J_t,A_t)$ in~$T\!\m{AC}^+$. To this end we should first compute the differential of the function
\begin{equation*}
A\mapsto\set*{\delta_i(A'A'')\tc 1\leq i\leq n}
\end{equation*}
that assigns to~$A$ the set of eigenvalues of~$A'A''$, where the type decomposition of~$A$ is computed with respect to~$J$. This is potentially problematic, mainly because the eigenvalues are smooth functions of the matrix if and only if they are all distinct. However, it turns out that~$\rho$ is a differentiable function of~$(J,A)$, even if the eigenvalues of~$A'A''$ are not all distinct.
\begin{lemma}\label{lemma:differenziale_BiquardGauduchon}
Let~$(J_t,A_t)$ be a path in~$T\!\m{AC}^+(2n)$. Then
\begin{equation*}
\diff_t\rho(J_t,A_t)=\mrm{Tr}\left(A\dot{A}\,\sum_{i=1}^n\frac{1}{2}\left(1+\sqrt{1-\delta(i)}\right)^{-1}\prod_{j\not=i}\frac{\delta(j)\mathbbm{1}-A^2}{\delta(j)-\delta(i)}\right)
\end{equation*}
where~$\delta(i)$ are as in Proposition~\ref{prop:BiquardGauduchonAC+}.
\end{lemma}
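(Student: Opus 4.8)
The plan is to recognise $\rho$ as the trace of a single matrix function, which dissolves the apparent difficulty with colliding eigenvalues. Write $g(\delta):=1-\sqrt{1-\delta}+\log\frac{1+\sqrt{1-\delta}}{2}$, so that Proposition~\ref{prop:BiquardGauduchonAC+} reads $\rho(J,A)=\sum_{i=1}^n g(\delta(i))$. In the block decomposition $A=\left(\begin{smallmatrix}0 & A'\\ A'' & 0\end{smallmatrix}\right)$ induced by $J$ one has $A^2=\mrm{diag}(A'A'',A''A'')$, where I mean $\mrm{diag}(A'A'',A''A')$; since $A'A''$ and $A''A'$ share the spectrum $\set{\delta(1),\dots,\delta(n)}$, the distinct eigenvalues of $A^2$ are exactly the $\delta(i)$, each doubled, and $\mrm{Tr}\,g(A^2)=2\sum_i g(\delta(i))$. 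Hence $\rho=\tfrac12\,\mrm{Tr}\,g(A^2)$. Crucially, $A^2=A\circ A$ makes no reference to the splitting, so $\rho$ in fact depends only on $A$ (through $A^2$) and not separately on $J$; this makes the $J_t$ part of the path irrelevant to the computation.

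Next I would invoke the standard fact that, for $g$ holomorphic near the spectrum, $M\mapsto\mrm{Tr}\,g(M)$ is a smooth function of the matrix $M$ \emph{regardless of eigenvalue multiplicities}, with differential $H\mapsto\mrm{Tr}\left(g'(M)H\right)$. The cleanest justification is the Cauchy representation $g(M)=\frac{1}{2\pi\I}\oint g(z)(z\mathbbm{1}-M)^{-1}\,\mrm{d}z$, which is manifestly smooth in $M$ and which, after differentiating the resolvent under the integral, using cyclicity of the trace and an integration by parts (turning $\oint g(z)(z\mathbbm{1}-M)^{-2}\mrm{d}z$ into $g'(M)$), yields $\mrm{Tr}\left(g'(M)H\right)$. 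This is exactly where the collision of eigenvalues — the obstacle flagged before the statement — is sidestepped: the individual $\delta(i)$ are not smooth, but $\mrm{Tr}\,g(A^2)$ is. A direct computation then gives $g'(\delta)=\tfrac12\bigl(1+\sqrt{1-\delta}\bigr)^{-1}=\psi(\delta)$, the spectral function of~\eqref{eq:funzioni_accessorie_HcscK}.

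Applying this along the path and using $\diff_t(A^2)=\dot{A}A+A\dot{A}$, I obtain $\diff_t\rho=\tfrac12\,\mrm{Tr}\bigl(\psi(A^2)(\dot{A}A+A\dot{A})\bigr)$. Since $A$ commutes with $A^2$, hence with $\psi(A^2)$, cyclicity of the trace makes the two summands equal and the factor $\tfrac12$ cancels, leaving $\diff_t\rho=\mrm{Tr}\bigl(A\dot{A}\,\psi(A^2)\bigr)$. Finally, because $A^2$ is diagonalisable with eigenvalues among $\set{\delta(i)}$ (Proposition~\ref{prop:BiquardGauduchonAC+} exhibits $A'A''$ as conjugate to a real diagonal matrix), Lagrange interpolation gives, on the open dense set where the $\delta(i)$ are distinct, $\psi(A^2)=\sum_{i=1}^n\psi(\delta(i))\prod_{j\neq i}\frac{\delta(j)\mathbbm{1}-A^2}{\delta(j)-\delta(i)}$; substituting $\psi(\delta(i))=\tfrac12\bigl(1+\sqrt{1-\delta(i)}\bigr)^{-1}$ produces the stated formula.

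The only real subtlety — and the step I would treat most carefully — is reconciling the spectral/Lagrange bookkeeping with differentiation at the locus where eigenvalues coincide, where the interpolation coefficients blow up. The device of writing $\rho=\tfrac12\,\mrm{Tr}\,g(A^2)$ and differentiating the matrix function \emph{globally}, rather than its eigenvalues one by one, is what makes the argument uniform: the derivative formula $\diff_t\rho=\mrm{Tr}\bigl(A\dot{A}\,\psi(A^2)\bigr)$ holds everywhere, and the Lagrange expression in the statement is simply a concrete rendering of $\psi(A^2)$ valid on the generic stratum, extended to the collision locus by the continuity guaranteed above.
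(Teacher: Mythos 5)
Your proof is correct, and it takes a genuinely different route from the paper's. The paper differentiates the eigenvalues $\delta(i)$ of $A'A''$ one at a time, quoting a theorem of Magnus that is valid when the $\delta(i)$ are distinct, and then composes with $D\rho=\sum_i\psi(\delta(i))\,D\delta(i)$; the extension across the locus of coinciding eigenvalues is dispatched with a one-line assertion that the resulting expression remains well defined. You instead package $\rho$ as $\tfrac{1}{2}\mrm{Tr}\,g(A^2)$ for the scalar function $g$ with $g'=\psi$, and differentiate the matrix function globally via the holomorphic functional calculus, obtaining $\diff_t\rho=\mrm{Tr}\bigl(A\dot{A}\,\psi(A^2)\bigr)$ uniformly in the eigenvalue configuration; the Lagrange-interpolation expression in the statement is then just the generic-stratum rendering of $\psi(A^2)$, which is exactly the content of Lemma~\ref{lemma:psi_spectral}. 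Your route buys three things: it actually \emph{proves} smoothness of $\rho$ at eigenvalue collisions rather than asserting it; it makes explicit that $\rho$ depends on $(J,A)$ only through $A^2$, so the $J_t$ part of the path drops out (a fact the paper exploits only implicitly later, via Remark~\ref{rmk:traccia_tripla}); and it lands directly on the form $\mrm{Tr}(A\dot{A}\,\psi(A^2))$ that is the one used downstream. The paper's route is more elementary (no contour integrals) and stays closer to the eigenvalue bookkeeping of Proposition~\ref{prop:BiquardGauduchonAC+}. All your individual steps check out: $g'(\delta)=\tfrac{1}{2}\bigl(1+\sqrt{1-\delta}\bigr)^{-1}$, the spectrum of $A^2$ is that of $A'A''$ doubled since $A'A''$ and $A''A'$ are isospectral, $g$ is holomorphic in a neighbourhood of $[0,1)$, and $A$ commutes with $\psi(A^2)$ so the two terms of $\diff_t(A^2)=\dot{A}A+A\dot{A}$ contribute equally under the trace.
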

\begin{proof}
This is a consequence of Theorem~$3$ in~\cite{Magnus_differential_eigenvalues}: noting that the eigenvalues of~$A'A''$ are real (and equal to the eigenvalues of~$A''A'$) from~\cite[Theorem~$3$]{Magnus_differential_eigenvalues} we find, assuming that the~$\delta(i)$s are all distinct,
\begin{equation*}
\begin{gathered}
\diff_t\delta_t(i)=\frac{1}{2}\mrm{Tr}\left(\!
\diff_t\!\left(A'_tA''_t\right)\prod_{\stackrel{j=1}{j\not=i}}^n\frac{\delta(j)\mathbbm{1}-A'A''}{\delta(j)-\delta(i)}+\diff_t\!\left(A''_tA'_t\right)\prod_{\stackrel{j=1}{j\not=i}}^n\frac{\delta(j)\mathbbm{1}-A''A'}{\delta(j)-\delta(i)}\!\right)=\\
=\frac{1}{2}\mrm{Tr}\left(\diff_t\left(A^2_t\right)
\begin{pmatrix}
\prod_{\stackrel{j=1}{j\not=i}}^n\frac{\delta(j)\mathbbm{1}-A'A''}{\delta(j)-\delta(i)} & 0\\
0 & \prod_{\stackrel{j=1}{j\not=i}}^n\frac{\delta(j)\mathbbm{1}-A''A'}{\delta(j)-\delta(i)}
\end{pmatrix}
\right)=\\
=\mrm{Tr}\left(A\,\dot{A}\prod_{j=1,\,j\not=i}^n\frac{\delta(j)\mathbbm{1}-A^2}{\delta(j)-\delta(i)}\right).
\end{gathered}
\end{equation*}
To compute the differential of the Biquard-Gauduchon function along the path~$(J_t,A_t)$ now it is enough to compose this expression for~$\diff_t\delta(i)$ with
\begin{equation*}
D\rho=\sum_i\frac{D\delta(i)}{2\left(1+\sqrt{1-\delta(i)}\right)}.
\end{equation*}
It is easy to check that this expression is well-defined even if~$\delta(i)=\delta(j)$ for some~$i$,~$j$.
\end{proof}
\begin{rmk}\label{rmk:traccia_tripla}
If~$A$,~$B$ and~$C$ are elements of~$T_J\m{AC}^+$ then for every~$i\in\set{1,\dots,n}$ this trace vanishes:
\begin{equation*}
\mrm{Tr}\Big(A\,B\,C\prod_{j\not=i}\left(\delta(j)\mathbbm{1}-A^2\right)\Big).
\end{equation*}
Indeed,~$A$,~$B$ and~$C$ anti-commute with~$J$, while~$Q=\prod_{j\not=i}\left(\delta(j)\mathbbm{1}-A^2\right)$ commutes with~$J$. Then
\begin{equation*}
\mrm{Tr}\left(A\,B\,C\,Q\right)=-\mrm{Tr}\left(J\,A\,B\,C\,Q\,J\right)=-\mrm{Tr}\left(A\,B\,C\,Q\right).
\end{equation*}
We will need this vanishing when computing the moment map equations for our hyperk\"ahler reduction in Section~\ref{sec:real_momentmap}.
\end{rmk}

\subsection{The cotangent space of complex structures}

The main object of interest in this chapter is the total space of the holomorphic cotangent bundle of~$\mscr{J}$; in order to use the results of Biquard and Gauduchon to obtain a hyperk\"ahler structure on this space it will be useful to describe it as the space of sections of a bundle over~$M$, with fibres isomorphic to~${T^{1,0}}^*\!\m{AC}^+$. Once we have such a description of~${T^{1,0}}^*\!\!\!\mscr{J}$ we will use it to induce a hyperk\"ahler structure, similarly to what was done in Section~\ref{sec:J} to define a K\"ahler structure on~$\mscr{J}$. The details of this construction will be given in Section~\ref{sec:cotJ_hyperk}.

We first briefly describe the real cotangent bundle of~$\mscr{J}$. From the description of~$\mscr{J}$ as the space of sections of~$\m{E}$ we have
\begin{equation*}
T_J\mscr{J}=\Gamma(M,J^*(\mrm{Vert}\,\m{E}))\mbox{ and }T^*_J\mscr{J}=\Gamma(M,J^*(\mrm{Vert}\,\m{E}^*)).
\end{equation*}
A more explicit description can be obtained by locally trivializing the bundle in a system of Darboux coordinates and identifying the fibres of~$\m{E}$ with~$\m{AC}^+$:
\begin{equation*}
T^*_J\!\mscr{J}=\set*{\alpha\in\mrm{End}(T^*M)\tc J^\transpose\circ\alpha+\alpha\circ J^\transpose=0\mbox{ and } g_J(\alpha-,-)\mbox{ is symmetric}}
\end{equation*}
Here~$J^\intercal:T^*M\to T^*M$ denotes the transpose of~$J:TM\to TM$. In other words, every element~$\alpha\in T^*_J\!\mscr{J}$ is a map from~$\bb{R}^{2n}$ to~$T^*_J\m{AC}^+$, once we fix a system of Darboux coordinates. Hence we have a ``pointwise'' description of the cotangent space of~$\mscr{J}$:~$T^*_J\m{AC}^+$ is the space of all~$\alpha:(\bb{R}^{2n})^\vee\to(\bb{R}^{2n})^\vee$ such that 
\begin{equation*}
\begin{split}
J\alpha^\transpose+&\alpha^\transpose J=0\\
J^\transpose\Omega_0\alpha^\transpose+&\alpha\Omega_0 J^\transpose=0
\end{split}
\end{equation*}
and the pairing between~$T^*_J\m{AC}^+$ and~$T_J\m{AC}^+$ is
\begin{equation*}
\langle\alpha,A\rangle=\frac{1}{2}\mrm{Tr}\left(\alpha^\transpose\,A\right).
\end{equation*}
We can realise the total space~$\cotJ$ as the space of sections of a~$\mrm{Sp}(2n)$-bundle, with fibres diffeomorphic to~$T^*\!\m{AC}^+$. The action by conjugation of~$\mrm{Sp}(2n)$ on~$\m{AC}^+$ induces an action on~$T^*\!\m{AC}^+$, again by conjugation. More precisely, for~$h\in\mrm{Sp}(2n)$ and~$(J,\alpha)\in T^*\!\m{AC}^+$ we have
\begin{equation*}
h.(J,\alpha)=(h\,J\,h^{-1},(h^{-1})^{\transpose}\alpha\,h^\transpose)
\end{equation*}
and that is precisely also the change that the matrices associated to~$(J,\alpha)\in\cotJ$ in a Darboux coordinate system undergo under a change to another Darboux coordinate system. Hence, as was the case for~$\mscr{J}$, we can write~$\cotJ$ as the space of sections of some~$\mrm{Sp}(2n)$-bundle~$\hat{\m{E}}\xrightarrow{\pi}M$.

There is a canonical~$\mrm{Sp}(2n)$-bundle map~$F:\hat{\m{E}}\to\m{E}$, covering the identity on~$M$, that is induced by the projection~$p:T^*\!\m{AC}^+\to\m{AC}^+$. It is defined as follows: for~$\xi\in\hat{\m{E}}$, let~$x=\pi(\xi)$ and fix a system of Darboux coordinates~$\bm{u}:U\to\bb{R}^{2n}$ around~$x$; consider then the trivializations~$\Phi_{\bm{u}}:\hat{\m{E}}_{\restriction U}\to U\times T^*\!\m{AC}^+$ and~$\phi_{\bm{u}}:\m{E}_{\restriction U}\to U\times\m{AC}^+$ and let
\begin{equation*}
F(\xi):=\phi_{\bm{u}}^{-1}\circ (\mathbbm{1}\times p)\circ\Phi_{\bm{u}}(\xi).
\end{equation*}
The definition of~$F$ does not depend upon the choice of Darboux coordinates on~$M$, since the action of the symplectic group on~$T^*\!\m{AC}^+$ is induced by the action on~$\m{AC}^+$. This map accounts for the fact that from a section~$s$ of~$\hat{\m{E}}$ we can always get a section~$J=F(s)$ of~$\m{E}$ and a section~$\alpha$ of~$J^*(\mrm{Vert}\,\m{E}^*)$.

A similar discussion holds for the \emph{holomorphic} cotangent bundle~${T^{1,0}_J}^*\!\!\mscr{J}$: it is locally identified with maps taking values in~${T^{1,0}_J}^*\!\m{AC}^+$, i.e. functions~$x\to\alpha(x)\in(\bb{R}^{2n})^\vee\otimes\bb{C}$ satisfying
\begin{enumerate}
\item~$J^\transpose\alpha=\I\alpha$;
\item~$\alpha\,J^\transpose=-\I\alpha$;
\item~$J^\transpose\Omega_0\mrm{Re}(\alpha)^\transpose+\mrm{Re}(\alpha)\Omega_0 J=0$.
\end{enumerate}
In what follows we will mostly study just the holomorphic cotangent bundle of~$\mscr{J}$, so notationally it will be more convenient to denote it just by~$\cotJ$; %
\nomenclature[cotangent J]{$\cotJ$}{holomorphic cotangent space of~$\mscr{J}$}%
the context will make clear what space we are working on.
\begin{rmk}
If~$J$ is integrable and we fix a system of coordinates on~$M$ that are holomorphic with respect to~$J$, then an element~$\alpha\in {T^{1,0}}^*_J\mscr{J}$ in these coordinates is written as
\begin{equation*}
\alpha=\alpha\indices{_a^{\bar{b}}}\,\diff_{z^{\bar{b}}}\otimes\mrm{d}z^a
\end{equation*}
while for~$A\in T_J\mscr{J}$, if we decompose it as~$A=A^{1,0}+A^{0,1}$ then
\begin{equation*}
A^{1,0}=A\indices{^a_{\bar{b}}}\mrm{d}\bar{z}^b\otimes\diff_{z^a}
\end{equation*}
so the notation~$A^{1,0}$ and~${T^{1,0}}^*\!\!\mscr{J}$ might seem to be contradictory. This is caused by the fact that the metric identification of the tangent and cotangent bundles of a K\"ahler manifold sends~$(1,0)$-vectors to~$(0,1)$-forms and vice versa.
\end{rmk}
We know from Theorem~\ref{thm:BiquardGauduchon} that~${T^{1,0}}^*\!\m{AC}^+$ carries a hyperk\"ahler structure, and one of the complex structures in the hyperk\"ahler family is the \emph{canonical complex structure}, i.e. the complex structure induced by that of~$\m{AC}^+$. To obtain an expression for the complex structure we can either directly pull back the complex structure of~$T^*\f{H}$ using the identification of~$\m{AC}^+$ with~$\f{H}$, or we can consider~$T_J\m{AC}^+$ as the space of infinitesimal deformations of~$\m{AC}^+$; for this approach, see Section~\ref{sec:complexification} and in particular equation~\eqref{eq:TAC_complex_str}.

The tangent space~$T_{J,\alpha}\left({T^{1,0}}^*\!\m{AC}^+\right)$ is the set of all pairs~$(\dot{J},\dot{\alpha})$ with~$\dot{J}\in T_J\m{AC}^+$ and~$\dot{\alpha}\in (\bb{R}^{2n})^\vee\otimes\bb{C}$ such that
\begin{description}
\item[$1'$.]~$\dot{J}^\transpose\alpha+J^\transpose\dot{\alpha}=\I\dot{\alpha}$
\item[$2'$.]$\alpha\,\dot{J}^\transpose+\dot{\alpha}\,J^\transpose=-\I\dot{\alpha}$
\item[$3'$.]~$\dot{J}^\transpose\Omega_0\mrm{Re}(\alpha)^\transpose+J^\transpose\Omega_0\mrm{Re}(\dot{\alpha})^\transpose+\mrm{Re}(\dot{\alpha})\Omega_0 J+\mrm{Re}(\alpha)\Omega_0\dot{J}=0$.
\end{description}
Using the identification between~$\m{AC}^+$ and~$\f{H}$, we can describe the canonical complex structure of~${T^{1,0}}^*\!\!\m{AC}^+$ as
\begin{equation}\label{eq:canonical_compl_str}
\begin{split}
T_{J,\alpha}\left({T^{1,0}}^*\!\!\m{AC}^+\right)&\longrightarrow T_{J,\alpha}\left({T^{1,0}}^*\!\!\m{AC}^+\right)\\
(\dot{J},\dot{\alpha})&\mapsto(J\dot{J},\dot{\alpha}J^\transpose+\dot{J}^\transpose\alpha).
\end{split}
\end{equation}
As we mentioned earlier, this complex structure can be directly computed, but it can also be seen in a more conceptual way, c.f. equation~\eqref{eq:TAC_complex_str}. For now, we just check that this map squares to minus the identity:
\begin{equation*}
\begin{split}
(\dot{J},\dot{\alpha})\mapsto(J\dot{J},\dot{\alpha}J^\transpose+\dot{J}^\transpose\alpha)\mapsto&(-\dot{J},(\dot{\alpha}J^\transpose+\dot{J}^\transpose\alpha)J^\transpose+(J\dot{J})^\transpose\alpha)=\\
&=(-\dot{J},-\dot{\alpha}+\dot{J}^\transpose\alpha J^\transpose+\dot{J}^\transpose J^\transpose\alpha)=(-\dot{J},-\dot{\alpha}).
\end{split}
\end{equation*}

\section{An infinite-dimensional hyperk\"ahler reduction}\label{sec:cotJ_hyperk}

In this section we prove Theorem~\ref{thm:HKThmIntro}, constructing a hyperk\"ahler structure on a neighbourhood of the zero section~$\mscr{J}\hookrightarrow\cotJ$ that restricted to~$\mscr{J}$ coincides with the Donaldson-Fujiki K\"ahler metric on~$\mscr{J}$ of Section~\ref{sec:J}. The structure will be constructed using Theorem~\ref{thm:BiquardGauduchon}, taking advantage of the fact that the metric on~$T^*\left(\mrm{Sp}(2n)/U(n)\right)$ of Theorem~\ref{thm:BiquardGauduchon} is invariant under the~$\mrm{Sp}(2n)$-action.

With a view to applying Lemma~\ref{lemma:hyperekahler_criterio}, we introduce the following tensors on~$\cotJ~$:
\begin{enumerate}
\item[$\cdot$] a Riemannian metric~$\bm{G}$;
\item[$\cdot$] a complex structure~$\bm{I}$ compatible with~$\bm{G}$;
\item[$\cdot$] a symplectic form~$\bm{\Theta}$ of type~$(2,0)$ with respect to~$\bm{I}$.
\end{enumerate}
By Lemma~\ref{lemma:hyperekahler_criterio}, to prove that this defines a hyperk\"ahler structure on~$\cotJ$ it suffices to show that
\begin{enumerate}
\item~$\bm{J}^2=-1$, where~$\bm{J}$ satisfies~$\bm{G}(\bm{J}-,-)=\mrm{Re}(\bm{\Theta})$;
\item~$\mrm{d}\bm{\Omega_I}=0$, where~$\bm{\Omega_I}(-,-) =\bm{G}(\bm{I}-,-)$.
\end{enumerate}

Since~$\mscr{J}$ already has a complex structure~$\bb{J}$, we define~$\bm{I}$ as the complex structure induced on~$\cotJ~$ by~$\bb{J}$; explicitly, from equation~\eqref{eq:canonical_compl_str} we have, for a point~$(J,\alpha)\in\cotJ$ and a tangent vector~$(A,\varphi)\in T_{(J,\alpha)}(\cotJ)$ 
\begin{equation*}
\bm{I}_{(J,\alpha)}(A,\varphi):=(JA,\varphi J^\transpose+A^\transpose\alpha). 
\end{equation*}
Let~$(I,\theta,\hat{g})$ be the triple of a complex structure, canonical~$2$-form and hyperk\"ahler metric on~$T^*\!\m{AC}^+$ described in Section~\ref{sec:BiquardGauduchon}. The~$2$-form~$\bm{\Theta}$ on~$\cotJ~$ will be  
\begin{equation}\label{eq:2forma_canonica}
\bm{\Theta}_{(J,\alpha)}(v,w):=\int_{x\in M}\theta_x(v_x,w_x)\,\frac{\omega^n}{n!}
\end{equation}
for all~$(J,\alpha)\in\cotJ$ and~$v,w\in T_{(J,\alpha)}(\cotJ )$, where as usual we are taking around each~$x\in M$ a trivialization of the fibre bundle (i.e. a system of Darboux coordinates). It's not obvious that this expression is actually independent from the choice of the trivialization; it will be shown in Lemma~\ref{lemma:2canonica_J}. A point to remark is that~$\bm{\Theta}$ is automatically of type~$(2,0)$ with respect to~$\bm{I}$, since~$\theta$ is of type~$(2,0)$ with respect to the canonical complex structure~$I$ of~$T^*\!\m{AC}^+$.

The natural candidate to be the hyperk\"ahler metric is the metric~$\bm{G}$ induced on~$\cotJ$ from the Biquard-Gauduchon metric on~$T^*\!\m{AC}^+$
\begin{equation}\label{eq:metrica_hyper_J}
\bm{G}_{(J,\alpha)}(v,w):=\int_{x\in M}\hat{g}_x(v_x,w_x)\,\frac{\omega^n}{n!}
\end{equation}
but again we should check that this expression is independent from the choice of Darboux coordinates around each point. Assuming for the moment that it is, the fact that~$\bm{I}$ and~$\bm{G}$ are compatible follows immediately from the compatibility of~$I$ and~$\hat{g}$ on~$T^*\!\m{AC}^+$; moreover, the~$2$-form~$\bm{\Omega}_{\bm{I}}$ is
\begin{equation}\label{eq:forma_hyper_J}
{\bm{\Omega}_{\bm{I}}}_{(J,\alpha)}(v,w):=\int_{x\in M}(\omega_I)_x(v_x,w_x)\,\frac{\omega^n}{n!}
\end{equation}
where~$\omega_I$ is the~$2$-form defined in Theorem~\ref{thm:BiquardGauduchon}. Notice also that it is enough to check that~\eqref{eq:forma_hyper_J} does not depend on the choice of coordinates to guarantee that also~\eqref{eq:metrica_hyper_J} does not. Again under the (provisional) assumption that~\eqref{eq:forma_hyper_J} is well-defined, we notice that condition~$(1)$ above is automatically satisfied. Indeed, the complex structure~$\bm{J}$ is pointwise induced from the analogue complex structure~$\bb{J}$ of~$T^*\!\m{AC}^+$, from which it inherits algebraic properties like~$\bb{J}^2=-1$.

Summing up these considerations, to prove Theorem~\ref{thm:HKThmIntro} we just have to verify that~$\bm{\Theta}$ and~$\bm{\Omega_I}$ are well-defined and closed. The closedness of both forms is guaranteed by Theorem~\ref{thm:differenziazione}. Even though that result is stated for~$\mscr{J}$, it is quite easy to see that the same argument can be used for~$\cotJ$, since the construction works in a much more general setting. Again, we refer to the arguments in~\cite{Koiso_complex_structure}.

First we prove the well-definedness of~$\bm{\Omega_I}$. Notice that, since the action of~$\mrm{Sp}(2n)$ on~$\m{AC}^+$ is isometric and holomorphic, both~$\rho$ and~$\mrm{d}\mrm{d}^c\rho$ are~$\mrm{Sp}(2n)$-invariant.
\begin{lemma}
The~$2$-form~$\bm{\Omega}_{\bm{I}}$ of equation~\eqref{eq:forma_hyper_J} is well-defined.
\end{lemma}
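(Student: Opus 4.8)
The plan is to show that the integrand $(\omega_I)_x(v_x,w_x)$ appearing in \eqref{eq:forma_hyper_J} does not depend on the system of Darboux coordinates used to compute it; once this is established the integral over $M$ is manifestly well-defined, and the same argument will apply verbatim to $\bm{\Theta}$ and to $\bm{G}$.

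First I would recall, as in the description of $\mscr{J}$ in Section~\ref{sec:J} and of $\cotJ$ above, that any two systems of Darboux coordinates $\bm{u},\bm{v}$ around a point $x\in M$ differ by a transition function $h=\frac{\diff\bm{v}}{\diff\bm{u}}(x)\in\mrm{Sp}(2n)$, and that the corresponding change of local trivialization of $\hat{\m{E}}$ acts on the fibre $T^*\!\m{AC}^+$ by the conjugation action $h.(J,\alpha)=(h\,J\,h^{-1},(h^{-1})^{\transpose}\alpha\,h^{\transpose})$ fixed earlier. Passing to tangent spaces, a vector $v_x$ read off in the $\bm{u}$-trivialization is replaced by its image $h_*v_x$ under the differential of this $\mrm{Sp}(2n)$-action when read off in the $\bm{v}$-trivialization, and likewise the base point $(J(x),\alpha(x))$ is replaced by $h.(J(x),\alpha(x))$.

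Next I would invoke the $\mrm{Sp}(2n)$-invariance of $\omega_I$. By Theorem~\ref{thm:BiquardGauduchon} we have $\omega_I=\pi^*\omega+\mrm{d}\mrm{d}^c\rho$, where the projection $\pi\colon T^*\!\m{AC}^+\to\m{AC}^+$ is $\mrm{Sp}(2n)$-equivariant and the K\"ahler form $\omega$ of $\m{AC}^+$ is $\mrm{Sp}(2n)$-invariant, since $\mrm{Sp}(2n)$ acts on $\m{AC}^+\cong\mrm{Sp}(2n)/\mrm{U}(n)$ by holomorphic isometries; moreover, as already observed in the remark preceding this Lemma, the Biquard-Gauduchon function $\rho$ is $\mrm{Sp}(2n)$-invariant, hence so is $\mrm{d}\mrm{d}^c\rho$. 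Therefore $h^*\omega_I=\omega_I$ for every $h\in\mrm{Sp}(2n)$, that is, $(\omega_I)_{h.(J,\alpha)}(h_*v,h_*w)=(\omega_I)_{(J,\alpha)}(v,w)$.

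Combining the two steps, the value of $(\omega_I)_x(v_x,w_x)$ computed in the $\bm{v}$-trivialization equals $(\omega_I)_{h.(J(x),\alpha(x))}(h_*v_x,h_*w_x)$, which by invariance equals the value computed in the $\bm{u}$-trivialization; thus the integrand is a genuinely well-defined function of $x\in M$, and $\bm{\Omega}_{\bm{I}}$ is well-defined. The only substantive input here is the $\mrm{Sp}(2n)$-invariance of $\rho$, which is built into the Biquard-Gauduchon construction; everything else is the bookkeeping of how $\mrm{Sp}(2n)$ acts on the cotangent fibres and their tangent spaces, already recorded in the construction of $\hat{\m{E}}$. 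I do not expect a serious obstacle: the main point requiring care is to keep the action on tangent vectors, and the direction of the transformations, consistent with the conjugation action declared above, so that the invariance of $\omega_I$ cancels the change of trivialization exactly.
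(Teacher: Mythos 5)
Your argument is correct and is essentially identical to the paper's proof: both reduce well-definedness to checking that under a change of Darboux coordinates the data transform by the $\mrm{Sp}(2n)$-action on $T^*\!\m{AC}^+$, and both conclude by invoking the $\mrm{Sp}(2n)$-invariance of $\pi^*\omega$ and of $\mrm{d}\mrm{d}^c\rho$ (the latter coming from the invariance of $\rho$, since the action is by holomorphic isometries). No gaps.
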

\begin{proof}
Choose~$(J,\alpha)\in\cotJ$,~$v,w\in T_{(J,\alpha)}(\cotJ)$ and a Darboux coordinate system~$\bm{u}$. In this coordinate system the bundle~$\hat{\m{E}}$ trivializes, and we have to check that, for~$x\in\mrm{dom}(\bm{u})$, the expression
\begin{equation*}
\pi^*\omega_{(J(x),\alpha(x))}(v(x),w(x))+\mrm{d}\mrm{d}^c\rho_{(J(x),\alpha(x))}(v(x),w(x))
\end{equation*}
does not depend upon the choice of the coordinate system~$\bm{u}$. If~$\bm{v}$ is a different Darboux coordinate system, the matrix~$\varphi:=\frac{\diff\bm{v}}{\diff\bm{u}}$ is a~$\mrm{Sp}(2n)$-valued function and the previous expression becomes, in the new coordinate system,
\begin{equation*}
\begin{split}
\pi^*\omega_{\varphi(x).(J(x),\alpha(x))}&(\varphi(x).v(x),\varphi(x).w(x))+\\
&+\mrm{d}\mrm{d}^c\rho_{\varphi(x).(J(x),\alpha(x))}(\varphi(x).v(x),\varphi(x).w(x)).
\end{split}
\end{equation*}
Since both terms are~$\mrm{Sp}(2n)$-invariant this proves the claim.
\end{proof}
Another consequence of Theorem~\ref{thm:differenziazione} is that~$\bm{\Theta}$ has a more natural description, and in particular it is well-defined, concluding the proof of Theorem~\ref{thm:HKThmIntro}.
\begin{lemma}\label{lemma:2canonica_J}
The~$2$-form~$\bm{\Theta}$ defined in~\eqref{eq:2forma_canonica} is the canonical~$2$-form of~$\cotJ$.
\end{lemma}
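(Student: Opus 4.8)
The plan is to exhibit $\bm{\Theta}$ as (up to the standard sign) the exterior derivative of the tautological $1$-form of $\cotJ$, thereby identifying it with the canonical $2$-form by definition. This strategy has the added benefit of settling well-definedness for free: the intrinsic tautological $1$-form makes no reference to any Darboux trivialization, so once $\bm{\Theta}$ is identified with its differential, its independence from the chosen trivializations is automatic.

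First I would recall the finite-dimensional picture. On $T^*\!\m{AC}^+$ the canonical $2$-form $\theta$ (the form called $\omega_c$ in Theorem~\ref{thm:BiquardGauduchon}) is $\theta=-\mrm{d}\lambda$, where the tautological $1$-form is $\lambda_{(J,\alpha)}(v)=\langle\alpha,\mrm{d}p(v)\rangle$, with $p\colon T^*\!\m{AC}^+\to\m{AC}^+$ the bundle projection and $\langle\alpha,A\rangle=\tfrac12\mrm{Tr}(\alpha^\transpose A)$ the pairing used throughout Section~\ref{sec:BiquardGauduchon} (the precise sign depends on the convention, in the same way as the minus sign in~\eqref{eq:rho_def}). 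The crucial structural point is that $\lambda$ is $\mrm{Sp}(2n)$-invariant: the $\mrm{Sp}(2n)$-action on $T^*\!\m{AC}^+$ is the cotangent lift of its action on $\m{AC}^+$, and cotangent lifts always preserve the tautological form. This invariance is exactly what is needed to feed $\lambda$ into (the $\cotJ$-version of) Theorem~\ref{thm:differenziazione}.

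Next I would introduce the integrated $1$-form
\[\bm{\lambda}_{(J,\alpha)}(v):=\int_{x\in M}\lambda_{(J(x),\alpha(x))}(v_x)\,\frac{\omega^n}{n!}\]
and check that it agrees with the intrinsic tautological $1$-form of $\cotJ$, namely $v\mapsto\langle\alpha,\mrm{d}\pi(v)\rangle$ for the projection $\pi\colon\cotJ\to\mscr{J}$. This rests on two matching facts: the duality pairing between $T^*_J\mscr{J}$ and $T_J\mscr{J}$ is itself the fibre integral of the $\m{AC}^+$-pairing, and under any Darboux trivialization one has $(\mrm{d}\pi(v))(x)=\mrm{d}p(v_x)$. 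Combining these, the pointwise integrand $\lambda_{(J(x),\alpha(x))}(v_x)=\langle\alpha(x),\mrm{d}p(v_x)\rangle$ equals $\langle\alpha(x),(\mrm{d}\pi(v))(x)\rangle$, so $\bm{\lambda}$ is the intrinsic tautological form and is in particular well-defined. Finally, invoking Theorem~\ref{thm:differenziazione} to commute $\mrm{d}$ with integration over $M$ gives
\[\bm{\Theta}=\int_{x\in M}\theta_x\,\frac{\omega^n}{n!}=-\int_{x\in M}(\mrm{d}\lambda)_x\,\frac{\omega^n}{n!}=-\mrm{d}\bm{\lambda},\]
which shows $\bm{\Theta}$ is the canonical $2$-form of $\cotJ$ and, as a byproduct, that it does not depend on the trivializations.

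The main obstacle I anticipate lies in the identification carried out in the third step: matching the intrinsic infinite-dimensional tautological form with the pointwise integral requires verifying carefully that the duality pairing on $\cotJ$ really is the fibre integral of the $\m{AC}^+$-pairing and that $\mrm{d}\pi$ is computed frame-by-frame by $\mrm{d}p$. Both are intuitively clear from the construction of $\cotJ$ as a space of sections, but they must be spelled out in a trivialization to make the integrand unambiguous. A secondary, purely bookkeeping issue is the tracking of sign conventions between the Biquard--Gauduchon $\theta=\mrm{d}\lambda$ versus $\theta=-\mrm{d}\lambda$ normalisations, which only affects the placement of a minus sign and not the substance of the identification.
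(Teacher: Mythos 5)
Your proposal is correct and follows essentially the same route as the paper's own proof: identify the integrated $1$-form with the intrinsic tautological form of $\cotJ$ via the fibre-wise pairing, then apply Theorem~\ref{thm:differenziazione} to commute $\mrm{d}$ with integration over $M$ and conclude $\bm{\Theta}=-\mrm{d}\bm{\tau}$. Your explicit remark that the $\mrm{Sp}(2n)$-invariance of the tautological form (as a cotangent lift) is what licenses the use of Theorem~\ref{thm:differenziazione} is a point the paper leaves implicit, but it does not change the argument.
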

\begin{proof}
We recall that for any manifold~$X$, the tautological~$1$ form~$\tau_X$ is a~$1$-form defined on the total space of~$T^*X\xrightarrow{\pi}X$ by
\begin{equation*}
\begin{split}
T^*_{x,\alpha}X&\to\bb{R}\\
v&\mapsto\alpha(\pi_*v)
\end{split}
\end{equation*}
and is related to the canonical~$2$-form~$\theta_X$ of~$T^*X$ by~$\theta_X=-\mrm{d}\tau_X$. Denote simply by~$\tau$ the tautological~$1$-form of~$T^*\!\m{AC}^+$, just as~$\theta$ is the canonical~$2$-form. Let also~$\bm{\tau}$ be the tautological form of~$\cotJ$. Then from the definitions it follows immediately that for any~$(J,\alpha)\in\cotJ$ and~$(A,\varphi)\in T_{(J,\alpha)}(\cotJ)$
\begin{equation*}
\bm{\tau}_{(J,\alpha)}((A,\varphi))=\alpha(A)=\int_{x\in M}\frac{1}{2}\mrm{Tr}(\alpha_x A_x^\transpose)\,\frac{\omega^n}{n!}=\int_{x\in M}\tau_{(J_x,\alpha_x)}(A_x,\varphi_x)\,\frac{\omega^n}{n!}.
\end{equation*}
By Theorem~\ref{thm:differenziazione} it is clear that this shows~$\bm{\Theta}=-\mrm{d}\bm{\tau}$.
\end{proof}

\subsection{The infinite-dimensional Hamiltonian action}\label{sec:moment_map}

Let~$(M,\omega)$ be a compact K\"ahler manifold. In this Section we prove Theorem~\ref{thm:HamiltonianHyperkIntro}, showing that the action of~$\mscr{G} =\mrm{Ham}(M,\omega)$ induced on~$\cotJ$ from the action on~$\mscr{J}$ is Hamiltonian with respect to both the real symplectic form~$\bm{\Omega}_{\bm{I}}$ and the complex symplectic form~$\bm{\Theta}$. 

Recall from Section~\ref{sec:J} that the group~$\mscr{G}$ acts on~$\mscr{J}$ by pull-backs as
\begin{equation*}
\varphi.J =(\varphi^{-1})^*J=\varphi_*\circ J\circ\varphi_*^{-1}.
\end{equation*}
As we saw in Chapter~\ref{chap:classicalScalCurv} this action preserves the K\"ahler structure of~$\mscr{J}$, and is Hamiltonian. The action induced by~$\mscr{G}$ on~$\cotJ$ is given by
\begin{equation*}
\varphi.(J,\alpha)=\left((\varphi^{-1})^*J,(\varphi^{-1})^*\alpha\right)=\left(\varphi_*\circ J\circ\varphi_*^{-1},(\varphi^{-1})^*\circ\alpha\circ\varphi^*\right)
\end{equation*}
and again it preserves~$\bm{\Theta}$,~$\bm{I}$ and~$\bm{\Omega}_{\bm{I}}$. For a function~$h\in\m{C}^\infty_0(M)=\mrm{Lie}(\mscr{G})$, the infinitesimal action of~$h$ on~$\cotJ$ is
\begin{equation}\label{eq:azione_infinitesima}
\hat{h}_{(J,\alpha)}=\left(\m{L}_{X_h}J,\m{L}_{X_h}\alpha\right)\in T_{(J,\alpha)}(\cotJ).
\end{equation}

First we recall a simple result that will be used to prove Theorem~\ref{thm:HamiltonianHyperkIntro}.
\begin{lemma}\label{lemma:moment_map_exact}
Let~$G$ be a Lie group acting on the left on a manifold~$X$, and assume that the action preserves a~$1$-form~$\chi$; let also~$\eta=\mrm{d}\chi$. Then the map
\begin{equation*}
\begin{split}
X&\to Lie(G)^*\\
x&\mapsto\f{m}_x
\end{split}
\end{equation*}
defined by~$\f{m}_x(a) =\chi_x(\hat{a}_x)$ satisfies
\begin{equation*}
\mrm{d}(\f{m}(a))=-\hat{a}\lrcorner\eta.
\end{equation*}
Moreover,~$\f{m}$ is~$G$-equivariant with respect to the action of~$G$ on~$X$ and the co-adjoint action on~$\mrm{Lie}(G)^*$. In particular if~$\eta$ is a symplectic form then~$\f{m}$ is a moment map for~$G\curvearrowright X$.
\end{lemma}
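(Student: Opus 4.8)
The statement is a standard consequence of Cartan's magic formula together with the invariance hypothesis, and I would organize the proof into three short computations. First I would verify the differential identity~$\mrm{d}(\f{m}(a))=-\hat{a}\lrcorner\eta$. The key observation is that since the action preserves~$\chi$, the Lie derivative of~$\chi$ along each fundamental vector field~$\hat{a}$ vanishes, i.e.~$\m{L}_{\hat{a}}\chi=0$. Applying Cartan's formula~$\m{L}_{\hat{a}}\chi=\mrm{d}(\hat{a}\lrcorner\chi)+\hat{a}\lrcorner\mrm{d}\chi$ and using~$\eta=\mrm{d}\chi$, I obtain
\begin{equation*}
0=\mrm{d}(\hat{a}\lrcorner\chi)+\hat{a}\lrcorner\eta.
\end{equation*}
Since by definition~$\f{m}_x(a)=\chi_x(\hat{a}_x)=(\hat{a}\lrcorner\chi)(x)$, this reads~$\mrm{d}(\f{m}(a))=-\hat{a}\lrcorner\eta$, which is exactly the required identity.

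Next I would establish the equivariance of~$\f{m}$ with respect to the given action on~$X$ and the co-adjoint action on~$\mrm{Lie}(G)^*$. The main input here is the behaviour of fundamental vector fields under the group action: for~$g\in G$ one has the standard relation~$(g^{-1})^*\hat{a}=\widehat{\mrm{Ad}_{g}(a)}$ (equivalently~$g_*\hat{a}=\widehat{\mrm{Ad}_{g^{-1}}(a)}$), which follows from differentiating the identity~$g\cdot\mrm{exp}(-ta)\cdot g^{-1}=\mrm{exp}(-t\,\mrm{Ad}_g(a))$. Combining this with the hypothesis that~$g$ preserves~$\chi$, I would compute~$\f{m}_{g.x}(a)=\chi_{g.x}(\hat{a}_{g.x})$ and pull everything back by~$g$ to land on~$\f{m}_x(\mrm{Ad}_{g^{-1}}a)$, which is the equivariance condition~$\langle\f{m}_{g.x},a\rangle=\langle\f{m}_x,\mrm{Ad}_{g^{-1}}(a)\rangle$ in the notation of the definition of a moment map. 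Care must be taken to match the sign conventions in the definition of~$\hat{a}$ (which carries the minus sign) so that the adjoint exponent comes out correctly.

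Finally, the last sentence follows immediately: once~$\eta$ is assumed to be symplectic, the two established properties are precisely the defining conditions of a moment map, so~$\f{m}$ is a moment map for~$G\curvearrowright X$ with respect to~$\eta$. The only genuine subtlety — and the step I would treat most carefully — is the equivariance computation, since it is the one place where the direction of the group action, the sign in the definition of the fundamental vector field, and the variance of the co-adjoint action all have to be reconciled; the differential identity itself is essentially a one-line application of Cartan's formula. I would also note that nothing in the argument uses finite-dimensionality, so the lemma applies verbatim to the infinite-dimensional action~$\mscr{G}\curvearrowright\cotJ$, which is exactly how it will be invoked in the proof of Theorem~\ref{thm:HamiltonianHyperkIntro}, taking~$\chi=\bm{\tau}$ (the tautological~$1$-form) and~$\eta=\bm{\Theta}$, and separately for the real form~$\bm{\Omega_I}$.
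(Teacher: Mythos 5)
Your proposal is correct and follows essentially the same route as the paper: Cartan's formula yields the differential identity in one line, and the transformation law for fundamental vector fields combined with the $G$-invariance of~$\chi$ gives equivariance. One small caution: your parenthetical restatement~$g_*\hat{a}=\widehat{\mrm{Ad}_{g^{-1}}(a)}$ has~$g$ and~$g^{-1}$ swapped (with the paper's convention for~$\hat{a}$ one has~$(\sigma_g)_*\hat{a}=\widehat{\mrm{Ad}_{g}(a)}$, equivalently~$\hat{a}_{g.x}=(\mrm{d}\sigma_g)_x\bigl(\widehat{\mrm{Ad}_{g^{-1}}(a)}_x\bigr)$, which is exactly what the paper's computation uses), but your primary identity and the final formula~$\f{m}_{g.x}(a)=\f{m}_x(\mrm{Ad}_{g^{-1}}(a))$ are the correct ones.
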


\begin{proof}
The first part is a simple consequence of Cartan's formula.
\begin{equation*}
0=\m{L}_{\hat{a}}\chi=\hat{a}\lrcorner\mrm{d}\chi+\mrm{d}(\hat{a}\lrcorner\chi)=\hat{a}\lrcorner\eta+\mrm{d}(\f{m}_x(a)).
\end{equation*}
As for the~$G$-equivariance, fix~$g\in G$ and~$a\in\mrm{Lie}(G)$. Then for every~$x\in X$ (here~$\sigma$ denotes the left action~$G\curvearrowright X$)
\begin{equation*}
\begin{split}
\f{m}_{g.x}(a)&=\chi_{g.x}(\hat{a}_{g.x})=\chi_{g.x}\left((\mrm{d}\sigma_g)_x\left(\widehat{\mrm{Ad}_{g^{-1}}(a)}_x\right)\right)=\\
&=\left(\sigma_g^*\chi\right)_x\left(\widehat{\mrm{Ad}_{g^{-1}}(a)}_x\right)=\chi_x\left(\widehat{\mrm{Ad}_{g^{-1}}(a)}_x\right)=\mrm{Ad}_{g^{-1}}^*\f{m}_x(a)
\end{split}
\end{equation*}
where we have used again the fact that~$\chi$ is~$G$-invariant.
\end{proof}

As a consequence, we obtain the following results for the action of~$\mscr{G}$ on the hyperk\"ahler manifold~$\cotJ$.

\begin{lemma}\label{lemma:moment_map_theta}
The action~$\mscr{G}\curvearrowright\cotJ$ is Hamiltonian with respect to the canonical symplectic form~$\bm{\Theta}$; a moment map~$\f{m}_{\bm{\Theta}}$ is given by
\begin{equation}\label{eq:moment_map_theta}
{\f{m}_{\bm{\Theta}}}_{(J,\alpha)}(h)=-\int_M\frac{1}{2}\mrm{Tr}(\alpha^\transpose\m{L}_{X_h}J)\,\frac{\omega^n}{n!}.
\end{equation}
\end{lemma}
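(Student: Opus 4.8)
The plan is to deduce the statement directly from Lemma~\ref{lemma:moment_map_exact}, using the tautological description of $\bm{\Theta}$ obtained in Lemma~\ref{lemma:2canonica_J}. Recall from that Lemma that $\bm{\Theta}=-\mrm{d}\bm{\tau}$, where $\bm{\tau}$ is the tautological $1$-form of $\cotJ$, given on a tangent vector $(A,\varphi)\in T_{(J,\alpha)}(\cotJ)$ by
\begin{equation*}
\bm{\tau}_{(J,\alpha)}(A,\varphi)=\int_{x\in M}\tfrac{1}{2}\mrm{Tr}\!\left(\alpha_x A_x^\transpose\right)\,\frac{\omega^n}{n!}.
\end{equation*}
Setting $\chi:=-\bm{\tau}$ we have $\mrm{d}\chi=\bm{\Theta}$, which is symplectic by Theorem~\ref{thm:HKThmIntro}. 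Hence, provided $\mscr{G}$ preserves $\bm{\tau}$, Lemma~\ref{lemma:moment_map_exact} applies verbatim and yields at once that $\f{m}_{\bm{\Theta}}(h):=-\bm{\tau}_{(J,\alpha)}(\hat h_{(J,\alpha)})$ is a $\mscr{G}$-equivariant moment map for the action $\mscr{G}\curvearrowright(\cotJ,\bm{\Theta})$. The only geometric input to check is therefore the invariance of $\bm{\tau}$, and everything else is a short identification of the resulting map with~\eqref{eq:moment_map_theta}.

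To verify the invariance, I would first observe that the action of $\varphi\in\mscr{G}$ on $\cotJ$, namely $(J,\alpha)\mapsto\left((\varphi^{-1})^*J,(\varphi^{-1})^*\alpha\right)$, is precisely the cotangent lift to $\cotJ=T^*\!\mscr{J}$ of the action $J\mapsto(\varphi^{-1})^*J$ on $\mscr{J}$. The tautological $1$-form of any cotangent bundle is natural under base diffeomorphisms: for a base map $\psi$ the cotangent lift $\Psi(x,\alpha)=\bigl(\psi(x),(\mrm{d}\psi^{-1})^*\alpha\bigr)$ satisfies $\Psi^*\tau=\tau$, because $\tau_{(x,\alpha)}(v)=\alpha(\pi_*v)$ and $\pi\circ\Psi=\psi\circ\pi$. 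Carrying out this classical computation pointwise over $M$ and integrating --- with Theorem~\ref{thm:differenziazione} justifying that the exterior derivative commutes with the integral over $M$ --- shows that $\bm{\tau}$ is fixed by every element of $\mscr{G}$.

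Granting the invariance, it remains to match the moment map with the claimed formula. Using the infinitesimal action~\eqref{eq:azione_infinitesima}, $\hat h_{(J,\alpha)}=\left(\m{L}_{X_h}J,\m{L}_{X_h}\alpha\right)$, the explicit expression for $\bm{\tau}$ gives
\begin{equation*}
\f{m}_{\bm{\Theta}}(h)=-\bm{\tau}_{(J,\alpha)}(\hat h_{(J,\alpha)})=-\int_M\tfrac{1}{2}\mrm{Tr}\!\left(\alpha\,(\m{L}_{X_h}J)^\transpose\right)\,\frac{\omega^n}{n!},
\end{equation*}
and the elementary trace identity $\mrm{Tr}(\alpha B^\transpose)=\mrm{Tr}(\alpha^\transpose B)$, applied with $B=\m{L}_{X_h}J$, turns this into exactly~\eqref{eq:moment_map_theta}. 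I expect the main (and only genuine) obstacle to be the invariance of $\bm{\tau}$: one must make sure that the $\mscr{G}$-action on $\cotJ$ really is the cotangent lift of the $\mscr{G}$-action on $\mscr{J}$ in the fibrewise-defined infinite-dimensional setting, and that the naturality of the tautological form is preserved after integration over $M$. Both points are controlled by the fact that the entire construction on $\cotJ$ is induced pointwise from the $\mrm{Sp}(2n)$-equivariant structure of $T^*\!\m{AC}^+$ together with Theorem~\ref{thm:differenziazione}, so no separate analytic argument is required.
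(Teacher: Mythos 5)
Your proposal is correct and follows essentially the same route as the paper: the paper's proof also consists of observing that $\bm{\Theta}=-\mrm{d}\bm{\tau}$, that $\mscr{G}$ preserves $\bm{\tau}$, and then invoking Lemma~\ref{lemma:moment_map_exact} to conclude that $-\bm{\tau}_{(J,\alpha)}(\hat h)$ is a moment map. The extra details you supply --- identifying the $\mscr{G}$-action on $\cotJ$ as the cotangent lift of the action on $\mscr{J}$ to justify the invariance of $\bm{\tau}$, and the trace identity matching the result with~\eqref{eq:moment_map_theta} --- are sound and simply make explicit what the paper leaves implicit.
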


\begin{proof}
Since~$\bm{\Theta}=-\mrm{d}\bm{\tau}$ and~$\mscr{G}$ preserves~$\bm{\tau}$, we can apply Lemma~\ref{lemma:moment_map_exact} to find that~$-\bm{\tau}_{(J,\alpha)}(\hat{h})$ is a moment map for~$\mscr{G}\curvearrowright(\cotJ,\bm{\Theta})$.
\end{proof}
Let us now consider the action with respect to the real symplectic form.
\begin{lemma}\label{lemma:mappa_momento_OmegaI}
The action~$\mscr{G}\curvearrowright(\cotJ,\bm{\Omega}_{\bm{I}})$ is Hamiltonian; a moment map~$\f{m}_{\bm{\Omega}_{\bm{I}}}$ is given by
\begin{equation*}
\f{m}_{\bm{\Omega}_{\bm{I}}} =\mu\circ\pi+\f{m}
\end{equation*}
where~$\mu$ is the moment map for the action~$\mscr{G}\curvearrowright(\mscr{J},\Omega)$,~$\pi:\cotJ\to J$ is the projection and~$\f{m}:\cotJ\to\mrm{Lie}(\mscr{G})^*$ is defined by 
\begin{equation}\label{eq:moment_map_omega}
\f{m}_{(J,\alpha)}(h)=\int_{x\in M}\mrm{d}^c\rho_{(J(x),\alpha(x))}\left(\m{L}_{X_h}J,\m{L}_{X_h}\alpha\right)\,\frac{\omega^n}{n!}.
\end{equation}
\end{lemma}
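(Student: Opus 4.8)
The plan is to split the real symplectic form $\bm{\Omega}_{\bm{I}}$ into a piece pulled back from the base $\mscr{J}$ and a globally exact piece coming from the Biquard-Gauduchon potential, and to exhibit a moment-map contribution for each. Starting from the pointwise identity $\omega_I=\pi^*\omega+\mrm{d}\mrm{d}^c\rho$ of Theorem~\ref{thm:BiquardGauduchon} (with $\Sigma=\m{AC}^+$, $\omega_\Sigma=\omega$) and integrating over $M$ as in \eqref{eq:forma_hyper_J}, I would first establish the decomposition
\[
\bm{\Omega}_{\bm{I}}=\bm{\pi}^*\Omega+\mrm{d}\mrm{d}^c\bm{\rho},
\]
where $\bm{\pi}:\cotJ\to\mscr{J}$ is the bundle projection (the map $\pi$ of the statement), $\Omega$ is the symplectic form \eqref{eq:symp_form_J} on $\mscr{J}$, and $\bm{\rho}_{(J,\alpha)}=\int_M\rho_{(J(x),\alpha(x))}\,\frac{\omega^n}{n!}$ is the integrated potential. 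The care needed here is to commute $\mrm{d}$ and $\mrm{d}^c$ past the integral: since $\rho$, and hence $\mrm{d}^c\rho$, is $\mrm{Sp}(2n)$-invariant and $\bm{I}$ is induced pointwise from $I$, Theorem~\ref{thm:differenziazione} gives $\mrm{d}^c\bm{\rho}=\int_M\mrm{d}^c\rho$ and $\mrm{d}\mrm{d}^c\bm{\rho}=\int_M\mrm{d}\mrm{d}^c\rho$, while the first summand is recognised as $\bm{\pi}^*\Omega$ by computing the fibrewise projection $\pi_*:T^*\!\m{AC}^+\to\m{AC}^+$ and using $\omega(A,B)=\frac{1}{2}\mrm{Tr}(JAB)$.

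For the pulled-back part I would show that $\mu\circ\bm{\pi}$ is a moment-map potential. The key observation is that the induced action $\mscr{G}\curvearrowright\cotJ$ covers the action on $\mscr{J}$, so that $\bm{\pi}$ is $\mscr{G}$-equivariant and the infinitesimal action $\hat{h}_{(J,\alpha)}=(\m{L}_{X_h}J,\m{L}_{X_h}\alpha)$ of \eqref{eq:azione_infinitesima} is $\bm{\pi}$-related to $\hat{h}_J=\m{L}_{X_h}J$ (the projection simply forgets the second component). Consequently $\mrm{d}\langle\mu\circ\bm{\pi},h\rangle=\bm{\pi}^*\mrm{d}\langle\mu,h\rangle=\bm{\pi}^*(-\hat{h}\lrcorner\Omega)=-\hat{h}\lrcorner\bm{\pi}^*\Omega$, using Theorem~\ref{thm:moment_map} for the action on $\mscr{J}$; equivariance of $\mu\circ\bm{\pi}$ is then inherited from that of $\mu$.

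For the exact part $\mrm{d}\mrm{d}^c\bm{\rho}$ I would apply Lemma~\ref{lemma:moment_map_exact} with $\chi=\mrm{d}^c\bm{\rho}$ and $\eta=\mrm{d}\chi$. This requires that $\mscr{G}$ preserve $\chi$, which follows because the action preserves $\bm{I}$ (already noted after \eqref{eq:azione_infinitesima}) and preserves $\bm{\rho}$: the latter is the integral of the $\mrm{Sp}(2n)$-invariant function $\rho$ against $\omega^n$, and since $\mscr{G}$ acts by volume-preserving symplectomorphisms this integral is invariant, exactly as in the equivariance argument for \eqref{eq:equivariance_scalar}. Lemma~\ref{lemma:moment_map_exact} then produces the equivariant potential $h\mapsto\chi(\hat{h})=\mrm{d}^c\bm{\rho}(\hat{h})=\int_M\mrm{d}^c\rho_{(J(x),\alpha(x))}(\m{L}_{X_h}J,\m{L}_{X_h}\alpha)\frac{\omega^n}{n!}$, which is precisely $\f{m}$ of \eqref{eq:moment_map_omega}. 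Adding the two contributions yields $\mrm{d}\langle\mu\circ\bm{\pi}+\f{m},h\rangle=-\hat{h}\lrcorner\bm{\Omega}_{\bm{I}}$ together with equivariance, so $\f{m}_{\bm{\Omega}_{\bm{I}}}=\mu\circ\pi+\f{m}$ is a moment map.

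The main obstacle I expect is the bookkeeping of the first paragraph: justifying rigorously that $\mrm{d}^c$ commutes with the fibre integral over $M$ (so that the decomposition of $\bm{\Omega}_{\bm{I}}$ is literally $\bm{\pi}^*\Omega$ plus an exact form whose primitive $\mrm{d}^c\bm{\rho}$ is itself the integral of $\mrm{d}^c\rho$), and correctly identifying the projected fibrewise form with $\bm{\pi}^*\Omega$. Once this decomposition is in place, the two moment-map computations are formal consequences of Lemma~\ref{lemma:moment_map_exact} and the fact that the action on $\cotJ$ covers the Hamiltonian action on $\mscr{J}$.
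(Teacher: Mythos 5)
Your proposal is correct and follows essentially the same route as the paper: decompose $\bm{\Omega}_{\bm{I}}$ as $\pi^*\Omega$ plus the exact form $\int_M\mrm{d}\mrm{d}^c\rho\,\frac{\omega^n}{n!}$, handle the first piece via Theorem~\ref{thm:moment_map} and the second by applying Lemma~\ref{lemma:moment_map_exact} to the $\mscr{G}$-invariant primitive $\chi=\int_M\mrm{d}^c\rho\,\frac{\omega^n}{n!}$, with Theorem~\ref{thm:differenziazione} justifying the exchange of $\mrm{d}$ with the fibre integral. The extra details you supply (the $\bm{\pi}$-relatedness of the infinitesimal actions and the invariance of the integrated potential) are exactly the points the paper leaves implicit.
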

Recall from Theorem~\ref{thm:moment_map} that the moment map~$\mu$ for~$\mscr{G}\curvearrowright(\mscr{J},\Omega)$ is
\begin{equation*}
\mu(J)=2\,S(J)-2\,\hat{S}.
\end{equation*}
\begin{proof}[Proof of Lemma~\ref{lemma:mappa_momento_OmegaI}]
Since~$\bm{\Omega}_{\bm{I}}=\pi^*\Omega+\int_M\mrm{d}\mrm{d}^c\rho\,\frac{\omega^n}{n!}$, from Theorem~\ref{thm:moment_map} we just have to show that for all~$h\in\m{C}^\infty_0$
\begin{equation*}
\mrm{d}(\f{m}(h))=-\hat{h}\lrcorner\int_M\mrm{d}\mrm{d}^c\rho\,\frac{\omega^n}{n!}.
\end{equation*}
To prove this, we can use Lemma~\ref{lemma:moment_map_exact} and Theorem~\ref{thm:differenziazione}. Indeed, if we define~$\chi =\int_M\mrm{d}^c\rho\,\frac{\omega^n}{n!}$ then~$\mrm{d}\chi=\int_M\mrm{d}\mrm{d}^c\rho\,\frac{\omega^n}{n!}$. We already saw that the action of~$\mscr{G}$ preserves~$\chi$, and so Lemma~\ref{lemma:moment_map_exact} tells us that~$\f{m}$ defined by
\begin{equation*}
\f{m}_{(J,\alpha)}(h)=\int_{x\in M}\mrm{d}^c\rho_{(J(x),\alpha(x))}\left(\m{L}_{X_h}J,\m{L}_{X_h}\alpha\right)\frac{\omega^n}{n!}
\end{equation*}
has the properties we need.
\end{proof}
The results of Lemma~\ref{lemma:moment_map_theta} and Lemma~\ref{lemma:mappa_momento_OmegaI} show that the action of~$\mscr{G}$ on~$\cotJ$ is Hamiltonian with respect to the symplectic forms in the hyperk\"ahler family. To conclude the proof of Theorem~\ref{thm:HamiltonianHyperkIntro} we will obtain more explicit expressions for the moment maps under the natural~$L^2$-pairing of functions. This is not too difficult for the \emph{complex} moment map, at least if~$J$ is integrable. 
\begin{lemma}
Suppose~$J$ is integrable. Then we have
\begin{equation*}
{\f{m}_{\bm{\Theta}}}_{(J,\alpha)}(h) = \left\langle h,-\mrm{div}\left(\bdiff^*\bar{\alpha}\right)\right\rangle.
\end{equation*}
\end{lemma}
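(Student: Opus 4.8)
The plan is to start from the formula for the complex moment map obtained in Lemma~\ref{lemma:moment_map_theta},
\begin{equation*}
{\f{m}_{\bm{\Theta}}}_{(J,\alpha)}(h)=-\int_M\tfrac{1}{2}\mrm{Tr}\!\left(\alpha^\transpose\,\m{L}_{X_h}J\right)\frac{\omega^n}{n!}=-\left\langle\alpha,\m{L}_{X_h}J\right\rangle,
\end{equation*}
and to rewrite the right-hand side by repeated integration by parts until the unknown $h$ is isolated. The first observation is that, since $J$ is integrable, $N_J=0$ and Lemma~\ref{lemma_P2} reduces to $\m{L}_{X_h}J=P_2(X_h)=4\,\mrm{Im}\!\left(\nabla^{0,1}X_h^{1,0}\right)$; as $\nabla^{0,1}=\bdiff_J$ on $T^{1,0}M$, the $(1,0)$-part of $\m{L}_{X_h}J$ is a constant multiple of $\bdiff X_h^{1,0}$, an element of $\m{A}^{0,1}(T^{1,0}M)$.

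Next I would unwind the trace pairing. Under the metric identification the cotangent vector $\alpha\in\m{A}^{1,0}(T^{0,1}M)$ has conjugate $\bar{\alpha}\in\m{A}^{0,1}(T^{1,0}M)$, which is of the same type as $\m{L}_{X_h}J$; the real pairing $\tfrac{1}{2}\mrm{Tr}(\alpha^\transpose\,\cdot)$ then becomes a real multiple of the real part of the Hermitian $L^2$ inner product of $\bar{\alpha}$ against $\bdiff X_h^{1,0}$. This is the step where the conjugate $\bar{\alpha}$ appears, and it is a consequence of the fact---already flagged in the Remark preceding this Lemma---that the metric identification of the tangent and cotangent bundles exchanges $(1,0)$ and $(0,1)$.

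Then I would move the $\bdiff$ onto $\alpha$ using the formal adjoint, $\big(\bar{\alpha},\bdiff X_h^{1,0}\big)_{L^2}=\big(\bdiff^*\bar{\alpha},X_h^{1,0}\big)_{L^2}$, so that $\bdiff^*\bar{\alpha}\in\Gamma(T^{1,0}M)$ now appears. Finally, writing the Hamiltonian vector field as $X_h=J\,\mrm{grad}(h)$, whose $(1,0)$-part is $\I\,\mrm{grad}(h)^{1,0}$, a last integration by parts through the identity $\int_M\langle\mrm{grad}(h),V\rangle\,\tfrac{\omega^n}{n!}=-\int_M h\,\mrm{div}(V)\,\tfrac{\omega^n}{n!}$ transfers the derivative onto $\mrm{div}\left(\bdiff^*\bar{\alpha}\right)$ and leaves $h$ undifferentiated, producing $\langle h,-\mrm{div}(\bdiff^*\bar{\alpha})\rangle$.

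The genuinely routine part is the chain of integrations by parts, all legitimate because $\omega$ is Kähler, so that the volume form is parallel and $\nabla^{0,1}=\bdiff$; the part that needs care is the bookkeeping in the middle step, namely reconciling the \emph{real} duality pairing $\tfrac12\mrm{Tr}(\alpha^\transpose\,\cdot)$ with the \emph{Hermitian} $L^2$ product, tracking the factors of $\I$ coming from $X_h=J\,\mrm{grad}(h)$ and from $P_2$, and checking that all constants combine to exactly $1$ with the correct overall sign. I expect this constant-and-conjugate bookkeeping, rather than any analytic difficulty, to be the main obstacle, and I would verify it by carrying out the computation once in Kähler normal coordinates, where $\diff$, $\bdiff$ and $\nabla$ coincide to the relevant order.
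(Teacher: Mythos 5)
Your proposal is correct and follows essentially the same route as the paper: the paper's proof also starts from the trace formula of Lemma~\ref{lemma:moment_map_theta}, uses integrability to write $\left(\m{L}_{X_h}J\right)\indices{^a_{\bar b}}$ as a multiple of $\nabla_{\bar b}(X_h)^a$, and performs the same two integrations by parts (first moving $\nabla^{0,1}$ onto $\alpha$, then using $X_h^a = -g^{a\bar c}\nabla_{\bar c}h$ to peel the derivative off $h$), just written out directly in index notation rather than via Hermitian $L^2$ adjoints. The constant-and-conjugate bookkeeping you flag is indeed the only delicate point, and the coordinate computation you propose is exactly how the paper settles it.
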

\begin{proof}
It is just a standard computation using the Divergence Theorem:
\begin{equation*}
\begin{split}
{\f{m}_{\bm{\Theta}}}_{(J,\alpha)}(h)&=-\frac{1}{2}\int_M\alpha\indices{_a^{\bar{b}}}\left(\m{L}_{X_h}J\right)\indices{^a_{\bar{b}}}\,\frac{\omega^n}{n!}=-\I\int_M\alpha\indices{_a^{\bar{b}}}\nabla_{\bar{b}}(X_h)\indices{^a}\,\frac{\omega^n}{n!}=\\
&=\I\int_M(X_h)\indices{^a}\nabla_{\bar{b}}\alpha\indices{_a^{\bar{b}}}\,\frac{\omega^n}{n!}=-\int_Mg^{a\bar{c}}\nabla_{\bar{c}}h\,\nabla_{\bar{b}}\alpha\indices{_a^{\bar{b}}}\,\frac{\omega^n}{n!}=\\
&=\int_M h\,g^{a\bar{b}}\nabla_{\bar{c}}\nabla_{\bar{b}}\alpha\indices{_a^{\bar{c}}}\,\frac{\omega^n}{n!}=\left\langle h,-\mrm{div}\left({\nabla^{0,1}}^*\bar{\alpha}\right)\right\rangle.\qedhere
\end{split}
\end{equation*}
\end{proof}
This shows that we can identify the complex moment map for the action of $\mscr{G}$ with $\mrm{div}\left(\diff^*\alpha\right)$. It is interesting to compare this with the expression in Section \ref{sec:thm:moment_map_proof} for the differential of the scalar curvature map along a path of integrable complex structures. By letting $A=\mrm{Re}(\alpha)^\transpose$, the differential $D(J\mapsto S(J,\omega))$ computed on $A$ was written as $Q(A)$ in Section~\ref{sec:thm:moment_map_proof}. We can see that, for an integrable complex structure $J$
\begin{equation*}
Q(A)=Q_2\circ Q_1(A)=-\mrm{div}\left(-JQ_1(A)^\sharp\right)=\mrm{Im}\,\mrm{div}\left(\diff^*\alpha\right).
\end{equation*}

It is slightly more complicated to obtain an explicit expression for the \emph{real} moment map. We carry out the details of the computation in the next section.

\subsection{An expression for the real moment map}\label{sec:real_momentmap}

In this Section we will express the map~$\f{m}$ of Lemma~\ref{lemma:mappa_momento_OmegaI} as the~$L^2$-pairing of~$h$ with some zero-average function. Recall that~$\f{m}$ is defined as
\begin{equation*}
\f{m}_{(J,\alpha)}(h)=\int_{M}\mrm{d}^c\rho_{(J,\alpha)}\left(\m{L}_{X_h}J,\m{L}_{X_h}\alpha\right)\frac{\omega^n}{n!}.
\end{equation*}
From Lemma~\ref{lemma:differenziale_BiquardGauduchon} we know how to compute the differential of the Biquard-Gauduchon function of~$\m{AC}^+$. We can use this to find the Biquard-Gauduchon form~$\mrm{d}^c\rho$ on~${T^{1,0}}^*\!\!\m{AC}^+$, by considering the identification of the space~$T\m{AC}^+$ with~${T^{1,0}}^*\!\m{AC}^+$ under the map
\begin{equation*}
\begin{split}
{T^{1,0}}^*\!\m{AC}^+&\to T\m{AC}^+\\
\alpha&\mapsto\mrm{Re}(\alpha)^\transpose.
\end{split}
\end{equation*}
So, we should compute 
\begin{equation*}
\begin{gathered}
\mrm{d}^c\rho_{J,\mrm{Re}(\alpha)^\transpose}\left(\m{L}_{X_h}J,\mrm{Re}\left(\m{L}_{X_h}\alpha\right)^\transpose\right)=\mrm{d}\rho_{J,\mrm{Re}(\alpha)^\transpose}\Big(-J\m{L}_{X_h}J,\ \mrm{Re}\left((\m{L}_{X_h}J)^\transpose\alpha+(\m{L}_{X_h}\alpha)J^\transpose\right)^\transpose\Big)
\end{gathered}
\end{equation*}
with~$\mrm{d}\rho$ as in~\ref{lemma:differenziale_BiquardGauduchon}, since the identification between~$T\m{AC}^+$ and~${T^{1,0}}^*\!\!\m{AC}^+$ is conjugate-linear in the second component. It is more convenient to write~$A=\mrm{Re}(\alpha)^\transpose\in T_J\mscr{J}$, so that Lemma~\ref{lemma:differenziale_BiquardGauduchon} gives
\begin{equation*}
\begin{gathered}
\mrm{d}^c\rho_{J,\mrm{Re}(\alpha)^\transpose}\left(\m{L}_{X_h}J,\mrm{Re}\left(\m{L}_{X_h}\alpha\right)^\transpose\right)=\\
=\sum_{i=1}^n\frac{1}{2}\left(1+\sqrt{1-\delta(i)}\right)^{-1}\mrm{Tr}\left(A
\left(A\,\m{L}_{X_h}J+J\,\m{L}_{X_h}A\right)\prod_{j\not=i}\frac{\delta(j)\mathbbm{1}-A^2}{\delta(j)-\delta(i)}
\right).
\end{gathered}
\end{equation*}
Notice however that~$\m{L}_{X_h}J\in T_J\mscr{J}$, so Remark~\ref{rmk:traccia_tripla} allows us to ignore that term when taking the trace:
\begin{equation}\label{eq:real_mm_implicit}
\f{m}_{(J,\alpha)}(h)=\sum_{i=1}^n\int_{M}
\frac{1}{2}\left(1+\sqrt{1-\delta(i)}\right)^{-1}\mrm{Tr}\left(A\,J\,\m{L}_{X_h}A\prod_{j\not=i}\frac{\delta(j)\mathbbm{1}-A^2}{\delta(j)-\delta(i)}
\right)
\frac{\omega^n}{n!}.
\end{equation}
We now show how to write the integrals in equation~\eqref{eq:real_mm_implicit} as the~$L^2$ pairing of~$h$ with a function~$f$. More precisely, for~$i\in\set{1,\dots,n}$ we will find a zero-average function~$f(i)=f(i,A,J)$ such that
\begin{equation*}
\int_{M}
\frac{\mrm{Tr}\left(A\,J\,\m{L}_{X_h}A\prod_{j\not=i}(\delta(j)\mathbbm{1}-A^2)
\right)}{2\left(1+\sqrt{1-\delta(i)}\right)\prod_{j\not=i}(\delta(j)-\delta(i))}
\frac{\omega^n}{n!}
=\int_M f(i)\,h\frac{\omega^n}{n!}.
\end{equation*}
This of course allows us to identify the moment map~$\f{m}$ with~$\sum_if(i)$. For notational convenience, we introduce the functions
\begin{equation}\label{eq:psi_A_i}
\begin{gathered}
\psi(i):=\frac{1}{2}\left(1+\sqrt{1-\delta(i)}\right)^{-1}\\
A(i):=\prod_{j\not=i}\frac{\delta(j)\mathbbm{1}-A^2}{\delta(j)-\delta(i)}.
\end{gathered}
\end{equation}
So we can write~$\f{m}(J,\alpha)$ in a more compact way as
\begin{equation*}
\f{m}_{(J,\alpha)}(h)=-\sum_{i=1}^n\int_{M}\psi(i)\,\mrm{Tr}\left((\m{L}_{X_h}A)\,JA(i)A\right)\frac{\omega^n}{n!}.
\end{equation*}
Consider the function
\begin{equation}\label{eq:F_F1F2F3}
\begin{split}
F^i:\m{C}^\infty_0(M)&\to\m{C}^\infty_0(M)\\
h&\mapsto\mrm{Tr}\left((\m{L}_{X_h}A)J\,A(i)\,A\right).
\end{split}
\end{equation}
We have~$\f{m}_{(J,\alpha)}(h)=-\sum_i\left\langle\psi(i),F^i(h)\right\rangle$; so if we find a formal adjoint~$F^i{}^*$ of~$F^i$, we can write~$\f{m}=-\sum_iF^i{}^*\left(\psi(i)\right)$. Notice that we can write~$F^i$ as a composition~$F^i=F^i_3\circ F^i_2\circ F^i_1$, with
\begin{align*}
F^i_1:\m{C}^\infty_0(M)&\to\Gamma(M,TM)\\
h&\mapsto X_h\\
F^i_2:\Gamma(M,TM)&\to\Gamma(M,\mrm{End}(TM))\\
X&\mapsto\m{L}_XA\\
F^i_3:\Gamma(M,\mrm{End}(TM))&\to\m{C}^\infty_0(M)\\
P&\mapsto \mrm{Tr}(P\,J\,A(i)\,A).
\end{align*}
Moreover the formal adjoints of~$F_1^i$ and~$F_3^i$ with respect to the pairing induced by the metric~$g_J:=\omega_0(-,J-)$ are given explicitly by
\begin{align*}
F_1^i{}^*(X)=\mrm{div}(JX);\\
F_3^i{}^*(f)=-f\,A\,A(i)\,J.
\end{align*}
It remains to compute the formal adjoint of~$F_2^i$.

\begin{lemma}\label{lemma:aggiunto_F2}
For any~$Q\in\Gamma(\mrm{End}(TM))$,~$X\in\Gamma(TM)$ and~$A\in T_J\mscr{J}$ we have
\begin{equation*}
\langle\m{L}_XA,Q\rangle=\langle Q,\nabla_XA\rangle+\langle AQ-QA,\nabla X\rangle.
\end{equation*}
\end{lemma}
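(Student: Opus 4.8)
The plan is to prove the identity pointwise, since it involves no integration by parts; the $L^2$ statement then follows by integrating. The heart of the matter is to trade the Lie derivative $\m{L}_XA$ of the endomorphism $A$ for a covariant derivative. First I would fix a metric connection $\nabla$ on $TM$ and recall the standard tensorial formula for the Lie derivative of a $(1,1)$-tensor: in local coordinates $(\m{L}_XA)\indices{^i_j}=X^k\nabla_kA\indices{^i_j}-A\indices{^k_j}\nabla_kX^i+A\indices{^i_k}\nabla_jX^k$, valid whenever $\nabla$ is torsion-free (the Christoffel symbols cancel against those hidden in $\m{L}_X$). Writing $\nabla X\in\Gamma(\mrm{End}(TM))$ for the endomorphism $Y\mapsto\nabla_YX$, this reads intrinsically as $\m{L}_XA=\nabla_XA+A\circ(\nabla X)-(\nabla X)\circ A=\nabla_XA+[A,\nabla X]$.

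Pairing against $Q$ and using that the real inner product on $\mrm{End}(TM)$ induced by $g_J$ is symmetric, I would reduce the claim to the purely algebraic pointwise identity $\langle[A,\nabla X],Q\rangle=\langle AQ-QA,\nabla X\rangle$. To establish it I would work in a $g_J$-orthonormal frame, in which the inner product is the Frobenius product $\langle P,R\rangle=\mrm{Tr}(P^\transpose R)$ and the $g_J$-adjoint is the transpose. The single essential input is that $A\in T_J\mscr{J}$ is $g_J$-self-adjoint, i.e.\ $A^\transpose=A$ in such a frame; this is exactly the symmetry of the form $(v,w)\mapsto g_J(Av,w)$ recorded after~\eqref{eq:indentita_tangenti}. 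Expanding both sides with $A^\transpose=A$ and repeatedly using $\mrm{Tr}(M)=\mrm{Tr}(M^\transpose)$ together with cyclicity of the trace, both sides collapse to $\mrm{Tr}(Q^\transpose A\,\nabla X)-\mrm{Tr}(A\,Q^\transpose\,\nabla X)$, proving the identity. Concretely, the commutator $[A,\nabla X]$ paired with $Q$ gets converted, by moving the factor $\nabla X$ through the trace, into the commutator $[A,Q]$ paired with $\nabla X$.

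The step I expect to require the most care is not the trace bookkeeping but the choice of connection and the attendant torsion. The clean formula $\m{L}_XA=\nabla_XA+[A,\nabla X]$ holds only for a torsion-free $\nabla$, so I would take $\nabla$ to be the Levi-Civita connection of $g_J$; if instead one insists on the canonical almost-complex connection used elsewhere, its torsion $-\tfrac14N_J$ (Lemma~\ref{lemma:torsion}) produces extra Nijenhuis terms of the form $-T(X,A-)+A\,T(X,-)$ that would have to be carried along or shown to vanish (for instance when $J$ is integrable, where the two connections agree). Thus the only genuine subtlety is to keep the self-adjointness bookkeeping straight and to be explicit that $\nabla$ is torsion-free, so that no $N_J$ correction appears in the stated identity.
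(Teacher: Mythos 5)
Your proof is correct and follows essentially the same route as the paper's: the paper converts the partial derivatives in $\m{L}_XA$ into covariant derivatives of the Levi-Civita connection of $g_J$ term by term in coordinates (which is precisely your invariant formula $\m{L}_XA=\nabla_XA+[A,\nabla X]$, valid by torsion-freeness), and then uses the $g_J$-self-adjointness of $A$ from~\eqref{eq:indentita_tangenti} to rearrange the pairing into $\langle AQ-QA,\nabla X\rangle$, exactly as in your trace computation. Your explicit remark that the torsion-free Levi-Civita connection must be used (rather than the Chern-type connection appearing elsewhere in the chapter, whose torsion $-\tfrac14N_J$ would contribute extra terms) is a point the paper leaves implicit and is worth keeping.
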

Here the pairings and the connection are those defined by the metric~$g_J$.
\begin{proof}
Fix an element~$Q$ of~$\Gamma(\mrm{End}(TM))$, and consider the product
\begin{equation}\label{eq:pairing_LX}
\begin{split}
g_J(\m{L}_XA,Q)=&g\indices{^i^j}g\indices{_k_l}Q\indices{^l_j}X\indices{^m}\diff\indices{_m}A\indices{^k_i}-g\indices{^i^j}g\indices{_k_l}Q\indices{^l_j}A\indices{^m_i}\diff\indices{_m}X\indices{^k}+\\
&+g\indices{^i^j}g\indices{_k_l}Q\indices{^l_j}A\indices{^k_m}\diff\indices{_i}X\indices{^m}.
\end{split}
\end{equation}
We can exchange the usual derivatives with covariant derivatives (using the Levi-Civita connection of~$g_J$), but we have to introduce Christoffel symbols; the proof consists in showing that the sum of all the terms that must be introduced in fact vanishes, and this is done recalling that~$g_J(-,A-)$ is symmetric (cf. equation~\eqref{eq:indentita_tangenti}). The first right hand side term of equation~\eqref{eq:pairing_LX} can then be written as
\begin{equation}\label{eq:LX_1}
\begin{split}
g^{ij}g_{kl}Q\indices{^l_j}X^m\diff_m A\indices{^k_i}=&g^{ij}g_{kl}Q\indices{^l_j}X^m\nabla_m A\indices{^k_i}-g^{ij}g_{kl}Q\indices{^l_j}X^m A\indices{^p_i}\Gamma^k_{mp}+\\
&+g^{ij}g_{kl}Q\indices{^l_j}X^m A\indices{^k_q}\Gamma^q_{mi}
\end{split}
\end{equation}
while the other two terms are written as
\begin{equation}\label{eq:LX_2}
\begin{split}
-g\indices{^i^j}g\indices{_k_l}Q\indices{^l_j}A\indices{^m_i}\diff\indices{_m}X\indices{^k}&=-g\indices{^i^j}g\indices{_k_l}Q\indices{^l_j}A\indices{^m_i}\nabla\indices{_m}X\indices{^k}+g\indices{^i^j}g\indices{_k_l}Q\indices{^l_j}A\indices{^m_i}X\indices{^p}\Gamma^k_{pm}
\end{split}
\end{equation}
\begin{equation}\label{eq:LX_3}
\begin{split}
g\indices{^i^j}g\indices{_k_l}Q\indices{^l_j}A\indices{^k_m}\diff\indices{_i}X\indices{^m}&=g\indices{^i^j}g\indices{_k_l}Q\indices{^l_j}A\indices{^k_m}\nabla\indices{_i}X\indices{^m}-g\indices{^i^j}g\indices{_k_l}Q\indices{^l_j}A\indices{^k_m}X\indices{^p}\Gamma^m_{ip}.
\end{split}
\end{equation}
Adding up equations~\eqref{eq:LX_1},~\eqref{eq:LX_2} and~\eqref{eq:LX_3} we find
\begin{equation*}
g_J(\m{L}_XA,Q)=g_J(Q,\nabla_XA)
-g\indices{^i^j}g\indices{_k_l}Q\indices{^l_j}A\indices{^m_i}\nabla\indices{_m}X\indices{^k}
+g\indices{^i^j}g\indices{_k_l}Q\indices{^l_j}A\indices{^k_m}\nabla\indices{_i}X\indices{^m}.
\end{equation*}
Recall now that $g$ and $A$ are compatible, i.e. $g(A-,-)$ is a symmetric tensor. This implies that $g^{ij}A\indices{^m_i}=g^{im}A\indices{^j_i}$ and $g_{kl}A\indices{^k_m}=g_{km}A\indices{^k_l}$, so in the end we get
\begin{equation*}
g_J(\m{L}_XA,Q)=g_J(Q,\nabla_XA)-g_J(QA,\nabla X)+g_J(AQ,\nabla X).\qedhere
\end{equation*}
\end{proof}

\begin{cor}\label{cor:aggiunto_F2}
The formal adjoint of~$F^i_2$ is
\begin{equation*}
\begin{split}
F^i_2{}^*:\Gamma(\mrm{End}(TM))&\to\Gamma(TM)\\
Q&\mapsto C^2_1\left((\nabla A)Q\right)^\sharp+\nabla^*([A,Q]).
\end{split}
\end{equation*}
\end{cor}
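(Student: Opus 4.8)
The plan is to derive the Corollary directly from Lemma~\ref{lemma:aggiunto_F2} by integrating the pointwise identity over $M$ and reading off, term by term, the operator acting on $X$. Recall that $F^i_2{}^*$ is characterised by
\begin{equation*}
\int_M\langle F^i_2(X),Q\rangle\,\frac{\omega^n}{n!}=\int_M g_J\!\left(X,F^i_2{}^*(Q)\right)\frac{\omega^n}{n!}
\end{equation*}
for all $X\in\Gamma(TM)$ and $Q\in\Gamma(\mrm{End}(TM))$, where $\langle-,-\rangle$ is the pairing $\langle P,R\rangle=g\indices{^i^j}g\indices{_k_l}P\indices{^k_i}R\indices{^l_j}$ on endomorphisms induced by $g_J$. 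Writing $AQ-QA=[A,Q]$, Lemma~\ref{lemma:aggiunto_F2} gives
\begin{equation*}
\langle\m{L}_XA,Q\rangle=\langle Q,\nabla_XA\rangle+\langle[A,Q],\nabla X\rangle,
\end{equation*}
so it suffices to rewrite each of the two terms on the right, after integration, as the $g_J$-pairing of $X$ with an explicit section of $TM$.

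First I would treat $\langle Q,\nabla_XA\rangle$. Since $\nabla_XA=X^m\nabla_mA$ contains $X$ undifferentiated, this term is already pointwise $g_J$-linear in $X$ and requires no integration by parts: in indices $\langle Q,\nabla_XA\rangle=X^m\,g\indices{^i^j}g\indices{_k_l}Q\indices{^k_i}\nabla_mA\indices{^l_j}$. The crucial observation is that the cofactor of $X^m$ is exactly the trace of the composite endomorphism $(\nabla_mA)\,Q$, i.e. the contraction the paper denotes $C^2_1\!\left((\nabla A)Q\right)$. This identification uses that $A$ is compatible, so that $g_J(A-,-)$ is symmetric and hence $(\nabla_mA)\indices{_{kj}}:=g_{kl}\nabla_mA\indices{^l_j}$ is symmetric in $k,j$ — this is the same ingredient invoked at the end of the proof of Lemma~\ref{lemma:aggiunto_F2}. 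Raising the resulting covector with $g_J$ produces $g_J\!\left(X,C^2_1((\nabla A)Q)^\sharp\right)$, the first summand of the asserted adjoint.

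Next I would treat $\langle[A,Q],\nabla X\rangle$, in which $\nabla X\in\Gamma(\mrm{End}(TM))$ is the full covariant derivative $\nabla_iX^k$. This is the only term carrying a derivative of $X$, so here I integrate by parts: the operator $X\mapsto\nabla X$ has formal $L^2$-adjoint the divergence $\nabla^*\colon\Gamma(\mrm{End}(TM))\to\Gamma(TM)$, whence
\begin{equation*}
\int_M\langle[A,Q],\nabla X\rangle\,\frac{\omega^n}{n!}=\int_M g_J\!\left(X,\nabla^*[A,Q]\right)\frac{\omega^n}{n!},
\end{equation*}
with no boundary contribution since $M$ is closed. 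Combining the two computations gives $F^i_2{}^*(Q)=C^2_1((\nabla A)Q)^\sharp+\nabla^*[A,Q]$, as claimed.

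The genuine content is all in Lemma~\ref{lemma:aggiunto_F2}; in the Corollary the only points requiring care are conventional rather than conceptual, and I expect these to be the main (minor) obstacle. Specifically, one must pin down that the contraction labelled $C^2_1$ really corresponds to $\mrm{Tr}((\nabla_mA)Q)$ under the chosen pairing $\langle P,R\rangle=g\indices{^i^j}g\indices{_k_l}P\indices{^k_i}R\indices{^l_j}$ — which, as noted, rests on the symmetry of $\nabla A$ coming from compatibility of $A$ — and fix the sign and normalisation of $\nabla^*$ so that it is genuinely the adjoint of $X\mapsto\nabla X$ and not of a transposed operator. Once these conventions are fixed the identification is immediate.
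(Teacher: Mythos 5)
Your proposal is correct and follows exactly the route the paper intends: the Corollary is stated without proof precisely because it is the integrated form of Lemma~\ref{lemma:aggiunto_F2}, with the term $\langle Q,\nabla_XA\rangle$ read off pointwise as $g_J\!\left(X,C^2_1((\nabla A)Q)^\sharp\right)$ (using the $g_J$-symmetry of $\nabla A$ inherited from the compatibility of $A$) and the term $\langle[A,Q],\nabla X\rangle$ integrated by parts against the divergence $\nabla^*$ as defined in the paper. Your attention to the two conventional points — the identification of $C^2_1$ with $\mrm{Tr}((\nabla_m A)Q)$ and the sign/normalisation of $\nabla^*$ — matches the explicit index formulas the paper records immediately after the statement.
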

Here~$\nabla^*$ is the formal adjoint of~$\nabla$,~$\nabla^*Q=-g^{ij}\nabla_iQ\indices{^k_j}\diff_k$, while~$C^2_1$ denotes the contraction of the first lower index with the second upper index. More explicitly
\begin{equation*}
C^2_1\left((\nabla A)Q\right)^\sharp=g^{np}\left(\nabla_pA\indices{^j_i}\right)Q\indices{^i_j}\diff_n.
\end{equation*}
We are finally in a good position to write the real moment map. Our computations so far show
\begin{equation*}
\begin{split}
\f{m}_{(J,\alpha)}(h)&=-\sum_i\left\langle\psi(i),F^i(h)\right\rangle=-\sum_i\left\langle F_1^i{}^*F_2^i{}^*F_3^i{}^*(\psi(i)),h\right\rangle=\\
&=\sum_i\left\langle\mrm{div}\left[\psi(i)\,J\,C^2_1\left((\nabla A)A\,A(i)\,J\right)^\sharp+2\,J\nabla^*(\psi(i)J A^2A(i))\right],\,h\right\rangle
\end{split}
\end{equation*}
so we identify the function~$\f{m}$ with
\begin{equation}\label{eq:mm_reale_esplicita}
\f{m}(J,\alpha)=\sum_i\mrm{div}\left[\psi(i)\,J\,C^2_1\left((\nabla A)A\,A(i)\,J\right)^\sharp+2\,J\nabla^*(\psi(i)J A^2A(i))\right].
\end{equation}
Notice that this expression implies that~$\f{m}_{(J,\alpha)}$ is a zero-average function, as we expected.

We can make some simplifications to equation~\eqref{eq:mm_reale_esplicita} under the assumption that~$J$ is an integrable complex structure. Then a first simplification is
\begin{equation*}
J\nabla^*(\psi(i)J A^2A(i))=-\nabla^*(\psi(i)A^2A(i))
\end{equation*}
since when~$J$ is integrable~$g_J$ is a K\"ahler metric, so that~$\nabla J=0$. Now, fix holomorphic coordinates with respect to~$J$. Then we have
\begin{equation*}
\begin{gathered}
J\left(C^2_1\,\left((\nabla A)A\,A(i)\,J\right)^\sharp\right)^{1,0}=\\=g^{a\bar{b}}\left(g(\nabla_{\bar{b}}A^{0,1},A^{1,0}A(i)^{0,1})-g(\nabla_{\bar{b}}A^{1,0},A^{0,1}A(i)^{1,0})\right)\diff_{z^a}=\\
=g\left(\nabla^aA^{0,1}-\nabla^aA^{1,0},\,A\,A(i)\right)\diff_{z^a}.
\end{gathered}
\end{equation*}
so we can rewrite~\eqref{eq:mm_reale_esplicita} as
\begin{equation*}
\begin{split}
\sum_i\mrm{div}\Big[\psi(i)\,&2\,\mrm{Re}\left(g\left(\nabla^aA^{0,1}-\nabla^aA^{1,0},\,A\,A(i)\right)\diff_{z^a}\right)-2\nabla^*(\psi(i)A^2A(i))\Big].
\end{split}
\end{equation*}
We can write this expression in a slightly more compact way.
\begin{lemma}\label{lemma:psi_spectral}
Let~$\psi$ be the function defined in~\eqref{eq:funzioni_accessorie_HcscK},~$\psi(x)=\frac{1}{2}\left(1+\sqrt{1-x}\right)^{-1}$. Then we have
\begin{equation*}
\hat{A}:=\sum_i\psi(i)\prod_{j\not=i}\frac{\delta(j)\mathbbm{1}-A^{0,1}A^{1,0}}{\delta(j)-\delta(i)}=\psi(A^{0,1}A^{1,0}).
\end{equation*}
\end{lemma}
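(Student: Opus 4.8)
The statement is a matrix-valued Lagrange interpolation identity, and the plan is to reduce it to the scalar interpolation formula together with the fact that $A^{0,1}A^{1,0}$ is diagonalizable. Write $M := A^{0,1}A^{1,0}$, so that by (the proof of) Proposition~\ref{prop:BiquardGauduchonAC+} the endomorphism $M$ is diagonalizable with real eigenvalues $\delta(1),\dots,\delta(n)$; since $(J,\alpha)$ lies in the neighbourhood of Theorem~\ref{thm:HKThmIntro} these eigenvalues lie in $[0,1)$, where the spectral function $\psi(x)=\frac{1}{2}(1+\sqrt{1-x})^{-1}$ of~\eqref{eq:funzioni_accessorie_HcscK} is smooth and where, by~\eqref{eq:psi_A_i}, $\psi(i)=\psi(\delta(i))$. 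The right-hand side $\psi(M)$ is then the endomorphism obtained by applying $\psi$ to $M$ through the functional calculus, i.e. $\psi(M)=p(M)$ for \emph{any} polynomial $p$ with $p(\delta(k))=\psi(\delta(k))$ at each eigenvalue $\delta(k)$ — here I would use that $M$ is diagonalizable, so its minimal polynomial has simple roots and no derivative data are needed to pin down $\psi(M)$.

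First I would treat the generic case in which the $\delta(i)$ are pairwise distinct. Substituting a scalar variable $x$ for $M$, the factor $\prod_{j\neq i}\frac{\delta(j)-x}{\delta(j)-\delta(i)}$ is precisely the $i$-th Lagrange basis polynomial $L_i(x)=\prod_{j\neq i}\frac{x-\delta(j)}{\delta(i)-\delta(j)}$ for the nodes $\delta(1),\dots,\delta(n)$, the two sign changes in numerator and denominator cancelling. Hence $\sum_i\psi(i)\,L_i(x)$ is exactly the interpolating polynomial $p(x)$ of degree $<n$ with $p(\delta(i))=\psi(i)$. Evaluating on $M$ — which is legitimate because each $L_i$ is a polynomial in $M$ — gives $\hat A=\sum_i\psi(i)\,L_i(M)=p(M)=\psi(M)$, which is the claim.

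It remains to remove the genericity hypothesis, and this is the only point that needs real care. Both sides are continuous functions of the pair $(J,\alpha)$: the right-hand side because $\psi$ is smooth on $[0,1)$ and the functional calculus of a smooth function depends continuously on the operator, and the left-hand side because, exactly as observed at the end of Lemma~\ref{lemma:differenziale_BiquardGauduchon}, the apparent poles at coincidences $\delta(i)=\delta(j)$ cancel in the symmetric sum, so that $\hat A$ is in fact a divided-difference expression that extends continuously across the coincidence locus. Since the locus where the $\delta(i)$ are pairwise distinct is open and dense, the identity $\hat A=\psi(M)$ established above extends by continuity to all admissible $(J,\alpha)$. I expect the main obstacle to be making this removable-singularity argument airtight rather than the algebra; I would do so either by passing to the confluent (Hermite) form of the interpolation formula, which replaces colliding nodes by derivative conditions and is manifestly continuous, or simply by invoking that two continuous functions agreeing on a dense set agree everywhere.
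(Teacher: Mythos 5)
Your proof is correct and is essentially the paper's argument: the paper simply works in a basis diagonalizing $A^{0,1}A^{1,0}$, where each product $\prod_{j\neq i}\frac{\delta(j)\mathbbm{1}-A^{0,1}A^{1,0}}{\delta(j)-\delta(i)}$ becomes the coordinate projection onto the $i$-th eigenline, which is exactly your Lagrange-basis observation. Your explicit continuity argument for coincident eigenvalues is a point the paper's proof glosses over (it tacitly assumes the $\delta(i)$ are distinct), so that addition is welcome rather than a deviation.
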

\begin{proof}
It is just a matter of linear algebra. In a basis for which~$A^{0,1}A^{1,0}$ is diagonal, with
\begin{equation*}
A^{0,1}A^{1,0}=\mrm{diag}\left(\delta(1),\dots,\delta(n)\right)
\end{equation*} 
one has
\begin{equation*}
\frac{\delta(j)\mathbbm{1}-A^{0,1}A^{1,0}}{\delta(j)-\delta(i)}=\mrm{diag}\Bigg(\frac{\delta(j)-\delta(1)}{\delta(j)-\delta(i)},\dots,\!\!\!\!\underset{i\mbox{{\footnotesize-th place}}}{1}\!\!\!\!,\dots,\!\!\!\!\underset{j\mbox{{\footnotesize-th place}}}{0}\!\!\!\!,\dots,\frac{\delta(j)-\delta(n)}{\delta(j)-\delta(i)}\Bigg)
\end{equation*}
so when we take the product of these terms for~$j\not=i$ we get a matrix whose only non-zero entry is a~$1$ on the~$i$-th place of the diagonal. Summing over~$i$ finally gives
\begin{equation*}
\hat{A}=\mrm{diag}\big(\psi(\delta(1)),\dots,\psi(\delta(n))\big).\qedhere
\end{equation*}
\end{proof}
Our previous computations allow us to write~$\f{m}(J,\alpha)$ as
\begin{equation}\label{eq:mappa_momento_reale}
\begin{split}
2\,\mrm{div}\,\mrm{Re}\left(g\left(\nabla^aA^{0,1},A^{1,0}\,\hat{A}\right)\diff_{z^a}-g\left(\nabla^{\bar{b}}A^{0,1},A^{1,0}\,\hat{A}\right)\diff_{\bar{z}^b}-2\nabla^*(A^{0,1}A^{1,0}\hat{A})\right)
\end{split}
\end{equation}
so from Lemma~\ref{lemma:psi_spectral} we can finally obtain the complete expression for the real moment map (c.f. Lemma~\ref{eq:moment_map_omega}) appearing in the HcscK system~\eqref{eq:HcscK_system}:
\begin{equation}\label{eq:mm_reale_completa}
\begin{gathered}
\f{m}_{\bm{\Omega}_{\bm{I}}}(J,\alpha)=2\,S(J)-2\,\hat{S}+\\
+2\,\mrm{div}\,\mrm{Re}\Big[g\left(\nabla^aA^{0,1}\!,A^{1,0}\hat{A}\right)\diff_{z^a}-g\left(\nabla^{\bar{b}}A^{0,1}\!,A^{1,0}\hat{A}\right)\diff_{\bar{z}^b}-2\nabla^*(A^{0,1}A^{1,0}\hat{A})\Big].
\end{gathered}
\end{equation}

\section{Formal complexification of the action}\label{sec:complexification}

Having a set of moment map equations, one would hope to link the existence of solutions to these equations to a stability condition of algebraic nature. As we already remarked for the classical cscK equation, the correspondence given by the Kempf-Ness Theorem between Marsden-Weinstein reductions and GIT quotients cannot be used in our situation, as there is no complexification of~$\mscr{G}$. However we can generalize the discussion of Section~\ref{sec:complexification_cscK} to formally complexify the action of~$\mscr{G}$ on~$\cotJ$.

We will see that there are some important differences between our problem on~$\cotJ$ and the classical description of the complexified orbits of~$\mscr{G}\curvearrowright\mscr{J}$; most notably, it turns out that our moment map equations can be complexified just in a more formal sense, c.f. Remark~\ref{rmk:int_Higgs_class}.

The infinitesimal action of~$h\in\mrm{Lie}(\mscr{G})=\m{C}^{\infty}(M,\bb{R})$ on~$\cotJ$ is
\begin{equation*}
\hat{h}_{J,\alpha}=\left(\m{L}_{X_h}J,\m{L}_{X_h}\alpha\right)
\end{equation*}
and since~$\cotJ$ has a complex structure~$\bm{I}$ we can infinitesimally complexify the action of~$\mscr{G}$ by setting for~$h\in\mrm{Lie}(\mscr{G})^{c}=\m{C}^\infty(M,\bb{C})$
\begin{equation*}
\widehat{h}_{J,\alpha}:=\widehat{\mrm{Re}(h)}_{J,\alpha}+\bm{I}_{J,\alpha}\widehat{\mrm{Im}(h)}_{J,\alpha}
\end{equation*}
So we can define a distribution on~$\cotJ$, similarly to what we did for~$\mscr{J}$ in Section~\ref{sec:complexification_cscK}, as 
\begin{equation*}
\mscr{D}_{J,\alpha}=\set*{\hat{h}_{J,\alpha}\tc h\in\m{C}^\infty(M,\bb{R})}\cup\set*{\widehat{\I h}_{J,\alpha}\tc h\in\m{C}^\infty(M,\bb{R})}
\end{equation*}
We would like to prove that this distribution is integrable, so that the integral leaves of~$\mscr{D}_{J,\alpha}$ can be considered as complexified orbits of the action~$\mscr{G}\curvearrowright\cotJ$. We will need some preliminary results, the analogues of Remark~\ref{rmk:integrable_submanifoldJ} and Lemma~\ref{lemma:integrable_submanifoldJ} on the cotangent space~$\cotJ$. To obtain these results we will use a slightly different description of~$\cotJ$.

Consider the ring~$\m{E}nd=\Gamma(M,\mrm{End}(TM))$ and the ring~$\m{E}nd\llbracket\varepsilon\rrbracket$ of formal series obtained by adding a formal parameter~$\varepsilon$ to~$\m{E}nd$; we can quotient it by the (two-sided) ideal~$(\varepsilon^2)$, and define 
\begin{equation*}
\begin{split}
\mscr{R}=\Big\lbrace& J+\varepsilon A+(\varepsilon^2)\in\m{E}nd\llbracket\varepsilon\rrbracket/(\varepsilon^2)\mid
\left(J+\varepsilon A\right)^2+1\in(\varepsilon^2)\mbox{ and }\omega\circ(J+\varepsilon A)-\omega\in(\varepsilon^2)\Big\rbrace
\end{split}
\end{equation*}
where we are considering the map~$\m{E}nd\llbracket\varepsilon\rrbracket\to\m{A}^2(M)\llbracket\varepsilon\rrbracket$ defined by
\begin{equation*}
P(\varepsilon)\mapsto\omega\circ P(\varepsilon):=\omega(P(\varepsilon)-,P(\varepsilon)-).
\end{equation*}
There is a diffeomorphism~$k:\mscr{R}\to\cotJ$, given by
\begin{equation*}
k\left(J+\varepsilon A+(\varepsilon^2)\right)=\left(J,(A+\I JA)^\transpose\right)=\left(J,2\,(A^{0,1})^\transpose\right)
\end{equation*}
with inverse
\begin{equation*}
k^{-1}(J,\alpha)=J+\varepsilon\mrm{Re}(\alpha)^\transpose+(\varepsilon^2).
\end{equation*}
The pull-back of the complex structure of~$\cotJ$ to~$\mscr{R}$ via~$k$ has a rather natural expression; compare it with the complex structure of~$\m{AC}^+$ described in Proposition~\ref{prop:metrica_AC+_Siegel}.
\begin{equation}\label{eq:TAC_complex_str}
\begin{split}
\left(k^*\bm{I}\right)_{J+\varepsilon A+(\varepsilon^2)}:T_{J+\varepsilon A+(\varepsilon^2)}\mscr{R}&\longrightarrow T_{J+\varepsilon A+(\varepsilon^2)}\mscr{R}\\
\dot{J}+\varepsilon\dot{A}+(\varepsilon^2)&\longmapsto (J+\varepsilon A)(\dot{J}+\varepsilon\dot{A})+(\varepsilon^2).
\end{split}
\end{equation}

The pull-back of the distribution~$\mscr{D}_{(J,\alpha)}$ to~$J+\varepsilon A+(\varepsilon^2)=k^{-1}(J,\alpha)$ is
\begin{equation*}
\begin{split}
\mscr{D}_{J+\varepsilon A+(\varepsilon^2)}=&\set*{\m{L}_{X_h}J+\varepsilon\m{L}_{X_h}A+(\varepsilon^2)\tc h\in\m{C}^\infty(M)}\cup\\
&\cup\set*{(J+\varepsilon A)(\m{L}_{X_h}J+\varepsilon\m{L}_{X_h}A)+(\varepsilon^2)\tc h\in\m{C}^\infty(M)}.
\end{split}
\end{equation*}

The action of~$\mscr{G}$ on~$\cotJ$ carries over to an action on~$\mscr{R}$; if~$\varphi$ is a Hamiltonian diffeomorphism,
\begin{equation*}
\begin{split}
k^{-1}(\varphi.(J,\alpha))=&k^{-1}((\varphi^{-1})^*J,(\varphi^{-1})^*\alpha)=(\varphi^{-1})^*J+\varepsilon\mrm{Re}((\varphi^{-1})^*\alpha)^\transpose+(\varepsilon^2)=\\
=&(\varphi^{-1})^*\left(J+\varepsilon\mrm{Re}(\alpha)^\transpose\right)+(\varepsilon^2)
\end{split}
\end{equation*}
so the induced action~$\mscr{G}\curvearrowright\mscr{R}$ is~$\varphi.\left(J+\varepsilon\,A+(\varepsilon^2)\right)=(\varphi^{-1})^*(J+\varepsilon\,A)+(\varepsilon^2)$.

We will consider only~$J+\varepsilon A+(\varepsilon^2)\in\mscr{R}$ that are (first-order) integrable in the sense of Definition~\ref{def:integrable}, i.e.~$N\left(J+\varepsilon A\right)=O(\varepsilon^2)$ where~$N$ is the Nijenhuis operator.
\begin{lemma}
The distribution~$J+\varepsilon A+(\varepsilon^2)\mapsto\mscr{D}_{J+\varepsilon A+(\varepsilon^2)}$ is tangent to the subspace of integrable objects in~$\mscr{R}$.
\end{lemma}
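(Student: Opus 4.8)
The plan is to identify the integrable locus $\mscr{R}_{\mrm{int}}\subseteq\mscr{R}$ explicitly and then to verify that $\mscr{D}$ is tangent to it by treating separately the two families of generators of the distribution. Expanding $N(J+\varepsilon A)=O(\varepsilon^2)$ in powers of $\varepsilon$, the constant term gives $N_J=0$, so $J\in\mscr{J}_{\mrm{int}}$, while the linear term gives the first-order Maurer-Cartan equation $\bdiff_J A^{1,0}=0$ discussed after Definition~\ref{def:integrable}; thus $\mscr{R}_{\mrm{int}}$ consists of those $J+\varepsilon A+(\varepsilon^2)$ with $J$ integrable and $A$ an integrable first-order deformation of $J$. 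Since $\mscr{D}_{J+\varepsilon A+(\varepsilon^2)}$ is spanned by the infinitesimal real action $\hat h=\m{L}_{X_h}J+\varepsilon\m{L}_{X_h}A$ and by its image $(k^*\bm{I})\hat h=(J+\varepsilon A)(\m{L}_{X_h}J+\varepsilon\m{L}_{X_h}A)$ under the complex structure \eqref{eq:TAC_complex_str}, it suffices to prove: (a) $\hat h$ is tangent to $\mscr{R}_{\mrm{int}}$; and (b) the tangent space $T\mscr{R}_{\mrm{int}}$ is $(k^*\bm{I})$-invariant, whence $(k^*\bm{I})\hat h$ is tangent as well.

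For (a) I would argue exactly as in Remark~\ref{rmk:integrable_submanifoldJ}. The induced $\mscr{G}$-action is by pull-back, $\varphi.(J+\varepsilon A+(\varepsilon^2))=(\varphi^{-1})^*(J+\varepsilon A)+(\varepsilon^2)$, and the naturality of the Nijenhuis tensor, $N_{\phi^*P}(X,Y)=\phi^{-1}_*\bigl(N_P(\phi_*X,\phi_*Y)\bigr)$, holds order by order for the formal endomorphism $P=J+\varepsilon A$. Hence pull-back by a Hamiltonian diffeomorphism maps $\mscr{R}_{\mrm{int}}$ into itself, the orbit $\mrm{exp}(-th).(J+\varepsilon A)$ stays in $\mscr{R}_{\mrm{int}}$, and its velocity $\hat h$ at $t=0$ is tangent to $\mscr{R}_{\mrm{int}}$.

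Step (b) is the first-order analogue of Lemma~\ref{lemma:integrable_submanifoldJ}, and I expect it to be the technical heart of the statement. Writing a tangent vector as $\dot J+\varepsilon\dot A$, formula \eqref{eq:TAC_complex_str} gives $(k^*\bm{I})(\dot J+\varepsilon\dot A)=J\dot J+\varepsilon(J\dot A+A\dot J)$, so I must check that $(J\dot J,\,J\dot A+A\dot J)$ again satisfies the two linearised integrability constraints cutting out $\mscr{R}_{\mrm{int}}$. The constant-order constraint $\bdiff_J\dot J^{1,0}=0$ is preserved precisely by Lemma~\ref{lemma:integrable_submanifoldJ}, which says $\bb{J}\dot J=J\dot J$ is again an integrable deformation. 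The delicate point is the linear-order constraint: because the operator $\bdiff_J$ itself depends on $J$, linearising $\bdiff_J A^{1,0}=0$ in the direction $\dot J$ produces, besides $\bdiff_J\dot A^{1,0}$, a term recording the variation of the $\bdiff$-operator, which is exactly the quantity computed in Lemma~\ref{lemma:variazione_J} (with the companion Lemma~\ref{lemma:variazione_nabla}). The main obstacle is to show that these extra $\dot J$-variation terms reorganise correctly under $\dot A\mapsto J\dot A+A\dot J$; I expect this to follow from the compatibility identities \eqref{eq:indentita_tangenti}, the anti-commutation of deformations with $J$, and the expression for $P_2=\m{L}_\bullet J$ in Lemma~\ref{lemma_P2}, reducing the whole verification to the already-established base case.

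Alternatively, I would try a more conceptual route for (b): via $k$ the locus $\mscr{R}_{\mrm{int}}$ is an open subset of the total space of the bundle over $\mscr{J}_{\mrm{int}}$ whose fibre over $J$ consists of the $J$-holomorphic Higgs terms $\alpha$ (those with $\bdiff\alpha=0$), and $\bm{I}$ should restrict to the canonical complex structure of this holomorphic bundle, which manifestly preserves its tangent spaces. If this identification can be made precise it bypasses the index computation entirely; the cost is exactly to verify that the fibrewise holomorphy condition varies holomorphically in the base variable $J$, which is once more governed by Lemma~\ref{lemma:variazione_J}.
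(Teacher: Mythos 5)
Your overall architecture is right, and part (a) coincides with the paper's argument: the paper also first isolates the sub-distribution $\mscr{D}^0$ of real directions and shows it is tangent to the integrable locus via the naturality of the Nijenhuis tensor under pull-back by diffeomorphisms. The gap is in part (b), which you correctly identify as the technical heart and then do not carry out. Your plan is to split the integrability condition for $J+\varepsilon A+(\varepsilon^2)$ into its constant-order part ($N_J=0$) and its linear-order part ($\bdiff_J A^{1,0}=0$), handle the first by Lemma~\ref{lemma:integrable_submanifoldJ}, and for the second track the variation of the operator $\bdiff_J$ in the direction $\dot J$ via Lemma~\ref{lemma:variazione_J}, hoping the extra terms ``reorganise correctly'' under $\dot A\mapsto J\dot A+A\dot J$. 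That reorganisation is precisely the content of the lemma, and asserting that you ``expect'' it to follow from the compatibility identities is not a proof: the cross terms coupling $\dot J$ to $A$ and $\dot A$ to $J$ are exactly where a sign or a Nijenhuis correction could fail to cancel, and nothing in your write-up rules that out.

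The paper avoids this order-by-order bookkeeping entirely, and this is the idea your proposal is missing. Rather than separating the two constraints, one treats $J(\varepsilon)=J+\varepsilon A$ as a single formal almost complex structure over the ring $\m{E}nd\llbracket\varepsilon\rrbracket/(\varepsilon^2)$ and a tangent vector $Q=\dot J+\varepsilon\dot A$ as a single deformation, and shows directly that if $N_{J(\varepsilon)}\in(\varepsilon^2)$ and $DN_{J(\varepsilon)}(Q)\in(\varepsilon^2)$ then $DN_{J(\varepsilon)}(\bm{I}_{J(\varepsilon)}Q)=DN_{J(\varepsilon)}(J(\varepsilon)Q)\in(\varepsilon^2)$. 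Writing out $DN_{J(\varepsilon)}(J(\varepsilon)Q)(X,Y)$ in terms of Lie brackets and substituting the two hypotheses (used to eliminate $Q([J(\varepsilon)X,Y]+[X,J(\varepsilon)Y])$ and $[J(\varepsilon)X,J(\varepsilon)QY]$ modulo $\varepsilon^2$) makes everything cancel; the computation is formally identical to the one behind Lemma~\ref{lemma:integrable_submanifoldJ}, with $J$ replaced by $J(\varepsilon)$, so the variation of $\bdiff_J$ never has to be computed separately. I would either adopt this formal-ring computation or, if you insist on your route, actually perform the linear-order cancellation; as it stands, part (b) is a statement of intent rather than an argument. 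Your alternative ``conceptual'' route has the same defect: it defers the entire difficulty to verifying that the fibrewise holomorphy condition varies holomorphically in $J$, which is again the unproven step.
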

\begin{proof}
Let~$\mscr{D}^0_{J+\varepsilon A+(\varepsilon^2)}$ be the distribution \[\set*{\m{L}_{X_h}J+\varepsilon\m{L}_{X_h}A+(\varepsilon^2)\tc h\in\m{C}^\infty(M)}.\] This is the tangent space to the orbits of the action~$\mscr{G}\curvearrowright\mscr{R}$. A computation using the naturality of the Lie bracket shows that if~$N_{J+\varepsilon\,A+(\varepsilon^2)}\in(\varepsilon^2)$ then also~$N_{f^*(J+\varepsilon\,A)+(\varepsilon^2)}\in(\varepsilon^2)$ for any diffeomorphism~$f$, so~$\mscr{D}^0$ is indeed tangent to the space of integrable first-order deformations of complex structures.

Since~$\mscr{D}=\mscr{D}^0+\bm{I}\mscr{D}^0$, the next step is to show that if~$J(\varepsilon)$ is integrable and~$DN_{J(\varepsilon)}(Q)\in(\varepsilon^2)$ then also~$DN_{J(\varepsilon)}(\bm{I}_{J(\varepsilon)}Q)\in(\varepsilon^2)$. For any two vector fields~$X,Y$ on~$M$, we have
\begin{equation}\label{eq:diff_nijenhuis_compl}
\begin{split}
DN_{J(\varepsilon)}(\bm{I}_{J(\varepsilon)}Q)(X,Y)=&J(\varepsilon)\left(\left[J(\varepsilon)QX,Y\right]+\left[X,J(\varepsilon)QY\right]\right)+\\
&+J(\varepsilon)Q\left(\left[J(\varepsilon)X,Y\right]+\left[X,J(\varepsilon)Y\right]\right)-\\
&-\left[J(\varepsilon)X,J(\varepsilon)QY\right]-\left[J(\varepsilon)QX,J(\varepsilon)Y\right].
\end{split}
\end{equation}
Since~$J(\varepsilon)$ is integrable and~$DN_{J(\varepsilon)}(Q)=0$, up to~$\varepsilon^2$-terms we have
\begin{equation*}
\begin{split}
Q\left(\left[J(\varepsilon)X,Y\right]+\left[X,J(\varepsilon)Y\right]\right)=&-J(\varepsilon)\left([QX,Y]+[X,QY]\right)+\\
&+[J(\varepsilon)X,QY]+[QX,J(\varepsilon)Y];\\
\left[J(\varepsilon)X,J(\varepsilon)QY\right]=&\left[X,QY\right]+J(\varepsilon)\Big([J(\varepsilon)X,QY]+[X,J(\varepsilon)QY]\Big)
\end{split}
\end{equation*}
and substituting these expressions in~\ref{eq:diff_nijenhuis_compl} we get~$DN_{J(\varepsilon)}(\bm{I}_{J(\varepsilon)}Q)(X,Y)=0$ up to~$\varepsilon^2$-terms, for all vector fields~$X,Y$ on~$M$.
\end{proof}
Analogous computations allow us to get the following result.
\begin{lemma}\label{lemma:integrable_epsilon}
If~$J+\varepsilon A+(\varepsilon^2)\in\mscr{R}$ is integrable, for every vector field~$X$
\begin{equation*}
\m{L}_{(J+\varepsilon A)X}(J+\varepsilon A)=(J+\varepsilon A)\m{L}_X(J+\varepsilon A)+(\varepsilon^2).
\end{equation*}
\end{lemma}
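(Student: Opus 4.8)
The plan is to deduce the statement from a single algebraic identity, valid for an arbitrary endomorphism field, and then to truncate modulo~$\varepsilon^2$. This is precisely the formal-deformation analogue of the identity~$\m{L}_{JX}J=J\,\m{L}_XJ$ (valid for integrable~$J$) used in the proof of Proposition~\ref{prop:integral_leaf}, and the argument recovers that case at~$\varepsilon=0$.

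First I would record the following identity, which holds for \emph{any}~$K\in\Gamma(\mrm{End}(TM))$ and all vector fields~$X,Y$, with no integrability or compatibility assumption:
\[
\big(\m{L}_{KX}K-K\,\m{L}_XK\big)(Y)=[KX,KY]-K[KX,Y]-K[X,KY]+K^2[X,Y].
\]
This follows by expanding each Lie derivative through~$(\m{L}_ZK)(Y)=[Z,KY]-K[Z,Y]$ and using the naturality of the bracket; it is purely formal, so it passes verbatim to the ring~$\m{E}nd\llbracket\varepsilon\rrbracket/(\varepsilon^2)$ once~$X,Y$ are honest vector fields and~$K$ is a formal endomorphism.

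Next I would apply this with~$K=J+\varepsilon A$. The key algebraic input is that~$A\in T_J\mscr{J}$ anticommutes with~$J$, whence~$(J+\varepsilon A)^2=-1+\varepsilon^2A^2\equiv-1\pmod{\varepsilon^2}$. Substituting~$K^2\equiv-1$ into the right-hand side of the identity above turns it into exactly minus the Nijenhuis expression of~$J+\varepsilon A$:
\[
[KX,KY]-K[KX,Y]-K[X,KY]-[X,Y]=-N_{J+\varepsilon A}(X,Y)\pmod{\varepsilon^2},
\]
where~$N$ is the Nijenhuis tensor defined in Section~\ref{sec:J}.

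Finally I would invoke the hypothesis that~$J+\varepsilon A+(\varepsilon^2)\in\mscr{R}$ is integrable in the sense of Definition~\ref{def:integrable}, i.e.\ $N(J+\varepsilon A)=O(\varepsilon^2)$. Combining the last three displays yields~$\m{L}_{(J+\varepsilon A)X}(J+\varepsilon A)-(J+\varepsilon A)\,\m{L}_X(J+\varepsilon A)\in(\varepsilon^2)$, which is the claim. The computation is essentially bookkeeping; the only point needing care is the truncation step, where one must keep the~$\varepsilon^2A^2$ term under control when using~$K^2\equiv-1$ and match the resulting bracket expression against the exact sign convention for~$N$ adopted in the paper.
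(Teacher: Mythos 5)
Your proof is correct and is essentially the argument the paper intends: the lemma is stated without proof (``analogous computations''), and the paper itself flags the underlying identity~$N_J=0\Rightarrow\m{L}_{JX}J=J\m{L}_XJ$ just before Proposition~\ref{prop:integral_leaf}, of which your universal identity for~$K$ and its truncation modulo~$\varepsilon^2$ (using~$(J+\varepsilon A)^2\equiv-1$, which is automatic from membership in~$\mscr{R}$) is the exact formal-deformation version. The bookkeeping checks out against the paper's sign convention for~$N_J$, so there is nothing to add.
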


We can now describe a complexification of the action~$\mscr{G}\curvearrowright\cotJ$. We fix an element~$(J,\alpha)$ of~$\cotJ$ such that~$J$ is an integrable complex structure and~$\alpha$ defines an integrable first-order deformation of~$J$. We will show that the parametrization in Proposition~\ref{prop:integral_leaf} of the complex~$\mscr{G}$-orbit of~$J$ in~$\mscr{J}$ can be used to parametrize the complex orbit of~$(J,\alpha)$, by modifying the map~$\Phi_J:\mscr{Y}_J\to\mscr{J}$ of~\eqref{eq:foglia_integrale_J}.

\begin{prop}\label{prop:complessificazione_cotJ}
Fix an integrable element~$(J,\alpha)$ of~$\cotJ$. Then, there is a map
\begin{equation*}
\Phi_{J,\alpha}:\m{K}(\omega)\to\cotJ
\end{equation*}
defined on the K\"ahler class of~$\omega$, such that~$\Phi_{J,\alpha}(\omega)=(J,\alpha)$, and such that its image is an integral leaf of the distribution
\begin{equation*}
\set*{\left(J\m{L}_{X_h}J,(\m{L}_{X_h}\alpha)J^\transpose+(\m{L}_{X_h}J^\transpose)\alpha\right)\tc h\in\m{C}^\infty(M,\bb{R})}.
\end{equation*}
\end{prop}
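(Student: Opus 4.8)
The plan is to mimic the construction of Proposition~\ref{prop:integral_leaf}, but to carry it out in the model~$\mscr{R}$ rather than on~$\cotJ$ directly, since there both the complex structure (equation~\eqref{eq:TAC_complex_str}) and the distribution take a transparent form. Writing~$\tilde{J}=J+\varepsilon\,\mrm{Re}(\alpha)^\transpose+(\varepsilon^2)=k^{-1}(J,\alpha)$, the preimage under~$k$ of the distribution in the statement is exactly the ``imaginary part''~$\set*{\tilde{J}\,\m{L}_{X_h}\tilde{J}+(\varepsilon^2)\tc h\in\m{C}^\infty(M,\bb{R})}$, and by Lemma~\ref{lemma:integrable_epsilon} this equals~$\set*{\m{L}_{\tilde{J}X_h}\tilde{J}+(\varepsilon^2)\tc h\in\m{C}^\infty(M,\bb{R})}$. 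Thus Lemma~\ref{lemma:integrable_epsilon} plays for~$\tilde{J}$ precisely the role that the identity~$J\m{L}_XJ=\m{L}_{JX}J$ played for integrable~$J$ in the proof of Proposition~\ref{prop:integral_leaf}: it lets us realise the complexified (imaginary) directions as Lie derivatives along genuine, though non-symplectic, vector fields.

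Concretely, given~$\omega'=\omega+\I\diff\bdiff\varphi\in\m{K}(\omega)$, I would take the segment~$\omega_t=\omega+t\,\I\diff\bdiff\varphi$ and the time-dependent vector field~$\tilde{X}_t:=\tfrac12\,\tilde{J}_t\,X_\varphi(\omega_t)$, where~$X_\varphi(\omega_t)$ is the Hamiltonian vector field of~$\varphi$ for~$\omega_t$. I then define~$\tilde{J}_t\in\mscr{R}$ as the solution of the transport equation~$\diff_t\tilde{J}_t=\m{L}_{\tilde{X}_t}\tilde{J}_t+(\varepsilon^2)$ with~$\tilde{J}_0=\tilde{J}$, read off order by order in~$\varepsilon$: at order~$\varepsilon^0$ this is exactly the Moser isotopy~$f_t$ of Proposition~\ref{prop:integral_leaf} (so~$J_t=f_t^*J$ and~$f_t^*\omega_t=\omega$), while at order~$\varepsilon^1$ it is a linear ODE transporting the Higgs component, twisted by the~$\varepsilon$-part~$\tfrac{\varepsilon}{2}\,\mrm{Re}(\alpha_t)^\transpose X_\varphi$ of~$\tilde{X}_t$. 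Setting~$\Phi_{J,\alpha}(\omega'):=k(\tilde{J}_1)$, the flow is trivial for~$\varphi$ constant, which gives~$\Phi_{J,\alpha}(\omega)=(J,\alpha)$.

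To see that the image is an integral leaf, I would differentiate: by construction~$\diff_t\tilde{J}_t=\tfrac12\,\m{L}_{\tilde{J}_t X_\varphi}\tilde{J}_t$, and Lemma~\ref{lemma:integrable_epsilon} rewrites this as~$\tfrac12\,\tilde{J}_t\,\m{L}_{X_\varphi}\tilde{J}_t$, that is, as the complex structure of~$\eqref{eq:TAC_complex_str}$ applied to the orbit direction~$\tfrac12\m{L}_{X_\varphi}\tilde{J}_t$. Transported back by~$k$, this is precisely the vector of the distribution in the statement with~$h=\tfrac12\varphi$; as~$\varphi$ ranges over~$\m{C}^\infty_0(M,\bb{R})$ these velocities span the whole distribution, so the tangent space of the image coincides with the distribution and the image is a \emph{maximal} integral manifold, i.e. a leaf. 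The lemma preceding Lemma~\ref{lemma:integrable_epsilon}, showing that the distribution is tangent to the integrable, $\omega$-compatible locus of~$\mscr{R}$, guarantees that~$\tilde{J}_t$ stays in~$\mscr{R}$, so the construction is consistent throughout.

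The main obstacle, and the reason this is genuinely harder than the scalar case, is the extra~$\varepsilon$-order term. Pulling~$\alpha$ back naively by the order-zero isotopy~$f_t$ would produce the velocity~$\tfrac12\m{L}_{J X_\varphi}\tilde{J}_t=\tfrac12\tilde{J}_t\m{L}_{X_\varphi}\tilde{J}_t-\tfrac{\varepsilon}{2}\m{L}_{(\mrm{Re}\,\alpha^\transpose)X_\varphi}J+(\varepsilon^2)$, whose last summand lies \emph{outside} the distribution; only the~$\tilde{J}$-twisted transport above cancels it, which is what forces the Higgs component to be transported nontrivially rather than by an honest diffeomorphism. The remaining delicate point is the well-definedness of~$\Phi_{J,\alpha}$ on~$\m{K}(\omega)$: the potential~$\varphi$ is determined by~$\omega'$ up to an additive constant, which is harmless since it does not affect~$X_\varphi$, but independence of the path from~$\omega$ to~$\omega'$ requires the involutivity of~$\mscr{D}$, and it is exactly here that one can only argue formally, in accordance with the relaxed complexification discussed in the introduction to this chapter and in Remark~\ref{rmk:int_Higgs_class}.
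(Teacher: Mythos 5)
Your overall strategy coincides with the paper's: the paper also builds the curve over the Moser isotopy~$f_t$ of~$Y_t=\tfrac12 JX^{\omega_t}_h$, and the correction you identify --- that the Higgs component must be transported by the~$\varepsilon$-part~$\tfrac{\varepsilon}{2}\mrm{Re}(\alpha_t)^\transpose X^{\omega_t}_h$ of the twisted vector field rather than simply pulled back by~$f_t$ --- is exactly the paper's equation~\eqref{eq:condizione_compl_alpha_t},~$\dot{\alpha}_t=\tfrac12\m{L}_{\alpha_t^\transpose(X^{\omega_t}_h)}J^\transpose$, obtained from Lemma~\ref{lemma:integrable_epsilon} in the same way. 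Up to that point your argument reproduces condition~$1$ of~\eqref{eq:condizioni_complessificazione}.

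The gap is in how you dispose of the remaining two issues. First, the order-$\varepsilon$ evolution is not ``a linear ODE'': the term~$\m{L}_{\frac12 A_tX}J_t$ contains~$\nabla A_t$, so what you must solve is a linear first-order PDE system for the tensor~$A_t$, whose solvability is not free. Second, and more seriously, the condition that~$(f_t^*J,f_t^*\alpha_t)$ actually lies in~$\cotJ$ --- i.e.\ that~$\alpha_t$ stays compatible with~$\omega_t$, which is condition~$2$ of~\eqref{eq:condizioni_complessificazione} --- cannot be obtained by saying that the distribution is tangent to~$\mscr{R}$: the formal expression defining your transport field is only known to be tangent to the compatibility locus at points already lying on it, so invoking tangency to conclude that the curve stays there is circular (at best it would require a separate Gronwall-type estimate on the compatibility defect, which you do not supply). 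The paper resolves both issues at once by imposing compatibility from the start: condition~$1$ forces~$\alpha_t=\alpha-2\I\diff V_t$, compatibility forces~$V_t=\tfrac{\I}{2}\left(t\alpha(\bdiff h)-\diff\varphi_t\right)^{\sharp_t}$ (using~$H^1(M)=0$), and substituting one into the other reduces everything to the scalar first-order PDE~\eqref{eq:ft_pde} for~$\varphi_t$, solved by the method of characteristics. That PDE is the technical heart of the proposition --- it is precisely the reason the complexification remains implicit, as stressed in Remark~\ref{rmk:int_Higgs_class} and afterwards --- and your proposal does not engage with it.
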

In other words, the map~$\Phi_{J,\alpha}$ gives a way to parametrize the ``complex directions'' of the complexified orbit of~$(J,\alpha)$ in~$\cotJ$. For convenience, we will prove this Proposition in the case~$H^1(M)=0$.
\begin{proof}
For a K\"ahler potential~$h$, consider~$\omega_t=\omega+\I\diff\bdiff th$ and the vector field~$Y_t=\frac{1}{2}JX^{\omega_t}_h$, where~$X^{\omega_t}_h$ is the Hamiltonian vector field associated to~$h$ under~$\omega_t$. In the proof of Theorem~\ref{prop:integral_leaf} we showed that, if~$f_t$ is the isotopy of the time-dependent vector field~$Y_t$, then~$(f_t,\omega_t)\in\mscr{Y}_{J}$ and~$\diff_tf_t^*J$ lies in the ``complex part'' of~$\mscr{D}_J$.

We will show that there is a path~$\alpha_t\in\m{A}^{1,0}(T^{0,1}M)$ such that
\begin{equation*}
t\mapsto\left(f_t^*J,f_t^*\alpha_t\right)
\end{equation*}
is a curve in~$\cotJ$ that is tangent to the complex part of the distribution~$\mscr{D}_{J,\alpha}$. These conditions can be rephrased as
\begin{equation}\label{eq:condizioni_complessificazione}
\begin{split}
1.\quad&\diff_tf_t^*\alpha_t=\frac{1}{2}f_t^*\left((\m{L}_{X^{\omega_t}_h}\alpha_t)J^\transpose+(\m{L}_{X^{\omega_t}_h}J^\transpose)\alpha_t\right);\\
2.\quad&\omega_t\left(\alpha_t-,J-\right)\in\mrm{Sym}\left(T^{0,1}M\right).
\end{split}
\end{equation}
For a generic path~$\alpha_t$, the variation of~$f_t^*\alpha_t$ is
\begin{equation*}
\diff_tf_t^*\alpha_t=f_t^*\left(\frac{1}{2}\m{L}_{JX^{\omega_t}_h}\alpha_t+\dot{\alpha}_t\right).
\end{equation*}
By Lemma~\ref{lemma:integrable_epsilon}, this can be rewritten as
\begin{equation*}
\begin{split}
\diff_tf_t^*\alpha_t=f_t^*\left(\frac{1}{2}\left(\m{L}_{X^{\omega_t}_h}\alpha_t\right)J^\transpose+\frac{1}{2}\left(\m{L}_{X^{\omega_t}_h}J\right)^\transpose\alpha_t-\frac{1}{2}\m{L}_{\alpha_t^\transpose(X^{\omega_t}_h)}J^\transpose
+\dot{\alpha}_t\right).
\end{split}
\end{equation*} 
Then, condition~$1$ in~\eqref{eq:condizioni_complessificazione} holds if and only if
\begin{equation}\label{eq:condizione_compl_alpha_t}
\dot{\alpha}_t=\frac{1}{2}\m{L}_{\alpha_t^\transpose(X^{\omega_t}_h)}J^\transpose=\diff\left(\alpha_t(\nabla_th)\right).
\end{equation}
This condition implies that any~$\alpha_t$ satisfying the first condition in~\eqref{eq:condizioni_complessificazione} can be written as~$\alpha_t=\alpha-2\I\diff V_t$, for some time-dependent vector field~$V_t\in\Gamma(T^{0,1}M)$. For equation~\eqref{eq:condizione_compl_alpha_t} to hold it is sufficient for the vector field $V_t$ to satisfy the PDE
\begin{equation}\label{eq:condizione_compl_Vt}
\dot{V}_t=\frac{1}{2}\alpha_t(X^{\omega_t}_h).
\end{equation}
The second condition in~\eqref{eq:condizioni_complessificazione} instead can be rephrased as
\begin{equation*}
\diff\left(t\alpha(\bdiff h)+2\I V_t^{\flat_t}\right)=0.
\end{equation*}
Hence, bearing on the assumption~$H^1=0$, there must be a family of functions~$\varphi_t\in\m{C}^\infty$ such that
\begin{equation}\label{eq:Vt}
V_t=\frac{\I}{2}\left(t\alpha(\bdiff h)-\diff\varphi_t\right)^{\sharp_t}.
\end{equation}
We should show that we can choose~$\varphi_t$ in such a way that~$\alpha_t=\alpha-2\I\diff V_t$ satisfies~\eqref{eq:condizioni_complessificazione}, for~$V_t$ defined by~\eqref{eq:Vt}. Notice that we should set~$\varphi_0=0$, so that~$\alpha_0=\alpha$. Rephrasing equation~\eqref{eq:condizione_compl_Vt} in terms of~$\varphi_t$ we can see that $\varphi_t$ and $V_t$ satisfy the relation
\begin{equation*}
\dot{\varphi}_t=2\I V_t(h)
\end{equation*}
that, together with \eqref{eq:Vt}, implies that $\varphi_t$ must satisfy the PDE
\begin{equation}\label{eq:ft_pde}
\dot{\varphi}_t=g_t\left(\diff\varphi_t-t\alpha(\bdiff h),\bdiff h\right).
\end{equation}
Using the method of characteristics, it is not difficult to check that this equation, together with the starting condition~$\varphi_0=0$, can be solved by a smooth function. The solution is in fact unique, up to the addition of a constant. We have not been able to write an explicit solution to~\eqref{eq:ft_pde}, and this has some consequence on the the complexification of the HcscK system. We will get back to this in the next section.

Tracing back the computations, the path~$\alpha_t$ is uniquely determined by~$\alpha$ and~$\omega_t$ as
\begin{equation*}
\alpha_t=\alpha+t\diff\left(\alpha(\bdiff h)^{\sharp_t}\right)-\diff\nabla_t\varphi_t
\end{equation*}
for a solution~$\varphi_t$ of~\eqref{eq:ft_pde}. 
\end{proof}

\subsection{The complexified equations}

Following the classical case of the cscK equation, the ``formal complexification'' of the orbits of~$\mscr{G}\curvearrowright\cotJ$ would make it natural to regard our system
\begin{equation*}
\begin{cases}
\f{m}_{\bm{\Omega}_{\bm{I}}}(\omega,J,\alpha)=0\\
\f{m}_{\bm{\Theta}}(\omega,J,\alpha)=0
\end{cases}
\end{equation*}
as equations for a form~$\omega$, to be found in some prescribed set, keeping instead the complex structure~$J$ fixed.

From the discussion of the cscK problem in Section~\ref{sec:complexification_cscK} we know that to achieve this we should consider how the moment maps change when we move~$(J,\alpha)$ along a ``complexified direction'' of the orbit, i.e. under the transformation
\begin{equation*}
(J,\alpha)\mapsto f_t^*(J,\alpha_t)
\end{equation*}
where, for a function~$h\in\m{C}^\infty(M)$,~$f_t$~and~$\alpha_t$ are described in the proof of Proposition~\ref{prop:complessificazione_cotJ}. So, we can write \emph{complexified moment map equations} for~$\f{m}_{\bm{\Omega_I}}$ and~$\f{m}_{\bm{\Theta}}$, as
\begin{equation}\label{eq:complessificate_1}
\begin{cases}
\f{m}_{\bm{\Omega_I}}(f_t^*J,f_t^*\alpha_t)=0;\\
\f{m}_{\bm{\Theta}}(f_t^*J,f_t^*\alpha_t)=0.
\end{cases}
\end{equation}
Focusing on the~$\bm{\Omega_I}$-equation, recall that the moment map is computed using the metric~$g(J,\omega)$ defined from the background symplectic form and the complex structure. Making the dependence on~$\omega$ explicit we find
\begin{equation*}
\begin{split}
\f{m}_{\bm{\Omega_I}}(f_t^*J,f_t^*\alpha_t)=&\f{m}_{\bm{\Omega_I}}\left(g(f_t^*J,\omega),f_t^*\alpha_t\right)=f_t^*\left(\f{m}_{\bm{\Omega_I}}\left(g(J,f_t^{-1}{}^*\omega),\alpha_t\right)\right)=\\
=&f_t^*\left(\f{m}_{\bm{\Omega_I}}\left(g(J,\omega_t),\alpha_t\right)\right).
\end{split}
\end{equation*}
So, the equation~$\f{m}_{\bm{\Omega_I}}(f_t^*J,f_t^*\alpha_t)=0$ is equivalent to
\begin{equation*}
\f{m}_{\bm{\Omega_I}}\left(g(J,\omega_t),\alpha_t\right)=0
\end{equation*}
hence looking for a solution to the moment map equations along the complexified orbit of~$(J,\alpha)$ is equivalent to keeping~$J$ fixed, moving~$\omega$ in its K\"ahler class to~$\omega+2\I\diff\bdiff t h$ and simultaneously moving~$\alpha$ to~$\alpha+t\diff\left(\alpha(\bdiff h)^{\sharp_t}\right)-\diff\nabla_t\varphi_t$, for a solution~$\varphi_t$ of~\eqref{eq:ft_pde}. 

The same considerations can be made also for~$\f{m}_{\bm{\Theta}}$, so that the \emph{complexified HcscK equations} are
\begin{equation}\label{eq:complessificate_definitive}
\begin{cases}
\f{m}_{\bm{\Omega_I}}\left(g(J,\omega_t),\alpha_t\right)=0;\\
\f{m}_{\bm{\Theta}}\left(g(J,\omega_t),\alpha_t\right)=0.
\end{cases}
\end{equation}

\begin{rmk}\label{rmk:int_Higgs_class}
Since we are assuming that~$\alpha$ is an integrable deformation of the complex structure~$J$,~$\alpha$~determines a class~$[\bar{\alpha}]\in H^1(T^{1,0}M)$. The formal complexification moves~$\alpha$ to the deformation~$\alpha_t:=\alpha-2\I\diff V^{0,1}_t$ that lies in the same class,~$[\bar{\alpha}_t]=[\bar{\alpha}]$. So, the formally complexified system~\eqref{eq:complessificate_definitive} can be considered as a system for compatible~$\omega$ and~$\alpha$ belonging to a fixed K\"ahler class and a fixed deformation class respectively.
\end{rmk}
It seems quite difficult to find an explicit expression for the general solution~$\varphi_t$ of the PDE~\eqref{eq:ft_pde}. There is, however, a very specific case in which we can characterize solutions to~\eqref{eq:ft_pde}: if~$\alpha(\bdiff h)=0$, then~$\alpha$ is compatible with~$\omega_t:=\omega+\I\diff\bdiff th$ and the unique solution of~\eqref{eq:ft_pde} is~$\varphi_t=0$. Notice however that, even if $\alpha$ is already compatible with $\omega_h$, i.e. $\diff\left(\alpha\left(\bdiff h\right)\right)=0$, we still have to move $\alpha$ along a nonconstant path $\alpha_t$ to obtain the complexified orbit.

The problem of finding an explicit expression for the function~$f_t$ perhaps could be avoided by complexifying the equations not by moving~$(J,\alpha)$ along the complex direction of the orbit that we have described, 
but rather by moving~$(J,\alpha)$ to a point of~$\mscr{G}^{c}.(J,\alpha)$ along a different path, or along a different parametrization of the same transversal path that described in the proof of Proposition~\ref{prop:complessificazione_cotJ}.

As we are not yet able to address this issue, we propose to study a slightly different version of system~\eqref{eq:complessificate_definitive}, obtained by decoupling the ``Higgs term'' and the K\"ahler form. For a fixed complex structure~$J$ on the compact manifold~$M$ and a K\"ahler class~$[\omega]$ on~$M$, we look for a K\"ahler form~$\omega'\in[\omega]$ and a deformation of the complex structure~$\alpha\in\m{A}^{1,0}(T^{0,1}M)$ such that
\begin{equation}\label{eq:HCSCK_complex_relaxed}
\begin{cases}
\omega'(J-,\alpha^\transpose-)+\omega'(\alpha^\transpose-,J-)=0;\\
\f{m}_{\bm{\Omega_I}}\left(\omega',\alpha\right)=0;\\
\f{m}_{\bm{\Theta}}\left(\omega',\alpha\right)=0.
\end{cases}
\end{equation}
Every solution to~\eqref{eq:complessificate_definitive} for some K\"ahler form~$\omega$ on~$M$ and some~$t>0$ gives a solution to~\eqref{eq:HCSCK_complex_relaxed}. The vice-versa does not hold, since for an integrable Higgs term~$\alpha$ we know by Remark~\ref{rmk:int_Higgs_class} that we should also impose the condition that~$\alpha$ belongs to a fixed class in~$H^1(T^{0,1}M)$. System~\eqref{eq:HCSCK_complex_relaxed} has the advantage over~\eqref{eq:complessificate_definitive} of not depending on the background metric~$\omega$ but just on the K\"ahler class, and the results in Chapter~\ref{chap:examples} about solutions to the system~\eqref{eq:HCSCK_complex_relaxed} suggest that it might be interesting to also consider non-integrable deformations of the complex structure. For the rest of the thesis we will refer to~\eqref{eq:HCSCK_complex_relaxed} as the \emph{HcscK system}.

\section{The HcscK system on curves and surfaces}\label{sec:curves_surfaces}

In this Section we examine the moment map equations when the base manifold~$M$ has complex dimension~$1$ and~$2$. In these cases the equations are easier to study, and in particular we will recover Donaldson's equations on a complex curve (c.f.~\cite{Donaldson_hyperkahler}) by considering the complexified equations~\eqref{eq:HCSCK_complex_relaxed}. We then show how the real moment map equation can be written in an alternative way for complex surfaces, and we show how the equation becomes simpler when considering deformations of the complex structure of non-maximal rank.

\subsection{The real moment map on a curve}\label{section:moment_map_curve}

In dimension~$1$ the expression for the real moment map, equation~\eqref{eq:mm_reale_completa}, becomes much simpler. Indeed~$A^{1,0}A^{0,1}$ is just the multiplication by some number, so that
\begin{equation*}
A^{0,1}A^{1,0}=\mrm{Tr}\left(A^{1,0}A^{0,1}\right)\mathbbm{1}
\end{equation*}
and the matrix~$\hat{A}$ of Lemma~\ref{lemma:psi_spectral}, that is needed to compute the real moment map, is just
\begin{equation*}
\hat{A}=\frac{1}{2}\left(1+\sqrt{1-\norm{A^{1,0}}^2_{g_J}}\right)\mathbbm{1}.
\end{equation*}
Then on a curve we can rewrite some of the terms in~\eqref{eq:mm_reale_completa}:
\begin{equation*}
-2\nabla^*(A^{0,1}A^{1,0}\hat{A})=2\,\mrm{grad}\left(\psi\,\norm{A^{1,0}}^2\right)
\end{equation*}
and we also have
\begin{equation*}
\begin{gathered}
\left(g(\nabla^aA^{0,1},A^{1,0})-g(\nabla^aA^{1,0},A^{0,1})\right)\diff_{a}=\\=-\mrm{grad}\left(\norm{A^{1,0}}^2\right)^{1,0}+2\,g(\nabla^aA^{0,1},A^{1,0})\diff_{z^a}.
\end{gathered}
\end{equation*}
The map~$\f{m}$ of equation~\eqref{eq:mappa_momento_reale} becomes, if~$\psi:=\psi(\norm{A^{1,0}}^2)$
\begin{equation}\label{eq:mappa_momento_RS_parziale}
\begin{split}
\f{m}(J,\alpha)=\mrm{div}\Big[-\psi\,\mrm{grad}\left(\norm{A^{1,0}}^2\right)+2\,\mrm{grad}\left(\psi\,\norm{A^{1,0}}^2\right)+4\,\psi\,\mrm{Re}\big(g(\nabla^aA^{0,1},A^{1,0})\diff_{z^a}\big)\Big].
\end{split}
\end{equation}
A direct computation shows that
\begin{equation*}
-\psi\,\mrm{grad}\left(\norm{A^{1,0}}^2\right)+2\,\mrm{grad}\left(\psi\,\norm{A^{1,0}}^2\right)=\mrm{grad}\left(\mrm{log}\left(1+\sqrt{1-\norm{A^{1,0}}^2}\right)\right)
\end{equation*}
and we find a simple expression for~$\f{m}$:
\begin{equation*}
\f{m}(J,\alpha)=\Delta\left(\mrm{log}\left(1+\sqrt{1-\norm{A^{1,0}}^2}\right)\right)+\mrm{div}\left(4\,\psi\,\mrm{Re}\left(g(\nabla^aA^{0,1},A^{1,0})\diff_a\right)\right)
\end{equation*}
where we recall that~$A=\mrm{Re}(\alpha)^\transpose$, i.e.~$A^{0,1}=\frac{1}{2}\alpha^\transpose$. Summing up these computations, the real moment map on a Riemann surface is (c.f. equation~\eqref{eq:mm_reale_completa})
\begin{equation}\label{eq:momento_omega_alpha}
\begin{split}
\f{m}_{\bm{\Omega}_{\bm{I}}}(J,\alpha)=&2\,S(J)-2\,\hat{S}+\Delta\left(\mrm{log}\left(1+\sqrt{1-\norm{A^{1,0}}^2}\right)\right)+\\
&+\mrm{div}\left(4\,\psi\,\mrm{Re}\left(g(\nabla^aA^{0,1},A^{1,0})\diff_a\right)\right).
\end{split}
\end{equation}

\subsubsection{A system of equations studied by Donaldson}

We turn now to the complexified system of equations~\eqref{eq:HCSCK_complex_relaxed}, so we fix a complex structure~$J$ on~$M$ and a K\"ahler class~$[\omega_0]$. Notice first that in dimension~$1$ every~$\alpha\in T_J^*\!\mscr{J}$ is compatible with any K\"ahler form, since~$\omega(\alpha-,J-)$ is certainly symmetric. Then we look for a metric~$\omega\in[\omega_0]$ and a ``Higgs field''~$\alpha\in\mrm{Hom}({T^{0,1}}^*M,{T^{1,0}}^*M)$ that satisfy the system of equations:
\begin{equation}\label{eq:equazioni_curva_alpha}
\begin{cases}\frac{1}{4}\norm{\alpha}^2<1;\\
\mrm{div}(\diff^*\alpha)=0;\\
2\,S(\omega)+\Delta\left(\mrm{log}\left(1+\sqrt{1-\frac{1}{4}\norm{\alpha}^2}\right)\right)+\mrm{div}\left(\psi\,Q(\omega,\alpha)\right)=2\,\hat{S},
\end{cases}
\end{equation}
where all the metric quantities are computed from the metric defined by~$\omega$ and~$J$, and~$Q(\omega,\alpha):=\mrm{Re}\left(g(\nabla^a\alpha,\bar{\alpha})\diff_a\right)$. It is more convenient to write the equations in~\eqref{eq:equazioni_curva_alpha} not in terms of~$\alpha$ but rather in terms of the \emph{quadratic differential}~$ q~$ defined by
\begin{equation*}
 q :=\frac{1}{2}\alpha\indices{_a^{\bar{b}}}\,g_{\bar{b}c}\,\mrm{d}z^a\odot\mrm{d}z^c;
\end{equation*}
using this object, equations~\eqref{eq:equazioni_curva_alpha} become
\begin{equation}\label{eq:equazioni_curva}
\begin{cases}
\norm{ q }^2<1;\\
\mrm{div}(\diff^* q )^\sharp=0;\\
2\,S(\omega)+\Delta\left(\mrm{log}\left(1+\sqrt{1-\norm{ q }^2}\right)\right)+\mrm{div}\left(\psi\,Q(\omega, q )\right)=2\,\hat{S}.
\end{cases}
\end{equation}
We can make the the second equation in~\eqref{eq:equazioni_curva} more explicit by using holomorphic local coordinates (with respect to the fixed complex structure); recall that we are working on a Riemann surface, so we just have one index, when working in coordinates:
\begin{equation*}
\mrm{div}(\diff^* q )^\sharp=-g^{1\bar{1}}\diff_{\bar{z}}\left(g^{1\bar{1}}\diff_{\bar{z}} q _{11}\right).
\end{equation*}
This shows that the second equation in~\eqref{eq:equazioni_curva} is certainly satisfied when~$ q~$ is a \emph{holomorphic} quadratic differential, i.e. when~$\bdiff q =0$. The space of such objects has dimension~$3\,g(M)-3$, and in particular is non-empty, for~$g(M)>1$. Notice that, while the second equation in~\eqref{eq:equazioni_curva} depends on the choice of~$\omega$ in the fixed K\"ahler class, the simpler condition~$\bdiff q =0$ does not.
\begin{rmk}\label{rmk:hol_quad_diff}
If~$ q~$ is a holomorphic quadratic differential then the vector field~$Q(\omega, q )$ vanishes, for every K\"ahler form~$\omega$. Indeed
\begin{equation*}
g(\nabla^a q ,\bar{ q })\diff_a=g^{a\bar{b}}g^{c\bar{e}}g^{d\bar{f}}\,\nabla_{\bar{b}} q _{cd}\, q _{\bar{e}\bar{f}}\,\diff_a=0.
\end{equation*}
\end{rmk}
So, assuming that~$ q~$ is a holomorphic quadratic differential on the Riemann surface~$M$, the complexified HcscK system reduces to the equation
\begin{equation}\label{eq:F_tau_seconda}
2\,S(\omega)-2\,\hat{S}+\Delta\left(\mrm{log}\left(1+\sqrt{1-\norm{ q }^2}\right)\right)=0.
\end{equation}
As was already mentioned in the Introduction, this equation has been already studied by Donaldson in~\cite{Donaldson_hyperkahler} and by Hodge in~\cite{Hodge_phd_thesis} (see also~\cite{traut}). In particular in~\cite{Hodge_phd_thesis} it is shown that if the norm of~$ q~$ and its derivative is small enough with respect to the hyperbolic metric~$\omega_0$ of~$M$, then there is a unique solution~$\omega$ of equation~\eqref{eq:F_tau_seconda} in the conformal class of~$\omega_0$.

\subsection{The real moment map on complex surfaces}\label{sec:complex_surface}

In this Section we will describe an alternative and more explicit expression for the real moment map equation on a complex surface, alternative to the one found in equation~\eqref{eq:mappa_momento_reale}. This is because the matrix~$\hat{A}=\frac{1}{2}\left(1+\sqrt{1-A^{0,1}A^{1,0}}\right)^{-1}$ is not easy to compute explicitly, in general, in dimension greater than~$1$. However in dimension~$2$ we can write the differential of the Biquard-Gauduchon functional in a different way.

On a surface the real moment map depends on the two eigenvalues~$\delta^+$,~$\delta^-$ of~$A^{1,0}A^{0,1}$, and the differential of the Biquard-Gauduchon function~$\rho$ is, according to Lemma~\ref{lemma:differenziale_BiquardGauduchon}
\begin{equation*}
\mrm{d}\rho_{J,A}(\dot{J},\dot{A})=\frac{\mrm{Tr}\left(A\,\dot{A}(\delta^+\mathbbm{1}-A^2)\right)}{2\left(1+\sqrt{1-\delta^-}\right)(\delta^+-\delta^-)}-\frac{\mrm{Tr}\left(A\,\dot{A}(\delta^-\mathbbm{1}-A^2)\right)}{2\left(1+\sqrt{1-\delta^+}\right)(\delta^+-\delta^-)}.
\end{equation*}
However in complex dimension~$2$ the eigenvalues can be expressed as
\begin{equation}\label{eq:autovalori_dim2}
\delta^{\pm}(A)=\frac{1}{2}\left(\mrm{Tr}\left(A^{1,0}A^{0,1}\right)\pm\sqrt{\left(\mrm{Tr}\left(A^{1,0}A^{0,1}\right)\right)^2-4\,\mrm{det}(A^{1,0}A^{0,1})}\right)
\end{equation}
or, equivalently, as
\begin{equation*}
\delta^{\pm}(A)=\frac{1}{2}\left(\frac{\mrm{Tr}(A^2)}{2}\pm\sqrt{\left(\frac{\mrm{Tr}(A^2)}{2}\right)^2-4\,\mrm{det}(A)}\right).
\end{equation*}
These expressions allows us to compute the derivative of~$\delta^\pm$ along a path~$(J_t,A_t)\in T\!\!\mscr{J}$ without using the results of~\cite{Magnus_differential_eigenvalues}, thus finding an alternative expression for~$\mrm{d}\rho$ in terms of the \emph{adjugate matrix} of~$A$, which we denote provisionally by~$\tilde{A}:=\mrm{adj}(A)$. %
\nomenclature[adjugate]{$\mrm{adj}(A)$}{adjugate matrix to~$A$, the transpose of the cofactor matrix of~$A$, satisfying~$A\,\mrm{adj}(A)=\mrm{adj}(A)\,A=\mrm{det}(A)\mathbbm{1}$}%
This is the transpose of the cofactor matrix of~$A$, and it appears when computing the differential of the determinant, by Jacobi's formula. Then the differential of~$\rho$ can be expressed as
\begin{equation}\label{eq:diff_rho_alt}
\begin{split}
\mrm{d}\rho_{J,A}(\dot{J},\dot{A})=&\frac{\mrm{Tr}(A\dot{A})}{2\left(\sqrt{1-\delta^+}+\sqrt{1-\delta^-}\right)}-\\
&-\frac{\mrm{Tr}(\mrm{adj}(A)\dot{A})}{2\left(\sqrt{1-\delta^+}+\sqrt{1-\delta^-}\right)\left(1+\sqrt{1-\delta^+}\right)\left(1+\sqrt{1-\delta^-}\right)}.
\end{split}
\end{equation}
To see that the two expressions are the same, one should check that in dimension~$2$ the following identity holds:~$\mrm{adj}(A)=\frac{1}{2}\mrm{Tr}(A^2)\,A-A^3$.

The real moment map can also be rewritten using this alternative expression for~$\mrm{d}\rho$. To do so, it will be convenient to introduce the quantities
\begin{equation}\label{eq:psi_psitilde}
\begin{split}
\psi_1(A)&=\frac{1}{2}\left(\sqrt{1-\delta^+(A)}+\sqrt{1-\delta^-(A)}\right)^{-1};\\
\psi_2(A)&=\psi_1(A)\left(1+\sqrt{1-\delta^+(A)}\right)^{-1}\left(1+\sqrt{1-\delta^-(A)}\right)^{-1}.
\end{split}
\end{equation}
The same process used to obtain equation~\eqref{eq:mappa_momento_reale} can also be carried out with a few differences starting from~\eqref{eq:diff_rho_alt}, giving this form of~$\f{m}(J,\alpha)$:
\begin{equation}\label{eq:HCSCK_surface_complete}
\begin{split}
&\f{m}(J,\alpha)=\mrm{div}\left[2\,\psi_1\,\mrm{Re}\Big(g(\nabla^aA^{0,1}-\nabla^aA^{1,0},A)\diff_{z^a}\Big)-2\,\nabla^*(\psi_1\,A^2)\right]-\\
&\phantom{\f{m}(J,\alpha)}
-\mrm{div}\left[2\,\psi_2\,\mrm{Re}\left(g\left(\nabla^aA^{0,1}-\nabla^aA^{1,0},\tilde{A}\right)\diff_{z^a}\right)+2\,\mrm{grad}(\psi_2\,\mrm{det}(A))\right]=\\
&=\mrm{div}\left[-\psi_1\,\mrm{grad}\left(\norm{A^{1,0}}^2\right)+4\,\psi_1\,\mrm{Re}\left(g(\nabla^aA^{0,1},A^{1,0})\diff_{z^a}\right)
-2\,\nabla^*(\psi_1\,A^2)\right]-\\
&\quad-\mrm{div}\left[2\,\psi_2\,\mrm{Re}\left(g\left(\nabla^aA^{0,1}-\nabla^aA^{1,0},\tilde{A}\right)\diff_{z^a}\right)+2\,\mrm{grad}(\psi_2\,\mrm{det}(A))\right].
\end{split}
\end{equation}
\begin{rmk}
Since~$\tilde{A}$ is the adjugate of~$A$ and is also an element of~$T_J\mscr{J}$,
\begin{equation*}
g(A^{1,0},\tilde{A}^{0,1})=\mrm{Tr}\left(A^{1,0}\tilde{A}^{0,1}\right)=2\,\mrm{det}(A)=g(A^{0,1},\tilde{A}^{1,0}).
\end{equation*}
This identity can be used in some situations to further simplify~\eqref{eq:HCSCK_surface_complete}.
\end{rmk}
There are some conditions under which~$\f{m}(J,\alpha)$ becomes much simpler. If~$A$ does not have maximal rank then~$\mrm{det}(A)=0$; moreover, since the rank of~$A$ is even (the kernel of~$A$ is~$J$-invariant), if~$\mrm{rk}(A)$ is not maximal then actually~$\mrm{rk}(A)=0$ or~$2$, so also~$\mrm{adj}(A)=0$. If~$A$ does not have maximal rank then~$\delta^-(A)=0$, and~$\delta^+(A)=\frac{1}{2}\mrm{Tr}(A^2)=\norm{A^{1,0}}_{g_J}^2$. Hence
\begin{equation*}
\psi_1=\frac{1}{2}\left(\sqrt{1-\delta^+(A)}+\sqrt{1-\delta^-(A)}\right)^{-1}=\frac{1}{2}\left(1+\sqrt{1-\norm{A^{1,0}}^2}\right)^{-1}.
\end{equation*}
In the low-rank case~$\f{m}(J,\alpha)$ becomes
\begin{equation}\label{eq:low_rank}
\mrm{div}\left[-\psi_1\,\mrm{grad}\left(\norm{A^{1,0}}^2\right)+4\,\psi_1\,\mrm{Re}\left(g(\nabla^aA^{0,1},A^{1,0})\diff_a\right)-2\nabla^*\left(\psi_1\,A^2\right)\right]
\end{equation}
The resulting moment map is remarkably similar to the one we had in the Riemann surface case, c.f. equation~\eqref{eq:mappa_momento_RS_parziale}.

\chapter{Examples of solutions to the HcscK system}\label{chap:examples}

\chaptertoc{}

\bigskip

In this chapter we collect some results about the existence of solutions to the HcscK system, particularly on curves and surfaces.

The first section, where we generalize some of the results of~\cite{Hodge_phd_thesis} to the HcscK system, is taken from~\cite{ScarpaStoppa_HcscK_curve}. The section is devoted to the proof of Theorem~\ref{thm:curve_existence_intro}, that is based on a change of variables introduced by Donaldson to study the HcscK system on a curve~\eqref{eq:equazioni_curva} in the special case when~$q$ is a holomorphic quadratic differential. The set of solutions described in Theorem~\ref{thm:curve_existence_intro} is obtained essentially by a continuity method that deforms the usual cscK equation to the real moment map equation in~\eqref{eq:equazioni_curva}, so it considers the HcscK system as a perturbation of the cscK equation.

The goal of Section~\ref{sec:ruled_surface} is to find a different kind of solutions to the HcscK system, not given by perturbation around a cscK metric. In fact, we find an example of a surface on which there is no metric of constant scalar curvature, but for which we can find a non-zero Higgs field and a metric solving the HcscK system. The result is not completely satisfactory, since in fact the metric and the Higgs term are not \emph{compatible}, i.e. they do not satisfy the first condition in~\eqref{eq:HCSCK_complex_relaxed}. This poses some difficulty in the interpretation of the real moment map equation, as will be addressed in Remark~\ref{rmk:surf_eq_complex_nonequiv}. The results of this second section already appeared in~\cite{ScarpaStoppa_hyperk_reduction}.

\section{The HcscK system on a curve}\label{sec:curve}

Let~$\Sigma$ be a compact oriented surface of genus~$g(\Sigma)>1$, let~$\omega$ be an area form on~$\Sigma$ and consider the space~$\mscr{J}=\mscr{J}(\omega)$ of complex structures compatible with~$\omega$. In this setting the group~$\mscr{G}$ acting on~$\mscr{J}$ consists of exact area-preserving diffeomorphisms of~$\Sigma$, and as a particular case of the discussion in Chapter~\ref{chap:classicalScalCurv} we have a well-defined K\"ahler reduction~$\m{M}=\mu^{-1}(0)/\mscr{G}$.

It is important to note that~$\m{M}$ is not the Teichm\"uller space~$\m{T}$ of~$\Sigma$, since we are taking the quotient of~$\mscr{J}$ under the group~$\mscr{G}$ rather than the group~$\mscr{G}^+$ of all area-preserving diffeomorphisms isotopic to the identity. The quotient of~$\mscr{G}^+$ by~$\mscr{G}$ is the~$2g$-dimensional torus~$\m{A}_\Sigma=H^1(\Sigma, \mathbb{R})/H^1(\Sigma, \mathbb{Z})$; in fact~$\m{M}$ can be identified with the moduli space of marked Riemann surfaces~$(\Sigma, J)$, together with a choice of a holomorphic line bundle on~$\Sigma$ of fixed degree (see the last paragraph in~\cite[\S$2.2$]{Donaldson_hyperkahler}). The Teichm\"uller space of~$\Sigma$ can then be obtained as the quotient of~$\m{M}$ by~$\m{A}_\Sigma$.

In this section we will study the complexified HcscK system~\eqref{eq:equazioni_curva}, whose solution should allow us to describe a hyperk\"ahler thickening of~$\m{M}$ inside~$T^*\!\m{M}$. More precisely, we fix a marked Riemann surface~$(\Sigma,J)$ together with a K\"ahler class~$\Omega$, and we consider the following system of equations for a quadratic differential~$q\in\Gamma(K^2_\Sigma)$ and a K\"ahler form~$\omega\in\Omega$
\begin{equation}\label{eq:HcscK_curve_originale}
\begin{dcases}
\norm{q}^2_{\omega}<1;\\
{\nabla^{1,0}_{\omega}}^*{\nabla^{1,0}_{\omega}}^*q=0;\\
2\,S(\omega)-2\,\widehat{S(\omega)}+\Delta_{\omega}\,\mrm{log}\left(1+\sqrt{1-\norm{q}_{\omega}^2}\right) +\mrm{div} \frac{2\,\mathrm{Re}\,(g(\bdiff q, \bar{q}))^{\sharp}}{1+\sqrt{1-\norm{q}_{\omega}^2}} =0 
\end{dcases}
\end{equation}
In the original paper~\cite{Donaldson_hyperkahler}, Donaldson was interested in solutions to~\eqref{eq:HcscK_curve_originale} given by a \emph{holomorphic} quadratic differential~$q$, since these special solutions can be used to define a hyperk\"ahler extension of the Weil-Petersson metric on the Teichm\"uller space~$\m{T}$ of~$\Sigma$ to an open subset of~$T^*\m{T}$. Under this holomorphicity condition the system decouples and reduces to equation~\eqref{eq:F_tau_seconda}. From our point of view however the restriction to holomorphic quadratic differentials is not very natural, and we will consider more general solutions to the original coupled system~\eqref{eq:HcscK_curve_originale}.

Let~$\omega_0$ be the K\"ahler form of constant scalar curvature in~$\Omega$. Then we state more precisely our main theorem of existence on curves (c.f. Theorem~\ref{thm:curve_existence_intro}) as
\begin{thm}\label{thm:curve_MainThm}
The system~\eqref{eq:HcscK_curve_originale} admits a set of solutions whose points are in bijection with pairs~$(\tau,\beta)$, consisting of a holomorphic quadratic differential~$\tau$ and a holomorphic~$1$-form~$\beta$, such that~$\norm{\tau}_{\m{C}^{0,\frac{1}{2}}(\omega_0)}\!\!\!\!< c_1$,~$\norm{\beta}_{\m{C}^{1,\frac{1}{2}}(\omega_0)}\!\!\!\!<c_2$ for certain~$c_1$,~$c_2 > 0$. The constants~$c_1$,~$c_2$ depend on~$(\Sigma, J)$ only through a few Sobolev and elliptic constants with respect to the hyperbolic metric~$\omega_0$.
\end{thm}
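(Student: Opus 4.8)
The plan is to decouple the system in two stages: first treat the complex moment map equation as a constraint that can be inverted to express the quadratic differential $q$ in terms of the metric and of a pair of holomorphic data $(\tau,\beta)$, and then read the real moment map equation as a single scalar elliptic equation for the K\"ahler form, to be solved by perturbing the constant scalar curvature representative $\omega_0$ of the fixed class $\Omega$. This realizes the HcscK system on $\Sigma$ as a perturbation of the cscK equation \eqref{eq:F_tau_seconda}, in the spirit of Hodge's decoupled result but now allowing a genuinely non-holomorphic $q$.

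First I would analyse the second equation of \eqref{eq:HcscK_curve_originale}. In local holomorphic coordinates it reads $\diff_{\bar z}\!\left(g^{1\bar1}\diff_{\bar z}q_{11}\right)=0$, so that $\beta:=g^{1\bar1}\diff_{\bar z}q_{11}\,\mrm{d}z$ is a $\bdiff$-closed section of $K_\Sigma$, i.e.\ a holomorphic $1$-form, and conversely every holomorphic $1$-form arises this way. Hence, for a fixed $\omega$, solving the complex moment map equation amounts to solving the $\bdiff$-equation $\bdiff q=g_{1\bar1}\beta_1\,\mrm{d}\bar z\otimes(\mrm{d}z)^2$ for $q\in\Gamma(K^2_\Sigma)$. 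Because $\deg K^{-1}<0$ for $g(\Sigma)>1$, Serre duality gives $H^1(\Sigma,K^2)\cong H^0(\Sigma,K^{-1})^*=0$, so $\bdiff\colon\Gamma(K^2_\Sigma)\to\m{A}^{0,1}(K^2_\Sigma)$ is surjective with kernel the holomorphic quadratic differentials. Fixing this ambiguity by prescribing the holomorphic part $\tau$, I obtain a map $(\omega,\tau,\beta)\mapsto q(\omega,\tau,\beta)$, smooth in all its arguments and vanishing as $(\tau,\beta)\to 0$; this is the change of variables that packages the solutions of the complex equation by the pairs $(\tau,\beta)$, reducing to $q=\tau$ precisely in the holomorphic case $\beta=0$ considered by Donaldson.

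Substituting $q=q(\omega,\tau,\beta)$ into the third equation of \eqref{eq:HcscK_curve_originale} turns the system into a single scalar equation $\m{F}(\omega,\tau,\beta)=0$ for $\omega$ in the class $\Omega$; writing $\omega=\omega_0+\I\diff\bdiff\psi$ this is an elliptic equation for $\psi\in\m{C}^\infty_0(\Sigma)$. At $(\tau,\beta)=(0,0)$ one has $q\equiv 0$ and $\m{F}=0$ reduces to $S(\omega)=\hat S$, solved by $\omega_0$. The linearisation $D_\psi\m{F}$ at $(0,0,0)$ is the Lichnerowicz operator $\m{D}^*\m{D}$ of $\omega_0$; since $g(\Sigma)>1$ the space $H^0(\Sigma,T\Sigma)$ of holomorphic vector fields vanishes, so this operator has trivial kernel on zero-average functions and is an isomorphism of the relevant H\"older spaces. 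A quantitative implicit function theorem (equivalently, a Banach fixed-point argument) then yields, for each $(\tau,\beta)$ with $\norm{\tau}_{\m{C}^{0,1/2}(\omega_0)}<c_1$ and $\norm{\beta}_{\m{C}^{1,1/2}(\omega_0)}<c_2$, a unique small $\psi$ solving $\m{F}=0$, with thresholds and estimates depending on $(\Sigma,J)$ only through the Sobolev and Schauder constants of $\omega_0$. A final step checks that for data this small the solution satisfies the openness constraint $\norm{q}^2_\omega<1$, placing it in the domain where the HcscK system is defined. The asymmetry in regularity (plain $\m{C}^{0,1/2}$ for $\tau$ but $\m{C}^{1,1/2}$ for $\beta$) reflects the fact that $\beta$ enters through $\bdiff q$ and is therefore differentiated once by the divergence term.

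The main obstacle I anticipate is precisely the coupling caused by a non-holomorphic $q$. When $\beta\neq 0$ the form $\bdiff q$ does not vanish, so the divergence term $\mrm{div}\big(2\,\mrm{Re}(g(\bdiff q,\bar q))^\sharp/(1+\sqrt{1-\norm{q}^2_\omega})\big)$ of \eqref{eq:HcscK_curve_originale} is genuinely present; moreover $q$ itself depends on the unknown $\omega$ through $g_{1\bar1}$ and through the metric-dependent projection used to fix $\tau$, so every $\omega$-derivative in the linearisation also acts on $q$. The delicate point is thus to obtain Schauder estimates for $q(\omega,\tau,\beta)$ that are uniform as $\omega$ ranges in a neighbourhood of $\omega_0$, and to show that the associated nonlinear remainder is a contraction in the chosen H\"older norms, thereby both closing the perturbation argument and exhibiting the explicit dependence of $c_1,c_2$ on the geometry of $\omega_0$. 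This uniform control of the coupled term is exactly the analytic input beyond Hodge's decoupled case, where $q$ is holomorphic and the divergence term drops out.
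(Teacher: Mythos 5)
Your strategy is sound and would prove the theorem, but it takes a genuinely different route from the one in the text, and it is worth spelling out where the two diverge. On the complex moment map you and the paper do the same thing in substance: both parametrise the solutions as $q=\tau+(\text{metric-dependent correction in }\beta)$, you via surjectivity of $\bdiff$ on $\Gamma(K^2_\Sigma)$ from Serre duality ($H^1(K^2_\Sigma)\cong H^0(K_\Sigma^{-1})^*=0$), the paper via the Fredholm alternative for $\nabla^{1,0}{}^*$ together with the vanishing of $\ker\nabla^{1,0}$ on $\Gamma(K_\Sigma)$ — equivalent arguments. The divergence is in the real moment map. The paper does \emph{not} substitute $q(\omega,\tau,\beta)$ into the equation in its original form; it first performs Donaldson's change of variables $\tilde{\omega}=\bigl(1+\sqrt{1-\norm{q}^2_\omega}\bigr)\omega$, after which the real equation contains $q$ only algebraically (the term $\Delta\log\bigl(1+\sqrt{1-\norm{q}^2}\bigr)$ disappears and the divergence term reduces to $\mrm{Re}\,\tilde{g}(\bar{q},\nabla^{1,0}\beta)$, with no derivatives falling on $q$), and then runs a continuity method in $(t\tau,t\beta)$ for a conformal potential, with a maximum-principle $\m{C}^0$ bound, $L^2$ and $L^4$ estimates and Morrey--Sobolev embeddings supplying the effective constants; openness is obtained by inverting the \emph{coupled} linearisation in $(\dot{\eta},\varphi)$ as a perturbation of a decoupled pair of elliptic operators. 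Your route — a one-shot quantitative implicit function theorem for the composite scalar operator in $\psi$ around the hyperbolic metric — is conceptually cleaner, and your identification of the linearisation at $(0,0,0)$ with the Lichnerowicz operator is correct, since $q(\omega,0,0)\equiv 0$ and the $q$-terms are quadratic in $(\tau,\beta)$. What it costs you is exactly the point you flag as the main obstacle: without the change of variables the nonlinear remainder contains two derivatives of $q$, hence of the $\omega$-dependent Green's operator applied to $\beta$, and the entire quantitative content of the theorem — that $c_1,c_2$ depend only on a few Sobolev and elliptic constants of $\omega_0$ — lives in a uniform Lipschitz estimate for that remainder as $\omega$ ranges over a H\"older ball. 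Donaldson's change of variables is precisely the device that removes those derivatives and makes the constants extractable; in your formulation the corresponding uniform Schauder bounds for $G_\omega$ and for two derivatives of $G_\omega(\beta)$ are still obtainable, but they amount to redoing most of the paper's estimates in a less convenient gauge. Two minor points that you handle correctly: the projection fixing $\tau$ is the $L^2(\omega)$-orthogonal one onto $H^0(K^2_\Sigma)$ and is therefore itself $\omega$-dependent, and the open condition $\norm{q}^2_\omega<1$ does follow from smallness of $(\tau,\beta)$ via the Green's operator estimate, as in the paper's closedness argument.
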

An application of the Implicit Function Theorem would give quite easily the result above for some~$c_1$,~$c_2 > 0$, but much of the work here goes into proving the stronger characterization in terms of Sobolev and elliptic constants of the hyperbolic metric. The precise constants which play a role will be made clear in the course of the proof, and with a little effort the dependence upon these constants could be made completely explicit.

A consequence of Theorem~\ref{thm:curve_MainThm} is the construction of a hyperk\"ahler structure on an open neighbourhood of the zero section in~$T^*\m{M}$. The hyperk\"ahler thickening of Teichm\"uller space considered by Donaldson is then a quotient of the locus~$\beta=0$ by the torus~$\m{A}_\Sigma$, see~\cite[\S$3.1$, pag.~$185$]{Donaldson_hyperkahler}. The open neighbourhood on which the hyperk\"ahler metric is defined can be controlled in terms of the hyperbolic geometry of~$\Sigma$.

According to our previous discussion of the space~$\m{M}$, the holomorphic cotangent space~$T^*\m{M}$ can be identified with the moduli space of collections consisting of a marked Riemann surface~$(\Sigma, J)$ together with a holomorphic line bundle of fixed degree, a holomorphic quadratic differential~$\tau$ and a holomorphic~$1$-form~$\beta$, so that it is a hyperk\"ahler manifold of complex dimension~$2g+2(3g-3)$: the differential~$\beta$ parametrizes the cotangent space of the torus~$\m{A}_\Sigma$.   

\begin{cor}
There is an open subset of the space of collections~$T^*\m{M} = \{[(\Sigma, J, L, \tau, \beta)]\}$, described by the conditions
\begin{equation*}
\norm{\tau}_{\m{C}^{0,\frac{1}{2}}(\omega_0)} < c_1(\omega_0)\mbox{ and }\norm{\beta}_{\m{C}^{1,\frac{1}{2}}(\omega_0)}<c_2(\omega_0),
\end{equation*}
carrying an incomplete hyperk\"ahler structure, induced by the hyperk\"ahler reduction of~$\cotJ$ by~$\mscr{G}$.  
\end{cor}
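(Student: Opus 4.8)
The plan is to read this Corollary as the infinite-dimensional hyperk\"ahler reduction of $\cotJ$ by $\mscr{G}$ made concrete by Theorem~\ref{thm:curve_MainThm}. First I would recall the general mechanism: by Theorem~\ref{thm:HKThmIntro} the space $\cotJ$ carries a hyperk\"ahler structure on a neighbourhood of the zero section, and by Theorem~\ref{thm:HamiltonianHyperkIntro} the action of $\mscr{G}=\mrm{Ham}(\Sigma,\omega)$ is Hamiltonian for the whole family, with moment maps $\f{m}_{\bm{\Omega}_{\bm{I}}}$ and $\f{m}_{\bm{\Theta}}$, i.e.\ for the three symplectic forms $\bm{\Omega}_{\bm{I}}$, $\mrm{Re}\,\bm{\Theta}$, $\mrm{Im}\,\bm{\Theta}$. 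Their common zero locus is exactly the solution set of the HcscK system, and on the regular part the Marsden--Weinstein construction, applied simultaneously to the three forms, endows the quotient $\left(\f{m}_{\bm{\Omega}_{\bm{I}}}^{-1}(0)\cap\f{m}_{\bm{\Theta}}^{-1}(0)\right)/\mscr{G}$ with an induced hyperk\"ahler structure. This is the abstract object whose existence I want to exhibit concretely.

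The second step is to coordinatise this quotient using Theorem~\ref{thm:curve_MainThm}. By the Kempf--Ness philosophy of Section~\ref{sec:stability}, reformulated infinitesimally in Section~\ref{sec:complexification}, solving the complexified system~\eqref{eq:HcscK_curve_originale} amounts to locating, inside each complexified $\mscr{G}$-orbit, a genuine zero of the moment maps. Theorem~\ref{thm:curve_MainThm} provides exactly one such zero for every pair $(\tau,\beta)$ of a holomorphic quadratic differential and a holomorphic $1$-form with $\norm{\tau}_{\m{C}^{0,\frac{1}{2}}(\omega_0)}<c_1$ and $\norm{\beta}_{\m{C}^{1,\frac{1}{2}}(\omega_0)}<c_2$. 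I would then argue that the assignment $(\tau,\beta)\mapsto[(\omega,q)]$ descends to a bijection from this range of holomorphic data onto the corresponding open subset of the reduction, matching the description of $T^*\m{M}$ recalled above: the Teichm\"uller directions of $\m{M}$ are dual to the quadratic differentials $\tau$, while $\beta$ parametrises the cotangent fibre of the torus $\m{A}_\Sigma$.

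To promote this bijection to a diffeomorphism, and hence to transport the three K\"ahler forms and the metric, I would invoke the smooth dependence of the solution $(\omega,q)$ on the parameters $(\tau,\beta)$, which is built into the continuity and implicit-function argument proving Theorem~\ref{thm:curve_MainThm}; a transverse slice to the $\mscr{G}$-orbits, obtained by fixing the gauge, then furnishes local charts in which $(\tau,\beta)$ are smooth coordinates. Finally, incompleteness is a structural feature rather than an extra computation: the Biquard--Gauduchon metric of Theorem~\ref{thm:BiquardGauduchon} is defined only where the eigenvalues of $\alpha\bar{\alpha}$ are $<1$, equivalently $\norm{q}^2<1$, so the reduced metric lives only on the open subset cut out by $c_1,c_2$ and cannot be complete.

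The main obstacle I anticipate is the regularity of the infinite-dimensional reduction: one must verify that $0$ is a regular value of the combined moment map and that $\mscr{G}$ acts on the solution locus with the expected finite stabilisers, so that the quotient is a genuine smooth hyperk\"ahler manifold and not merely a set of $\mscr{G}$-equivalence classes. Here the identification with the finite-dimensional moduli space $\m{M}$ of marked Riemann surfaces with a line bundle --- smooth away from its orbifold points --- is what makes the argument run, but transferring that regularity across the complexified-orbit correspondence, and ruling out that distinct admissible pairs $(\tau,\beta)$ could yield $\mscr{G}$-equivalent solutions, is the delicate point that the sharp Sobolev and elliptic estimates of Theorem~\ref{thm:curve_MainThm} are meant to control.
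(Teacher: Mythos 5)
Your proposal matches the paper's own reasoning: the Corollary is stated as an immediate consequence of Theorem~\ref{thm:curve_MainThm}, with the preceding discussion identifying $T^*\m{M}$ with collections $[(\Sigma,J,L,\tau,\beta)]$ and the solutions of the HcscK system (i.e.\ the zero locus of the hyperk\"ahler moment maps along complexified orbits) parametrized bijectively by the pairs $(\tau,\beta)$ in the stated H\"older balls, so that the hyperk\"ahler reduction descends to this open subset. The regularity and stabiliser issues you flag at the end are likewise left implicit in the paper, which treats the reduction at this formal level.
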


The rest of the section is devoted to the proof of Theorem~\ref{thm:curve_MainThm}. In Section~\ref{sec:complex_mm_curve} we first show that solutions to~\eqref{eq:HcscK_curve_originale}, if they exist, are parametrised a priori by pairs~$(\tau,\beta)$ as above. The pair~$(0,0)$ corresponds to the unique hyperbolic metric~$\omega_0$. Then, following an idea of Donaldson, we perform a conformal transformation of the unknown metric~$\omega$ which brings the real moment map equation to a much simpler form. But in our case this has the cost of turning the linear complex moment map equation into a more complicated quasi-linear equation. 

In Section~\ref{sec:ContinuitySec} we introduce a continuity method for solving this equivalent system of equations. It is given simply by deforming a given pair~$(\tau, \beta)$ to~$(t \tau, t\beta)$ for~$t\in[0,1]$. In Section~\ref{sec:Estimates_curve} we proceed to establish~$\m{C}^{2,\frac{1}{2}}(\omega_0)$ a priori estimates on solutions~$\omega_t$,~$q_t$, and to show that the condition~$\norm{q_t}^2_{\omega_t} < 1$ is closed along the continuity path. The latter fact requires to control the growth of the norm~$\norm{\omega_t}_{\m{C}^{0,\frac{1}{2}}(\omega_0)}$, which we can achieve provided the norms~$\norm{\tau}_{\m{C}^{0,\frac{1}{2}}(\omega_0)}$,~$\norm{\beta}_{\m{C}^{1}(\omega_0)}$ are sufficiently small, depending only on a few Sobolev constants of~$\omega_0$, as well as elliptic constants for the Bochner Laplacian~$\nabla^*_{\omega_0}\nabla_{\omega_0}$ acting on~$1$-forms and the Riemannian Laplacian~$\Delta_{\omega_0}$ acting on functions. Finally in Section~\ref{sec:Openness_curve} we show that the linearization of the operator corresponding to our equations is an isomorphism. For this we need to take~$\norm{\beta}_{\m{C}^{1, \frac{1}{2}}(\omega_0)}$ sufficiently small, again in terms of an elliptic constant for the Riemannian Laplacian~$\Delta_{\omega_0}$ on functions. Thus our continuity path is also open, and moreover the parametrization by~$(\tau, \beta)$ is bijective.
 
\subsection{The complex moment map}\label{sec:complex_mm_curve}

Let us focus on the first equation in~\eqref{eq:HcscK_curve_originale}, corresponding to the complex moment map. We want to show that for any fixed~$\omega$ we can parametrize the solutions~$q$ to
\begin{equation*}
\nabla^{1,0}_\omega{}^*\nabla^{1,0}_\omega{}^*q=0.
\end{equation*}
Notice that the kernel of the operator~$\nabla^{1,0}{}^*:\m{A}^{1,0}(\Sigma)\to\m{C}^\infty_0(\Sigma)$ is~$H^0(K_\Sigma)$, since 
\begin{equation*}
\nabla^{1,0}{}^*\beta=-g^{1\bar{1}}\diff_{\bar{z}}\beta_1.
\end{equation*}
So in order to solve the complex moment map equation we can simply fix a holomorphic~$1$-form~$\beta$ and solve
\begin{equation}\label{eq:complex_mm_curve_abeliandiff}
\nabla^{1,0}{}^*q=\beta.
\end{equation}
In equation~\eqref{eq:complex_mm_curve_abeliandiff},~$\nabla^{1,0}{}^*$ is the formal adjoint of
\begin{equation*}
\nabla^{1,0}\!:\m{A}^{1,0}(\Sigma)\to\Gamma(K^2_\Sigma).
\end{equation*}
Since~$\nabla^{1,0}{}^*\!:\Gamma(K^2_\Sigma)\to\Gamma(K_\Sigma)$ is an elliptic operator, by the Fredholm alternative we know that there is a solution~$q$ to equation~\eqref{eq:complex_mm_curve_abeliandiff} if and only if~$\beta$ is orthogonal to the kernel of~$\nabla^{1,0}$.
\begin{lemma}
The kernel of~$\nabla^{1,0}\!:\m{A}^0(K_\Sigma)\to\m{A}^0(K_\Sigma^2)$ is trivial.
\end{lemma}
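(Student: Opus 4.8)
The plan is to reduce the vanishing of $\ker\nabla^{1,0}$ to the classical fact that a holomorphic line bundle of negative degree on a compact Riemann surface has no nonzero holomorphic sections. The operator $\nabla^{1,0}$ is not itself a $\bdiff$-operator, so I cannot invoke holomorphicity directly; the point I would exploit is that, because $\nabla$ is the Chern connection of $K_\Sigma$ (a metric connection with $\nabla^{0,1}=\bdiff$), its $(1,0)$-part is interchanged with a genuine $\bdiff$-operator under complex conjugation. Concretely, as recalled earlier in the excerpt for metric connections, $\nabla$ induces a connection $\overline{\nabla}$ on the conjugate bundle $\overline{K_\Sigma}$ satisfying $\overline{\nabla}^{0,1}=\overline{\nabla^{1,0}}$, i.e. $\overline{\nabla^{1,0}s}=\overline{\nabla}^{0,1}\bar{s}$ for every $s\in\Gamma(K_\Sigma)$.

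First I would note that on a curve we are automatically in the integrable setting, so the $(0,1)$-operator $\overline{\nabla}^{0,1}$ on $\overline{K_\Sigma}$ carries no integrability obstruction and endows $\overline{K_\Sigma}$ with the structure of a holomorphic line bundle, say $\mathcal{L}$. The conjugation relation above then gives a bijection between $\ker\nabla^{1,0}\subseteq\Gamma(K_\Sigma)$ and the space of $\mathcal{L}$-holomorphic sections: $\nabla^{1,0}s=0$ if and only if $\overline{\nabla}^{0,1}\bar s=0$, that is, $\bar s\in H^0(\mathcal{L})$. Next I would compute the degree of $\mathcal{L}$. Since the degree depends only on the underlying smooth bundle and conjugation reverses the first Chern class, $\deg\mathcal{L}=c_1(\overline{K_\Sigma})=-c_1(K_\Sigma)=2-2g(\Sigma)$, which is strictly negative because $g(\Sigma)>1$. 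A nonzero holomorphic section of $\mathcal{L}$ would have an effective divisor of zeros of total degree $\deg\mathcal{L}<0$, which is impossible; hence $H^0(\mathcal{L})=0$, and therefore $s=0$.

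The only step requiring genuine care is the second one: verifying that $\overline{\nabla}^{0,1}$ really is the $\bdiff$-operator of a holomorphic structure and that the identification $\ker\nabla^{1,0}\cong H^0(\mathcal{L})$ respects it, together with getting the sign of $\deg\mathcal L$ right. I do not expect this to be hard — it is a bookkeeping check of conjugation conventions — but it is where the argument actually lives. I would avoid the alternative Bochner--Kodaira route, since pairing $\nabla^{1,0\,*}\nabla^{1,0}$ against $s$ only yields $\|\bdiff s\|^2=-\int_\Sigma\gamma\,|s|^2$ with $\gamma$ the (pointwise) Chern curvature of $K_\Sigma$; for the hyperbolic $\omega_0$ one has $\gamma>0$ pointwise and this closes immediately, but for a general area form in the class $\gamma$ need not be pointwise positive, so only the topological degree argument is robust for arbitrary $\omega$.
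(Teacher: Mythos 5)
Your argument is correct and is essentially the paper's: the paper's proof also conjugates an element $\eta$ of the kernel and uses the metric to identify $\bar{\eta}$ with a section of $T^{1,0}\Sigma\cong\overline{K_\Sigma}$ annihilated by the induced $(0,1)$-operator, then concludes from the absence of nonzero holomorphic vector fields on a curve of genus $>1$ — which is exactly your negative-degree observation, since $\deg\overline{K_\Sigma}=2-2g<0$. The only cosmetic difference is that the paper phrases the endpoint as ``no holomorphic vector fields'' rather than computing the degree of the conjugate bundle explicitly.
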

\begin{proof}
Assume that~$\eta$ is in the kernel of~$\nabla^{1,0}\!:\m{A}^0(K_\Sigma)\to\m{A}^0(K_\Sigma^2)$, and let~$X:=\bar{\eta}^\sharp\in\Gamma(T^{1,0}\Sigma)$. Then~$\nabla^{0,1}\bar{\eta}=0$, but this happens if and only if
\begin{equation*}
0=\nabla_{\bar{1}}\eta_{\bar{1}}\,(\mrm{d}\bar{z})^2=g_{1\bar{1}}\,\nabla_{\bar{1}}X^1\,(\mrm{d}\bar{z})^2
\end{equation*}
if and only if~$X$ is holomorphic. But since~$g(\Sigma)>1$ there are no non-zero holomorphic vector fields on~$\Sigma$, so~$\eta=0$.
\end{proof}
Hence for all fixed~$\beta$ there is a solution to equation~\eqref{eq:complex_mm_curve_abeliandiff}. Moreover, there is a unique solution orthogonal to the kernel of~$\nabla^{1,0}{}^*$, i.e. there is a unique solution to equation~\eqref{eq:complex_mm_curve_abeliandiff} that is in the image of~$\nabla^{1,0}$. We already computed in~\ref{rmk:hol_quad_diff} that the kernel of~$\nabla^{1,0}{}^*$ is the space of holomorphic quadratic differentials, so we deduce that for any holomorphic~$1$-form~$\beta$, any solution~$q$ of~\eqref{eq:complex_mm_curve_abeliandiff} can be written as
\begin{equation}
q=\tau+\nabla^{1,0}\eta(\beta)
\end{equation} 
where~$\tau$ is a holomorphic quadratic differential and~$\eta(\beta)$ is the (unique)~$(1,0)$--form that solves
\begin{equation*}
\nabla^{1,0}{}^*\nabla^{1,0}\eta=\beta.
\end{equation*}
Of course~$\eta(\beta)$ can be written as~$\eta=G(\beta)$, where~$G$ is the Green's operator associated to the elliptic operator~$\nabla^{1,0}{}^*\nabla^{1,0}\!:\Gamma(K_\Sigma)\to\Gamma(K^2_\Sigma)$. So the set of solutions to the complex moment map equation can be written as the~$(4g-3)$-dimensional complex vector space 
\begin{equation*}
\m{V}=\set*{\tau+\nabla^{1,0}G(\beta)\tc\beta\in H^0(K_\Sigma)\mbox{ and }\tau\in H^0(K^2_\Sigma)}.
\end{equation*}
The solutions considered in~\cite{Donaldson_hyperkahler} and~\cite{Hodge_phd_thesis} form a codimension-$g$ vector subspace of~$\m{V}$ and correspond to setting~$\beta = 0$.

Let~$L\!: \Gamma(K_\Sigma) \to \Gamma(K_\Sigma)$ be the self--adjoint elliptic operator defined by~$L(\varphi)=\nabla^{1,0}{}^*\nabla^{1,0}\varphi$. The standard Schauder estimates for elliptic operators on~$\m{C}^{k,\alpha}(\Sigma,\omega)$ tell us that there is a constant~$C=C(\omega,\alpha,k)$ such that
\begin{equation}\label{eq:elliptic_estimate_nablanabla}
\norm{\varphi}_{k,\alpha}\leq C\left(\norm{L\varphi}_{k-2,\alpha}+\norm{\varphi}_0\right),
\end{equation}
so for the Green operator we have
\begin{lemma}\label{lemma:stime_operatore_Green}
Let~$\beta\in\m{A}^{1,0}(\Sigma)$, and let~$\eta\in\m{A}^{1,0}(\Sigma)$ be the unique solution to
\begin{equation*}
\nabla^{1,0}{}^*\nabla^{1,0}\eta=\beta.
\end{equation*}
Then, for every~$k\geq 2$
\begin{equation*}
\norm{\eta}_{k,\alpha}\leq K\norm{\beta}_{k-2,\alpha}
\end{equation*}
for some constant~$K>0$ that does not depend on~$\eta$,~$\beta$.
\end{lemma}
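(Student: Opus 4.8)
The estimate \eqref{eq:elliptic_estimate_nablanabla} already contains almost everything needed: applied to $\varphi=\eta$ with $L\eta=\beta$ it gives
\begin{equation*}
\norm{\eta}_{k,\alpha}\leq C\left(\norm{\beta}_{k-2,\alpha}+\norm{\eta}_0\right),
\end{equation*}
and the existence and uniqueness of $\eta=G(\beta)$ have already been established via the Green operator. The plan is therefore to absorb the lower-order term $\norm{\eta}_0$ into the right-hand side, upgrading the Schauder estimate to the cleaner bound $\norm{\varphi}_{k,\alpha}\leq K\norm{L\varphi}_{k-2,\alpha}$ valid for all $\varphi\in\Gamma(K_\Sigma)$. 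The crucial input will be the injectivity of $L$.

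First I would record that $L=\nabla^{1,0}{}^*\nabla^{1,0}$ is injective. If $L\varphi=0$, then integrating by parts gives $0=\left\langle L\varphi,\varphi\right\rangle_{L^2}=\norm{\nabla^{1,0}\varphi}_{L^2}^2$, so that $\varphi\in\ker\nabla^{1,0}$; by the Lemma just proved this kernel is trivial, hence $\varphi=0$.

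The heart of the argument is a standard compactness-contradiction scheme to remove the $\norm{\cdot}_0$ term. Suppose the desired uniform bound fails; then there is a sequence $\varphi_n\in\Gamma(K_\Sigma)$ with $\norm{\varphi_n}_{k,\alpha}=1$ and $\norm{L\varphi_n}_{k-2,\alpha}\to 0$. By \eqref{eq:elliptic_estimate_nablanabla} we have $1\leq C\left(\norm{L\varphi_n}_{k-2,\alpha}+\norm{\varphi_n}_0\right)$, so $\norm{\varphi_n}_0\geq\tfrac{1}{2C}$ for $n$ large. Since the $\varphi_n$ are bounded in $\m{C}^{k,\alpha}(\omega_0)$, the compact embedding $\m{C}^{k,\alpha}\hookrightarrow\m{C}^{k,\alpha'}$ for $\alpha'<\alpha$ yields a subsequence converging to some $\varphi_\infty$ with $\norm{\varphi_\infty}_0\geq\tfrac{1}{2C}>0$. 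Passing to the limit in $L\varphi_n\to 0$ gives $L\varphi_\infty=0$ with $\varphi_\infty\neq 0$, contradicting the injectivity of $L$. This establishes $\norm{\varphi}_{k,\alpha}\leq K\norm{L\varphi}_{k-2,\alpha}$; specializing to $\varphi=\eta$, $L\eta=\beta$, gives the claim, with $K$ depending on $(\Sigma,J)$ only through the elliptic constant $C$ of \eqref{eq:elliptic_estimate_nablanabla} and the compact embedding, that is, through Sobolev and elliptic constants of $\omega_0$.

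The only subtlety, which is the main obstacle, is to carry out the compactness step in the correct pair of Hölder spaces, so that both $\norm{\varphi_\infty}_0>0$ survives the limit and the equation $L\varphi_\infty=0$ holds in the limit. This is guaranteed by using the compact embedding into the slightly weaker space $\m{C}^{k,\alpha'}$ and noting that $L$ is a bounded operator $\m{C}^{k,\alpha'}\to\m{C}^{k-2,\alpha'}$, so that $L\varphi_n\to L\varphi_\infty$ in $\m{C}^{k-2,\alpha'}$ and the vanishing of the limit is preserved.
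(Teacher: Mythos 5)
Your proof is correct and follows essentially the same route as the paper's: a compactness--contradiction argument that upgrades the Schauder estimate \eqref{eq:elliptic_estimate_nablanabla} by absorbing the $\norm{\cdot}_0$ term, using that $L=\nabla^{1,0}{}^*\nabla^{1,0}$ has trivial kernel. The only cosmetic difference is that the paper normalizes the contradiction sequence by $\norm{G\beta_n}_0$ and kills the limit via the $L^2$ self-adjointness of $L$ and $G$, whereas you normalize by $\norm{\varphi_n}_{k,\alpha}$ and invoke the injectivity of $L$ directly; both hinge on the same input, namely the triviality of $\ker\nabla^{1,0}$ established in the preceding lemma.
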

This result is analogous to~\cite[Proposition~$2.3$]{KodairaMorrow}. The proof there is relative to the Green operator associated to the Laplacian, but it also goes through in our situation; the key points are an elliptic estimate, the linearity of the operator and its self-adjointness. We give a proof of Lemma~\ref{lemma:stime_operatore_Green} anyway, for completeness.
\begin{proof}
Let as before~$L:=\nabla^{1,0}{}^*\nabla^{1,0}$, and let~$G$ be the corresponding Green's operator. In the statement of the Lemma we have~$\eta=G(\beta)$, so we have to prove an estimate for the operator~$G$. By the elliptic estimate~\eqref{eq:elliptic_estimate_nablanabla} we have, for any~$\beta$
\begin{equation*}
\norm{G\beta}_{k,\alpha}\leq C\left(\norm{\beta}_{k-2,\alpha}+\norm{G\beta}_0\right)
\end{equation*}
so it will be enough to show that there is a constant~$C'$ such that~$\norm{G\beta}_0\leq C'\norm{\beta}_{k-2,\alpha}$ for every~$\beta$. Assume that this is not the case. Then we can find a sequence~$\beta_n$ such that
\begin{equation*}
\frac{\norm{G\beta_n}_0}{\norm{\beta_n}_{k-2,\alpha}}\to\infty
\end{equation*}
so the sequence~$\psi_n:=\frac{1}{\norm{G\beta_n}_0}\beta_n$ satisfies
\begin{equation*}
\norm{G\psi_n}_0=1\mbox{ and }\norm{\psi_n}_{k-2,\alpha}\to 0.
\end{equation*}
In particular, together with the elliptic estimate, this implies
\begin{equation*}
\norm{G\psi_n}_{k,\alpha}\leq K
\end{equation*}
for some constant~$K$. By the Ascoli-Arzelà Theorem we can assume that there is a~$\vartheta$ such that for every~$h\leq k$ we have uniform convergence~$\nabla^hG\psi_n\to\nabla^h\vartheta$, up to choosing a subsequence of~$\set{\psi_n}$. Then:
\begin{equation*}
\begin{split}
\norm{\vartheta}_{L^2}^2&=\lim \left\langle G\psi_n,\vartheta\right\rangle_{L^2}=\lim\left\langle G\psi_n,LG\vartheta\right\rangle_{L^2}=\\
&=\lim\left\langle LG\psi_n,G\vartheta\right\rangle_{L^2}=\lim\left\langle\psi_n,G\vartheta\right\rangle_{L^2}=0
\end{split}
\end{equation*}
since~$\psi_n\to 0$ in~$\m{C}^{k-2,\alpha}$. But this is a contradiction: indeed~$\norm{\vartheta}_0=\lim\norm{G\psi_n}_0=1$.
\end{proof}

In particular we deduce from Lemma~\ref{lemma:stime_operatore_Green} that for every~$\alpha\in(0,1)$, if~$\nabla^{1,0}{}^*\nabla^{1,0}\eta=\beta$ then
\begin{equation*}
\norm{\nabla^{1,0}_\omega\eta}_0\leq\norm{\eta}_{2,\alpha}\leq \tilde{C}\norm{\beta}_{0,\alpha}.
\end{equation*}
So for~$q=\tau+\nabla^{1,0}\eta$ we see that if for some~$\alpha$ the~$\m{C}^{0,\alpha}(\omega)$--norms of~$\tau$,~$\beta$ are small enough then we also have~$\norm{q}_0^2<1$, as required by the real moment map equation.

\begin{rmk}
Let us consider what happens when~$g(\Sigma) \leq 1$, that is, when~$\Sigma=\bb{CP}^1$ or~$\Sigma=\bb{C}/\Lambda$ for a lattice~$\Lambda<\bb{C}$.

In the first case~$\Sigma=\bb{CP}^1$ there are no holomorphic~$1$-forms or holomorphic quadratic differentials, so the only solution to the complex moment map equation is~$q=0$ and the HcscK system reduces to the cscK equation.

When~$\Sigma$ is a torus, if we consider systems of coordinates on~$\Sigma$ induced by affine coordinates on~$\bb{C}$ via the projection~$\bb{C}\rightarrow\bb{C}/\Lambda$, then holomorphic objects on~$\Sigma$ have constant coefficients. It is immediate then to see, by the Fredholm alternative for~$\nabla^{1,0}{}^*$, that the equation~$\nabla^{1,0}{}^*q=\beta$ can be solved precisely when the holomorphic form~$\beta$ is~$0$. In this case then~$q$ must be a holomorphic quadratic differential. By fixing an affine coordinate~$z$ on the torus then the HcscK system reduces to the equation
\begin{equation*}
\Delta\,\mrm{log}\left(g_{1\bar{1}}\left(1+\sqrt{1-(g^{1\bar{1}})^2q_{11}q_{\bar{1}\bar{1}}}\right)\right)=0
\end{equation*}
since~$g_{1\bar{1}}$ can be regarded as a (global) positive function on~$\Sigma$ and~$q_{11}$ is a constant. But then~$g_{1\bar{1}}\left(1+\sqrt{1-(g^{1\bar{1}})^2q_{11}q_{\bar{1}\bar{1}}}\right)$ must be a constant, and this happens only if~$g$ is the flat metric in its class. So, even for~$g(\Sigma)=1$, the HcscK equations essentially reduce to the cscK equation.
\end{rmk}

\subsection{A change of variables} 

The upshot of the previous section is that a unique solution~$q$ to the complex moment map equation can always be found, for a fixed metric~$\omega$, by prescribing two parameters~$\tau\in H^0(K_\Sigma^2)$,~$\beta\in H^0(K_\Sigma)$. The corresponding~$q(\omega,\tau,\beta)$ is given by
\begin{equation*}
q=\tau+\nabla^{1,0}\eta(\beta)
\end{equation*}
where~$\eta(\beta)$ is the unique solution to~$\nabla^{1,0}{}^*\nabla^{1,0}\eta=\beta$, so our system becomes
\begin{equation}\label{eq:sistema_HCSCK_beta}
\begin{dcases}
\norm{q}^2_\omega<1;\\
\nabla^{1,0}{}^*q=\beta;\\
2\,S(\omega)-2\,\widehat{S}+\Delta\,\mrm{log}\left(1+\sqrt{1-\norm{q}^2}\right)-\mrm{div}\frac{2\,\mrm{Re}\left(\bar{q}\left(-,\beta^\sharp\right)\right)^\sharp}{1+\sqrt{1-\norm{q}^2}}=0.
\end{dcases}
\end{equation}

In order to study the real moment map equation we take an approach analogous to the one in~\cite{Donaldson_hyperkahler}, by performing a change of variables. 

Let~$F:=1+\sqrt{1-\norm{q}^2_\omega}$, and consider the K\"ahler form~$\tilde{\omega}:=F\,\omega$. Notice that~$\omega$ can be recovered from~$\tilde{\omega}$ and~$q$, by~$\omega=\frac{1}{2}\left(1+\norm{q}^2_{\tilde{\omega}}\right)\tilde{\omega}$. Indeed, a quick computation shows that
\begin{equation*}
\frac{2}{1+\sqrt{1-\norm{q}^2_\omega}}=1+\frac{\norm{q}^2_\omega}{\left(1+\sqrt{1-\norm{q}^2_\omega}\right)^2}=1+\norm{q}^2_{\tilde{\omega}}
\end{equation*}
so that~$F^{-1}=\frac{1}{2}(1+\norm{q}^2_{\tilde{\omega}})$. We also have this identity for the scalar curvature:
\begin{equation*}
S(\omega)=F\,S(\tilde{\omega})-\frac{1}{2}\Delta(\mrm{log}\,F)
\end{equation*}
while for any vector field~$X$
\begin{equation*}
\mrm{div}_\omega(X)=\mrm{div}_{\tilde{\omega}}(X)-X(\mrm{log}\,F)
\end{equation*}
so that we get
\begin{equation*}
\mrm{div}_\omega\left(\frac{2}{F}\mrm{Re}\left(\bar{q}\left(-,\beta^\sharp\right)^\sharp\right)\right)=F\,\mrm{div}_{\tilde{\omega}}\left(2\,\mrm{Re}\left(\bar{q}\big(-,\beta^{\tilde{\sharp}}\big)^{\tilde{\sharp}}\right)\right).
\end{equation*}
Using these identities we see that~$\omega$ solves the second equation in~\eqref{eq:sistema_HCSCK_beta} if and only if~$\tilde{\omega}$ solves
\begin{equation*}
2\,S(\tilde{\omega})-\frac{2}{F}\widehat{S}-\mrm{div}_{\tilde{\omega}}\left(2\,\mrm{Re}\left(\bar{q}\big(-,\beta^{\tilde{\sharp}}\big)^{\tilde{\sharp}}\right)\right)=0.
\end{equation*}
These computations show that under this change of variables the HcscK system for~$(\omega,q)$ is equivalent to the following system of equations for~$\tilde{\omega}$ and~$q$
\begin{equation}\label{eq:HcscK_trasformata}
\begin{dcases}
\frac{2}{1+\norm{q}^2_{\tilde{\omega}}}{\nabla^{1,0}_{\tilde{\omega}}}^*q=\beta;\\
2\,S(\tilde{\omega})-\widehat{S}\left(1+\norm{q}^2_{\tilde{\omega}}\right)-\mrm{div}_{\tilde{\omega}}\left(2\,\mrm{Re}\left(\bar{q}\big(-,\beta^{\tilde{\sharp}}\big)^{\tilde{\sharp}}\right)\right)=0;\\
\norm{q}^2_{\tilde{\omega}}<1.
\end{dcases}
\end{equation}
We can use the first equation in~\eqref{eq:HcscK_trasformata} to rewrite the second as
\begin{equation*}
2\,S(\tilde{\omega})+\left(-\widehat{S}+\norm{\beta}^2_{\tilde{\omega}}\right)\left(1+\norm{q}^2_{\tilde{\omega}}\right)
-2\,\mrm{Re}\left(\tilde{g}(\bar{q},\nabla^{1,0}\beta)\right)=0.
\end{equation*}

\subsubsection{The equations for a conformal potential}

In order to solve our equations~\eqref{eq:HcscK_trasformata} we take the standard approach of fixing a reference K\"ahler form, still denoted by~$\tilde{\omega}$, and of looking for solutions in its conformal class, that is, of the form~$e^{f}\tilde{\omega}$. A straightforward computation shows that our equations written in terms of the unknown~$f$ become 
\begin{equation}\label{eq:HcscK_trasf_conf_pot}
\begin{dcases}
\frac{2\,\mrm{e}^{-f}}{1+\norm{\mrm{e}^{-f}q}^2_{\tilde{\omega}}}{\nabla^{1,0}_{\tilde{\omega}}}^*q=\beta;\\
2S(\tilde{\omega})+\Delta_{\tilde{\omega}}f+\left(\norm{\beta}^2_{\tilde{\omega}}-\mrm{e}^f\widehat{S}\right)\!\left(1+\norm{\mrm{e}^{-f}\!q}^2_{\tilde{\omega}}\right)=2\mrm{Re}\,\tilde{g}(\mrm{e}^{-f}\bar{q},\nabla^{1,0}_{\tilde{\omega}}\beta-\beta\otimes\diff f);\\
\norm{\mrm{e}^{-f}q}^2_{\tilde{\omega}}<1.
\end{dcases}
\end{equation}
Here~$\widehat{S}=\widehat{S(\omega)}$ is still computed using the original metric~$\omega$.

Of course we may also do things in the opposite order: we can first write our original system~\eqref{eq:sistema_HCSCK_beta} in terms of a conformal factor and then perform the change of variables described in the previous section. In fact this yields the same equations~\eqref{eq:HcscK_trasf_conf_pot}. To see this write~\eqref{eq:sistema_HCSCK_beta} in terms of a reference K\"ahler form, still denoted by~$\omega$, and a conformal metric~$\omega_f=\mrm{e}^f\omega$, giving 
\begin{equation}\label{eq:HcscK_system_curve_confpot}
\begin{dcases}
{\nabla^{1,0}_{\omega}}^*q=\mrm{e}^f\beta;\\
2\,S(\omega)+\Delta_\omega(f)+\Delta_{\omega}\,\mrm{log}\left(1+\sqrt{1-\norm{\mrm{e}^{-f}q}_{\omega}^2}\right)=\mrm{div}_{\omega}\frac{2\,\mrm{e}^{-f}\,\mrm{Re}\left(\bar{q}\left(-,\beta^\sharp\right)\right)^\sharp}{1+\sqrt{1-\norm{\mrm{e}^{-f}q}_{\omega}^2}}+2\,\mrm{e}^f\widehat{S};\\
\norm{\mrm{e}^{-f}q}^2_{\omega}<1.
\end{dcases}
\end{equation}
Notice first of all that if~$\omega_f$ satisfies the second equation in~\eqref{eq:HcscK_system_curve_confpot} then~$\omega_f$ is necessarily in the same K\"ahler class of~$\omega$, since the constant which appears is~$\widehat{S(\omega)}$ rather than~$\widehat{S(\omega_f)}$. Now we can rewrite this system in terms of~$\omega' =\left(1+\sqrt{1-\norm{\mrm{e}^{-f}q}^2_\omega}\right)\omega$ and a computation shows that this is the same as~\eqref{eq:HcscK_trasf_conf_pot}, with~$\tilde{\omega}$ replaced by~$\omega'$.

The upshot of this observation is that there is a bijection between the solutions to~\eqref{eq:HcscK_trasf_conf_pot} and those of~\eqref{eq:HcscK_system_curve_confpot}, given by mapping~$(q, \mrm{e}^f \tilde{\omega})$ to~$(q, \mrm{e}^f \omega)$, and a solution~$\mrm{e}^f\omega$ is automatically cohomologous to the original metric~$\omega$. In particular the complex moment map equation in~\eqref{eq:HcscK_trasf_conf_pot}, that is
\begin{equation}\label{eq:trasf_complex_mm_confpot}
\frac{2\,\mrm{e}^{-f}}{1+\norm{\mrm{e}^{-f}q}^2_{\tilde{\omega}}}{\nabla^{1,0}_{\tilde{\omega}}}^*q=\beta
\end{equation}
is equivalent to~$\mrm{e}^{-f}{\nabla^{1,0}_\omega}^*q=\beta$, and we know from Section~\ref{sec:complex_mm_curve} that it has solutions. 

\subsection{A continuity method}\label{sec:ContinuitySec}

In the previous section we showed that the original HcscK system is equivalent to~\eqref{eq:HcscK_trasf_conf_pot}. We will solve this system, under appropriate conditions on~$\tau$ and~$\beta$, by using a continuity method. 

It is convenient to change our notation for the background metric appearing in~\eqref{eq:HcscK_trasf_conf_pot}, denoting it simply by~$\omega$. We take the background metric~$\omega$ to have constant negative Gauss curvature. Without loss of generality we can also assume that the constant~$\widehat{S}$ in~\eqref{eq:HcscK_trasf_conf_pot} is equal to~$-2$, and we consider the family of equations~\eqref{eq:HcscK_continuity} parametrized by~$t\in[0,1]$, 
\begin{equation}
\tag{$\star_t$}\label{eq:HcscK_continuity}
\begin{split}
&\Delta f_t+\left(2\,\mrm{e}^{f_t}+\norm{t\beta}^2\right)\!\left(1+\mrm{e}^{-2\,f_t}\norm{q_t}^2\right)=2+2\,\mrm{Re}\left(g\big(\mrm{e}^{-f_t}\bar{q_t},\nabla^{1,0}\left(t\beta\right)-(t\beta)\otimes\diff f_t\big)\right)\\
&\mrm{e}^{-2\,f_t}\norm{q_t}^2<1
\end{split}
\end{equation}
where~$q_t$ is a solution to
\begin{equation*}
\frac{2\,\mrm{e}^{-f_t}}{1+\mrm{e}^{-2f_t}\norm{q_t}^2}\nabla^{1,0}{}^*q_t=t\,\beta.
\end{equation*}
Recall that the complex moment map always has a solution, from Section~\ref{sec:complex_mm_curve}. Moreover, once we fix the projection of~$q_t$ on~$\mrm{ker}\left(\nabla^{1,0}{}^*\right)$ to be~$t\tau\in H^0(K^2_\Sigma)$, there is a unique~$(1,0)$-form~$\eta_t$ such that~$q_t =t\tau+\nabla^{1,0}\eta_t$. Here all metric quantities are computed with respect to the background metric~$\omega$, as usual.
 
For~$t=0$ we have the solution~$f\equiv 0$ to~$(\star_0)$, and we propose to show that, under some boundedness assumptions of~$\tau$,~$\beta$,~$\nabla\beta$, we can find a solution~$f$ to~$(\star_1)$. To prove closedness of the continuity method we need \textit{a priori}~$\m{C}^{k,\alpha}$-estimates on~$f_t$ and~$q_t$, for some~$k\geq 2$ and some~$0<\alpha<1$. Moreover, crucially, we also need to show that the open condition~$\mrm{e}^{-2\,f_t}\norm{q_t}^2<1$ is also closed. 

Ideally, the openness of our continuity method should follow from general principles: one expects that obstructions are given by Hamiltonian Killing vector fields, which are trivial in our case since~$g(\Sigma)>1$. However, it is not clear that this general principle can be used in our problem involving a coupled system of equations. Thus we will give a more direct argument using the Implicit Function Theorem. This requires a further estimate along the continuity method.

As a preliminary step we first establish such estimates on the quadratic differential~$q$, along the continuity path, in terms of given H\"older bounds on~$\tau$,~$\beta$, and a H\"older bound on~$f$. The latter will be then proved in the following sections. In what follows all metric quantities are computed with respect to~$\omega$. We know that a solution~$q$ to~\eqref{eq:trasf_complex_mm_confpot} can be decomposed as~$q=\tau+\nabla^{1,0} \eta$ for some~$\eta\in\m{A}^{1,0}(\Sigma)$ and~$\tau\in H^0(K^2_\Sigma)=\mrm{ker}(\nabla^{1,0}{}^*)$. Thus~$\eta$ solves the equation
\begin{equation*}
\frac{2\,\mrm{e}^{-f}}{1+\mrm{e}^{-2f}\norm{\tau+\nabla^{1,0} \eta}^2 }{\nabla^{1,0} }^*\nabla^{1,0} \eta=\beta.
\end{equation*}
We write this in the form
\begin{equation*}
{\nabla^{1,0} }^* \nabla^{1,0}  \eta=\frac{1}{2}\beta\left(\mrm{e}^f+\mrm{e}^{-f}\norm{\tau+\nabla^{1,0} \eta}^2 \right)
\end{equation*}
and use the standard estimate given in Lemma~\ref{lemma:stime_operatore_Green} to show that for all~$k\geq 2$ and~$\alpha\in(0,1)$ there are constants~$C, C' > 0$ such that
\begin{equation}\label{eq:ineq_acc_curva}
\begin{split}
\norm{\eta}_{k,\alpha}&\leq C\norm{\beta}_{k-2,\alpha}\left(\norm{\mrm{e}^f}_{k-2,\alpha}+\norm{\mrm{e}^{-f}}_{k-2,\alpha}\norm*{\norm{\tau+\nabla^{1,0} \eta}^2 }_{k-2,\alpha}\right)\leq\\
&\leq C\mrm{e}^{\norm{f}_{k-2,\alpha}}\norm{\beta}_{k-2,\alpha}\left(1+C'\norm*{\tau+\nabla^{1,0}\eta}^2_{k-2,\alpha}\right)\leq\\
&\leq C\mrm{e}^{\norm{f}_{k-2,\alpha}}\norm{\beta}_{k-2,\alpha}\left(1+C'\left(\norm{\tau}^2_{k-2,\alpha}\!+\norm{\eta}^2_{k,\alpha}+2\norm{\tau}_{k-2,\alpha}\norm{\eta}_{k,\alpha}\right)\right)\!.
\end{split}
\end{equation}
In this estimate only the constant~$C$ depends on~$\omega$, and the dependence is only through the elliptic constant~$K$ appearing in Lemma~\ref{lemma:stime_operatore_Green}. Note that to go from the first to the second inequality in~\eqref{eq:ineq_acc_curva} one has to bound~$\norm*{\norm{\tau+\nabla^{1,0} \eta}^2 }_{k-2,\alpha}$ in terms of~$\norm{\tau+\nabla^{1,0} \eta}_{k-2,\alpha}$: this is possible since~$\nabla$ is the Levi-Civita connection, so the covariant derivatives of the function~$\norm{\tau+\nabla^{1,0} \eta}^2$ can be written in terms of the covariant derivatives of the tensor~$\tau+\nabla^{1,0} \eta$. Setting 
\begin{equation*}
\begin{split}
a=&C\,C'\,\mrm{e}^{\norm{f}_{k-2,\alpha}}\norm{\beta}_{k-2,\alpha}\\
b=&2\,C\,C'\,\mrm{e}^{\norm{f}_{k-2,\alpha}}\norm{\beta}_{k-2,\alpha}\norm{\tau}_{k-2,\alpha}\\
c=&C\,\mrm{e}^{\norm{f}_{k-2,\alpha}}\norm{\beta}_{k-2,\alpha}\left(1+C'\norm{\tau}^2_{k-2,\alpha}\right)\\
\end{split}
\end{equation*}
we can rewrite the inequality~\ref{eq:ineq_acc_curva} in the form
\begin{equation*}
\norm{\eta}_{k,\alpha}\leq c+b\norm{\eta}_{k,\alpha}+a\norm{\eta}^2_{k,\alpha}.
\end{equation*}
Notice that~$a$,~$b$ and~$c$ become arbitrarily small if~$\norm{\beta}_{k-2,\alpha}$ is small enough, depending on~$K$. So if~$\norm{f}_{k-2,\alpha}$,~$\norm{\beta}_{k-2,\alpha}$ and~$\norm{\tau}_{k-2,\alpha}$ satisfy a suitable bound, which only depends on~$\omega$ through~$K$, then we have~$1-b>0$ and~$(1-b)^2-4ac>0$, and we find
\begin{equation*}
0\leq\norm{\eta}_{k,\alpha}\leq\frac{1-b-\sqrt{(1-b)^2-4ac}}{2a}\mbox{ or }\norm{\eta}_{k,\alpha}\geq\frac{1-b+\sqrt{(1-b)^2-4ac}}{2a}.
\end{equation*}
Since for~$\beta=0$ the only solution to our equation is~$\eta=0$, along the continuity path~\eqref{eq:HcscK_continuity} we obtain the  bounds
\begin{equation}\label{eq:stime_eta_cambiamentocoordinate_sintetica}
\norm{\eta}_{k,\alpha}\leq\frac{1-b-\sqrt{(1-b)^2-4ac}}{2a}.
\end{equation}
In particular, for~$k=2$ we get bounds on~$\norm{\eta}_0$ in terms of the~$\m{C}^{0,\alpha}$-norms of~$\beta$,~$\tau$,~$f$. The bound~\eqref{eq:stime_eta_cambiamentocoordinate_sintetica} on~$\eta$ may be written more explicitly as
\begin{equation}\label{eq:stime_eta_cambiamentocoordinate}
\norm{\eta}_{k,\alpha}\leq C\,\mrm{e}^{\norm{f}_{k-2,\alpha}}\norm{\beta}_{k-2,\alpha}+O(\norm{\beta}_{k-2,\alpha}\norm{\tau}_{k-2,\alpha}),
\end{equation}
(where the~$O$ term depends on the background~$\omega$ only through the constant~$K$), and holds as long as
\begin{equation*}
2\,C\,C'\,\mrm{e}^{\norm{f}_{k-2,\alpha}}\norm{\beta}_{k-2,\alpha}\norm{\tau}_{k-2,\alpha}<1
\end{equation*}
and
\begin{equation*}
\begin{split}
4\,C\,C'\,\mrm{e}^{\norm{f}_{k-2,\alpha}}\norm{\beta}_{k-2,\alpha}&\left(C\,\mrm{e}^{\norm{f}_{k-2,\alpha}}\norm{\beta}_{k-2,\alpha}+\norm{\tau}_{k-2,\alpha}\right)<1.
\end{split}
\end{equation*}

\subsection{Estimates along the continuity method}\label{sec:Estimates_curve}

We now proceed to establish H\"older bounds on the conformal potential~$f$.

\paragraph{$\m{C}^0$-estimates.} Let~$(q,f)$ be a solution to~\eqref{eq:HcscK_trasf_conf_pot}. Then, at a point at which~$f$ attains its maximum we have 
\begin{equation*}
-2+\left(2\,\mrm{e}^f+\norm{\beta}^2\right)\left(1+\mrm{e}^{-2\,f}\norm{q}^2\right)-2\,\mrm{Re}\left(g\left(\mrm{e}^{-f}\bar{q},\nabla^{1,0}\beta\right)\right)\leq 0
\end{equation*}
(recall that our convention is~$\Delta=-\mrm{div}\,\mrm{grad}$, so that~$\Delta(f)$ is \emph{positive} where~$f$ attains its maximum). As we are assuming~$\mrm{e}^{-2\,f}\norm{q}^2<1$, by the Cauchy--Schwarz inequality we have
\begin{equation*}
\abs*{\mrm{Re}\left(g\left(\mrm{e}^{-f}\bar{q},\nabla^{1,0}\beta\right)\right)}\leq\norm{\nabla^{1,0}\beta}
\end{equation*}
and so, at a maximum of~$f$
\begin{equation*}
\begin{split}
0&\geq -2+\left(2\,\mrm{e}^f+\norm{\beta}^2\right)\left(1+\mrm{e}^{-2\,f}\norm{q}^2\right)-2\,\mrm{Re}\left(g\left(\mrm{e}^{-f}\bar{q},\nabla^{1,0}\beta\right)\right)\geq\\
&\geq-2+\left(2\,\mrm{e}^f+\norm{\beta}^2\right)-2\norm{\nabla^{1,0}\beta}
\end{split}
\end{equation*}
hence we find that
\begin{equation*}
\mrm{e}^f\leq 1+\norm{\nabla^{1,0}\beta}.
\end{equation*}
Similarly, at a point of minimum of~$f$ we find
\begin{equation*}
-2+\left(2\,\mrm{e}^f+\norm{\beta}^2\right)\left(1+\mrm{e}^{-2\,f}\norm{q}^2\right)-2\,\mrm{Re}\left(g\left(\mrm{e}^{-f}\bar{q},\nabla^{1,0}\beta\right)\right)\geq 0.
\end{equation*}
The same estimates then imply
\begin{equation*}
\begin{split}
0&\leq-2+\left(2\,\mrm{e}^f+\norm{\beta}^2\right)\left(1+\mrm{e}^{-2\,f}\norm{q}^2\right)-2\,\mrm{Re}\left(g\left(\mrm{e}^{-f}\bar{q},\nabla^{1,0}\beta\right)\right)\leq\\
&\leq-2+2\left(2\,\mrm{e}^f+\norm{\beta}^2\right)+2\,\norm{\nabla^{1,0}\beta}
\end{split}
\end{equation*}
so that
\begin{equation*}
2\,\mrm{e}^f\geq 1-\norm{\nabla^{1,0}\beta}-\norm{\beta}^2.
\end{equation*}
If~$\beta$ is chosen in such a way that~$\norm{\nabla^{1,0}\beta}+\norm{\beta}^2\leq 1-2\varepsilon$ then~$\mrm{e}^f$ is uniformly bounded away from~$0$ by~$\varepsilon$, and we have a~$\m{C}^0$-bound for solutions to~$(\star_1)$ (and similarly for solutions to any~\eqref{eq:HcscK_continuity}).

\paragraph{$L^4$-bounds on the gradient and the Laplacian.} Our~$\m{C}^0$-bound on~$f$ can be used to obtain an estimate for the~$L^2$-norm of~$\mrm{d}f$. Since~$f$ solves 
\begin{equation*}
\begin{split}
\Delta(f)-2&+\left(2\,\mrm{e}^f+\norm{\beta}^2\right)\left(1+\mrm{e}^{-2\,f}\norm{q}^2\right)-
2\,\mrm{Re}\left(g\left(\mrm{e}^{-f}\bar{q},\nabla^{1,0}\beta-\beta\otimes\diff f\right)\right)=0
\end{split}
\end{equation*}
the identity
\begin{equation*}
\int_{\Sigma}\norm{\mrm{d}f}^2\omega=\int_{\Sigma} f\Delta(f)\,\omega
\end{equation*}
shows that we have 
\begin{equation*}
\begin{split}
\norm{\mrm{d}f}^2_{L^2}=\int f\Big[2&-\left(2\,\mrm{e}^f+\norm{\beta}^2\right)\left(1+\mrm{e}^{-2\,f}\norm{q}^2\right)+2\,\mrm{Re}\left(g\left(\mrm{e}^{-f}\bar{q},\nabla^{1,0}\beta-\beta\otimes\diff f\right)\right)\Big]\omega.
\end{split}
\end{equation*}
Expanding out the product in the integrand, we see that the first three terms can be bounded explicitly in terms of~$\norm{\beta}_{0}$ and~$\norm{\nabla\beta}_{0}$ using the~$\m{C}^0$-bound on~$f$. As for the last term, we have by Cauchy--Schwarz
\begin{equation*}
\begin{gathered}
\int f\,2\,\mrm{Re}\left(g\left(\mrm{e}^{-f}\bar{q},\beta\otimes\diff f\right)\right)\omega=2\,\mrm{Re}\left\langle \beta\otimes\diff f,f\,\mrm{e}^{-f}q\right\rangle_{L^2}\leq\\
\leq 2\,\norm*{f\,\mrm{e}^{-f}q}_{L^2}\norm*{\beta}_{L^2}\norm*{\diff f}_{L^2}<\sqrt{2}\,\norm{f}_{L^2}\norm*{\beta}_{L^2}\norm*{\mrm{d}f}_{L^2}.
\end{gathered}
\end{equation*}
So there are some positive constants~$C_1$ and~$C_2$ that depend explicitly on our~$\m{C}^0$-bound for~$f$ and a bound for~$\norm{\beta}_0$, such that
\begin{equation*}
\norm*{\mrm{d}f}^2_{L^2}<C_1+C_2\norm*{\mrm{d}f}_{L^2},
\end{equation*}
which clearly gives a bound on the~$L^2$-norm of~$\mrm{d}f$.

Now we write our equation as
\begin{equation*}
\Delta(f)=2-\left(2\,\mrm{e}^f+\norm{\beta}^2\right)\left(1+\mrm{e}^{-2\,f}\norm{q}^2\right)+2\,\mrm{Re}\left(g\left(\mrm{e}^{-f}\bar{q},\nabla^{1,0}\beta-\beta\otimes\diff f\right)\right).
\end{equation*}
Using the~$\m{C}^0$-estimate, the condition~$\mrm{e}^{-f}\norm{q} <1$ and the Cauchy--Schwarz inequality we get
\begin{equation*}
\abs*{\Delta(f)}\leq C_3+2\abs*{\mrm{Re}\left(g\left(\mrm{e}^{-f}\bar{q},\beta\otimes\diff f\right)\right)}\leq C_3+C_4\norm{\mrm{d}f} 
\end{equation*}
for positive constants~$C_3$,~$C_4$ that depend on~$\norm{\beta}_0$,~$\norm{\nabla^{1,0}\beta}_0$ and the~$\m{C}^0$--estimate on~$f$. This implies
\begin{equation*}
\norm{\Delta f}_{L^2}\leq\norm*{C_3+C_4\norm{\mrm{d}f}_\omega}_{L^2}\leq C_3+C_4\norm{\mrm{d}f}_{L^2}
\end{equation*}
so the~$L^2$-bound on~$\mrm{d}f$ gives us a~$L^2$-bound on~$\Delta f$. The same reasoning actually shows that~$L^p$-bounds on~$\mrm{d}f$ will imply~$L^p$-bounds on~$\Delta f$.

Recall the Sobolev inequality
\begin{equation*}
\norm{u}_{L^2}\leq K_1\norm{u}_{W^{1,1}}.
\end{equation*}
In particular for~$u=\norm{\mrm{d}f}^2_\omega$ we find
\begin{equation*}
\norm*{\norm{\mrm{d}f}^2_\omega}_{L^2}\leq K_1\norm*{\norm{\mrm{d}f}^2_\omega}_{W^{1,1}}= K_1\left(\norm*{\norm{\mrm{d}f}^2_\omega}_{L^1}+\norm*{\nabla\norm{\mrm{d}f}^2_\omega}_{L^1}\right)
\end{equation*}
Now,~$\norm*{\norm{\mrm{d}f}^2_\omega}_{L^1}=\norm{\mrm{d}f}^2_{L^2}$ and~$\nabla\norm{\mrm{d}f}^2_\omega=2\,g(\nabla\mrm{d}f,\mrm{d}f)$, so by Cauchy--Schwarz
\begin{equation*}
\begin{split}
\norm*{\nabla\norm{\mrm{d}f}^2_\omega}_{L^1}=&\int\norm*{2\,g(\nabla\mrm{d}f,\mrm{d}f)}_\omega\omega=2\int\norm*{\nabla\mrm{d}f}_\omega\norm*{\mrm{d}f}_\omega\omega=\\
=&2\left\langle\norm{\nabla\mrm{d}f}_{\omega},\norm{\mrm{d}f}_\omega\right\rangle_{L^2}\leq 2\norm{\nabla\mrm{d}f}_{L^2}\norm{\mrm{d}f}_{L^2}.
\end{split}
\end{equation*}
By elliptic estimates (c.f.~\cite[Theorem~$5.2$]{SpinGeometry_book}) we have
\begin{equation*}
\norm{\nabla d f}_{L^2}\leq K_2\left(\norm{f}_{L^2}+\norm{\Delta f}_{L^2}\right).
\end{equation*}
Thus we find
\begin{equation*}
\norm*{\norm{\mrm{d}f}^2_\omega}_{L^2}\leq K_1\left(\norm{\mrm{d}f}^2_{L^2}+2\,K_2\,\norm{\mrm{d}f}_{L^2}\left(\norm{f}_{L^2}+\norm{\Delta f}_{L^2}\right)\right).
\end{equation*}
Since~$\norm*{\norm{\mrm{d}f}^2_\omega}_{L^2}=\norm{\mrm{d}f}^2_{L^4}$, from the~$L^2$-bound on~$\mrm{d}f$ and~$\Delta(f)$ that we already have we deduce an~$L^4$-bound on~$\mrm{d}f$. 

Our previous discussion then shows that we can actually obtain (explicit)~$L^4$-bounds on~$\Delta f$, in terms of~$\norm{\beta}_0$,~$\norm{\nabla^{1,0}\beta}_0$, the Sobolev constant~$K_1$ and the elliptic constant~$K_2$.

\paragraph{$\m{C}^{k,\alpha}$-bounds.} Recall Morrey's inequality for~$n=2$,~$p=4$ (c.f.~\cite[\S$5.6.2$]{Evans_elliptic_PDE}):
\begin{equation*}
\norm{f}_{ 0,\frac{1}{2} }\leq K_3\norm{f}_{W^{1,4}}.
\end{equation*}
By our~$L^4$ bound on~$df$ this implies a~$\m{C}^{0,\frac{1}{2}}$-estimate on~$f$ in terms of the~$\m{C}^0$-estimate on~$f$, the Sobolev constants~$K_1, K_3$ and the elliptic constant~$K_2$. 

Moreover, the Sobolev inequality for~$n=2$,~$p=4$ (c.f.~\cite[\S$5.6.3$]{Evans_elliptic_PDE}) tells us that
\begin{equation*}
\norm{f}_{ 3,\frac{1}{2} }\leq K_4\norm{f}_{W^{2,4}}
\end{equation*}
so our previous~$L^4$ bound on~$\Delta f$ gives  \emph{a priori} estimates for the~$\m{C}^{3,\frac{1}{2}}$-norm of~$f$ solving~$(\star_1)$ (or~\eqref{eq:HcscK_continuity} substituting~$t\beta$ to~$\beta$ in the previous discussion).

\subsubsection{Closedness}\label{sec:Closedness_curve}

We can now complete the proof of closedness for our continuity path.

Our~$\m{C}^{3,\frac{1}{2}}$-estimate for~$f$ is enough to pass to the limit as~$t \to \bar{t} \leq 1$ in the equation 
\begin{equation*}
\begin{split}
\Delta(f_t)-2+&\left(2\,\mrm{e}^{f_t}+\norm{t\beta}^2\right)\left(1+\mrm{e}^{-2\,f_t}\norm{q_t}^2\right)-2\,\mrm{Re}\left(g\left(\mrm{e}^{-f_t}\bar{q_t},\nabla^{1,0}t\beta-t\beta\otimes\diff f_t\right)\right)=0
\end{split}
\end{equation*}
for~$q_t = q(t\tau,f_t,t\beta)$. Bootstrapping then shows that the set of~$t\in[0,1]$ for which this equation has a smooth solution is closed. Moreover, the~$\m{C}^{3,\frac{1}{2}}$-estimate for~$f$ follows from the~$\m{C}^{0}$-estimate, which only requires the assumption~$\norm{\beta}_1 < 1$. 

What remains to be checked is that the quantity~$\norm{\mrm{e}^{-f_t}q_t}_0$ stays uniformly bounded away from~$1$ along the continuity path. This is where the more refined control on the growth of~$\norm{f_t}_{ 0,\frac{1}{2} }$ is required.

Our estimate~\eqref{eq:stime_eta_cambiamentocoordinate} on~$\eta$ for~$k = 2$,~$\alpha = 1/2$ immediately gives a bound on~$q$ of the form
\begin{equation*}
\begin{split}
\norm{q}_0&\leq  \norm{\tau}_0+\norm{\nabla^{1,0}\eta }_0\leq  \norm{\tau}_0+\norm{\eta }_{2,\frac{1}{2}} \leq\\
&\leq  \norm{\tau}_0+C\mrm{e}^{\norm{f}_{0,\frac{1}{2}}} \norm{\beta}_{0,\frac{1}{2}}+O( \norm{\beta}_{0,\frac{1}{2}}\norm{\tau}_{0,\frac{1}{2}}).
\end{split}
\end{equation*}
The~$O( \norm{\beta}_{0,\frac{1}{2}}\norm{\tau}_{0,\frac{1}{2}})$-terms depend on the background~$\omega$ only through the elliptic constant~$K$, and the inequality holds provided~$\norm{f}_{0,\frac{1}{2}}$,~$\norm{\beta}_{0,\frac{1}{2}}$ and~$\norm{\tau}_{0,\frac{1}{2}}$ are sufficiently small, also in terms of~$K$. But we showed that there is a uniform a priori bound on~$\norm{f}_{0,\frac{1}{2}}$, depending on the condition~$\norm{\beta}_1 < 1$, the Sobolev constants~$K_1, K_3$ and the elliptic constant~$K_2$.

It follows that we if choose~$\norm{\tau}_{0,\frac{1}{2}}$,~$\norm{\beta}_1$ small enough, depending only on the Sobolev constants~$K_1, K_3$ and the elliptic constants~$K, K_2$, then we can make sure that for all~$t \in [0, 1]$ the norm~$\norm{q_t}_0$ is sufficiently small so that the required bound
\begin{equation*}
\norm{\mrm{e}^{-f_t} q_t}_0 \leq  \mrm{e}^{\norm{f_t}_{0, \frac{1}{2}}} \norm{q_t}_{0} < 1
\end{equation*}
holds uniformly.

\subsection{Openness}\label{sec:Openness_curve}

We complete our analysis of the continuity path~\eqref{eq:HcscK_continuity} by showing that the set of times~$t \in [0,1]$ for which there is a smooth solution is open. We will see that openness requires control of a further elliptic constant, namely the~$\m{C}^{2,\frac{1}{2}}$ Schauder estimate for the Riemannian Laplacian of the hyperbolic metric acting on functions.

It is convenient to write our equations in the form 
\begin{equation}\label{eq:sistema_openness_f}
\begin{dcases}
\frac{2}{1+\norm{q}_f^2}{\nabla^{1,0}_f}^*q=\beta\\
-2\,\mrm{e}^{-f}+\Delta_f(f)+(1+\norm{q}^2_f)(2+\norm{\beta}^2_f)-2\,\mrm{Re}\left(g_f(\bar{q},\nabla^{1,0}_f\beta)\right)=0\\
 \norm{q}^2_f<1.
\end{dcases}
\end{equation}
where the notation underlines that metric quantities are now computed with respect to the metric~$\omega_f$. The last condition is clearly open, so we focus on the first two equations. These can be regarded as the zero-locus equations for the functional
\begin{equation*}
\begin{split}
\m{F}:H^0(K^2_\Sigma)\times H^0(K_\Sigma)\times\m{A}^{0}(K_\Sigma)\times\m{C}^\infty(\Sigma,\bb{R})\to\m{A}^0(K_\Sigma)\times\m{C}^\infty(\Sigma,\bb{R})&\\
(\tau,\beta,\eta,f)\mapsto(\m{F}^1(\tau,\beta,\eta,f),\m{F}^2(\tau,\beta,\eta,f))&
\end{split}
\end{equation*}
defined as 
\begin{equation*}
\m{F}^1(\tau,\beta,\eta,f)=\frac{2}{1+\norm*{\tau+\nabla^{1,0}_f\eta}_f^2}{\nabla^{1,0}_f}^*\nabla^{1,0}_f\eta-\beta
\end{equation*}
\begin{equation*}
\begin{split}
\m{F}^2(\tau,\beta,\eta,f)=-2\,\mrm{e}^{-f}+\Delta_f(f)&+\left(1+\norm*{\tau+\nabla^{1,0}_f\eta}_f^2\right)(2+\norm{\beta}^2_f)-\\&-2\,\mrm{Re}\,g_f\left(\bar{\tau}+\nabla^{0,1}_f\bar{\eta},\nabla^{1,0}_f\beta\right).
\end{split}
\end{equation*}
Assume that~$\m{F}(\tau,\beta,\eta,f)=(0,0)$. We want to show that if~$\tau',\beta'$ are close enough to~$\tau,\beta$ then we can also find~$\eta',f'$ such that~$\m{F}(\tau',\beta',\eta',f')=(0,0)$. To use the Implicit Function Theorem we should show that
\begin{equation*}
\begin{split}
\m{A}^0(K_\Sigma)\times\m{C}^\infty(\Sigma,\bb{R})&\to\m{A}^0(K_\Sigma)\times\m{C}^\infty(\Sigma,\bb{R})\\
(\dot{\eta},\varphi)&\mapsto D\m{F}_{(\tau,\beta,\eta,f)}(0,0,\dot{\eta},\varphi)
\end{split}
\end{equation*}
is surjective (on some appropriate Banach subspaces). We will show that in fact it is an isomorphism. For the rest of this section we will compute all metric quantities with respect to~$\omega_f$, unless we specify otherwise, so we will drop the subscript~$f$. As usual we write~$q=\tau+\nabla^{1,0} \eta$.

Using~$\m{F}^1(\tau,\beta,\eta,f)=0$, we compute 
\begin{equation*}
\begin{gathered}
D\m{F}^1(\dot{\eta},\varphi)=-\frac{\beta}{1+\norm{q}^2}\left(-2\,\varphi\,\norm{q}^2+2\,\mrm{Re}\left\langle\nabla^{1,0}_0\dot{\eta},\bar{q}\right\rangle\right)-\varphi\,\beta+\frac{2\,\nabla^{1,0}{}^*\nabla^{1,0} \dot{\eta}}{1+\norm{q}^2}=\\
=\varphi\,\beta\frac{\norm{q}^2-1}{1+\norm{q}^2}+\frac{2\,\nabla^{1,0}{}^*\nabla^{1,0}_0\dot{\eta}}{1+\norm{q}^2}-\frac{\beta}{1+\norm{q}^2}2\,\mrm{Re}\left\langle\nabla^{1,0}\dot{\eta},\bar{q}\right\rangle.
\end{gathered}
\end{equation*}
Similarly, using~$\m{F}^2(\tau,\beta,\eta,f)=0$, we compute
\begin{equation*}
\begin{split}
D\m{F}^2(\dot{\eta},\varphi)=&\Delta(\varphi)+2\,\varphi\left(1-\norm{q}^2(1+\norm{\beta}^2)+2\mrm{Re}\langle\bar{q},\nabla\beta\rangle\right)+\\
&+2\,\mrm{Re}\left\langle(2+\norm{\beta}^2)\bar{q},\nabla^{1,0}_0\dot{\eta}\right\rangle-2\,\mrm{Re}\left\langle\nabla\bar{\beta},\nabla^{1,0} \dot{\eta}\right\rangle.
\end{split}
\end{equation*}

To prove that~$(\dot{\eta},\varphi)\mapsto D\m{F}(\dot{\eta},\varphi)$ is an isomorphism we have to show that for any fixed~$(\sigma,h)\in\m{A}^0(K_\Sigma)\times\m{C}^{\infty}(\Sigma)$ there is a unique pair~$(\dot{\eta},\varphi)$ such that
\begin{equation}\label{eq:sistema_apertura}
\begin{dcases}
D\m{F}^1(\dot{\eta},\varphi)=\sigma\\
D\m{F}^2(\dot{\eta},\varphi)=h.
\end{dcases}
\end{equation}
Our strategy to prove this is to regard~\eqref{eq:sistema_apertura} as a deformation of the system
\begin{equation}\label{eq:sistema_deformato}
\begin{dcases}
\frac{2\,\nabla^{1,0}{}^*\nabla^{1,0} \dot{\eta}}{1+\norm{q}^2} + \varphi\,\beta\frac{\norm{q}^2-1}{1+\norm{q}^2} =\sigma\\
\Delta(\varphi)+2\,\varphi(1-\norm{q}^2)=h.
\end{dcases}
\end{equation}
Since the two operators~$\dot\eta\mapsto\nabla^{1,0}{}^*\nabla^{1,0}\dot{\eta}$ and~$\varphi\mapsto\Delta(\varphi)+2\,\varphi$ are elliptic, self-adjoint and their kernel is trivial, it is straightforward to check that~\eqref{eq:sistema_deformato} has a unique smooth solution~$(\dot{\eta},\varphi)$ for each fixed~$\sigma$,~$h$. Now the equations~\eqref{eq:sistema_apertura} differ from~\eqref{eq:sistema_deformato} from terms which vanish as~$\norm{\beta}_f$,~$\norm{\nabla^{1,0}_f\beta}_f$,~$\norm{\tau}_f$ and~$\norm{\nabla^{1,0}_f\eta}_f$ go to zero; we have shown that all these terms can be bounded in terms of~$\norm{\beta}_0$,~$\norm{\nabla^{1,0} \beta}_0$,~$\norm{\tau}_0$, effectively in terms of certain Sobolev and elliptic constants, so for~$\norm{\beta}_{\m{C}^1}$ and~$\norm{\tau}_0$ small enough we can make sure that~\eqref{eq:sistema_apertura} also have a unique smooth solution. 

\begin{lemma}[Lemma~$7.10$ in~\cite{Fine_phd}]\label{lemma:inversione}
Let~$D:B_1\to B_2$ be a bounded linear map between Banach spaces, with bounded inverse~$D^{-1}$. Then any other linear bounded operator~$L$ such that~$\norm{D-L}\leq (2\,\norm{D^{-1}})^{-1}$ is also invertible, and~$\norm{L^{-1}}\leq 2\,\norm{D^{-1}}$.
\end{lemma}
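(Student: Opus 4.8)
The plan is to realise $L$ as a small multiplicative perturbation of $D$ and to invert the correction term by a Neumann series. First I would factor
\[
L = D - (D-L) = D\bigl(\mathbbm{1} - D^{-1}(D-L)\bigr),
\]
so that the invertibility of $L$ is reduced to the invertibility of the endomorphism $T := \mathbbm{1} - D^{-1}(D-L)$ of $B_1$. Note that this factorisation is the natural one because $D^{-1}\colon B_2\to B_1$ and $D-L\colon B_1\to B_2$ compose to a bounded operator $B_1\to B_1$, so that $T$ genuinely lives in the Banach algebra $\mathcal{B}(B_1)$ of bounded endomorphisms of $B_1$.

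Next I would estimate the correction. Using submultiplicativity of the operator norm and the hypothesis $\norm{D-L}\leq\bigl(2\,\norm{D^{-1}}\bigr)^{-1}$, one finds
\[
\norm{D^{-1}(D-L)} \leq \norm{D^{-1}}\,\norm{D-L} \leq \norm{D^{-1}}\cdot\bigl(2\,\norm{D^{-1}}\bigr)^{-1} = \tfrac{1}{2} < 1.
\]
The key step is then the classical Neumann series argument: since $B_1$ is complete, $\mathcal{B}(B_1)$ is a Banach algebra, and because $\norm{D^{-1}(D-L)}<1$ the series $\sum_{k\geq 0}\bigl(D^{-1}(D-L)\bigr)^k$ converges there and gives a two-sided inverse for $T$, with
\[
\norm{T^{-1}} \leq \sum_{k\geq 0}\norm{D^{-1}(D-L)}^k \leq \sum_{k\geq 0} 2^{-k} = 2.
\]

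Finally I would conclude by composing inverses. From $L = D\,T$ it follows that $L$ is invertible with $L^{-1} = T^{-1}D^{-1}$, whence
\[
\norm{L^{-1}} \leq \norm{T^{-1}}\,\norm{D^{-1}} \leq 2\,\norm{D^{-1}},
\]
which is exactly the claimed estimate. I do not expect a real obstacle here: the argument is entirely elementary. The only points requiring a little care are the order of composition of the factors (so that $D^{-1}(D-L)$ is an endomorphism of $B_1$, and the Neumann series is formed in the Banach algebra $\mathcal{B}(B_1)$), and the observation that the hypothesis is calibrated precisely to force the geometric ratio $\tfrac12<1$; completeness of $B_1$ is the only structural input beyond the stated hypotheses.
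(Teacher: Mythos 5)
Your proof is correct and is the standard Neumann series argument; the paper itself gives no proof of this lemma, simply citing Fine's thesis, where the same perturbation-by-geometric-series argument is used. Nothing to add.
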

To apply this result we regard~$\m{F}$ as an operator defined on the product~$\m{C}^{0,\alpha}(\Sigma,K^2_\Sigma)\times\m{C}^{1,\alpha}(\Sigma,K_\Sigma)\times\m{C}^{2,\alpha}(\Sigma,K_\Sigma)\times\m{C}^{2,\alpha}(\Sigma,\bb{R})$ with image contained in~$\m{C}^{0,\alpha}(\Sigma,K_\Sigma)\times\m{C}^{0,\alpha}(\Sigma,\bb{R})$, and we are interested in the invertibility of the linear operator
\begin{equation*}
L(\dot{\eta},\varphi)=\left(D\m{F}^1_{(\tau,\beta,\eta,f)}(\dot{\eta},\varphi),D\m{F}^2_{(\tau,\beta,\eta,f)}(\dot{\eta},\varphi)\right).
\end{equation*}
We compare~$L$ to the auxiliary linear operator
\begin{equation*}
D(\dot{\eta},\varphi)=\left(\frac{2\,\nabla^{1,0}{}^*\nabla^{1,0}\dot{\eta}}{1+\norm{q}^2}+\varphi\,\beta\frac{\norm{q}^2-1}{1+\norm{q}^2},\Delta(\varphi)+2\,\varphi(1-\norm{q}^2)\right).
\end{equation*}

$D$ is invertible, and the norm of~$D^{-1}$ is controlled by the Schauder constants of the Laplacians~$\Delta$ and~$\nabla^{1,0}{}^*\nabla^{1,0}$. The difference between~$D$ and~$L$ is given by the operator
\begin{equation*}
\begin{split}
(D-L)(\dot{\eta}&,\varphi)=\Bigg(\frac{-\beta}{1+\norm{q}^2}2\,\mrm{Re}\left\langle\nabla^{1,0}\dot{\eta},\bar{q}\right\rangle,\\
&2\varphi\left(2\,\mrm{Re}\langle\bar{q},\nabla^{1,0}\beta\rangle-\norm{q}^2\norm{\beta}^2\right)+2\,\mrm{Re}\left\langle(2+\norm{\beta}^2)\bar{q}-\nabla\bar{\beta},\nabla^{1,0}\dot{\eta}\right\rangle\!\Bigg)
\end{split}
\end{equation*}
and we can estimate
\begin{equation*}
\begin{split}
\norm{D-L}\leq&\norm{\beta}_{1,\alpha}\left(1+\norm{q}_{0,\alpha}\right)\left(1+\norm{\beta}_{1,\alpha}\norm{q}_{0,\alpha}\right)+2\norm{q}_{0,\alpha}\left(1+\norm{\beta}_{1,\alpha}\right).
\end{split}
\end{equation*}
It is important to recall that in the present context all these norms are computed using the conformal metric~$\omega_f$. However, our H\"older estimates on the conformal potential~$f$ along the continuity method tell us that these norms are uniformly equivalent to those computed using the background hyperbolic metric~$\omega$. By also using our H\"older estimates on~$q$, it follows that we can control the norm of~$D - L$, for~$\alpha=1/2$, in terms of the norms~$\norm{\beta}_{\m{C}^{1,\frac{1}{2}}(\omega)}$,~$\norm{\tau}_{\m{C}^{0,\frac{1}{2}}(\omega)}$. If these are small enough, then by Lemma~\ref{lemma:inversione} the operator~$L$ is invertible. Finally, bootstrapping shows that a solution to~\eqref{eq:sistema_openness_f} in~$\m{C}^{2,\alpha}$ is actually smooth.

\section{The HcscK system on a ruled surface}\label{sec:ruled_surface}

Let~$\Sigma$ be a Riemann surface of genus~$g(\Sigma)\geq 2$ and assume that~$L\to\Sigma$ is a holomorphic line bundle equipped with a Hermitian fibre metric~$h$. In this section we study our equations on the ruled surface~$M=\bb{P}(\m{O}\oplus L)$ (the completion of~$L$) using the \emph{momentum construction}; our main reference for this technique is~\cite{Hwang_Singer}; see also~\cite[chapter~$5$]{Szekelyhidi_phd}). 

After this initial study we solve a ``complexified'' version of the equations in the particular case when~$L$ is the anti-canonical bundle of~$\Sigma$. We remark that we solve just a subset of equations of the complexified HcscK system~\eqref{eq:HCSCK_complex_relaxed}, namely for a fixed complex structure~$J$ we will find a K\"ahler form~$\omega_\phi$ and a Higgs field~$\alpha$ that are a zero of the moment maps, but such that~$\alpha$ and~$\omega_\phi$ are \emph{not} compatible. A consequence of the lack of compatibility is that we have at least two different equations to consider when studying the complexified real moment map equation. We will discuss this point in Remark~\ref{rmk:surf_eq_complex_nonequiv}.

Section~\ref{sec:def_complex_bundle} and Section~\ref{sec:ruled_complex_mm} are devoted to describing a set of first-order deformations of the complex structure and to characterize the deformations that give solutions to the complex moment map equation. Essentially we consider deformations of the complex structure of~$\bb{P}(\m{O}\oplus L)$ that are induced from a deformation of the bundle~$\m{O}\oplus L$. We proceed to study the real moment map equation in Section~\ref{sec:ruled_surface_realmm}, for a particular choice of a deformation of complex structure solving the complex moment map equation. Our goal is to prove that when the fibres of~$\bb{P}(\m{O}\oplus L)\to\Sigma$ are sufficiently small compared to the base then there is a solution to the real moment map equation. The study of the constant scalar curvature equation in this \emph{adiabatic limit} is a well-developed subject (see for example~\cite{Hong_fibered}), and we follow in particular the approach of~\cite{Fine_phd}.

For a fixed K\"ahler form~$\omega_\Sigma$ on~$\Sigma$, we consider K\"ahler forms on the total space of the bundle
\begin{equation*}
\bb{P}(\m{O}\oplus L)\xrightarrow{\pi}\Sigma
\end{equation*}
that satisfy the \emph{Calabi ansatz}, i.e. we consider a form~$\omega$ of the form
\begin{equation}\label{eq:metrica_ruled_surface}
\omega=\pi^*\omega_\Sigma+\I\,\diff\bar{\diff}f(t)
\end{equation}
where~$t$ is the logarithm of the fibre-wise norm function, and~$f$ is a suitably convex real function. More explicitly, we fix a system of holomorphic coordinates~$(z,\zeta)$ on~$M$ that are adapted to the bundle structure, i.e.~$z$ is a holomorphic coordinate on~$\Sigma$ while~$\zeta$ is a linear coordinate on the fibres of~$L\to\Sigma$. Let~$a(z)$ denote the local function on~$\Sigma$ such that the Hermitian metric~$h$ on~$L$ is given by~$h=a(z)\,\mrm{d}\zeta\,\mrm{d}\bar{\zeta}$; then~$t:=\mrm{log}(a(z)\,\zeta\bar{\zeta})$ is a well-defined function on~$L\setminus\Sigma$, and if~$f$ satisfies some conditions on its second derivative then~$\I\diff\bar{\diff}f(t)$ is a (globally) well-defined real~$2$-form on the total space of~$L$, that in some cases can be extended to~$M$.

Let~$F(h)$ be the curvature form of~$h$. We choose~$h$ such that~$F(h)=-\omega_\Sigma$. Then in bundle-adapted holomorphic coordinates~$\bm{w}=(z,\zeta)$ we have 
\begin{equation}\label{eq:metrica_CalabiAnsatz}
\begin{split}
\pi^*\omega_\Sigma&+\I\,\diff\bar{\diff}f(t)=(1+f'(t))\pi^*\omega_\Sigma+\\
+&\I\,f''(t)\left[\diff_zt\,\diff_{\bar{z}}t\,\mrm{d}z\wedge\mrm{d}\bar{z}+\frac{\diff_zt}{\bar{\zeta}}\mrm{d}z\wedge\mrm{d}\bar{\zeta}+\frac{\diff_{\bar{z}}t}{\zeta}\mrm{d}\zeta\wedge\mrm{d}\bar{z}+\frac{1}{\zeta\,\bar{\zeta}}\mrm{d}\zeta\wedge\mrm{d}\bar{\zeta}\right].
\end{split}
\end{equation}

It will be useful to change point of view to describe the curvature properties of the metric~$\omega$. Rather than working with~$f$ and~$t$, define~$\tau$ to be the function~$\tau=f'(t)$, and let~$F$ be the Legendre transform of~$f$. If we define~$\phi:=\frac{1}{F''}$, then we have
\begin{align*}
\tau=f'(t)\\
t=F'(\tau)\\
F(\tau)+f(t)=t\,\tau\\
f''(t)=\phi(\tau)
\end{align*}
so that the metric~$\omega_\phi:=\omega$ is, with the notation of~\eqref{eq:metrica_CalabiAnsatz}
\begin{equation}\label{eq:metrica_CalabiAnsatz_phi}
\begin{split}
\omega_\phi=&(1+\tau)\pi^*\omega_\Sigma+\\
&+\I\phi(\tau)\left(\diff_zt\,\diff_{\bar{z}}t\,\mrm{d}z\wedge\mrm{d}\bar{z}+\frac{\diff_zt}{\bar{\zeta}}\mrm{d}z\wedge\mrm{d}\bar{\zeta}+\frac{\diff_{\bar{z}}t}{\zeta}\mrm{d}\zeta\wedge\mrm{d}\bar{z}+\frac{1}{\zeta\,\bar{\zeta}}\mrm{d}\zeta\wedge\mrm{d}\bar{\zeta}\right)
\end{split}
\end{equation}
In particular, the matrices of the metric and its inverse in this system of coordinates are
\begin{equation*}
\left(g_{a\bar{b}}\right)_{1\leq a,b\leq 2}=\begin{pmatrix}(1+\tau)g_\Sigma+\phi(\tau)\,\diff_zt\,\diff_{\bar{z}}t & \phi(\tau)\frac{\diff_zt}{\bar{\zeta}} \\
\phi(\tau)\frac{\diff_{\bar{z}}t}{\zeta} & \frac{\phi(\tau)}{\zeta\,\bar{\zeta}} \end{pmatrix}
\end{equation*}
\begin{equation*}
\left(g^{a\bar{b}}\right)_{1\leq a,b\leq 2}=\begin{pmatrix}\frac{1}{(1+\tau)g_\Sigma}& -\frac{\bar{\zeta}\,\diff_{\bar{z}}t}{(1+\tau)g_\Sigma}\\
-\frac{\zeta\,\diff_zt}{(1+\tau)g_\Sigma} & \frac{\zeta\,\bar{\zeta}}{\phi(\tau)}+\frac{\zeta\,\bar{\zeta}\,\diff_zt\,\diff_{\bar{z}}t}{(1+\tau)g_\Sigma} \end{pmatrix}.
\end{equation*}
The main reason for using~$\phi(\tau)$ rather than~$f(t)$ is that it is easier to characterize the conditions under which our Calabi ansatz metric extends to the two possibly singular sections~$\Sigma_0$ and~$\Sigma_\infty$, see Proposition~\ref{prop:omega_phi_estende}. Moreover the scalar curvature of~$\omega_\phi$ has a nice expression in terms of the momentum profile~$\phi(\tau)$, as it often happens for the scalar curvature under Legendre duality (see Chapter~\ref{chap:symplectic_coords}). We will see this formula in Proposition~\ref{prop:curv_scal_phi}. Note that we are only stating a particular case of the more general results of Hwang-Singer in~\cite{Hwang_Singer}.
\begin{prop}[\cite{Hwang_Singer}, see also~\cite{Szekelyhidi_libro}]\label{prop:omega_phi_estende}
Assume that~$\phi:[a,b]\to[0,\infty)$ is a function positive on the interior of~$[a,b]$. Then~$\omega_\phi$ defines a smooth metric on~$M\setminus\Sigma_\infty$ if and only if~$\phi(a)=0$,~$\phi'(0)=1$. Moreover,~$\omega_\phi$ extends to the whole of~$M$ if and only if~$\phi(a)=\phi(b)=0$ and~$\phi'(a)=1$,~$\phi'(b)=-1$.
\end{prop}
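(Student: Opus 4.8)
The plan is to reduce the global extension question to a local, fibrewise analysis, exploiting the fact that the whole construction is invariant under the $\mrm{U}(1)$-action $\zeta\mapsto\mrm{e}^{\I\vartheta}\zeta$ on the fibres. Away from the two sections $\Sigma_0=\{\zeta=0\}$ and $\Sigma_\infty=\{\zeta=\infty\}$ the form $\omega_\phi$ is manifestly smooth, and a direct computation of the matrix $(g_{a\bar b})$ displayed above gives $\det(g_{a\bar b})=(1+\tau)\,\phi(\tau)\,g_\Sigma/(\zeta\bar\zeta)$, so that $\omega_\phi$ is positive definite on the interior precisely because $1+\tau>0$ and $\phi(\tau)>0$ there. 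Hence the only points at which extension can fail lie on $\Sigma_0$ and $\Sigma_\infty$, and by $\mrm{U}(1)$-invariance it suffices to understand the metric induced on a single fibre together with its coupling to the base.

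First I would pass to the action--angle description of the fibres. Writing $\zeta=r\mrm{e}^{\I\vartheta}$ and using $t=\log a(z)+\log r^2$, the fibrewise part of $\omega_\phi$ becomes $\phi(\tau)\,\mrm{d}t\wedge\mrm{d}\vartheta=\mrm{d}\tau\wedge\mrm{d}\vartheta$, since $\phi\,\mrm{d}t=\mrm{d}\tau$ by the Legendre relations $t=F'(\tau)$, $\phi=1/F''$. Thus each fibre is a surface of revolution whose momentum coordinate is $\tau\in[a,b]$ and whose Riemannian metric is conformal to $\phi(\tau)\,(\mrm{d}t^2+\mrm{d}\vartheta^2)$. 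The zero section corresponds to the endpoint $\tau\to a$ (where $t\to-\infty$, $\zeta\to 0$) and the section at infinity to $\tau\to b$.

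The core of the argument is the asymptotic matching at the endpoints. Near $\Sigma_0$ I would introduce the geodesic distance $\rho$ to the zero section, satisfying $\mrm{d}\rho=\mrm{d}\tau/(2\sqrt{\phi})$; if $\phi(a)=0$ with $\phi(\tau)\sim\phi'(a)(\tau-a)$, then $\rho^2\sim(\tau-a)/\phi'(a)\to 0$, so $\Sigma_0$ lies at finite distance and the angular circles shrink to a point. The length of the $\vartheta$-circle is governed by $g_{\vartheta\vartheta}=\phi(\tau)\sim\phi'(a)^2\rho^2$, so the fibre looks like the flat model $\mrm{d}\rho^2+\rho^2\mrm{d}\vartheta^2$ (a smooth point of $\bb{C}$) if and only if $\phi'(a)=1$, the constant $1$ being dictated by the $2\pi$-periodicity of $\vartheta=\arg\zeta$; any other slope produces a cone singularity along $\Sigma_0$. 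A symmetric analysis at $\Sigma_\infty$ in the honest coordinate $w=1/\zeta$, where $\tau$ increases to $b$ and the relevant circle shrinks from the other side, yields $\phi(b)=0$ and $\phi'(b)=-1$, the sign reflecting that $\phi$ decreases to $0$ there. Combining these, $\phi(a)=0,\ \phi'(a)=1$ gives a smooth metric on $M\setminus\Sigma_\infty$, and adding $\phi(b)=0,\ \phi'(b)=-1$ caps off the remaining section to produce a smooth metric on all of $M$.

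The step I expect to be the main obstacle is upgrading this leading-order, cone-angle matching to genuine $\m{C}^\infty$-smoothness across the sections, including the mixed horizontal--vertical terms such as $\phi(\tau)\,\diff_z t\,\mrm{d}z\wedge\mrm{d}\bar\zeta/\bar\zeta$. Here one must show that, once expressed in the honest holomorphic fibre coordinate $\zeta$ (resp.\ $w$), every coefficient of $\omega_\phi$ extends as a smooth function of $|\zeta|^2$; this amounts to checking that the endpoint conditions on $\phi$, together with smoothness of $\phi$ on the closed interval $[a,b]$, force the full Taylor expansion of the Kähler potential to be smooth and even in the radial variable. This is exactly the delicate part of the momentum construction carried out by Hwang and Singer in~\cite{Hwang_Singer}, and I would obtain the statement by specialising their analysis to the present rank-one situation rather than redoing the expansion by hand.
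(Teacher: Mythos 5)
The paper does not actually prove this proposition: it is quoted directly from Hwang--Singer (and Sz\'ekelyhidi's book), so there is no in-paper argument to compare yours against. Your sketch is the standard one and is essentially correct: positivity on the interior follows from $\det(g_{a\bar b})=(1+\tau)\,g_\Sigma\,\phi(\tau)/(\zeta\bar\zeta)>0$, the fibrewise symplectic form is $\mrm{d}\tau\wedge\mrm{d}\vartheta$, and the cone-angle matching at the endpoints forces $\phi(a)=0$, $\phi'(a)=1$ (resp.\ $\phi(b)=0$, $\phi'(b)=-1$ in the coordinate $w=1/\zeta$). You also correctly read $\phi'(0)=1$ in the statement as the typo it is for $\phi'(a)=1$.

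One remark on the step you flag as the main obstacle: you do not really need to outsource the full $\m{C}^\infty$ matching to \cite{Hwang_Singer}; it falls out of the Legendre transform directly. Since $t=F'(\tau)=\int \mrm{d}\tau/\phi$, near $\tau=a$ one has $t=\tfrac{1}{\phi'(a)}\log(\tau-a)+(\text{smooth in }\tau)$, hence
\begin{equation*}
\tau-a = \left(a(z)\,\zeta\bar\zeta\right)^{\phi'(a)}\cdot(\text{positive smooth function}),
\end{equation*}
so $\tau$ extends smoothly across $\Sigma_0$ as a function of $\zeta$ precisely when $\phi'(a)$ is a positive integer, and the fibre coefficient $\phi(\tau)/(\zeta\bar\zeta)\sim \phi'(a)(\tau-a)/(\zeta\bar\zeta)$ is smooth and nonvanishing precisely when $\phi'(a)=1$. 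Once $\tau$ is known to be a smooth function of $a(z)\zeta\bar\zeta$ vanishing to first order, every coefficient of $\omega_\phi$, including the mixed terms $\phi(\tau)\diff_z t\,\mrm{d}z\wedge\mrm{d}\bar\zeta/\bar\zeta$, is a smooth function of $(z,\zeta)$ because $\phi(\tau)/\bar\zeta$ carries a factor of $\zeta$; this is exactly the argument in~\cite{Szekelyhidi_libro}, \S4.4. Your Riemannian cone-angle computation gives the same numerical conditions but, as you note, only controls the leading order; the potential-theoretic route above closes that gap in one line and would make your proof self-contained.
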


Then it will be useful to assume that~$\tau$ takes values in an interval~$[a,b]$. The convexity assumptions on~$f$ imply that actually~$\tau$ is increasing (as a function of~$t$), and that~$\tau_{\restriction_{\Sigma_0}}=a$,~$\tau_{\restriction_{\Sigma_\infty}}=b$. Up to translations, we can assume that in fact~$[a,b]=[0,m]$ for some~$m\in\bb{R}_{>0}$. This~$m$ has a direct geometric interpretation:

\begin{lemma}\label{lemma:volume_fibre}
The volume of a fibre of~$\bb{P}(\m{O}\oplus L)\to\Sigma$ is~$2\,\pi\,m$.
\end{lemma}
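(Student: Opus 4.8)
The plan is to compute the volume of a fibre directly from the Calabi ansatz expression for $\omega_\phi$ in~\eqref{eq:metrica_CalabiAnsatz_phi}, restricting attention to a single fibre $F_{z_0}=\pi^{-1}(z_0)\cong\bb{P}^1$. On such a fibre the coordinate $z$ is constant, so the only surviving term of $\omega_\phi$ is the one involving $\mrm{d}\zeta\wedge\mrm{d}\bar{\zeta}$; concretely, the restriction $\omega_\phi\big|_{F_{z_0}}$ equals $\I\,\phi(\tau)\,\frac{1}{\zeta\bar{\zeta}}\mrm{d}\zeta\wedge\mrm{d}\bar{\zeta}$, since all the terms carrying a $\mrm{d}z$ or $\mrm{d}\bar{z}$ factor vanish when pulled back to the fibre.

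First I would set up convenient coordinates on the fibre. Along $F_{z_0}$ the function $t=\log(a(z_0)\zeta\bar\zeta)$ depends only on $|\zeta|$, so writing $\zeta=\mrm{e}^{s/2+\I\vartheta}$ (or equivalently using $t$ and the angular coordinate $\vartheta$) turns $t$ into an affine function of $s=\log|\zeta|^2$. A short computation gives $\frac{\I}{\zeta\bar\zeta}\mrm{d}\zeta\wedge\mrm{d}\bar\zeta = \mrm{d}s\wedge\mrm{d}\vartheta$ up to the usual factor, so that the area form on the fibre becomes $\phi(\tau)\,\mrm{d}t\wedge\mrm{d}\vartheta$ once we account for $\mrm{d}t=\mrm{d}s$ (the constant $a(z_0)$ contributes only an additive constant to $t$, hence does not affect $\mrm{d}t$). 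The key point is that $\tau=f'(t)$ is a monotone increasing function of $t$ running over the interval $[0,m]$ as one moves from $\Sigma_0$ to $\Sigma_\infty$, with $\mrm{d}\tau=f''(t)\,\mrm{d}t=\phi(\tau)\,\mrm{d}t$.

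The volume is then computed by changing variables from $t$ to $\tau$:
\begin{equation*}
\mrm{Vol}(F_{z_0})=\int_{F_{z_0}}\omega_\phi\big|_{F_{z_0}}=\int_0^{2\pi}\!\!\int_{t} \phi(\tau)\,\mrm{d}t\,\mrm{d}\vartheta = 2\pi\int_0^m \mrm{d}\tau = 2\pi\,m,
\end{equation*}
using precisely the relation $\phi(\tau)\,\mrm{d}t=\mrm{d}\tau$ to collapse the integrand to $\mrm{d}\tau$ and the boundary conditions $\tau|_{\Sigma_0}=0$, $\tau|_{\Sigma_\infty}=m$ from Proposition~\ref{prop:omega_phi_estende} to fix the limits of integration. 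I expect no real obstacle here: the computation is essentially a one-variable Legendre-transform change of variables, and the only care needed is in normalising the angular integral (verifying the factor of $2\pi$) and in confirming that the transition between $t$ and $\tau$ is a genuine diffeomorphism onto $[0,m]$, which follows from the strict convexity of $f$ (equivalently $\phi>0$ on the interior). One should also note that the answer is independent of the basepoint $z_0$, since the local factor $a(z_0)$ drops out of $\mrm{d}t$, confirming that every fibre has the same volume $2\pi m$.
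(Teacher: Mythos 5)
Your computation is correct and follows essentially the same route as the paper's proof: both restrict $\omega_\phi$ to a fibre, reduce the integral to a radial one via the angular coordinate, and use the relation $\mrm{d}\tau=\phi(\tau)\,\mrm{d}t$ (equivalently $\diff_r\tau=2\phi(\tau)/r$ in the paper's polar coordinates) together with the endpoint values $\tau\in[0,m]$ to conclude. The only cosmetic difference is your use of the logarithmic coordinate $s=\log|\zeta|^2$ in place of $r=|\zeta|$, and indeed the factor in $\frac{\I}{\zeta\bar\zeta}\mrm{d}\zeta\wedge\mrm{d}\bar\zeta=\mrm{d}s\wedge\mrm{d}\vartheta$ is exactly $1$, so no further normalisation is needed.
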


\begin{proof}
We just have to compute~$\int_{F}i^*\omega_\phi$, where~$F$ is a fibre of~$\bb{P}(\m{O}\oplus L)\to\Sigma$ and~$i:F\hookrightarrow\bb{P}(\m{O}\oplus L)$ is the inclusion. Fix a system of bundle-adapted coordinates~$(z,\zeta)$ on~$\bb{P}(\m{O}\oplus L)$, and let~$r=\abs{\zeta}$. Then~$\diff_r\tau=2\,\phi(\tau)\,r^{-1}$, so
\begin{equation*}
\begin{split}
\int_{F}i^*\omega_\phi=&\int_{\zeta\in\bb{C}}\I\,\frac{\phi(\tau)}{r^2}\,\mrm{d}\zeta\,\mrm{d}\bar{\zeta}=\int_{\bb{R}^2}2\,\frac{\phi(\tau)}{r^2}\,\mrm{d}x\,\mrm{d}y=\\
=&\int_{[0,2\,\pi]\times\bb{R}}\diff_r\tau\,\mrm{d}\tau\,\mrm{d}\vartheta=2\,\pi\,m.\qedhere
\end{split}
\end{equation*}
\end{proof}

\begin{prop}[\cite{Hwang_Singer}, see also~\cite{Szekelyhidi_libro}]\label{prop:curv_scal_phi}
With the previous notation, the scalar curvature of~$\omega_\phi$ is
\begin{equation*}
s(\omega_\phi)=\frac{1}{1+\tau}\pi^*s(\omega_\Sigma)-\phi''(\tau)-\frac{2}{1+\tau}\phi'(\tau).
\end{equation*}
\end{prop}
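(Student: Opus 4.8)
The statement to prove is Proposition~\ref{prop:curv_scal_phi}, giving the scalar curvature of the Calabi-ansatz metric $\omega_\phi$ in terms of the momentum profile $\phi(\tau)$. This is a standard but computationally involved result, and I outline two complementary strategies.

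\textbf{The direct computation.} The plan is to compute the scalar curvature directly from the K\"ahler potential using the formula $S = -g^{a\bar b}\,\partial_a\partial_{\bar b}\log\det(g_{c\bar d})$ (up to the convention-dependent constant, matching the definition of Hermitian scalar curvature in Section~\ref{sec:J}). First I would record $\det(g_{a\bar b})$ from the explicit matrix in~\eqref{eq:metrica_CalabiAnsatz_phi}: a short calculation gives $\det(g) = (1+\tau)\,g_\Sigma\cdot \frac{\phi(\tau)}{\zeta\bar\zeta}$, the cross terms cancelling. Hence $\log\det(g) = \log(1+\tau) + \log\phi(\tau) + \log g_\Sigma - \log(\zeta\bar\zeta)$. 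The key structural fact I would exploit is the relation $\partial_r\tau = 2\phi(\tau)r^{-1}$ from the proof of Lemma~\ref{lemma:volume_fibre}, or equivalently in complex coordinates $\partial_\zeta t = \zeta^{-1}$ and $\tau = \tau(t)$ with $t$ the log-norm function, so that derivatives of $\tau$ along the fibre and base directions are controlled by $\phi(\tau)$ and by $\partial t$. Then I would apply $g^{a\bar b}\partial_a\partial_{\bar b}$ to each summand of $\log\det(g)$, using the explicit inverse matrix $(g^{a\bar b})$ already displayed, and collect terms.

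\textbf{Organizing the chain rule.} The main obstacle is bookkeeping: $\tau$ depends on the point only through $t$, and $t$ depends on both $z$ and $\zeta$, so every application of $\partial_a\partial_{\bar b}$ to $\log(1+\tau)$ or $\log\phi(\tau)$ produces chain-rule terms involving $\phi$, $\phi'$, $\phi''$, together with geometric terms from $F(h) = -\omega_\Sigma$ and from the base curvature $s(\omega_\Sigma)$. The cleanest way to manage this is to separate the Laplacian into its ``base'' contribution — which, after contracting with $g^{z\bar z} = \frac{1}{(1+\tau)g_\Sigma}$, produces the $\frac{1}{1+\tau}\pi^*s(\omega_\Sigma)$ term from $\partial\bar\partial\log g_\Sigma$ — and its ``fibre'' contribution. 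For the fibre part I would change to the single real variable $\tau$, using $\partial_r\tau = 2\phi r^{-1}$ to convert $\zeta$-derivatives into $\tau$-derivatives; the terms $-\phi''(\tau)$ and $-\frac{2}{1+\tau}\phi'(\tau)$ should then emerge from differentiating $\log\phi(\tau)$ and $\log(1+\tau)$ twice with respect to $\tau$ and reorganizing via the momentum-profile identities $f''(t) = \phi(\tau)$, $t = F'(\tau)$.

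\textbf{Alternative via symplectic/Legendre duality and citation.} Since the result is quoted from Hwang--Singer~\cite{Hwang_Singer} (see also~\cite{Szekelyhidi_libro}), a legitimate and more conceptual route is to pass to the momentum coordinate $\tau$, where the metric in action-angle variables becomes diagonal and the scalar curvature acquires the form typical of Abreu-type formulas (this is the duality alluded to in the text and developed in Chapter~\ref{chap:symplectic_coords}). In those coordinates the fibre contribution to $S$ is literally $-(\phi'' + \text{lower order})$, and the factor $\frac{1}{1+\tau}$ is the conformal rescaling of the base metric by the fibre volume factor $1+\tau$. I would present the proof as: reduce to the $S^1$-invariant setting, invoke the Legendre transform to replace $f(t)$ by the momentum profile $\phi(\tau) = 1/F''(\tau)$, and then either cite~\cite[]{Hwang_Singer} directly or verify the displayed identity by the direct computation above as a consistency check. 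Given the statement explicitly defers to Hwang--Singer, the expected proof is short: set up the momentum-profile formalism, reduce the curvature computation to the one-dimensional profile, and match with the cited general formula specialized to the present rank-one case $\bb{P}(\m{O}\oplus L)$ with $F(h) = -\omega_\Sigma$.
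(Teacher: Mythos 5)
The paper offers no proof of this proposition: it is quoted from Hwang--Singer (see also Sz\'ekelyhidi's book), with only the remark that it is a special case of their general momentum-construction formula, so there is no in-text argument to compare yours against. Your direct verification is nevertheless correct and essentially complete in outline. The determinant identity $\det(g_{a\bar b}) = (1+\tau)\,g_\Sigma\,\phi(\tau)/(\zeta\bar\zeta)$ is right (the cross terms do cancel), and it is confirmed independently by the paper's own formula $\omega_\phi^2=-(1+\tau)g_\Sigma\frac{\phi(\tau)}{r^2}\,\mathrm{d}z\wedge\mathrm{d}\bar z\wedge\mathrm{d}\zeta\wedge\mathrm{d}\bar\zeta$ in the proof of Lemma~\ref{lemma:media_curvatura}. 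For the chain-rule bookkeeping, the transversally normal coordinates introduced just after the proposition (where $\partial_z t(p)=0$, so $g$ and $g^{-1}$ are diagonal at $p$) do exactly the job you describe: using $\partial_a\tau=\phi(\tau)\,\partial_a t$, $\partial_z\partial_{\bar z}t=g_\Sigma$ and $\partial_\zeta\partial_{\bar\zeta}t=0$, one gets $g^{a\bar b}\partial_a\partial_{\bar b}\log(1+\tau)=\phi'/(1+\tau)$, $g^{a\bar b}\partial_a\partial_{\bar b}\log\phi(\tau)=\phi''+\phi'/(1+\tau)$ and $g^{a\bar b}\partial_a\partial_{\bar b}\log g_\Sigma=-s(\omega_\Sigma)/(1+\tau)$, while $\log(\zeta\bar\zeta)$ is pluriharmonic away from $\Sigma_0\cup\Sigma_\infty$; summing and negating gives exactly the stated formula. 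The one point you should pin down rather than leave as ``up to a convention-dependent constant'' is the normalization: the identity holds with $s=-g^{a\bar b}\partial_a\partial_{\bar b}\log\det g$ (no factor of $2$), which is the convention the paper uses elsewhere, e.g.\ in deriving Abreu's formula $S=-\tfrac14(u^{ab})_{,ab}$; since the right-hand side is not homogeneous in $s$ (the $\phi$-terms carry no curvature), the formula would be false under the doubled Riemannian normalization, so this is not a harmless ambiguity. Your alternative route via Legendre duality is also legitimate but, given that the cited source already works in the momentum profile, it amounts to the same specialization.
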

To study the real moment map equation we also need an explicit expression for~$\widehat{s(\omega_\phi)}$.
\begin{lemma}\label{lemma:media_curvatura}
If~$\phi$ defines a K\"ahler metric on the whole ruled surface~$\bb{P}(\m{O}\oplus L)$ then
\begin{equation*}
\widehat{s(\omega_\phi)}=\frac{2}{m+2}\widehat{s(\omega_\Sigma)}+\frac{2}{m}.
\end{equation*}
\end{lemma}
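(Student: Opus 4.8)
The plan is to compute $\widehat{s(\omega_\phi)}$ directly as the ratio
\[
\widehat{s(\omega_\phi)}=\frac{\int_M s(\omega_\phi)\,\frac{\omega_\phi^2}{2}}{\int_M\frac{\omega_\phi^2}{2}},
\]
integrating the formula for $s(\omega_\phi)$ from Proposition~\ref{prop:curv_scal_phi} against the volume form and reducing everything to one-variable integrals in the momentum coordinate $\tau\in[0,m]$.

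First I would pin down the volume form. Setting $\eta=\frac{\mrm{d}\zeta}{\zeta}+\diff_z t\,\mrm{d}z$, a direct check from~\eqref{eq:metrica_CalabiAnsatz_phi} shows that the fibre part of $\omega_\phi$ is exactly $\I\,\phi(\tau)\,\eta\wedge\bar\eta$, so that $\omega_\phi=(1+\tau)\pi^*\omega_\Sigma+\I\,\phi(\tau)\,\eta\wedge\bar\eta$. Since $\eta\wedge\eta=0$ the square of the fibre part vanishes, and since $(\pi^*\omega_\Sigma)^2=0$ one is left with $\frac{\omega_\phi^2}{2}=(1+\tau)\,\pi^*\omega_\Sigma\wedge\I\phi(\tau)\,\eta\wedge\bar\eta$. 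The computation already performed in the proof of Lemma~\ref{lemma:volume_fibre} identifies $\I\phi(\tau)\,\eta\wedge\bar\eta$ with the fibre measure $\mrm{d}\tau\wedge\mrm{d}\vartheta$, whence $\frac{\omega_\phi^2}{2}=(1+\tau)\,\pi^*\omega_\Sigma\wedge\mrm{d}\tau\wedge\mrm{d}\vartheta$. Integrating gives the total volume $\int_M\frac{\omega_\phi^2}{2}=2\pi\,\mrm{Vol}(\Sigma)\int_0^m(1+\tau)\,\mrm{d}\tau=\pi\,\mrm{Vol}(\Sigma)\,m(m+2)$.

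For the numerator I would substitute Proposition~\ref{prop:curv_scal_phi}; the factor $(1+\tau)$ in the volume form cancels the two denominators, leaving $\int_M s(\omega_\phi)\frac{\omega_\phi^2}{2}=2\pi\int_\Sigma\int_0^m\big[s(\omega_\Sigma)-(1+\tau)\phi''-2\phi'\big]\,\mrm{d}\tau\,\omega_\Sigma$. The $\tau$-integral of the $\phi$-terms collapses because $(1+\tau)\phi''+2\phi'=\frac{\mrm{d}}{\mrm{d}\tau}\big[(1+\tau)\phi'+\phi\big]$, so it reduces to the boundary value $\big[(1+\tau)\phi'+\phi\big]_0^m$. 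Here I would invoke the endpoint conditions $\phi(0)=\phi(m)=0$, $\phi'(0)=1$, $\phi'(m)=-1$ of Proposition~\ref{prop:omega_phi_estende}, which give the value $-(m+2)$. Using $\int_\Sigma s(\omega_\Sigma)\,\omega_\Sigma=\widehat{s(\omega_\Sigma)}\,\mrm{Vol}(\Sigma)$ then produces $\int_M s(\omega_\phi)\frac{\omega_\phi^2}{2}=2\pi\,\mrm{Vol}(\Sigma)\big[m\,\widehat{s(\omega_\Sigma)}+m+2\big]$, and dividing by the total volume and simplifying yields $\frac{2}{m+2}\widehat{s(\omega_\Sigma)}+\frac{2}{m}$.

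The argument is essentially mechanical once the two ingredients are in place — the clean form of the volume element and the total-derivative structure of the curvature terms. The only point genuinely requiring care is the correct use of the boundary conditions of Proposition~\ref{prop:omega_phi_estende}, on which the constant $\frac{2}{m}$ entirely depends, together with the bookkeeping of the normalisation $\mrm{d}\tau\wedge\mrm{d}\vartheta$ for the fibre measure. One could instead argue cohomologically, since $\widehat{s(\omega_\phi)}$ is determined by $c_1(M)\cdot[\omega_\phi]$ and $[\omega_\phi]^2$; but the momentum-profile integration is self-contained given Proposition~\ref{prop:curv_scal_phi}, so I would prefer it.
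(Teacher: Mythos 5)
Your proposal is correct and follows essentially the same route as the paper: both compute $\widehat{s(\omega_\phi)}$ as the ratio of $\int_M s(\omega_\phi)\,\omega_\phi^2/2$ to the volume, reduce the fibre integrals to one-variable integrals in the momentum coordinate $\tau$, and extract the constant $2/m$ from the endpoint conditions $\phi(0)=\phi(m)=0$, $\phi'(0)=-\phi'(m)=1$. The only difference is organizational — the paper evaluates the three curvature terms separately in polar coordinates on each fibre, whereas you package $(1+\tau)\phi''+2\phi'$ as the single total derivative $\frac{\mrm{d}}{\mrm{d}\tau}\bigl[(1+\tau)\phi'+\phi\bigr]$, which is a slightly cleaner bookkeeping of the same computation.
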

\begin{proof}
We use the same notation of the proof of Lemma~\ref{lemma:volume_fibre}. First notice that
\begin{equation*}
\omega_\phi^2=-(1+\tau)g_\Sigma\frac{\phi(\tau)}{r^2}\,\mrm{d}z\wedge\mrm{d}\bar{z}\wedge\mrm{d}\zeta\wedge\mrm{d}\bar{\zeta}
\end{equation*}
so that the volume of~$M=\bb{P}(L\oplus\m{O})$ is
\begin{equation*}
\begin{split}
\mrm{Vol}_\phi(M)=&\int_M\frac{\omega_\phi^2}{2}=-\frac{1}{2}\int_\Sigma\mrm{d}z\,\mrm{d}\bar{z}\left[g_0\int_{\bb{C}}(1+\tau)\frac{\phi(\tau)}{r^2}\mrm{d}\zeta\,\mrm{d}\bar{\zeta}\right]=\\
=&\int_\Sigma\mrm{d}z\,\mrm{d}\bar{z}\,g_0\left[\pi\,\I\left(1+\frac{m}{2}\right)m\right]=\pi\frac{m(2+m)}{2}\,\mrm{Vol}_{\omega_\Sigma}(\Sigma).
\end{split}
\end{equation*}
To compute the integral of~$s(\omega_\phi)$ we use the expression in Proposition~\ref{prop:curv_scal_phi} 
\begin{equation*}
\begin{split}
\int_M s(\omega_\phi)\frac{\omega_\phi^2}{2}=&-\frac{1}{2}\int_M\!\!\!(1+\tau)g_\Sigma\frac{\phi(\tau)}{r^2}\left(\frac{s(\omega_\Sigma)}{1+\tau}-\phi''(\tau)-2\frac{\phi'(\tau)}{1+\tau}\right)\mrm{d}z\,\mrm{d}\bar{z}\,\mrm{d}\zeta\,\mrm{d}\bar{\zeta}=\\
=&-\frac{1}{2}\int_\Sigma\mrm{d}z\,\mrm{d}\bar{z}\,g_\Sigma\,s(\omega_\Sigma)\left[\int_{\bb{C}}\frac{\phi(\tau)}{r^2}\mrm{d}\zeta\,\mrm{d}\bar{\zeta}\right]+\\
&+\frac{1}{2}\int_\Sigma\mrm{d}z\,\mrm{d}\bar{z}\,g_\Sigma\left[\int_{\bb{C}}\frac{2\,\phi(\tau)\,\phi'(\tau)}{r^2}\mrm{d}\zeta\,\mrm{d}\bar{\zeta}\right]\\
&+\frac{1}{2}\int_\Sigma\mrm{d}z\,\mrm{d}\bar{z}\,g_\Sigma\left[\int_{\bb{C}}\frac{(1+\tau)\phi(\tau)\,\phi''(\tau)}{r^2}\mrm{d}\zeta\,\mrm{d}\bar{\zeta}\right].
\end{split}
\end{equation*}
We split the computation in three parts. To compute the integrals over~$\bb{C}$, we use polar coordinates.
\begin{equation*}
\int_{\bb{C}}\frac{\phi(\tau)}{r^2}\mrm{d}\zeta\,\mrm{d}\bar{\zeta}=-\I\int_0^{2\pi}\mrm{d}\vartheta\left[\int_0^\infty2\frac{\phi(\tau)}{r}\mrm{d}r\right]=-2\,\pi\,\I\,m;
\end{equation*}
\begin{equation*}
\begin{split}
\int_{\bb{C}}\frac{2\,\phi(\tau)\,\phi'(\tau)}{r^2}\mrm{d}\zeta\,\mrm{d}\bar{\zeta}=&-2\,\I\int_0^{2\pi}\mrm{d}\vartheta\left[2\int_0^\infty\frac{\phi(\tau)\,\phi'(\tau)}{r}\mrm{d}r\right]=\\
=&-2\,\I\int_0^{2\pi}\mrm{d}\vartheta\left[\phi(\tau)\right]^\infty_0=0;
\end{split}
\end{equation*}
\begin{equation*}
\begin{split}
\int_{\bb{C}}&\frac{(1+\tau)\phi(\tau)\,\phi''(\tau)}{r^2}\mrm{d}\zeta\,\mrm{d}\bar{\zeta}=-\,\I\int_0^{2\pi}\!\!\!\!\mrm{d}\vartheta\left[2\int_0^\infty\frac{(1+\tau)\phi(\tau)\,\phi''(\tau)}{r}\mrm{d}r\right]=\\
&=-\I\int_0^{2\pi}\!\!\!\!\mrm{d}\vartheta\left[\int_0^\infty\diff_r\phi'(\tau)\mrm{d}r\right]-\I\int_0^{2\pi}\!\!\!\!\mrm{d}\vartheta\left[\int_0^\infty\diff_r(\phi'(\tau)\,\tau)-\diff_r\phi(\tau)\mrm{d}r\right]=\\
&=4\,\pi\,\I+2\,\pi\,\I\,m.
\end{split}
\end{equation*}
Putting everything together:
\begin{equation*}
\begin{split}
\int_M s(\omega_\phi)\frac{\omega_\phi^2}{2}=&-\frac{1}{2}\int_\Sigma\mrm{d}z\,\mrm{d}\bar{z}\,g_\Sigma\,s(\omega_\Sigma)\left[-2\,\pi\,\I\,m\right]+\\
&+\frac{1}{2}\int_\Sigma\mrm{d}z\,\mrm{d}\bar{z}\,g_\Sigma\left[4\,\pi\,\I+2\,\pi\,\I\,m\right]=\\
=&\pi\,m\int_\Sigma s(\omega_\Sigma)\,\omega_\Sigma+(2\,\pi+\pi\,m)\,\mrm{Vol}_{\omega_\Sigma}(\Sigma)
\end{split}
\end{equation*}
and finally we find
\begin{equation*}
\widehat{s(\omega_\phi)}=\frac{\int_M s(\omega_\phi)\,\frac{\omega^2_\phi}{2}}{\mrm{Vol}_\phi(M)}
=\frac{2}{2+m}\widehat{s(\omega_\Sigma)}+\frac{2}{m}.\qedhere
\end{equation*}
\end{proof}

An analogous computation will give the K\"ahler class of~$\omega_\phi$.

\begin{lemma}[See \S~$4.4$ in~\cite{Szekelyhidi_libro}]\label{lemma:classe_phi}
Consider on~$\bb{P}(\m{O}\oplus L)$ the classes of a fibre~$\m{C}$ and the infinity section~$\Sigma_\infty$. Then the Poincaré dual to~$[\omega_\phi]$ is
\begin{equation*}
\m{L}_m:=2\,\pi\left(\m{C}+m\,\Sigma_\infty\right).
\end{equation*}
\end{lemma}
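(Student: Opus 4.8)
The plan is to exploit that $H_2(M,\bb{R})$ is two-dimensional, spanned by the fibre class $\m{C}$ and the infinity section $\Sigma_\infty$. Since the Poincaré dual of a degree-two class is the unique homology class whose intersection pairing reproduces the periods, it suffices to check the two identities $\m{L}_m\cdot\m{C}=\int_{\m{C}}\omega_\phi$ and $\m{L}_m\cdot\Sigma_\infty=\int_{\Sigma_\infty}\omega_\phi$. To evaluate the left-hand sides I would first record the intersection theory of $M=\bb{P}(\m{O}\oplus L)$: one has $\m{C}^2=0$, $\m{C}\cdot\Sigma_\infty=1$, and $\Sigma_\infty^2=-\deg L$ (the normal bundle of $\Sigma_\infty$ being $L^{-1}$), together with the cohomological normalisation forced by $F(h)=-\omega_\Sigma$, which ties $\int_\Sigma\omega_\Sigma$ to $\deg L$ up to a factor of $2\pi$.

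The fibre period is immediate: Lemma~\ref{lemma:volume_fibre} gives $\int_{\m{C}}\omega_\phi=2\pi m$, while on the homological side $\m{L}_m\cdot\m{C}=2\pi\left(\m{C}^2+m\,\Sigma_\infty\cdot\m{C}\right)=2\pi m$, so these agree. This already pins down the coefficient of $\Sigma_\infty$ in the Poincaré dual.

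For the remaining period I would compute $\int_{\Sigma_\infty}\omega_\phi$ by restricting the Calabi-ansatz form \eqref{eq:metrica_CalabiAnsatz_phi} to the section. The horizontal term contributes $(1+\tau)\pi^*\omega_\Sigma$ evaluated at $\tau=m$, and the vertical and mixed terms are handled by the very same $\bb{C}$-fibre integration used in the proofs of Lemma~\ref{lemma:volume_fibre} and Lemma~\ref{lemma:media_curvatura}, using $\diff_r\tau=2\,\phi(\tau)\,r^{-1}$ together with the boundary conditions $\phi(a)=\phi(b)=0$. Feeding in the normalisation from $F(h)=-\omega_\Sigma$ and the intersection numbers above should show that this period coincides with $\m{L}_m\cdot\Sigma_\infty=2\pi\left(\m{C}\cdot\Sigma_\infty+m\,\Sigma_\infty^2\right)$, completing the identification of the Poincaré dual with $2\pi(\m{C}+m\,\Sigma_\infty)$.

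The hard part will be the bookkeeping in the section period: because the fibre potential $f(t)$ is singular along $\Sigma_\infty$, the restriction of $\I\diff\bdiff f(t)$ does not simply vanish, and its contribution must be extracted from the boundary behaviour of $\phi$ rather than discarded. I would therefore keep track of the exact factors of $2\pi$ carried by the curvature normalisation throughout, and use the total-volume relation $\int_M\omega_\phi^2=2\,\mrm{Vol}_\phi(M)$ computed in the proof of Lemma~\ref{lemma:media_curvatura} as an internal consistency check on the constants, since $\int_M\omega_\phi^2$ must equal the self-intersection $\m{L}_m^2$.
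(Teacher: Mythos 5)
The paper does not actually prove this lemma --- it is quoted from \cite{Szekelyhidi_libro} with the remark that an ``analogous computation'' to Lemma~\ref{lemma:media_curvatura} gives it --- so your period-matching argument is the natural way to supply the missing proof, and the strategy is sound: $H_2(M,\bb{R})$ is spanned by $[\m{C}]$ and $[\Sigma_\infty]$ with nondegenerate intersection form, so the two pairings you list determine the Poincar\'e dual, and the fibre period is exactly Lemma~\ref{lemma:volume_fibre}. You have, however, misplaced the difficulty: the period over $\Sigma_\infty$ needs no fibre integration and no delicate bookkeeping of the singularity of $f(t)$. Proposition~\ref{prop:omega_phi_estende} already guarantees that $\omega_\phi$ extends smoothly across $\Sigma_\infty$, and in the chart $\eta=\zeta^{-1}$ every term of \eqref{eq:metrica_CalabiAnsatz_phi} other than $(1+\tau)\pi^*\omega_\Sigma$ either contains a $\mrm{d}\eta$ or $\mrm{d}\bar{\eta}$, which dies under pullback to $\{\eta=0\}$, or has coefficient $\phi(\tau)\,\diff_z t\,\diff_{\bar{z}}t$, which vanishes there because $\phi(m)=0$. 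Hence $\int_{\Sigma_\infty}\omega_\phi=(1+m)\int_\Sigma\omega_\Sigma$ on the nose.

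Two caveats when you close up the constants. First, writing $\mrm{PD}[\omega_\phi]=a\,\m{C}+b\,\Sigma_\infty$, the fibre period gives $b=2\pi m$, and the section period together with $\Sigma_\infty^2=-\deg L=\tfrac{1}{2\pi}\mrm{Vol}_{\omega_\Sigma}(\Sigma)$ (the normalisation forced by $F(h)=-\omega_\Sigma$, with the sign making $\deg L<0$) gives $a=(1+m)\mrm{Vol}_{\omega_\Sigma}(\Sigma)-2\pi m\,\Sigma_\infty^2=\mrm{Vol}_{\omega_\Sigma}(\Sigma)$. So what your argument actually proves is $\mrm{PD}[\omega_\phi]=\mrm{Vol}_{\omega_\Sigma}(\Sigma)\,\m{C}+2\pi m\,\Sigma_\infty$, which coincides with the stated $\m{L}_m$ precisely when $\mrm{Vol}_{\omega_\Sigma}(\Sigma)=2\pi$, i.e.\ $\deg L=-1$; for the case used later ($L=K_\Sigma^*$ with the hyperbolic metric) the literal coefficient of $\m{C}$ differs, though only the affine dependence on $m$ matters downstream. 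Your proof will surface this, and you should not read it as an error in your computation. Second, do not use the total volume from the proof of Lemma~\ref{lemma:media_curvatura} as your consistency check without recomputing it: the displayed formula for $\omega_\phi^2$ there is missing the factor $2$ coming from the cross term $2(1+\tau)\pi^*\omega_\Sigma\wedge\I\,\phi(\tau)\,(\zeta\bar{\zeta})^{-1}\mrm{d}\zeta\wedge\mrm{d}\bar{\zeta}$ (harmless in that proof, since it cancels in the average scalar curvature), so comparing $\tfrac12\m{L}_m^2$ with the stated $\mrm{Vol}_\phi(M)$ would produce a spurious discrepancy of a factor of $2$.
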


\paragraph*{Transversally normal coordinates.} For many of the computations in Section~\ref{sec:ruled_surface_realmm} it will be convenient to choose bundle-adapted holomorphic coordinates~$\bm{w}=(z,\zeta)$ such that, for a fixed point~$p\in\Sigma$,~$\left(\diff_zt\right)(p)=0$. For brevity, we will call coordinates with these properties \emph{transversally normal at}~$p$. Such a system of coordinates always exists, they are essentially just normal coordinates for the bundle metric~$h$. In these coordinates the metric~$\omega_\phi$ becomes (c.f. equation~\eqref{eq:metrica_CalabiAnsatz_phi})
\begin{equation*}
\omega(p)=(1+\tau)\pi^*\omega_\Sigma+\I\,\frac{\phi(\tau)}{\zeta\bar{\zeta}}\mrm{d}\zeta\wedge\mrm{d}\bar{\zeta}.
\end{equation*}
In particular, it will be convenient to use transversally normal coordinates whenever we have to compute objects that involve the Christoffel symbols of~$\omega_\phi$, since in these coordinates~$g_\phi$ and its inverse are diagonal.

\begin{lemma}\label{lemma:christoffel}
Let~$\Gamma(\Sigma)$ be the Christoffel symbol of~$g_\Sigma$. Then the Christoffel symbols of~$\omega_\phi$ are
\begin{align*}
\Gamma^1_{11}=&2\frac{\phi(\tau)}{1+\tau}\diff_zt+\Gamma(\Sigma); & \Gamma^2_{11}=&\zeta\diff_zt\left(\!\left(\phi'(\tau)-2\frac{\phi(\tau)}{1+\tau}\right)\diff_zt-\Gamma(\Sigma)\right)+\zeta\,\diff^2_zt;\\
\Gamma^1_{21}=&\frac{\phi(\tau)}{(1+\tau)\zeta}; & \Gamma^2_{21}=&\diff_zt\left(\phi'(\tau)-\frac{\phi(\tau)}{1+\tau}\right);\\
\Gamma^1_{22}=&0; & \Gamma^2_{22}=&\frac{\phi'(\tau)-1}{\zeta}.
\end{align*}
\end{lemma}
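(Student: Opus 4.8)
The plan is to compute the Christoffel symbols directly from the defining formula $\Gamma^k_{ij} = g^{k\bar{l}}\,\diff_i g_{j\bar{l}}$ for a Kähler metric, working in transversally normal coordinates at the point $p$. This is the natural strategy because the metric $\omega_\phi$ and its inverse are given explicitly in equation~\eqref{eq:metrica_CalabiAnsatz_phi} and the two displayed matrices of $g_{a\bar b}$ and $g^{a\bar b}$; the computation is purely mechanical once one has these ingredients. The only subtlety is that the asserted formulas are meant to hold as \emph{functions} (not merely at $p$), so I must be careful: although the coordinates are transversally normal at $p$ (meaning $(\diff_z t)(p)=0$), the Christoffel symbols in the statement retain terms proportional to $\diff_z t$, so these must be genuine identities valid in a neighbourhood in which the coordinates are bundle-adapted but not yet specialised to $p$. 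I would therefore carry out the computation keeping $\diff_z t$ and $\diff^2_z t$ general.

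First I would record the derivatives of the auxiliary functions: since $t = \log(a(z)\zeta\bar\zeta)$ we have $\diff_z t = \diff_z\log a(z)$ and $\diff_{\bar z} t = \diff_{\bar z}\log a(z)$ (independent of $\zeta$ apart from through $t$ itself), and $\diff_\zeta t = 1/\zeta$; moreover $\tau = f'(t)$ gives $\diff_z\tau = \phi(\tau)\diff_z t$ and $\diff_\zeta\tau = \phi(\tau)/\zeta$, using $f''(t)=\phi(\tau)$. The curvature normalisation $F(h) = -\omega_\Sigma$ translates into $\diff_z\diff_{\bar z} t = -g_\Sigma$ (up to the sign convention fixed in the excerpt), which is what produces the Gauss curvature / $\Gamma(\Sigma)$ contributions in the base direction. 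With these derivative rules in hand, I would differentiate each entry of the matrix $(g_{a\bar b})$ with respect to $z$ and $\zeta$, then contract with the explicit inverse matrix $(g^{a\bar b})$.

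The bulk of the work is organising the six contractions $\Gamma^1_{11}, \Gamma^2_{11}, \Gamma^1_{21}, \Gamma^2_{21}, \Gamma^1_{22}, \Gamma^2_{22}$. For each, one writes $\Gamma^k_{ij} = g^{k\bar 1}\diff_i g_{j\bar 1} + g^{k\bar 2}\diff_i g_{j\bar 2}$ and substitutes. The entry $\Gamma^1_{22}=0$ should fall out immediately because $g_{2\bar 1}$ and $g_{2\bar 2}$ depend on $\zeta$ only through the explicit $1/(\zeta\bar\zeta)$ and through $\tau$, and the relevant combination cancels against the off-diagonal inverse entry; this is a good consistency check to perform first. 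The purely-fibre symbol $\Gamma^2_{22} = (\phi'(\tau)-1)/\zeta$ is essentially the one-dimensional computation on a fibre and can be verified from $\diff_\zeta(\phi(\tau)/(\zeta\bar\zeta))$ together with $\diff_\zeta\tau = \phi/\zeta$. The mixed and base symbols require keeping track of the $\Gamma(\Sigma)$ term, which arises from differentiating $(1+\tau)g_\Sigma$ in the $z$-direction and hence from $\diff_z g_\Sigma = g_\Sigma\,\Gamma(\Sigma)$ (the base Christoffel symbol), and of the $\diff^2_z t$ term in $\Gamma^2_{11}$, which comes from differentiating $\diff_z t$ itself.

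The main obstacle I anticipate is purely bookkeeping: the appearance of both $\diff_z t$ and its derivatives alongside the base Christoffel symbol $\Gamma(\Sigma)$ means several terms of similar shape must be combined correctly, and sign errors are easy given the competing conventions for curvature noted in the remark after Theorem~\ref{thm:BiquardGauduchon}. To keep this manageable I would exploit transversal normality at $p$ to verify the $\diff_z t$-independent pieces (the coefficients of $\phi,\phi',\phi'',1/(1+\tau)$) cleanly at $p$, and then separately track the terms linear in $\diff_z t$ and in $\diff^2_z t$ by differentiating before setting $(\diff_z t)(p)=0$. Since the claimed formulas are stated as a Lemma with the expectation that they follow from ``a quick computation,'' I would present the derivation compactly, displaying the derivative rules for $\tau$ and $t$, then one representative contraction in full (say $\Gamma^2_{11}$, the most involved), and asserting the remaining five by the identical method.
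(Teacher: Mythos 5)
Your proposal is correct and is exactly the computation the paper intends: the lemma is stated without proof and is meant to follow from contracting $\Gamma^k_{ij}=g^{k\bar l}\diff_i g_{j\bar l}$ against the explicitly displayed matrices for $g_{a\bar b}$ and $g^{a\bar b}$, using $\diff_z\tau=\phi(\tau)\diff_z t$, $\diff_\zeta\tau=\phi(\tau)/\zeta$ and $\diff_z g_\Sigma=\Gamma(\Sigma)\,g_\Sigma$, which is precisely your plan. The only detail to pin down when you carry it out is that the normalisation $F(h)=-\omega_\Sigma$ must be read as $\diff_z\diff_{\bar z}t=+g_\Sigma$ in these coordinates for the coefficient $-2\phi/(1+\tau)$ in $\Gamma^2_{11}$ to come out right, but you have correctly flagged this as the one convention-dependent sign.
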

In particular, if we fix a point~$p\in\Sigma$ and a system of transversally normal coordinates around it, all the Christoffel symbols of~$\omega_\phi$ at the point~$p$ vanish, except for
\begin{equation}\label{christoffel}
\nonumber \Gamma^1_{11}=\Gamma(\Sigma),\quad\Gamma^1_{21}=\frac{\phi(\tau)}{(1+\tau)\zeta},\quad\Gamma^2_{22}=\frac{\phi'(\tau)-1}{\zeta}.
\end{equation}

\subsection{First-order deformations of projective bundles}\label{sec:def_complex_bundle}

The HcscK equations involve both a K\"ahler metric and a deformation of the complex structure. While in this ruled surface case we have already chosen to use K\"ahler metrics satisfying the Calabi ansatz~\eqref{eq:metrica_ruled_surface}, we still have to choose which deformations of~$\bb{P}(\m{O}\oplus L)$ to consider. The natural choice is to consider a deformation of the~$\bdiff$-operator of~$E:=\m{O}\oplus L$, so a matrix-valued form~$\beta\in\m{A}^{0,1}(\mrm{End}(E))$; this~$1$-form will induce in turn a deformation~$A\in\mrm{End}(TE)$ of the complex structure of the total space (which we still denote by~$E$).

First, recall how a~$\bdiff_E$-operator determines the complex structure~$J_E$, see~\cite[Proposition~$1.3.7$]{Kobayashi_bundles}. Fix a local holomorphic coordinate~$z$ on~$\Sigma$ and a local frame~$(s_1,s_2)$ on~$E$. If we let~$(w^1,w^2)$ be the usual coordinates on~$\mathbb{C}^2$, by the choice of the local frame we can use~$(z,w^1,w^2)$ as local complex coordinates on~$E$. Denote by
\begin{equation*}
T\indices{^i_j}:=T\indices{_{\bar{1}}^i_j}\mathrm{d}\bar{z}
\end{equation*}
the local representative of the~$\bdiff_E$-operator. A complex structure on~$E$ is uniquely determined by a decomposition~$T_{\mathbb{C}}E=T^{1,0}E\oplus T^{0,1}E$; we define
\begin{equation*}
T^{1,0}E:=\mrm{span}_{\mathbb{C}}\left(\diff_{w^1},\diff_{w^2},\diff_z-\overline{T\indices{^i_j}}(\diff_z)\bar{w}^j\diff_{\bar{w}^i}\right).
\end{equation*}
A different choice of a local frame does not change this bundle; moreover, the integrability of~$\bdiff_E$ (i.e.~$\bdiff_E^2=0$) is equivalent to that of~$T^{1,0}E$ (i.e.~$[T^{1,0}E,T^{1,0}E]\subseteq T^{1,0}E$.)

Consider now the case in which we already have a holomorphic structure~$\bdiff_E$, and we are deforming it as~$\bdiff'_E:=\bdiff_E+\beta$ for some~$\beta\in\m{A}^{0,1}(\mrm{End}(E))$. Choose a local~$\bdiff_E$-holomorphic frame~$s_1,s_2$ for~$E$. Then a local representative for~$\bdiff'_E$ in this local frame is just the matrix~$\beta$, and the previous construction gives us
\begin{equation*}
T_{\bdiff_E}^{1,0}E=\mrm{span}_{\mathbb{C}}\left(\diff_{w^1},\diff_{w^2},\diff_z\right),\quad T_{\bdiff'_E}^{1,0}E=\mrm{span}_{\mathbb{C}}\left(\diff_{w^1},\diff_{w^2},\diff_z-\overline{\beta\indices{_{\bar{1}}^i_j}}\bar{w}^j\diff_{\bar{w}^i}\right).
\end{equation*}

Changing point of view,~$\bdiff_E$ defines on the total space of~$E$ a complex structure~$J_E$, and if we slightly deform it to~$J'_E:=J_E+\varepsilon\, A$ for some~$A\in\Gamma(E,\mrm{End}(TE))$, to first order in~$\varepsilon$ the holomorphic tangent bundle of~$E$ with respect to~$J'_E$ can be described as
\begin{equation*}
T_{J'_E}^{1,0}E=\left\lbrace v-\frac{\mrm{i}\,\varepsilon}{2}A(v)\mid v\in T_{J_E}^{1,0}E\right\rbrace.
\end{equation*}
Comparing the spaces~$T_{J'_E}^{1,0}E$ and~$T_{\bdiff'_E}^{1,0}E$, we see that~$A$ induces the same deformation of~$J_E$ as~$\beta$ if and only if
\begin{equation}
\begin{split}
A^{1,0}(\diff_{\bar{w}^i})=&0\\
A^{1,0}(\diff_{\bar{z}})=&2\,\mrm{i}\,\beta\indices{^i_j}(\diff_{\bar{z}})w^j\diff_{w^i};
\end{split}
\end{equation}
we let~$A(\beta)$ be the deformation of the complex structure defined by these equations.

The next step is to see how a deformation of~$\bdiff_E$,~$\beta\in\m{A}^{0,1}(\mrm{End}(E))$ induces a deformation of the complex structure of~$\bb{P}(E)$. From the previous discussion, we have a canonical way to induce a first-order deformation~$A(\beta)\in\Gamma(\mrm{End}(TE))$ of the complex structure of~$E$. Recall the definition of~$\bb{P}(E)$ from the usual~$\bb{C}^*$-action on the fibres
\begin{equation*}
\bb{P}(E):=\left(E\setminus M\right)/\bb{C}^*.
\end{equation*}
\begin{lemma}
Let~$p:E\setminus M\to\bb{P}(E)$ be the usual projection, and fix~$\beta\in\m{A}^{0,1}(\mrm{End}(E))$. Then~$A=A(\beta)$ induces a deformation of the complex structure of~$\bb{P}(E)$ as follows: for~$[x]\in\bb{P}(E)$ and~$v\in T_{[x]}\bb{P}(E)$ choose a~$p$-lift~$\hat{v}\in T_xE$ of~$v$, and let
\begin{equation*}
A_{[x]}(v):=p_*A_{x}(\hat{v}).
\end{equation*}
\end{lemma}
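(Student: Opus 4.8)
The goal is to show that the deformation $A=A(\beta)\in\Gamma(\mrm{End}(TE))$ descends from the total space $E\setminus M$ to $\bb{P}(E)$ via the projection $p$. The plan is to verify that the proposed formula $A_{[x]}(v)=p_*A_x(\hat{v})$ is well-defined, i.e.\ that it does not depend on the choice of lift $\hat{v}\in T_xE$ of $v$, nor on the choice of representative $x$ in the fibre $\bb{C}^*\cdot x$. Since a first-order deformation of a complex structure is an endomorphism of the tangent bundle, what must be checked is that $A$ is compatible with the $\bb{C}^*$-quotient in the appropriate sense, so that $p_*A_x(\hat{v})$ defines a genuine endomorphism of $T_{[x]}\bb{P}(E)$.

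First I would recall that $p_*:T_xE\to T_{[x]}\bb{P}(E)$ is surjective with kernel spanned by the fundamental vector field of the $\bb{C}^*$-action (the Euler vector field $w^i\diff_{w^i}$ together with its conjugate). Thus well-definedness with respect to the choice of lift $\hat{v}$ amounts to showing that $A_x$ sends $\ker(p_*)_x$ into $\ker(p_*)_x$; equivalently, that $A$ preserves the vertical directions generated by the scaling action. Using the explicit formula for $A(\beta)$, namely $A^{1,0}(\diff_{\bar{w}^i})=0$ and $A^{1,0}(\diff_{\bar{z}})=2\,\I\,\beta\indices{^i_j}(\diff_{\bar{z}})w^j\diff_{w^i}$, I would compute $A$ applied to the Euler field and its conjugate. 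The key observation is that the image of $A$ lands in the span of the $\diff_{w^i}$ (the vertical holomorphic directions), which is precisely the span of the fibre directions that $p_*$ collapses together with the Euler field; the linearity in $w^j$ of the coefficient $\beta\indices{^i_j}(\diff_{\bar{z}})w^j$ is what makes the construction equivariant under rescaling $w\mapsto \lambda w$.

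Next I would address independence of the representative $x$ in its $\bb{C}^*$-orbit. Here the point is that $A(\beta)$ is built canonically from $\beta\in\m{A}^{0,1}(\mrm{End}(E))$, which is a globally defined tensor on the base pulled back fibrewise; the linear dependence on the fibre coordinate $w$ means that under the replacement $x\mapsto\lambda x$ the endomorphism $A_{\lambda x}$ transforms by exactly the differential $(\mu_\lambda)_*$ of the scaling map $\mu_\lambda$, so that $p_*A_{\lambda x}(\hat{v})=p_*A_x(\hat{v}')$ for the correspondingly rescaled lift. I would verify this transformation rule by a short coordinate computation using the homogeneity of the formula, concluding that the descended tensor is unambiguous.

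The main obstacle I expect is bookkeeping: one must track carefully how $p_*$ interacts with the holomorphic versus antiholomorphic decomposition, since $A^{1,0}$ is defined only on $T^{0,1}E$ and extended as a real endomorphism, and the Euler field has both $(1,0)$ and $(0,1)$ parts. I would therefore organise the verification by checking separately that $A$ maps the complex-linear span of the vertical Euler field into the vertical holomorphic subbundle $\mrm{span}_{\bb{C}}(\diff_{w^1},\diff_{w^2})$, which already lies in $\ker(p_*)\otimes\bb{C}$. Once these two invariance properties are established, it is immediate that $A_{[x]}$ is a well-defined endomorphism of $T_{[x]}\bb{P}(E)$ depending smoothly on $[x]$, and hence defines a first-order deformation of the complex structure of $\bb{P}(E)$; I would close by remarking that this deformation is automatically compatible with the projection $\pi:\bb{P}(E)\to\Sigma$ by construction, since $A(\beta)$ acts trivially on the base direction modulo vertical terms.
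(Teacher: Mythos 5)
Your proposal is correct and follows essentially the same route as the paper: a direct coordinate verification that $p_*A_x(\hat v)$ is unchanged when the lift is altered by an element of $\ker(p_*)$ (which $A(\beta)$ annihilates, since it vanishes on all the $\diff_{\bar{w}^i}$) and when $x$ is replaced by $\lambda x$ (the degree-one homogeneity of $\beta\indices{^i_j}(\diff_{\bar{z}})\,w^j\diff_{w^i}$ cancelling the degree $-1$ homogeneity of $p_*$ in the fibre directions). One small imprecision to fix when writing this up: $p_*$ does not collapse the whole span of $\diff_{w^1},\diff_{w^2}$ but only the Euler direction, so the image of $A$ being vertical is not by itself sufficient for lift-independence --- what you actually need, and what holds, is that $A$ kills $\ker(p_*)$.
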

\begin{proof}
We have to check that this expression does not depend upon the choice of the preimage of~$[x]$ and of the lift~$\hat{v}$ of~$v$. Fix holomorphic local frames of~$\m{O}$ and~$L$, so that we can locally describe~$E$ as~$M\times\bb{C}^2$, with coordinates~$w^1,w^2$ on the fibres. We get homogeneous coordinates on the fibres of~$\bb{P}(E)$ as~$[w^1:w^2]$. If we fix a holomorphic coordinate~$z$ on~$M$, on the open subset of~$\bb{P}(E)$ where~$w^1\not=0$ we have local holomorphic coordinates~$(z,\zeta)$, with~$\zeta=w^2/w^1$.

In this system of local coordinates the projection~$p$ is written as~$p(z,\bm{w})=\left(z,\frac{w^2}{w^1}\right)$, and (the~$(1,0)$ part of) its differential is
\begin{equation*}
\mrm{d}p_{(z,w^1,w^2)}=\begin{pmatrix}
1 & 0 & 0 \\
0 & -\frac{w^2}{(w^1)^2}& \frac{1}{w^1}
\end{pmatrix}.
\end{equation*}
We have to check that for all~$[x]\in\bb{P}(E)$ and all~$\lambda\in\bb{C}^*$, if~$\hat{v}_1\in T^{0,1}_{x}E$ and~$\hat{v}_2\in T^{0,1}_{\lambda\,x}E$ are such that~$p_*\hat{v}_1=p_*\hat{v}_2$, then also
\begin{equation*}
p_*A_{x}(\hat{v}_1)=p_*A_{\lambda\,x}(\hat{v}_2).
\end{equation*}
If~$x=(z,w^1,w^2)$ and~$\hat{v}_1=V\,\diff_{\bar{z}}+U^{\bar{i}}\diff_{\bar{w}^i}$ then
\begin{equation*}
\begin{split}
p_*A_{x}(\hat{v}_1)&=p_*\left(2\,\I\,V\,\beta\indices{^i_j}(\diff_{\bar{z}})\,w^j\diff_{w^i}\right)=\\
&=2\,\I\,V\left(-\beta\indices{^1_j}(\diff_{\bar{z}})\,w^j\frac{w^2}{(w^1)^2}+\beta\indices{^2_j}(\diff_{\bar{z}})\,w^j\frac{1}{w^2}\right)\diff_\zeta
\end{split}
\end{equation*}
while, if~$\hat{v}_2=\tilde{V}\diff_{\bar{z}}+\tilde{U}^{\bar{i}}\diff_{\bar{w}^i}$
\begin{equation*}
\begin{split}
p_*A_{\lambda\,x}(\hat{v}_2)&=p_*\left(2\,\I\,\tilde{V}\,\beta\indices{^i_j}(\diff_{\bar{z}})\,w^j\diff_{w^i}\right)=\\
&=2\,\I\,\tilde{V}\left(-\beta\indices{^1_j}(\diff_{\bar{z}})\,w^j\frac{w^2}{(w^1)^2}+\beta\indices{^2_j}(\diff_{\bar{z}})\,w^j\frac{1}{w^2}\right)\diff_\zeta
\end{split}
\end{equation*}
but if~$\hat{v}_1$ and~$\hat{v}_2$ have the same image under~$p_*$,~$V=\tilde{V}$.
\end{proof}
Let~$v=v^{\bar{1}}\diff_{\bar{z}}+v^{\bar{2}}\diff_{\bar{\zeta}}\in T^{0,1}_{(z,\zeta)}\bb{P}(E)$, and consider~$\hat{v}=v^{\bar{1}}\diff_{\bar{z}}+v^{\bar{2}}\diff_{\bar{w}^2}\in T^{0,1}_{(z,1,\zeta)}(E)$. By our definition,
\begin{equation*}
\begin{split}
p_*A(\hat{v})=&2\,\I\,v^{\bar{1}}\left(-\beta\indices{^1_1}(\diff_{\bar{z}})\,\zeta-\beta\indices{^1_2}(\diff_{\bar{z}})\,\zeta^2+\beta\indices{^2_1}(\diff_{\bar{z}})+\beta\indices{^2_2}(\diff_{\bar{z}})\,\zeta\right)\diff_\zeta.
\end{split}
\end{equation*}
So, if we denote still by~$A$ the deformation of the complex structure of~$\bb{P}(E)$ we have
\begin{equation}\label{eq:deformazione_proiettivo}
A^{1,0}=2\,\I\left[(\beta\indices{_{\bar{1}}^2_2}-\beta\indices{_{\bar{1}}^1_1})\,\zeta-\beta\indices{_{\bar{1}}^1_2}\,\zeta^2+\beta\indices{_{\bar{1}}^2_1}\right]\mrm{d}\bar{z}\otimes\diff_\zeta.
\end{equation}

Notice that when we decompose~$\beta\in\m{A}^{0,1}(\m{O}\oplus L)$ as
\begin{equation*}
\beta=\begin{pmatrix}
\beta\indices{^1_1} & \beta\indices{^1_2}\\
\beta\indices{^2_1} &\beta\indices{^2_2}
\end{pmatrix}
\end{equation*}
then
\begin{center}
\begin{tabular}{l l}
$\beta\indices{^1_1}\in\m{A}^{0,1}(\m{O})\cong\m{A}^{0,1}(\Sigma,\bb{C})$, &~$\beta\indices{^1_2}\in\m{A}^{0,1}(L^*)$,\\
$\beta\indices{^2_1}\in\m{A}^{0,1}(L)$, &~$\beta\indices{^2_2}\in\m{A}^{0,1}(\mrm{End}(L))\cong\m{A}^{0,1}(\Sigma,\bb{C})$.
\end{tabular}
\end{center}
The expression~\eqref{eq:deformazione_proiettivo} for~$A^{1,0}$ holds just on the set~$\bb{P}(\m{O}\oplus L)\setminus\Sigma_\infty$. If instead we change coordinates to~$\bb{P}(\m{O}\oplus L)\setminus\Sigma_0$, we simply have to exchange the roles of~$\beta\indices{^1_2}$ and~$\beta\indices{^2_1}$. Indeed, equation~\eqref{eq:deformazione_proiettivo} was obtained by fixing a system of bundle-adapted holomorphic coordinates~$(z,\zeta)$ on~$L$; if we perform the change of variables~$\eta=\zeta^{-1}$ we obtain
\begin{equation*}
A^{1,0}=-2\,\I\left[\left(\beta\indices{_{\bar{1}}^2_2}-\beta\indices{_{\bar{1}}^1_1}\right)\eta-\beta\indices{_{\bar{1}}^1_2}+\beta\indices{_{\bar{1}}^2_1}\eta^2\right]\mrm{d}\bar{z}\otimes\diff_{\eta}.
\end{equation*}
After all, the construction of~$\bb{P}(\m{O}\oplus L)$ can be interpreted as glueing the total spaces of~$L$ and~$L^*$ along their open subsets~$L\setminus\Sigma$ and~$L^*\setminus\Sigma$.

\begin{rmk}
Choosing the Higgs term~$A$ as in~\eqref{eq:deformazione_proiettivo} guarantees that the first-order deformation of the complex structure is integrable (c.f. Definition~\ref{def:integrable}). The integrability condition for~$A$ in~\eqref{eq:deformazione_proiettivo} is written as
\begin{equation*}
\diff_{\bar{a}}A\indices{^2_{\bar{c}}}=\diff_{\bar{c}}A\indices{^2_{\bar{a}}};
\end{equation*}
the only possibly non-vanishing term of~$A$ is~$A\indices{^2_{\bar{1}}}$, so the integrability reduces to~$\diff_{\bar{\zeta}}A\indices{^2_{\bar{1}}}=0$. It is immediate to check that any~$A$ defined by~\eqref{eq:deformazione_proiettivo} satisfies this condition.
\end{rmk}

\begin{rmk}\label{rmk:surf_eq_complex_nonequiv}
Our choice of deformation of the complex structure~$A$ is not compatible with~$\omega_\phi$ for any~$\phi$. Indeed~$A^2=A^{1,0}A^{0,1}+A^{0,1}A^{1,0}=0$, and if~$A$ and~$\omega_\phi$ were compatible then we would find 
\begin{equation*}
\norm{A}^2_{g_\phi}=\mrm{Tr}(A^2)=0
\end{equation*}
but~$A\not=0$. Hence, in this Section we study the complexified equations
\begin{equation*}
\begin{cases}
\f{m}_{\bm{\Omega_I}}\left(\omega,A(\beta)\right)=0;\\
\f{m}_{\bm{\Theta}}\left(\omega,A(\beta)\right)=0.
\end{cases}
\end{equation*}
for~$A(\beta)$ as in~\ref{eq:deformazione_proiettivo}, and so we'll find a solution to the complexified system~\eqref{eq:HCSCK_complex_relaxed} without the compatibility condition.

Hence, we are tacitly assuming that we have extended the moment maps~$\f{m}_{\bm{\Omega_I}}$,~$\f{m}_{\bm{\Theta}}$ to the space of metrics~$g$ for which~$g(\alpha^\transpose-,-)$ is not necessarily symmetric. We have shown above that~${\f{m}_{\bm{\Theta}}}_{(J,\alpha)}(h) = \left\langle h,-\mrm{div}\left(\bdiff^*\bar{\alpha}^\transpose\right)\right\rangle$, which clearly has a tautological extension to all~$g$ in the K\"ahler class. But the choice of an extension of the real moment map~$\f{m}_{\bm{\Omega_I}}$ is more flexible. 

The crucial point is that, by Lemma~\ref{lemma:mappa_momento_OmegaI},~$\f{m}_{\bm{\Omega_I}}$ is computed in terms of a spectral function of~$A = \mrm{Re}(\alpha^\transpose)$. This function can be expressed in several different, equivalent ways by using a compatible metric~$g$, that is, one for which~$g(\alpha^\transpose-,-)$ is symmetric. In our present situation where this compatibility condition might not hold, these equivalent expressions give rise to potentially different extensions of~$\f{m}_{\bm{\Omega_I}}$. A simple example is given by the spectral quantity~$\mrm{Tr}(A^2)$. We know that for \emph{compatible}~$g$ this may be expressed equivalently as~$\norm{A}^2_g$, so when~$g$ and~$A$ are not compatible~$\norm{A}^2_g$ gives an alternative extension of the spectral quantity~$\mrm{Tr}(A^2)$.
\end{rmk}
\paragraph*{Choice of complexification.} The expression for~$\f{m}$ appearing in~\eqref{eq:low_rank} has been derived in close analogy to the case of curves. However in Remark~\ref{rmk:surf_eq_complex_nonequiv} we saw that we have to extend~$\f{m}$ to the space of first-order deformations of the complex structure that are not necessarily compatible with the metric, and this can be done at least in two ways:~$\f{m}$ is the divergence of the vector field
\begin{equation}\label{eq:low_rank_alternative}
\begin{split}
&-\frac{\mrm{grad}\left(\norm{A^{1,0}}^2\right)}{2\left(1+\sqrt{1-\norm{A^{1,0}}^2}\right)}+\frac{2\,\mrm{Re}\,g(\nabla^aA^{0,1},A^{1,0})\diff_a}{1+\sqrt{1-\norm{A^{1,0}}^2}}-\nabla^*\left(\frac{A^2}{1+\sqrt{1-\norm{A^{1,0}}^2}}\right)\\
&=-\frac{\mrm{grad}\left(\frac{\mrm{Tr}(A^2)}{2}\right)}{2\left(1+\sqrt{1-\frac{\mrm{Tr}(A^2)}{2}}\right)}+\frac{2\,\mrm{Re}\,g(\nabla^aA^{0,1},A^{1,0})\diff_a}{1+\sqrt{1-\frac{\mrm{Tr}(A^2)}{2}}}-\nabla^*\left(\frac{A^2}{1+\sqrt{1-\frac{\mrm{Tr}(A^2)}{2}}}\right).
\end{split}
\end{equation}
This leads to a few different possibilities for the formal complexification. In the rest of this paper we examine the natural choices given by the two expressions in~\eqref{eq:low_rank_alternative}.

\subsection{The complex moment map}\label{sec:ruled_complex_mm}

In this Section we find sufficient conditions on~$\beta\in\m{A}^{0,1}(\mrm{End}(\m{O}\oplus L))$ such that the pair~$\left(\omega_\phi,A(\beta)\right)$ satisfies the complex moment map equation. We work with a fixed metric~$\omega_\phi$ for a prescribed (arbitrary) momentum profile~$\phi$. 

Our strategy is to carry out the necessary computations \emph{without} assuming that~$A=A(\beta)$, but rather for some arbitrary~$A^{1,0}=A\indices{^2_{\bar{1}}}\mrm{d}\bar{z}\otimes\diff_\zeta$. At the end of this Section we show that, when~$L$ is the anti-canonical bundle and for suitable choices of~$A=A(\beta)$, we can find some solutions to the complex moment map equation on~$\bb{P}(\m{O}\oplus L)$.

Recall that, for a deformation of complex structures~$\dot{J}_0$ and a K\"ahler form~$\omega$, the complex moment map equation is
\begin{equation*}
\mrm{div}\left(\bdiff^*\!\dot{J}_0^{1,0}\right)=0.
\end{equation*}
\begin{lemma}
With the previous notation, 
\begin{equation*}
\begin{split}
\bdiff^*A^{1,0}=&-\frac{\phi(\tau)A\indices{^2_{\bar{1}}}}{\zeta\,g_0\,(1+\tau)^2}\diff_z-\frac{1}{(1+\tau)g_0}\left(\diff_zA\indices{^2_{\bar{1}}}+A\indices{^2_{\bar{1}}}\,\diff_zt\,\left(1-\frac{\phi(\tau)}{1+\tau}\right)-\zeta\,\diff_zt\,\diff_{\zeta}A\indices{^2_{\bar{1}}}\right)\diff_\zeta.
\end{split}
\end{equation*}
\end{lemma}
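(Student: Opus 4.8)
The plan is to compute the operator $\bdiff^*$ applied to $A^{1,0}=A\indices{^2_{\bar 1}}\,\mrm{d}\bar z\otimes\diff_\zeta$ directly in bundle-adapted holomorphic coordinates $(z,\zeta)$, using the explicit form of the Calabi-ansatz metric $\omega_\phi$ from~\eqref{eq:metrica_CalabiAnsatz_phi} together with the Christoffel symbols of Lemma~\ref{lemma:christoffel}. Recall that for a $(0,1)$-form valued in $T^{1,0}M$, the formal adjoint of $\bdiff$ acting as $\nabla^{0,1}$ is $\bdiff^* = -g^{c\bar b}\nabla_{\bar b}$ contracting the form index. Concretely, writing $A^{1,0}=A\indices{^2_{\bar 1}}\,\mrm{d}\bar z\otimes\diff_\zeta$, I would compute
\begin{equation*}
(\bdiff^*A^{1,0})\indices{^i}=-g^{i\bar b}\,g^{c\bar 1}\,\nabla_{\bar b}\!\left(A\indices{^2_{\bar c}}\right)\delta\indices{^2_{\!\cdot}}
\end{equation*}
with the only nonzero form-component being $\bar c=\bar 1$, so the whole expression is driven by $\nabla_{\bar b}A\indices{^2_{\bar 1}}$ paired against the inverse metric.

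First I would record the inverse metric $g^{a\bar b}$ in these coordinates, which is already displayed just before Lemma~\ref{lemma:christoffel}, and note that the relevant covariant derivatives are $\nabla_{\bar b}A\indices{^2_{\bar 1}}=\diff_{\bar b}A\indices{^2_{\bar 1}}-\overline{\Gamma^{k}_{b1}}\,A\indices{^2_{\bar k}}$ (lowering the $\bar 1$ index via the conjugate connection) together with the upper-index correction $+\Gamma^2_{bk}A\indices{^k_{\bar 1}}$; since $A$ has only the $\indices{^2_{\bar 1}}$ entry, most Christoffel contractions collapse. The cleanest route is to \emph{work at a fixed point $p$ in transversally normal coordinates}, where $\diff_z t(p)=0$ and by Lemma~\ref{lemma:christoffel} only $\Gamma^1_{11}=\Gamma(\Sigma)$, $\Gamma^1_{21}=\frac{\phi(\tau)}{(1+\tau)\zeta}$ and $\Gamma^2_{22}=\frac{\phi'(\tau)-1}{\zeta}$ survive. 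This kills the off-diagonal metric terms and reduces the computation of each covariant derivative to at most one Christoffel symbol, from which the two displayed components $\diff_z$ and $\diff_\zeta$ emerge: the $\diff_z$-component $-\frac{\phi(\tau)A\indices{^2_{\bar 1}}}{\zeta\,g_0(1+\tau)^2}$ comes from the $\Gamma^1_{21}$ term contracted with $g^{z\bar z}=\frac{1}{(1+\tau)g_0}$, and the $\diff_\zeta$-component collects $\diff_z A\indices{^2_{\bar 1}}$ together with the curvature corrections.

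After obtaining the pointwise identity in transversally normal coordinates, I would then \emph{restore the general coordinate expression} by reinserting the $\diff_z t$-dependent Christoffel terms (the full $\Gamma^2_{11}$, $\Gamma^2_{21}$ of Lemma~\ref{lemma:christoffel}), which accounts for the terms $A\indices{^2_{\bar 1}}\,\diff_z t\left(1-\frac{\phi(\tau)}{1+\tau}\right)$ and $-\zeta\,\diff_z t\,\diff_\zeta A\indices{^2_{\bar 1}}$ appearing inside the $\diff_\zeta$-bracket. The main obstacle I anticipate is purely bookkeeping: correctly tracking the action of $\nabla_{\bar b}$ on a tensor that is simultaneously a $(0,1)$-form (lower barred index $\bar 1$) and a $T^{1,0}$-valued object (upper index $2$), so that both the form-slot and the vector-slot connection terms are included with the right signs and the right conjugations. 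Because $\bdiff^*$ here is adjoint to $\nabla^{0,1}$ and not to the naive $\diff$, one must be careful to use the Chern connection compatibility (Lemma~\ref{lemma:Chern_conn}) and the Kähler identities on $\omega_\phi$; a convenient consistency check is to verify that the final expression is independent of the choice of transversally normal frame and transforms correctly under $\zeta\mapsto\eta=\zeta^{-1}$, matching the coordinate change used for the deformation $A(\beta)$ near $\Sigma_\infty$.
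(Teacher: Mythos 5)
Your overall strategy (a direct coordinate computation against the Calabi-ansatz metric using the Christoffel symbols of Lemma~\ref{lemma:christoffel}) is the same as the paper's, but there are two concrete problems in the execution as you describe it. First, the operator is set up with the derivative in the wrong direction. For $A^{1,0}=A\indices{^c_{\bar b}}\,\mrm{d}\bar z^b\otimes\diff_{z^c}\in\m{A}^{0,1}(T^{1,0}M)$ on the K\"ahler manifold $(M,\omega_\phi)$ the adjoint of $\bdiff=\nabla^{0,1}$ is
\begin{equation*}
\bdiff^*A^{1,0}=-g^{a\bar b}\,\nabla_a A\indices{^c_{\bar b}}\,\diff_{z^c},
\end{equation*}
i.e.\ the covariant derivative is taken in the \emph{holomorphic} direction and contracted with the $(0,1)$-form index (compare the paper's own $\nabla^{1,0}{}^*\beta=-g^{1\bar 1}\diff_{\bar z}\beta_1$, which is the conjugate statement). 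Your $-g^{\,\cdot\,\bar b}\nabla_{\bar b}$ is the adjoint of $\nabla^{1,0}$, not of $\bdiff$, and the Christoffel bookkeeping you attach to it is inverted: with $\nabla_a$ the only nonzero corrections come from the upper $T^{1,0}$ index, $+\Gamma^c_{ak}A\indices{^k_{\bar b}}$, while the lower barred index receives none (the mixed symbols vanish for a K\"ahler metric); your proposed corrections $-\overline{\Gamma^k_{b1}}A\indices{^2_{\bar k}}$ on the form slot and $+\Gamma^2_{bk}$ from a barred derivative are exactly the ones that do \emph{not} occur. Carried out as written, this computation would not return the displayed formula.

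Second, the plan ``compute at a point in transversally normal coordinates, then reinsert the $\diff_z t$-dependent terms'' cannot produce the general statement. At a transversally normal point $\diff_z t=0$, so the off-diagonal inverse metric entry $g^{2\bar 1}=-\frac{\zeta\,\diff_z t}{(1+\tau)g_\Sigma}$ and the symbols $\Gamma^2_{11},\Gamma^2_{21}$ all vanish; but it is precisely $g^{2\bar 1}\nabla_2A\indices{^2_{\bar 1}}$ and the $\Gamma^2_{21}$, $\Gamma^2_{22}$ contractions that generate the terms $-\zeta\,\diff_z t\,\diff_\zeta A\indices{^2_{\bar 1}}$ and $A\indices{^2_{\bar 1}}\diff_z t\bigl(1-\frac{\phi(\tau)}{1+\tau}\bigr)$ in the lemma. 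Evaluating at such a point destroys this information and there is no way to ``restore'' it afterwards; one must do what the paper does, namely compute $-g^{1\bar 1}\nabla_1A\indices{^c_{\bar 1}}-g^{2\bar 1}\nabla_2A\indices{^c_{\bar 1}}$ in a general bundle-adapted chart with the full Christoffel symbols. (Transversally normal coordinates are legitimately used later, in Lemma~\ref{lemma:calcolo_div}, but only for quantities evaluated pointwise.) Finally, no K\"ahler identities or appeal to Lemma~\ref{lemma:Chern_conn} are needed here: since $\omega_\phi$ is K\"ahler, the Chern connection on $T^{1,0}M$ is the Levi-Civita connection and Lemma~\ref{lemma:christoffel} already supplies everything.
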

\begin{proof}
It's just a matter of computing carefully, starting from
\begin{equation*}
\bdiff^*A^{1,0}=-g^{a\bar{b}}\nabla_aA\indices{^c_{\bar{b}}}\,\diff_c.
\end{equation*}
The only possibly non-vanishing covariant derivatives of~$A$ are
\begin{equation*}
\begin{split}
\nabla_1A\indices{^1_{\bar{1}}}=A\indices{^2_{\bar{1}}}\Gamma^1_{12},\quad
\nabla_1A\indices{^2_{\bar{1}}}=\diff_zA\indices{^2_{\bar{1}}}+A\indices{^2_{\bar{1}}}\Gamma^2_{21},\quad
\nabla_2A\indices{^2_{\bar{1}}}=\diff_{\zeta}A\indices{^2_{\bar{1}}}+A\indices{^2_{\bar{1}}}\Gamma^2_{22}
\end{split}
\end{equation*}
So by Lemma~\ref{lemma:christoffel} we can rewrite~$\bdiff^*A^{1,0}$ as 
\begin{equation*}
\begin{split}
\bdiff^*A^{1,0}=&-g^{1\bar{1}}\nabla_1A\indices{^c_{\bar{1}}}\,\diff_c-g^{2\bar{1}}\nabla_2A\indices{^2_{\bar{1}}}\,\diff_\zeta=\\
=&-g^{1\bar{1}}\nabla_1A\indices{^1_{\bar{1}}}\diff_z-\left(g^{1\bar{1}}\nabla_1A\indices{^2_{\bar{1}}}+g^{2\bar{1}}\nabla_2A\indices{^2_{\bar{1}}}\right)\!\diff_\zeta=\\
=&-\frac{\phi(\tau)A\indices{^2_{\bar{1}}}\diff_z}{\zeta\,g_0\,(1+\tau)^2}-\frac{1}{(1+\tau)g_0}\left(\diff_zA\indices{^2_{\bar{1}}}+A\indices{^2_{\bar{1}}}\,\diff_zt\,\left(1-\frac{\phi(\tau)}{1+\tau}\right)-\zeta\,\diff_zt\,\diff_{\zeta}A\indices{^2_{\bar{1}}}\right)\diff_\zeta.\qedhere
\end{split}
\end{equation*}
\end{proof}
We proceed to calculate the divergence of~$\bdiff^*A^{1,0}$. By definition
\begin{equation}\label{eq:div_ruled_complex}
\mrm{div}(\bdiff^*A^{1,0})=\nabla_a(\bdiff^*A^{1,0})^a=\diff_a(\bdiff^*A^{1,0})^a+(\bdiff^*A^{1,0})^b\Gamma^a_{ab}.
\end{equation}
We compute the two terms separately. We will need the quantities
\begin{equation*}
\begin{split}
D_1(\tau):=&-\frac{\phi(\tau)}{(1+\tau)^2}=\phi(\tau)\,\diff_\tau\left(\frac{1}{1+\tau}\right)\\
D_2(\tau):=&\phi(\tau)\,\diff_\tau D_1(\tau).
\end{split}
\end{equation*}
The first term in~\eqref{eq:div_ruled_complex} is the sum of 
\begin{equation*}
\begin{split}
\diff_1(\bdiff^*A^{1,0})^1=&\diff_z\left(D_1(\tau)\,\frac{A\indices{^2_{\bar{1}}}}{\zeta\,g_0}\right)=\\
=&D_2(\tau)\,\frac{\diff_zt}{\zeta\,g_0}A\indices{^2_{\bar{1}}}-D_1(\tau)\,\frac{\Gamma(\Sigma)}{\zeta\,g_0}A\indices{^2_{\bar{1}}}+D_1(\tau)\,\frac{1}{\zeta\,g_0}\diff_z A\indices{^2_{\bar{1}}}
\end{split} 
\end{equation*}
and 
\begin{equation*}
\begin{split}
&\diff_2(\bdiff^*A^{1,0})^2=\\
=&\diff_\zeta\left(-\frac{\diff_zA\indices{^2_{\bar{1}}}}{(1+\tau)g_0}-\frac{A\indices{^2_{\bar{1}}}\,\diff_zt}{(1+\tau)g_0}-D_1(\tau)\,\frac{A\indices{^2_{\bar{1}}}\,\diff_zt}{g_0}+\frac{\zeta\,\diff_zt}{(1+\tau)g_0}\diff_{\zeta}A\indices{^2_{\bar{1}}}\right)=\\
=&-D_1(\tau)\,\frac{\diff_zA\indices{^2_{\bar{1}}}}{\zeta\,g_0}-\frac{\diff_\zeta\diff_zA\indices{^2_{\bar{1}}}}{(1+\tau)g_0}-D_1(\tau)\frac{A\indices{^2_{\bar{1}}}\,\diff_zt}{\zeta\,g_0}-D_2(\tau)\,\frac{A\indices{^2_{\bar{1}}}\,\diff_zt}{\zeta\,g_0}+\frac{\zeta\,\diff_zt}{(1+\tau)g_0}\diff_{\zeta}\diff_{\zeta}A\indices{^2_{\bar{1}}}.
\end{split}
\end{equation*}
The sum is given by 
\begin{equation*}
\begin{split}
\diff_a(\bdiff^*A^{1,0})^a=&-D_1(\tau)\,\frac{\Gamma(\Sigma)}{\zeta\,g_0}A\indices{^2_{\bar{1}}}-\frac{\diff_\zeta\diff_zA\indices{^2_{\bar{1}}}}{(1+\tau)g_0}-D_1(\tau)\,\frac{A\indices{^2_{\bar{1}}}\,\diff_zt}{\zeta\,g_0}+\frac{\zeta\,\diff_zt}{(1+\tau)g_0}\diff_{\zeta}\diff_{\zeta}A\indices{^2_{\bar{1}}}=\\
=&-(\bdiff^*A^{1,0})^1\Gamma(\Sigma)-\frac{\diff_\zeta\diff_zA\indices{^2_{\bar{1}}}}{(1+\tau)g_0}-D_1(\tau)\,\frac{A\indices{^2_{\bar{1}}}\,\diff_zt}{\zeta\,g_0}+\frac{\zeta\,\diff_zt}{(1+\tau)g_0}\diff_{\zeta}\diff_{\zeta}A\indices{^2_{\bar{1}}}.
\end{split}
\end{equation*}
On the other hand the second term in~\eqref{eq:div_ruled_complex} is given by
\begin{equation*}
\begin{split}
(\bdiff^*A^{1,0})^b\Gamma^a_{ab}=&(\bdiff^*A^{1,0})^1\Gamma^1_{11}+(\bdiff^*A^{1,0})^1\Gamma^2_{21}+(\bdiff^*A^{1,0})^2\Gamma^1_{12}+(\bdiff^*A^{1,0})^2\Gamma^2_{22}=\\
=&(\bdiff^*A^{1,0})^1\Gamma(\Sigma)+D_1(\tau)\,\frac{\diff_zt}{\zeta\,g_0}\,A\indices{^2_{\bar{1}}}\,\left(\phi'(\tau)+\frac{\phi(\tau)}{1+\tau}\right)+\\
&+D_1(\tau)\,\frac{\diff_zA\indices{^2_{\bar{1}}}}{\zeta\,g_0}+D_1(\tau)\,\frac{\diff_zt}{\zeta\,g_0}A\indices{^2_{\bar{1}}}-D_1(\tau)\,\frac{\phi(\tau)\,\diff_zt}{\zeta\,g_0}A\indices{^2_{\bar{1}}}-\\
&-D_1(\tau)\,\frac{\diff_zt\,\diff_\zeta A\indices{^2_{\bar{1}}}}{g_0}+(\bdiff^*A^{1,0})^2\frac{\phi'(\tau)-1}{\zeta}.
\end{split}
\end{equation*}
These computations show that we can write~$\mrm{div}(\bdiff^*A^{1,0})$ as
\begin{equation*}
\begin{split}
&-\frac{\diff_\zeta\diff_zA\indices{^2_{\bar{1}}}}{(1+\tau)g_0}+\frac{\zeta\,\diff_zt}{(1+\tau)g_0}\diff_{\zeta}\diff_{\zeta}A\indices{^2_{\bar{1}}}-\frac{\diff_zA\indices{^2_{\bar{1}}}}{(1+\tau)g_0\,\zeta}\left(\frac{\phi(\tau)}{1+\tau}+\phi'(\tau)-1\right)+\\
&\quad+\left(\frac{\phi(\tau)}{1+\tau}+\phi'(\tau)-1\right)\frac{\diff_zt\,\diff_\zeta A\indices{^2_{\bar{1}}}}{(1+\tau)g_0}-\frac{\diff_zt\,A\indices{^2_{\bar{1}}}}{(1+\tau)\zeta\,g_0}\left(\phi'(\tau)-1+\frac{\phi(\tau)}{1+\tau}\right)=\\
=&\frac{-\diff_\zeta\diff_zA\indices{^2_{\bar{1}}}+\zeta\,\diff_zt\,\diff_{\zeta}\diff_{\zeta}A\indices{^2_{\bar{1}}}}{(1+\tau)g_0}+\frac{1}{(1+\tau)g_0}\left(\frac{\diff_zA\indices{^2_{\bar{1}}}}{\zeta}-\diff_zt\,\diff_\zeta A\indices{^2_{\bar{1}}}+\frac{\diff_zt\,A\indices{^2_{\bar{1}}}}{\zeta}\right)-\\
&-\frac{1}{(1+\tau)g_0}\diff_\zeta\left[\mrm{log}\,\phi(\tau)(1+\tau)\right]\left(\diff_zA\indices{^2_{\bar{1}}}-\diff_zt\,\zeta\diff_\zeta A\indices{^2_{\bar{1}}}+\diff_zt\,A\indices{^2_{\bar{1}}}\right).
\end{split}
\end{equation*}
This quantity vanishes precisely when
\begin{equation*}
\begin{split}
-\zeta\diff_\zeta\diff_zA\indices{^2_{\bar{1}}}&+\zeta^2\,\diff_zt\,\diff_{\zeta}\diff_{\zeta}A\indices{^2_{\bar{1}}}-\left(-\diff_zA\indices{^2_{\bar{1}}}+\diff_zt\,\zeta\,\diff_\zeta A\indices{^2_{\bar{1}}}-\diff_zt\,A\indices{^2_{\bar{1}}}\right)+\\&+\zeta\diff_\zeta\left[\mrm{log}\,\phi(\tau)(1+\tau)\right]\left(-\diff_zA\indices{^2_{\bar{1}}}+\diff_zt\,\zeta\,\diff_\zeta A\indices{^2_{\bar{1}}}-\diff_zt\,A\indices{^2_{\bar{1}}}\right)=0.
\end{split}
\end{equation*}
Notice that
\begin{equation*}
-\zeta\diff_\zeta\diff_zA\indices{^2_{\bar{1}}}+\zeta^2\,\diff_zt\,\diff_{\zeta}\diff_{\zeta}A\indices{^2_{\bar{1}}}=\zeta\diff_\zeta\left(-\diff_zA\indices{^2_{\bar{1}}}+\diff_zt\,\zeta\diff_\zeta A\indices{^2_{\bar{1}}}-\diff_zt\,A\indices{^2_{\bar{1}}}\right).
\end{equation*}
Thus, introducing the \emph{locally defined function} 
\begin{equation}\label{eq:def_k}
k:=-\diff_zA\indices{^2_{\bar{1}}}+\diff_zt\,\zeta\diff_\zeta A\indices{^2_{\bar{1}}}-\diff_zt\,A\indices{^2_{\bar{1}}},
\end{equation}
the complex moment map equation~$\mrm{div}(\bdiff^*A^{1,0})=0$ may be expressed \emph{locally} as
\begin{equation*}
\zeta\diff_\zeta k+\left(\zeta\diff_\zeta\left[\mrm{log}\,\phi(\tau)(1+\tau)\right]-1\right)k=0.
\end{equation*}
This condition can be rewritten as
\begin{equation*}
\diff_\zeta k+\diff_\zeta\left(\mrm{log}\frac{\phi(\tau)(1+\tau)}{\zeta\,\bar{\zeta}}\right)k=0.
\end{equation*}
This equation can be integrated; so we see that the equation~$\mrm{div}(\bdiff^*A^{1,0})=0$ is satisfied locally if and only if the function~$k$ defined by equation~\eqref{eq:def_k} satisfies
\begin{equation}\label{eq:complex_mm_k_sol}
k=c\,\frac{\zeta\,\bar{\zeta}}{\phi(\tau)(1+\tau)}
\end{equation}
for some function~$c=c(z,\zeta)$ such that~$\diff_\zeta c=0$.

\paragraph*{Choosing~$c=0$.} Let's consider the case in which the function~$c$ in~\eqref{eq:complex_mm_k_sol} is identically~$0$. In this case,~$A^{1,0}$ satisfies 
\begin{equation}\label{eq:complessa_c0}
-\diff_zA\indices{^2_{\bar{1}}}+\zeta\,\diff_zt\,\diff_\zeta A\indices{^2_{\bar{1}}}-\diff_zt\,A\indices{^2_{\bar{1}}}=0.
\end{equation}
If we now choose~$A=A(\beta)$, i.e.
\begin{equation*}
A\indices{^2_{\bar{1}}}=2\I\left(\zeta(\beta\indices{_{\bar{1}}^2_2}-\beta\indices{_{\bar{1}}^1_1})-\zeta^2\beta\indices{_{\bar{1}}^1_2}+\beta\indices{_{\bar{1}}^2_1}\right)
\end{equation*}
for~$\beta\indices{^1_1},\beta\indices{^2_2}\in\m{A}^{0,1}(\Sigma,\bb{C})$,~$\beta\indices{^1_2}\in\m{A}^{0,1}(L^*)$,~$\beta\indices{^2_1}\in\m{A}^{0,1}(L)$, we can get an interesting consequence from equation~\eqref{eq:complessa_c0}. Indeed, on the divisor~$\Sigma=\Sigma_0=\set{\zeta=0}$ we get, from equation~\eqref{eq:complessa_c0}
\begin{equation*}
-\diff_z\beta\indices{_{\bar{1}}^2_1}-\diff_zt\,\beta\indices{_{\bar{1}}^2_1}=0
\end{equation*}
and recalling that~$\diff_zt=\diff_z\mrm{log}(a(z))$, were~$a(z)$ is the local representative of the fibre metric on~$L$, this tells us that
\begin{equation*}
\beta\indices{_{\bar{1}}^2_1}=\frac{q(z)}{a(z)}
\end{equation*}
for some function~$q$ over~$\Sigma$ such that~$\diff_zq=0$. Consider instead what equation~\eqref{eq:complessa_c0} tells us for~$\zeta=\infty$, i.e. on the zero-set of~$\eta=\zeta^{-1}$; after the change of coordinates, equation~\eqref{eq:complessa_c0} becomes  
\begin{equation*}
\diff_zA\indices{^2_{\bar{1}}}(\eta)+\diff_zt\left(\eta\diff_\eta A\indices{^2_{\bar{1}}}-A\indices{^2_{\bar{1}}}(\eta)\right)=0
\end{equation*}
where~$A\indices{^2_{\bar{1}}}(\eta)=-2\I\left(\eta(\beta\indices{_{\bar{1}}^2_2}-\beta\indices{_{\bar{1}}^1_1})-\beta\indices{_{\bar{1}}^1_2}+\eta^2\,\beta\indices{_{\bar{1}}^2_1}\right)$. Setting~$\eta=0$ we find
\begin{equation*}
\diff_z\beta\indices{_{\bar{1}}^1_2}-\diff_zt\,\beta\indices{_{\bar{1}}^1_2}=0
\end{equation*}
and so
\begin{equation*}
\beta\indices{_{\bar{1}}^1_2}=a(z)\,\tilde{q}(z)
\end{equation*}
for some function~$\tilde{q}$ over~$\Sigma$ such that~$\diff_z\tilde{q}=0$. With these choices, the matrix associated to~$\beta\in\m{A}^{0,1}(\mrm{End}(\m{O}\oplus L))$ in a local holomorphic frame for~$L$ is
\begin{equation*}
\begin{pmatrix}
\beta\indices{^1_1} & \tilde{q}(z)\,a(z)\,\mrm{d}\bar{z}\\
\frac{q(z)}{a(z)}\,\mrm{d}\bar{z} & \beta\indices{^2_2}
\end{pmatrix}.
\end{equation*}
It is useful to notice the identity~$\zeta\,\diff_\zeta A\indices{^2_{\bar{1}}}-A\indices{^2_{\bar{1}}}=-2\I\left(\zeta^2\,\beta\indices{_{\bar{1}}^1_2}+\beta\indices{_{\bar{1}}^2_1}\right)$. Plugging this into~\eqref{eq:complessa_c0} the equation can be rewritten as
\begin{equation*}
-\zeta\diff_z\left(\beta\indices{_{\bar{1}}^2_2}-\beta\indices{_{\bar{1}}^1_1}\right)-\zeta^2\,\diff_z\beta\indices{_{\bar{1}}^1_2}+\diff_z\beta\indices{_{\bar{1}}^2_1}-\diff_zt\left(\zeta^2\,\beta\indices{_{\bar{1}}^1_2}+\beta\indices{_{\bar{1}}^2_1}\right)=0,
\end{equation*}
which reduces to
\begin{equation*}
\diff_z\left(\beta\indices{_{\bar{1}}^2_2}-\beta\indices{_{\bar{1}}^1_1}\right)=0.
\end{equation*}
So equation~\eqref{eq:complessa_c0} is satisfied if and only if
\begin{equation}\label{eq:condizione_coeff_c0}
\begin{split}
\beta\indices{_{\bar{1}}^1_2}&=a(z)\,\tilde{q}(z)\quad\mbox{with }\diff_z\tilde{q}=0;\\
\beta\indices{_{\bar{1}}^2_1}&=\frac{q(z)}{a(z)}\quad\mbox{with }\diff_zq=0;\\
\diff\big(\beta\indices{^2_2}&-\beta\indices{^1_1}\big)=0.
\end{split}
\end{equation}
The first two conditions in equation~\eqref{eq:condizione_coeff_c0} are still just local ones. However we can globalize them by choosing~$L$ to be the anti-canonical bundle of~$\Sigma$,~$L=T^{1,0}\Sigma$. Indeed, recall that~$\beta\indices{^1_2}\in\m{A}^{0,1}(L^*)$,~$\beta\indices{^2_1}\in\m{A}^{0,1}(L)$, so that if~$L=T^{1,0}\Sigma$ then~$\beta\indices{^1_2}$ must be an element of~$\m{A}^{0,1}({T^{1,0}}^*\Sigma)$, while~$\beta\indices{^2_1}$ must be an element of~$\m{A}^{0,1}(T^{1,0}\Sigma)$. Then we can choose the quantity~$\tilde{q}$ of equation~\eqref{eq:condizione_coeff_c0} to be a constant, and the local condition on~$\beta\indices{^1_2}$ becomes the global condition~$\beta\indices{^1_2}=\tilde{q}\,h$. This is compatible with~$\beta\indices{^1_2}\in\m{A}^{0,1}({T^{1,0}}^*\Sigma)$, since~$h$ is a Hermitian metric on~$T^{1,0}\Sigma$. In the same way, if~$q$ is the local representative of a global holomorphic quadratic differential on~$\Sigma$ (that we denote still by~$q$), then the local condition on~$\beta\indices{^2_1}$ globalizes to~$\beta\indices{^2_1}=q^{\sharp_h}$, i.e.~$\beta\indices{^2_1}$ should be the quadratic differential with one index raised by~$h$.

Let us summarise the results of this Section. Suppose that~$L=K_\Sigma^*=T^{1,0}\Sigma$ and that~$\beta$ satisfies the globally defined equations
\begin{equation}\label{eq:cond_globali_coeff_c0}
\begin{split}
\beta\indices{^1_2}&=\tilde{q}\,h\quad\mbox{ for some constant }\tilde{q};\\
\beta\indices{^2_1}&=q^{\sharp_h}\quad\mbox{ for some holomorphic quadratic differential }q;\\
\diff\big(\beta\indices{^2_2}&-\beta\indices{^1_1}\big)=0.
\end{split}
\end{equation}
Then the complex moment map equation is satisfied. From now we always assume that~$L$ and~$\beta$ are of this form.

\subsection{The real moment map}\label{sec:ruled_surface_realmm}

In this section we will prove that there exists a solution to the HcscK equations on our ruled surface, at least when the fibres have sufficiently small volume. We will work with the two possible choices of formal complexification given by the expressions in~\eqref{eq:low_rank_alternative}. First we reformulate Theorem~\ref{thm:solution_mm_reale} using the notation introduced in the last few sections.
\begin{thm}\label{thm:HCSCK_rigata_esistenza}
Let~$\Sigma$ be a Riemann surface of genus~$g\geq 2$, and consider the ruled surface~$M=\bb{P}(\m{O}\oplus K_\Sigma^*)$. Then, for all sufficiently small~$m>0$, there exists a K\"ahler metric~$\omega$ in the class dual to~$2\,\pi\left(\m{C}+m\,\Sigma_\infty\right)$ and a Higgs field~$\alpha\in\mrm{Hom}({T^{1,0}}^*M,{T^{0,1}}^*M)$ such that the system of equations
\begin{equation*}
\begin{dcases}
\mrm{div}\left(\diff^*\alpha\right)=0\\
2\,S(\omega)-2\,\widehat{S(\omega)}+\mrm{div}\left[X(\omega,\alpha)\right]=0
\end{dcases}
\end{equation*}
is satisfied, where the vector field~$X(\omega,\alpha)$ is given by one of the expressions in~\eqref{eq:low_rank_alternative}.
\end{thm}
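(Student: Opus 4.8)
The strategy is an adiabatic-limit argument, solving the HcscK system on $\bb{P}(\m{O}\oplus K_\Sigma^*)$ for small fibre volume $m$ by perturbing off a cscK metric on the base $\Sigma$ (which exists, being the hyperbolic metric). First I would record that the complex moment map is already handled: by Section~\ref{sec:ruled_complex_mm}, for $L=K_\Sigma^*$ and $\beta$ satisfying the global conditions~\eqref{eq:cond_globali_coeff_c0} (i.e.\ $\beta\indices{^1_2}=\tilde q\,h$, $\beta\indices{^2_1}=q^{\sharp_h}$ with $q$ a holomorphic quadratic differential, and $\diff(\beta\indices{^2_2}-\beta\indices{^1_1})=0$), the deformation $A=A(\beta)$ of~\eqref{eq:deformazione_proiettivo} satisfies $\mrm{div}(\bdiff^*A^{1,0})=0$ with the choice $c=0$. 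So $A$ may be fixed once and for all, and the remaining task is purely the \emph{real} moment map equation $2\,S(\omega)-2\,\widehat{S(\omega)}+\mrm{div}[X(\omega,A)]=0$ for a Calabi-ansatz metric $\omega_\phi$, for one of the two extensions of $X$ in~\eqref{eq:low_rank_alternative}.

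The next step is to reduce the real moment map equation to an ODE for the momentum profile $\phi(\tau)$, using Proposition~\ref{prop:curv_scal_phi} for $S(\omega_\phi)$ and Lemma~\ref{lemma:media_curvatura} for $\widehat{S(\omega_\phi)}$, together with the Christoffel symbols of Lemma~\ref{lemma:christoffel}. Since the chosen $A$ has low rank ($A^2=0$, cf.\ Remark~\ref{rmk:surf_eq_complex_nonequiv}), the Higgs contribution $X(\omega_\phi,A)$ simplifies considerably: with $A^2=0$ and $\mrm{det}(A)=0$ the term $\nabla^*(\psi_1 A^2)$ and the $\psi_2$-terms drop, leaving only the gradient and $\mrm{Re}\,g(\nabla^aA^{0,1},A^{1,0})$ pieces of~\eqref{eq:low_rank}. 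I would compute $\mrm{Tr}(A^2)=\norm{A^{1,0}}^2$ explicitly from~\eqref{eq:deformazione_proiettivo} and~\eqref{eq:metrica_CalabiAnsatz_phi}, obtaining a function of $\tau,\zeta$ (and the base point, through $q$ and $h$); substituting yields a second-order ODE boundary-value problem for $\phi$ on $[0,m]$ with Hwang--Singer boundary conditions $\phi(0)=\phi(m)=0$, $\phi'(0)=1$, $\phi'(m)=-1$ (Proposition~\ref{prop:omega_phi_estende}), to be solved as a perturbation of the cscK profile.

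The continuity/implicit-function step is where the main work lies. I would set up a one-parameter family scaling the Higgs data (e.g.\ $q\mapsto s\,q$, $\tilde q\mapsto s\,\tilde q$, $s\in[0,1]$): at $s=0$ the equation is the genuine cscK equation on $\bb{P}(\m{O}\oplus K_\Sigma^*)$, whose solvability in the small-$m$ adiabatic limit is governed by the base being cscK and by a positivity condition on the fibrewise-averaged data, following the approach of~\cite{Fine_phd} and~\cite{Hong_fibered}. The linearisation of the scalar-curvature operator at the approximate solution is, up to lower-order terms in $m$, the Lichnerowicz operator of $\omega_\Sigma$ plus the fibre Laplacian; its invertibility (after quotienting by holomorphic vector fields, of which there are none since $g(\Sigma)\ge 2$) gives openness, and $\m{C}^{k,\alpha}$ a priori estimates give closedness, exactly as in the curve case of Section~\ref{sec:curve}.

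\textbf{The main obstacle.} The hard part will be the adiabatic analysis of the cscK-type equation on the total space: the linearised operator degenerates as $m\to 0$ (the fibre directions scale differently from the base), so one must track the mapping properties in $m$-weighted norms carefully and show the Higgs perturbation $X(\omega,A)$ is genuinely lower order. The key \emph{qualitative} point—that $\bb{P}(\m{O}\oplus K_\Sigma^*)$ admits \emph{no} cscK metric for $m>0$ yet \emph{does} admit an HcscK solution—means the solvability cannot come from the $s=0$ endpoint alone; rather the obstruction to cscK (an integral Futaki-type quantity) must be cancelled by the averaged Higgs term at first order in $s$. Verifying that the chosen $\beta$ produces a Higgs contribution with the correct sign to stabilise the fibrewise problem, for all sufficiently small $m$, is the crux, and it is what forces the restriction to small $m$ and distinguishes the two extensions in~\eqref{eq:low_rank_alternative}.
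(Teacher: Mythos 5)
Your reduction steps are right and match the paper: the complex moment map is disposed of by the choice of $\beta$ in~\eqref{eq:cond_globali_coeff_c0}, the low-rank identity $A(\beta)^2=0$ collapses $X(\omega,\alpha)$ to the single term $\mrm{div}\,\mrm{Re}\,g(\nabla^a A^{0,1},A^{1,0})\diff_a$, and Lemma~\ref{lemma:calcolo_div} together with Proposition~\ref{prop:curv_scal_phi} and Lemma~\ref{lemma:media_curvatura} turns the real moment map into a boundary-value problem for the momentum profile $\phi$ on $[0,m]$ (in fact an \emph{integro-differential} problem, not a plain ODE, since $e^{t}$ with $t(\tau)=\int_{m/2}^{\tau}\phi^{-1}$ enters the equation).

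The genuine gap is in your solution scheme. You propose a continuity path in the Higgs magnitude, $q\mapsto s\,q$, starting from $s=0$; but at $s=0$ the equation is the cscK equation on $(M,[\omega_m])$, which has \emph{no} solution for any $m>0$ (this is exactly the point of the theorem). So the path has no starting point, and the deformation argument cannot begin. You notice this yourself in your last paragraph, but leave it as "the crux" without a mechanism. The paper's resolution is structurally different: the overall scale of the Higgs field is absorbed into a positive constant $c$ which is treated as an \emph{unknown} alongside $\phi$. After rescaling $\tau=\lambda m$ one studies the operator $\m{F}_m(\phi,c)$ of~\eqref{eq:operatore_F_m}, exhibits an explicit approximate solution $(\phi_0,c_0)$ with $c_0=2m^2$ and $\m{F}_m(\phi_0,c_0)=O(m^3)$, and shows that the joint linearization in $(\phi,c)$ is, up to $O(m)$, the map $(u,k)\mapsto u''+2k(3\lambda^2-2\lambda)$, which is an isomorphism onto $\m{C}^\infty_0([0,1])$ (Lemma~\ref{lemma:linearizzazione_primo_ordine}). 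The extra parameter $k$ is essential: $u\mapsto u''$ alone has a one-dimensional cokernel in $\m{C}^\infty_0$ (the Futaki-type obstruction you allude to), and varying $c$ is precisely what fills it. A quantitative inverse function theorem (Lemma~\ref{lemma:funzione_inversa_quantitativo}) with Lipschitz control on a ball of radius $O(m^{5/2})$ then produces an exact solution preserving the positivity of $\phi$ and $c$.

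A secondary inconsistency: having reduced to the momentum profile, the linearized operator is a scalar second-order operator in one variable, not "the Lichnerowicz operator of $\omega_\Sigma$ plus the fibre Laplacian"; the heavy $m$-weighted adiabatic analysis of Fine and Hong is not needed here, and indeed could not succeed as stated, since those results require the underlying bundle to be stable, whereas $\m{O}\oplus K_\Sigma^*$ is unstable -- which is why the cscK problem is obstructed in the first place.
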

We will choose~$A=\mrm{Re}(\alpha^\transpose) = A(\beta)$, for a form~$\beta\in\m{A}^{0,1}(\mrm{End}(\m{O}\oplus L))$. Then the complex moment map equation holds provided~$\beta$ satisfies the conditions~\eqref{eq:cond_globali_coeff_c0}. As we already noted, any~$A(\beta)$ will be such that~$A^2=0$, and in particular we are in the low-rank situation described at the end of Section~\ref{sec:complex_surface}.

We will present the details of the proof of Theorem~\ref{thm:HCSCK_rigata_esistenza} for the choice of complexification given by the second expression in~\eqref{eq:low_rank_alternative}, namely 
\begin{equation*}
\begin{split}
\f{m}=\mrm{div}\Bigg[\psi\left(\frac{\mrm{Tr}(A^2)}{2}\right)
\left(4\mrm{Re}\left(\langle\nabla^aA^{0,1},A^{1,0}\rangle\diff_a\right)-\mrm{grad}\left(\frac{\mrm{Tr}(A^2)}{2}\right)\right)-2\nabla^*\left(\psi\left(\frac{\mrm{Tr}(A^2)}{2}\right)A^2\right)&\Bigg]
\end{split}
\end{equation*}
for~$\psi(x)=\frac{1}{2}\left(1+\sqrt{1-x}\right)^{-1}$, as in~\eqref{eq:funzioni_accessorie_HcscK}. The proof for the alternative complexified equation, i.e.
\begin{equation}\label{eq:low_rank_norm} 
\begin{split}
\f{m}=\mrm{div}\Bigg[\psi\left(\norm{A^{1,0}}^2\right)
\left(4\mrm{Re}\left(\langle\nabla^a A^{0,1} ,A^{1,0}\rangle\diff_a\right)-\mrm{grad}\left(\norm{A^{1,0}}^2\right)\right)-2\nabla^*\left(\psi\left(\norm{A^{1,0}}^2\right)A^2\right)&\Bigg]
\end{split}
\end{equation}
is essentially the same, but some of the computations are more involved. We will point out the key differences in the course of the proof.

With our current choice of complexification we find, from~$A(\beta)^2=0$
\begin{equation*}
\begin{split}
\f{m}(\omega_\phi,A(\alpha))=&\mrm{div}_{\phi}\,\mrm{Re}\left(g_\phi(\nabla_\phi^aA^{0,1},A^{1,0})\diff_a\right).
\end{split}
\end{equation*}
In the rest of this Section we fix~$L = K^*_{\Sigma}$ and choose~$\beta$ so that the complex moment map vanishes, i.e. we assume that~$A^{1,0}$ satisfies equation~\eqref{eq:complessa_c0}. Then~$\beta$ should satisfy the conditions in equation~\eqref{eq:cond_globali_coeff_c0}, and in particular~$\beta\indices{_{\bar{1}}^1_2}=\tilde{q}\,a(z)$ for some constant~$\tilde{q}$. We can also make some additional assumptions on the Hermitian metric on~$L$ and on~$\beta\in\m{A}^{0,1}\left(\mrm{End}(\m{O}\oplus L)\right)$. First, since~$L=K^*_\Sigma$ we can assume that~$a(z)$ is a local representative for~$g_\Sigma$, so that~$\beta\indices{^1_2}=\tilde{q}g_\Sigma$. We will prove the following result.
\begin{lemma}\label{lemma:calcolo_div}
Assume that~$\beta\indices{^2_2}=\beta\indices{^1_1}$ and~$\beta\indices{^2_1}=0$, so that the matrix of~$1$-forms associated to~$\beta$ is upper triangular. Then
\begin{equation*}
\mrm{div}_{\phi}\,\mrm{Re}\left(g_\phi(\nabla_\phi^aA^{0,1},A^{1,0})\diff_a\right)=\norm{A^{1,0}}^2_{\phi}\left(\frac{(\phi'+1)^2}{\phi}+\phi''\right).
\end{equation*}
\end{lemma}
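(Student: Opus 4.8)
The plan is to use the hypotheses to collapse $A=A(\beta)$ to a single component, and then compute the divergence in coordinates adapted to the fibration, reducing everything to an ordinary differential expression in $\tau$.

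First I would simplify the Higgs field. Under $\beta\indices{^2_2}=\beta\indices{^1_1}$ and $\beta\indices{^2_1}=0$, formula~\eqref{eq:deformazione_proiettivo} reduces to $A\indices{^2_{\bar{1}}}=-2\,\I\,\beta\indices{_{\bar{1}}^1_2}\,\zeta^2$, and by~\eqref{eq:cond_globali_coeff_c0} with $L=K_\Sigma^*$ we may take $\beta\indices{_{\bar{1}}^1_2}=\tilde{q}\,a(z)$ for a constant $\tilde{q}$ and $a(z)$ a local representative of $g_\Sigma$. Hence $A^{1,0}=A\indices{^2_{\bar{1}}}\,\mrm{d}\bar{z}\otimes\diff_\zeta$ with $A\indices{^2_{\bar{1}}}=-2\,\I\,\tilde{q}\,a\,\zeta^2$; in particular $A^2=0$, which is why (for the present complexification, where $\psi\equiv\tfrac14$) the real moment map is just $\mrm{div}_\phi\,\mrm{Re}\!\left(g_\phi(\nabla^aA^{0,1},A^{1,0})\diff_a\right)$, as recorded just before the statement. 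One checks directly that this $A\indices{^2_{\bar{1}}}$ solves the complex moment map equation~\eqref{eq:complessa_c0}; this fact will be used again below.

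Next I would fix an arbitrary point $p\in\Sigma$ and work in coordinates transversally normal at $p$, so that $(\diff_zt)(p)=0$. There $g_\phi$ and $g_\phi^{-1}$ are diagonal and, by Lemma~\ref{lemma:christoffel}, the only nonvanishing Christoffel symbols are $\Gamma^1_{11}=\Gamma(\Sigma)$, $\Gamma^1_{21}=\tfrac{\phi}{(1+\tau)\zeta}$ and $\Gamma^2_{22}=\tfrac{\phi'-1}{\zeta}$. I would compute the covariant derivatives $\nabla_{\bar{b}}A^{0,1}$ of $A^{0,1}=\overline{A^{1,0}}$, form the $(1,0)$ field $P^a:=g^{a\bar{b}}\,g_\phi(\nabla_{\bar{b}}A^{0,1},A^{1,0})$, and record the scalar $\norm{A^{1,0}}^2_\phi=g^{1\bar{1}}g_{2\bar{2}}\,\abs{A\indices{^2_{\bar{1}}}}^2=\tfrac{4\abs{\tilde{q}}^2 a\,\phi\,\abs{\zeta}^2}{1+\tau}$, which by $a\abs{\zeta}^2=\mrm{e}^t$ and $t=F'(\tau)$ is a function of $\tau$ alone. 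The divergence is then $\mrm{Re}(\nabla_aP^a)=\mrm{Re}\!\left(\diff_aP^a+P^b\Gamma^a_{ab}\right)$, evaluated at $p$; every $\zeta$-derivative that occurs I would convert into a $\tau$-derivative of $\phi$ through the radial identities $\diff_\zeta t=\zeta^{-1}$, $\diff_\zeta\tau=\phi\,\zeta^{-1}$ and $\diff_\zeta G(\tau)=G'(\tau)\,\phi\,\zeta^{-1}$.

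The hard part will be the contributions of the base geometry. Although $\diff_zt$ vanishes at $p$, the second derivatives $\diff_z^2 t$ and $\diff_{\bar{z}}\diff_z t$ do not: the latter equals, up to sign, $g_\Sigma$ because $F(h)=-\omega_\Sigma$, and both enter $\diff_zP^1$ as well as the $\Gamma(\Sigma)$ terms in the divergence. Since the claimed formula contains no scalar curvature of $\Sigma$, all of these must cancel. I expect this cancellation to be precisely the imprint of the complex moment map equation~\eqref{eq:complessa_c0} (equivalently, of the holomorphicity of $A\indices{^2_{\bar{1}}}$ in $\zeta$ together with the special form $A\indices{^2_{\bar{1}}}\propto a\,\zeta^2$), so that after the base terms drop out only the fiber-radial part survives; establishing this cleanly, rather than by a brute-force expansion, is the delicate step. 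Once it is in place, the surviving expression reduces, via the Legendre relations $\tau=f'(t)$, $\phi=f''(t)$ and the radial identities above, to $\norm{A^{1,0}}^2_\phi\bigl(\tfrac{(\phi'+1)^2}{\phi}+\phi''\bigr)$, which, since $p$ was arbitrary, proves the lemma.
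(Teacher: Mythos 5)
Your setup is right: the reduction of $A$ to the single component $A\indices{^2_{\bar1}}=-2\I\tilde q\,a\,\zeta^2$, the computation of $\norm{A^{1,0}}^2_\phi=\lambda\,\mrm{e}^t\phi/(1+\tau)$, and the observation that $A^2=0$ makes $\psi\equiv\tfrac14$ so that only the $\mrm{Re}\,g(\nabla^aA^{0,1},A^{1,0})\diff_a$ term survives. But the proof has a genuine gap exactly where you flag "the delicate step": you assert that the base-curvature contributions (the $\diff_{\bar z}\diff_z t$ and $\Gamma(\Sigma)$ terms entering $\diff_zP^1$) "must cancel" because the final formula contains no $s(\omega_\Sigma)$, and you attribute this to the complex moment map equation, but you never establish it. As written, the argument is circular at that point — you are inferring the cancellation from the statement you are trying to prove.

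The way the paper closes this gap is to postpone the choice of transversally normal coordinates: one first computes the vector field $P^a\diff_a=g^{a\bar b}\nabla_{\bar b}A\indices{^{\bar c}_d}A\indices{^e_{\bar f}}g_{e\bar c}g^{d\bar f}\diff_a$ in an \emph{arbitrary} bundle-adapted chart, using $\diff_{\bar z}A\indices{^{\bar 2}_1}=A\indices{^{\bar 2}_1}\diff_{\bar z}t$ and $\diff_{\bar\zeta}A\indices{^{\bar 2}_1}=2A\indices{^{\bar 2}_1}\diff_{\bar\zeta}t$ (which are where the special form $A\indices{^2_{\bar1}}\propto a\,\zeta^2$ enters), and finds
\begin{equation*}
P^a\diff_a=\norm{A^{1,0}}^2_\phi(\phi'+1)\left(g^{a\bar1}\diff_{\bar z}t+g^{a\bar2}\diff_{\bar\zeta}t\right)\diff_a=\norm{A^{1,0}}^2_\phi(\phi'+1)\frac{\zeta}{\phi}\,\diff_\zeta,
\end{equation*}
the last equality being the purely metric identity that the $(1,0)$-gradient of $t$ for a Calabi-ansatz metric is vertical. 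Thus $P^1\equiv 0$ identically, not merely at the chosen point, so $\diff_zP^1\equiv0$ and the feared second derivatives of $t$ (hence the curvature of $\Sigma$) never appear; the divergence then only requires $\diff_\zeta$ of a function of $\tau$ and $\zeta$, which is where your radial identities do the work. To repair your proof you would either have to carry out the brute-force expansion at $p$ and verify the cancellation by hand, or — better — prove this intermediate identity for $P^a\diff_a$ first, which is what the paper does.
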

Under the assumptions of Lemma~\ref{lemma:calcolo_div},~$A^{1,0}=A\indices{^2_{\bar{1}}}\mrm{d}\bar{z}\otimes\diff_\zeta$ is written as
\begin{equation*}
A\indices{^2_{\bar{1}}}=-2\I\,\zeta^2\,\tilde{q}\,g_0
\end{equation*}
so we can easily compute that, for some positive constant~$\lambda$
\begin{equation}\label{eq:norma_A}
\norm{A^{1,0}}^2_\phi=\lambda\frac{\phi(\tau)}{1+\tau}\zeta\,\bar{\zeta}\,g_0=\lambda\frac{\mrm{e}^t\phi(\tau)}{1+\tau}.
\end{equation}
\begin{proof}[Proof of Lemma~\ref{lemma:calcolo_div}]
We will first obtain an expression for the vector field
\begin{equation*}
g_\phi(\nabla_\phi^aA^{0,1},A^{1,0})\diff_a.
\end{equation*}
We will then compute the divergence of this vector field working in a system of transversally normal coordinates at a point~$p\in M$. First, in any system of bundle-adapted coordinates~$(z,\zeta)$ we have
\begin{equation*}
\begin{split}
g_\phi(\nabla_\phi^aA^{0,1},A^{1,0})\diff_a=&g^{a\bar{b}}\nabla_{\bar{b}}A\indices{^{\bar{c}}_d}\,A\indices{^e_{\bar{f}}}\,g_{e\bar{c}}g^{d\bar{f}}=\\
=&g^{a\bar{b}}\left(\diff_{\bar{b}}A\indices{^{\bar{c}}_d}+A\indices{^{\bar{q}}_d}\Gamma^{\bar{c}}_{\bar{q}\bar{b}}\right)A\indices{^e_{\bar{f}}}\,g_{e\bar{c}}g^{d\bar{f}}=\\
=&g^{a\bar{b}}\diff_{\bar{b}}A\indices{^{\bar{2}}_1}\,A\indices{^2_{\bar{1}}}\,g_{2\bar{2}}g^{1\bar{1}}+g^{a\bar{b}}A\indices{^{\bar{2}}_1}\Gamma^{\bar{c}}_{\bar{2}\bar{b}}A\indices{^2_{\bar{1}}}\,g_{2\bar{c}}g^{1\bar{1}}.
\end{split}
\end{equation*}
a direct computation gives
\begin{equation*}
\begin{split}
g^{a\bar{b}}\diff_{\bar{b}}A\indices{^{\bar{2}}_1}=&\left(g^{a\bar{1}}\diff_{\bar{z}}A\indices{^{\bar{2}}_1}+g^{a\bar{2}}\diff_{\bar{\zeta}}A\indices{^{\bar{2}}_1}\right)A\indices{^2_{\bar{1}}}\,g_{2\bar{2}}g^{1\bar{1}}=\\
=&\norm{A^{1,0}}^2_\phi\left(g^{a\bar{1}}\diff_{\bar{z}}t+2g^{a\bar{2}}\diff_{\bar{\zeta}}t \right).
\end{split}
\end{equation*}
While for the term involving Christoffel symbols
\begin{equation*}
\begin{split}
g^{a\bar{b}}A\indices{^{\bar{2}}_1}\Gamma^{\bar{c}}_{\bar{2}\bar{b}}A\indices{^2_{\bar{1}}}\,g_{2\bar{c}}g^{1\bar{1}}&=A\indices{^{\bar{2}}_1}A\indices{^2_{\bar{1}}}g^{1\bar{1}}\left(g^{a\bar{1}}\frac{\phi\,\phi'}{\zeta\bar{\zeta}}\diff_{\bar{z}}t+g^{a\bar{2}}\frac{\phi}{\zeta\bar{\zeta}}(\phi'-1)\diff_{\bar{\zeta}}t\right)=\\
&=\norm{A^{1,0}}^2_\phi\left(g^{a\bar{1}}\phi'\diff_{\bar{z}}t+g^{a\bar{2}}(\phi'-1)\diff_{\bar{\zeta}}t\right).
\end{split}
\end{equation*}
Putting these expressions together we get, after some simplifications
\begin{equation*}
\begin{split}
g_\phi(\nabla_\phi^aA^{0,1},A^{1,0})\diff_a=&\norm{A^{1,0}}^2_\phi(\phi'+1)\left(g^{a\bar{1}}\diff_{\bar{z}}t+g^{a\bar{2}}\diff_{\bar{\zeta}}t \right)\diff_a=\\
=&\norm{A^{1,0}}^2_\phi(\phi'+1)\frac{\zeta}{\phi}\diff_\zeta.
\end{split}
\end{equation*}
We should now compute the divergence of this vector field. We do so in transversally normal coordinates around a point of~$\bb{P}(\m{O}\oplus L)$; in such a system of coordinates we have
\begin{equation*}
\diff_\zeta\norm{A^{1,0}}^2_\phi=\norm{A^{1,0}}^2_\phi\left(1+\phi'-\frac{\phi}{1+\tau}\right)\diff_\zeta t
\end{equation*}
so we can compute
\begin{equation*}
\begin{split}
&\nabla_a\left(g_\phi(\nabla_\phi^aA^{0,1},A^{1,0})\right)=\\
&=\diff_\zeta\left(\norm{A^{1,0}}^2_\phi(\phi'+1)\frac{\zeta}{\phi}\right)+\norm{A^{1,0}}^2_\phi(\phi'+1)\left(\frac{\phi'-1}{\phi}+\frac{1}{1+\tau}\right)=\\
&=\norm{A^{1,0}}^2_{\phi}\left(\frac{(\phi'+1)^2}{\phi}+\phi''\right).
\end{split}
\end{equation*}
In particular~$\mrm{div}\left(g_\phi(\nabla_\phi^aA^{0,1},A^{1,0})\diff_a\right)$ is a real function, and this concludes the proof.
\end{proof}
Substituting~\eqref{eq:norma_A} in the result of Lemma~\ref{lemma:calcolo_div} we find
\begin{equation*}
\mrm{div}_{\phi}\,\mrm{Re}\left(g_\phi(\nabla_\phi^aA^{0,1},A^{1,0})\diff_a\right)=\lambda\frac{\mrm{e}^t\phi}{1+\tau}\left(\frac{(\phi'+1)^2}{\phi}+\phi''\right).
\end{equation*}
We can finally write the zero-locus equation of the real moment map, using Proposition~\ref{prop:curv_scal_phi} and Lemma~\ref{lemma:media_curvatura}: since we are choosing a metric on~$\Sigma$ that has constant scalar curvature equal to~$-1$, the equation is 
\begin{equation}\label{eq:mappa_reale}
\phi''+\frac{2\,\phi'+1}{1+\tau}+\frac{4}{m(2+m)}=\frac{c}{m^2}\frac{\mrm{e}^t\phi}{1+\tau}\left(\frac{(\phi'+1)^2}{\phi}+\phi''\right)
\end{equation}
where we are collecting in~$c\,m^{-2}$ the various constants. The reason for introducing the factor~$m^{-2}$ in the equation is that in the next sections we will find a solution to equation~\eqref{eq:mappa_reale} in the \emph{adiabatic limit} when~$m\to 0$, and to do this we will have to expand the equation with respect to~$m$. This~$m^{-2}$ factor has been chosen precisely in such a way that the expansion  in~$m$ will have the appropriate form.

Let us summarise our computations so far. We showed that with all our assumptions, in particular those of Lemma~\ref{lemma:calcolo_div}, the complex moment map vanishes automatically, while the real moment map equation reduces to the problem 
\begin{equation}\label{eq:HCSCK_original}
\begin{gathered}
\phi''+\frac{2\,\phi'+1}{1+\tau}+\frac{4}{m(2+m)}=\frac{c}{m^2}\frac{\mrm{e}^t\phi}{1+\tau}\left(\frac{(\phi'+1)^2}{\phi}+\phi''\right)\\
\phi(0)=\phi(m)=0\\
\phi'(0)=-\phi'(m)=1
\end{gathered}
\end{equation}
to be solved for a positive function~$\phi(\tau)$ on~$[0,m]$ and a positive real number~$c$. Here the function~$t$ is a primitive of~$\frac{1}{\phi(\tau)}$; we might fix the starting point of integration as~$m/2$, since the choice of a different point can be absorbed by the constant~$c$. From now on then we will consider~$t$ as
\begin{equation*}
t(\tau)=\int_{\frac{m}{2}}^{\tau}\frac{1}{\phi(x)}\mrm{d}x
\end{equation*}
hence equation~\eqref{eq:HCSCK_original} becomes an ordinary integro-differential equation for~$\phi$ and~$c$.
\begin{rmk}
Essentially the same computations show that for the alternative choice of complexification~\eqref{eq:low_rank_norm}, the real moment map equation reduces to the problem 
\begin{equation*}
\begin{split}
&\left(1+\sqrt{1-\frac{c}{m^2}\frac{\phi(\tau)}{1+\tau}\mrm{e}^t}\right)\left(\phi''(\tau)+\frac{1+2\,\phi'(\tau)}{1+\tau}\right)+\\
&+\frac{8}{m(2+m)}+\frac{\frac{c}{m^2}\frac{\phi(\tau)}{1+\tau}\mrm{e}^t}{2\sqrt{1-\frac{c}{m^2}\frac{\phi(\tau)}{1+\tau}\mrm{e}^t}}\left(\frac{\phi(\tau)}{(1+\tau)^2}-\frac{(1+\phi'(\tau))^2}{\phi(\tau)}\right)=0
\end{split}
\end{equation*}
with the same boundary and positivity conditions, and the same definition of~$t(\tau)$.
\end{rmk}

\subsection{Approximate solutions}

We may regard~\eqref{eq:HCSCK_original} as a family of integro-differential equations parame\-trized by~$m\in\bb{R}_{>0}$. Our aim is to show that for sufficiently small values of this parameter (i.e. in the limit when the fibres of~$\bb{P}(\m{O}\oplus L)$ are very small) there is a solution to the equation. Notice however that~$m$ appears both in the equation and in the domain of definition of~$\phi(\tau)$, since~$\tau$ takes values in~$[0,m]$. It will then more convenient to first change variables, letting~$\tau=\lambda m$, so that~$\lambda$ takes values in the fixed interval~$[0,1]$. If we rewrite the problem~\eqref{eq:HCSCK_original} in terms of~$\phi(\lambda)$ we get the equivalent equation
\begin{equation*}
\begin{split}
&\phi''+\frac{2\,m\,\phi'+m^2}{1+\lambda m}+\frac{4\,m}{2+m}=\frac{c}{m^2}\,\frac{\mrm{exp}\!\left(\int_{\frac{1}{2}}^{\lambda}\frac{m}{\phi(x)}\mrm{d}x\right)}{1+\lambda m}\left((\phi'+m)^2+\phi\,\phi''\right)
\end{split}
\end{equation*}
with boundary conditions
\begin{equation*}
\begin{split}
\phi(0)=\phi(1)&=0\\
\phi'(0)=-\phi'(1)&=m,
\end{split}
\end{equation*}
to be solved for a momentum profile~$\phi(\lambda)$ and a constant~$c >0$. 
\begin{rmk} The corresponding equation for~\eqref{eq:low_rank_norm} is given by
\begin{equation*}
\begin{split}
&\left(1+\left(1-\frac{c}{m^2}\frac{\phi}{1+\lambda m}\mrm{exp}\!\left(\int_{1/2}^\lambda\frac{m}{\phi}\mrm{d}x\right)\right)^{\frac{1}{2}}\right)\left(\phi''+\frac{2\,m\,\phi'+m^2}{1+\lambda m}\right)+\\
&+\frac{8\,m}{2+m}+\frac{\frac{c}{m^2}\frac{\phi}{1+\lambda m}\mrm{exp}\!\left(\int_{1/2}^\lambda\frac{m}{\phi}\mrm{d}x\right)}{2\left(1-\frac{c}{m^2}\frac{\phi}{1+\lambda m}\mrm{exp}\!\left(\int_{1/2}^\lambda\frac{m}{\phi}\mrm{d}x\right)\right)^{\frac{1}{2}}}\left(\frac{m^2\,\phi}{(1+\lambda m)^2}-\frac{(m+\phi')^2}{\phi}\right)=0
\end{split}
\end{equation*}
with the same boundary conditions.
\end{rmk}
Introduce the space
\begin{equation*}
\m{V}_m:=\set*{\phi\in\m{C}^\infty([0,1])\tc\begin{matrix}\phi>0\mbox{ in }(0,1)\\
\phi(0)=\phi(1)=0\\
\phi'(0)=-\phi'(1)=m
\end{matrix}
}.
\end{equation*}
Our problem is equivalent to showing that the integro-differential operator
\begin{equation*}
\m{F}_m:\m{V}_m\times\bb{R}_{>0}\to\m{C}^{\infty}_0([0,1])
\end{equation*}
has a zero for some choice of~$m$, where~$\m{F}_m$ is defined as 
\begin{equation}\label{eq:operatore_F_m}
\begin{split}
\m{F}_m(\phi,c):=&\phi''+\frac{2\,m\,\phi'+m^2}{1+\lambda m}+\frac{4\,m}{2+m}-\frac{c}{m^2}\,\frac{\mrm{exp}\!\left(\int_{\frac{1}{2}}^{\lambda}\frac{m}{\phi}\mrm{d}x\right)}{1+\lambda m}\left((\phi'+m)^2+\phi\,\phi''\right).
\end{split}
\end{equation}
The reason why the image of~$\m{F}_m$ lies inside the space of zero-average functions is that in its original form the real HcscK equation is of the form
\begin{equation*}
\mbox{scalar curvature }-\mbox{ its average }+\mbox{ divergence of a vector field }=0.
\end{equation*}
In fact we will show that~$\m{F}_m$ has a zero for all sufficiently small~$m>0$. 

We follow the well-developed approach of \emph{adiabatic limits} and in particular the excellent reference~\cite{Fine_phd}. In this approach one first constructs a sufficiently good approximate solution and then perturbs this to a genuine solution by using a suitable quantitative version of the Implicit Function Theorem. An ``approximate solution'' in this context is just the data of a function~$\tilde{\phi}\in\m{V}_m$ and a positive constant~$\tilde{c}$ such that
\begin{equation*}
\m{F}_m(\tilde{\phi},\tilde{c})=O(m^n)
\end{equation*}
for some~$n>0$, in a purely formal sense. It is in fact possible to find approximate solutions up to every order, but we'll just need the first one
\begin{equation*}
\begin{split}
\phi_0(\lambda)=&\frac{\lambda m(1-\lambda)}{2(2+m)}\big(4+2\,m-m(4+3\,m)\lambda(1-\lambda)\big);\\
c_0=&2\,m^2.
\end{split}
\end{equation*}
For this choice of~$\phi$ and~$c$ we have
\begin{equation*}
\m{F}_m(\phi_0,c_0)=O(m^3)
\end{equation*}
moreover,
\begin{equation}\label{eq:prima_approssimazione}
\begin{split}
\phi_0''+2\,m\frac{\phi_0'}{1+\lambda m}+\frac{m^2}{1+\lambda m}+\frac{4\,m}{2+m}=O(m^2)\\
\frac{\mrm{exp}\!\left(\int_{\frac{1}{2}}^{\lambda}\frac{m}{\phi_0}\mrm{d}x\right)}{1+\lambda m}\left((\phi_0'+m)^2+\phi_0\phi_0''\right)=O(m^2).
\end{split}
\end{equation}
\begin{rmk}
Precisely the same choice of approximate solution works for the more complicated equation corresponding to~\eqref{eq:low_rank_norm}; the choice of~$c_0$ instead is~$8\,m^2$. 
\end{rmk}
\paragraph*{Linearization around the approximate solution.} We wish to study the differential of~$\m{F}_m$ around our approximate solution,~$(\phi_0,c_0)$. Introduce the space
\begin{equation*}
\m{V}:=\left\lbrace\phi\in\m{C}^\infty([0,1])\mid \phi(0)=\phi'(0)=\phi(1)=\phi'(1)=0\right\rbrace.
\end{equation*}
The tangent space to~$\m{V}_m\times\bb{R}_{>0}$ is~$\m{V}\times\bb{R}$. The linearization 
\begin{equation*}
\left(D\m{F}_m\right)_{(\phi,c)}:\m{V}\times\bb{R}\to\m{C}^{\infty}_0([0,1])
\end{equation*}
around a point~$(\phi,c)\in\m{V}_m\times\bb{R}_{>0}$ is
\begin{equation}\label{eq:linearizzazione}
\begin{split}
\left(D\m{F}_m\right)_{(\phi,c)}(u,k&)=u''+\frac{2\,m\,u'}{1+\lambda m}-\frac{k\,\mrm{exp}\!\left(\int_{\frac{1}{2}}^{\lambda}\frac{m}{\phi}\mrm{d}x\right)}{m^2(1+\lambda m)}\left((\phi'+m)^2+\phi\,\phi''\right)+\\
&+\frac{c}{m^2}\,\frac{\mrm{exp}\!\left(\int_{\frac{1}{2}}^{\lambda}\frac{m}{\phi}\mrm{d}x\right)}{1+\lambda m}\left(\int_{\frac{1}{2}}^\lambda\frac{m\,u}{\phi^2}\mrm{d}x\right)\left((\phi'+m)^2+\phi\,\phi''\right)-\\
&-\frac{c}{m^2}\,\frac{\mrm{exp}\!\left(\int_{\frac{1}{2}}^{\lambda}\frac{m}{\phi}\mrm{d}x\right)}{1+\lambda m}\left(2(\phi'+m)u'+u\,\phi''+\phi\,u''\right).
\end{split}
\end{equation}
Consider now the linearization around the approximate solution~$(\phi_0,c_0)$. Taking into account~\eqref{eq:prima_approssimazione} and the fact that~$\phi_0(\lambda)\in O(m)$, we have for the various terms in the linearized operator:
\begin{equation*}
\begin{gathered}
u''+2\,m\frac{u'}{1+\lambda m}=u''+O(m);\\
\frac{k}{m^2}\frac{\mrm{exp}\!\left(\int_{\frac{1}{2}}^{\lambda}\frac{m}{\phi_0}\mrm{d}x\right)}{1+\lambda m}\left((\phi_0'+m)^2+\phi_0\,\phi_0''\right)=-2\,k\,(3\,\lambda^2-2\,\lambda)+O(m);\\
\frac{c_0}{m^2}\frac{\mrm{exp}\!\left(\int_{\frac{1}{2}}^{\lambda}\frac{m}{\phi_0}\mrm{d}x\right)}{1+\lambda m}\left(\int_{\frac{1}{2}}^\lambda\frac{m\,u}{\phi_0^2}\mrm{d}x\right)\left((\phi_0'(\lambda)+m)^2+\phi_0\,\phi_0''\right)=O(m);\\
\frac{c_0}{m^2}\,\frac{\mrm{exp}\!\left(\int_{\frac{1}{2}}^{\lambda}\frac{m}{\phi_0}\mrm{d}x\right)}{1+\lambda m}\left(2(\phi_0'+m)u'+u\,\phi_0''+\phi_0\,u''\right)=O(m).
\end{gathered}
\end{equation*}
Hence we see that the differential of~$\m{F}_m$ at the point~$(\phi_0,c_0)$ is
\begin{equation*}
\left(D\m{F}_m\right)_{(\phi_0,c_0)}(u,k)=u''+2\,k(3\,\lambda^2-2\,\lambda)+O(m).
\end{equation*}
\begin{lemma}\label{lemma:linearizzazione_primo_ordine}
The following map is an isomorphism:
\begin{equation}\label{eq:linearizzazione_primo_ordine}
\begin{split}
D:\m{V}\times\bb{R}&\to\m{C}^\infty_0([0,1])\\
(u,k)&\mapsto u''+2\,k(3\,\lambda^2-2\,\lambda).
\end{split}
\end{equation}
\end{lemma}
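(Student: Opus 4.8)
The plan is to invert $D$ explicitly by elementary integration. First I would record the relevant average constraints: the target $\m{C}^\infty_0([0,1])$ consists of functions $g$ with $\int_0^1 g\,\mrm{d}\lambda = 0$, and the weight appearing in the statement satisfies $\int_0^1 (3\lambda^2 - 2\lambda)\,\mrm{d}\lambda = 0$. Consequently $D$ genuinely lands in $\m{C}^\infty_0$: for any $(u,k)\in\m{V}\times\bb{R}$ one has $\int_0^1 u''\,\mrm{d}\lambda = u'(1) - u'(0) = 0$ by the boundary conditions defining $\m{V}$, so the image $u'' + 2k(3\lambda^2 - 2\lambda)$ has zero average. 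Thus the claim to be proved is that for every $g\in\m{C}^\infty_0([0,1])$ there is a unique pair $(u,k)$ with $u'' + 2k(3\lambda^2 - 2\lambda) = g$.

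Second, I would solve this by direct integration. Setting $h := g - 2k(3\lambda^2 - 2\lambda)$, the unique solution of $u'' = h$ satisfying $u(0) = u'(0) = 0$ is
\begin{equation*}
u(\lambda) = \int_0^\lambda (\lambda - s)\,h(s)\,\mrm{d}s,
\end{equation*}
which is smooth whenever $h$ is. It then remains to impose the two conditions at $\lambda = 1$ that complete membership in $\m{V}$. The condition $u'(1) = \int_0^1 h\,\mrm{d}s = 0$ holds automatically for every $k$, again because both $g$ and $3\lambda^2 - 2\lambda$ integrate to zero. The condition $u(1) = 0$ becomes
\begin{equation*}
\int_0^1 (1-s)\,g(s)\,\mrm{d}s - 2k\int_0^1 (1-s)(3s^2 - 2s)\,\mrm{d}s = 0,
\end{equation*}
a single scalar equation for $k$.

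Third — the only point where the explicit weight enters quantitatively — I would compute $\int_0^1 (1-s)(3s^2 - 2s)\,\mrm{d}s = -\tfrac{1}{12} \neq 0$. Since this pairing is nonzero, the displayed equation determines $k$ uniquely, namely $k = -6\int_0^1 (1-s)\,g(s)\,\mrm{d}s$, and then $u$ is fixed by the integral formula above. This yields both injectivity (taking $g = 0$ forces $k = 0$ and hence $u = 0$) and surjectivity (the constructed $(u,k)$ lies in $\m{V}\times\bb{R}$ and maps to $g$), so $D$ is a linear bijection. I would also note that the inverse just built is bounded in the relevant Hölder norms — the double integration gains two derivatives and the functional defining $k$ is continuous — which is what is needed to feed $D$ into the perturbation argument via Lemma~\ref{lemma:inversione}.

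I do not expect a genuine obstacle here: the whole content is that the one-dimensional $\bb{R}$-factor in the domain precisely absorbs the single scalar obstruction (the endpoint condition $u(1)=0$) left over once the zero-average constraints have been used to dispose of $u'(1)=0$, and that this absorption is possible exactly because the compatibility pairing $\int_0^1(1-s)(3s^2-2s)\,\mrm{d}s$ does not vanish. If one wished to avoid the explicit constant, the same conclusion follows structurally: $u''$ realises every zero-average function as $u$ ranges over $\{u(0)=u'(0)=u'(1)=0\}$, so $D$ is surjective onto $\m{C}^\infty_0$ up to the single further linear functional $u\mapsto u(1)$, which the free parameter $k$ can be tuned to kill precisely when that pairing is nonzero.
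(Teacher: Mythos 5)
Your proof is correct and follows essentially the same route as the paper: integrate the ODE twice, use $u(0)=u'(0)=0$ to fix the integration constants, observe that $u'(1)=0$ is automatic from the zero-average conditions, and let the remaining constraint $u(1)=0$ determine $k$ uniquely via the nonvanishing pairing $\int_0^1(1-s)(3s^2-2s)\,\mrm{d}s=-\tfrac{1}{12}$; your formula $k=-6\int_0^1(1-s)g(s)\,\mrm{d}s$ agrees with the paper's $k=-6\int_0^1\bigl(\int_0^y g\bigr)\mrm{d}y$ by Fubini. Your write-up is if anything slightly more explicit about which boundary condition is automatic and why the scalar parameter absorbs the leftover obstruction.
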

\begin{proof}
Fix~$f\in\m{C}^\infty([0,1])$ and consider
\begin{equation*}
u''(\lambda)+2\,k(3\,\lambda^2-2\,\lambda)=f(\lambda)
\end{equation*}
as a differential equation for~$u(\lambda)$. The general solution is given by
\begin{equation*}
u(\lambda)=\int_0^\lambda\left(\int_0^yf(x)\mrm{d}x\right)\mrm{d}y-2\,k\left(\frac{\lambda^4}{4}-\frac{\lambda^3}{3}\right)+C_1\,\lambda+C_2
\end{equation*}
for constants~$C_1$,~$C_2$. There is a \textit{unique} choice of~$k$,~$C_1$,~$C_2$ such that this solution~$u$ lies in~$\m{V}$, namely
\begin{equation*}
C_1=C_2=0\mbox{ and }k=-6\,\int_0^1\left(\int_0^yf(x)\mrm{d}x\right)\mrm{d}y.\qedhere
\end{equation*}
\end{proof}
The proof of Lemma~\ref{lemma:linearizzazione_primo_ordine} gives in particular an explicit inverse to the zeroth-order part of~$\left(D\m{F}_m\right)_{(\phi_0,c_0)}$.
\begin{rmk}
The linearisation of the more complicated equation corresponding to the choice of complexification~\eqref{eq:low_rank_norm} is in fact just the same as~$D\m{F}_m$, up to~$O(m)$ terms, so Lemma~\ref{lemma:linearizzazione_primo_ordine} also applies to that case.  
\end{rmk}

\subsection{Solutions in the adiabatic limit}

Since the linearization around the approximate solution~$(\phi_0,c_0)$ is an isomorphism up to~$O(m)$-terms, to obtain an exact solution we can use Lemma~\ref{lemma:inversione} together with the following result, a quantitative version of the Inverse Function Theorem.
\begin{lemma}[Theorem~$5.3$ in~\cite{Fine_phd}]\label{lemma:funzione_inversa_quantitativo}
Let~$F:B_1\to B_2$ be a differentiable map between Banach spaces, with derivative~$DF:B_1\to B_2$ at~$0$. Assume that~$DF$ is an isomorphism, with inverse~$P$, and let~$\delta$ be such that~$F-DF$ is Lipschitz on the ball~$B(0,\delta)$ with a Lipschitz constant~$l\leq (2\norm{P})^{-1}$. Then, for any~$y\in B_2$ such that~$\norm{y-F(0)}<\delta\,(2\norm{P})^{-1}$ there is a unique~$x$ in~$B_1$ such that~$\norm{x}<\delta$ and~$F(x)=y$.  
\end{lemma}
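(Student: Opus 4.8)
The plan is to recognise Lemma~\ref{lemma:funzione_inversa_quantitativo} as a quantitative form of the Banach fixed-point theorem, and to prove it by turning the equation $F(x)=y$ into the search for a fixed point of a Newton-type map. Concretely, I would set
\begin{equation*}
T(x):=x-P\big(F(x)-y\big)
\end{equation*}
defined on the closed ball $\overline{B(0,\delta)}\subseteq B_1$. Since $P$ is a bijection (it is the inverse of the isomorphism $DF$), a point $x$ satisfies $T(x)=x$ if and only if $P(F(x)-y)=0$, i.e. if and only if $F(x)=y$. Thus it suffices to produce a unique fixed point of $T$ in $\overline{B(0,\delta)}$ whose norm is strictly less than $\delta$.

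First I would check that $T$ is a contraction. For $x_1,x_2\in\overline{B(0,\delta)}$, using that $DF$ is linear and $P\circ DF=\mrm{id}$, one can write $x_1-x_2=P\,DF(x_1-x_2)$, so that
\begin{equation*}
T(x_1)-T(x_2)=P\Big(DF(x_1-x_2)-\big(F(x_1)-F(x_2)\big)\Big)=P\Big((F-DF)(x_2)-(F-DF)(x_1)\Big).
\end{equation*}
The Lipschitz hypothesis on $F-DF$ then gives $\norm{T(x_1)-T(x_2)}\leq\norm{P}\,l\,\norm{x_1-x_2}\leq\tfrac12\norm{x_1-x_2}$, using $l\leq(2\norm{P})^{-1}$. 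Hence $T$ is a contraction with constant $\tfrac12$.

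Next I would verify the self-map property. Writing $T(0)=P(y-F(0))$, the bound $\norm{y-F(0)}<\delta(2\norm{P})^{-1}$ gives $\norm{T(0)}<\tfrac{\delta}{2}$; combining this with the contraction estimate, for $x\in\overline{B(0,\delta)}$ one finds
\begin{equation*}
\norm{T(x)}\leq\norm{T(x)-T(0)}+\norm{T(0)}\leq\tfrac12\norm{x}+\tfrac{\delta}{2}<\delta.
\end{equation*}
Thus $T$ maps the complete metric space $\overline{B(0,\delta)}$ strictly into the open ball $B(0,\delta)$. The Banach fixed-point theorem now yields a unique fixed point $x\in\overline{B(0,\delta)}$, which automatically satisfies $\norm{x}<\delta$ because its image lies in the open ball; uniqueness of the fixed point in the closed ball gives uniqueness of the solution among all $x$ with $\norm{x}<\delta$.

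There is no serious obstacle here: the only points requiring care are the algebraic identity $x_1-x_2=P\,DF(x_1-x_2)$ (which is where the hypothesis that $P$ is a two-sided inverse of the linear map $DF$ enters) and the bookkeeping that upgrades $\norm{x}\leq\delta$ to the strict inequality $\norm{x}<\delta$, so that the fixed point genuinely lies in the interior where the Lipschitz bound is assumed. Both the contraction constant and the admissible size of $\norm{y-F(0)}$ are dictated precisely by the constant $\tfrac12$, which is why the hypotheses are stated with the factor $(2\norm{P})^{-1}$.
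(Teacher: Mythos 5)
Your proof is correct. The paper does not actually prove this lemma --- it is quoted as Theorem~$5.3$ of the cited thesis of Fine --- and your argument is precisely the standard one used there: recast $F(x)=y$ as a fixed-point problem for the Newton map $x\mapsto x-P(F(x)-y)$, check it is a $\tfrac12$-contraction of the closed ball into the open ball (the Lipschitz bound on $F-DF$ extends to the closure by continuity), and apply the Banach fixed-point theorem.
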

In order to apply these results we embed~$\m{V}\times\bb{R}$ and~$\m{C}^{\infty}_0([0,1])$ into Banach spaces as follows:
\begin{itemize}
\item the first Banach space is the closure~$\overline{\m{V}}$ of~$\m{V}$ in~$\m{C}^{l+2,\beta}([0,1])$, with the usual H\"older norm, for~$l$ large enough and~$0<\beta< 1$. We can then take the direct sum of this space with~$\bb{R}$, and we let~$\left(\overline{\m{V}}\times{\bb{R}},\norm{.}\right)$ be the resulting Banach space;
\item for~$\m{C}^{\infty}_0([0,1])$, we'll just consider it as a subset of~$\m{C}^{l,\beta}_0([0,1])$.
\end{itemize}
We have the following estimate for the norm of the operator~$D$ defined in equation~\eqref{eq:linearizzazione_primo_ordine} (that is the zeroth-order part of the linearization of~$\m{F}_m$ around the approximate solution~$(\phi_0,c_0)$):
\begin{equation*}
\begin{split}
\norm{D(u,k)}_{\m{C}^{l,\beta}}&\leq\norm{u''}_{\m{C}^{l,\beta}}+2\,\abs{k}\,\norm{3\,\lambda^2-2\,\lambda }_{\m{C}^{l,\beta}}\leq\\
&\leq\norm{u}_{\m{C}^{l+2,\beta}}+22\,\abs{k}\leq 22\,\norm{(u,k)}.
\end{split}
\end{equation*}
In order to prove a similar estimate for the inverse, fix~$f\in \m{C}^{l,\beta}_0([0,1])$ and let~$(u_0,k_0):=D^{-1}(f)$. Then
\begin{equation*}
\abs{k_0}=\abs*{6\,\int_0^1\left(\int_0^yf(x)\mrm{d}x\right)\mrm{d}y}\leq 3\,\sup{f}\leq 3\,\norm{f}_{\m{C}^{l,\beta}}
\end{equation*}
\begin{equation*}
\begin{split}
\norm{u_0}_{\m{C}^{l+2,\beta}}=&\norm*{\int_0^\lambda\left(\int_0^yf(x)\mrm{d}x\right)\mrm{d}y+2\,k_0\left(\frac{\lambda^4}{4}-\frac{\lambda^3}{3}\right)}_{\m{C}^{l,\beta}}\leq\\
\leq&\norm*{\int_0^\lambda\left(\int_0^yf(x)\mrm{d}x\right)\mrm{d}y}_{\m{C}^{l,\beta}}+2\,\abs{k_0}\,\norm*{\frac{\lambda^4}{4}-\frac{\lambda^3}{3}}_{\m{C}^{l,\beta}}< 70\,\norm{f}_{\m{C}^{l,\beta}}
\end{split}
\end{equation*}
This shows
\begin{equation*}
\norm{D^{-1}(f)}< 73\,\norm{f}_{\m{C}^{l,\beta}}.
\end{equation*}

\begin{lemma}
For all sufficiently small~$m > 0$ the map~$\left(D\m{F}_m\right)_{(\phi_0,c_0)}$ is a linear isomorphism of Banach spaces. Moreover the norm of its inverse is less than~$146$.
\end{lemma}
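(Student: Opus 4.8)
The plan is to exhibit $\left(D\m{F}_m\right)_{(\phi_0,c_0)}$ as a perturbation of the isomorphism $D$ of Lemma~\ref{lemma:linearizzazione_primo_ordine} that becomes arbitrarily small as $m\to 0$, and then to apply Lemma~\ref{lemma:inversione}. Concretely, I would write
\[
\left(D\m{F}_m\right)_{(\phi_0,c_0)}=D+R_m,
\]
where $D(u,k)=u''+2\,k(3\,\lambda^2-2\,\lambda)$ is the operator of~\eqref{eq:linearizzazione_primo_ordine} and $R_m$ collects all the remaining terms of the linearisation~\eqref{eq:linearizzazione} evaluated at $(\phi_0,c_0)$. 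The computation carried out just before~\eqref{eq:linearizzazione_primo_ordine}, together with the expansions~\eqref{eq:prima_approssimazione} and the facts $\phi_0=O(m)$, $c_0=2\,m^2$, already shows that each of these remaining terms is $O(m)$ pointwise; the real task is to upgrade this to the operator-norm bound $\norm{R_m}\le C\,m$ as a map $\overline{\m{V}}\times\bb{R}\to\m{C}^{l,\beta}_0([0,1])$, with $C$ independent of $m$.

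To obtain this bound I would estimate the four groups of terms in~\eqref{eq:linearizzazione} separately in the $\m{C}^{l,\beta}$-norm. The contribution $u''+\tfrac{2\,m\,u'}{1+\lambda m}$ gives $D(u,k)$ plus a manifestly $O(m)$ operator, and the terms proportional to $k$ and to $2(\phi_0'+m)u'+u\,\phi_0''+\phi_0\,u''$ are controlled directly using $c_0/m^2=2$ and the $O(m^2)$ estimate~\eqref{eq:prima_approssimazione} for $\tfrac{\exp(\cdots)}{1+\lambda m}\big((\phi_0'+m)^2+\phi_0\phi_0''\big)$. The delicate term is the integral contribution
\[
\frac{c_0}{m^2}\,\frac{\exp\!\left(\int_{\frac{1}{2}}^{\lambda}\frac{m}{\phi_0}\mrm{d}x\right)}{1+\lambda m}\left(\int_{\frac{1}{2}}^\lambda\frac{m\,u}{\phi_0^2}\mrm{d}x\right)\left((\phi_0'+m)^2+\phi_0\,\phi_0''\right),
\]
since $\phi_0$ vanishes at $\lambda=0,1$ and hence $1/\phi_0^2$ blows up at the endpoints. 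The main obstacle is precisely to show that this term, and all of its derivatives up to order $l$, are bounded by $C\,m\,\norm{u}_{\m{C}^{l+2,\beta}}$ uniformly in $m$.

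The key point making this work is that any $u\in\overline{\m{V}}$ vanishes to second order at both endpoints, $u(0)=u'(0)=u(1)=u'(1)=0$, which matches the first-order vanishing $\phi_0\sim\lambda(1-\lambda)\,m$ there; thus $u/\phi_0^2$ stays bounded by $O(m^{-1})\norm{u}$ near the boundary, the inner integral is $O(m^{-1})\norm{u}$, and multiplying by the $O(m^2)$ factor yields the desired $O(m)\norm{u}$ bound, with derivatives handled by differentiating under the integral sign and using the controlled endpoint behaviour of $\phi_0,\phi_0',\phi_0''$. Granting $\norm{R_m}\le C\,m$, I would conclude as follows: from the estimate $\norm{D^{-1}(f)}<73\,\norm{f}_{\m{C}^{l,\beta}}$ proved above we have $\norm{D^{-1}}\le 73$, so $(2\,\norm{D^{-1}})^{-1}\ge 1/146$. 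Choosing $m$ small enough that $C\,m\le(2\,\norm{D^{-1}})^{-1}$, Lemma~\ref{lemma:inversione} applies with $D$ as the reference isomorphism and $L=\left(D\m{F}_m\right)_{(\phi_0,c_0)}$, yielding that $\left(D\m{F}_m\right)_{(\phi_0,c_0)}$ is a linear isomorphism with $\norm{\left(D\m{F}_m\right)_{(\phi_0,c_0)}^{-1}}\le 2\,\norm{D^{-1}}$, which is strictly less than $146$ by the strict inequality in the bound for $\norm{D^{-1}}$. This proves the claim.
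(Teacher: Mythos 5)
Your proof is correct and follows essentially the same route as the paper: the paper likewise writes $\left(D\m{F}_m\right)_{(\phi_0,c_0)} = D + O(m)$, citing the expansions established just before Lemma~\ref{lemma:linearizzazione_primo_ordine}, and applies Lemma~\ref{lemma:inversione} with the bound $\norm{D^{-1}} < 73$ to get invertibility with inverse norm at most $2\norm{D^{-1}} < 146$. Your treatment of the singular integral term is more explicit than the paper's (which simply asserts the $O(m)$ operator bound); the only harmless slip is that $u/\phi_0^2$ is $O(m^{-2})\norm{u}$ near the endpoints rather than $O(m^{-1})\norm{u}$, so that the inner integral $\int_{1/2}^{\lambda} m\,u/\phi_0^2\,\mrm{d}x$ is $O(m^{-1})\norm{u}$ as you state and the final $O(m)$ bound is unaffected.
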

\begin{proof}
We can use Lemma~\ref{lemma:inversione}; indeed, we know that~$\left(D\m{F}_m\right)_{(\phi_0,c_0)} - D\in O(m)$ so for~$m$ small enough we'll have that the norm of the difference is less than~$\frac{1}{146}$, as is needed to apply the Lemma.
\end{proof}
\begin{rmk}
In fact precise estimates for the norm of~$\left(D\m{F}_m\right)_{(\phi_0,c_0)}$ and its inverse are not needed. We only require the norm of the inverse to be controlled by a quantity which is independent of~$m$ and~$l$. In what follows we will write simply~$N$ for the norm of~$\left(D\m{F}_m\right)_{(\phi_0,c_0)}^{-1}$.
\end{rmk}

We showed that for~$m$ small enough we have an approximate solution~$(\phi_0,c_0)$, depending on~$m$, to the equation~$\m{F}_m=0$, such that
\begin{equation*}
\m{F}_m(\phi_0,c_0)=O(m^3). 
\end{equation*}
Moreover, we know that the differential of~$\m{F}$ around this approximate solution is an isomorphism of Banach spaces. Our next step is to use Lemma~\ref{lemma:funzione_inversa_quantitativo} to show that for small enough~$m$ we have a genuine solution to~$\m{F}_m=0$.

Let~$\m{G}_m:\overline{\m{V}}\times\bb{R}\to L^\infty_0([0,1])$ be defined as
\begin{equation*}
\m{G}_m(u,c):=\m{F}_m(\phi_0+u,c_0+c).
\end{equation*}
The differential of~$\m{G}_m$ at~$0$ is just~$\left(D\m{F}_m\right)_{(\phi_0,c_0)}$, so it is an isomorphism. Then Lemma~\ref{lemma:funzione_inversa_quantitativo} tells us that, if~$\delta$ is the radius of a ball over which~$\m{G}_m-D\m{G}_m$ is Lipschitz with a constant that is less than~$\frac{1}{N}$, then for any~$y$ such that~$\norm{y-\m{G}_m(0)}\leq\frac{\delta}{N}$ there is a unique~$x$ such that~$\norm{x}<\delta$ and~$\m{G}_m(x)=y$.

As~$\m{G}_m(0)=O(m^3)$, in order to apply the result, we need to show that~$\delta$ can be chosen to vanish slower than~$m^3$ as~$m \to 0$. 

However we also want~$(\Phi,C)$ to satisfy some additional conditions:~$\Phi$ should be strictly positive in the interior of~$[0,1]$, and~$C$ should be positive. The approximate solution satisfies these conditions, however~$\phi_0\in O(m)$ and~$c_0\in O(m^2)$; so in order to preserve positivity we need to choose a radius~$\delta$ that goes to~$0$ faster than~$m^2$ as~$m \to 0$ 

The next result shows that we can choose~$\delta$ as required.

\begin{lemma}\label{lemma:costante_lipschitz}
Let~$k\geq 2$. If~$\delta\in O(m^k)$ then for~$m$ small enough~$\m{G}_m-D\m{G}_m$ is Lipschitz on~$B(0,\delta)\subset\overline{\m{V}}\times\bb{R}$ with Lipschitz constant smaller than~$\frac{1}{N}$.
\end{lemma}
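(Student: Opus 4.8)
The goal is to show that the nonlinear remainder $\m{G}_m-D\m{G}_m$ is Lipschitz on a ball of radius $\delta\in O(m^k)$, $k\geq 2$, with Lipschitz constant below $\frac{1}{N}$, where $N$ is the (uniformly bounded) norm of the inverse linearization. The map $\m{G}_m$ is built from $\m{F}_m$ as in~\eqref{eq:operatore_F_m}, so $\m{G}_m-D\m{G}_m$ collects precisely the terms that are at least quadratic in the increment $(u,c)$. The plan is to differentiate $\m{G}_m$ twice and bound the Hessian uniformly on $B(0,\delta)$: if $\norm{D^2\m{G}_m}\leq K(m,\delta)$ on the ball, then $\m{G}_m-D\m{G}_m$ is Lipschitz there with constant at most $K(m,\delta)\,\delta$, by the mean value inequality applied along segments. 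Thus it suffices to produce a bound $K(m,\delta)\,\delta<\frac{1}{N}$ for small $m$.

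First I would isolate where the increment enters. Writing $\phi=\phi_0+u$, $c=c_0+c$, the only genuinely nonlinear ingredients in~\eqref{eq:operatore_F_m} are the exponential factor $\exp\!\left(\int_{1/2}^{\lambda}\frac{m}{\phi}\,\mrm{d}x\right)$ and the quadratic expression $(\phi'+m)^2+\phi\,\phi''$, together with the products among $c$, the exponential, and these terms. The key quantitative input is that on the fixed interval $[0,1]$ the approximate profile satisfies $\phi_0\in O(m)$ with $\phi_0$ bounded below by (a constant times) $m\,\lambda(1-\lambda)$, so $\frac{m}{\phi_0}$ is $O(1)$ and the integral $\int_{1/2}^\lambda\frac{m}{\phi_0}\,\mrm{d}x$ stays bounded; hence the exponential and its $u$-derivatives are controlled in $\m{C}^{l,\beta}$ uniformly in $m$, provided $u$ stays small enough (which $\delta\in O(m^k)$ with $k\geq 2$ guarantees, since then $\norm{u}\ll\phi_0$ and $\phi=\phi_0+u$ remains positive and comparable to $\phi_0$). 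I would record these as elementary Schauder-algebra estimates: products and quotients of $\m{C}^{l,\beta}$ functions, with the quotient controlled because the denominator is bounded away from zero on the relevant ball.

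Next I would assemble the second-derivative bound. Each second-order term of $D^2\m{G}_m$ is a sum of products of: the exponential factor (uniformly $\m{C}^{l,\beta}$-bounded), the prefactor $\frac{c}{m^2}$ (which is $O(1)$ near $c_0=2m^2$), and derivatives of the $u$-dependent pieces, several of which carry an explicit factor of $m$ (from the $(\phi'+m)$ and $\frac{m}{1+\lambda m}$ structure) or arise only from the $\phi\,\phi''$ and $\exp$ terms whose $u$-Hessian contributes factors controlled by $\norm{u}/\phi_0\in O(m^{k-1})$. Collecting, one obtains $K(m,\delta)\leq C$ for a constant $C$ independent of $m$ and $l$, once $\delta\in O(m^k)$; consequently the Lipschitz constant is at most $C\,\delta\in O(m^k)$, which is $<\frac{1}{N}$ for all sufficiently small $m$ since $N$ is bounded. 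This is exactly the stated claim.

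The main obstacle I anticipate is the careful treatment of the integral exponential term: its $u$-variations involve $\int_{1/2}^\lambda\frac{m\,u}{\phi^2}\,\mrm{d}x$ and $\int_{1/2}^\lambda\frac{m\,u}{\phi^3}(\cdots)\,\mrm{d}x$, and one must verify that the factor $\frac{m}{\phi^2}$ (respectively $\frac{m}{\phi^3}$) does not blow up near the endpoints $\lambda=0,1$, where $\phi\to 0$. The resolution is that the boundary conditions $\phi(0)=\phi(1)=0$, $\phi'(0)=m$, $\phi'(1)=-m$ force $\phi$ to vanish linearly, so $\frac{m}{\phi}$ has only an integrable logarithmic-type singularity after integration, and the H\"older norms of the integrated quantities remain finite and uniformly bounded; this is where the choice of the space $\overline{\m{V}}$ (vanishing to first order at the endpoints) and the comparability $\phi\sim\phi_0\sim m\,\lambda(1-\lambda)$ are both essential. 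Once this endpoint analysis is in place, the remaining estimates are routine applications of the Schauder product rule, and the Lemma follows.
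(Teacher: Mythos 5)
Your strategy is essentially the paper's, pushed one derivative further: the paper bounds the Lipschitz constant of $\m{N}_m=\m{G}_m-D\m{G}_m$ by $\sup_{z\in B(0,\delta)}\norm{(D\m{N}_m)_z}=\sup_z\norm{(D\m{F}_m)_{(\phi_0,c_0)+z}-(D\m{F}_m)_{(\phi_0,c_0)}}$ and shows this difference is $O(m)$ by Taylor-expanding each term of the explicit linearization~\eqref{eq:linearizzazione} around $(\phi_0,c_0)$, using $\tilde{y},\tilde{c}\in O(m^2)$; you instead bound $\sup\norm{D^2\m{G}_m}$ and multiply by $\delta$. These are the same mean-value argument applied at adjacent orders, and your endpoint analysis (using that elements of $\overline{\m{V}}$ vanish to first order so that $m\,u/\phi_0^2$ stays pointwise $O(\norm{u}/m)$) is exactly the justification the paper's expansion of the exponential implicitly relies on.

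However, one quantitative claim in your write-up is wrong: the Hessian $D^2\m{G}_m$ is \emph{not} bounded by a constant independent of $m$. The mixed $c$--$\phi$ second derivatives coming from the term $-\frac{c}{m^2}\,\frac{\exp\!\left(\int_{1/2}^{\lambda}\frac{m}{\phi}\,\mrm{d}x\right)}{1+\lambda m}\left((\phi'+m)^2+\phi\,\phi''\right)$ have the form $-\frac{k_1}{m^2}\,D\!\left[E\,Q\right](w)$ with $E\,Q\in O(m^2)$ and $D(EQ)(w)\in O(m\norm{w})$, hence contribute $O\!\left(\frac{\abs{k_1}\,\norm{w}}{m}\right)$; so $K(m,\delta)$ is only $O(m^{-1})$, not $O(1)$. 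This does not sink the argument --- you then get a Lipschitz constant of order $K(m,\delta)\,\delta\in O(m^{k-1})\subseteq O(m)$ for $k\geq 2$, which is still eventually smaller than $\frac{1}{N}$ --- but the bound as you stated it ($C\,\delta\in O(m^k)$) should be corrected, and you should verify the $1/m$ loss does not compound elsewhere. The paper's route of comparing the two linearizations directly avoids having to organize the full second derivative and makes the single power of $m$ that is gained (rather than $m^k$) transparent.
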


This tells us that for a small enough~$m$ we can choose~$\delta$ in such a way that the solution to the equation that we have found satisfies the positivity conditions; it is enough to use Lemma~\ref{lemma:costante_lipschitz} for~$k=2+\frac{1}{2}$.

\begin{proof}[Proof of Lemma~\ref{lemma:costante_lipschitz}]
Let~$\m{N}_m:=\m{G}_m-D\m{G}_m$ be the non-linear part of~$\m{G}_m$. For~$a,b\in B(0,\delta)$, the Mean Value Theorem implies
\begin{equation*}
\norm{\m{N}_m(a)-\m{N}_m(b)}_{\m{C}^{l,\beta}}\leq \norm{a-b}_{\m{C}^{l+2,\beta}}\cdot\mrm{sup}_{z\in B(0,\delta)}\norm{\left(D\m{N}_m\right)_z}.
\end{equation*}
For~$z\in B(0,\delta)$,
\begin{equation*}
\begin{split}
\left(D\m{N}_m\right)_z(\varphi)=&\left(D\m{G}_m\right)_z(\varphi)-\left(D\m{G}_m\right)_0(\varphi)=\\
=&\left(D\m{F}_m\right)_{(\phi_0,c_0)+z}(\varphi)-\left(D\m{F}_m\right)_{(\phi_0,c_0)}(\varphi).
\end{split}
\end{equation*}
We will show that this quantity is~$O(m)$, if~$\delta\in O(m^2)$. Since~$O(m^k)\subseteq O(m^2)$ for~$k\geq 2$, this will give us the thesis.

To prove the claim, let~$z=:(\tilde{y},\tilde{c})$; if~$\delta$ is~$O(m^2)$, since~$\norm{z}\leq\delta$ also~$\tilde{y}=O(m^2)$ and~$\tilde{c}=O(m^2)$. The linearization of~$\m{F}_m$ at~$(\phi,c):=(\phi_0,c_0)+z$ is given by equation~\eqref{eq:linearizzazione}
\begin{equation*}
\begin{split}
\left(D\m{F}_m\right)_{(\phi,c)}(u,k)=&u''+\frac{2\,m\,u'}{1+\lambda m}-\frac{k\,\mrm{exp}\!\left(\int_{\frac{1}{2}}^{\lambda}\frac{m}{\phi}\mrm{d}x\right)}{m^2(1+\lambda m)}\left((\phi'+m)^2+\phi\,\phi''\right)+\\
+&\frac{c}{m^2}\frac{\mrm{exp}\!\left(\int_{\frac{1}{2}}^{\lambda}\frac{m}{\phi}\mrm{d}x\right)}{1+\lambda m}\left(\int_{\frac{1}{2}}^\lambda\frac{m\,u}{\phi^2}\mrm{d}x\right)\left((\phi'+m)^2+\phi\,\phi''\right)-\\
&-\frac{c}{m^2}\,\frac{\mrm{exp}\!\left(\int_{\frac{1}{2}}^{\lambda}\frac{m}{\phi}\mrm{d}x\right)}{1+\lambda m}\left(2(\phi'+m)u'+u\,\phi''+\phi\,u''\right).
\end{split}
\end{equation*}
Let us consider the series expansions:
\begin{equation*}
\begin{split}
\mrm{exp}\!\left(\int_{\frac{1}{2}}^{\lambda}\frac{m}{\phi}\mrm{d}x\right)=&\mrm{exp}\!\left(\int_{\frac{1}{2}}^{\lambda}\frac{m}{\phi_0+\tilde{y}}\mrm{d}x\right)=\mrm{exp}\!\left(\int_{\frac{1}{2}}^{\lambda}\frac{m}{\phi_0}-\frac{m}{\phi_0^2}\tilde{y}+\dots\mrm{d}x\right)=\\
=&\mrm{exp}\!\left(\int_{\frac{1}{2}}^{\lambda}\frac{m}{\phi_0}+O(m)\,\mrm{d}x\right)=\mrm{exp}\!\left(\int_{\frac{1}{2}}^{\lambda}\frac{m}{\phi_0}\mrm{d}x\right)+O(m),
\end{split}
\end{equation*}
\begin{equation*}
\begin{split}
&(\phi'(\lambda)+m)^2+\phi(\lambda)\,\phi''(\lambda)=(\phi_0'+\tilde{y}'+m)^2+(\phi_0+\tilde{y})(\phi_0''+\tilde{y}'')=\\
&=(\phi_0'+m)^2+(\tilde{y}')^2+2(\phi_0'+m)\tilde{y}'+\phi_0\,\phi_0''+\tilde{y}\,\phi_0''+\phi_0\,\tilde{y}''+\tilde{y}\,\tilde{y}''=\\
&=(\phi_0'+m)^2+\phi_0\,\phi_0''+O(m^3).
\end{split}
\end{equation*}
So we have
\begin{equation*}
\begin{split}
\frac{k}{m^2}&\frac{\mrm{exp}\!\left(\int_{\frac{1}{2}}^{\lambda}\frac{m}{\phi}\mrm{d}x\right)}{1+\lambda m}\left((\phi'+m)^2+\phi\,\phi''\right)=\\
&=\frac{k}{m^2}\frac{\mrm{exp}\!\left(\int_{\frac{1}{2}}^{\lambda}\frac{m}{\phi_0}\mrm{d}x\right)}{1+\lambda m}\,\left((\phi_0'+m)^2+\phi_0\,\phi_0''\right)+O(m).
\end{split}
\end{equation*}
For the other terms, recalling that~$c=c_0+\tilde{c}=O(m^2)$, we simply have
\begin{equation*}
\frac{c}{m^2}\frac{\mrm{exp}\!\left(\int_{\frac{1}{2}}^{\lambda}\frac{m}{\phi}\mrm{d}x\right)}{1+\lambda m}\left(\int_{\frac{1}{2}}^\lambda\frac{m\,u}{\phi^2}\mrm{d}x\right)\left((\phi'+m)^2+\phi\,\phi''\right)=O(m)
\end{equation*}
\begin{equation*}
\frac{c}{m^2}\,\frac{\mrm{exp}\!\left(\int_{\frac{1}{2}}^{\lambda}\frac{m}{\phi}\mrm{d}x\right)}{1+\lambda m}\left(2(\phi'+m)u'+u\,\phi''+\phi\,u''\right)=O(m).
\end{equation*}
As a consequence we find that, up to~$O(m)$-terms
\begin{equation*}
\begin{split}
\left(D\m{F}_m\right)_{(\phi,c)}(u,k)=&u''+\frac{2m\,u'}{1+\lambda m}-\frac{k}{m^2}\frac{\mrm{exp}\!\left(\int_{\frac{1}{2}}^{\lambda}\!\frac{m}{\phi_0}\mrm{d}x\right)}{1+\lambda m}\left((\phi_0'+m)^2+\phi_0\phi_0''\right)=\\
=&\left(D\m{F}_m\right)_{(\phi_0,c_0)}(u,k).
\end{split}
\end{equation*}
Then for~$z\in B(0,\delta)$,~$\norm{\left(D\m{N}_m\right)_z}$ is~$O(m)$. Hence for~$m$ small enough, on a ball of radius~$m^2$ the Lipschitz constant of~$\m{N}_m$ will be smaller than~$N^{-1}$.
\end{proof}
Lemma~\ref{lemma:costante_lipschitz} settles the problem of existence of a solution~$(\Phi,C)\in\m{C}^{l+2,\beta}([0,1])\times\bb{R}$ of~$\m{F}_m(\phi,c)=0$. To prove smoothness we consider the existence result we just showed for increasing values of~$l$, with corresponding solutions~$\Phi_l$. The uniqueness statement in Lemma~\ref{lemma:funzione_inversa_quantitativo}, together with the fact that~$\norm{u}_{\m{C}^{l,\beta}}\leq\norm{u}_{\m{C}^{l+1,\beta}}$, implies that actually all the various~$\Phi_l$ are the same function, that is of course smooth.
\begin{rmk}
Given our previous remarks, it is straightforward to check that the same proof works for the more complicated equation corresponding to~\eqref{eq:low_rank_norm}.  
\end{rmk}

\chapter{The HcscK system in symplectic coordinates}\label{chap:symplectic_coords}

\chaptertoc{}

\bigskip

In the first part of this Chapter we study the HcscK system in the special case of a compact complex torus, focusing in particular on the~$2$-dimensional case. In the second part instead we study the system on a toric manifold, i.e. a compact K\"ahler~$n$-fold~$(M,\omega)$ equipped with a Hamiltonian~$\bb{T}^n$-action. We study the two situations together because they share an important feature, the existence of symplectic coordinates on a dense open subset of~$M$. The idea is to generalize Abreu's formula for the scalar curvature of a K\"ahler metric in symplectic coordinates, obtaining an interesting expression for the HcscK system under some simplifying assumptions. Our first result, Theorem~\ref{thm:HcscKSymplCoordThm}, shows that the equations~\eqref{eq:HCSCK_complex_relaxed} become more explicit and treatable when written in symplectic coordinates, mainly because they can be \emph{decoupled} by this simple change of variables. These results generalize those of~\cite{ScarpaStoppa_HcscK_abelian} to abelian varieties and toric manifolds of arbitrary dimension.

Section~\ref{sec:toric} focuses on \emph{obstructions} to the solvability of the HcscK system on a toric surface: we will see for example that the complex moment map can be solved just for a \emph{non-integrable} Higgs term~$\alpha$. More interestingly, we will show that a necessary condition for having a solution of the (toric) HcscK system is that the surface is~$K$-stable, so we have a first obstruction to the existence of solutions to the system, at least on a toric surface. This result is a consequence of an ``integration by parts formula'', inspired by a result in~\cite{Donaldson_stability_toric} that is crucial to study the cscK equation on toric varieties, see the discussion in~\cite[\S$6$]{Ross_toric}.

It is easier to start the study of the HcscK system in symplectic coordinates on a complex torus, following the approach of~\cite{FengSzekelyhidi_abelian} to the study of the prescribed scalar curvature problem. Without loss of generality, we can assume that the torus is in fact the abelian variety
\begin{equation*}
M=\mathbb{C}^n/(\mathbb{Z}^n+\I\mathbb{Z}^n),
\end{equation*}
on which we have complex coordinates~$\bm{z}=\bm{x}+\I\bm{w}$ induced by the projection~$\bb{C}^n\to M$. We fix the flat K\"ahler form 
\begin{equation*}
\omega_0=\I\sum_{a}\mrm{d}z^a\wedge\mrm{d}\bar{z}^a
\end{equation*}
and we want to find solutions to~\eqref{eq:HCSCK_complex_relaxed} for a K\"ahler form in~$\omega\in[\omega_0]$ and a first-order deformation~$\alpha$ of the standard complex structure induced by the projection~$\bb{C}^n\to M$. Even on this simple geometry the equations~\eqref{eq:HcscK_system} are highly non-trivial, so we will work under some additional assumptions. There is a real torus~$\bb{T}\subset \operatorname{Symp}(M,\omega_0)$, with~$\bb{T}\cong \bb{R}^n/\bb{Z}^n$, acting on~$M$ by translations   
\begin{equation*}
\bm{t}\cdot(\bm{x}+\I\bm{w})=\bm{x}+\I(\bm{w}+\bm{t}).
\end{equation*} 
We will study a particular set of solutions to the HcscK system such that the symplectic form~$\omega$ and the endomorphism~$\alpha$ are both~$\mathbb{T}$-invariant.

We can assume that the K\"ahler form~$\omega$ has a local (real) potential of the form
\begin{equation*}
f(\bm{z})=\abs{\bm{z}}^2+4\,h(\bm{x})
\end{equation*}
for a function~$h$ that is~$\bb{Z}^2$-periodic in~$\bm{x}$. Notice that, if we define~$v(\bm{x})$ as
\begin{equation*}
v(\bm{x})=\frac{1}{2}\abs{\bm{x}}^2+h(\bm{x})
\end{equation*}
then the matrix of the metric defined by~$\omega$ and~$J$ is the Hessian of~$v(\bm{x})$:
\begin{equation*}
\diff_{z^a}\diff_{\bar{z}^b}f(\bm{z})=\diff_{x^a}\diff_{x^b}v(\bm{x}).
\end{equation*}
If we let~$v^{ab}$ be the inverse matrix of~$\mrm{Hess}_{\bm{x}}v$ then we have the following formula for the scalar curvature of our metric:
\begin{equation*}
S(\omega)=-g^{a\bar{b}}\diff_{\bar{b}}\Gamma^c_{ca}=-\frac{1}{4}v^{ab}\diff_{x^b}\left(v^{cd}\diff_{x^c}v_{,ad}\right).
\end{equation*}
It is an observation of Abreu~\cite{Abreu_toric} that, under the change of coordinates given by the Legendre transform of~$v$, this expression of the scalar curvature greatly simplifies. Consider the system
\begin{equation*}
\begin{cases}
\bm{x}=\diff_{\bm{y}}u\\
\bm{y}=\diff_{\bm{x}}v\\
u(\bm{y})+v(\bm{x})=\bm{y}\cdot\bm{x}
\end{cases}
\end{equation*}
defining the Legendre transform~$u(\bm{y})$ of~$v(\bm{x})$. Then the identities
\begin{center}
\begin{tabular}{l l}
$\mrm{Hess}_{\bm{y}}u(\bm{y})=(\mrm{Hess}_{\bm{x}}v(\bm{x}))^{-1}$&$v^{ab}(\bm{x})\diff_{x^a}=\diff_{y^b}$\\
$\mrm{Hess}_{\bm{x}}v(\bm{x})=(\mrm{Hess}_{\bm{y}}u(\bm{y}))^{-1}$&$u^{ab}(\bm{y})\diff_{y^a}=\diff_{x^b}$
\end{tabular}
\end{center}
allow us to write the scalar curvature of~$(\omega,J)$ as
\begin{equation}\label{eq:Abreu_scalar}
S(\omega)=-\frac{1}{4}v^{ab}\diff_{x^b}\left(v^{cd}\diff_{x^c}v_{,ad}\right)=-\frac{1}{4}\diff_{y^a}\diff_{y^d}u^{ad}=-\frac{1}{4}\left(u^{ab}\right)_{,ab}.
\end{equation}
In these coordinates~$(\bm{y},\bm{w})$ the K\"ahler form~$\omega$ becomes the standard symplectic form
\begin{equation*}
\omega=\I\sum_{a=1}^n\mrm{d}y^a\wedge\mrm{d}w^a
\end{equation*}
while the complex structure~$J$ takes the form
\begin{equation*}
J(\bm{y},\bm{w}) = J(\bm{y}) = \begin{pmatrix} 0 & -(\mrm{Hess}_{\bm{y}}u)^{-1} \\
\mrm{Hess}_{\bm{y}}u & 0
\end{pmatrix}.
\end{equation*}
So we could equivalently fix the flat metric on~$M$ and consider instead the complex structure~$J$ as a variable in the HcscK system. In other words, the duality between complex and symplectic coordinates gives an alternative description of the complexification of the action~$\mscr{G}\curvearrowright\mscr{J}$. We refer to~\cite[\S$3.1$]{Donaldson_stability_toric} for more details on this point of view.

In Section~\ref{sec:symplectic_coords_proof} we will write the HcscK system~\eqref{eq:HCSCK_complex_relaxed} in symplectic coordinates, obtaining an expression similar to~\eqref{eq:Abreu_scalar}. To do so we will write the HcscK system in terms of the quadratic differential~$q$ defined by~$\alpha$ and~$\omega$, similarly to what we did in Section~\ref{sec:curve}. Under the assumption that~$\alpha$ is~$\bb{T}$-invariant, we have
\begin{equation*}
\alpha=q_{ac}(\bm{x})v^{cb}(\bm{x})\diff_{\bar{z}^b}\otimes\mrm{d}z^a
\end{equation*}
and we let~$\xi^{ab}(\bm{y})$ be the image under Legendre duality of the matrix~$q_{ab}(\bm{x})$, so that
\begin{equation*}
\alpha=\sum_c\xi^{ac}(\bm{y})u_{cb}(\bm{y})\diff_{\bar{z}^b}\otimes\mrm{d}z^a.
\end{equation*}
The HcscK system can thus be written in terms of~$\mrm{Hess}(u)$ and~$\xi$; these are functions on the real torus~$\mathbb{T}$ with values in complex symmetric matrices, and~$\mrm{Hess}(u)$ is real and positive-definite. In what follows we occasionally refer to~$u$ as the \emph{symplectic potential} of the metric, and we will often use~$G$ or~$D^2u$ to denote the Hessian matrix of~$u$.

Notice that with our new set of variables~$(u,\xi)$ we can express the morphism~$\alpha\bar{\alpha}$ as~$\xi G\bar{\xi}G$, so the eigenvalues of~$\alpha\bar{\alpha}$ are in fact the eigenvalues of~$\xi G\bar{\xi}G$. The morphism~$\hat{\alpha}$ can also be written in terms of~$u$ and~$\xi$, since it is computed as a spectral function of~$\alpha\bar{\alpha}$. If we let~$\check{\alpha}$ be the matrix representing~$\hat{\alpha}$ in a system of symplectic coordinates, i.e.
\begin{equation*}
\hat{\alpha}=\sum_{a,b}\check{\alpha}\indices{^a_b}(\bm{y})\diff_{z^b}\otimes\mrm{d}z^a
\end{equation*}
then the matrix~$\check{\alpha}$ can be expressed as
\begin{equation}\label{eq:alpha_symp_check}
\check{\alpha}=\sum_i\frac{1}{2}\left(1+\sqrt{1-\delta(i)}\right)^{-1}\prod_{j\not=i}\frac{\delta(j)\mathbbm{1}-\xi G\bar{\xi}G}{\delta(j)-\delta(i)}=\psi(\xi\,G\,\bar{\xi}\,G).
\end{equation}
\begin{thm}\label{thm:HcscKSymplCoordThm}
The HcscK equations~\eqref{eq:HCSCK_complex_relaxed} on~$M$ for~$\mathbb{T}$-invariant~$\omega$,~$J$ and~$\alpha$, are equivalent to the system of \emph{uncoupled} partial differential equations  
\begin{equation}\label{eq:HcscKSymplRealEq}
\begin{cases}
\left(\xi^{ab}\right)_{,ab}=0\\
 \left(-\frac{1}{2}G^{ab}+(\check{\alpha}\xi G\bar{\xi})^{ab}\right)_{,ab}=0, 
\end{cases}
\end{equation}
corresponding to the vanishing of the complex and real moment map respectively.
\end{thm}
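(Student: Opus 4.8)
The plan is to compute both moment maps of the HcscK system~\eqref{eq:HCSCK_complex_relaxed} for $\bb{T}$-invariant data and transport them to symplectic coordinates via the Legendre transform, exactly as Abreu's formula~\eqref{eq:Abreu_scalar} does for the scalar curvature. First I would fix the coordinates $(\bm{y},\bm{w})$ and record the ansatz: $\omega$ is the standard symplectic form, $J=J(\bm{y})$ is expressed through $\mrm{Hess}_{\bm{y}}u$ and its inverse, $\alpha=\xi^{ac}u_{cb}\,\diff_{\bar{z}^b}\otimes\mrm{d}z^a$, and $\hat{\alpha}=\check{\alpha}=\psi(\xi G\bar{\xi}G)$ as in~\eqref{eq:alpha_symp_check}. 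Since all tensors are $\bb{T}$-invariant they reduce to functions of $\bm{y}$ alone, which kills every $\bm{w}$-derivative. Two preliminary reductions are needed. The compatibility equation (the first line of~\eqref{eq:HCSCK_complex_relaxed}) is equivalent to the symmetry $\xi^{ab}=\xi^{ba}$, which is automatic once $\alpha$ is built from a genuine quadratic differential, so it may be discarded; and since $M$ has trivial canonical bundle, $c_1(M)=0$ forces $\hat{S}=0$, so the scalar-curvature term carries no additive constant.

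Next I would treat the complex moment map, which by the computation in Section~\ref{sec:moment_map} is identified with $\f{m}_{\bm{\Theta}}=\mrm{div}\left(\diff^*\alpha\right)$. For the $\bb{T}$-invariant metric $G_{ab}=v_{,ab}$ this is a second-order operator in $\bm{x}$; using the Legendre identities $v^{ab}\diff_{x^a}=\diff_{y^b}$ together with $\mrm{Hess}_{\bm{x}}v=(\mrm{Hess}_{\bm{y}}u)^{-1}$, the Christoffel contributions are absorbed exactly as in the passage from the first to the last expression in~\eqref{eq:Abreu_scalar}, collapsing the operator (up to a nonzero constant) to the constant-coefficient double divergence $\left(\xi^{ab}\right)_{,ab}$. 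This is the first equation in~\eqref{eq:HcscKSymplRealEq}.

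For the real moment map I would use Abreu's formula~\eqref{eq:Abreu_scalar} and $\hat{S}=0$ to write $2\,S(J)-2\,\hat{S}=\left(-\frac{1}{2}G^{ab}\right)_{,ab}$, so that the curvature part already furnishes the first term of the second line of~\eqref{eq:HcscKSymplRealEq}. It then remains to show that $\mrm{div}\,X(J,\alpha)$, with $X$ as in~\eqref{eq:mm_reale_completa}, reduces to $\bigl((\check{\alpha}\,\xi G\bar{\xi})^{ab}\bigr)_{,ab}$. Setting $A=\mrm{Re}(\alpha^\transpose)$ and expressing the products $A^{0,1}A^{1,0}$ and the spectral matrix $\hat{A}=\check{\alpha}$ in terms of $\xi$ and $G$, the three contributions $g(\nabla^{a}A^{0,1},A^{1,0}\hat{A})$, $g(\nabla^{\bar{b}}A^{0,1},A^{1,0}\hat{A})$ and $\nabla^*(A^{0,1}A^{1,0}\hat{A})$ become explicit $\bm{y}$-expressions once the covariant derivatives are rewritten through the Christoffel symbols of $G$, and the same Legendre-absorption mechanism reorganizes them into the single double divergence of $\check{\alpha}\,\xi G\bar{\xi}$.

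The main obstacle is this last reduction of $\mrm{div}\,X(J,\alpha)$: unlike the complex moment map it is genuinely nonlinear, since $\hat{\alpha}=\psi(\alpha\bar{\alpha})$ enters as a spectral function, and one must verify that the numerous Christoffel-symbol terms produced by $\nabla^*$ and by the covariant derivatives cancel so as to leave a pure second-order divergence. I would organize this by first recombining the gradient and $\nabla^{0,1}{}^*$ terms using the compatibility symmetry, as in the manipulations of Lemma~\ref{lemma:psi_spectral} leading to~\eqref{eq:mappa_momento_reale}, thereby reducing $X$ to $A^{0,1}A^{1,0}\hat{A}$ up to a gradient whose divergence is absorbed, and only then invoking the Legendre identities to reach $(\check{\alpha}\,\xi G\bar{\xi})^{ab}_{,ab}$. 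A secondary point to check is that no compatibility ambiguity of the kind discussed in Remark~\ref{rmk:surf_eq_complex_nonequiv} arises here, since in the present torus setting $\alpha$ is taken compatible with $\omega$, so the a priori distinct symplectic-coordinate expressions for the spectral quantities coincide.
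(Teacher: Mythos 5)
Your proposal follows essentially the same route as the paper: Legendre duality and Abreu's formula for the curvature term, the identity $v^{ab}\diff_{x^a}=\diff_{y^b}$ (packaged in the paper as a divergence formula for $\bb{T}$-invariant vector fields) to collapse the complex moment map to $(\xi^{ab})_{,ab}$, and a term-by-term transport of the vector field $X(J,\alpha)$ into symplectic coordinates. The only point your sketch leaves implicit is the mechanism by which the nonlinear terms collapse — in the paper the $\diff_a\xi$-contributions turn out to be purely imaginary and vanish upon taking $2\,\mrm{Re}$, which is exactly the cancellation you flag as the main obstacle.
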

\begin{rmk}
Since~$x\psi(x)=\frac{1}{2}(1-\sqrt{1-x})$, we could also write the real moment map equation in~\eqref{eq:HcscKSymplRealEq} as
\begin{equation*}
\left(\left(1-\xi\,G\,\bar{\xi}\,G\right)^{\frac{1}{2}}G^{-1}\right)^{ab}_{,ab}=0
\end{equation*}
obtaining the expression in~\eqref{eq:symplectic_HcscK_intro}.
\end{rmk}
To study the HcscK system on an abelian variety we fix a solution~$\xi$ to the first equation in~\eqref{eq:HcscKSymplRealEq}, and we will study the second equation for a symplectic potential~$u$. In~\cite{ScarpaStoppa_HcscK_abelian} we have shown that, in complex dimension~$2$, for every small enough~$\xi$ there is a (unique) solution to the real moment map equation, using results developed in~\cite{FengSzekelyhidi_abelian} to study Abreu's equation on abelian varieties.

This point of view for studying the HcscK system in symplectic coordinates is also motivated by a variational characterisation of the real moment map equation in~\eqref{eq:HcscKSymplRealEq} in terms of the Biquard-Gauduchon function. To describe this, let~$H(u,\xi)$ be the Biquard-Gauduchon function of the corresponding first-order deformation of the complex structure~$\alpha$, c.f. Proposition~\ref{prop:BiquardGauduchonAC+}
\begin{equation*}
H(u,\xi):=\rho(\alpha)=\sum_{i=1}^n 1-\sqrt{1-\delta(i)}+\log\frac{1+\sqrt{1-\delta(i)}}{2}
\end{equation*}
where~$\delta(i)$ are the eigenvalues of~$\alpha\bar{\alpha}=\xi\,G\,\bar{\xi}\,G$, and define the \emph{Biquard-Gauduchon functional} as
\begin{equation}\label{eq:BGFunc}
\m{H}(u, \xi) = \frac{1}{2} \int_{\mathbb{T}}H(u,\xi)\,\mrm{d}\mu
\end{equation}
where~$\mrm{d}\mu$ is the Lebesgue measure. The \emph{periodic K-energy} (see~\cite[\S$2$]{FengSzekelyhidi_abelian}) is given by
\begin{equation}\label{eq:K_energy_periodic}
\m{F}(u) = -\frac{1}{2} \int_{\mathbb{T}}\log\det(D^2 u)\,\mrm{d}\mu.
\end{equation}
We define the \emph{periodic HK-energy} as
\begin{equation}\label{eq:HK_energy}
\widehat{\m{F}}(u, \xi) = \m{F}(u) + \m{H}(u,\xi).
\end{equation}
Note that, for fixed~$\xi$, these functionals are well defined on the set
\begin{equation*}
\mscr{A}(\xi) =\set*{u\mbox{ symplectic potential}\tc \SpecRad(u, \xi) < 1}
\end{equation*}
where~$\SpecRad(u,\xi)$ is the \emph{spectral radius} of~$\alpha\bar{\alpha}=\xi G\bar{\xi}G$, i.e. the largest eigenvalue of~$\alpha\bar{\alpha}$. Pairs~$(u,\xi)$ corresponding to points of~$\cotJ$ satisfy~$u\in\mscr{A}(\xi)$. 
\begin{rmk}
The set~$\mscr{A}(\xi)$ is not convex, in general, even if we assume that~$\xi$ is definite.
\end{rmk}
\begin{prop}\label{HKEulLagr}
For a fixed Higgs tensor~$\xi$, the real moment map equation in~\eqref{eq:HcscKSymplRealEq} is the Euler-Lagrange equation, with respect to variations of the symplectic potential~$u$, for the periodic HK-energy~$\widehat{\m{F}}(u,\xi)$.
\end{prop}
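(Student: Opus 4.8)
The plan is to compute the first variation of $\widehat{\m{F}}(u,\xi) = \m{F}(u) + \m{H}(u,\xi)$ with respect to $u$, with $\xi$ held fixed, and to check that the resulting Euler--Lagrange equation is exactly the real moment map equation in~\eqref{eq:HcscKSymplRealEq}. Write $G = D^2u$ and let $\dot{u}$ be a periodic variation of the symplectic potential, so $\dot{G} = D^2\dot{u}$. For the periodic K-energy the computation is classical: from $\frac{\mrm{d}}{\mrm{d}t}\log\det G = \mrm{Tr}(G^{-1}\dot{G}) = G^{ab}\dot{u}_{,ab}$ one gets
\begin{equation*}
\frac{\mrm{d}}{\mrm{d}t}\m{F}(u) = -\frac{1}{2}\int_{\bb{T}}G^{ab}\dot{u}_{,ab}\,\mrm{d}\mu = -\frac{1}{2}\int_{\bb{T}}\left(G^{ab}\right)_{,ab}\dot{u}\,\mrm{d}\mu,
\end{equation*}
where I integrate by parts twice, with no boundary terms since everything is periodic on $\bb{T}$.

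For the Biquard--Gauduchon term the key observation is that $H(u,\xi) = \rho(\alpha)$ is a spectral function of $M := \xi\,G\,\bar{\xi}\,G$, whose eigenvalues are the $\delta(i)$. Differentiating $h(\delta) = 1 - \sqrt{1-\delta} + \log\frac{1+\sqrt{1-\delta}}{2}$ gives $h'(\delta) = \psi(\delta)$, in agreement with Lemma~\ref{lemma:differenziale_BiquardGauduchon}; together with the standard perturbation formula for eigenvalues this yields
\begin{equation*}
\frac{\mrm{d}}{\mrm{d}t}H = \mrm{Tr}\!\left(\dot{M}\,\psi(M)\right) = \mrm{Tr}\!\left(\dot{M}\,\check{\alpha}\right),
\end{equation*}
and Lemma~\ref{lemma:differenziale_BiquardGauduchon} ensures this remains valid when the $\delta(i)$ collide, so no separate regularity discussion is needed. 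Since $\dot{M} = \xi\,\dot{G}\,\bar{\xi}\,G + \xi\,G\,\bar{\xi}\,\dot{G}$, cyclicity of the trace gives
\begin{equation*}
\frac{\mrm{d}}{\mrm{d}t}H = \mrm{Tr}\!\left(\dot{G}\left(\bar{\xi}\,G\,\check{\alpha}\,\xi + \check{\alpha}\,\xi\,G\,\bar{\xi}\right)\right).
\end{equation*}

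The crux is then an algebraic identity reconciling these two terms with the single term $\left(\check{\alpha}\,\xi\,G\,\bar{\xi}\right)^{ab}$ appearing in~\eqref{eq:HcscKSymplRealEq}. Because $G,\xi,\bar{\xi}$ are symmetric, and because $\psi(\xi G\bar{\xi} G)\,\xi = \xi\,\psi(G\bar{\xi} G\xi)$ — which follows by expanding $\psi$ as a power series and using $(\xi G\bar{\xi} G)\xi = \xi(G\bar{\xi} G\xi)$ — one checks that
\begin{equation*}
\bar{\xi}\,G\,\check{\alpha}\,\xi = \left(\check{\alpha}\,\xi\,G\,\bar{\xi}\right)^{\transpose}.
\end{equation*}
Since the double divergence of a matrix sees only its symmetric part (indeed $\left(Z^{ab}\right)_{,ab} = \left((Z^{\transpose})^{ab}\right)_{,ab}$ because partial derivatives commute), the two contributions have the same double divergence, so their sum contributes exactly $2\left(\check{\alpha}\,\xi\,G\,\bar{\xi}\right)^{ab}_{,ab}$. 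Combining the variations of $\m{F}$ and of $\m{H} = \frac{1}{2}\int_{\bb{T}}H\,\mrm{d}\mu$ and integrating by parts twice, I obtain
\begin{equation*}
\frac{\mrm{d}}{\mrm{d}t}\widehat{\m{F}}(u,\xi) = \int_{\bb{T}}\left(-\frac{1}{2}G^{ab} + \left(\check{\alpha}\,\xi\,G\,\bar{\xi}\right)^{ab}\right)_{,ab}\dot{u}\,\mrm{d}\mu,
\end{equation*}
which vanishes for all periodic $\dot{u}$ precisely when the real moment map equation in~\eqref{eq:HcscKSymplRealEq} holds. I expect the transpose identity above to be the main obstacle: it is exactly what produces the factor $\tfrac{1}{2}$ in front of $G^{ab}$ and explains why a single product $\check{\alpha}\,\xi\,G\,\bar{\xi}$, rather than a symmetrised pair, appears in the moment map equation. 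Verifying it cleanly requires the commutation relation for $\psi$ together with care about the index conventions under which $\check{\alpha}\,\xi\,G\,\bar{\xi}$ is read as a symmetric two-tensor paired with $\dot{u}_{,ab}$.
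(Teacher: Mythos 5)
Your proposal is correct and follows essentially the same route as the paper: differentiate the spectral function to get $\mrm{Tr}\left(\check{\alpha}\,\diff_t(\xi G\bar{\xi}G)\right)$, use the commutation property of $\psi$ to merge the two resulting terms into a single $\check{\alpha}\,\xi\,G\,\bar{\xi}$ contribution, and integrate by parts on the torus. The only cosmetic difference is that you package the merging step as the transpose identity $\bar{\xi}G\check{\alpha}\xi=(\check{\alpha}\xi G\bar{\xi})^{\transpose}$, whereas the paper writes the first term as the complex conjugate of the second (giving $2\,\mrm{Re}$) and then separately checks that $(\check{\alpha}\xi G\bar{\xi})^{ab}_{,ab}$ is real; these are equivalent because $\check{\alpha}\,\xi\,G\,\bar{\xi}$ is Hermitian.
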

\begin{proof}
Let~$u(t) =u+t\,\varphi$ be a path of symplectic potentials in~$\mscr{A}(\xi)$, for a periodic function~$\varphi$, and compute the derivative of the Biquard-Gauduchon function (c.f. the proof of Proposition~\ref{prop:BiquardGauduchonAC+})
\begin{equation*}
\diff_t\rho(u_t,\xi)=\sum_i\frac{\diff_t\delta(i)}{2\left(1+\sqrt{1-\delta(i)}\right)}=\sum_i\frac{\mrm{Tr}\left( \underset{j\not=i}{\prod}\frac{\delta(j)\mathbbm{1}-\xi\,G\,\bar{\xi}\,G}{\delta(j)-\delta(i)}\ \diff_t\left(\xi\,G_t\,\bar{\xi}\,G_t\right)\right)}{2\left(1+\sqrt{1-\delta(i)}\right)}
\end{equation*}
and from the definition of~$\check{\alpha}$ we have~$\diff_t\rho(u_t,\xi)=\mrm{Tr}\left(\check{\alpha}\,\diff_t\left(\xi\,G_t\,\bar{\xi}\,G_t\right)\right)$. A quick computation shows that~$\hat{\alpha}\,\alpha=\alpha\,\bar{\hat{\alpha}}$ and~$\bar{\alpha}\,\hat{\alpha}=\bar{\hat{\alpha}}\,\bar{\alpha}$, so we get
\begin{equation*}
\begin{split}
\diff_t\rho(u_t,\xi)=&\mrm{Tr}\left(\check{\alpha}\,\xi\,\dot{G}\,\bar{\xi}\,G\right)+\mrm{Tr}\left(\check{\alpha}\,\xi\,G\,\bar{\xi}\,\dot{G}\right)=\\
=&\mrm{Tr}\left(\bar{\check{\alpha}}\,\bar{\xi}\,G\,\xi\,\dot{G}\right)+\mrm{Tr}\left(\check{\alpha}\,\xi\,G\,\bar{\xi}\,\dot{G}\right).
\end{split}
\end{equation*}
Since~$G=\mrm{Hess}(u)$ is a real matrix we find
\begin{equation*}
\diff_t\rho(u_t,\xi)=2\,\mrm{Re}\,\mrm{Tr}\left(\check{\alpha}\,\xi\,G\,\bar{\xi}\,\dot{G}\right).
\end{equation*}
At this point computing the derivative of the Biquard-Gauduchon functional~\eqref{eq:BGFunc} is straightforward, since every~$\varphi\in T_u\mscr{A}$ is a periodic function:
\begin{equation*}
D\m{H}_{(u,\xi)}(\varphi)=\int\mrm{Re}\left(\check{\alpha}\indices{^a_b}\xi^{bc}u_{cd}\bar{\xi}^{de}\varphi_{,ea}\right)\mrm{d}\mu=\int\mrm{Re}\left(\check{\alpha}\,\xi\,G\,\bar{\xi}\right)^{ab}_{,ab}\varphi\,\mrm{d}\mu.
\end{equation*}
On the other hand the first variation of the (periodic) K-energy gives the scalar curvature term, see~\cite{FengSzekelyhidi_abelian}, so the variation of~$\hat{\m{F}}=\m{F}+\m{H}$ is
\begin{equation*}
D\hat{\m{F}}_{(u,\xi)}(\varphi)=-\frac{1}{2}\int u^{ab}_{,ab}\varphi\,\mrm{d}\mu+\int\mrm{Re}\left(\check{\alpha}\,\xi\,G\,\bar{\xi}\right)^{ab}_{,ab}\varphi\,\mrm{d}\mu.
\end{equation*}
To conclude the proof we note that~$\left(\check{\alpha}\,\xi\,G\,\bar{\xi}\right)^{ab}_{,ab}$ is a real quantity. Indeed, using the expression~\eqref{eq:alpha_symp_check} for~$\check{\alpha}$, we see that~$\check{\alpha}\,\xi\,G\,\bar{\xi}=\xi\,G\,\bar{\xi}\,\overline{\check{\alpha}^{\transpose}}$, from which it follows
\begin{equation*}
\left(\check{\alpha}\,\xi\,G\,\bar{\xi}\right)^{ab}_{,ab}=\left(\xi\,G\,\bar{\xi}\,\overline{\check{\alpha}^{\transpose}}\right)^{ab}_{,ab}=\left(\left(\xi\,G\,\bar{\xi}\,\overline{\check{\alpha}^{\transpose}}\right)^\transpose\right)^{ab}_{,ab}=\left(\overline{\check{\alpha}}\,\bar{\xi}\,G\,\xi\right)^{ab}_{,ab}.
\qedhere
\end{equation*}
\end{proof}
Our main application of this variational characterisation is a uniqueness result for surfaces, which relies in turn on a convexity property of the HK-energy that is proven in Section~\ref{sec:convexity}.

We will provide some examples of solutions to system~\eqref{eq:HcscKSymplRealEq}. First note that the complex moment map equation is~\eqref{eq:HcscKSymplRealEq} is linear. As a consequence the full space of solutions~$\xi$ can be described quite easily in momentum space. Indeed writing~$\xi_k$ for the Fourier modes of~$\xi$, the complex moment map equation is equivalent to the set of conditions 
\begin{equation*}
\bm{k}^\transpose\xi_{\bm{k}}\bm{k} = 0,\, \bm{k}\in\bb{Z}^n.
\end{equation*}
In~\cite{ScarpaStoppa_HcscK_abelian} we also proved a general existence result for the real moment map equation on surfaces.
\begin{thm}\label{thm:QuantPertThm}
In complex dimension~$2$ there exists a constant~$K > 0$, depending only on well-known elliptic estimates on the real torus~$\mathbb{T}$ with respect to the flat metric~$g_{J_0}$, such that if~$\xi$ satisfies~$\norm{\xi}_{u_0} < K$ then there exists a unique solution~$u$ to the real moment map equation in~\eqref{eq:HcscKSymplRealEq}, up to an additive constant.
\end{thm}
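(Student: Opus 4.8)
The plan is to treat the real moment map equation in~\eqref{eq:HcscKSymplRealEq} as a small perturbation, controlled by~$\xi$, of Abreu's equation for the flat metric, and to solve it by a quantitative inverse function theorem. First I would introduce the nonlinear operator
\[
\m{F}(u,\xi):=\left(\left(1-\xi\,G\,\bar{\xi}\,G\right)^{\frac{1}{2}}G^{-1}\right)^{ab}_{,ab},\qquad G:=D^2u,
\]
regarded as a map on pairs~$(u,\xi)$ with~$u\in\mscr{A}(\xi)$. Since~$\m{F}(u,\xi)$ is the double divergence of a~$\bb{T}$-periodic symmetric matrix, it automatically has zero average, so~$\m{F}$ takes values in the space of zero-average periodic functions. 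The reference point is~$(u_0,0)$, where~$u_0$ is the symplectic potential of the flat metric~$g_{J_0}$: here~$G_0=D^2u_0$ is a constant positive-definite matrix, $G^{-1}$ is constant, and Abreu's formula~\eqref{eq:Abreu_scalar} gives~$\m{F}(u_0,0)=0$.

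The heart of the argument is the linearisation of~$\m{F}$ in the~$u$-direction at~$(u_0,0)$. When~$\xi=0$ the operator reduces to the Abreu operator~$u\mapsto(G^{-1})^{ab}_{,ab}$, whose differential at the constant-Hessian potential~$u_0$ is the constant-coefficient, fourth-order operator
\[
L\varphi=-\,u_0^{ac}\,u_0^{bd}\,\varphi_{,abcd}.
\]
Its principal symbol is~$\left(u_0^{ab}\zeta_a\zeta_b\right)^2$, so~$L$ is elliptic; expanding in Fourier modes on~$\bb{T}$ one finds that~$L$ acts on the mode~$\bm{k}$ by the scalar~$\left(u_0^{ab}k_ak_b\right)^2$, whence the kernel of~$L$ on periodic functions is exactly the constants. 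Thus~$L$ is an isomorphism from the zero-average part of~$\m{C}^{4,\beta}(\bb{T})$ onto the zero-average part of~$\m{C}^{0,\beta}(\bb{T})$, and the norm of~$L^{-1}$ is bounded purely in terms of the Schauder constants of the flat metric~$g_{J_0}$ — these are the ``well-known elliptic estimates'' appearing in the statement, and this is precisely the linear theory of~\cite{FengSzekelyhidi_abelian}.

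With the reference solution and the invertible linearisation in hand, I would run the quantitative inverse function theorem, Lemma~\ref{lemma:funzione_inversa_quantitativo}, together with Lemma~\ref{lemma:inversione}. Two quantitative inputs are needed, both of which become small with~$\xi$: first, the size of the error term~$\norm{\m{F}(u_0,\xi)}_{0,\beta}$, which is~$O(\norm{\xi}_{u_0}^2)$ because the perturbation~$(1-\xi G\bar{\xi}G)^{\frac{1}{2}}G^{-1}-G^{-1}$ is quadratic in~$\xi$; and second, the requirement that the nonlinear remainder~$\m{F}-D\m{F}_{(u_0,0)}$ be Lipschitz on a small~$\m{C}^{4,\beta}$-ball with constant below~$(2\norm{L^{-1}})^{-1}$, which holds once~$\xi$ is small enough that~$D\m{F}_{(u,\xi)}$ stays close to~$L$. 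It is here that the restriction to complex dimension~$2$ is used: by~\eqref{eq:autovalori_dim2} the two eigenvalues of~$\xi G\bar{\xi}G$, and hence the matrix square root~$(1-\xi G\bar{\xi}G)^{\frac{1}{2}}$ and the spectral function~$\check{\alpha}$, are \emph{explicit} algebraic functions of the entries of~$\xi$ and~$G$, so~$\m{F}$ is manifestly smooth and its Hölder Lipschitz constants are directly estimable, avoiding any abstract functional calculus. Choosing~$K$ to be the resulting threshold on~$\norm{\xi}_{u_0}$, which by construction depends only on the elliptic constants of~$g_{J_0}$, Lemma~\ref{lemma:funzione_inversa_quantitativo} produces a unique~$u$ in a small ball with~$\m{F}(u,\xi)=0$.

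It then remains to verify admissibility and smoothness. Since~$u$ is~$\m{C}^{4,\beta}$-close to~$u_0$, positivity~$D^2u>0$ is preserved by continuity, and for~$\norm{\xi}_{u_0}<K$ the eigenvalues of~$\xi G\bar{\xi}G$ stay near~$0$, so~$\SpecRad(u,\xi)<1$ and~$u\in\mscr{A}(\xi)$; in particular the square root is real throughout the construction. Elliptic bootstrapping upgrades~$u$ to a smooth potential, and the uniqueness clause of Lemma~\ref{lemma:funzione_inversa_quantitativo} applied for increasing Hölder exponents shows that these solutions coincide. Finally, the ambiguity by an additive constant is exactly~$\ker L$, giving the uniqueness ``up to an additive constant''; alternatively this uniqueness can be read off from the strict convexity of the~\emph{HK}-energy~\eqref{eq:HK_energy} via Proposition~\ref{HKEulLagr} and the convexity results of Section~\ref{sec:convexity}. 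The main obstacle I anticipate is not the abstract scheme but the \emph{uniform} quantitative control: arranging that the error and Lipschitz estimates depend only on the fixed elliptic constants of~$g_{J_0}$, and not on~$u$ or on finer data of~$\xi$, which is what pins down an explicit, geometry-independent threshold~$K$.
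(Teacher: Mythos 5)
The thesis does not actually prove Theorem~\ref{thm:QuantPertThm}: it is imported from the companion article~\cite{ScarpaStoppa_HcscK_abelian} and explicitly described as ``quite implicit'', resting on the linear theory for Abreu's equation on abelian varieties from~\cite{FengSzekelyhidi_abelian}. Your proposal is exactly the argument that description points to -- perturb off the flat potential~$u_0$, observe that the linearisation at~$(u_0,0)$ is the constant-coefficient bi-Laplacian type operator~$-u_0^{ac}u_0^{bd}\varphi_{,abcd}$, which is an isomorphism between zero-average periodic H\"older spaces with kernel the constants, and close with the quantitative inverse function theorem (Lemma~\ref{lemma:funzione_inversa_quantitativo} plus Lemma~\ref{lemma:inversione}); your reason for the restriction to~$n=2$ (explicit eigenvalues~\eqref{eq:autovalori_dim2}, hence explicit spectral functions) matches the paper's stated reason. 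So the scheme is correct and consistent with what can be reconstructed from the text.

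Two points deserve care. First, the quantity~$\norm{\xi}_{u_0}$ as it is used elsewhere in the chapter is the \emph{pointwise} norm~$\mrm{Tr}(\xi\,D^2u_0\,\bar{\xi}\,D^2u_0)^{1/2}$, i.e.\ a~$\m{C}^0$-type quantity; but your error estimate~$\norm{\m{F}(u_0,\xi)}_{0,\beta}=O(\norm{\xi}^2)$ and the Lipschitz bound on the nonlinearity require control of \emph{two derivatives} of~$\xi$ in H\"older norm, since~$\m{F}$ is a double divergence of an expression quadratic in~$\xi$. The smallness hypothesis must therefore be read as a~$\m{C}^{2,\beta}$-type bound on~$\xi$ (the theorem statement is vague on this, but your argument should make the choice of norm explicit). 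Second, your ``alternative'' global uniqueness via strict convexity of the HK-energy is not available for general small~$\xi$: Theorem~\ref{thm:ConvexThm} requires~$\mrm{Re}(\xi)$ and~$\mrm{Im}(\xi)$ semidefinite or~$\det(\xi)=0$, and moreover only gives convexity on the possibly non-convex set where~$\norm{\xi}^2_u<1$. Without those hypotheses the only uniqueness you obtain is the local one from Lemma~\ref{lemma:funzione_inversa_quantitativo}, i.e.\ uniqueness in a small~$\m{C}^{4,\beta}$-ball around~$u_0$ modulo constants, and you should state the conclusion at that level of strength.
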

This result is quite implicit, but we discuss in detail the special case when the Higgs tensor~$\xi$ depends only on a single variable, say~$y^1$, and satisfies~$\det(\xi) = 0$. We show that in this case the real moment map equation is ``integrable'', i.e. it reduces to an algebraic condition, and use this to prove an effective existence result in Section~\ref{sec:1dim_solutions}. These results have been obtained just in complex dimension~$2$, since on a complex surface we have an alternative expression for the real moment map, as we explained in Section~\ref{sec:complex_surface}. We will write the resulting equivalent for of the real moment map in Section~\ref{sec:convexity}.

\begin{rmk}
We will show in Proposition~\ref{prop:integrability_xi} that the integrability condition for the deformation of complex structure~$\alpha$ is equivalent to~$\xi$ being of the form
\begin{equation*}
\xi=(D^2u)^{-1}\,D^2\varphi\,(D^2u)^{-1},
\end{equation*}
for a function~$\varphi$ whose Hessian is a periodic function of~$\bm{y}$. Thus a general solution to the complex moment map equation does not correspond to an integrable deformation of the complex structure, but rather to an almost K\"ahler deformation. We will also check that the translation-invariant solutions to Section~\ref{sec:1dim_solutions} are integrable only if~$\xi$ is constant.
\end{rmk}

\section{The HcscK system on abelian varieties}\label{sec:symplectic_coords_proof}

The proof of Theorem~\ref{thm:HcscKSymplCoordThm} relies on the interplay of complex and symplectic coordinates on a K\"ahler manifold, as in the computation of Abreu's equation~\eqref{eq:Abreu_scalar} for the scalar curvature. The key ingredient is the following computation.
\begin{lemma}\label{lemma:divergenza_coord_sympl}
Let~$X$ be a~$(1,0)$-vector field on~$M$ that is invariant under the~$\mathbb{T}^2$-action. Then in symplectic coordinates the divergence of~$X$ may be expressed as 
\begin{equation*}
\operatorname{div}(X)=\frac{1}{2}\diff_{y^a}\left(u^{ab}\chi_b(y)\right)
\end{equation*}
where we write~$\chi_b(y) =X^b(x)$ for the image under the Legendre duality of~$X$.
\end{lemma}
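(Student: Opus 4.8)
The plan is to compute $\mrm{div}(X)$ first in the complex coordinates $\bm z=\bm x+\I\bm w$ and then transport the answer to the symplectic coordinates $(\bm y,\bm w)$ through the Legendre transform, exactly as in the derivation of Abreu's formula~\eqref{eq:Abreu_scalar}. I would begin by recording the complex-coordinate expression for the divergence of a $(1,0)$ field. On a K\"ahler manifold the divergence (defined by $\mrm{div}(X)\frac{\omega^n}{n!}=\m{L}_X\frac{\omega^n}{n!}$ and extended $\bb{C}$-linearly) of $X=X^a\diff_{z^a}$ is the covariant trace $\nabla_a X^a=\diff_{z^a}X^a+\Gamma^a_{ba}X^b$, and since $\Gamma^a_{ba}=\diff_{z^b}\log\det(g_{c\bar d})$ this collapses to $\mrm{div}(X)=\frac{1}{\det g}\diff_{z^a}(\det g\,X^a)$, with $g=(g_{a\bar b})=(v_{,ab})$. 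Because $X$ and $g$ are $\mathbb{T}$-invariant they depend on $\bm x$ only, so $\diff_{z^a}$ reduces to $\tfrac12\diff_{x^a}$ on them, giving
\begin{equation*}
\mrm{div}(X)=\frac{1}{2\det g}\diff_{x^a}\left(\det g\cdot X^a\right),\qquad \det g=\det\left(\mrm{Hess}_{\bm x}v\right).
\end{equation*}

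Then comes the change of variables. I would use the Legendre identities: the Jacobian $\diff y^b/\diff x^a=v_{,ab}$, which in the notation of this section is exactly $u^{ab}$, the inverse Jacobian $\diff x^a/\diff y^b=v^{ab}=u_{,ab}$, and $\det g=\det(\mrm{Hess}_{\bm x}v)=1/\det(\mrm{Hess}_{\bm y}u)$, which is also the Jacobian determinant of $\bm x\mapsto\bm y$. Expanding the target expression $\tfrac12\diff_{y^a}(u^{ab}\chi_b)$ by the chain rule, reading $u^{ab}\chi_b=v_{,ab}X^b$ as a function of $\bm y$, produces two contributions: one term $\tfrac12\diff_{x^b}X^b$ coming from $\sum_a u_{,ac}\,v_{,ab}=\delta_{cb}$, and one term $\tfrac12 X^b\sum_{a,c}(\mrm{Hess}_{\bm x}v)^{-1}_{ac}\,v_{,abc}$. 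The heart of the matter is the Jacobi-type identity $\sum_{a,c}(\mrm{Hess}_{\bm x}v)^{-1}_{ac}\,v_{,abc}=\diff_{x^b}\log\det(\mrm{Hess}_{\bm x}v)$, which rewrites the second term as $\tfrac12 X^b(\det g)^{-1}\diff_{x^b}\det g$. Summing the two contributions recombines precisely into $\frac{1}{2\det g}\diff_{x^b}(\det g\,X^b)=\mrm{div}(X)$, which is the claim.

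The computation is bookkeeping rather than conceptual, so the main obstacle is organizational: keeping the two inverse Hessians straight (the symbols $u^{ab}$ and $v^{ab}$ denote a Hessian and an inverse Hessian, respectively) and not dropping the factor $\tfrac12$ that arises from $\diff_{z^a}=\tfrac12\diff_{x^a}$ on $\mathbb{T}$-invariant functions. As a cleaner alternative that sidesteps the log-determinant identity, I would test against an arbitrary $\mathbb{T}$-invariant function $\phi(\bm y)$: since the symplectic coordinates make $\omega^n/n!$ a constant multiple of Lebesgue measure $\mrm{d}\bm y\,\mrm{d}\bm w$, integrating $\int_M\mrm{div}(X)\,\phi\,\tfrac{\omega^n}{n!}=-\int_M X(\phi)\,\tfrac{\omega^n}{n!}$ and using $\diff_{x^a}\phi=u^{ab}\diff_{y^b}\phi$ reduces both sides to integrals over the torus $\mathbb{T}^n$; a single integration by parts in $\bm y$ then yields $\mrm{div}(X)=\tfrac12\diff_{y^a}(u^{ab}\chi_b)$ directly. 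This version makes transparent why it is precisely the constant symplectic volume form that produces the clean divergence expression.
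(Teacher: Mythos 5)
Your proof is correct and is essentially the paper's argument: both start from the covariant trace $\nabla_aX^a=\diff_{z^a}X^a+X^b\Gamma^a_{ab}=\frac12\left(\diff_{x^a}X^a+X^bv^{ac}\diff_{x^a}v_{bc}\right)$ and transport it through the Legendre relations $v^{ac}\diff_{x^a}=\diff_{y^c}$, $v_{bc}=u^{bc}$ — the paper simply recognizes the resulting two terms as the product rule for $\diff_{y^a}(u^{ab}\chi_b)$ in one line, whereas you package the Christoffel term as $\diff_{x^b}\log\det(\mrm{Hess}_{\bm x}v)$ and verify the identity by expanding the target, which is the same computation routed through Jacobi's formula. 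Your alternative weak formulation (testing against $\mathbb{T}$-invariant functions and exploiting that $\omega^n/n!$ is Lebesgue measure in symplectic coordinates) is a clean variant not taken by the paper, but it is not needed.
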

\begin{proof}
Using the fundamental relations of Legendre duality, we have
\begin{align*}
\nabla_aX^a=\frac{1}{2}\left(\diff_{x^a}X^a(x)+X^b(x)v^{ac}\diff_{x^a}v_{bc}\right)&=\frac{1}{2}\left(u^{ab}\diff_{y^b}\chi_a(y)+\chi_b(y)\diff_{y^c}u^{bc}\right)\\
&=\frac{1}{2}\diff_{y^a}\left(u^{ab}\chi_b(y)\right).\qedhere
\end{align*}
\end{proof}
Let us consider the complex moment map equation in~\eqref{eq:HcscK_system}, i.e. the linear PDE
\begin{equation*} 
\mrm{div}\left(\diff^*\alpha\right)=0
\end{equation*}
where~$\diff^*$ denotes the formal adjoint with respect to~$g_J=\omega(-,J-)$. With the previous notation we have~$\alpha\indices{_a^{\bar{b}}}(\bm{x})=q_{ac}(\bm{x})v^{cb}(\bm{x})$, and
\begin{equation*}
\left(\diff^*\alpha\right)^{\bar{b}}=-g^{a\bar{c}}\nabla_{\bar{c}}\alpha\indices{_a^{\bar{b}}}=-g^{a\bar{c}}g^{\bar{b}d}\diff_{\bar{z}^c}q_{ad}=-\frac{1}{2}v^{ac}v^{bd}\diff_{x^c}q_{ad}
\end{equation*}
so in symplectic coordinates we find
\begin{equation*}
\left(\diff^*\alpha\right)^{\bar{b}}(\bm{x})=-\frac{1}{2}u_{bd}\diff_{y^a}\xi^{ad}
\end{equation*}
and by Lemma~\ref{lemma:divergenza_coord_sympl} we get
\begin{equation*}
\operatorname{div}\left(\diff^*\alpha\right)=-\frac{1}{4}\diff_{y^c}\left(u^{bc}u_{bd}\diff_{y^a}\xi^{ad}\right)=-\frac{1}{4}\diff_{y^a}\diff_{y^b}\xi^{ab}=-\frac{1}{4}\left(\xi^{ab}\right)_{,ab}.
\end{equation*}
Thus the vanishing condition~$m^{\bb{C}}_{(J,\alpha)}(h) = 0, \, h \in \mathfrak{ham}(M, \omega)$ is equivalent to the linear PDE~$\left(\xi^{ab}\right)_{,ab}=0$ appearing in~\eqref{eq:HcscKSymplRealEq}.

The analogous computation for the real moment map is slightly more complicated. By Abreu's formula we have~$S(g_J) = -\frac{1}{4}(u^{ab})_{,ab}$, so we just have to compute the other terms of~\eqref{eq:HcscK_system} in symplectic coordinates. As these are in divergence form this can be achieved by applying Lemma~\ref{lemma:divergenza_coord_sympl}.

In order to compute the divergence term in the real moment map in symplectic coordinates we will use the following fact: if~$X$ is a real vector field and we decompose it as~$X=X^{1,0}+X^{0,1}$, we have~$X^{0,1}=\overline{X^{1,0}}$, so~$X=2\,\mathrm{Re}(X^{1,0})$ and
\begin{equation*}
\operatorname{div}(X)=\operatorname{div}\left(2\,\operatorname{Re}(X^{1,0})\right)=2\,\operatorname{Re}\left(\operatorname{div}(X^{1,0})\right).
\end{equation*}
Now the~$(1,0)$-part of the vector field~$X(\alpha)$ appearing in~\eqref{eq:HcscK_system} is
\begin{equation*}
X^a=g(\nabla^a\alpha,\bar{\alpha}\,\hat{\alpha})-g(\nabla^a\bar{\alpha},\alpha\,\bar{\hat{\alpha}})-2\nabla^*(\alpha\bar{\alpha}\,\hat{\alpha})^a
\end{equation*}
We examine the three terms separately:
\begin{equation*}
g(\nabla^a\alpha,\bar{\alpha}\,\hat{\alpha})=\frac{1}{2}\diff_{y^a}\xi^{bc}\,u_{bd}u_{ep}\bar{\xi}^{dp}\check{\alpha}\indices{^e_c};
\end{equation*}
\begin{equation*}
\begin{split}
g(\nabla^a\bar{\alpha},\alpha\,\bar{\hat{\alpha}})=&g(\nabla^a\bar{\alpha},\hat{\alpha}\alpha)=\\
=&\frac{1}{2}\left(\diff_{y^a}\bar{\xi}^{bc}\,u_{bd}\check{\alpha}\indices{^d_e}\xi^{ep}u_{pc}+2\diff_{y^a}u_{bd}\,\bar{\xi}^{bc}\check{\alpha}\indices{^d_e}\xi^{ep}u_{pc}\right);\\
\nabla^*(\alpha\bar{\alpha}\hat{\alpha})^a=&-\frac{1}{2}\diff_{y^c}\left(\xi^{cp}u_{dp}\bar{\xi}^{db}u_{be}\check{\alpha}\indices{^e_a}\right).
\end{split}
\end{equation*}
Putting everything together we have
\begin{equation}\label{eq:symplectic_acc1}
\begin{split}
2\,X^a(\bm{x})=&\diff_{a}\xi^{cb}\,u_{bd}(\bar{\xi}G)\indices{^d_e}\,\check{\alpha}\indices{^e_c}-\diff_{a}\bar{\xi}^{bc}\,u_{bd}\,\check{\alpha}\indices{^d_e}(\xi G)\indices{^e_c}-\\
&-2\diff_{y^a}u_{bd}\,\bar{\xi}^{bc}\check{\alpha}\indices{^d_e}(\xi G)\indices{^e_c}+2\diff_{y^c}\left((\xi G)\indices{^c_d}(\bar{\xi}G)\indices{^d_e}\check{\alpha}\indices{^e_a}\right).
\end{split}
\end{equation}
Since~$\check{\alpha}\,\xi\,G=\xi\,G\,\bar{\check{\alpha}}$ we can rewrite the first two terms in~\eqref{eq:symplectic_acc1} as
\begin{equation*}
\begin{gathered}
\diff_{a}\xi^{cb}\,u_{bd}(\bar{\xi}G)\indices{^d_e}\,\check{\alpha}\indices{^e_c}-\diff_{a}\bar{\xi}^{bc}\,u_{bd}\,\check{\alpha}\indices{^d_e}(\xi G)\indices{^e_c}=\\=\diff_{a}\xi^{cb}\,u_{bd}(\bar{\xi}G)\indices{^d_e}\,\check{\alpha}\indices{^e_c}-\diff_{a}\bar{\xi}^{bc}\,u_{bd}\,\bar{\check{\alpha}}\indices{^e_c}\,(\xi G)\indices{^d_e}
\end{gathered}
\end{equation*}
and in particular this quantity is purely imaginary; we have to compute the divergence of~$X^{1,0}+X^{0,1}$, so we can ignore these two terms in the computation. For the other terms in~\eqref{eq:symplectic_acc1} instead
\begin{equation*}
\begin{gathered}
-2\diff_{y^a}u_{bd}\,(\check{\alpha}\xi G\bar{\xi})^{db}+2\diff_{y^c}(\xi G\bar{\xi}G\check{\alpha})\indices{^c_a}=\\
=-2\diff_{y^a}u_{bd}\,(\check{\alpha}\xi G\bar{\xi})^{db}+2\diff_{y^c}(\check{\alpha}\xi G\bar{\xi}G)\indices{^c_a}=\\
=-2\diff_{y^a}u_{bd}\,(\check{\alpha}\xi G\bar{\xi})^{db}
+2\,u_{ea}\,\diff_{y^c}(\check{\alpha}\xi G\bar{\xi})^{ce}
+2\,\diff_{y^c}u_{ea}\,(\check{\alpha}\xi G\bar{\xi})^{ce}=\\
=2\,u_{ea}\,\diff_{y^c}(\check{\alpha}\xi G\bar{\xi})^{ce}
\end{gathered}
\end{equation*}
and finally we conclude the computation of the real moment map using Lemma~\ref{lemma:divergenza_coord_sympl}:
\begin{equation*}
\mrm{div}(X(\alpha))=\mrm{Re}\left(\diff_{y^a}\diff_{y^b}\left(\check{\alpha}\,\xi\,G\,\bar{\xi}\right)^{ab}\right)=\diff_{y^a}\diff_{y^b}\left(\check{\alpha}\,\xi\,G\,\bar{\xi}\right)^{ab}
\end{equation*}
The last equality comes from the observation that~$\left(\check{\alpha}\,\xi\,G\,\bar{\xi}\right)^{ab}_{,ab}$ is real, as was shown in the proof of Proposition~\ref{HKEulLagr}.

\subsection{Deformations of complex structures in symplectic coordinates}\label{sec:symp_boundary_abelian}

We want to characterize those Higgs fields~$\alpha\in T_J^*\!\mscr{J}$ that are integrable in terms of the matrix~$\xi$ that we used to write the HcscK system in symplectic coordinates. To do so, it will be convenient to let~$\dot{J}=\mrm{Re}(\alpha)^\transpose$ be the first-order deformation of the complex structure defined by~$\alpha$.

In the system of symplectic coordinates~$(\bm{y},\bm{w})$ that we are using on~$M$, the symplectic form~$\omega$ is just the standard symplectic form~$\sum_i\mrm{d}y^i\wedge\mrm{d}w^i$, while the complex structure~$J$ is described by the matrix
\begin{equation*}
J=\begin{pmatrix}
0 & -G\\ G & 0
\end{pmatrix}
\end{equation*}
with~$G=\mrm{Hess}_{\bm{y}}u$; the metric~$g_J$ defined by~$\omega$ and~$J$ is 
\begin{equation*}
g_J=\omega(-,J-)=\begin{pmatrix} G & 0\\ 0& G^{-1}\end{pmatrix}.
\end{equation*}
A system of holomorphic coordinates for~$J$ is described by the vector fields
\begin{equation*}
\diff_{z^a}=\frac{1}{2}\left(G^{ab}\diff_{y^b}-\I\diff_{w^a}\right);
\end{equation*}
indeed, using the fact that~$G$ is a Hessian matrix it is easy to check that~$[\diff_{z^a},\diff_{z^b}]=0$.

The first-order deformation~$\dot{J}$ is also described, in our fixed coordinate system~$(\bm{y},\bm{w})$, by a real matrix-valued function. It must satisfy the relations~$\dot{J}J+J\dot{J}=0$ and~$\omega(J-,\dot{J}-)+\omega(\dot{J}-,J-)=0$. It is not too difficult to see that any such~$\dot{J}$ is of the form
\begin{equation*}
\dot{J}=\begin{pmatrix} G^{-1}A & G^{-1}B G^{-1}\\ B &- A G^{-1}\end{pmatrix}
\end{equation*}
for some real symmetric matrices~$A$ and~$B$. We are interested in the~$(0,1)$-part of~$\dot{J}$, i.e. the restriction of~$\dot{J}$ to~$T^{1,0}_JM$, that can be written as~$\dot{J}\indices{^{\bar{b}}_a}\mrm{d}z^a\otimes\diff_{\bar{z}^b}$, since~$\alpha^\transpose=\dot{J}^{0,1}$.
\begin{lemma}
With the previous notation, the matrix~$\dot{J}\indices{^{\bar{b}}_a}$ is~$G^{ac}\left(A-\I B\right)_{cb}$.
\end{lemma}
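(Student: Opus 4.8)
The statement is a direct computation carried out in the fixed real coordinate system $(\bm{y},\bm{w})$, using the explicit block form of $J$, the holomorphic frame $\diff_{z^a}$, and the block form of $\dot{J}$, all of which are recorded just above in the excerpt. The plan is to apply the endomorphism $\dot{J}$ to the holomorphic vector field $\diff_{z^a}$ and to recognise the result as a linear combination of the $\diff_{\bar{z}^b}$, whose coefficients are then, by definition, the entries $\dot{J}\indices{^{\bar{b}}_a}$.

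First I would record the antiholomorphic frame dual to the given holomorphic one, namely $\diff_{\bar{z}^b}=\tfrac{1}{2}\left(G^{bc}\diff_{y^c}+\I\diff_{w^b}\right)$, and rewrite both frames as block column vectors in $(\bm{y},\bm{w})$-components. Writing $e_a$ for the standard basis vector and using the convention $G^{ab}=(G^{-1})_{ab}$ together with the symmetry of $G$, one has $\diff_{z^a}=\tfrac{1}{2}\begin{pmatrix} G^{-1}e_a\\ -\I\,e_a\end{pmatrix}$ and $\diff_{\bar{z}^b}=\tfrac{1}{2}\begin{pmatrix} G^{-1}e_b\\ \I\,e_b\end{pmatrix}$. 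The core of the argument is then the single matrix multiplication
\begin{equation*}
\dot{J}\,\diff_{z^a}=\frac{1}{2}\begin{pmatrix} G^{-1}A & G^{-1}B G^{-1}\\ B &- A G^{-1}\end{pmatrix}\begin{pmatrix} G^{-1}e_a\\ -\I\,e_a\end{pmatrix}=\frac{1}{2}\begin{pmatrix} G^{-1}(A-\I B)G^{-1}e_a\\ \I\,(A-\I B)G^{-1}e_a\end{pmatrix},
\end{equation*}
where the $w$-block is simplified by noting $\I(A-\I B)=B+\I A$. A one-line sanity check, that the $w$-component equals $\I G$ applied to the $y$-component, confirms that this vector indeed lies in $T^{0,1}_JM$, as it must since $\dot{J}$ anticommutes with $J$; this is a useful consistency test before reading off the coefficients.

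Finally, setting $M:=(A-\I B)G^{-1}e_a$ and using $\begin{pmatrix} G^{-1}M\\ \I M\end{pmatrix}=\sum_b M_b\begin{pmatrix} G^{-1}e_b\\ \I e_b\end{pmatrix}$, I would conclude $\dot{J}\,\diff_{z^a}=\sum_b M_b\,\diff_{\bar{z}^b}$, so that $\dot{J}\indices{^{\bar{b}}_a}=M_b=\big[(A-\I B)G^{-1}\big]_{ba}$; invoking once more the symmetry of $A$, $B$ and $G$ collapses this to the claimed symmetric expression $G^{ac}(A-\I B)_{cb}$. Since everything reduces to elementary linear algebra, there is no genuine obstacle here; the only point demanding care is consistent bookkeeping with the index convention $G^{ab}=(G^{-1})_{ab}$ and the systematic use of the symmetry of all three matrices, which is precisely what turns $\big[(A-\I B)G^{-1}\big]_{ba}$ into $G^{ac}(A-\I B)_{cb}$.
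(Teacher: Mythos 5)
Your proof is correct and follows essentially the same route as the paper: both compute $\dot{J}(\diff_{z^a})$ from the block form of $\dot{J}$ in the $(\bm{y},\bm{w})$ coordinates and extract the coefficient of $\diff_{\bar{z}^b}$, the only cosmetic difference being that the paper pairs with the dual covector $\mrm{d}\bar{z}^b=\frac{1}{2}\left(G_{bc}\mrm{d}y^c-\I\mrm{d}w^b\right)$ while you expand the resulting vector in the conjugate frame. Your final appeal to the symmetry of $A$, $B$ and $G$ to rewrite $\big[(A-\I B)G^{-1}\big]_{ba}$ as $G^{ac}(A-\I B)_{cb}$ matches the paper's conclusion $G^{ad}A_{db}-\I G^{ad}B_{db}$.
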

\begin{proof}
Using the expression of~$\dot{J}$ in the coordinates~$(\bm{y},\bm{w})$ we see that
\begin{equation*}
\begin{split}
\dot{J}\indices{^{\bar{b}}_a}=&\mrm{d}\bar{z}^b\left(\dot{J}(\diff_{z^a})\right)=\frac{1}{2}\left(G_{bc}\mrm{d}y^c-\I\mrm{d}w^b\right)\left(G^{ad}\dot{J}(\diff_{y^d})-\I\dot{J}(\diff_{w^a})\right)=\\
=&\frac{1}{2}\left(G_{bc}G^{ad}G^{ef}A_{fd}\delta^c_e+\delta^{ef}A_{fp}G^{pq}\delta_{qa}\delta^b_e
\right)-\\
&-\frac{\I}{2}\left(G_{bc}G^{ef}B_{fp}G^{pq}\delta_{aq}\delta^c_e+G^{ad}\delta^{ef}B_{fd}\delta^b_e\right)=\\
=&G^{ad}A_{db}-\I G^{ad}B_{db}.\hspace{2.5cm}\qedhere
\end{split}
\end{equation*}
\end{proof}
\begin{lemma}\label{lemma:xi_caratt_alt}
The quadratic differential~$q:=\omega(-,J\dot{J}^{0,1}-)$ is written in the coordinates~$\bm{z}$ as
\begin{equation*}
q=\sum_{a,b}G^{ac}(A-\I B)_{cd}G^{db}\mrm{d}z^a\otimes\mrm{d}z^b.
\end{equation*}
In particular,~$\xi=G^{-1}(A-\I B)G^{-1}$.
\end{lemma}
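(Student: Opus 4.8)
The plan is to prove the two assertions by a direct computation of the tensor $q=\omega(-,J\dot{J}^{0,1}-)$ on a holomorphic frame, feeding in the expression for $\dot{J}^{0,1}$ already obtained in the previous Lemma, and then to read off $\xi$ by comparison with the definition of $\xi$ as (the Legendre dual of) the coefficients of the quadratic differential attached to $\alpha$. The conceptual point is that $q$ is nothing but the metric contraction that turns the endomorphism-valued form $\dot{J}^{0,1}$ into an honest section of $\mrm{Sym}^2 T^{1,0\,*}M$; once this is made explicit, both statements fall out of the same calculation.

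First I would unwind $J\dot{J}^{0,1}$. Since $\dot{J}^{0,1}=\alpha^{\transpose}$ maps $T^{1,0}_JM$ into $T^{0,1}_JM$, and $J$ acts on $T^{0,1}_JM$ as multiplication by $-\I$, one gets $J\dot{J}^{0,1}(\diff_{z^b})=-\I\,\dot{J}\indices{^{\bar c}_b}\diff_{\bar z^c}$. Evaluating $\omega$ on the mixed frame, I would use that for a vector of type $(0,1)$ one has $g_J(\diff_{z^a},\diff_{\bar z^c})=\omega(\diff_{z^a},J\diff_{\bar z^c})=-\I\,\omega(\diff_{z^a},\diff_{\bar z^c})$, hence $\omega(\diff_{z^a},\diff_{\bar z^c})=\I\,g_{a\bar c}$. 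Combining these, the two factors of $\I$ cancel and $q(\diff_{z^a},\diff_{z^b})=g_{a\bar c}\,\dot{J}\indices{^{\bar c}_b}$, exhibiting $q$ as the index-lowering of $\dot{J}^{0,1}$ by the metric. It then remains to compute $g_{a\bar c}$ in the present coordinates: from $\diff_{z^a}=\tfrac12\bigl(G^{ab}\diff_{y^b}-\I\,\diff_{w^a}\bigr)$ and $g_J=\mrm{diag}(G,G^{-1})$ one finds $g_{a\bar c}$ proportional to $G^{ac}$ (the inverse Hessian). Substituting the previous Lemma's formula $\dot{J}\indices{^{\bar c}_b}=G^{bd}(A-\I B)_{dc}$ and using the symmetry of $A$ and $B$ to symmetrise $(A-\I B)_{dc}=(A-\I B)_{cd}$, the product telescopes to $G^{ac}(A-\I B)_{cd}G^{db}$, which is the asserted expression $q=\sum_{a,b}G^{ac}(A-\I B)_{cd}G^{db}\,\mrm{d}z^a\otimes\mrm{d}z^b$.

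For the ``in particular'' clause I would simply match coefficients. By construction $\xi$ is the Legendre image of the quadratic-differential data, so that $\alpha\indices{_a{}^{\bar b}}=\xi^{ac}u_{cb}=(\xi\,G)\indices{^a_b}$; on the other hand the previous Lemma gives $\alpha\indices{_a{}^{\bar b}}=\dot{J}\indices{^{\bar b}_a}=G^{ad}(A-\I B)_{db}=\bigl(G^{-1}(A-\I B)\bigr)\indices{^a_b}$. Equating the two matrices yields $\xi\,G=G^{-1}(A-\I B)$, i.e.\ $\xi=G^{-1}(A-\I B)G^{-1}$, which is exactly the matrix appearing as the coefficient of $\mrm{d}z^a\otimes\mrm{d}z^b$ in the first part.

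The only genuinely delicate point is the bookkeeping of transposes and normalisation constants: one must pin down, consistently with the paper's conventions, the precise constant in $g_{a\bar c}$ coming from the factor $\tfrac12$ in the holomorphic frame $\diff_{z^a}$, together with the identification $\alpha^{\transpose}=\dot{J}^{0,1}$ relating $\alpha$ and the real deformation $\dot{J}=\mrm{Re}(\alpha)^{\transpose}$. I would therefore carry out the calculation keeping every factor of $\I$ and every scalar explicit, and check at the end that the normalisation agrees with the definition of $\xi$ used throughout Section~\ref{sec:symplectic_coords_proof}; modulo this routine tracking of constants, the argument is a short and self-contained linear-algebra computation.
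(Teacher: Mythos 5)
Your proof is correct and follows essentially the same route as the paper's: both reduce $q=\omega(-,J\dot{J}^{0,1}-)$ to the metric contraction $g_{a\bar{b}}\dot{J}\indices{^{\bar{b}}_c}$ and then substitute the matrix expression of $g$ and the formula for $\dot{J}\indices{^{\bar{b}}_a}$ from the preceding lemma. The extra care you take in deriving that contraction from the $\pm\I$ eigenvalue actions, and in tracking the factor of $\tfrac{1}{2}$ in $g_{a\bar{b}}$ and the transpose $\alpha^\transpose=\dot{J}^{0,1}$, is exactly the bookkeeping the paper leaves implicit.
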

\begin{proof}
From the definition, if we let~$g=\omega(-,J-)$ be the metric defined by~$J$, we have~$q=g_{a\bar{b}}\dot{J}\indices{^{\bar{b}}_c}\mrm{d}z^a\otimes\mrm{d}z^c$. Then it is enough to use the matrix expression of~$g$ to find
\begin{equation*}
q=g_{a\bar{b}}\dot{J}\indices{^{\bar{b}}_c}\mrm{d}z^a\otimes\mrm{d}z^c=\sum_{a,c}
G^{ab}G^{cd}\left(A-\I B\right)_{db}\mrm{d}z^a\otimes\mrm{d}z^c.\qedhere
\end{equation*}
\end{proof}
The integrability condition for~$\alpha$ from Section~\ref{sec:J} is
\begin{equation}\label{eq:integrability}
\diff\alpha=0.
\end{equation}
The next result expresses~\eqref{eq:integrability} in terms of our symplectic coordinates.
\begin{prop}\label{prop:integrability_xi}
Let~$J$ be the complex structure associated to a symplectic potential~$u$ in a system of symplectic coordinates, and let~$\xi$ be a Higgs tensor for~$J$. Then~$\xi$ comes from an integrable deformation of the complex structure~$J$ if and only if~$D^2u\,\xi\,D^2u$ is the Hessian matrix of a function.
\end{prop}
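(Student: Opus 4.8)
The plan is to translate the integrability condition~\eqref{eq:integrability}, $\partial\alpha=0$, into a differential condition on the symmetric matrix-valued function $C:=D^2u\,\xi\,D^2u$ and then to recognise the latter as the statement that $C$ is a Hessian. By Lemma~\ref{lemma:xi_caratt_alt} we have $\xi=G^{-1}(A-\I B)G^{-1}$ with $G=D^2u=\mrm{Hess}_{\bm y}u$, so that $C=G\,\xi\,G=A-\I B$ is exactly the object in the statement; the claim to prove is therefore that $\alpha$ is integrable if and only if $C=\mrm{Hess}_{\bm y}\varphi$ for some function $\varphi$ (whose Hessian descends to the torus).

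First I would write the integrability condition in coordinates. Since integrability is diffeomorphism invariant (Definition~\ref{def:integrable}), I may compute $\partial\alpha$ in the complex coordinates $\bm z=\bm x+\I\bm w$, related to the symplectic coordinates by the Legendre duality; there $\partial_{\bar z^b}$ is a constant frame, so $\partial\alpha=0$ reads $\partial_{z^c}\alpha\indices{_a^{\bar b}}=\partial_{z^a}\alpha\indices{_c^{\bar b}}$ for all indices. Using that $\alpha$ is $\mathbb T$-invariant, $\partial_{z^c}$ reduces to $\tfrac12\partial_{x^c}$, and from $\alpha\indices{_a^{\bar b}}=q_{ac}v^{cb}$ together with $v^{cb}=g^{c\bar b}=G_{cb}$ and $q=\xi=G^{-1}CG^{-1}$ one obtains the clean expression $\alpha\indices{_a^{\bar b}}=(G^{-1}C)_{ab}$. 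I would then pass to $\bm y$-derivatives via the Legendre relation $\partial_{x^c}=(G^{-1})_{cf}\partial_{y^f}$, so that integrability becomes
\begin{equation*}
(G^{-1})_{cf}\,\partial_{y^f}\!\big((G^{-1}C)_{ab}\big)=(G^{-1})_{af}\,\partial_{y^f}\!\big((G^{-1}C)_{cb}\big)\qquad\text{for all }a,c,b.
\end{equation*}

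The heart of the argument is the cancellation of the terms in which the derivative hits the factor $G^{-1}$. Writing $H:=G^{-1}$ and using the Hessian structure of $u$, namely that $\partial_{y^f}G_{ab}=u_{,abf}$ is totally symmetric (hence $\partial_{y^f}H_{ae}=-H_{ag}H_{eh}u_{,ghf}$), I would expand $\partial_{y^f}(H_{ae}C_{eb})$ and antisymmetrise in $c\leftrightarrow a$. The terms proportional to $\partial H$ read $-H_{cf}H_{ag}H_{eh}u_{,ghf}C_{eb}+H_{af}H_{cg}H_{eh}u_{,ghf}C_{eb}$, and after relabelling $f\leftrightarrow g$ in the second summand the total symmetry of $u_{,ghf}$ makes the two cancel. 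What survives is $H_{ae}H_{cf}\big(\partial_{y^f}C_{eb}-\partial_{y^e}C_{fb}\big)$; since $H$ is invertible this vanishes for all $a,c$ precisely when $\partial_{y^f}C_{eb}=\partial_{y^e}C_{fb}$ for all $e,f,b$. For each fixed $b$ this says the $1$-form $C_{eb}\,\mrm{d}y^e$ is closed, so by the Poincaré lemma $C_{eb}=\partial_{y^e}\psi_b$; the symmetry of $C$ (inherited from that of $A-\I B$) then forces $\psi_b=\partial_{y^b}\varphi$, whence $C=\mrm{Hess}_{\bm y}\varphi$. The converse implication follows by reading the same chain backwards.

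I expect the main difficulty to be bookkeeping rather than conceptual: making the index cancellation in the previous paragraph watertight, and justifying cleanly that for $\mathbb T$-invariant tensors the operator $\partial$ reduces to the flat coordinate antisymmetrisation $\partial_{x^{[c}}(\,\cdot\,)_{a]b}$, so that no metric Christoffel symbols intervene and only the derivatives of $G$ (governed by the symmetry of $u_{,abc}$) play a role. A minor point to record is periodicity: $\varphi$ need not be periodic on $\mathbb T$, but its Hessian $C$ is, which is exactly the formulation stated in the Remark preceding Section~\ref{sec:symplectic_coords_proof}.
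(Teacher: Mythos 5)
Your proposal is correct and follows essentially the same route as the paper: express the integrability condition $\diff_{z^a}\alpha\indices{_c^{\bar b}}=\diff_{z^c}\alpha\indices{_a^{\bar b}}$ in symplectic coordinates via Legendre duality, use the total symmetry of $u_{,abc}$ to dispose of the terms where the derivative hits the metric factors, and reduce to the total symmetry of $\diff_{y^c}(G\,\xi\,G)_{ab}$. The only cosmetic differences are that you exhibit the $\diff(G^{-1})$ terms as an explicit cancellation (where the paper absorbs them into a manifestly symmetric summand $u_{bd}\xi^{df}\diff_{y^f}u^{ac}$) and that you spell out the Poincar\'e-lemma endgame and the periodicity caveat, which the paper leaves to the remark following the proof.
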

\begin{proof}
The integrability condition~\eqref{eq:integrability} is written more explicitly as
\begin{equation*}
\diff_{z^a}\alpha\indices{_c^{\bar{b}}}=\diff_{z^c}\alpha\indices{_a^{\bar{b}}}
\end{equation*}
and in our case it becomes
\begin{equation*}
\diff_{x^a}\left(v^{bd}q_{cd}\right)=\diff_{x^c}\left(v^{bd}q_{ad}\right)
\end{equation*}
i.e.~$\diff_{x^a}\left(v^{bd}q_{cd}\right)$ should be symmetric in the indices~$a$,~$c$. Under Legendre duality this quantity becomes, using the fact that~$u_{ij}$ is a Hessian matrix,
\begin{equation*}
\diff_{x^a}\left(v^{bd}q_{cd}\right)=u^{ae}\diff_{y^e}\left(u_{bd}\xi^{cd}\right)=u^{ae}\diff_{y^e}\left(u_{bd}\xi^{fd}u_{fg}\right)u^{gc}+u_{bd}\xi^{df}\diff_{y^f}u^{ac}.
\end{equation*}
The second expression on the right hand side is symmetric in~$a$ and~$c$, while the first
\begin{equation*}
u^{ae}\diff_{y^e}\left(u_{bd}\xi^{fd}u_{fg}\right)u^{gc}
\end{equation*}
is symmetric in~$a$ and~$c$ if and only if~$\diff_{y^e}\left(u_{bd}\xi^{fd}u_{fg}\right)$ is symmetric in~$e$ and~$g$. Thus letting~$H_{ab}(y):=u_{ac}\xi^{cd}u_{db}$ equation~\eqref{eq:integrability} is equivalent to~$\diff_c H_{ab}$ being symmetric in all indices.
\end{proof}
From Lemma~\ref{lemma:xi_caratt_alt} we know that~$\xi$ can be written as~$G^{-1}HG^{-1}$ for some symmetric matrix-valued function~$H$; Proposition~\ref{prop:integrability_xi} then tells us that~$\xi$ corresponds to an integrable first-order deformation of~$J$ if and only if~$H=\mrm{Hess}_yh$ for some complex-valued function~$h$.

\subsection{Convexity of the periodic HK-energy}\label{sec:convexity}

In this Section we study the Biquard-Gauduchon functional~$\m{H}(u, \xi)$ defined in~\eqref{eq:BGFunc} on an abelian surface, and the corresponding version of the K-energy~$\widehat{\m{F}}(u, \xi)$ defined in~\eqref{eq:HK_energy}, proving a convexity result.
\begin{thm}\label{thm:ConvexThm}
Fix a Higgs tensor~$\xi$ on a~$2$-dimensional torus~$M$, and suppose that either~$\mrm{Re}(\xi)$,~$\mrm{Im}(\xi)$ are (positive or negative) semidefinite, or~$\det(\xi) = 0$. Let~$u_t = (1-t)u_0 + t u_1, \, t\in [0,1]$ be a linear path between symplectic potentials~$u_0$,~$u_1$, such that~$\norm{\xi}^2_{u_i} < 1$,~$i =0, 1$. Then the HK-energy~$\widehat{\m{F}}(u_t, \xi)$ is a well defined and strictly convex function of~$t \in [0,1]$.
\end{thm}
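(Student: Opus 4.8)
The plan is to split $\widehat{\m{F}} = \m{F} + \m{H}$ and prove that the periodic $K$-energy $\m{F}$ is \emph{strictly} convex along the linear path while the Biquard-Gauduchon term $\m{H}$ is merely \emph{convex}; strict convexity of $\widehat{\m{F}}$ then follows from that of $\m{F}$ alone. For the $K$-energy this is standard: writing $G_t = D^2u_t = (1-t)D^2u_0 + t\,D^2u_1$ and $B = D^2(u_1-u_0)$, one has
\begin{equation*}
\diff_t^2\left(-\tfrac12\log\det G_t\right) = \tfrac12\,\mrm{Tr}\!\left((G_t^{-1}B)^2\right) = \tfrac12\,\norm{G_t^{-1/2}BG_t^{-1/2}}^2 \geq 0,
\end{equation*}
with equality precisely when $B\equiv 0$; since the potentials are $\bb{Z}^n$-periodic, $B\equiv 0$ forces $u_0-u_1$ to be constant. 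Integrating over $\bb{T}$ gives strict convexity of $\m{F}(u_t)$ unless $u_0\equiv u_1$ modulo constants (see \cite{FengSzekelyhidi_abelian}).

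Before addressing $\m{H}$ I would settle well-definedness, i.e.\ that the whole segment stays inside the (non-convex, cf.\ the Remark preceding Proposition~\ref{HKEulLagr}) domain $\mscr{A}(\xi)$, so that the spectral functions $h(\delta^\pm)$ are finite. Writing $\xi = P + \I Q$ with $P=\mrm{Re}(\xi)$, $Q=\mrm{Im}(\xi)$, the hypothesis $\norm{\xi}^2_{u_i}<1$ is the statement $s(u_i)<1$ for $s := \norm{\xi}^2_{u} = \mrm{Tr}(\xi G_t\bar\xi G_t)$; since the eigenvalues $\delta^\pm$ of $\xi G_t\bar\xi G_t$ are non-negative, the spectral radius satisfies $\SpecRad(u,\xi)=\delta^+\leq s$, so it suffices to prove $s(u_t)<1$ throughout $[0,1]$. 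A cyclic-trace computation gives $s = \mrm{Tr}((PG_t)^2) + \mrm{Tr}((QG_t)^2)$, whose second $t$-derivative is $2\big(\mrm{Tr}((PB)^2) + \mrm{Tr}((QB)^2)\big)$; each term equals $\norm{\abs{P}^{1/2}B\abs{P}^{1/2}}^2\geq 0$ precisely when $P$ (resp.\ $Q$) is semidefinite, so $s$ is convex in $t$ and hence $s(u_t)\leq\max(s(u_0),s(u_1))<1$. When instead $\det\xi=0$, $\xi$ has rank one, $\xi = vv^\transpose$, so $\delta^-\equiv 0$ and $s = \delta^+ = (v^\transpose G_t\bar v)^2$ is the square of an affine function of $t$, again convex. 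Either hypothesis keeps the path in $\mscr{A}(\xi)$.

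The substance of the theorem is the convexity of $\m{H}(u_t,\xi) = \tfrac12\int_{\bb{T}}H(u_t,\xi)\,\mrm{d}\mu$, which by linearity of the integral reduces to pointwise convexity in $t$ of $G\mapsto H(G) = h(\delta^+) + h(\delta^-)$, where $h(x) = 1 - \sqrt{1-x} + \log\tfrac{1+\sqrt{1-x}}{2}$ satisfies $h(0)=0$, $h'=\psi>0$ and $h''=\psi'>0$ on $[0,1)$. The rank-one case $\det\xi=0$ is immediate: $H = h(\delta^+) = h\big((v^\transpose G_t\bar v)^2\big)$ is an increasing convex function of a convex function, hence convex. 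For the semidefinite case I would compute
\begin{equation*}
\diff_t^2 H = \sum_{\pm} h''(\delta^\pm)\,(\dot\delta^\pm)^2 + \sum_{\pm} h'(\delta^\pm)\,\ddot\delta^\pm ,
\end{equation*}
where the first sum is non-negative because $h''>0$. To handle the second sum I would parametrise by $s=\delta^++\delta^-$ and $p=\delta^+\delta^-=\abs{\det\xi}^2(\det G_t)^2$: differentiating $\delta^\pm = \tfrac12\big(s\pm\sqrt{s^2-4p}\big)$ gives $\sum_\pm h'(\delta^\pm)\,\delta^\pm_s = \Psi_s > 0$ and $\sum_\pm h'(\delta^\pm)\,\delta^\pm_p = \Psi_p = (h'(\delta^-)-h'(\delta^+))/\sqrt{s^2-4p}\leq 0$, using that $h'$ is increasing and $\delta^+\geq\delta^-$. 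Since $\ddot s\geq 0$ by the computation above and $\det(G_0+tB) = \det G_0 + t\,\mrm{Tr}(\mrm{adj}(G_0)B) + t^2\det B$, the whole second sum can be written explicitly, and the claim $\diff_t^2 H\geq 0$ reduces to a scalar inequality in $s,p$ and their derivatives.

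The main obstacle is exactly this last step. The delicate point is that $\Psi_p\leq 0$ while $\ddot p = 2\abs{\det\xi}^2\diff_t^2(\det G_t)^2$ has no definite sign (as $\det B$ may be of either sign), so one cannot argue termwise and must genuinely use the semidefiniteness of $P,Q$ to dominate $\dot p,\ddot p$ by $\dot s,\ddot s$. I expect the cleanest route is to diagonalise pointwise on $\bb{T}$ via the Takagi factorisation $G^{1/2}\xi G^{1/2}=U\Sigma U^\transpose$, so that $\delta^\pm=\sigma_\pm^2$ are the squared singular values (exactly as in Proposition~\ref{prop:BiquardGauduchonAC+}); this turns the matrix second derivative into a scalar inequality in the two singular values and their first and second variations, which can then be checked directly. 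Finally, combining the convexity of $\m{H}$ with the strict convexity of $\m{F}$ yields strict convexity of $\widehat{\m{F}}(u_t,\xi)$, which is finite on all of $[0,1]$ by the domain analysis above.
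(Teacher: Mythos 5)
Your well-definedness argument (convexity of $t\mapsto\norm{\xi}^2_{u_t}$ under either hypothesis on $\xi$) and your treatment of the $K$-energy are correct and essentially identical to the paper's. The rank-one case also goes through as you describe, since there $\delta^-\equiv 0$ and $H=h(\delta^+)$ with $\delta^+$ convex in $t$ and $h$ increasing and convex.

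However, the overall strategy — prove $\m{F}$ strictly convex and $\m{H}$ convex \emph{separately} — contains a genuine gap in the semidefinite case with $\det\xi\neq 0$. The paper explicitly remarks, immediately before its proof, that even under the hypotheses of the theorem the Biquard--Gauduchon functional $\m{H}(u,\xi)$ is \emph{not} convex in $u$ in general; the periodic $K$-energy is needed to compensate. The obstacle you flag at the end is not a technical difficulty to be overcome by a cleverer diagonalisation: the pointwise inequality $\diff_t^2 H\geq 0$ you are aiming for is simply false, because the term $\Psi_p\,\ddot p$ (with $\Psi_p\leq 0$ and $\ddot p = 2\abs{\det\xi}^2\,\diff_t^2(\det G_t)^2$ of indefinite sign) can dominate. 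The paper's resolution is to write the full second variation of $\widehat{\m{F}}$, observe that the only negative contribution is
\begin{equation*}
2\int D_2R\,\abs{\det(G\,\xi)}^2\,\mrm{Tr}\left(G^{-1}\dot{G}\right)^2\mrm{d}\mu ,
\end{equation*}
and show that this is absorbed by the $K$-energy term $\tfrac12\int\mrm{Tr}\bigl((G^{-1}\dot G)^2\bigr)\mrm{d}\mu$ together with the companion term $-\int D_2R\,\abs{\det(G\xi)}^2\mrm{Tr}\bigl((G^{-1}\dot G)^2\bigr)\mrm{d}\mu$. Writing $\mu_1,\mu_2$ for the eigenvalues of $G^{-1}\dot G$, the combined integrand is $\bigl(\tfrac12 - D_2R\,\delta^+\delta^-\bigr)(\mu_1^2+\mu_2^2)+2D_2R\,\delta^+\delta^-(\mu_1+\mu_2)^2$, and positivity hinges on the quantitative estimate $-D_2R\,\delta^+\delta^-<\tfrac16$, which is where the hypothesis $\delta^++\delta^-=\norm{\xi}^2_u<1$ enters. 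This coupling of the two functionals is the essential idea missing from your proposal; without it the semidefinite case cannot be closed.
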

As a consequence, under these assumptions on~$\xi$, there is at most one symplectic potential~$u$, up to an additive constant, solving the real moment map equation in~\eqref{eq:HcscKSymplRealEq} and such that~$\norm{\xi}^2_u < 1$. This condition on the norm of~$\xi$ is equivalent to
\begin{equation*}
\delta^+(\xi\,G\,\bar{\xi}\,G)+\delta^-(\xi\,G\,\bar{\xi}\,G)<1
\end{equation*}
so the symplectic potentials~$u$ such that~$\norm{\xi}^2_u<1$ describe a proper subset of~$\mscr{A}(\xi)$. We claim that it is in fact a convex subset of~$\mscr{A}(\xi)$, if~$\xi$ satisfies the hypothesis of Theorem~\ref{thm:ConvexThm}, namely
\begin{equation*}
\mrm{Re}(\xi),\,\mrm{Im}(\xi) \textrm{ definite, or} \det(\xi) = 0.
\end{equation*}
This follows from the fact that with our assumption~$\norm{\xi}^2_{u}$ is a convex function of~$D^2u$. Indeed we have, for a linear path~$u_t=(1-t)u_0+t\,u_1\in\mscr{A}(\xi)$  
\begin{align*}
&\frac{d^2}{dt^2}\norm{\xi}^2_{u_t} = 2\mrm{Tr}(\xi\,D^2\dot{u}_t\,\bar{\xi}\,D^2\dot{u}_t).  
\end{align*}
If~$\mrm{Re}(\xi)$ is definite, then~$\mrm{Re}(\xi)\,D^2\dot{u}_t$ is similar to a symmetric matrix and so it has real spectrum, thus
\begin{equation*}
\mrm{Tr}(\mrm{Re}(\xi)\,D^2\dot{u}_t\,\mrm{Re}(\xi)\,D^2\dot{u}_t) = \mrm{Tr}(\mrm{Re}(\xi)\,D^2\dot{u}_t)^2 \geq 0.
\end{equation*}
The same happens for~$\mrm{Im}(\xi)$, and both conditions together imply
\begin{equation*}
\mrm{Tr}(\xi\,D^2\dot{u}_t\,\bar{\xi}\,D^2\dot{u}_t)\geq 0.
\end{equation*}
The same conclusion holds when~$\det(\xi) = 0$: it is enough to consider the case when~$D^2\dot{u}_t$ is diagonal with eigenvalues~$\mu_1$,~$\mu_2$, and using the condition~$\mrm{det}(\xi) = 0$ we compute in that case  
\begin{equation*}
\mrm{Tr}(\xi\,D^2\dot{u}_t\,\bar{\xi}\,D^2\dot{u}_t) = (\abs{\xi^{11}}\mu_1 +\abs{\xi^{22}}\mu_2)^2 \geq 0.
\end{equation*}
\begin{rmk}
It is important to point our that, even under the assumptions of Theorem~\ref{thm:ConvexThm}, the Biquard-Gauduchon functional~$\m{H}(u, \xi)$ is not, in general, convex with respect to~$u$. However we show that the periodic K-energy~$\m{F}(u)$ compensates this lack of convexity.
\end{rmk}
\begin{proof}[Proof of Theorem~\ref{thm:ConvexThm}]
Let~$u_0,u_1\in\mscr{A}(\xi)$ be symplectic potentials such that~$\norm{\xi}^2_{u_i}<1$, and consider the linear path~$u_t=(1-t)u_0+t\,u_1$. The convexity in~$t$ of~$\norm{\xi}^2_{u_t}$ implies in particular that the HK-energy is well-defined along~$u_t$.

The differential of~$\hat{\m{F}}$ along the path is
\begin{equation*}
\diff_t\hat{\m{F}}(u_t,\xi)=-\frac{1}{2}\int u_t^{ab} \dot{u}_{,ab}\,\mrm{d}\mu+\frac{1}{2}\int\diff_t\rho(u_t,\xi)\,\mrm{d}\mu 
\end{equation*}
to compute the differential of the Biquard-Gauduchon function we will use the expression~\eqref{eq:autovalori_dim2} for the eigenvalues of~$\xi\,G_t\,\bar{\xi}\,G_t$ on a complex surface. Consider the functions of two real variables
\begin{equation*}
\begin{split}
f^\pm(x,y):=&x\pm\sqrt{x^2-4 y}\\
R(x,y):=&2-\sqrt{1-f^+(x,y)}-\sqrt{1-f^-(x,y)}+\\
&+\mrm{log}\frac{1+\sqrt{1-f^+(x,y)}}{2}+\mrm{log}\frac{1+\sqrt{1-f^-(x,y)}}{2}
\end{split}
\end{equation*}
then the Biquard-Gauduchon function on the torus~$\rho(u_t,\xi)$ is in fact 
\begin{equation*}
\rho(u_t,\xi)=R\left(\norm{\xi}^2_{u_t},\,\abs{\mrm{det}(G_t\xi)}^2\right)
\end{equation*}
where as usual~$G_t:=\mrm{Hess}(u_t)$. Setting also~$\dot{G} =\mrm{Hess}(\dot{u}_t)$ for convenience, we can compute the second variation of the HK-energy in terms of the derivatives of~$\norm{\xi}^2_{u_t}$ and~$\abs{\mrm{det}(G_t\xi)}^2$. These can be expressed as
\begin{equation*}
\frac{d}{dt}\Bigr|_{t=0}\norm{\xi}^2_{u(t)}=2\,\mrm{Tr}\left(\xi\,G_t\,\bar{\xi}\,\dot{G}_t\right)
\end{equation*}
while for the determinant we find
\begin{equation*}
\begin{split}
\frac{d}{dt}\Bigr|_{t=0}\abs{\mrm{det}(G_t\xi)}^2=2\,\abs{\mrm{det}(G_t\xi)}^2u_t^{ab}\varphi_{ab}.
\end{split}
\end{equation*}
We can compute the second variation of the HK-energy. In the following computation the derivatives of~$R$ are all evaluated on~$(\norm{\xi}^2_{u_t},\abs{\mrm{det}(G\xi)}^2)$.
\begin{equation}\label{secondVar}
\begin{split}
&\frac{d^2}{dt^2}\widehat{\m{F}}(u_t,\xi)=\frac{1}{2}\int\!\!\mrm{Tr}\left((G^{-1}\dot{G})^2\right)d\mu+\!\int\!\!D_1R\,\mrm{Tr}(\xi\,\dot{G}\,\bar{\xi}\,\dot{G})d\mu+\\
&+2\!\int\!\! D_2R\,\abs{\mrm{det}(G\,\xi)}^2\mrm{Tr}\left(G^{-1}\dot{G}\right)^2\mrm{d}\mu
-\!\int\!\! D_2R\,\abs{\mrm{det}(G\,\xi)}^2\mrm{Tr}\left((G^{-1}\dot{G})^2\right)\mrm{d}\mu+\\
&+2\int\!\!\begin{pmatrix}\mrm{Tr}(\xi\,G\,\bar{\xi}\,\dot{G})\\\abs{\mrm{det}(G\,\xi)}^2\mrm{Tr}\left(G^{-1}\dot{G}\right)\end{pmatrix}^\transpose\!\!\cdot\mrm{Hess}_{x,y}R\cdot\begin{pmatrix}\mrm{Tr}(\xi\,G\,\bar{\xi}\,\dot{G})\\\abs{\mrm{det}(G\,\xi)}^2\mrm{Tr}\left(G^{-1}\dot{G}\right)\end{pmatrix}\mrm{d}\mu.
\end{split}
\end{equation}
Elementary inequalities show that~$R$ is a convex function on its domain; moreover, the function~$D_1 R(p, q)$ is positive, while~$D_2 R(p, q)$ is negative. It follows that, in the expression above for the second variation, under our assumptions on~$\xi$, all terms are positive except for the negative term
\begin{equation*}
2\int D_2R\,\abs{\det\left(G\,\xi\right)}^2\mrm{Tr}\left(G^{-1}\dot{G}\right)^2 d\mu.
\end{equation*}
We claim that we have in fact
\begin{align*}
\frac{1}{2}\int\mrm{Tr}\left((G^{-1}\dot{G})^2\right)\mrm{d}\mu+2\int D_2R\,\abs{\det\left(G\,\xi\right)}^2\mrm{Tr}\left(G^{-1}\dot{G}\right)^2\mrm{d}\mu-&\\
-\int D_2R\,\abs{\det\left(G\,\xi\right)}^2\mrm{Tr}\left((G^{-1}\dot{G})\right)\mrm{d}\mu&\geq 0. 
\end{align*}
Indeed, if we denote by~$\mu_i$,~$i=1,2$ the eigenvalues of the endomorphism~$G^{-1}\dot{G}$ at a point, then the sum of the integrands in the expression above may be expressed as
\begin{equation}\label{eq:convexity_Kenergyterm}
\left(\frac{1}{2}-D_2R\,\delta^+\delta^-\right)(\mu^2_1 + \mu^2_2) + 2D_2R\,\delta^+\delta^-(\mu_1+\mu_2)^2
\end{equation}
since~$\abs{\mathrm{det}(G\,\xi)}^2=\delta^+\delta^-$.  We will show that, under the condition~$\delta^++\delta^-=\norm{\xi}^2_{u}<1$, we have~$-D_2R\,\delta^+\delta^-<\frac{1}{6}$. Assuming this for the moment, we have
\begin{equation*}
\begin{split}
&\left(\frac{1}{2}-D_2R\,\delta^+\delta^-\right)(\mu^2_1 + \mu^2_2) + 2D_2R\,\delta^+\delta^-(\mu_1+\mu_2)^2\geq\\
&\quad\geq \left(\frac{1}{2}+D_2R\,\delta^+\delta^-\right)(\mu_1^2 + \mu_2^2) + 4D_2R\,\delta^+\delta^-\abs{\mu_1}\,\abs{\mu_2}\geq \\
&\quad\geq \left(\frac{1}{2}+D_2R\,\delta^+\delta^-\right)(\mu_1^2 + \mu_2^2) -2\left(\frac{1}{2}+D_2R\,\delta^+\delta^-\right)\abs{\mu_1}\,\abs{\mu_2}
\end{split}
\end{equation*}
so the expression in~\eqref{eq:convexity_Kenergyterm} is strictly positive unless~$\mu_i = 0$,~$i = 1, 2$, that is,~$\dot{G}\equiv 0$. To check that~$-D_2R\,\delta^+\delta^-<\frac{1}{6}$ holds, we consider the following inequality, which holds for~$\delta^++\delta^-<1$:
\begin{align*}
-D_2R\,\delta^+\delta^-=&\frac{\delta^+\delta^-}{2\left(\sqrt{1-\delta^+}+\sqrt{1-\delta^-}\right)\left(1+\sqrt{1-\delta^+}\right)\left(1+\sqrt{1-\delta^-}\right)}=\\
=&\frac{\left(1-\sqrt{1-\delta^+}\right)\left(1-\sqrt{1-\delta^+}\right)}{2\left(\sqrt{1-\delta^+}+\sqrt{1-\delta^-}\right)}\leq\frac{\left(1-\sqrt{\delta^+}\right)\left(1-\sqrt{1-\delta^+}\right)}{2\left(\sqrt{\delta^+}+\sqrt{1-\delta^+}\right)}.
\end{align*}
This last term achieves its maximum for~$\delta^+=\frac{1}{2}$, and one can check numerically that this is strictly less than~$\frac{1}{6}$.
\end{proof}
Our discussion so far shows that~$\widehat{\m{F}}(u_t, \xi)$ is convex along the linear path. Moreover,~$\widehat{\m{F}}(u_t, \xi)$ is strictly convex unless~$D^2 \dot{u}_t \equiv 0$, i.e. unless~$u_1 - u_0$ is constant. By Proposition~\ref{HKEulLagr}, solutions to the real moment map equation in~\eqref{eq:HcscKSymplRealEq}, for fixed~$\xi$, are precisely critical points of~$\widehat{\m{F}}(u, \xi)$, which only depends on~$D^2 u$. Therefore a symplectic potential in the set
\begin{equation*}
\mscr{A}'(\xi):=\set*{u\in\mscr{A}(\xi)\tc\norm{\xi}^2_u<1}
\end{equation*}
solving~\eqref{eq:HcscKSymplRealEq} is unique up to additive constants. Notice that, in general,~$\mscr{A}'(\xi)$ is strictly included in~$\mscr{A}(\xi)$. A possible case in which the two sets coincide is when the deformation of the complex structure has non-maximal rank.
\begin{rmk}
The same result also holds on curves but it is not particularly interesting in that case, since we already obtained a stronger result for genus~$1$ Riemann surfaces in Section~\ref{sec:curve}. We also conjecture that a convexity result analogous to Theorem~\ref{thm:ConvexThm} should hold in higher dimension, using the convexity of the Biquard-Gauduchon function.
\end{rmk}

\subsection{Translation-invariant periodic solutions}\label{sec:1dim_solutions}

In this Section we describe some solutions to the HcscK system in dimension~$2$ in the very simple situation where both the Higgs tensor and the matrix~$G=\mrm{Hess}(u)$ depend on a single variable, say~$y^1$, and the Higgs field has non-maximal rank.

The complex moment map equation in~\eqref{eq:HcscKSymplRealEq} reduces to~$\xi^{11}_{11} = 0$, i.e. to the condition~$\xi^{11} = c \in \bb{C}$, by periodicity. With our further assumption~$\det(\xi) = 0$, all possible solutions are given by
\begin{equation}\label{eq:sol_lowrank_complex}
\begin{pmatrix}
0 & 0\\ 0& \xi^{22}(y^1)
\end{pmatrix}
\ \mbox{ or }\ 
\begin{pmatrix} c & \xi^{12}(y^1)\\ \xi^{12}(y^1) & \frac{\left(\xi^{12}(y^1)\right)^2}{c}\end{pmatrix}.
\end{equation}
We look for solutions~$u(y)$ of the real moment map equation of the form
\begin{equation*}
u(\bm{y})=\frac{1}{2}\abs{\bm{y}}^2+f(y^1)
\end{equation*}
for a periodic function~$f(y^1)$, so 
\begin{equation*}
D^2(u)=\begin{pmatrix}1+f''(y^1) &0\\ 0& 1\end{pmatrix}.
\end{equation*}

\begin{rmk}
Once we show that there are such solutions, Theorem~\ref{thm:ConvexThm} will guarantee that in fact all possible solutions~$u(\bm{y})$ are of this form. Notice also that under the assumption~$\mathrm{det}(\xi)=0$, the conditions~$\SpecRad(u,\xi)<1$ and~$\norm{\xi}^2<1$ are equivalent. 
\end{rmk}

From the definition~\eqref{eq:alpha_symp_check} of~$\check{\alpha}$, in the low-rank case the real moment map equation becomes
\begin{equation*}
\left(-(1+f'')+\frac{(1+f'')\abs{\xi^{11}}^2+\abs{\xi^{12}}^2}{1+(1-\norm{\xi}^2_{u})^{1/2}}\right)_{,11}=0.
\end{equation*}
Notice however that if~$\xi$ is of the first type in~\eqref{eq:sol_lowrank_complex} then the equation reduces to~$f^{(4)}=0$, i.e. the metric must have constant coefficients. So it is enough to discuss the case
\begin{equation*}
\xi=\begin{pmatrix} c & F(y^1)\\ F(y^1) & \frac{F(y^1)^2}{c}\end{pmatrix}
\end{equation*}
for some periodic function~$F$. Then the equation becomes
\begin{equation}\label{eq:real_mm_twoderivs}
\left(-(1+f'')+\frac{(1+f'')\abs{c}^2+\abs{F}^2}{1+\sqrt{1-\frac{1}{\abs{c}^2}\left((1+f'')\abs{c}^2+\abs{F}^2\right)^2}}\right)''=0
\end{equation}
which is equivalent to the condition 
\begin{equation*}
\frac{(1+f'')\abs{c}^2+\abs{F}^2}{1+\sqrt{1-\frac{1}{\abs{c}^2}\left((1+f'')\abs{c}^2+\abs{F}^2\right)^2}}=1+k+f'' 
\end{equation*}
for some real constant~$k$, or equivalently,~$f''$ must satisfy the algebraic equation 
\begin{equation}\label{eq:real_mm_abelian_alt}
\frac{\abs{F}^2}{\abs{c}^2}+\left(f''+1\right)=\frac{2\left(f''+k+1\right)}{\abs{c}^2+\left(f''+k+1\right)^2}.
\end{equation}
When~$F \equiv 0$ a solution~$f''$ is a suitable constant, corresponding to a constant almost-complex structure and a flat metric. We will prove our existence result by perturbing around~$F \equiv 0$ in a quantitative way. It is convenient to introduce the operators 
\begin{equation*}
\begin{split}
P:C^{\infty}(C^1,\bb{R})&\to C^{\infty}_0(S^1,\bb{R})\\
\varphi&\mapsto\varphi'';
\end{split}
\end{equation*}
\begin{equation*}
\begin{split}
Q: C^{\infty}(S^1,\bb{R})\times C^{\infty}&(S^1,\bb{R})\to C^{\infty}(S^1,\bb{R})\\
(\varphi_1,\varphi_2)&\mapsto \frac{(1+\varphi_1)\abs{c}^2+\varphi_2^2}{1+\sqrt{1-\frac{1}{\abs{c}^2}\left((1+\varphi_1)\abs{c}^2+\varphi_2^2\right)^2}}-(1+\varphi_1);
\end{split}
\end{equation*}
\begin{equation*}
\begin{split}
R: C^\infty(S^1,\bb{R})\times C^\infty(S^1,\bb{C})&\to C^\infty_0(S^1,\bb{R})\\
(f,F)&\mapsto P\left(Q(P(f),\abs{F})\right).
\end{split}
\end{equation*}
We will show that if~$\abs{F}\leq\abs{c}<\frac{3}{10}$, then there exists~$f\in C^\infty(S^1,\bb{R})$, unique up to an additive constant, such that
\begin{equation*}
\begin{gathered}
1+f''>0;\\
(1+f'')\abs{c}^2+\abs{F}^2<\abs{c};\\
R(f,F)=0.
\end{gathered}
\end{equation*}
Clearly~$P$ is surjective, and its kernel are the constant functions, so it is enough to show that with our assumptions there is a unique~$\phi\in C^\infty_0(S^1,\bb{R})$ satisfying the positivity condition
\begin{equation}\label{positivityIneq}
\phi+1>0,
\end{equation}  
as well as the uniform nonsingularity condition
\begin{equation}\label{nonsingIneq}
\left((1+\phi)\abs{c}^2+\abs{F}^2\right)^2\leq \abs{c}^2-\varepsilon\abs{c}^2
\end{equation}
for some \emph{fixed}~$0 < \varepsilon < 1$, and such that
\begin{equation*}
P(Q(\phi,F))=0.
\end{equation*}
For~$F=0$ we have a unique solution,~$\phi=0$. We consider the continuity path
\begin{equation*}
P(Q(\phi,t\,F))=0\mbox{ for }0\leq t\leq 1,
\end{equation*}
and prove openness and closedness, as usual. 

Consider the differential of the operator
\begin{equation*}
\begin{split}
C^{2,\alpha}_0(S^1)&\to C^{0,\alpha}_0(S^1)\\
\phi&\mapsto P(Q(\phi,F)),
\end{split}
\end{equation*}
namely
\begin{equation*}
\dot{\phi}\mapsto\left[\left(\frac{\abs{c}^2}{\sqrt{1-\frac{1}{\abs{c}^2}p^2}\left(1+\sqrt{1-\frac{1}{\abs{c}^2}p^2}\right)}-1\right)\dot{\phi}\right]'',
\end{equation*}
where we set~$p = (1+\phi)\abs{c}^2+\abs{F}^2$ for convenience. This is clearly an isomorphism provided its coefficient is bounded away from~$0$. Since we have~$p^2\leq \abs{c}^2-\varepsilon\abs{c}^2$ by~\eqref{nonsingIneq}, a short computation shows that it is enough to assume
\begin{equation*}
\abs{c}^2<\sqrt{\varepsilon}-\varepsilon.
\end{equation*}
What remains to be seen is that the (automatically open) positivity condition~\eqref{positivityIneq} is also closed along the path, while conversely the (automatically closed) uniform nonsingularity condition~\eqref{nonsingIneq} is also open. Then further regularity would follow by a standard bootstrapping argument. Let us prove both claims at once. Our equation holds if and only if there is a constant~$k$ such that
\begin{equation*}
\frac{(1+\phi)\abs{c}^2+\abs{F}^2}{1+\sqrt{1-\frac{1}{\abs{c}^2}\left((1+\phi)\abs{c}^2+\abs{F}^2\right)^2}}-(1+\phi)=k.
\end{equation*}
Note that this implies~$1+\phi+k\geq 0$. Moreover, since~$\phi$ has zero average, we have
\begin{equation*}
k+1=\int_0^1\frac{p}{1+\sqrt{1-\frac{1}{\abs{c}^2}p^2}} dy^1,\, p = (1+\phi)\abs{c}^2+\abs{F}^2 
\end{equation*}
so we also have~$k+1\geq 0$ and~$k+1\leq p$, hence~$k+1\leq \abs{c}$ along the continuity path. 
Moreover we have~$1+\phi \geq -k \geq 1-\abs{c}$, so if~$\abs{c} < 1$ then~$1+\phi$ is bounded uniformly away from~$0$. This shows that the positivity condition~\eqref{positivityIneq} is closed along the path provided~$\abs{c} <1$. Now as in~\eqref{eq:real_mm_abelian_alt} we may write our equation as
\begin{equation*}
\abs*{\frac{F}{c}}^2+\phi+1=\frac{2(\phi+1+k)}{\abs{c}^2+(\phi+1+k)^2}.
\end{equation*}
Then~$\phi+1+k$ is a positive solution to the equation
\begin{equation*}
\abs{F}^2-k\abs{c}^2+\abs{c}^2x+\left(\abs*{\frac{F}{c}}^2-k\right)x^2+x^3=2x.
\end{equation*}
Note that all the coefficients on the left hand side are positive. Using the fact that~$-k \geq 1-\abs{c}$, we see that a positive solution to the equation cannot be larger than~$1+\abs{c}$. Hence we find
\begin{equation*}
\phi+1\leq1+\abs{c}-k\leq 2+\abs{c}
\end{equation*}
from which we get, assuming~$\abs{F}^2\leq \abs{c}^2$,
\begin{equation*}
(1+\phi)\abs{c}^2+\abs{F}^2<\abs{c}^2\left(3+ \abs{c}\right).
\end{equation*}
It follows that~\eqref{nonsingIneq} certainly holds for~$0 < \varepsilon < 1$ as long as
\begin{equation*}
\abs{c}^2\left(3+\abs{c}\right)^2\leq 1-\varepsilon
\end{equation*}
hence~$\abs{c}$ must be less or equal than the first positive root of the polynomial
\begin{equation}\label{eq:polinomio_limite}
x^4+6 x^3+9 x^2+\varepsilon -1.
\end{equation}
Under these conditions on~$\varepsilon$ and~$\abs{c}$ the uniform nonsingularity condition~\eqref{nonsingIneq} would hold automatically along the path. Finally we need to choose the values of~$\abs{c}$ and~$\varepsilon$ in our argument so that all constraints are satisfied. Recall these are
\begin{equation*}
\abs{c} < \sqrt{\sqrt{\varepsilon}-\varepsilon},\, \abs{c} < 1
\end{equation*}
as well as the fact that~$\abs{c}$ is less or equal than the first positive root of~\eqref{eq:polinomio_limite}. Direct computation shows that~$\varepsilon= 1/100$,~$\abs{c}<\frac{3}{10}$ is an admissible choice. 

We conclude by examining the integrability condition, as characterised in Proposition~\ref{prop:integrability_xi}, for the solutions to the HcscK system we have constructed. Since both~$\xi$ and~$D^2u$ depend just on the variable~$y^1$, the integrability condition on~$\xi$ implies  
\begin{equation*}
\diff_1\left(G\xi G\right)_{22}=\diff_2\left(G\xi G\right)_{12}=0.
\end{equation*}
Hence, if~$\xi$ is of the first type in~\eqref{eq:sol_lowrank_complex}, we find~$F'=0$, while for the second type we must have~$F\,F'=0$. In both cases~$\xi$ must be constant.

\section{The HcscK system on toric surfaces}\label{sec:toric}

Consider a toric manifold~$(M,J,\omega)$ with a Hamiltonian action~$\bb{T}^n\curvearrowright M$, whose moment map~$\mu$ sends~$M$ to a convex polytope~$P\subseteq\bb{R}^n$ by the Atiyah-Guillemin-Sternberg Theorem~\cite{Atiyah_convexity,Guillemin_convexity}. It is well-know that~$P$ is a \emph{Delzant polytope}~\cite{Delzant_polytope}, and that any such polytope defines in turn a standard compact symplectic manifold~$(M_P,\omega_P)$ together with a Hamiltonian~$\bb{T}^n$-action on~$M_P$ such that~$(M_P,\omega_P)$ is equivariantly isomorphic to~$(M,\omega)$. The polytope defines also a standard compatible complex structure~$J_P$, but in general~$(M_P,J_P,\omega_P)$ will \emph{not} be isomorphic to~$(M,J,\omega)$. For the general theory we refer to~\cite{Guillemin_toric,Guillemin_momentmaps} and~\cite{Abreu_toric}.

The moment map gives an alternative way to describe the symplectic structure on~$M$, since it establishes an isomorphism between~$(M^\circ,\omega)$ and~$(P^\circ\times\bb{T}^n,\omega_P)$, where~$M^\circ$ is the open subset of~$M$ where the action is free and~$\omega_P$ is the standard symplectic structure induced by the inclusion in~$\bb{R}^{2n}$. The manifold~$M^\circ$ is~$\bb{T}$-equivariantly biholomorphic to~$\bb{C}^n/2\pi\I\bb{Z}^n\cong\bb{R}^n+\I\bb{T}^n$, where the action is by translations on the~$\bb{T}^n$-factor. We consider the standard coordinates~$\bm{z}=\bm{x}+\I\bm{w}$ on~$\bb{R}^n\times\bb{T}^n$. On~$P^\circ\times\bb{T}^n$ instead we consider coordinates~$(\bm{y},\bm{w})$, with~$\bm{y}=\mu(\bm{x})$.

The symplectic form on~$M^\circ$ is given by~$4\I\diff\bdiff v$ for some~$\bb{T}$-invariant potential function~$v$, so that~$\omega=\I\diff_{x^a}\diff_{x^b}v\,\mrm{d}z^a\wedge\mrm{d}\bar{z}^b$, while~$J$ is just represented by the canonical matrix~$J(\bm{x},\bm{w})=\begin{pmatrix}0 & -\mathbbm{1} \\ \mathbbm{1} & 0\end{pmatrix}$. On the other hand the symplectic structure on~$P$ induced by~$\omega$ via the moment map is the canonical symplectic form~$\sum_{i,j}\mrm{d}y^i\wedge\mrm{d}w^j$ on~$\bb{R}^{2n}$, while the complex structure~$J$ is described by a matrix~$J(\bm{y},\bm{w})=\begin{pmatrix} 0 & -G^{-1} \\ G & 0\end{pmatrix}$. Since~$J$ is integrable, this matrix must be of the form~$G=\mrm{Hess}_{\bm{y}}u$ for some potential~$u(\bm{y})$: moreover, the two coordinate systems and the two functions~$u$ and~$v$ are Legendre dual to each other:
\begin{equation*}
\begin{cases}
\bm{y}=\diff_{\bm{x}}v;\\
\bm{x}=\diff_{\bm{y}}u;\\
u(\bm{y})+v(\bm{x})=\bm{x}\cdot\bm{y}.
\end{cases}
\end{equation*}
This means that the HcscK system~\eqref{eq:HcscK_system} can be expressed in the coordinates~$(\bm{y},\bm{w})$ in a form similar to what we did for abelian varieties in~\eqref{eq:HcscKSymplRealEq}. The only difference is that~$\hat{S}$ does not vanish, in general, for a toric manifold, so the system of equations becomes
\begin{equation}\label{eq:symplectic_HcscK_toric}
\begin{cases}
\left(\xi^{ab}\right)_{,ab}=0\\
\left(\left(1-\xi\,D^2u\,\bar{\xi}\,D^2u\right)^{\frac{1}{2}}D^2u^{-1}\right)^{ab}_{,ab}=-C.
\end{cases}
\end{equation}
Here~$C$ is the topological constant~$4\hat{S}$. The system should be solved for a potential~$u$ and a deformation~$\xi$ of the complex structure. However an important difference between the system on abelian and toric varieties is the boundary conditions that we must impose on~$u$ and~$\xi$. For an abelian variety the conditions are just periodicity of the matrix-valued functions~$D^2u$ and~$\xi$, but for the toric case the situation is slightly more complicated: while the boundary conditions for~$u$ are well-understood from the work of Abreu, it is not yet clear what the boundary behaviour of~$\xi$ should be. We will see that, as a consequence of the compatibility between~$\xi$ and the complex structure, also the boundary conditions for~$\xi$ can be expressed in terms of \emph{Guillemin's boundary conditions}. We recall here in some details some features of these boundary conditions, since they play a major role in what follows.

As was mentioned above, there is a standard complex structure~$J_P$ on~$P^\circ\times\bb{T}^n$, whose boundary behaviour allows it to be extended to a complex structure on the whole manifold~$M_P$; moreover,~$(M_P,J_P)$ is~$\bb{T}^n$-equivariantly biholomorphic to~$(M,J)$. We recall from~\cite{Guillemin_toric} the construction of this complex structure~$J_P$: let~$S_1,\dots,S_r$ be the faces of the polytope~$P$, defined as~$S_r:=\set*{\ell^r(\bm{y})=0}$ for some affine-linear functions~$\ell^r$ such that
\begin{equation*}
P=\set*{\bm{y}\in\bb{R}^n\tc\ell^r(\bm{y})\geq 0\ \forall r}.
\end{equation*}
Then the potential~$u_P$ for the canonical complex structure~$J_P$ is
\begin{equation}\label{eq:can_potential}
u_P:=\sum_r\ell^r(\bm{y})\,\mrm{log}\,\ell^r(\bm{y}).
\end{equation}
The following result of Abreu describes all possible integrable compatible complex structures~$J$ on~$P$; more precisely, it shows that any~$\bb{T}$-invariant complex structure on~$M$ has the same behaviour of~$J_P$ near the boundary of the moment polytope: this boundary behaviour is what we refer as \emph{Guillemin's boundary conditions} for a symplectic potential on the polytope~$P$.
\begin{thm}[\cite{Abreu_toric}, Thm.~$2.8$]\label{thm:complex_str}
Every integrable compatible complex structure~$J$ is given by a potential~$u=u_P+h$, where~$u_P(\bm{y})$ is defined by~\eqref{eq:can_potential}, and~$h$ is a smooth function on the whole polytope such that~$\mrm{Hess}_{\bm{y}}u$ is positive definite on~$P^\circ$ and has determinant of the form
\begin{equation*}
\mrm{det}(\mrm{Hess}_{\bm{y}}(u))=\left(\delta(\bm{y})\prod_{r}\ell^r(\bm{y})\right)^{-1}
\end{equation*}
for some strictly positive function~$\delta\in\m{C}^0(P)$.
\end{thm}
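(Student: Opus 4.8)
The plan is to reduce the statement to a local analysis near the boundary faces of $P$, where the action--angle coordinates $(\bm{y},\bm{w})$ degenerate, and to compare an arbitrary symplectic potential with the canonical one $u_P$ of \eqref{eq:can_potential}. First I would record the interior dictionary: on $M^\circ\cong P^\circ\times\bb{T}^n$ a $\bb{T}^n$-invariant almost complex structure compatible with the standard symplectic form is represented by a matrix $\begin{pmatrix}0 & -G^{-1}\\ G & 0\end{pmatrix}$ for a symmetric positive-definite matrix-valued function $G(\bm{y})$, and integrability of $J$ forces the associated connection $1$-form to be closed, hence $G=\mrm{Hess}_{\bm{y}}u$ for a strictly convex potential $u$, unique up to an affine function. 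This is exactly the Legendre-dual picture used for abelian varieties above, so on the interior the correspondence $J\leftrightarrow u$ and the positivity of $\mrm{Hess}_{\bm{y}}u$ on $P^\circ$ are immediate, and the only genuine content of the theorem is the behaviour of $u$ as $\bm{y}$ approaches $\partial P$.

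Next I would treat the canonical potential. By Guillemin's theorem \cite{Guillemin_toric} $(M,J)$ is $\bb{T}^n$-equivariantly biholomorphic to $(M_P,J_P)$, and $J_P$ is the smooth extension of the structure with potential $u_P$; it therefore suffices to show that $u_P$ satisfies the stated conditions and that a potential $u$ yields a complex structure extending smoothly across the singular orbits if and only if $h:=u-u_P$ extends smoothly to $\overline{P}$. For the determinant of $u_P$ I would argue locally at a vertex: by the Delzant (unimodularity) condition \cite{Delzant_polytope} the $n$ facets $\ell^1,\dots,\ell^n$ meeting a vertex have primitive normals forming a $\bb{Z}$-basis, so after an integral-affine change of coordinates $\ell^r(\bm{y})=y^r$ and $u_P=\sum_{r=1}^n y^r\log y^r+(\text{smooth})$, the smooth part coming from the facets that miss the vertex. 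Then $\mrm{Hess}\,u_P=\mrm{diag}(1/y^r)+(\text{smooth})$, so $\det\mrm{Hess}\,u_P=\big(\prod_{r=1}^n y^r\big)^{-1}\det\!\big(I+\mrm{diag}(y^r)S\big)$ with $S$ smooth; since $\mrm{diag}(y^r)S\to 0$ at the vertex the last factor is positive and continuous, and as every $\ell^{r'}$ is bounded below away from its own facet, all nonsingular contributions can be absorbed into a single strictly positive continuous $\delta$, giving $\det\mrm{Hess}\,u_P=(\delta\prod_r\ell^r)^{-1}$. The identical computation for $u=u_P+h$ with $h$ smooth shows the determinant condition is preserved.

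The hard part is the boundary equivalence: that smoothness of $J$ as a tensor field on the manifold $M$ across the lower-dimensional orbits is equivalent to smoothness of $h$ up to $\partial P$. To prove this I would use the Delzant local model, writing a neighbourhood of the orbit over the relative interior of a codimension-$k$ face as an open subset of $\bb{C}^k\times(\bb{C}^*)^{n-k}$ with explicit complex coordinates $Z^r$ and the normal-direction relation $y^r\sim|Z^r|^2$. In these coordinates the logarithmic singularity of $u_P$ is precisely what makes the canonical $J_P$ smooth across $\{|Z^r|=0\}$, and one computes the change of variables between $(\bm{y},\bm{w})$ and the $Z^r$ to rewrite $J$ in honest smooth coordinates. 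The heart of the argument is checking both implications of this transcription: that $u_P+(\text{smooth }h)$ produces a tensor extending smoothly over the singular locus, and conversely that any $\bb{T}$-invariant compatible integrable $J$ smooth on $M$ forces $u-u_P$ to extend smoothly, the degeneracy rate $1/\ell^r$ of $\mrm{Hess}\,u$ being pinned entirely to $u_P$. I expect the careful bookkeeping of this coordinate change near the corners, where several facets meet and several $|Z^r|$ vanish simultaneously, to be the principal technical obstacle. Once it is in place, the global statement follows by patching the local models with a $\bb{T}$-invariant partition of unity and collecting the positive smooth factors from each chart into the single function $\delta\in\m{C}^0(P)$.
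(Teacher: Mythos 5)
First, note that the thesis does not prove this statement at all: it is quoted verbatim from Abreu with a citation, and is used as a black box in Section~\ref{sec:toric}. So there is no proof of the paper's to compare against; your proposal has to be judged against the standard Guillemin--Abreu argument, whose overall architecture (interior Legendre dictionary, vertex computation via the Delzant condition, boundary analysis in the local model $\bb{C}^k\times(\bb{C}^*)^{n-k}$ with $y^r\sim\abs{Z^r}^2$) you have reproduced correctly. The interior step and the vertex determinant computation are fine.

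There are, however, two genuine gaps. The first is that the step you yourself flag as ``the heart of the argument'' --- that smoothness of $J$ as a tensor on $M$ across the lower-dimensional orbits is equivalent to smoothness of $h=u-u_P$ up to $\diff P$ --- is the entire content of the theorem, and you only announce the coordinate change rather than carry it out; in particular the necessity direction (an arbitrary invariant compatible integrable $J$, smooth on all of $M$, forces $u-u_P$ to extend smoothly with the degeneracy rate $1/\ell^r$ pinned to $u_P$) is exactly what distinguishes this theorem from a routine observation, and a proposal that defers it has not proved the statement. A smaller but related point: you also assert as ``immediate'' that every $\bb{T}^n$-invariant compatible integrable $J$ on $P^\circ\times\bb{T}^n$ takes the anti-diagonal block form $\begin{pmatrix}0 & -G^{-1}\\ G & 0\end{pmatrix}$; as the abelian discussion in Chapter~\ref{chap:symplectic_coords} shows, a general invariant compatible deformation has nonzero diagonal blocks, so killing them requires an adapted choice of angle coordinates (a $\bm{y}$-dependent translation along the fibres), which deserves at least a sentence.

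The second gap is in the determinant condition. You claim that ``the identical computation for $u=u_P+h$ with $h$ smooth shows the determinant condition is preserved,'' but smoothness of $h$ together with positivity of $\mrm{Hess}_{\bm{y}}u$ on $P^\circ$ does \emph{not} by itself give $\mrm{det}(\mrm{Hess}_{\bm{y}}u)=\left(\delta\prod_r\ell^r\right)^{-1}$ with $\delta$ strictly positive and continuous on all of $P$. Along the relative interior of a facet $\set{\ell^1=0}$ (in adapted coordinates $\ell^1=y^1$) one has $\mrm{Hess}\,u=\tfrac{1}{y^1}E_{11}+S$ with $S$ smooth, hence $\mrm{det}(\mrm{Hess}\,u)=\tfrac{1}{y^1}\left(\mrm{cof}_{11}(S)+y^1\mrm{det}(S)\right)$, and the required strict positivity of $\delta$ amounts to $\mrm{cof}_{11}(S)>0$ on the facet, i.e.\ non-degeneracy of the induced structure on the boundary divisor. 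Interior positive-definiteness only gives $\mrm{cof}_{11}(S)\geq 0$ there; the strict inequality is an extra piece of information that must be extracted from the hypothesis that $J$ extends to a genuine compatible complex structure on the whole of $M$ (so that the restriction to each boundary divisor is again Kähler). At the vertices your argument is complete, since the correction factor tends to the identity, but along the open facets this point needs to be made explicitly.
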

In particular, consider the matrix~$G^{-1}:=\left(\mrm{Hess}_{\bm{y}}(u_P+h)\right)^{-1}$. We claim that it is a continuous and bounded function on the whole polytope; by our assumptions it is invertible in~$P^\circ$, and we claim that as~$\bm{y}$ tends to a face~$S_r$, the matrix~$G^{-1}$ acquires a kernel containing the vector~$\nabla\ell^r$. In particular,~$G^{-1}$ vanishes at the vertices of~$P$.

We write down the details of the proof of this claim just in the~$2$-dimensional case, since it is notationally easier and in what follows we will mainly focus on the complex surfaces.\todo{$2$-dimensional case} 
\begin{proof}\label{boundary_coords}
It is enough to show this for the first edge~$S_1=\set*{\ell^1(\vec{y})=0}$ and the vertex~$p_0$ at the intersection of~$S_1$ and~$S_2$. Consider the linear system of coordinates~$\bm{p}$ centred at~$p_0$ and generated by~$\nabla\ell^1$,~$\nabla\ell^2$; up to reordering the edges of~$P$ we can assume that~$(\nabla\ell^1,\nabla\ell^2)$ is positively oriented, and if we let~$L$ be the matrix whose columns are~$\nabla\ell^1$ and~$\nabla\ell^2$, the two system of coordinates~$\bm{y}$ and~$\bm{p}$ are related by the affine transformation~$\bm{y}=L\bm{p}+p_0$.

Notice that~$\mrm{Hess}_{\bm{y}}u=L^{-1}\mrm{Hess}_{\bm{p}}u\,L^{-1}$: it will be enough to prove our claim for the matrix~$\left(\mrm{Hess}_{\bm{p}}u\right)^{-1}$. More precisely, since in this new coordinate system the side~$S_1$ is described as~${p^1=0}$, we will consider the limit for~$p^1\to 0$ of the matrix~$\left(\mrm{Hess}_{\bm{p}}(u)\right)^{-1}$.
\begin{equation*}
\mrm{Hess}_{\bm{p}}u=\begin{pmatrix}
\frac{1}{p^1}+\sum_{r>2}\frac{\nabla\ell^r_1\,\nabla\ell^r_1}{\ell^r} & \sum_{r>2}\frac{\nabla\ell^r_1\,\nabla\ell^r_2}{\ell^r}\\ \sum_{r>2}\frac{\nabla\ell^r_1\,\nabla\ell^r_2}{\ell^r} & \frac{1}{p^2}+\sum_{r>2}\frac{\nabla\ell^r_2\,\nabla\ell^r_2}{\ell^r}
\end{pmatrix}+\begin{pmatrix}
h_{11} & h_{12} \\ h_{12} & h_{22}
\end{pmatrix}
\end{equation*}
\begin{equation}\label{eq:Hess_det}
\begin{gathered}
p^1p^2\left(\mrm{det}\,\mrm{Hess}_{\bm{p}}u\right)=1+p^1\left(h_{11}+\sum_{r>2}\frac{\nabla\ell^r_1\,\nabla\ell^r_1}{\ell^r}\right)+p^2\left(h_{22}+\sum_{r>2}\frac{\nabla\ell^r_2\,\nabla\ell^r_2}{\ell^r}\right)+\\
+p^1p^2\left[\left(h_{11}+\sum_{r>2}\frac{\nabla\ell^r_1\,\nabla\ell^r_1}{\ell^r}\right)\left(h_{22}+\sum_{r>2}\frac{\nabla\ell^r_2\,\nabla\ell^r_2}{\ell^r}\right)-\left(h_{12}+\sum_{r>2}\frac{\nabla\ell^r_1\,\nabla\ell^r_2}{\ell^r}\right)^{2}\right]
\end{gathered}
\end{equation}
\begin{equation}\label{eq:Hess_inverso}
\left(\mrm{Hess}_{\bm{p}}u\right)^{-1}=\frac{1}{\mrm{det}\,\mrm{Hess}_{\bm{p}}u}\begin{pmatrix}
h_{22}+\frac{1}{p^2}+\underset{r>2}{\sum}\frac{\nabla\ell^r_2\,\nabla\ell^r_2}{\ell^r} & -h_{12}-\underset{r>2}{\sum}\frac{\nabla\ell^r_1\,\nabla\ell^r_2}{\ell^r}\\ -h_{12}-\underset{r>2}{\sum}\frac{\nabla\ell^r_1\,\nabla\ell^r_2}{\ell^r} & h_{11}+\frac{1}{p^1}+\underset{r>2}{\sum}\frac{\nabla\ell^r_1\,\nabla\ell^r_1}{\ell^r}
\end{pmatrix}
\end{equation}
so as~$p^1$ goes to~$0$ we see that~$\left(\mrm{Hess}_{\bm{p}}u\right)^{-1}$ tends to
\begin{equation*}
\frac{1}{p^2h_{22}+1+p^2\sum_{r>2}\frac{\nabla\ell^r_2\nabla\ell^r_2}{\ell^r}}\begin{pmatrix} 0 & 0 \\ 0 & p^2
\end{pmatrix}.
\end{equation*}
Since we know from Theorem~\ref{thm:complex_str} that the denominator is strictly positive on the edges of~$P$, this matrix-valued function is continuous on~$S_1$ and its kernel is generated by~$(1,0)$, i.e.~$\nabla\ell^1$. Moreover,~$\mrm{Hess}^{-1}_{\bm{p}}u$ vanishes at~$p_0$, i.e. for~$p^1,p^2\to0$.

In the higher-dimensional case the situation is analogous; in a system of linear coordinates centred in a vertex~$p_0$ and generated by the vectors~$\set*{\nabla\ell^r\tc p_0=\cap S_r}$ the matrix is of the shape
\begin{equation*}
\mrm{Hess}_{\bm{p}}(u)^{-1}=
\begin{pmatrix}
a_1 	& a_2 	& \dots	& a_n 	\\
a_2 	& 		& 		& 		\\
\vdots 	& 		&	H	& 		\\
a_n 	&		&		& 		\\
\end{pmatrix}
\end{equation*}
where~$a_1,\dots,a_n$ are functions on~$P$ that vanish for~$p^1\to 0$, and~$H$ is a~$(n-1)\times(n-1)$ matrix-valued function on~$P$ that vanishes on the vertices of~$P$.
\end{proof}

\subsection{Boundary conditions for a deformation of the complex structure}\label{sec:symp_boundary_toric}

From Theorem~\ref{thm:complex_str} we can describe all integrable deformations of the complex structure~$J$ in the symplectic coordinates~$\bm{y}$ of~$P$: assume that~$J$ is defined by the potential~$u$, and let~$G:=\mrm{Hess}_{\bm{y}}(u)$. For a function~$\varphi\in\m{C}^\infty(P)$ let~$\Phi:=\mrm{Hess}_{\bm{y}}(\varphi)$ and consider
\begin{equation*}
\dot{J}:=\begin{pmatrix} 0 & G^{-1}\Phi G^{-1}\\ \Phi & 0\end{pmatrix}.
\end{equation*}
Then~$\dot{J}$ is the deformation of the complex structure~$J$ corresponding to the path of potentials~$u+\varepsilon\varphi$. Since~$\mscr{J}$ is a complex manifold, also~$J\dot{J}$ is a first-order deformation of~$J$, so the general form of an integrable deformation of the complex structure is
\begin{equation*}
\dot{J}=\begin{pmatrix} G^{-1}A & G^{-1}B G^{-1}\\ B &- A G^{-1}\end{pmatrix}
\end{equation*}
where~$A$ and~$B$ are the Hessian matrices of two functions in~$\m{C}^\infty(P)$, and in particular they are smooth matrix-valued functions on~$P$. If we do not assume~$A$ and~$B$ to be Hessians, but rather just symmetric matrices, then we obtain deformations of the complex structure that are not first-order integrable. This description of the first-order deformations of~$J$ is completely analogous to the one we had for abelian varieties, see Section~\ref{sec:symplectic_coords_proof}. Correspondingly, the matrix~$\xi$ has the same description as in Lemma~\ref{lemma:xi_caratt_alt}
\begin{equation}\label{eq:def_cs_sympcoord}
\xi=G^{-1}(A+\I B)G^{-1}.
\end{equation}
We see that, essentially, the boundary behaviour of a deformation~$\xi$ of the complex structure is determined by that of~$J$. More precisely, consider a symplectic potential~$u$ satisfying Guillemin's boundary conditions (c.f. Theorem~\ref{thm:complex_str}), that defines a~$\bb{T}$-invariant integrable complex structure~$J$. Then for any~$A,B\in\m{C}^\infty(P,\bb{R}^{n\times n})$ the matrix~$\xi$ defined by~\eqref{eq:def_cs_sympcoord} comes from a~$\bb{T}$-invariant Higgs term~$\alpha\in T^*_J\!\mscr{J}$, and vice-versa.
\begin{rmk}
One of the conditions of the HcscK system is that~$\norm{\xi}^2_u<1$; letting~$G=\mrm{Hess}_{\bm{y}}u$ we have
\begin{equation*}
\norm{\xi}^2_u=\mrm{Tr}\left(\xi\,G\,\bar{\xi}\,G\right).
\end{equation*}
For~$\bm{y}\to\diff S_r$ however, we know that some components of~$G$ blow up along the direction~$\nabla\ell^r$, so the condition~$\norm{\xi}^2_u<1$ can be satisfied in~$P$ only if~$\xi$ vanishes for~$\bm{y}\to\diff P$ along the normal direction. Equation~\eqref{eq:def_cs_sympcoord} implies that this is the case: since~$\xi=G^{-1}\Phi G^{-1}$ for some symmetric matrix-valued function~$\Phi\in\m{C}^\infty(P,\bb{C}^{n\times n})$ then
\begin{equation*}
\norm{\xi}^2_u=\mrm{Tr}\left(\Phi\,G^{-1}\,\bar{\Phi}\,G^{-1}\right)
\end{equation*}
is well-defined (and finite) up to the boundary of~$P$.
\end{rmk}

\subsection{The complex moment map on a toric surface}

In this Section we consider the complex moment map equation in complex dimension~$2$, with the aim of studying the integrability condition for a solution to the HcscK system on a toric surface.

Assume that~$\xi$ is of the form~\eqref{eq:def_cs_sympcoord} for some smooth matrix-valued functions~$A$ and~$B$. In particular~$\xi$ is bounded on~$P$, since~$G^{-1}$ is; also the function~$\xi^{ab}_{,ab}$ is bounded on~$P$, as it is just the expression in symplectic coordinates of~$\mrm{div}\left(\diff^*\alpha\right)$, up to a constant.

\begin{lemma}\label{lemma:xi_complex_mm_intbyparts}
Let~$\xi$ be the representative of a first-order deformation of the complex structure. Then for any~$f\in\m{C}^\infty(P)$ we have
\begin{equation*}
\int_P(\xi^{ij})_{,ij}f\mrm{d}\mu=\int_P\xi^{ij}f_{,ij}\mrm{d}\mu.
\end{equation*}
\end{lemma}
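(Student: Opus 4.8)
The statement is an integration-by-parts identity on the moment polytope $P$, and the key subtlety is that $P$ has boundary, so naively integrating by parts twice will produce boundary terms. The whole point is that these boundary terms vanish, thanks to the boundary behaviour of $\xi$ established in Section~\ref{sec:symp_boundary_toric}. Let me think about what I actually need.

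Let me reconsider. The claim is $\int_P (\xi^{ij})_{,ij}\, f\, \mathrm{d}\mu = \int_P \xi^{ij} f_{,ij}\, \mathrm{d}\mu$ for all $f \in \mathcal{C}^\infty(P)$. Moving two derivatives from $\xi$ onto $f$ by the divergence theorem generates boundary integrals over $\partial P$ of the form $\int_{\partial P}\big((\xi^{ij})_{,i}\, f - \xi^{ij} f_{,i}\big)\nu_j\, \mathrm{d}S$, where $\nu$ is the outward conormal. So the essential content is that this boundary term vanishes. From Section~\ref{sec:symp_boundary_toric} we know $\xi = G^{-1}\Phi\, G^{-1}$ for a symmetric matrix-valued $\Phi \in \mathcal{C}^\infty(P)$, with $G = \mathrm{Hess}_{\bm y}(u)$ satisfying Guillemin's boundary conditions. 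The crucial facts from the proof labelled `boundary_coords' are that $G^{-1}$ is continuous and bounded up to $\partial P$, that as $\bm y$ tends to a face $S_r = \{\ell^r = 0\}$ the matrix $G^{-1}$ develops a kernel containing $\nabla\ell^r$, and that $G^{-1}$ vanishes at the vertices.

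My plan is the following. First I would integrate by parts twice, carefully recording the two boundary contributions, writing the combined boundary term as an integral over $\partial P$ of $\nu_j\big(f\,(\xi^{ij})_{,i} - \xi^{ij} f_{,i}\big)$. On a face $S_r$ the outward conormal $\nu$ is (proportional to) $\nabla\ell^r$. The term $\xi^{ij} f_{,i}\,\nu_j = \xi^{ij} f_{,i}\,\nabla\ell^r_j$ vanishes on $S_r$ because, writing $\xi = G^{-1}\Phi G^{-1}$, the factor $G^{-1}\nabla\ell^r \to 0$ as $\bm y \to S_r$ by the kernel property; since $\Phi$ and $G^{-1}$ are bounded and $f$ is smooth, the whole contraction with $\nu_j = \nabla\ell^r_j$ tends to zero. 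For the term $f\,(\xi^{ij})_{,i}\,\nu_j$ I would argue similarly, but here I expect to need a slightly more delicate analysis, since differentiating $\xi$ could in principle worsen the behaviour near $\partial P$; the plan is to use that $(\xi^{ij})_{,ij}$ is globally bounded on $P$ (it is the symplectic-coordinate expression of $\mathrm{div}(\partial^*\alpha)$ up to a constant) and that the structure $\xi = G^{-1}\Phi G^{-1}$ forces $\xi^{ij}\nabla\ell^r_j \to 0$ on $S_r$, so that an appropriate normal-direction estimate kills this boundary piece as well.

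The main obstacle I anticipate is precisely controlling the term $f\,(\xi^{ij})_{,i}\,\nu_j$: unlike $\xi^{ij}\nu_j$, which vanishes by the clean kernel property, the derivative $(\xi^{ij})_{,i}$ need not be bounded a priori, and I must show that its contraction against the conormal still has the right decay. The cleanest route is probably to avoid differentiating $\xi$ before integrating by parts — that is, to integrate by parts only \emph{once} in each of the two derivatives in a symmetric way, so that each boundary term is of the form (boundary value of $\xi$ or of a once-differentiated quantity) contracted with $\nu_j = \nabla\ell^r_j$, and in both cases the factor $G^{-1}\nabla\ell^r \to 0$ does the work. Alternatively, I would exhaust $P$ by slightly shrunken polytopes $P_\delta$ bounded away from $\partial P$, perform the two integrations by parts on $P_\delta$ where everything is smooth, and take $\delta \to 0$, checking that the boundary integrals over $\partial P_\delta$ tend to zero using the vanishing of $\xi^{ij}\nabla\ell^r_j$ together with the boundedness of $(\xi^{ij})_{,ij}$. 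This limiting argument makes the role of Guillemin's boundary conditions fully transparent and is how I would present the proof.
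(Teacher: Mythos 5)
Your overall strategy matches the paper's: integrate by parts (the paper does this on $P$ directly, writing the result as
\begin{equation*}
\int_{P}\xi^{ij}_{,ij}\,f\,\mrm{d}\mu=-\int_{\diff P}\left(\xi\,\nabla\ell\right)^{i}_{,i}f\,\mrm{d}\sigma+\int_{\diff P}\left(\xi\,\nabla\ell\right)^{i}f_{,i}\,\mrm{d}\sigma+\int_P\xi^{ij}\,f_{,ij}\,\mrm{d}\mu
\end{equation*}
and then killing both boundary terms), and your treatment of the term $\xi^{ij}f_{,i}\nu_j$ is exactly right: it vanishes because $\xi\,\nabla\ell^r=G^{-1}\Phi\,(G^{-1}\nabla\ell^r)\to 0$ on $S_r$.

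However, there is a genuine gap in your handling of the other boundary term, $\int_{\diff P}(\xi^{ij})_{,i}\,f\,\nu_j$, which is the real content of the lemma. Neither of your proposed mechanisms suffices. The global boundedness of $(\xi^{ij})_{,ij}$ gives no control on the boundary trace of $(\xi^{ij})_{,i}\nu_j=\mrm{div}(\xi\,\nabla\ell^r)$, and the vanishing of $\xi\,\nabla\ell^r$ on $S_r$ does not by itself imply the vanishing of its divergence: a vector field can vanish on a face while its normal derivative does not. Your idea of splitting the two integrations by parts ``symmetrically'' does not avoid this either, since one of the two boundary terms necessarily carries one derivative of $\xi$. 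What actually makes the argument work — and what the paper computes explicitly in adapted coordinates at a corner — is that the component of $V:=\xi\,\nabla\ell^r$ normal to the face is \emph{quadratic} in the degenerating entries $a,b$ of $G^{-1}$ (because $\nabla\ell^r$ is hit by $G^{-1}$ on both sides of $\Phi$: $V^1=\Phi_{11}a^2+2\Phi_{12}ab+\Phi_{22}b^2$), so its normal derivative still retains a factor of $a$ or $b$ and tends to zero; while the tangential derivative of the tangential component vanishes because $V\equiv 0$ along the face (equivalently, because the tangential derivatives of $a$ and $b$ vanish there, as the paper records in its computation of $\lim\diff_{y^2}a$ and $\lim\diff_{y^2}b$). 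Without isolating this quadratic vanishing of $\xi\,\nabla\ell^r$ and the explicit first-derivative limits of the entries of $G^{-1}$ coming from Guillemin's boundary conditions, the boundary term $\int_{\diff P}(\xi\,\nabla\ell)^i_{,i}\,f\,\mrm{d}\sigma$ is not shown to vanish, and the proof is incomplete.
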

To prove this result it will be notationally convenient to let~$\mrm{d}\sigma$ be the measure on~$\diff P$ that on each side~$S_r$ is given by the Lebesgue measure multiplied by~$\abs{\nabla\ell^r}^{-2}$; this is the same measure~$\mrm{d}\sigma$ considered by Donaldson in~\cite{Donaldson_stability_toric}.
\begin{proof}
For any sufficiently regular function~$f$ on~$P$, integration by parts with respect to the Lebesgue measure~$\mrm{d}\mu$ gives
\begin{equation*}
\int_{P}\xi^{ij}_{,ij}\,f\,\mrm{d}\mu=-\int_{\diff P}\left(\xi\,\nabla\ell\right)^{i}_{,i}f\,\mrm{d}\sigma+\int_{\diff P}\left(\xi\,\nabla\ell\right)^{i}f_{,i}\,\mrm{d}\sigma+\int_P\xi^{ij}\,f_{,ij}\,\mrm{d}\mu
\end{equation*}
since the outer normal to the boundary is given, at the side~$S_r$, by~$-\frac{\nabla\ell^r}{\abs{\nabla\ell^r}^2}$. The proof consists in showing that the boundary terms vanish.

First, since~$\xi$ represents a deformation of the complex structure we have~$\xi=G^{-1}\Phi G^{-1}$ for some symmetric~$\Phi\in\m{C}^\infty(P,\bb{C}^{n\times n})$, so that on the boundary of~$P$ we have~$\xi\nabla\ell=0$. Then the integration term~$\left(\xi\,\nabla\ell\right)^{i}\,f_{,i}$ vanishes, for smooth~$f$.

For any side~$S_r$ of~$\diff P$, let~$V$ be the vector field~$V:=\xi\nabla\ell^r$. We claim that also~$\mrm{div}(V)$ vanishes on~$S_r$; this would imply~$\int_{\diff P}\left(\xi\,\nabla\ell\right)^{i}_{,i}f\,\mrm{d}\sigma=0$, concluding the proof.

We can assume without loss of generality that~$r=1$,~$\ell^1=y^1$ and~$\ell^2=y^2$, so that we are in the same situation considered in our previous description of~$G^{-1}$, c.f. pag. \pageref{boundary_coords}. We have~$\nabla\ell^1=(1,0)$, and we know that~$G^{-1}=\begin{pmatrix}a & b \\ b & c\end{pmatrix}$ for some functions~$a$,~$b$ and~$c$ such that~$a\to 0$ and~$b\to 0$ as~$y^1\to 0$. More precisely, from~\eqref{eq:Hess_inverso} we have
\begin{equation*}
\begin{split}
a=&\frac{1}{\mrm{det}\,\mrm{Hess}(u)}\left(
h_{22}+\frac{1}{p^2}+\sum_{r>2}\frac{\nabla\ell^r_2\,\nabla\ell^r_2}{\ell^r}\right),\\
b=&-\frac{1}{\mrm{det}\,\mrm{Hess}(u)}\left(h_{12}+\sum_{r>2}\frac{\nabla\ell^r_1\,\nabla\ell^r_2}{\ell^r}\right)
\end{split}
\end{equation*}
where~$\mrm{det}\,\mrm{Hess}(u)$ is given by~\eqref{eq:Hess_det}. We can easily compute
\begin{equation}\label{eq:derivate_ab}
\begin{gathered}
\lim_{y^1\to 0}\diff_{y^1}a=1;\quad\lim_{y^1\to 0}\diff_{y^2}a=0;\\
\lim_{y^1\to 0}\diff_{y^1}b=-\frac{y^2\left(h_{12}+\sum_{r>2}\frac{\nabla\ell^r_1\nabla\ell^r_2}{\ell^r}\right)}{y^2h_{22}+1+y^2\sum_{r>2}\frac{\nabla\ell^r_2\nabla\ell^r_2}{\ell^r}};\quad\lim_{p^1\to 0}\diff_{y^2}b=0.
\end{gathered}
\end{equation}
Since~$u_P+h$ satisfies Abreu's conditions on the determinant of~$G=\mrm{Hess}(u_P+h)$ (c.f. Theorem~\ref{thm:complex_str}), the fraction that gives~$\lim_{y^1\to 0}\diff_{y^1}b$ is non-singular on~$\diff P$. Now,~$V$ is
\begin{equation*}
V(\bm{y})=\begin{pmatrix} a & b \\ b & c \end{pmatrix}
\begin{pmatrix} \Phi_{11} & \Phi_{12} \\ \Phi_{12} & \Phi_{22}\end{pmatrix}
\begin{pmatrix} a \\ b \end{pmatrix}=
\begin{pmatrix} \Phi_{11}\,a^2+2\,\Phi_{12}\,a\,b+\Phi_{22}\,b^2\\
\Phi_{11}\,a\,b+\Phi_{12}\left(b^2+a\,c\right)+\Phi_{22}\,b\,c
\end{pmatrix}.
\end{equation*}
Since~$\Phi_{ij}$ are smooth functions on~$P$, it is clear from~\eqref{eq:derivate_ab} that~$\mrm{div}\,V=0$ on~$S_1$, thus proving our claim.
\end{proof}
\begin{cor}\label{cor:toric_integrability}
The matrix~$\xi=G^{-1}\Phi G^{-1}$ is a solution to the complex moment map equation if and only if~$\Phi$ is orthogonal to the space~$\set*{\mrm{Hess}(f)\tc f\in\m{C}^\infty(P)}$ with respect to the~$L^2$ product on~$\m{C}(P)$ defined by~$G$. In particular, if~$G$ and~$\xi$ solve the HcscK system~\eqref{eq:symplectic_HcscK_toric}, then~$\xi$ is integrable if and only if it is identically~$0$.
\end{cor}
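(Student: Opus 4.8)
The plan is to deduce both assertions from the integration-by-parts identity of Lemma~\ref{lemma:xi_complex_mm_intbyparts} together with the integrability criterion of Proposition~\ref{prop:integrability_xi}. First I would fix the meaning of the pairing in the statement: for symmetric matrix-valued functions $\Phi,\Psi$ on $P$ the $L^2$ product induced by $G=\mrm{Hess}(u)$ is
\[
\langle\Phi,\Psi\rangle_G=\int_P\mrm{Tr}\!\left(G^{-1}\Phi\,G^{-1}\bar\Psi\right)\mrm{d}\mu .
\]
Since $\xi=G^{-1}\Phi G^{-1}$, for every real $f\in\m{C}^\infty(P)$ one has $\langle\Phi,\mrm{Hess}(f)\rangle_G=\int_P\xi^{ij}f_{,ij}\,\mrm{d}\mu$, which by Lemma~\ref{lemma:xi_complex_mm_intbyparts} equals $\int_P(\xi^{ij})_{,ij}\,f\,\mrm{d}\mu$. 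Thus the complex moment map equation $(\xi^{ij})_{,ij}=0$ gives $\Phi\perp_G\mrm{Hess}(f)$ for all $f$; conversely, if this orthogonality holds for every real $f$, then $\int_P(\xi^{ij})_{,ij}f\,\mrm{d}\mu=0$ for all such $f$, and testing against functions compactly supported in $P^\circ$ (and using continuity up to $\partial P$) forces the smooth complex function $(\xi^{ij})_{,ij}$ to vanish. This establishes the first claim.

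For the ``in particular'' part I would first record the algebraic fact that $\langle\cdot,\cdot\rangle_G$ is \emph{positive definite} on real symmetric matrix-valued functions: with $M=G^{-1/2}\Psi G^{-1/2}$ one has $\mrm{Tr}(G^{-1}\Psi G^{-1}\Psi)=\mrm{Tr}(M^2)=\norm{M}^2\ge 0$, vanishing pointwise iff $\Psi=0$. Then, assuming $(G,\xi)$ solves~\eqref{eq:symplectic_HcscK_toric} and that $\xi$ is integrable, Proposition~\ref{prop:integrability_xi} gives $\Phi=G\xi G=\mrm{Hess}(h)$ for some complex $h=h_1+\I h_2$, so $\Phi=\mrm{Hess}(h_1)+\I\,\mrm{Hess}(h_2)$ with real Hessians. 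The first part says $\Phi$ is $G$-orthogonal to every real Hessian; separating real and imaginary parts (each being a pairing of real matrices, hence real-valued) yields $\langle\mrm{Hess}(h_j),\mrm{Hess}(f)\rangle_G=0$ for $j=1,2$ and all real $f$. Choosing $f=h_1$ and $f=h_2$ and invoking positive-definiteness gives $\mrm{Hess}(h_1)=\mrm{Hess}(h_2)=0$, whence $\Phi\equiv 0$ and $\xi\equiv 0$. The converse is immediate, since the zero deformation is integrable.

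The computations here are all routine once the pairing is identified; the two points requiring a little care are that testing against real $f$ already detects the vanishing of the complex-valued quantity $(\xi^{ij})_{,ij}$, and the real/imaginary splitting (one cannot simply test against the complex potential $h$, because $\mrm{Tr}(G^{-1}\Phi G^{-1}\Phi)$ involves $\Phi$ rather than $\bar\Phi$ and so need not be sign-definite). The genuine analytic obstacle, namely the vanishing of the boundary contributions in the integration by parts over $P$, is already resolved inside Lemma~\ref{lemma:xi_complex_mm_intbyparts}, where Guillemin's boundary conditions enter through the relations $\xi\,\nabla\ell^r=0$ and $\mrm{div}(\xi\,\nabla\ell^r)=0$ on each face; granting that lemma, nothing further about the boundary behaviour is needed.
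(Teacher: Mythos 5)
Your proof is correct and follows exactly the route the paper intends: the $G$-weighted $L^2$ pairing of $\Phi$ with $\mrm{Hess}(f)$ is $\int_P\xi^{ij}f_{,ij}\,\mrm{d}\mu$, which Lemma~\ref{lemma:xi_complex_mm_intbyparts} identifies with $\int_P(\xi^{ij})_{,ij}f\,\mrm{d}\mu$, and the integrability statement follows by combining Proposition~\ref{prop:integrability_xi} with self-orthogonality. The paper leaves the deduction implicit, and the two points you flag — testing the complex-valued quantity $(\xi^{ij})_{,ij}$ against real $f$, and splitting $\Phi=\mrm{Hess}(h_1)+\I\,\mrm{Hess}(h_2)$ before invoking positive-definiteness rather than pairing $\Phi$ with itself — are exactly the details worth making explicit.
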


\subsection{\textit{K}-stability and the HcscK system on toric surfaces}

We would like to have a variational formulation of the HcscK problem on a toric manifold, similarly to what we saw in Proposition~\ref{HKEulLagr} for abelian varieties. In that situation a simple computation showed that the real moment map equation of the HcscK system is the Euler-Lagrange equation for the periodic HK-functional~$\hat{\m{F}}$, c.f.~\eqref{eq:HK_energy}. However on a toric manifold the computation of Proposition~\ref{HKEulLagr} has to be modified, taking into account some boundary terms arising from the integration by parts that is required.

Fix a symplectic potential~$u$, let~$G:=\mrm{Hess}(u)$ and assume that~$\xi=G^{-1}\Phi G^{-1}$, as in~\eqref{eq:def_cs_sympcoord}. Consider the real moment map equation of~\eqref{eq:symplectic_HcscK_toric}
\begin{equation*}
\left(\left(1-G^{-1}\Phi\,G^{-1}\bar{\Phi}\right)^{\frac{1}{2}}G^{-1}\right)^{ab}_{,ab}=-C.
\end{equation*}
To compute an analogue of the periodic HK-energy in the toric setting, we need an integration by parts formula.
\begin{lemma}\label{lemma:intbyparts_realmm}
For a Delzant polytope~$P\subset\bb{R}^2$, fix a symmetric matrix-valued function~$\Phi\in\m{C}^\infty(P,\bb{C}^{2\times 2})$ and a symplectic potential~$u$. For any function~$f\in\m{C}^0(P)\cap\m{C}^\infty(P^\circ)$ that is either convex or smooth on~$P$ we have 
\begin{equation*}
\begin{gathered}
\int_P\mrm{Tr}\left(\left(1-G^{-1}\Phi\,G^{-1}\bar{\Phi}\right)^{\frac{1}{2}}G^{-1}\,D^2f\right)\mrm{d}\mu=\int_Pf\left(\left(1-G^{-1}\Phi\,G^{-1}\bar{\Phi}\right)^{\frac{1}{2}}G^{-1}\right)^{ab}_{,ab}\mrm{d}\mu+\int_{\diff P}\!f\,\mrm{d}\sigma.
\end{gathered}
\end{equation*}
\end{lemma}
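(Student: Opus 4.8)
The plan is to reduce the statement to two integration-by-parts identities that are already available: the vanishing-boundary formula of Lemma~\ref{lemma:xi_complex_mm_intbyparts} for matrices of the doubly-sandwiched form $G^{-1}\Psi G^{-1}$, and Donaldson's classical toric integration-by-parts formula for the single inverse Hessian $G^{-1}$. Throughout write $G=\mrm{Hess}(u)$, $\Lambda:=G^{-1}\Phi\,G^{-1}\bar{\Phi}=\alpha\bar{\alpha}$, and $W:=\left(\mathbbm{1}-\Lambda\right)^{1/2}G^{-1}$, so that $\mrm{Tr}(W\,D^2f)=W^{ab}f_{,ab}$ and the assertion reads $\int_P W^{ab}f_{,ab}\,\mrm{d}\mu=\int_P W^{ab}_{,ab}f\,\mrm{d}\mu+\int_{\diff P}f\,\mrm{d}\sigma$. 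First I would record, exactly as in the proof of Lemma~\ref{lemma:xi_complex_mm_intbyparts}, the master double integration-by-parts identity valid for an arbitrary bounded matrix field $W$ and an admissible $f$,
\begin{equation*}
\int_P W^{ij}f_{,ij}\,\mrm{d}\mu=\int_P W^{ij}_{,ij}f\,\mrm{d}\mu+\int_{\diff P}\Big[(W\nabla\ell)^{i}_{,i}\,f-(W\nabla\ell)^{i}f_{,i}\Big]\mrm{d}\sigma,
\end{equation*}
where on each facet $S_r$ the vector $W\nabla\ell^r$ and its divergence are evaluated in the limit to $S_r$. The whole proof then reduces to evaluating these two boundary integrals for our specific $W$.

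Next I would split $W=G^{-1}-NG^{-1}$ with $N:=\mathbbm{1}-\left(\mathbbm{1}-\Lambda\right)^{1/2}$. Since $1-(1-z)^{1/2}=z\,g(z)$ with $g$ analytic on the unit disc, and the spectral radius $\SpecRad(u,\xi)<1$ holds up to $\diff P$, the holomorphic functional calculus gives $N=\Lambda\,g(\Lambda)=G^{-1}\Phi\,G^{-1}\bar{\Phi}\,g(\Lambda)$, whence
\begin{equation*}
NG^{-1}=G^{-1}\big[\Phi\,G^{-1}\bar{\Phi}\,g(\Lambda)\big]G^{-1}=G^{-1}\Psi\,G^{-1},\qquad \Psi:=\Phi\,G^{-1}\bar{\Phi}\,g(\Lambda).
\end{equation*}
Because $G^{-1}$ extends to a $\m{C}^1$ (indeed smooth) matrix field on $\bar P$ by Theorem~\ref{thm:complex_str} and the explicit boundary expansions~\eqref{eq:Hess_inverso}--\eqref{eq:derivate_ab}, and $\Lambda\in\m{C}^\infty(\bar P)$ has spectral radius $<1$ there, the middle factor $\Psi$ is $\m{C}^1$ up to $\diff P$. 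Hence $NG^{-1}$ is of precisely the doubly-sandwiched shape treated in Lemma~\ref{lemma:xi_complex_mm_intbyparts} — and that computation uses only the factorisation $G^{-1}\Psi G^{-1}$ with a $\m{C}^1$ middle factor, not the symmetry of $\Psi$ — so both $(NG^{-1})\nabla\ell^r$ and $\mrm{div}\big((NG^{-1})\nabla\ell^r\big)$ vanish on every $S_r$. Applying the master identity to $NG^{-1}$ therefore produces no boundary contribution.

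Then I would apply the master identity to $G^{-1}$. Here the boundary value of $G^{-1}$ annihilates $\nabla\ell^r$, so $(G^{-1}\nabla\ell^r)|_{S_r}=0$ and the $f_{,i}$-term drops out, while the limits~\eqref{eq:derivate_ab} give $\mrm{div}(G^{-1}\nabla\ell^r)=1$ on $S_r$ in the adapted coordinates (the normalisation of $\mrm{d}\sigma$ by $\abs{\nabla\ell^r}^{-2}$ absorbing the general case); this is exactly Donaldson's formula \cite{Donaldson_stability_toric}, yielding the boundary term $\int_{\diff P}f\,\mrm{d}\sigma$. Subtracting the two applications of the master identity and using $\mrm{Tr}(W\,D^2f)=W^{ij}f_{,ij}$ gives the claimed equality.

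The hard part will be the convex, non-smooth case: when $f$ is merely convex and continuous up to $\diff P$ its first derivatives may blow up near the boundary, so I must still justify that $\int_{\diff P}(W\nabla\ell^r)^{i}f_{,i}\,\mrm{d}\sigma=0$. This requires showing that the first-order vanishing of $W\nabla\ell^r$ on $\diff P$ dominates the possible blow-up of $f_{,i}$, exactly as in Donaldson's toric analysis; the interior identities and the term $\int_{\diff P}f\,\mrm{d}\sigma$ already make sense for $f\in\m{C}^0(\bar P)$, and an approximation (or monotonicity) argument should upgrade the smooth computation to all admissible convex $f$.
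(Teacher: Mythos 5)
Your decomposition $W=G^{-1}-G^{-1}\Psi\,G^{-1}$ with $\Psi=\Phi\,G^{-1}\bar{\Phi}\,g(\Lambda)$ is a genuinely different and rather clean reduction: the algebra $1-(1-z)^{1/2}=z\,g(z)$ is correct, the middle factor $\Psi$ is indeed continuous (and, granting the $\m{C}^1$ extension of $G^{-1}$ to $\bar{P}$ implicit in the boundary expansions~\eqref{eq:Hess_inverso}--\eqref{eq:derivate_ab}, $\m{C}^1$) up to $\diff P$, and you are right that the boundary computation in the proof of Lemma~\ref{lemma:xi_complex_mm_intbyparts} uses only the sandwich structure and not the symmetry of the middle factor. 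For $f$ smooth up to $\diff P$ this does reduce the lemma to Donaldson's formula for $G^{-1}$ plus the already-established vanishing for doubly-sandwiched matrices, whereas the paper instead expands $\bigl(1-G^{-1}\Phi G^{-1}\bar{\Phi}\bigr)^{1/2}G^{-1}\nabla\ell$ directly on the shrunken polytopes $P_\delta$ and reads off that its divergence tends to $1$ and the vector itself is $O(\delta)$.

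The genuine gap is the convex, non-smooth case, which is precisely the case needed for Corollary~\ref{cor:toric_Futaki}. Your ``master identity'' for an arbitrary bounded $W$ and an admissible $f$ is not available off the shelf: when $f$ is only convex and continuous on $\bar{P}$ its gradient can blow up like $\delta^{-1}$ at distance $\delta$ from a facet, so the boundary term $\int(W\nabla\ell)^i f_{,i}\,\mrm{d}\sigma$ is a product of an $O(\delta)$ quantity with an $O(\delta^{-1})$ quantity and is a priori only $O(1)$, not $o(1)$. The paper closes this by working on $P_\delta$ and using convexity quantitatively: for $V$ the (real part of the) boundary vector one has $\abs{V}\leq c\,\abs{p-q}$ where $q$ is the exit point of the ray $p-tV$ on $\diff P$, and convexity gives $\abs{\nabla_Vf(p)}\leq\frac{\abs{V}}{\abs{q-p}}\max\set{f(q)-f(p),\,f(q')-f(p)}$, which tends to $0$ by uniform continuity of $f$ on the compact $\bar{P}$. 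Your proposal names this difficulty but resolves it only by appeal to ``an approximation (or monotonicity) argument'' that is not spelled out; note that approximating $f$ by functions smooth up to $\diff P$ also requires passing to the limit in $\int_P W^{ab}f_{,ab}\,\mrm{d}\mu$, where the second derivatives of $f$ need not be uniformly integrable, so this too needs an argument (e.g.\ via the positivity of $W$ from Lemma~\ref{lemma:squareroot_posdef} and monotone convergence). Until the boundary estimate for convex $f$ is carried out --- for both pieces of your decomposition, since Lemma~\ref{lemma:xi_complex_mm_intbyparts} is itself only proved for smooth test functions --- the proof is incomplete in the case that the lemma is actually used for.
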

We follow the proof of \cite[Lemma~$3.3.5$]{Donaldson_stability_toric}. 
\begin{proof}
For~$\delta>0$, let~$P_\delta$ be a polytope contained in~$P$, with sides parallel to those of~$P$ separated by a distance~$\delta$. Inside~$P_\delta$ both~$u$ and~$f$ are smooth, so we can integrate by parts and obtain
\begin{equation}\label{eq:intbyparts_realmm}
\begin{gathered}
\int_{P_\delta}\mrm{Tr}\left(\left(1-G^{-1}\Phi\,G^{-1}\bar{\Phi}\right)^{\frac{1}{2}}G^{-1}\,D^2f\right)\mrm{d}\mu=\int_{P_\delta}f\left(\left(1-G^{-1}\Phi\,G^{-1}\bar{\Phi}\right)^{\frac{1}{2}}G^{-1}\right)^{ab}_{,ab}\mrm{d}\mu+\\
+\int_{\diff P_\delta}f\,\mrm{div}\left(\left(1-G^{-1}\Phi\,G^{-1}\bar{\Phi}\right)^{\frac{1}{2}}G^{-1}\nabla\ell\right)\mrm{d}\sigma-\int_{\diff P_\delta}\left(\left(1-G^{-1}\Phi\,G^{-1}\bar{\Phi}\right)^{\frac{1}{2}}G^{-1}\nabla\ell\right)^a\diff_af\mrm{d}\sigma.
\end{gathered}
\end{equation}
We will take the limit for~$\delta\to 0$ of the two boundary terms on the right-hand side. First of all, fix a side~$S_{\delta,r}$ of~$P_\delta$, parallel to the side~$S_r$ of~$P$; as in the proof of Lemma~\ref{lemma:xi_complex_mm_intbyparts} we can assume that~$r=1$,~$\ell^1=y^1$ and~$\ell^2=y^2$,  c.f. the discussion about~$G^{-1}$ at pag. \pageref{boundary_coords}. As the distance between~$S_{\delta,r}$ and~$S_{r}$ is~$\delta$, points of~$S_{\delta,r}$ have coordinates~$(\delta,y^2)$.

Recall from~\eqref{eq:derivate_ab} that, in this system of coordinates, the matrix~$G^{-1}$ is of the shape~$\begin{pmatrix} a & b\\ b &c\end{pmatrix}$ for functions~$a$,~$b$ such that
\begin{equation*}
\begin{gathered}
a(y^1,y^2)=y^1+O\left(\left(y^1\right)^2\right)\\
b(y^1,y^2)=\varphi(y^2)\,y^1+O\left(\left(y^1\right)^2\right)
\end{gathered}
\end{equation*}
and~$\varphi\in O(y^2)$. Then we can expand~$G^{-1}\Phi G^{-1}\bar{\Phi}$ in~$\delta$ to find
\begin{equation*}
\begin{split}
\left(1-G^{-1}\Phi G^{-1}\bar{\Phi}\right)^{\frac{1}{2}}&=\left(\begin{pmatrix}1 &0\\-c^2\bar{\Phi}_{12}\Phi_{22} & 1-\abs{c\Phi_{22}}^2\end{pmatrix}+O(\delta)\right)^{\frac{1}{2}}=\\
&=\begin{pmatrix}1&0\\-\frac{c^2\bar{\Phi}_{12}\Phi_{22}}{1+\sqrt{1-\abs{c\Phi_{22}}^2}} & \sqrt{1-\abs{c\Phi_{22}}^2}\end{pmatrix}+O(\delta).
\end{split}
\end{equation*}
Composing with~$G^{-1}\nabla\ell$ we have
\begin{equation}\label{eq:espansione_radice}
\left(1-G^{-1}\Phi G^{-1}\bar{\Phi}\right)^{\frac{1}{2}}G^{-1}\nabla\ell=\begin{pmatrix}a\\b\sqrt{1-\abs{c\Phi_{22}}^2}-\frac{ac^2\bar{\Phi}_{12}\Phi_{22}}{1+\sqrt{1-\abs{c\Phi_{22}}^2}}\end{pmatrix}+O(\delta^2)
\end{equation}
and the divergence of this vector is
\begin{equation*}
\mrm{div}\left(\left(1-G^{-1}\Phi G^{-1}\bar{\Phi}\right)^{\frac{1}{2}}G^{-1}\nabla\ell\right)=\diff_1a+O(\delta)=1+O(\delta).
\end{equation*}
Hence the first boundary term in~\eqref{eq:intbyparts_realmm} goes to~$\int_{\diff P}f\mrm{d}\sigma$ as~$\delta$ goes to~$0$.

As for the second boundary term, it certainly vanishes if~$f$ is smooth on the whole of~$P$, since we showed in~\eqref{eq:espansione_radice} that~$\left(1-G^{-1}\Phi G^{-1}\bar{\Phi}\right)^{\frac{1}{2}}G^{-1}\nabla\ell\in O(\delta)$. It is slightly more complicated to obtain the same result for a convex function~$f\in\m{C}^0(P)\cap\m{C}^\infty(P^\circ)$, since the gradient of~$f$ might blow up as we go to the boundary of~$P$. The convexity of~$f$ is however sufficient to guarantee that the vanishing of~$G^{-1}\nabla\ell$ will balance out the growth of~$\nabla f$. We follow the approach of \cite{Donaldson_stability_toric}.

Let~$V$ be the real part of~$\left(1-G^{-1}\Phi G^{-1}\bar{\Phi}\right)^{\frac{1}{2}}G^{-1}\nabla\ell$. Since all the terms in~\eqref{eq:intbyparts_realmm} are real, at least in the limit for~$\delta\to0$, we just have to show that
\begin{equation*}
\int_{\diff P_\delta}\nabla_Vf\mrm{d}\mu\to 0\mbox{ as }\delta\to0.
\end{equation*}
For~$p\in S_{\delta,r}$, let~$q=q(p)$ be the closest point to~$p$ at the intersection between~$\diff P$ and the ray~$p-tV$. Notice that, as~$\nabla\ell^\transpose\cdot V\geq 0$, the vector~$V$ points inward from~$p$ to~$P_\delta$. Moreover, the slope of~$V$ is 
\begin{equation*}
\frac{b}{a}\sqrt{1-\abs{c\Phi_{22}}^2}-\frac{c^2\,\mrm{Re}\left(\bar{\Phi}_{12}\Phi_{22}\right)}{1+\sqrt{1-\abs{c\Phi_{22}}^2}}+O(\delta).
\end{equation*}
We can choose~$\delta_0$ such that, for~$\delta<\delta_0$ and every~$p\in S_{\delta,r}$,~$q(p)$ belongs to the side~$S_r$ of~$\diff P$, as the slope of~$V$ is uniformly bounded on~$S_{\delta,r}$.

Let now~$q'=q'(p)=p-(q-p)$, and consider the norm of~$V$ and the distance between~$p$ and~$q$,~$q'$. As~$q$ lies on~$S_r$, we have
\begin{equation*}
\abs{p-q}=\abs{p-q'}=\delta\abs*{\left(1,\ \frac{b}{a}\sqrt{1-\abs{c\Phi_{22}}^2}-\frac{c^2\,\mrm{Re}\left(\bar{\Phi}_{12}\Phi_{22}\right)}{1+\sqrt{1-\abs{c\Phi_{22}}^2}}\right)}
\end{equation*}
and so there is some positive constant~$c$ such that, for~$\delta<\delta_0$
\begin{equation*}
\abs{V}=\abs*{\left(a,b\sqrt{1-\abs{c\Phi_{22}}^2}-\frac{ac^2\,\mrm{Re}\left(\bar{\Phi}_{12}\Phi_{22}\right)}{1+\sqrt{1-\abs{c\Phi_{22}}^2}}\right)+O(\delta^2)}\leq c\abs{p-q}.
\end{equation*}
By the convexity of~$f$, we know that at~$p\in S_{\delta,r}$
\begin{equation*}
\abs*{\nabla_Vf(p)}\leq\frac{\abs{V}}{\abs{q-p}}\mrm{max}\set*{f(q)-f(p),f(q')-f(p)}\leq c\,\mrm{max}\set*{f(q)-f(p),f(q')-f(p)}
\end{equation*}
so we can estimate the second boundary term in~\eqref{eq:intbyparts_realmm} as
\begin{equation*}
\begin{split}
\abs*{\int_{\diff P_\delta}\nabla_Vf\mrm{d}\mu}\leq&\int_{\diff P_\delta}\abs*{\nabla_Vf}\mrm{d}\mu\leq c\int_{p\in\diff P_\delta}\mrm{max}\set*{f(q(p))-f(p),f(q'(p))-f(p)}\mrm{d}\mu\leq\\
\leq&c\,\mrm{Vol}(\diff P_\delta)\max_{p\in\diff P_\delta}\set*{f(q)-f(p),f(q')-f(p)}.
\end{split}
\end{equation*}
As~$f$ is uniformly continuous on~$P$ and~$\abs{p-q}\in O(\delta)$, this inequality shows that, as~$\delta$ goes to~$0$,
\begin{equation*}
\int_{\diff P_\delta}\nabla_Vf\mrm{d}\mu\to 0.\qedhere
\end{equation*}
\end{proof}

\begin{lemma}\label{lemma:squareroot_posdef}
With the previous notation,~$\sqrt{1-G^{-1}\Phi G^{-1}\bar{\Phi}}\,G^{-1}$ is a Hermitian positive-definite matrix.
\end{lemma}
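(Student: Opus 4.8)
The plan is to conjugate by the square root of the positive-definite Hessian $G=\mrm{Hess}(u)=D^2u$ in order to recognise the matrix under the square root as a manifestly Hermitian object. First I would recall that, since $\xi=G^{-1}\Phi\,G^{-1}$ with $\Phi$ a complex \emph{symmetric} matrix-valued function and $G$ real symmetric positive-definite, the morphism $\alpha\bar{\alpha}=\xi\,G\,\bar{\xi}\,G$ appearing in the real moment map equals $M:=G^{-1}\Phi\,G^{-1}\bar{\Phi}$. Setting $N:=G^{-1/2}\Phi\,G^{-1/2}$, the symmetry of $\Phi$ gives $N^{\transpose}=N$, and since $G^{-1/2}$ is real one has $\bar{N}=N^{*}$; therefore
\begin{equation*}
G^{1/2}\,M\,G^{-1/2}=G^{-1/2}\Phi\,G^{-1}\bar{\Phi}\,G^{-1/2}=N\,\bar{N}=N\,N^{*}
\end{equation*}
is Hermitian and positive semi-definite. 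This is the crucial observation: although $M$ itself is not Hermitian, it is similar (via $G^{1/2}$) to the Hermitian matrix $N\,N^{*}$, whose eigenvalues are exactly the $\delta(i)$ of Proposition~\ref{prop:BiquardGauduchonAC+}.

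Under the running hypothesis that $\SpecRad(u,\xi)<1$ --- equivalently $\delta(i)<1$ for all $i$ --- the matrix $1-N\,N^{*}=G^{1/2}(1-M)G^{-1/2}$ is Hermitian positive-definite, with spectrum contained in $(0,1]$, and its principal square root $\sqrt{1-N\,N^{*}}$ is again Hermitian positive-definite. Because conjugation by the invertible matrix $G^{1/2}$ carries the principal square root of $1-M$ to that of $G^{1/2}(1-M)G^{-1/2}$, I would conclude
\begin{equation*}
\sqrt{1-M}=G^{-1/2}\sqrt{1-N\,N^{*}}\,G^{1/2},
\end{equation*}
and hence
\begin{equation*}
\sqrt{1-M}\,G^{-1}=G^{-1/2}\sqrt{1-N\,N^{*}}\,G^{-1/2}.
\end{equation*}
The right-hand side is a congruence $P^{*}HP$ with $P=G^{-1/2}$ real symmetric invertible and $H=\sqrt{1-N\,N^{*}}$ Hermitian positive-definite; thus $\sqrt{1-M}\,G^{-1}$ is Hermitian and, since $v^{*}G^{-1/2}H\,G^{-1/2}v=(G^{-1/2}v)^{*}H(G^{-1/2}v)>0$ for $v\neq 0$, positive-definite, which is the claim.

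The only delicate point --- and the step I would write out most carefully --- is the passage through the matrix square root, since $1-M$ is not Hermitian and an arbitrary square root need not respect the similarity. This is handled by noting that $1-M$ is diagonalisable with spectrum in $(0,1]$, being similar to the Hermitian matrix $1-N\,N^{*}$, so that the principal square root (the positive real root on each eigenvalue) is the unique square root with spectrum in the open right half-plane; uniqueness then forces it to commute with conjugation by $G^{1/2}$. Everything else is formal linear algebra, relying only on $G$ being real symmetric positive-definite and $\Phi$ symmetric, both of which hold because $\xi$ is the symplectic-coordinate representative of a genuine first-order deformation of the complex structure, as in~\eqref{eq:def_cs_sympcoord}.
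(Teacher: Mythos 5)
Your proof is correct and follows essentially the same route as the paper: the paper writes $G^{-1}=S^{\transpose}\Lambda S$ and sets $R:=\Lambda^{\frac{1}{2}}S\Phi S^{\transpose}\Lambda^{\frac{1}{2}}$, which is just your $N=G^{-\frac{1}{2}}\Phi G^{-\frac{1}{2}}$ up to orthogonal conjugation, so that $1-G^{-1}\Phi G^{-1}\bar{\Phi}$ is similar to the Hermitian positive-definite $1-R\bar{R}$ and the product with $G^{-1}$ becomes a congruence $S^{\transpose}\Lambda^{\frac{1}{2}}\sqrt{1-R\bar{R}}\,\Lambda^{\frac{1}{2}}S$. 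Your only addition is the explicit justification that the principal square root commutes with the similarity, a point the paper leaves implicit.
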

\begin{proof}
We can write~$G^{-1}$ as~$S^\transpose\Lambda S$ for a diagonal positive matrix~$\Lambda$ and an orthogonal matrix~$S$, so that
\begin{equation*}
G^{-1}\Phi G^{-1}\bar{\Phi}=S^\transpose\Lambda^{\frac{1}{2}}\left(\Lambda^{\frac{1}{2}}S\Phi S^\transpose\Lambda^{\frac{1}{2}}\right)\left(\Lambda^{\frac{1}{2}}S\bar{\Phi}S^\transpose\Lambda^{\frac{1}{2}}\right)\Lambda^{-\frac{1}{2}}S.
\end{equation*}
Let~$R:=\Lambda^{\frac{1}{2}}S\Phi S^\transpose\Lambda^{\frac{1}{2}}$, so that
\begin{equation*}
\sqrt{1-G^{-1}\Phi G^{-1}\bar{\Phi}}\,G^{-1}=S^\transpose\Lambda^{\frac{1}{2}}\sqrt{1-R\bar{R}}\Lambda^{-\frac{1}{2}}S\,S^\transpose\Lambda S=S^\transpose\Lambda^{\frac{1}{2}}\sqrt{1-R\bar{R}}\Lambda^{\frac{1}{2}}S
\end{equation*}
and since the eigenvalues of~$R\bar{R}$ are smaller than~$1$,~$\sqrt{1-R\bar{R}}$ is positive-definite.
\end{proof}

\begin{cor}\label{cor:toric_Futaki}
Assume that~$(\xi,u)$ is a solution to the HcscK system~\eqref{eq:symplectic_HcscK_toric}, that~$u$ satisfies Guillemin's boundary condition, and that~$\xi=D^2u^{-1}\Phi\,D^2u^{-1}$ for a symmetric complex matrix~$\Phi\in\m{C}^\infty(P)$. Then for every convex function~$f\in\m{C}^0(P)\cap\m{C}^\infty(P^\circ)$
\begin{equation*}
\int_{\diff P}f\,\mrm{d}\sigma-\int_PC\,f\,\mrm{d}\mu\geq 0.
\end{equation*}
With equality if and only if~$f$ is affine-linear.
\end{cor}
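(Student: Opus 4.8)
The plan is to combine the integration-by-parts formula of Lemma~\ref{lemma:intbyparts_realmm} with the real moment map equation and a pointwise positivity argument. Write $G = D^2u$ and set $M := \left(1-G^{-1}\Phi\,G^{-1}\bar{\Phi}\right)^{\frac{1}{2}}G^{-1}$. Since $\xi = G^{-1}\Phi\,G^{-1}$ and $G$ is real, one has $\xi\,G\,\bar{\xi}\,G = G^{-1}\Phi\,G^{-1}\bar{\Phi}$, so that $\left(1-\xi\,D^2u\,\bar{\xi}\,D^2u\right)^{\frac{1}{2}}D^2u^{-1} = M$ and the real moment map equation in~\eqref{eq:symplectic_HcscK_toric} reads precisely $M^{ab}_{,ab} = -C$.

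First I would apply Lemma~\ref{lemma:intbyparts_realmm} to the given convex function $f$, obtaining
\[
\int_P \mrm{Tr}\left(M\,D^2f\right)\,\mrm{d}\mu = \int_P f\,M^{ab}_{,ab}\,\mrm{d}\mu + \int_{\diff P} f\,\mrm{d}\sigma .
\]
Substituting $M^{ab}_{,ab} = -C$ turns the right-hand side into exactly the quantity $\int_{\diff P} f\,\mrm{d}\sigma - \int_P C\,f\,\mrm{d}\mu$ whose sign we must control. Thus it suffices to prove that the left-hand integrand $\mrm{Tr}\left(M\,D^2f\right)$ is pointwise nonnegative on $P^\circ$.

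For the positivity, by Lemma~\ref{lemma:squareroot_posdef} the matrix $M$ is Hermitian and positive-definite at every point of $P^\circ$, while the convexity of $f$ gives that $D^2f$ is a real symmetric positive-semidefinite matrix there. For such matrices $\mrm{Tr}\left(M\,D^2f\right)$ is real: writing $N = D^2f$ one checks $\overline{\mrm{Tr}(M\,N)} = \mrm{Tr}(\bar{M}\,N) = \mrm{Tr}(M^{\transpose}N) = \mrm{Tr}(N^{\transpose}M) = \mrm{Tr}(M\,N)$, using $\bar{M} = M^{\transpose}$ and $N^{\transpose} = N$. Moreover, writing $\mrm{Tr}\left(M\,D^2f\right) = \mrm{Tr}\!\left((D^2f)^{\frac{1}{2}}\,M\,(D^2f)^{\frac{1}{2}}\right)$ exhibits it as the trace of a positive-semidefinite Hermitian matrix, hence $\geq 0$. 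Integrating over $P$ yields the desired inequality $\int_{\diff P} f\,\mrm{d}\sigma - \int_P C\,f\,\mrm{d}\mu \geq 0$.

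Finally, for the equality statement I would argue as follows. The integrand $\mrm{Tr}\left(M\,D^2f\right)$ is continuous and nonnegative on $P^\circ$, so the integral vanishes only if it vanishes identically there. At a point where this trace is zero, the positive-semidefinite matrix $(D^2f)^{\frac{1}{2}}M(D^2f)^{\frac{1}{2}}$ has zero trace and is therefore the zero matrix; since $M$ is positive-definite, this forces $(D^2f)^{\frac{1}{2}} = 0$, i.e. $D^2f = 0$. Hence $f$ is affine-linear on $P^\circ$ and, by continuity, on all of $P$; conversely an affine-linear $f$ gives equality trivially. The one point requiring care — and the reason the hypothesis that $f$ be convex rather than merely smooth enters — is that the integration by parts of Lemma~\ref{lemma:intbyparts_realmm} must remain valid for such $f$, whose gradient may blow up at $\diff P$; this is exactly the situation already handled in the proof of that lemma, so there is no genuine new obstacle here.
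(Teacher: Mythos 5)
Your proof is correct and follows essentially the same route as the paper: apply Lemma~\ref{lemma:intbyparts_realmm} to $f$, substitute the real moment map equation $M^{ab}_{,ab}=-C$, and conclude pointwise nonnegativity of $\mrm{Tr}(M\,D^2f)$ from Lemma~\ref{lemma:squareroot_posdef} together with convexity of $f$. Your treatment of the equality case (showing that vanishing of the integral forces $D^2f\equiv 0$ because $M$ is positive-definite) is in fact spelled out more carefully than in the paper, which only records the easy direction.
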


\begin{proof}
From Lemma~\ref{lemma:intbyparts_realmm} we see that if~$(\xi,u)$ is a solution to the real moment map for the toric HcscK system then
\begin{equation*}
\int_P\mrm{Tr}\left(\left(1-G^{-1}\Phi\,G^{-1}\bar{\Phi}\right)^{\frac{1}{2}}G^{-1}\,D^2f\right)\mrm{d}\mu=-\int_Pf\,C\mrm{d}\mu+\int_{\diff P}f\,\mrm{d}\sigma.
\end{equation*}
By the convexity of~$f$ and Lemma~\ref{lemma:squareroot_posdef}, the left hand side is non-negative, and vanishes when~$f$ is affine-linear.
\end{proof}
Linear functions on~$P$ correspond to~$\bb{T}$-invariant holomorphic vector fields on the toric surface via the Hamiltonian construction, and in fact the functional
\begin{equation*}
\m{L}_C(f)=\int_{\diff P}f\,\mrm{d}\sigma-\int_PC\,f\,\mrm{d}\mu%
\end{equation*}
is the Futaki invariant of~$M$ valued on the vector field defined by~$f$, c.f.~\cite[Lemma~$3.2.9$]{Donaldson_stability_toric}. For general convex functions~$f$,~$\m{L}_C(f)$ is the Donaldson-Futaki invariant of a toric degeneration of the surface induced by~$f$. The condition~$\m{L}_C(f)\geq 0$ is equivalent to toric~$K$-stability of the surface, see again \cite{Donaldson_stability_toric}, and implies general~$K$-stability of the surface. In the series of papers culminating in \cite{Donaldson_cscKmetrics_toricsurfaces}, Donaldson showed that toric~$K$-stability implies the existence of a cscK metric.

Corollary~\ref{cor:toric_Futaki} then implies that a necessary condition for the existence of a solution to the toric HcscK system is that there is a symplectic potential~$u_0$ satisfying Abreu's equation~$(u_0^{ij})_{,ij}=-C$. It is not clear, however, if~$K$-stability is sufficient to guarantee the existence of a solution~$(\xi,u)$ for non-zero Higgs terms~$\xi$.

A possible alternative way to study the stability of a polarized toric surface with the addition of a non-integrable Higgs term comes from a variational characterization of the real moment map equation in~\eqref{eq:symplectic_HcscK_toric}. The integration by parts formula for smooth test functions of Lemma~\ref{lemma:intbyparts_realmm} allows us to describe the real moment map equation of~\eqref{eq:symplectic_HcscK_toric} as an Euler-Lagrange equation, for a fixed symmetric complex matrix~$\Phi\in\m{C}^\infty(P,\bb{C}^{2\times 2})$, that induces the Higgs term~$\xi=G^{-1}\Phi G^{-1}$.
\begin{lemma}\label{lemma:toricHKenergy}
Consider the space of symplectic potentials (c.f. Theorem~\ref{thm:complex_str})
\begin{equation*}
\m{A}(\Phi)=\set*{u=u_P+h\tc r(D^2u^{-1}\Phi\,D^2u^{-1}\bar{\Phi})<1}.
\end{equation*}
The real moment map equation in~\eqref{eq:symplectic_HcscK_toric} is the Euler-Lagrange equation of the HK-functional defined on~$\m{A}(\Phi)$ as
\begin{equation*}
\begin{split}
\hat{\m{F}}(u,\Phi)=&\int_{\diff P}u\,\mrm{d}\sigma-\int_PC\,u\,\mrm{d}\mu-\int_P\log\mrm{det}\left(D^2u\right)\mrm{d}\mu-\int_P\rho\left(D^2u^{-1}\Phi\,D^2u^{-1}\bar{\Phi}\right)\mrm{d}\mu.
\end{split}
\end{equation*}
\end{lemma}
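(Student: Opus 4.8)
The plan is to adapt the variational computation of Proposition~\ref{HKEulLagr} from the abelian to the toric setting, the essential new feature being that here the fixed datum is the matrix-valued function $\Phi$ rather than the Higgs tensor $\xi = G^{-1}\Phi\,G^{-1}$ (with $G := D^2u$), so the first variation of the Biquard-Gauduchon term must also account for the dependence of $\xi$ on $u$. First I would fix a path $u_t = u + t\,\varphi$ in $\m{A}(\Phi)$ with $\varphi\in\m{C}^\infty(P)$ (so that $u_t = u_P + (h+t\varphi)$ retains Guillemin's boundary conditions and $\varphi$ is smooth up to $\diff P$), set $\Psi := D^2\varphi$ and $W := G^{-1}\Phi\,G^{-1}\bar{\Phi} = \xi\,G\,\bar{\xi}\,G$, and differentiate $\hat{\m{F}}(u_t,\Phi)$ term by term at $t=0$. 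The linear part contributes $\int_{\diff P}\varphi\,\mrm{d}\sigma - \int_P C\,\varphi\,\mrm{d}\mu$, and the logarithmic $K$-energy part contributes $-\int_P\mrm{Tr}(G^{-1}\Psi)\,\mrm{d}\mu$, both immediate.

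For the Biquard-Gauduchon term I would use, exactly as in the proofs of Proposition~\ref{prop:BiquardGauduchonAC+} and Proposition~\ref{HKEulLagr}, that $\diff_t\rho(W_t) = \mrm{Tr}(\check{\alpha}\,\diff_t W)$ with $\check{\alpha} = \psi(W)$ as in~\eqref{eq:alpha_symp_check}. Expanding $\diff_t W$ by means of $\diff_t G^{-1} = -G^{-1}\Psi\,G^{-1}$ produces two traces; rewriting them with cyclicity of the trace together with the commutation relations $\check{\alpha}\,\xi G = \xi G\,\bar{\check{\alpha}}$ and $\bar{\xi}G\,\check{\alpha} = \bar{\check{\alpha}}\,\bar{\xi}G$ established in Proposition~\ref{HKEulLagr}, I expect the two traces to be complex conjugates and to collapse to $\diff_t\rho(W_t) = -2\,\mrm{Re}\,\mrm{Tr}(\check{\alpha}\,\xi G\,\bar{\xi}\,\Psi)$. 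Hence this term contributes $+2\int_P\mrm{Re}\,\mrm{Tr}(\check{\alpha}\,\xi G\,\bar{\xi}\,\Psi)\,\mrm{d}\mu$ to the first variation.

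The crucial simplification is then the identity $2\,\check{\alpha}\,\xi G\,\bar{\xi} = \big(\mathbbm{1}-(1-W)^{1/2}\big)G^{-1}$, which follows from $x\,\psi(x) = \tfrac{1}{2}(1-\sqrt{1-x})$ exactly as in the Remark after Theorem~\ref{thm:HcscKSymplCoordThm}; the real part is automatic since $(1-W)^{1/2}G^{-1}$ is Hermitian by Lemma~\ref{lemma:squareroot_posdef}. Using it, the $K$-energy and Biquard-Gauduchon contributions combine into the single expression $-\int_P\mrm{Tr}\big((1-W)^{1/2}G^{-1}\,\Psi\big)\,\mrm{d}\mu$. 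Invoking the integration by parts formula of Lemma~\ref{lemma:intbyparts_realmm} for the smooth test function $\varphi$ turns this into $-\int_P\varphi\,\big((1-W)^{1/2}G^{-1}\big)^{ab}_{,ab}\,\mrm{d}\mu - \int_{\diff P}\varphi\,\mrm{d}\sigma$. The boundary term produced here cancels the $\int_{\diff P}\varphi\,\mrm{d}\sigma$ coming from the linear part, so that
\[
D\hat{\m{F}}_{(u,\Phi)}(\varphi) = -\int_P\varphi\Big[\big((1-W)^{1/2}G^{-1}\big)^{ab}_{,ab}+C\Big]\,\mrm{d}\mu,
\]
whose vanishing for every $\varphi$ is precisely the real moment map equation of~\eqref{eq:symplectic_HcscK_toric}.

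The main obstacle I anticipate is the bookkeeping in the first variation of the Biquard-Gauduchon term: unlike the abelian case, both factors $G^{-1}$ in $W$ vary with $u$, so $\diff_t W$ is not simply $\xi\dot{G}\bar{\xi}G + \xi G\bar{\xi}\dot{G}$, and one must check that after applying the commutation relations the contribution still reorganizes into $\mrm{Re}\,\mrm{Tr}(\check{\alpha}\,\xi G\,\bar{\xi}\,\Psi)$ with exactly the coefficient needed to combine with the $K$-energy term and reproduce $(1-W)^{1/2}G^{-1}$. A secondary point requiring care is the applicability of Lemma~\ref{lemma:intbyparts_realmm}: since $\varphi$ is smooth on all of $P$ (a variation of $h$ with $u_P$ fixed), the second, gradient boundary term in that lemma's proof vanishes, leaving only the $\int_{\diff P}\varphi\,\mrm{d}\sigma$ contribution whose cancellation against the linear part is what makes the Euler-Lagrange equation emerge with no residual boundary term.
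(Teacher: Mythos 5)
Your proposal is correct: the two sign changes (the minus in front of $\int_P\rho$ in the functional and the minus from $\diff_t G^{-1}=-G^{-1}\Psi G^{-1}$, which is the genuine new feature compared to the abelian case where $\xi$ is held fixed) do conspire to give $+2\int_P\mrm{Re}\,\mrm{Tr}(\check{\alpha}\,\xi G\bar{\xi}\,\Psi)\,\mrm{d}\mu$, the identity $2\check{\alpha}\,\xi G\bar{\xi}=(\mathbbm{1}-(1-W)^{1/2})G^{-1}$ then merges this with the $K$-energy term, and Lemma~\ref{lemma:intbyparts_realmm} applied to the smooth variation $\varphi$ produces exactly the boundary term that cancels against $\int_{\diff P}\varphi\,\mrm{d}\sigma$ from the linear part, yielding the real moment map equation with the constant $-C$ (note the absence of the factors $\tfrac{1}{2}$ in the toric functional is what makes this constant come out right). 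The paper states this lemma without proof, and your argument is precisely the one its setup intends — Proposition~\ref{HKEulLagr} adapted to variations of $u$ with $\Phi$ fixed, closed off by the integration-by-parts formula.
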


As we are not able yet to characterize solutions to the toric HcscK system by an \emph{algebraic} stability notion, a possible alternative approach could be to study the relationship between the toric~$\textit{HK}$-energy of Lemma~\ref{lemma:toricHKenergy} and its linear part~$\m{L}_C$, along the lines of the study of the toric~$K$-energy in~\cite{Donaldson_stability_toric}. This might shed some light into a possible stability condition, generalizing~$K$-stability, to characterize the existence of solutions to the HcscK system on a toric surface at least from an analytic point of view.
	
\backmatter

\addcontentsline{toc}{chapter}{Bibliography}
\bibliographystyle{amsalpha}
\bibliography{../bibliografia_PhD}
\end{document}